\numberwithin{equation}{section}
\def\sgn{{\,\mathrm{sgn}}}
\def\pa{{\partial}}
\def\low{\mathrm{low}}
\def\g{\gamma}
\def\e{\varepsilon}
\def\eps{\epsilon}
\def\o{\omega}
\def\bar{\overline}
\def\what{\widehat}
\def\varep{\varepsilon}
\def\veps{\varepsilon}
\def\D{{\mathcal D}}
\def\R{{\mathbb R}}
\def\f12{{\frac 1 2}}
\def\curl{{\mbox curl\,}}
\def\f{\widetilde{f}}
\theoremstyle{plain}
\newtheorem{theorem}{Theorem}[section]
\newtheorem{lemma}[theorem]{Lemma}
\newtheorem{proposition}[theorem]{Proposition}
\newtheorem{definition}[theorem]{Definition}
\newtheorem{remark}[theorem]{Remark}
\begin{document}

\title[Water Waves with surface tension]{Global regularity for 2d water waves \\ with surface tension}


\thanks{The first author was partially supported by a Packard Fellowship and NSF Grant DMS 1265818.
The second author was partially supported by a Simons Postdoctoral Fellowship and NSF Grant DMS 1265875.}

\author{Alexandru D. Ionescu}\address{Princeton University}\email{aionescu@math.princeton.edu}
\author{Fabio Pusateri}\address{Princeton University}\email{fabiop@math.princeton.edu}

\begin{abstract}
We consider the full irrotational water waves system with surface tension and no gravity in dimension two (the capillary waves system),
and prove global regularity and modified scattering
for suitably small and localized perturbations of a flat interface.
An important point of our analysis is to develop a sufficiently robust method, based on energy estimates and dispersive analysis,
which allows us to deal simultaneously with strong singularities arising from time resonances 
in the applications of the normal form method and with nonlinear scattering. 
As a result, we are able to consider a suitable class of perturbations with finite energy, but no other momentum conditions.

Part of our analysis relies on a new treatment of the Dirichlet-Neumann operator in dimension two which is of independent interest.
As a consequence, the results in this paper are self-contained.
%
%
%

\end{abstract}

\maketitle

\setcounter{tocdepth}{1}
\tableofcontents

\section{Introduction}


\subsection{Free boundary Euler equations and water waves}\label{secWW}
The evolution of an inviscid perfect fluid that occupies a domain $\Omega_t \subset \R^n$, for $n \geq 2$, at time $t \in \R$,
is described by the free boundary incompressible Euler equations.
If $v$ and $p$ denote respectively the velocity and the pressure of the fluid (with constant density equal to $1$)
at time $t$ and position $x \in \Omega_t$, these equations are:
\begin{equation}
\label{E}
\left\{
\begin{array}{ll}
v_t + v \cdot \nabla v = - \nabla p - g e_n  &   x \in \Omega_t
\\
\nabla \cdot v = 0    &   x \in \Omega_t
\\
v (x,0) = v_0 (x)     &   x \in \Omega_0 \, ,
\end{array}
\right.
\end{equation}
where $g$ is the gravitational constant.
The free surface $S_t := \partial \Omega_t$ moves with the normal component of the velocity according to the kinematic boundary condition:
\begin{subequations}
\label{BC}
 \begin{equation}
\label{BC1}
\partial_t + v \cdot \nabla  \,\, \mbox{is tangent to} \,\, \bigcup_t S_t \subset \R^{n+1} \, .
\end{equation}
In the presence of surface tension the pressure on the interface is given by
\begin{equation}
\label{BC2}
p (x,t) = \sigma \kappa(x,t)  \,\, , \,\,\, x \in S_t  \, ,
\end{equation}
where $\kappa$ is the mean-curvature of $S_t$ and $\sigma > 0$.
\end{subequations}
At liquid-air interfaces, the surface tension force results from the greater attraction
of water molecules to each other than to the molecules in the air. In the case of irrotational flows, i.e.
\begin{equation}
\label{irro}
\rm{\curl} v = 0 \, ,
\end{equation}
one can reduce \eqref{E}-\eqref{BC} to a system on the boundary.
Such a reduction can be performed identically regardless of the number of spatial dimensions,
but here we only focus on the two dimensional case -- which is the one we are interested in --
and moreover assume that $\Omega_t \subset \R^2$ is the region below the graph of a function $h : \R_x \times \R_t \rightarrow \R$,
that is $\Omega_t = \{ (x,y) \in \R^2 \, : y \leq h(x,t) \}$ and $S_t = \{ (x,y) : y = h(x,t) \}$.

Let us denote by $\Phi$ the velocity potential: $\nabla \Phi(x,y,t) = v (x,y,t)$, for $(x,y) \in \Omega_t$.
If $\phi(x,t) := \Phi (x, h(x,t),t)$ is the restriction of $\Phi$ to the boundary $S_t$,
the equations of motion reduce to the following system for the unknowns $h, \phi : \R_x \times \R_t \rightarrow \R$:
\begin{equation}
\label{WWE}
\left\{
\begin{array}{l}
\partial_t h = G(h) \phi
\\
\partial_t \phi = - g h + \sigma  \dfrac{\partial_x^2 h}{ (1+h_x^2)^{3/2} }
  - \dfrac{1}{2} {|\phi_x|}^2 + \dfrac{{\left( G(h)\phi + h_x \phi_x \right)}^2}{2(1+{|h_x|}^2)}
\end{array}
\right.
\end{equation}
with
\begin{equation}
\label{defG0}
G(h) := \sqrt{1+{|h_x|}^2} \mathcal{N}(h)
\end{equation}
where $\mathcal{N}(h)$ is the Dirichlet-Neumann\footnote{By slightly abusing notation we will refer to
$G(h)$ as the Dirichlet-Neumann map, as this causes no confusion.} map associated to the domain $\Omega_t$.
We refer to  \cite[chap. 11]{SulemBook} or \cite{CraSul} for the derivation of the  water waves equations \eqref{WWE}.
This system describes the evolution of an incompressible perfect fluid of infinite depth and infinite extent, with a free moving
(one-dimensional) surface, and a pressure boundary condition given by the Young-Laplace equation.
One generally refers to \eqref{WWE} as the gravity water waves system when $g>0$ and $\sigma=0$,
and as the capillary water waves system when $g=0$ and $\sigma>0$.

The system \eqref{E}-\eqref{BC} has been under very active investigation in recent years.
Without trying to be exhaustive, we mention the early work on
the wellposedness of the Cauchy problem in the irrotational case and with gravity by
Nalimov \cite{Nalimov}, Yosihara \cite{Yosi}, and Craig \cite{CraigLim};
the first works on the wellposedness for general data in Sobolev spaces (for irrotational gravity waves) by Wu \cite{Wu1,Wu2};
and subsequent work on the gravity problem by Christodoulou-Lindblad \cite{CL}, Lannes \cite{Lannes}, Lindblad \cite{Lindblad},
Coutand-Shkoller \cite{CS2}, Shatah-Zeng \cite{ShZ1,ShZ3}, and Alazard-Burq-Zuily \cite{ABZ2,ABZ3}.
Surface tension effects have been considered in the work of Beyer-Gunther \cite{BG}, Ambrose-Masmoudi \cite{AM},
Coutand-Shkoller \cite{CS2}, Shatah-Zeng \cite{ShZ1,ShZ3}, Christianson-Hur-Staffilani \cite{CHS}, and Alazard-Burq-Zuily \cite{ABZ1}.
Recently, some blow-up scenarios have also been investigated \cite{CCFGL,CCFGG,CSSplash,IFL}.

The question of long time regularity of solutions with irrotational, small and localized initial data was also addressed in a few papers,
starting with \cite{WuAG}, where Wu showed almost global existence for the gravity problem ($g>0$, $\sigma=0$) in two dimensions ($1$d interfaces).
Subsequently, Germain-Masmoudi-Shatah \cite{GMS2} and Wu \cite{Wu3DWW}
proved global existence of gravity waves in three dimensions ($2$d interfaces).
Global regularity in $3$d was also proved in the case of surface tension and no gravity ($g=0$, $\sigma>0$)
by Germain-Masmoudi-Shatah \cite{GMSC}.

Global regularity for the gravity water waves system in dimension $2$ (the harder case) has been proved by the authors in
\cite{IoPu2}\footnote{See also our earlier paper \cite{IoPu1}
for the analysis of a simplified model (a fractional cubic Schr\"odinger equation),
and \cite{IoPunote} for an alternative description of the asymptotic behavior of the solutions constructed in \cite{IoPu2}.} and, independently by Alazard-Delort \cite{ADa,ADb}.
More recently, a new proof
of Wu's $2$d almost global existence result was given by Hunter-Ifrim-Tataru \cite{HIT},
and then complemented to a proof of global regularity in \cite{IT}.

\subsection{The main results}\label{MainResult}

Our results in this paper concern the capillary water waves system
\begin{equation}\label{CPW}
\left\{
\begin{array}{l}
\partial_t h = G(h) \phi,
\\
\partial_t \phi = \dfrac{\partial_x^2 h}{ (1+h_x^2)^{3/2} }
  - \dfrac{1}{2} {|\phi_x|}^2 + \dfrac{{\left( G(h)\phi + h_x \phi_x \right)}^2}{2(1+{|h_x|}^2)} .
\end{array}
\right.
\end{equation}
This is the system \eqref{WWE} when gravity effects are neglected ($g=0$) and the surface tension coefficient $\sigma$ is,
without loss of generality, taken to be $1$.
The system admits the conserved Hamiltonian
\begin{equation}\label{CPWHam}
\mathcal{H}(h,\phi) := \frac{1}{2} \int_\R G(h)\phi \cdot \phi \, dx + \int_\R \frac{{(\partial_x h)}^2}{1 + \sqrt{1+h_x^2} } \, dx
  \approx {\big\| |\partial_x|^{1/2} \phi \big\|}_{L^2}^2 + {\big\| |\partial_x|h \big\|}_{L^2}^2 .
\end{equation}

To describe our results we first introduce some basic notation.
Let
\begin{equation}\label{normC}
\mathcal{C}_0:=\{f:\mathbb{R}\to\mathbb{C} \,, f\text{ continuous and }\lim_{|x|\to\infty}|f(x)|=0\},
  \qquad {\|f\|}_{\mathcal{C}_0} := {\|f\|}_{L^\infty}.
\end{equation}
For any $N\geq 0$ let $H^N$ denote the standard Sobolev space of index $N$.
More generally, if $N\geq 0$, $b\in[-1,N]$, and $f\in\mathcal{C}_0$ then we define
\begin{equation}\label{normH}
\begin{split}
& {\|f\|}_{\dot{H}^{N,b}} := \Big\{\sum_{k\in\mathbb{Z}}
  {\|P_kf\|}^2_{L^2}(2^{2Nk}+2^{2kb}) \Big\}^{1/2} \approx {\big\| (|\partial_x|^N + |\partial_x|^b)f \big\|}_{L^2},
\\
& {\|f\|}_{\dot{W}^{N,b}} := \sum_{k\in\mathbb{Z}} {\|P_kf\|}_{L^\infty} (2^{Nk}+2^{bk}),
\end{split}
\end{equation}
where $P_k$ denote standard Littlewood-Paley projection operators (see subsection \ref{notation} for precise definitions).
Notice that the norms $\dot{H}^{N,b}$ define natural spaces of distributions for $b<1/2$ in dimension $1$, but not for $b\geq 1/2$.
This is the reason for the assumption $f\in\mathcal{C}_0$ in the definition.
Our main result is the following:

\begin{theorem}[Global Regularity] \label{MainTheo}
Let
\begin{equation}\label{constants0}
N_0=N_I:=9,\quad N_1=N_S:=3,\quad N_2=N_\infty:=5,\qquad 0<10^4p_1\leq p_0\leq 10^{-10}.
\end{equation}
Assume that $(h_0,\phi_0)\in(\mathcal{C}_0\cap \dot{H}^{N_0+1,p_1+1/2})\times \dot{H}^{N_0+1/2,p_1}$ satisfies
\begin{equation}\label{h0p0}
{\|h_0\|}_{\dot{H}^{N_0+1,p_1+1/2}} + {\|\phi_0\|}_{\dot{H}^{N_0+1/2,p_1}}
  + {\|(x\partial_x)h_0\|}_{\dot{H}^{N_1+1,p_1+1/2}} + {\|(x\partial_x)\phi_0\|}_{\dot{H}^{N_1+1/2,p_1}} = \e_0 \leq \bar{\e_0},
\end{equation}
where $\bar{\e_0}$ is a sufficiently small constant.
Then, there is a unique global solution
\begin{align*}
(h,\phi)\in C\big([0,\infty) : (\mathcal{C}_0\cap \dot{H}^{N_0+1,p_1+1/2})\times \dot{H}^{N_0+1/2,p_1}\big)
\end{align*}
of the system \eqref{CPW}, with $(h(0),\phi(0))=(h_0,\phi_0)$.
In addition, with $S:=(3/2)t\partial_t+x\partial_x$, we have
\begin{align}
\label{mainconcl1}
\langle t\rangle^{-p_0} {\|f(t)\|}_{\dot{H}^{N_0,p_1-1/2}} + \langle t\rangle^{-4p_0} {\|Sf(t)\|}_{\dot{H}^{N_1,p_1-1/2}}+ \langle t\rangle^{1/2} {\|f(t)\|}_{\dot{W}^{N_2,-1/10}}\lesssim \e_0,
\end{align}
for any $t\in[0,\infty)$, where $\langle t\rangle:=1+t$ and $f\in\{|\partial_x|h,|\partial_x|^{1/2}\phi\}$.
\end{theorem}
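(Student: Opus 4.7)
The plan is to reduce \eqref{CPW} to a diagonal dispersive equation and close a bootstrap argument in three norms: the high-Sobolev norm appearing in \eqref{mainconcl1}, the weighted $S$-norm, and a dispersive $Z$-type norm on the Fourier profile. First I introduce the complex good unknown $U := |\partial_x| h + i |\partial_x|^{1/2} \phi$ (modified by paradifferential corrections forced by the quasilinear nature of the equation) so that \eqref{CPW} becomes
\begin{equation*}
(\partial_t + i \Lambda) U = \mathcal{Q}(U, \bar U) + \mathcal{C}(U, \bar U, U, \bar U) + \text{higher order}, \qquad \Lambda := |\partial_x|^{3/2},
\end{equation*}
with $\mathcal{Q}$ an explicit quadratic pseudo-product and $\mathcal{C}$ cubic. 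The precise form of $\mathcal{Q}$ comes from the refined analysis of the Dirichlet--Neumann operator announced in the abstract---a convergent paralinearized expansion of $G(h)\phi$ in one dimension---which is proved separately and then used as a black box. Passing to the Fourier profile $f := e^{it\Lambda} U$, the evolution is governed by phase integrals in the resonance function
\begin{equation*}
\Phi_{\iota_1 \iota_2}(\xi,\eta) := |\xi|^{3/2} - \iota_1 |\eta|^{3/2} - \iota_2 |\xi-\eta|^{3/2}, \qquad \iota_j \in \{+,-\}.
\end{equation*}

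On a maximal interval $[0,T]$ I assume \eqref{mainconcl1} with an enlarged constant, together with an auxiliary weighted $L^\infty_\xi$ bound on $\hat f$ (the $Z$-norm), and improve each bound. The high-regularity estimate $\|f\|_{\dot H^{N_0,p_1-1/2}} \lesssim \e_0 \langle t\rangle^{p_0}$ is obtained from a modified energy $E = \|U\|_{\dot H^{N_0}}^2 + \text{cubic corrections}$: the quasilinear top-order contributions are symmetrized and the cubic correctors are chosen to cancel the non-resonant part of $\partial_t E$; the residual terms are absorbed through a Gronwall argument driven by the $L^\infty$ dispersive decay, which forces the $\langle t\rangle^{p_0}$ growth. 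The weighted bound on $Sf$ follows by commuting $S$ with $\partial_t + i\Lambda$ (they nearly commute, up to a multiple of $\partial_t + i \Lambda$ itself) and running the same modified-energy scheme at the lower regularity $N_1$; the looser growth $\langle t\rangle^{4p_0}$ accommodates $S$ striking the quadratic terms.

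The dispersive estimate $\|f\|_{\dot W^{N_2,-1/10}} \lesssim \e_0 \langle t\rangle^{-1/2}$ is the sharpest of the three and the one that really closes the argument. One-dimensional stationary phase applied to $e^{-it\Lambda}\hat f$ reduces it to a weighted $L^\infty_\xi$ bound on $\hat f$ at the correct rate. Propagating that $Z$-bound amounts to controlling the Duhamel integral
\begin{equation*}
\hat f(\xi,t) - \hat f(\xi,0) = \sum_{\iota_1,\iota_2} \int_0^t \!\! \int_{\R} e^{is \Phi_{\iota_1 \iota_2}(\xi,\eta)} m_{\iota_1 \iota_2}(\xi,\eta)\, \hat f_{\iota_1}(\eta,s)\, \hat f_{\iota_2}(\xi - \eta, s)\, d\eta\, ds + \text{cubic},
\end{equation*}
and, on the region where $\Phi$ is non-vanishing, integrating by parts in $s$ via $e^{is\Phi}/(i\Phi) = \partial_s(e^{is\Phi})/(i\Phi)$ converts the quadratic integral to a cubic one that is integrable in time thanks to the linear decay.

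The main obstacle is the \emph{division problem}: the resonance function $\Phi_{\iota_1 \iota_2}$ is small on substantial frequency regions (for instance near the low-high and high-low interactions $\eta \to 0$ or $\eta \to \xi$, where $|\Phi| \sim |\xi|^{1/2}|\eta|$), so the normal-form multiplier $m/\Phi$ is too singular for a naive substitution to close and the resulting cubic error is not summable in any standard $L^2$-based scheme. Following the paper's \emph{quasilinear $I$-method}, my plan is to localize dyadically both in frequency and in $|\Phi|$, to absorb the strongly singular (small-$|\Phi|$) pieces into modified energies at each dyadic scale through cubic correctors equipped with smooth $I$-type multipliers, and to keep the quasilinear paraproduct structure so that the top-order energy estimates remain valid. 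The genuinely non-integrable contributions then produce a logarithmic phase correction of the form $e^{i c |\hat f(\xi,t)|^2 \log t}$, yielding the modified-scattering asymptotics. Once the three norms of \eqref{mainconcl1} and the $Z$-norm have all been improved on $[0,T]$, standard continuation of the local-in-time solution extends it past $T$, delivering global regularity.
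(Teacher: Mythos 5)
Your overall architecture matches the paper's: a diagonalized good unknown solving a dispersive equation with $\Lambda=|\partial_x|^{3/2}$, quadratic+cubic modified energies in Fourier space for the Sobolev and $S$-weighted norms, a profile/$Z$-norm analysis via stationary phase for the $t^{-1/2}$ decay, and a logarithmic phase correction giving modified scattering. However, there is a genuine gap at exactly the point you identify as the main obstacle. After the cubic energy correction, $\partial_t(E^{(2)}+E^{(3)})$ consists of \emph{quartic} terms whose symbols carry a $(\text{low frequency})^{-1/2}$ singularity, schematically $\int_\R Su\cdot u\cdot|\partial_x|^{1/2}u\cdot|\partial_x|^{-1/2}Su\,dx$. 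To bound this by $\e_0 t^{-1}E^{(2)}$ one needs an $L^2$ bound on $|\partial_x|^{-1/2}Su$, and the norms in your bootstrap (those of \eqref{mainconcl1}, i.e.\ $\dot H^{N_1,p_1-1/2}$ for $Su$) only control $|\partial_x|^{-1/2+p_1}Su$; the missing factor $|\xi|^{-p_1}$ is unbounded as $\xi\to 0$. Your proposed remedy --- dyadic localization in $|\Phi|$ with smooth $I$-type multipliers absorbed into further energy correctors --- does not resolve this: these terms are already the \emph{output} of the cubic correction, the singularity is in the low frequency of a factor rather than in the resonance function, and a further (quintic) correction would inherit the same singular denominator.

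What the paper actually does, and what is missing from your plan, is to propagate an \emph{additional, time-dependent low-frequency energy}: quadratic functionals of the form $\int(|\widehat{u}|^2+|\widehat{Su}|^2)\,|\xi|^{-1}\mathcal{P}((1+t)^2|\xi|)\,d\xi$, which control $|\partial_x|^{-1/2}\mathcal{O}u$ exactly for $|\xi|\gtrsim(1+t)^{-2}$ and degrade to $|\partial_x|^{-1/2+p_1}\mathcal{O}u$ with an improving prefactor $(1+t)^{2p_1}$ below that threshold (sections \ref{secEElow} and \ref{secweightedlow}). Propagating these requires its own set of cubic corrections and, crucially, a null structure of the water-wave nonlinearity at low output frequencies: the quadratic terms vanish suitably as $\xi\to 0$, so the nonlinear evolution has better low-frequency behavior than the linear one even though no momentum condition is imposed on the data. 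Without this extra bootstrap quantity and the null-structure input, neither the singular quartic terms in the energy increment nor the singular symbols $c^{\pm\pm\pm}$ appearing in the cubic Duhamel integrals of the $Z$-norm analysis can be closed. The rest of your outline (symmetrization of the quasilinear terms, commuting $S$ with $\partial_t-i\Lambda$, the resonant point $(\xi,\eta,\sigma)=(\xi,0,-\xi)$ producing the $\log$ correction) is consistent with the paper.
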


\begin{remark}
\normalfont
Very recently, Ifrim-Tataru \cite{IT2} independently obtained a similar global regularity result for the same system, 
in the case of data satisfying one momentum condition on the Hamiltonian variables. 
Their proof of global regularity still relies on the basic modified scattering mechanism, 
as introduced in the context of water waves in \cite{ADa,ADb,IoPu2}, 
but is somewhat simpler in large part because of their use of the better adapted holomorphic coordinates 
(which avoid the Dirichlet-Neumann map and some of the complications associated with paradifferential calculus) 
instead of Eulerian coordinates.
\end{remark}

\begin{remark}[Modified scattering]
\normalfont
The solution exhibits nonlinear scattering behavior as $t \to \infty$. 
A precise statement can be found in Theorem \ref{ScaThm} in section \ref{Sca1}, 
in which we provide two different descriptions of the asymptotic behavior of the solution. 
\end{remark}

\begin{remark}[Low frequencies and momentum conditions]\label{LowFreq}
\normalfont
The proof of the main theorem becomes easier if one makes the stronger low-frequency assumption
\begin{equation}\label{strongLow}
\|h_0\|_{\dot{H}^{N_0+1,1/2-}}+\|(x\partial_x)h_0\|_{\dot{H}^{N_1+1,1/2-}}
  + \|\phi_0\|_{\dot{H}^{N_0+1/2,0-}}+\|(x\partial_x)\phi_0\|_{\dot{H}^{N_1+1/2,0-}} \ll 1,
\end{equation}
which is the same condition as \eqref{h0p0}, but taking $p_1<0$.
Such low frequency norms are propagated by the flow, due to a suitable null structure at low frequencies of the nonlinearity.
However, the finiteness of the norm \eqref{strongLow} requires an unwanted momentum condition
on the natural Hamiltonian variables $(|\partial_x| h,|\partial_x|^{1/2}\phi)$, compare with \eqref{CPWHam}.

The choice of norm in \eqref{h0p0} accomplishes our main goals. 
On one hand it is strong enough to allow us to control the singular terms arising from the resonances of the normal form transformation
and deal with the ``division problem'', see \ref{introen} below.
On the other hand, it is weak enough to avoid momentum assumptions on the natural energy variables\footnote{Similar assumptions at low frequencies, designed
to avoid momentum conditions on the energy variables, were used by Germain-Masmoudi-Shatah \cite{GMSC} in their work on the capillary system in three dimensions.}
$|\partial_x| h$ and $|\partial_x|^{1/2}\phi$.
\end{remark}


As a direct byproduct of our energy estimates in sections \ref{secEE} and \ref{secEElow}, we also obtain the following:

\begin{theorem}[Long-time existence in Sobolev spaces]\label{e-2Theo}
Assume that
\begin{align*}
(h_0,\phi_0)\in(\mathcal{C}_0\cap \dot{H}^{N_0+1,p_1+1/2})\times \dot{H}^{N_0+1/2,p_1}
\end{align*}
satisfies
\begin{equation}\label{h0p01}
\|h_0\|_{\dot{H}^{N_0+1,p_1+1/2}}+\|\phi_0\|_{\dot{H}^{N_0+1/2,p_1}} = \e_0 \leq 1.
\end{equation}
Then there is a unique solution $$(h,\phi)
\in C\big([0,T_{\varepsilon_0}]:(\mathcal{C}_0\cap \dot{H}^{N_0+1,p_1+1/2})\times \dot{H}^{N_0+1/2,p_1}\big)$$
of the system \eqref{CPW}, with $(h(0),\phi(0))=(h_0,\phi_0)$ and $T_{\varepsilon_0}\gtrsim\varepsilon_0^{-2}$.
Moreover,
\begin{equation}\label{mainconcl11}
\|h(t)\|_{\dot{H}^{N_0+1,p_1+1/2}}+\|\phi(t)\|_{\dot{H}^{N_0+1/2,p_1}}\lesssim \varepsilon_0
\end{equation}
for any $t\in[0,T_{\varepsilon_0}]$.
\end{theorem}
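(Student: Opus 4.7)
The plan is a straightforward bootstrap argument built on a cubic-type energy estimate obtained via a paradifferential normal form; this is essentially the content of sections \ref{secEE} and \ref{secEElow} once one strips away all the weighted-norm and decay-based ingredients used for Theorem \ref{MainTheo}. Local existence on some $[0, T_0]$ with $T_0 \gtrsim 1$ in the class stated is standard for the capillary water waves system, and the homogeneous low-frequency component $\dot H^{\cdot,p_1}$ is propagated thanks to the refined treatment of the Dirichlet--Neumann operator developed elsewhere in the paper. Setting
\[
E(t) := \|h(t)\|_{\dot H^{N_0+1,p_1+1/2}}^2 + \|\phi(t)\|_{\dot H^{N_0+1/2,p_1}}^2,
\]
I would then make the bootstrap assumption $E(t)^{1/2} \leq A\varepsilon_0$ on an interval $[0, T^\ast]$ with $T^\ast \leq c A^{-2}\varepsilon_0^{-2}$ (for $A$ large and $c$ small), and aim to improve it to $E(t)^{1/2} \leq (A/2)\varepsilon_0$.

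The crux is the cubic energy identity
\[
\frac{d}{dt}\widetilde E(t) \lesssim \|(|\partial_x|h,|\partial_x|^{1/2}\phi)\|_{W^{k,\infty}}^2 \cdot \widetilde E(t), \qquad \widetilde E \approx E,
\]
obtained by adding to $E$ a cubic correction functional whose symbol is, morally, the quadratic symbol of \eqref{CPW} divided by the resonant phase $\Phi(\xi,\eta) = |\xi|^{3/2} \pm |\eta|^{3/2} \pm |\xi-\eta|^{3/2}$. A naive energy estimate yields only $\dot E \lesssim \varepsilon_0 E$ (since the nonlinearity of \eqref{CPW} is quadratic), enough for existence up to $t \sim \varepsilon_0^{-1}$; to reach $\varepsilon_0^{-2}$ one must cancel the quadratic contribution via this partial normal form. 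Granting the cubic estimate, Sobolev embedding bounds the $W^{k,\infty}$ norm by $A\varepsilon_0$, and Gr\"onwall gives $\widetilde E(t) \leq \widetilde E(0) \exp(C A^2\varepsilon_0^2 t) \leq 2\varepsilon_0^2$ on $[0, c A^{-2}\varepsilon_0^{-2}]$, closing the bootstrap. A continuity argument then extends the solution to $[0, T_{\varepsilon_0}]$ and \eqref{mainconcl11} is just the bootstrap bound.

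The main obstacle, precisely as for Theorem \ref{MainTheo}, is to produce the cubic energy estimate without losing derivatives. The system is quasilinear, so the normal form cannot be applied verbatim: it must be paradifferentialized, with the low--high interactions treated as quasilinear transport/dispersive terms that are symmetrized at top order, and the normal form reserved for the high--high (and low--low) pieces. Moreover $1/\Phi$ is genuinely singular --- $\Phi$ vanishes on a nontrivial set in $(\xi,\eta)$-space and, more damagingly, degenerates at the origin --- which is the ``division problem'' emphasized in the abstract. The high-frequency analysis is carried out in section \ref{secEE}, while the low-frequency analysis, which is the novelty allowing the index $p_1 > 0$ in \eqref{h0p01} instead of the simpler $p_1 < 0$ discussed in Remark \ref{LowFreq}, is handled in section \ref{secEElow} by exploiting a null structure of the nonlinearity at low frequencies. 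Since neither the decay bound nor the $S$-derivative bound in \eqref{mainconcl1} is needed anywhere in these two sections, the long-time result falls out as an immediate byproduct of the machinery already assembled for the global theorem.
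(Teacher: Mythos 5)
Your proposal is correct and is essentially the paper's argument: the theorem is stated there precisely as a byproduct of the cubic-corrected energy functionals of sections \ref{secEE} and \ref{secEElow} (the quasilinear I-method with the paradifferential symmetrization and the low-frequency null structure), with the dispersive input of the bootstrap replaced by the Sobolev-embedding bound $\|U\|_{L^\infty}\lesssim\varepsilon_0$, so that the quartic increment gives $\dot E\lesssim\varepsilon_0^2E$ and Gr\"onwall yields the lifespan $\varepsilon_0^{-2}$. No weighted $S$-norms or decay estimates enter those two sections, so nothing further is needed.
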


A similar result is also proved, independently, by Ifrim-Tataru in \cite{IT2}, 
with weaker assumptions on the initial data. 
For gravity water waves such a result was obtained in\footnote{It is also a corollary
of the results in \cite{WuAG,IoPu2,ADb,HIT}.} \cite{WuNLS} in $2$d,
and in \cite{Totz} in $3$d, as a key step to proving the modulation approximation in infinite depth.
By analogy, Theorem \ref{e-2Theo} should be considered a step towards the rigorous justification of
approximate models and scaling limits for the water waves system with surface tension.
We refer the reader to \cite{CSS,SW1}, the book \cite{LannesBook}, and references therein, for works dealing with the
long-time existence of non-localized solutions of the water waves system and their modulation and scaling regimes.

\subsection{Main ideas of the proof}\label{secideas}

The system \eqref{CPW} is a time reversible quasilinear system.
In order to prove global regularity for solutions of the Cauchy problem for this type of equations, one needs to accomplish two main tasks:

\setlength{\leftmargini}{1.8em}
\begin{itemize}
  \item[1)] Propagate control of high frequencies (high order Sobolev norms);

  \item[2)] Prove pointwise decay of the solution over time.
\end{itemize}

In this paper we use a combination of improved energy estimates and asymptotic analysis to achieve these two goals.
This is a natural continuation of our work on the gravity water waves system \cite{IoPu2}.
However, here we adopt a more robust framework for some of the arguments and work entirely in Eulerian coordinates.\footnote{Besides
being the natural coordinates associated to the Hamiltonian formulation of the water waves problem,
Eulerian coordinates are in general more flexible because the analysis can be generalized
to higher dimensions.} More precisely, we perform a careful paralinearization of the Dirichlet-Neumann operator in two dimensions,
in the spirit of \cite{ABZ1,ADb}, and obtain new bounds consistent with the limited low-frequency structure we assume on the interface $h$. Then we set up our equations and construct high order energy functionals and low frequency energy functionals which can be controlled for long times.
Finally, we use the Fourier transform method to obtain sharp time decay rates for our solutions,
and prove modified scattering.
We elaborate on these main aspects of our proof below.

\subsection{Paralinearization and the Dirichlet-Neumann operator}\label{intropar}

One of the main difficulties in the analysis of the water waves system \eqref{WWE} comes from
the quasilinear nature of the equations, and their non-locality, due to the presence of the Dirichlet-Neumann operator $G(h)\phi$,
see \eqref{defG0}.
In a series of papers \cite{ABZ1,ABZ2,ABZ3} Alazard-Burq-Zuily proposed a systematic approach to these issues in the context
of the local-in-time Cauchy problem, based on para-differential calculus.
See also the earlier works of Alazard-M\'etivier \cite{AlMet1} and Lannes \cite{Lannes}.
This approach was then extended and adapted to study the problem of global regularity for gravity waves by Alazard-Delort in \cite{ADb}.

Our first step towards the proof of Theorem \ref{MainTheo} is a paralinearization of the system \eqref{CPW}
inspired by the works cited above. For our problem, however, we need bounds that depend only on $\||\partial_x|^{1/2+p_1}h\|_{L^2}$, not on $\|h\|_{L^2}$.
More precisely, let
\begin{align}
 \label{par0}
(h,\phi) \in C(\mathcal{C}_0 \cap \dot{H}^{N_0+1,p_1+1/2}(\R)\times H^{N_0+1/2,p_1}(\R))
\end{align}
be a real-valued solution of \eqref{CPW} on some time interval. Define
\begin{equation}
\label{par1}
\begin{split}
& B := \frac{G(h)\phi + h_x\phi_x}{(1+h_x^2)} , \qquad V := \phi_x-Bh_x,
\qquad \omega := \phi - T_B P_{\geq 1} h.
\end{split}
\end{equation}
Here, for any $a, b \in L^2(\R)$, we have denoted by $T_a b$ the paradifferential operator
\begin{equation}
\label{Tab}
\begin{split}
\mathcal{F}(T_a b)(\xi) & := \frac{1}{2\pi} \int_{\R} \what{a}(\xi-\eta) \what{b}(\eta) \chi_0(\xi-\eta,\eta)\,d\eta,
\\
\chi_0(x,y) & := \sum_{k \in \mathbb{Z}} \varphi_{\leq k-10}(x) \varphi_k(y),
\end{split}
\end{equation}
which restricts the product of $a$ and $b$ to the frequencies of $b$ much larger than those of $a$. The functions $(V,B)$ represent the restriction of the velocity field $v$ to the boundary of $\Omega_t$, and
the function $\omega$ is a variant of the so-called ``good-unknown'' of Alinhac.

In Appendix \ref{secDN}, we prove the following formula for the Dirichlet-Neumann operator:
\begin{equation}
\label{Gh}
G(h) \phi = |\partial_x|\omega - \partial_x T_V h + G_2 + G_{\geq 3},
\end{equation}
where $G_2$ is an explicit semilinear quadratic term, and $G_{\geq 3}$ denotes cubic and higher order terms.
This formula was already derived and used by Alazard-Delort \cite{ADb}; the new aspect here is that 
the remainder
$G_{\geq 3}$  satisfies better trilinear 
bounds of $L^2$, weighted $L^2$, and $L^\infty$-type, under the sole assumptions \eqref{par0}.
In particular we only need to assume that $|\partial_x|^{1/2+p_1}h$ and $|\partial_x|^{p_1} \phi$ are in  $L^2$,
for some $p_1 > 0$.

In section \ref{Equations}, using \eqref{Gh}, we diagonalize and symmetrize the system \eqref{CPW}
reducing it to a single scalar equation for one complex unknown $u$ of the form
\begin{align}
\label{par5}
u \approx |\partial_x| h - i |\partial_x|^{1/2} \omega + \text{higher order corrections} ,
\end{align}
see the formulas \eqref{symmsymbols}-\eqref{symm5}.
The capillary system \eqref{CPW} then takes the form
\begin{align}
\label{WWCpar}
\partial_t u - i \Lambda(\partial_x) u - i \Sigma(u) = - T_V \partial_x u + \mathcal{N}_u + \mathcal{R}_u,
\end{align}
where: $\Lambda(\xi) := |\xi|^{3/2}$ is the dispersion relation for linear waves of frequency $\xi$;
$\Sigma$ can be thought of as a self-adjoint differential operator of order $3/2$ which is cubic in $u$;
$\mathcal{N}_u$ are semilinear quadratic terms;
and $\mathcal{R}_u$ contains (semilinear) cubic and higher order terms in $u$.
See Proposition \ref{prosymm} for details.
Equation \eqref{WWCpar} is our starting point in establishing energy estimates for $u$,
hence for $(|\partial_x|h, |\partial_x|^{1/2}\phi)$.

\subsection{Energy estimates and the quartic energy inequality}\label{introen}
In order to construct high order energy functionals controlling Sobolev norms of our solution,
we first apply the natural differentiation operator associated to the equation \eqref{WWCpar}.
More precisely, we look at $W_k := \D^k u$, with $\D := |\partial_x|^{3/2} + \Sigma$, and show that
\begin{align}
\label{WWCparW}
\begin{split}
& \partial_t W_k - i \Lambda(\partial_x) W_k - i \Sigma(W_k) = - T_V \partial_x W_k + \mathcal{N}_{W_k} + \mathcal{R}_{W_k} .
\end{split}
\end{align}
Here $W_k \approx |\partial_x|^{3k/2} u$,
$\mathcal{N}_{W_k}$ denotes semilinear quadratic terms, and $\mathcal{R}_{W_k}$ are semilinear cubic and higher order terms in $W_k$.
If one looks at the basic functional $E(t) = {\| W_{N_0}(t) \|}_{L^2}^2$ associated to \eqref{WWCparW},
it is easy to verify that, as long as solutions are of size $\e$, such energy functional is controlled for $O(\e^{-1})$ times.

In order to go past this local existence time, one needs to rely on the dispersive properties of solutions.
One of the main difficulties in dealing with a one dimensional problem such as \eqref{WWCparW} is
the slow time decay, which is $t^{-1/2}$ for linear solutions.
A classical idea used to overcome the difficulties associated to weak dispersion is the use of normal forms \cite{shatahKGE},
which can sometimes be used to eliminate the slow decaying quadratic terms from the nonlinearity.

In the context of water waves in 2D (1D interfaces), the starting point is a {\it{quartic energy inequality}}, which is an estimate on the energy increment of the form
\begin{equation}\label{qin}
|\partial_t \mathcal{E}(t)| \lesssim \text{quartic semilinear terms},
\end{equation}
where $\mathcal{E}$ is a suitable energy functional. An inequality of this form was first proved by Wu \cite{WuAG}
for the gravity water wave model, and led to an almost-global existence result. 
All the later work on long term 2D water wave models, such as  \cite{IoPu2, IoPu3, ADa, ADb, HIT, IT, IT2} and this paper, 
relies on proving an inequality of this type as a first step.

For \eqref{WWCparW} a normal form transformation is formally available since the only time resonances, i.e. solutions of
\begin{align}
\label{phase1}
\Lambda(\xi) \pm \Lambda(\xi-\eta) \pm \Lambda(\eta) = 0 ,
\end{align}
occur when one of the three interacting frequencies $(\xi,\xi-\eta,\eta)$ is zero.
However, the superlinear dispersion relation $|\xi|^{3/2}$ makes these resonances very strong.
For example, in the case $|\xi| \approx |\eta| \approx 1 \gg |\xi-\eta|$, we see that
\begin{align}
\label{phase2}
| \Lambda(\xi) \pm \Lambda(\xi-\eta) - \Lambda(\eta) | \approx |\xi-\eta| .
\end{align}
Because of \eqref{phase1}, a standard normal transformation which eliminates the quadratic terms in \eqref{WWCparW}
will introduce a low frequency singularity. This is the so-called ``division problem''.
For comparison, in the gravity water waves case the dispersion relation is
$\Lambda(\xi)=|\xi|^{1/2}$ and one has the less singular behavior $|\Lambda(\xi) \pm \Lambda(\xi-\eta) - \Lambda(\eta) | \approx |\xi-\eta|^{1/2}$
in the case $|\xi| \approx |\eta| \approx 1 \gg |\xi-\eta|$.

The implementation of the method of normal forms is delicate in quasilinear problems,
due to the potential loss of derivatives.
Nevertheless this has been done in some cases, for example by using carefully constructed
nonlinear changes of variables as in Wu \cite{WuAG}, or via the ``iterated energy method'' of Germain-Masmoudi \cite{GM},
or the ``paradifferential normal form method'' of Alazard-Delort \cite{ADb},
or the ``modified energy method'' of Hunter-Ifrim-Tataru \cite{HIT}.

The common goal of these methods is to prove a quartic energy inequality like \eqref{qin}. The main ingredient for such an inequality to hold is the absence of time-resonant bilinear interactions (after symmetrization), and the methods described above are largely interchangeable as long as this ingredient is present.{\footnote{See also \cite{Delo,HITW} for earlier constructions proving quartic energy inequalities like \eqref{qin} in simpler models.}} In \cite{IoPu2} we proved a quartic energy inequality, suitable for global-in-time analysis, by adapting the change of variable of Wu \cite{WuAG}.
Here, as a starting point of our analysis, we adapt the more elegant and robust approach of Alazard--Delort \cite{ADb} and Hunter-Ifrim-Tataru \cite{HIT}.

In order to deal simultaneously with the issue of slow decay and low-frequency singularities introduced by normal forms,
we construct our modified energy functionals in the Fourier space,
 in a way that is similar to the I-method of Colliander--Keel--Staffilani--Takaoka--Tao \cite{CKSTT1,CKSTT2}. See also \cite{DIP,IoPu3}.
The use of Fourier analysis gives us a lot of flexibility in defining energy functionals and isolating the most singular contributions,
which can be expressed  as multilinear paraproducts with singular multipliers.

A similar construction, involving quadratic energy functionals and higher order corrections, was performed earlier in \cite{HIT} in the case of gravity water waves. 
In the gravity case, however, the problem is simpler because there are no singularities in the resulting quartic integrals, and the entire construction in \cite{HIT} was performed in the physical space. 
In our case, there are strong singularities (of the form $(\text{low frequency})^{-1/2}$)
in the quartic integrals resulting from the application of the normal form, see the longer discussion below.
To deal with these singularities it is important to construct energy functionals in the Fourier space and make careful assumptions on the low frequency structure of solutions.

\subsubsection{Low frequency singularities} We begin our analysis by looking the basic energy, expressed in time-frequency space,
\begin{align}
 \label{E2N}
E^{(2)}(t) = \frac{1}{2\pi} \int_{\R} {| \what{W_{N_0}}(\xi,t)|}^2 \, d\xi + \frac{1}{2\pi} \int_{\R} {|\what{S W_{N_1}}(\xi,t)|}^2 \, d\xi ,
\end{align}
for $N_0$ and $N_1$ as in \eqref{constants0}. Assuming that $E^{(2)}(0)$ is of size $\e_0^2$, our maim aim is to control $E^{(2)}(t)$ for all times, 
possibly allowing a small polynomial growth, under the a priori assumption that solutions are $\e_0 t^{-1/2}$ small in a suitable $L^\infty_x$ type space.
As we will see below, this is not possible without stronger information on the low-frequencies behavior of the solution.
Calculating the evolution of \eqref{E2N} by using the equations \eqref{WWCparW},
and performing appropriate symmetrizations to avoid losses of derivatives, we obtain
\begin{align}
\label{d_tE2}
\partial_t E^{(2)}(t) = \mbox{semilinear cubic terms}.
\end{align}
We then define a cubic energy functional $E^{(3)}$ which is a sum of cubic terms of the form
\begin{align}
 \label{E3N}
\begin{split}
\int_{\R\times\R} \what{F}(\xi,t) \what{G}(\eta,t) \what{H}(\xi-\eta,t) m(\xi,\eta)\, d\xi d\eta
\end{split}
\end{align}
where $F,G,H$ can be any of the functions $W_{N_0}, SW_{N_1}, u, Su$ or their complex conjugates,
and the symbols $m$ are obtained by diving the symbols of the cubic expressions in \eqref{d_tE2}
by the appropriate resonant phase function \eqref{phase1}.
These cubic energy functionals are a perturbation of \eqref{E2N} on each fixed time slice.
Moreover, by construction
\begin{align}
\label{d_tE2+E3}
\partial_t \big( E^{(2)} + E^{(3)} \big)(t) = \mbox{singular quartic semilinear terms} .
\end{align}
The ``division problem'' mentioned above manifests itself in the fact that
some of the symbols of the above quartic expressions have singularities of the form $(\text{low frequency})^{-1/2}$.
This is ultimately due to the lack of symmetries in the equation for $Su$ (or $SW_{N_1}$).

A typical example of a singular quartic term can be schematically written (in real space, for simplicity of exposition) as
\begin{align}
 \label{quarticsing}
\int_\R  Su  \cdot u \cdot |\partial_x|^{1/2} u \cdot |\partial_x|^{-1/2} S u \, dx. 
\end{align}
The difficulty in estimating such a term is the following:
in order to control the growth of the energy, one would need to bound \eqref{quarticsing} by $\varepsilon_0 t^{-1} E^{(2)}(t)$.
A sharp a priori decay assumption gives us that ${\| u \cdot |\partial_x|^{1/2} u \|}_{L^\infty} \lesssim \e_0^2 t^{-1}$, 
and the energy controls ${\| Su \|}_{L^2} \lesssim \sqrt{E^{(2)}}$,
but we have no control over the $L^2$ norm of $|\partial_x|^{-1/2} S u$. This difficulty is not present in the gravity water wave system, due to the less singular denominators. 

To deal with these singularities 
we need to need to make our suitable low frequency assumptions \eqref{h0p0} on the solutions.
Using these low frequency assumptions, and anticipating a sharp decay rate of $t^{-1/2}$ for our solution,
we then control the energy functional $E^{(2)}(t)$ for all times, allowing a slow growth of $t^{2p_0}$ 
(in fact, due to weaker symmetries, we need a faster growth rate on the weighted energies).
This is done in sections \ref{secEE} and \ref{secweighted}, for the Sobolev and the weighted Sobolev norm.

The additional assumptions made in order to close the energy estimates above
are then recovered by low frequency energy estimates in sections \ref{secEElow} and \ref{secweightedlow}.
By following a similar argument to the one above,
we control for long times a quadratic energy of the form
\begin{align}
\label{Elow}
E^{(2)}_{\low} := \int_{\R} \big( |\what{u}(\xi,t)|^2 + |\what{Su}(\xi,t)|^2 \big) \cdot |\xi|^{-1}
  \mathcal{P}_t (\xi) \,d\xi,
\end{align}
where $\mathcal{P}_t : \xi \in \R \rightarrow [0,1]$ is a smooth function that vanishes if $|\xi| \geq 20$,
equals $1$ if $(1+t)^{-2} \leq |\xi| \leq 1$, and is equal to $[(1+t)^2 |\xi|]^{2p_1}$, if $2|\xi| \leq (1+t)^{-2}$.
Notice that $E^{(2)}_{\low}$ has the following properties:

\setlength{\leftmargini}{1.8em}
\begin{itemize}
  \item[(1)]  it is consistent with our initial assumptions \eqref{h0p0}; 

  \item[(2)] for frequencies that are not too small with respect to time, 
  it controls $|\partial_x|^{-1/2} Su$, which is exactly what would be needed to bound the quartic singular term \eqref{quarticsing} above;

  \item[(3)] for very small frequencies it controls the $L^2$ norm of $|\partial_x|^{-1/2+p_1} Su$ with a bound that improves as $t \rightarrow \infty$.
\end{itemize}

To control the growth of $E^{(2)}_{\low}(t)$ we follow a similar strategy to the one above
and construct a suitable cubic correction $E^{(3)}_{\low}$ such that, schematically, we obtain
\begin{align*}
\partial_t \big( E^{(2)}_{\low} + E^{(3)}_{\low} \big)(t) = \mbox{special cubic semilinear terms}
  + \mbox{singular quartic semilinear terms} .
\end{align*}
The cubic terms in the above right-hand side are special because they are supported on small time-dependent sets.
The key to control these low frequency energy functionals is the null structure for low frequency outputs in the water waves system \eqref{CPW}.
In essence, here we exploit the fact that the nonlinear part of the system
has a better low frequency behavior than the linear evolution,
so that, without imposing moment conditions on the initial data, we can still recover strong enough low frequency information
for the nonlinear evolution.


\subsubsection{Compatible vector-field structures}\label{introvf}
As in other quasilinear problems, to prove global regularity we propagate
control not only of high Sobolev norms but also of other $L^2$ norms defined by vector-fields \cite{K1}. These norms are helpful both in proving decay and in controlling remainders from the stationary phase and integration by parts
arguments in section \ref{secdecay} and \ref{secprmainlem}. In our case, a natural vector-field to propagate is the scaling vector-field $S=(3/2)t\partial_t+x\partial_x$
which (essentially) commutes with the linear part of the equation.

In our case, propagating $L^2$ control of vector-fields carrying weights is challenging.
The reason for this is well-known: as in the semilinear case, the success of the I-method
ultimately depends on exploiting certain symmetries of the equation, which are related to its Hamiltonian structure.
Once weighted vector-fields, such as $S$, are applied to the equation, these symmetries are weakened.
Moreover, every weighted vector-field requires its own modified energy functional, and its own set of cubic corrections.
At the very least, this increases considerably the amount of work needed to prove weighted energy estimates.

In this paper we are able to use energy estimates to propagate control of a specific {\it compatible vector-field structure},
namely the vector-fields
\begin{equation}\label{vfstr}
\partial_x^{m_0},\qquad S\partial_x^{m_1},\qquad m_0\in\{0,\ldots,N_0\},\qquad m_1 \in\{0,\ldots,N_1\}.
\end{equation}
We also propagate control of a suitable low-frequency structure described by an energy functional as in \eqref{Elow}.

The compatible vector-field structure \eqref{vfstr},
using at most one weighted vector-field $S$ and many vector-fields $\partial_x$, was introduced, in the setting of water waves,
by the authors in \cite{IoPu2}.
It was then used, and played a critical role, in all the later papers on the subject such as \cite{HIT,IT,IoPu3,IT2}. 
The point of this choice is that:

\setlength{\leftmargini}{1.8em}
\begin{itemize}
  \item[(i)] The compatible vector-field structure \eqref{vfstr} can be propagated in time,
  with small $t^\e$ loss and reasonably manageable computations;

  \item[(ii)] It is still strong enough to provide almost optimal $t^{-1/2}$ dispersive decay,
  since we work in dimension one (see Lemmas \ref{dispersive} and \ref{interpolation}).
\end{itemize}

\subsection{Decay and modified scattering}\label{introdecay}
Having established the $L^2$ bounds described above,
we eventually prove a pointwise sharp decay rate of $t^{-1/2}$ for our solutions.
This is done in sections \ref{secdecay} and \ref{secprmainlem}
where we follow a similar strategy as in our previous works \cite{IoPu1,IoPu2,IoPu3},
and study the nonlinear oscillations in the spirit of the "method of space-time resonances", as in \cite{GMS2,GMSC,GNT1,IoPu1}.

Our starting point is again the main paralinearized equation \eqref{WWCpar}.
As a first step we perform a standard normal form transformation, i.e. a bilinear change of variables $v = u + B(u,u)$, to remove the slowly decaying quadratic terms from the equation, see \eqref{defv}-\eqref{nf}.
In this part of the argument we do not need to pay attention to losses of derivatives, but only keep track of the singularities introduced by the normal forms.
It then suffices to prove decay for the new unknown $v$ using the equation containing only cubic (and higher order) terms.
To do this we write Duhamel's formula in terms of the linear profile $f = e^{-it\Lambda} v$ and study the oscillations in time and space of the resulting integral.
In particular we need to deal with trilinear expressions of the form
\begin{align}
\label{Duhamelintro}
\begin{split}
\int_{t_1}^{t_2} \int_{\R\times\R} &
  e^{it(-|\xi|^{3/2} \pm|\xi-\eta|^{3/2} \pm|\eta-\sigma|^{3/2} \pm|\sigma|^{3/2})}
\\
& \times c^{\pm\pm\pm}(\xi,\eta,\sigma) \what{f^{\pm}}(\xi-\eta,s) \what{f^{\pm}}(\eta-\sigma,s) \what{f^{\pm}}(\sigma,s)\,d\eta d\sigma \,ds,
\end{split}
\end{align}
where $f^+ = f$, $f^- = \bar{f}$.
We remark once again that the symbols in the above expressions have singularities of the form $(\mbox{low frequency})^{-1/2}$, see \eqref{cbounds}.

We proceed by splitting the integrals \eqref{Duhamelintro} into several types of interactions, depending on the size of the 
frequencies of the inputs relative to each other and to time, and estimate all the different contributions in several 
Lemmas in sections \ref{prooftech1} and \ref{prooftech2}.
The main contribution to the integrals \eqref{Duhamelintro} comes from space-time resonances, which occur when the size of the frequencies of the three functions is comparable to the 
size of the output frequency $|\xi|$, and the three inputs are $f,f$ and $\bar{f}$.

In this case, a stationary phase analysis argument reveals that a correction to the asymptotic behavior
is needed, similarly to the case of gravity waves \cite{IoPu2}.
We refer the reader to \cite{HN,DelortKG1d,KP,IoPu2,BosonStar} and references therein,
for more works related to modified scattering in dispersive equations.
We take this into account by defining a proper norm, the $Z$-norm in \eqref{Znorm}, which essentially measures $\what{f}$ in $L^\infty$.
Using also the carefully chosen $L^2$-norms \eqref{h0p0}, we are able to control uniformly, over time and frequencies, the $Z$-norm.
This gives us the desired pointwise decay at the sharp $t^{-1/2}$ rate, as well as modified scattering.

Some of the proofs in section \ref{secprmainlem} are similar to parts of the arguments in our previous works \cite{IoPu1,IoPu2,IoPu3}.
However, the decay analysis here is more complicated, once again, because of the low frequency singularities introduced by the strong quadratic time resonances.
This is especially evident in this part of the argument where non $L^2$-based norms need to be estimated, and
meaningful symmetrizations cannot be performed.
 In particular, we need to deal with singularities in several places:
to prove bounds on the normal form that recasts the quadratic nonlinearity into a cubic one (subsection \ref{secnf});
to control all cubic terms once the main asymptotic contribution is factored out (subsection \ref{prooftech1});
to estimate the quartic terms arising from the renormalization of the cubic equation needed to correct the asymptotic behavior (subsection \ref{prooftech3}).
Moreover, the superlinear dispersion relation creates additional cubic resonant interactions (subsection \ref{prooftech2})
which are not present in the case of gravity water waves.

\subsection{Organization} The rest of the paper is organized as follows: in section \ref{prelim} we summarize the main definitions and notation in the paper and state the main bootstrap proposition, which is Proposition \ref{MainProp}.

In sections \ref{Equations}--\ref{secweightedlow} we prove the main improved energy estimate.
The key components of the proof are Proposition \ref{prosymm} (derivation of the main quasilinear scalar equation),
Proposition \ref{proEE1} (improved energy estimate on the high Sobolev norm), Proposition \ref{proEElow}
(improved energy estimate on the low frequencies), Proposition \ref{proEEZ}
(improved weighted energy estimate on the high frequencies), and Proposition \ref{proEEZlow} (improved weighted energy estimate on the low frequencies).
The proofs in these sections use also the material presented in the appendices, 
in particular the paralinearization of the Dirichlet--Neumann operator in Proposition \ref{ra102}.

In sections \ref{secdecay}--\ref{secprmainlem} we prove the main improved decay estimate. 
In these sections we work with the Eulerian variables. The key components of the proof are the normal form transformation in \eqref{defv}, 
the construction of the profile in \eqref{prof} and its renormalization in \eqref{nf50}, and the improved control of $Z$ norm in Lemma \ref{mainlem}.

In section \ref{Sca1} we discuss the asymptotic behavior of nonlinear solutions, and provide two descriptions of the modified scattering,
one in the Fourier space and one in the physical space.

\section{Preliminaries}\label{prelim}

\subsection{Notation and basic lemmas}\label{notation}
In this subsection we summarize some of our main notation and recall several basic formulas and estimates.
We fix an even smooth function $\varphi: \R\to[0,1]$ supported in $[-8/5,8/5]$ and equal to $1$ in $[-5/4,5/4]$,
and define, for any $k\in\mathbb{Z}$,
\begin{equation*}
\varphi_k(x) := \varphi(x/2^k) - \varphi(x/2^{k-1}) , \qquad \varphi_{\leq k}(x):=\varphi(x/2^k),
  \qquad\varphi_{\geq k}(x) := 1-\varphi(x/2^{k-1}).
\end{equation*}
For $k\in\mathbb{Z}$ we denote by $P_k$, $P_{\leq k}$, and $P_{\geq k}$ the operators defined by the Fourier multipliers $\varphi_k$,
$\varphi_{\leq k}$, and $\varphi_{\geq k}$ respectively.
Moreover, let
\begin{align}
 \label{P'_k}
P'_k := P_{k-1} + P_k + P_{k+1} \qquad \mbox{and} \qquad \varphi'_k := \varphi_{k-1} + \varphi_k + \varphi_{k+1}.
\end{align}

Given $s\geq 0$ let $H^s$ denote the usual space of Sobolev functions on $\mathbb{R}$. Recall the space $\mathcal{C}_0$ defined in \eqref{normC}.
We use 3 other main norms: assume $N\geq 0$, $b\in[-1,N]$, and $f\in \mathcal{C}_0$ then
\begin{equation}\label{norms0}
\begin{split}
&\|f\|_{\dot{H}^{N,b}}:=\big\{\sum_{k\in\mathbb{Z}}\|P_kf\|^2_{L^2}(2^{2Nk}+2^{2kb})\big\}^{1/2},\\
&\|f\|_{\dot{W}^{N,b}}:=\sum_{k\in\mathbb{Z}}\|P_kf\|_{L^\infty}(2^{Nk}+2^{bk}),\\
&\|f\|_{\widetilde{W}^{N}}:=\|f\|_{L^\infty}+\sum_{k\geq 0}2^{Nk}\|P_kf\|_{L^\infty}.
\end{split}
\end{equation}
Notice that $\|f\|_{\dot{H}^{N,0}}\approx \|f\|_{H^N}$ and $\|f\|_{\widetilde{W}^{N}}\lesssim \|f\|_{\dot{W}^{N,0}}$. The spaces $\widetilde{W}^N$ are often used in connection with Lemma \ref{algebra} to prove iterative bounds on products of functions.

\subsubsection{Multipliers and associated operators} We will often work with multipliers $m:\mathbb{R}^2\to\mathbb{C}$ or $m:\mathbb{R}^3\to\mathbb{C}$, and operators defined by such multipliers. We define the class of symbols
\begin{equation}
\label{Sinfty}
S^\infty := \{m: \R^d \to \mathbb{C} : \,m \text{ continuous and } {\| m \|}_{S^\infty} := {\|\mathcal{F}^{-1}(m)\|}_{L^1} < \infty \}.
\end{equation}
Given a suitable symbol $m$ we define the associated bilinear operator $M$ by
\begin{equation}
\label{Moutm}
\mathcal{F}\big[M(f,g)\big](\xi)=\frac{1}{2\pi}\int_{\mathbb{R}}m(\xi,\eta)\widehat{f}(\xi-\eta)\widehat{g}(\eta)\,d\eta ,
\end{equation}
We often use the identity
\begin{align}
\label{lemcomm}
 SM(f,g) = M(Sf,g)+M(f,Sg)+\widetilde{M}(f,g)
\end{align}
where $S=(3/2)t\partial_t+x\partial_x$, $f,g$ are suitable functions defined on $I\times\mathbb{R}$,
and the symbol of the bilinear operator $\widetilde{M}$ is given by
\begin{align}
\label{lemcomm2}
\widetilde{m}(\xi,\eta) = -(\xi\partial_\xi +\eta \partial_\eta ) m(\xi,\eta).
\end{align}
This follows by direct calculations and integration by parts.

Lemma \ref{touse} below summarizes some properties of symbols and associated operators (see \cite[Lemma 5.2]{IoPu2} for the proof).

\begin{lemma}\label{touse}

\setlength{\leftmargini}{1.8em}
\begin{itemize}
 
  \item[(i)] We have $S^\infty\hookrightarrow L^\infty(\mathbb{R}\times\mathbb{R})$. If $m,m'\in S^\infty$ then $m\cdot m'\in S^\infty$ and
\begin{equation}\label{al8}
\|m\cdot m'\|_{S^\infty}\lesssim \|m\|_{S^\infty}\|m'\|_{S^\infty}.
\end{equation}
Moreover, if $m\in S^\infty$, $A:\mathbb{R}^2\to\mathbb{R}^2$ is a linear transformation, $v\in \mathbb{R}^2$, and $m_{A,v}(\xi,\eta):=m(A(\xi,\eta)+v)$ then
\begin{equation}\label{al9}
\|m_{A,v}\|_{S^\infty}=\|m\|_{S^\infty}.
\end{equation}

  \item[(ii)] Assume $p,q,r\in[1,\infty]$ satisfy $1/p+1/q=1/r$, and $m\in S^\infty$. Then, for any $f,g\in L^2(\mathbb{R})$,
\begin{equation}\label{mk6}
\|M(f,g)\|_{L^r} \lesssim \|m\|_{S^\infty} \|f\|_{L^p} \|g\|_{L^q}.
\end{equation}
In particular, if $1/p+1/q+1/r=1$,
\begin{equation}\label{mk6.01}
\Big|\int_{\mathbb{R}^2}m(\xi,\eta)\widehat{f}(\xi)\widehat{g}(\eta)\widehat{h}(-\xi-\eta)\,d\xi d\eta\Big|\lesssim \|m\|_{S^\infty}\|f\|_{L^p}\|g\|_{L^q}\|h\|_{L^r}.
\end{equation}

  \item[(iii)] If $p_1,p_2,p_3,p_4\in[1,\infty]$ are exponents that satisfy $1/p_1+1/p_2+1/p_3+1/p_4=1$ then
\begin{equation}\label{mk6.5}
\begin{split}
\Big|\int_{\mathbb{R}^3}\widehat{f_1}(\xi)\widehat{f_2}(\eta)\widehat{f_3}(\rho-\xi)&\widehat{f_4}(-\rho-\eta)
m(\xi,\eta,\rho)\,d\xi d\rho d\eta\Big|
\\
&\lesssim \|f_1\|_{L^{p_1}}\|f_2\|_{L^{p_2}}\|f_3\|_{L^{p_3}}\|f_4\|_{L^{p_4}}
\|\mathcal{F}^{-1}m\|_{L^1}.
\end{split}
\end{equation}

\end{itemize}

\end{lemma}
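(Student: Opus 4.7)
\bigskip

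\noindent\textbf{Proof proposal.} The entire lemma is essentially a rephrasing of the fact that $\|m\|_{S^\infty} = \|\mathcal{F}^{-1}(m)\|_{L^1}$, so the plan is to reduce every assertion to a manipulation of $\mathcal{F}^{-1}(m)$ in physical space and then use Young/Minkowski/H\"older.

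\medskip

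For part (i), the embedding $S^\infty\hookrightarrow L^\infty$ is immediate from $m=\mathcal{F}(\mathcal{F}^{-1}m)$ and the trivial bound $\|\mathcal{F}g\|_{L^\infty}\le\|g\|_{L^1}$. For \eqref{al8} I would use the multiplication-convolution identity
\[
\mathcal{F}^{-1}(m\cdot m')=(2\pi)^{-d}\,\mathcal{F}^{-1}(m)\ast \mathcal{F}^{-1}(m'),
\]
(with a constant depending only on the Fourier normalization), and conclude by Young's inequality $L^1\ast L^1\hookrightarrow L^1$. For \eqref{al9} a direct change of variables gives $\mathcal{F}^{-1}(m_{A,v})(z)=|\det A|^{-1}e^{-iA^{-\mathrm{t}}z\cdot v}\,\mathcal{F}^{-1}(m)(A^{-\mathrm{t}}z)$, whose $L^1$ norm equals $\|\mathcal{F}^{-1}(m)\|_{L^1}$ since the Jacobian cancels exactly and the modulus of the modulating phase is $1$.

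\medskip

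For part (ii), the main step is to rewrite $M(f,g)$ as a physical-space superposition of translates of $f$ and $g$. Writing $m=\mathcal{F}(\widetilde m)$ with $\widetilde m=\mathcal{F}^{-1}(m)$ and substituting into the definition \eqref{Moutm}, the exponential factors decouple after the change of variables $u=\xi-\eta$, $v=\eta$, giving
\[
M(f,g)(z)=\int_{\mathbb{R}^2}\widetilde m(x,y)\,f(z-x)\,g(z-x-y)\,dx\,dy.
\]
Minkowski's inequality in $z$ pulls the $L^r_z$-norm past the integral in $(x,y)$, and then translation invariance plus H\"older in $(p,q,r)$ yields \eqref{mk6}. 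The duality statement \eqref{mk6.01} then follows by pairing $M(f,g)$ with $h$ and applying \eqref{mk6} with the third Lebesgue exponent dictated by $1/p+1/q+1/r=1$.

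\medskip

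Part (iii) is a straightforward multilinear generalization of the argument for (ii). I would write the trilinear symbol $m(\xi,\eta,\rho)$ as the Fourier transform of $\mathcal{F}^{-1}(m)\in L^1(\mathbb{R}^3)$, plug into the integral, and carry out a linear change of variables in $(\xi,\eta,\rho)$ that decouples the three factors $\widehat{f_j}$ into translations and modulations; Fubini then collapses everything to
\[
\int_{\mathbb{R}^3}\mathcal{F}^{-1}(m)(x,y,w)\,\bigl(\text{product of translates of }f_1,f_2,f_3,f_4\bigr)\,dx\,dy\,dw,
\]
and a final four-fold H\"older with exponents $(p_1,p_2,p_3,p_4)$ gives \eqref{mk6.5}. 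The only step that requires a little care is bookkeeping the change of variables so that each $\widehat{f_j}$ becomes, after inverse Fourier transform, a genuine translate of $f_j$; this is routine once one tracks the Fourier-convention constants, and I do not expect any real obstacle beyond clean notation.
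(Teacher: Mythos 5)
Your proposal is correct and is the standard argument for this type of lemma: the paper itself does not reproduce a proof (it defers to \cite[Lemma 5.2]{IoPu2}), and the argument given there is exactly this reduction to the physical-space kernel $\mathcal{F}^{-1}(m)$ followed by the representation $M(f,g)(z)=\int \widetilde m(x,y)f(z-x)g(z-x-y)\,dx\,dy$, Minkowski, and H\"older. All the individual steps check out, including the Jacobian cancellation in \eqref{al9} and the change of variables collapsing the quadrilinear form in (iii) to $\int \mathcal{F}^{-1}(m)(x,y,w)\int f_1(s-x-w)f_2(s-y)f_3(s-w)f_4(s)\,ds\,dx\,dy\,dw$.
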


Given any multiplier $m: \mathbb{R}^d \to \mathbb{C}$, $d\in\{2,3\}$, and any $k,k_1,k_2,k_3,k_4\in \mathbb{Z}$, we define
\begin{equation}
\begin{split}
\label{lemEE1pr1}
&m^{k,k_1,k_2}(\xi,\eta) := m(\xi,\eta)\cdot\varphi_k(\xi) \varphi_{k_1}(\xi-\eta) \varphi_{k_2}(\eta),\\
&m^{k_1,k_2,k_3,k_4}(\xi,\eta,\rho) := m(\xi,\eta,\rho) \cdot\varphi_{k_1}(\xi) \varphi_{k_2}(\eta)\varphi_{k_3}(\rho-\xi)\varphi_{k_4}(-\rho-\eta).
\end{split}
\end{equation}
Let
\begin{align}
\label{lemEE1pr2}
\begin{split}
&\mathcal{X} := \{(k,k_1,k_2) \in \mathbb{Z}^3 : \max(k,k_1,k_2) - \mathrm{med}(k,k_1,k_2) \leq 4\},\\
&\mathcal{Y} := \{(k_1,k_2,k_3,k_4) \in \mathbb{Z}^4 : 2^{k_1}+2^{k_2}+2^{k_3}+2^{k_4}\geq (1+2^{-10})2^{\max(k_1,k_2,k_3,k_4)}\},\\
\end{split}
\end{align}
and notice that $m^{k,k_1,k_2}\equiv 0$ unless
$(k,k_1,k_2)\in\mathcal{X}$, and $m^{k_1,k_2,k_3,k_4}\equiv 0$ unless
$(k,k_1,k_2)\in\mathcal{Y}$.
Moreover, we will often use the notation
\begin{align}
\label{Onot}
m(\xi,\eta) & = O\big( f(|\xi|,|\xi-\eta|,|\eta|) \big) \, \Longleftrightarrow \,
{\| m^{k,k_1,k_2}(\xi,\eta) \|}_{S^\infty} \lesssim f(2^k,2^{k_1},2^{k_2}) \mathbf{1}_{\mathcal{X}}(k,k_1,k_2)
\end{align}
So, for example,
\begin{align*}
m(\xi,\eta) = O\big( |\xi-\eta|^{3/2} \big)\quad\text{ means }\quad\| m^{k,k_1,k_2}(\xi,\eta) \|_{S^\infty} \lesssim 2^{3k_1/2} \mathbf{1}_{\mathcal{X}}(k,k_1,k_2).
\end{align*}
We use a similar notation for symbols of three variables,
\begin{align}
\label{Onot2}
\begin{split}
& m(\xi,\eta,\rho) = O\big( f(|\xi|,|\eta|,|\rho-\xi|,|\rho+\eta|) \big)
\\
& \, \Longleftrightarrow \,
\big\| \mathcal{F}^{-1}\big(m^{k_1,k_2,k_3,k_4}\big)\big\|_{L^1} \lesssim f(2^{k_1},2^{k_2},2^{k_3},2^{k_4}) \mathbf{1}_{\mathcal{Y}}(k_1,k_2,k_3,k_4).
\end{split}
\end{align}

\subsubsection{Paraproducts}\label{paraprod}
For any $a,b\in L^2(\mathbb{R})$ we define the paraproduct $T_ab$ by the formula
\begin{equation}\label{on3}
\begin{split}
& \mathcal{F}(T_ab)(\xi) := \frac{1}{2\pi}\int_{\mathbb{R}}\widehat{a}(\xi-\eta)\widehat{b}(\eta)\chi(\xi-\eta,\eta)\,d\eta,
\\
& \chi(x,y) := \sum_{k\in\mathbb{Z}}\varphi_k(y)\varphi_{\leq k-10}(x).
\end{split}
\end{equation}
We also use the general formulas
\begin{equation}\label{on7}
a b = T_a b + T_b a + R(a,b),\qquad F(a) = T_{F'(a)} a + R_F(a),
\end{equation}
where $R(a,b)$ and $R_F(a)$ are (substantially) more smooth remainders.
The precise bounds on the remainders $R(a,b)$ and $R_F(a)$ depend on the context.

\subsubsection{A dispersive estimate and an interpolation lemma}
The following is our main linear dispersive estimate, which we use to control pointwise decay of solutions.

\begin{lemma}\label{dispersive}
For any $t\in\mathbb{R}\setminus\{0\}$, $k\in\mathbb{Z}$, and $f\in L^2(\mathbb{R})$ we have
\begin{equation}
\label{disperse}
 \|e^{i t \Lambda}P_kf\|_{L^\infty}\lesssim |t|^{-1/2}2^{k/4}\|\widehat{f}\|_{L^\infty}
  + |t|^{-3/5}2^{-2k/5}\big[2^k\|\partial \widehat{f}\|_{L^2}+\|\widehat{f}\|_{L^2}\big]
\end{equation}
and
\begin{equation}\label{disperseEa}
 \|e^{i t \Lambda}P_kf\|_{L^\infty}\lesssim |t|^{-1/2}2^{k/4}\|f\|_{L^1}.
\end{equation}
\end{lemma}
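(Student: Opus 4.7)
The approach is stationary phase analysis of
$$e^{it\Lambda}P_k f(x) = \frac{1}{2\pi}\int_\R e^{i\Phi(\xi)}\varphi_k(\xi)\widehat f(\xi)\,d\xi, \qquad \Phi(\xi) := x\xi + t|\xi|^{3/2}.$$
The key facts are $|\Phi''(\xi)|=(3/4)|t|\,|\xi|^{-1/2}\approx|t|2^{-k/2}$ on the support of $\varphi_k$, and the equation $\Phi'(\xi_0)=0$ has at most one solution $\xi_0$ in that support, which (when present) satisfies $|\xi_0|\approx 2^k$.

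For \eqref{disperseEa} I bound the kernel $K_k(z,t):=(2\pi)^{-1}\int e^{i(z\xi+t|\xi|^{3/2})}\varphi_k(\xi)\,d\xi$ in $L^\infty$. The second-derivative form of the van der Corput lemma, combined with $\|\varphi_k\|_{L^\infty}+\|\varphi_k'\|_{L^1}\lesssim 1$ and $\min|\Phi''|\gtrsim |t|2^{-k/2}$, yields $\|K_k(\cdot,t)\|_{L^\infty}\lesssim |t|^{-1/2}2^{k/4}$. Young's convolution inequality then gives \eqref{disperseEa}.

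For \eqref{disperse}: if $\Phi$ has no critical point in $\mathrm{supp}\,\varphi_k$ then $|\Phi'|\gtrsim|t|2^{k/2}$ there and iterated integration by parts produces a bound much stronger than claimed. Otherwise let $\xi_0$ be the critical point and fix a scale $\delta>0$ to be chosen. Insert a smooth cutoff splitting the integral into a \emph{near} piece on $\{|\xi-\xi_0|\le\delta\}$ and a \emph{far} piece on $\{|\xi-\xi_0|\ge\delta/2\}$. On the near piece I write $\widehat f(\xi)=\widehat f(\xi_0)+[\widehat f(\xi)-\widehat f(\xi_0)]$: the constant part, via van der Corput, yields the main term $\lesssim|t|^{-1/2}2^{k/4}\|\widehat f\|_{L^\infty}$, while the residual obeys the Cauchy--Schwarz pointwise estimate $|\widehat f(\xi)-\widehat f(\xi_0)|\le|\xi-\xi_0|^{1/2}\|\partial\widehat f\|_{L^2}$, contributing at most $\lesssim\delta^{3/2}\|\partial\widehat f\|_{L^2}$.

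On the far piece we have $|\Phi'(\xi)|\gtrsim|t|2^{-k/2}|\xi-\xi_0|$. Integrating by parts once and applying Cauchy--Schwarz in $\xi$ to each resulting contribution (the volume term $(\varphi_k\widehat f)'/\Phi'$, the commutator $\Phi''/(\Phi')^2$, and the derivative of the cutoff), I obtain a bound of the schematic form
$$(|t|2^{-k/2})^{-1}\Bigl[\delta^{-3/2}\|\widehat f\|_{L^2}+\delta^{-1/2}\|\partial\widehat f\|_{L^2}+2^{-k}\delta^{-1/2}\|\widehat f\|_{L^2}\Bigr].$$
Choosing $\delta$ as an appropriate power of $|t|2^{-k/2}$ balances the near $\delta^{3/2}$ error against the far $\delta^{-3/2}$ and $\delta^{-1/2}$ contributions and yields the stated $|t|^{-3/5}$ decay with the advertised powers of $2^k$ multiplying $\|\widehat f\|_{L^2}$ and $2^k\|\partial\widehat f\|_{L^2}$. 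The main obstacle is the low regularity of the amplitude: only $L^2$ information on $\widehat f$ and $\partial\widehat f$ is assumed, so classical pointwise stationary phase cannot be used past the leading term, and every error must be handled as an $L^2$ pairing with carefully balanced dyadic weights.
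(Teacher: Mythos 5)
Your overall strategy for \eqref{disperse} --- direct stationary phase in $\xi$, a near/far split at a scale $\delta$ around the critical point $\xi_0$, and subtraction of $\widehat f(\xi_0)$ on the near piece --- is essentially the same as the paper's (which proves the sharper asymptotic Lemma \ref{ScaDecay}(ii) after the rescaling $\xi=b\eta$ and deduces \eqref{disperse} from it), and your treatment of \eqref{disperseEa} is fine. However, the far-piece estimate does not close as written. Set $T:=|t|2^{-k/2}$. After one integration by parts, the commutator term $\Phi''/(\Phi')^2$ and the cutoff-derivative term, paired with $\|\widehat f\|_{L^2}$ by Cauchy--Schwarz as you propose, give $T^{-1}\delta^{-3/2}\|\widehat f\|_{L^2}$; for this to be $\lesssim |t|^{-3/5}2^{-2k/5}\|\widehat f\|_{L^2}$ you need $\delta\gtrsim |t|^{-4/15}2^{3k/5}$. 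On the other hand, the near residual $\delta^{3/2}\|\partial\widehat f\|_{L^2}$ must be $\lesssim |t|^{-3/5}2^{3k/5}\|\partial\widehat f\|_{L^2}$, which forces $\delta\lesssim |t|^{-2/5}2^{2k/5}$. These two constraints are compatible only when $|t|2^{3k/2}\lesssim 1$, i.e.\ precisely in the non-oscillatory regime where the estimate is trivial; in the oscillatory regime $|t|2^{3k/2}\gtrsim 1$ no choice of $\delta$ works, and with the near-optimal $\delta\approx|t|^{-2/5}2^{2k/5}$ you lose a factor $(|t|2^{3k/2})^{1/5}$ on the $\|\widehat f\|_{L^2}$ term. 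The issue is that the far contributions carrying $\|\widehat f\|_{L^2}$ (rather than $\|\partial\widehat f\|_{L^2}$) are genuinely too large when estimated by Cauchy--Schwarz.

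The repair is to carry the subtraction into the far region as well: integrate by parts against $\widehat f(\xi)-\widehat f(\xi_0)\chi(\xi)$ rather than against $\varphi_k\widehat f$, and on each dyadic shell $|\xi-\xi_0|\approx 2^j\geq\delta$ estimate the non-derivative terms using the pointwise bound $|\widehat f(\xi)-\widehat f(\xi_0)|\leq|\xi-\xi_0|^{1/2}\|\partial\widehat f\|_{L^2}$ instead of Cauchy--Schwarz. Then every far contribution carries $\|\partial\widehat f\|_{L^2}$ with the harmless weight $T^{-1}\delta^{-1/2}$, the term $\widehat f(\xi_0)\chi$ is handled over the whole support by van der Corput and absorbed into $|t|^{-1/2}2^{k/4}\|\widehat f\|_{L^\infty}$, and $\delta\approx|t|^{-2/5}2^{2k/5}$ closes the argument. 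This is exactly what the paper does in the proof of \eqref{Sca16}: the dyadic pieces $J_l$ are built from $g(\eta)-g(1)\chi_1(\eta)$ and the bound $|g(\eta)-g(1)\chi_1(\eta)|\lesssim|\eta-1|^{1/2}$ of \eqref{Sca18} is used on \emph{every} shell, not only the innermost one. Alternatively, you could keep your decomposition and estimate the two offending far terms pointwise with $\|\widehat f\|_{L^\infty}$, which yields $T^{-1}\delta^{-1}\|\widehat f\|_{L^\infty}\lesssim|t|^{-1/2}2^{k/4}\|\widehat f\|_{L^\infty}$ for the same $\delta$ in the oscillatory regime.
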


A more precise version is proved in Lemma \ref{ScaDecay} in section \ref{Sca1}. See also \cite[Lemma A.1]{IoPu3}.

We also use the following simple interpolation lemma (see also Lemma A.2 in \cite{IoPu3}).

\begin{lemma}\label{interpolation}
For any $k \in \mathbb{Z}$, and $f\in L^2(\mathbb{R})$ we have
\begin{equation}
\label{interp1}
{\big\| \what{P_kf} \big\|}_{L^\infty}^2 \lesssim {\big\| P_kf \big\|}_{L^1}^2 \lesssim
  2^{-k} {\|\what{f}\|}_{L^2} \big[ 2^k {\|\partial \what{f}\|}_{L^2} + {\|\what{f}\|}_{L^2} \big].
\end{equation}
\end{lemma}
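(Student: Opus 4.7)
\medskip
\noindent\textbf{Proof plan.} The first inequality $\|\widehat{P_kf}\|_{L^\infty}^2 \lesssim \|P_kf\|_{L^1}^2$ is immediate from the definition of the Fourier transform (Hausdorff--Young with $p=1$), so the whole content lies in the second inequality. My plan is to reduce this to the classical $L^1$--$L^2$--weighted interpolation $\|g\|_{L^1}^2 \lesssim \|g\|_{L^2}\,\|xg\|_{L^2}$ applied to $g=P_kf$, then compute both factors via Plancherel.

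First I would write, for $\lambda>0$,
\begin{equation*}
\|P_kf\|_{L^1}^2 = \Big(\int (1+\lambda|x|)^{-1}\cdot(1+\lambda|x|)|P_kf(x)|\,dx\Big)^2 \leq \|(1+\lambda|x|)^{-1}\|_{L^2}^2 \,\|(1+\lambda|x|)P_kf\|_{L^2}^2,
\end{equation*}
by Cauchy--Schwarz. Since $\|(1+\lambda|x|)^{-1}\|_{L^2}^2 = 2/\lambda$ and $(1+\lambda|x|)^2\leq 2(1+\lambda^2 x^2)$, this gives
\begin{equation*}
\|P_kf\|_{L^1}^2 \lesssim \lambda^{-1}\|P_kf\|_{L^2}^2 + \lambda\,\|xP_kf\|_{L^2}^2.
\end{equation*}
Optimizing in $\lambda$ (choose $\lambda = \|P_kf\|_{L^2}/\|xP_kf\|_{L^2}$) yields the interpolation bound
\begin{equation*}
\|P_kf\|_{L^1}^2 \lesssim \|P_kf\|_{L^2}\,\|xP_kf\|_{L^2}.
\end{equation*}

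Next I would convert both $L^2$ norms on the right-hand side to Fourier-side quantities. By Plancherel, $\|P_kf\|_{L^2}\lesssim \|\varphi_k\widehat{f}\|_{L^2}\lesssim \|\widehat{f}\|_{L^2}$ since $0\leq\varphi_k\leq 1$. For the weighted piece, $\|xP_kf\|_{L^2}\lesssim \|\partial_\xi(\varphi_k\widehat{f})\|_{L^2} \leq \|\varphi_k'\widehat{f}\|_{L^2} + \|\varphi_k\,\partial\widehat{f}\|_{L^2}$. Since $\varphi_k'$ is supported in $|\xi|\sim 2^k$ with $\|\varphi_k'\|_{L^\infty}\lesssim 2^{-k}$, we get $\|xP_kf\|_{L^2}\lesssim 2^{-k}\|\widehat{f}\|_{L^2} + \|\partial\widehat{f}\|_{L^2}$. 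Combining the two estimates and factoring out $2^{-k}$ gives
\begin{equation*}
\|P_kf\|_{L^1}^2 \lesssim \|\widehat{f}\|_{L^2}\,\bigl(2^{-k}\|\widehat{f}\|_{L^2}+\|\partial\widehat{f}\|_{L^2}\bigr) = 2^{-k}\|\widehat{f}\|_{L^2}\,\bigl[2^k\|\partial\widehat{f}\|_{L^2}+\|\widehat{f}\|_{L^2}\bigr],
\end{equation*}
which is exactly \eqref{interp1}. There is no real obstacle here: the argument is a standard frequency-localized interpolation, and the only point requiring some care is tracking the $2^k$-weight correctly when differentiating $\varphi_k$ so that the resulting bound has the same homogeneity as the right-hand side.
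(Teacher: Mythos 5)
Your proof is correct and takes essentially the same approach as the paper: the paper rescales to $k=0$ and splits $\|P_0f\|_{L^1}$ sharply at a radius $R$ before optimizing over $R$, which is just the sharp-cutoff version of your smooth weight $(1+\lambda|x|)^{-1}$ and optimization in $\lambda$, and the Plancherel conversion to the Fourier side is the same.
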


\begin{proof} By scale invariance we may assume that $k=0$. It suffices to prove that
\begin{equation}\label{interp2}
 \big\|P_0f\big\|^2_{L^1}\lesssim \|\widehat{f}\|_{L^2}\big[\|\partial \widehat{f}\|_{L^2}+\|\widehat{f}\|_{L^2}\big].
\end{equation}
For $R\geq 1$ we estimate
\begin{equation*}
\begin{split}
\big\|P_0f\big\|_{L^1}&\lesssim \int_{|x|\leq R}|P_0f(x)|\,dx+\int_{|x|\geq R}|xP_0f(x)|\cdot \frac{1}{|x|}\,dx\\
&\lesssim R^{1/2}\|P_0f\|_{L^2}+R^{-1/2}\|xP_0f(x)\|_{L^2_x}\\
&\lesssim R^{1/2}\|\widehat{f}\|_{L^2}+R^{-1/2}\big[\|\partial \widehat{f}\|_{L^2}+\|\widehat{f}\|_{L^2}\big].
\end{split}
\end{equation*}
The desired estimate \eqref{interp2} follows by choosing $R$ suitably.
\end{proof}

\subsection{The main proposition}
Given $p_1\in[0,10^{-3}]$ we fix $\mathcal{P}=\mathcal{P}_{p_1}: [0,\infty) \rightarrow [0,1]$ an increasing function,
smooth on $(0,\infty)$, such that
\begin{equation}
\label{p0}
\mathcal{P}(x) = x^{2p_1}\,\,\text{ if }\,\,x\leq 1/2,\qquad \mathcal{P}(x) =1\,\,\text{ if }\,\, x\geq 1,\qquad x\mathcal{P}^\prime (x) \leq 10 p_1\mathcal{P}(x).
\end{equation}

Our main theorem follows using the local existence theory and a continuity argument from the following main proposition:

\begin{proposition}[Main bootstrap]\label{MainProp}
Assume that
\begin{equation}\label{constants}
\begin{split}
& N_0=N_I:=9,\quad N_1=N_S:=3,\quad N_2=N_\infty:=5,\qquad 0<10^4p_1\leq p_0\leq 10^{-10},
\\
& 0 < \varepsilon_0 \leq \varepsilon_1 \leq \varepsilon_0^{2/3} \ll 1.
\end{split}
\end{equation}
Assume $T\geq 1$ and $(h,\phi)\in C\big([0,\infty):(\mathcal{C}_0\cap \dot{H}^{N_0+1,p_1+1/2})\times \dot{H}^{N_0+1/2,p_1}\big)$
is a real-valued solution of the system \eqref{CPW},
\begin{equation}\label{on1}
\partial_th = G(h)\phi,\qquad \partial_t\phi = \dfrac{\partial_x^2 h}{(1+h_x^2)^{3/2}}
  - \dfrac{1}{2}\phi_x^2 + \dfrac{(G(h)\phi+h_x\phi_x)^2}{2(1+h_x^2)}.
\end{equation}
Let $U:=|\partial_x|h-i|\partial_x|^{1/2}\phi$ and assume that, for any $t\in[0,T]$,
\begin{equation}\label{bing1}
\langle t\rangle^{-p_0}\mathcal{K}_{I}(t)+\langle t\rangle^{-4p_0}\mathcal{K}_{S}(t)+\langle t\rangle^{1/2}\sum_{k\in\mathbb{Z}}(2^{N_2k}+2^{-k/10})\big\|P_k U(t)\big\|_{L^\infty}\leq\e_1,
\end{equation}
where $\langle t\rangle=1+t$ and, for $\mathcal{O}\in\{I,S\}$,
\begin{equation}\label{bing2}
\big[\mathcal{K}_{\mathcal{O}}(t)\big]^2:=\int_{\mathbb{R}}|\widehat{\mathcal{O}U}(\xi,t)|^2\cdot (|\xi|^{-1}+|\xi|^{2N_{\mathcal{O}}})\mathcal{P}((1+t)^2|\xi|)\,d\xi.
\end{equation}
Assume also that the initial data $U(0)$ satisfy the stronger bounds
\begin{equation}\label{bing3}
\sum_{\mathcal{O}\in\{I,S\}}\big\|(|\partial_x|^{-1/2+p_1}+|\partial_x|^{N_{\mathcal{O}}})(\mathcal{O}U)(0)\big\|_{L^2}\leq\e_0.
\end{equation}

Then we have the improved bound, for any $t\in[0,T]$,
\begin{equation}\label{bing4}
\langle t\rangle^{-p_0}\mathcal{K}_{I}(t)+\langle t\rangle^{-4p_0}\mathcal{K}_{S}(t)+\langle t\rangle^{1/2}\sum_{k\in\mathbb{Z}}(2^{N_2k}+2^{-k/10})\big\|P_kU(t)\big\|_{L^\infty}\lesssim \e_0.
\end{equation}
\end{proposition}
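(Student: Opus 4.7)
The proof of Proposition \ref{MainProp} is a bootstrap assembly: I would derive \eqref{bing4} by separately improving the three summands in \eqref{bing1} (the Sobolev energy $\mathcal{K}_I$, the weighted energy $\mathcal{K}_S$, and the $L^\infty$ decay sum), each under the combined assumption \eqref{bing1}--\eqref{bing3}. These three improvements correspond to the sections previewed in the introduction: the Sobolev and low-frequency energy bounds (Propositions \ref{proEE1} and \ref{proEElow}), the weighted energy bounds (Propositions \ref{proEEZ} and \ref{proEEZlow}), and the pointwise decay obtained from uniform $Z$-norm control (Lemma \ref{mainlem}). The main work is thus the reduction to, and the proofs of, these sub-statements, after switching from the variables $(h,\phi)$ to the scalar quasilinear unknown $u$ of \eqref{par5}.

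For the energy step I would first paralinearize \eqref{on1} via Proposition \ref{prosymm} to produce the scalar equation \eqref{WWCpar}. Applying the differentiation operator $\D=|\partial_x|^{3/2}+\Sigma$ and, in the weighted case, also commuting with $S$ via \eqref{lemcomm}, yields \eqref{WWCparW} for the natural high-regularity variables $W_{N_0}$ and $SW_{N_1}$. The Fourier-space quadratic energy \eqref{E2N}, augmented with the low-frequency weight $\mathcal{P}((1+t)^2|\xi|)|\xi|^{-1}$ that appears in \eqref{bing2}, has a time derivative that is cubic and semilinear after appropriate symmetrizations preventing derivative losses. I would then build a cubic corrector $E^{(3)}$ of the form \eqref{E3N} whose symbol is obtained by dividing the cubic symbols of $\partial_t E^{(2)}$ by the resonant phase in \eqref{phase1}; this is the Fourier-space "quasilinear I-method" construction. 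The outcome is \eqref{d_tE2+E3}: a quartic semilinear expression in $u, Su, W_{N_0}, SW_{N_1}$ whose estimation uses the decay half of \eqref{bing1} together with the multilinear tools of Lemma \ref{touse}, and closes with the slow growth $\langle t\rangle^{p_0}$ (respectively $\langle t\rangle^{4p_0}$ for the weighted piece, where the symmetries of the energy are weaker).

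The main obstacle is the strong quadratic time resonance \eqref{phase2}: division by \eqref{phase1} introduces $(|\xi-\eta|)^{-1/2}$ singularities in the symbols of $E^{(3)}$, and hence in the quartic remainders such as \eqref{quarticsing}. Since neither the energy nor the decay information in \eqref{bing1} controls $|\partial_x|^{-1/2}Su$ in $L^2$, the estimates cannot close at this level alone. My plan is to import low-frequency information through Propositions \ref{proEElow} and \ref{proEEZlow}, proved by running the same I-method scheme on the low-frequency energy \eqref{Elow}, itself built precisely to absorb the $|\xi|^{-1/2}$ factor through the weight $|\xi|^{-1}\mathcal{P}((1+t)^2|\xi|)$. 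These low-frequency bounds rely crucially on the null structure of the nonlinearity at low output frequency, which lets me trade low-frequency outputs in the quadratic terms for the cubic, weighted low-frequency norms encoded in \eqref{bing2}, and thereby close the estimates without imposing any momentum condition on the data \eqref{bing3}.

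For the $L^\infty$ improvement I would work directly with the Eulerian variable $U$. The plan is to apply a semilinear normal form $v=u+B(u,u)$ to replace the quadratic nonlinearity by a cubic one, pass to the profile $f=e^{-it\Lambda}v$, and analyze the Duhamel expression \eqref{Duhamelintro}. I would partition the integration region by the relative sizes of $(\xi,\xi-\eta,\eta-\sigma,\sigma)$ and by proximity to the space-time resonant set; away from that set, integration by parts in $s$, $\eta$, or $\sigma$ (using the weighted $L^2$ control of $SU$ from $\mathcal{K}_S$ to handle $\xi$-derivatives on the profiles) produces time-integrable contributions that close via Lemma \ref{touse}. The stationary-phase contribution at the space-time resonance is absorbed by a further cubic renormalization of $\widehat{f}$, yielding modified scattering and uniform control of an $L^\infty_\xi$-type norm of $\widehat{f}$ (the $Z$-norm). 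Lemmas \ref{dispersive} and \ref{interpolation} then translate this $Z$-norm bound into the sharp $\langle t\rangle^{-1/2}$ pointwise bound required by \eqref{bing4}. I expect the hardest piece to be bounding the quartic renormalization remainders in the presence of the same $(\text{low frequency})^{-1/2}$ singularities that drive the low-frequency energy estimates, since non-$L^2$ norms are involved here and direct symmetrization is not available.
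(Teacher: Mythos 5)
Your proposal is correct and follows essentially the same route as the paper: the paper assembles Proposition \ref{MainProp} from Propositions \ref{proEE1}, \ref{proEElow}, \ref{proEEZ}, \ref{proEEZlow} (the I-method energies with cubic corrections and the low-frequency functionals absorbing the $|\xi|^{-1/2}$ singularities) together with Proposition \ref{prodecay} (normal form, profile, $Z$-norm), exactly as you describe. The only small step you leave implicit is the passage between the symmetrized unknown $u$ and the Eulerian variable $U$, which the paper handles by noting that $u-U\in O_{2,0}$ is quadratic, does not lose derivatives, and vanishes at low frequencies.
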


The rest of the paper is concerned with the proof of Proposition \ref{MainProp}.
We will always work under the assumptions \eqref{bing1}-\eqref{bing2}. The proof depends on the equations derived in section \ref{Equations} and on the  improved estimates in Propositions \ref{proEE1}, \ref{proEElow}, \ref{proEEZ}, \ref{proEEZlow}, and \ref{prodecay}. The argument is provided after the statement of Proposition \ref{prodecay}.

\medskip
\section{Derivation of the main scalar equation}\label{Equations}

As in Proposition \ref{MainProp}, assume $T\geq 1$ and $(h,\phi)\in C\big([0,T]:H^{N_0+1}\times \dot{H}^{p_1,N_0+1/2}\big)$
is a real-valued solution of the system \eqref{on1} satisfying \eqref{bing1}.
Let
\begin{equation}\label{symm2}
\begin{split}
& B := \frac{G(h)\phi + h_x\phi_x}{1+h_x^2} , \qquad V := \phi_x-Bh_x,
\\
& \omega := \phi-T_BP_{\geq 1}h=\phi-T_Bh+T_BP_{\leq 0}h,
\\
& \sigma := {(1+h_x^2)}^{-3/2}-1,
\end{split}
\end{equation}
where $T$ is defined in \eqref{on3}. Using \eqref{symm2} we calculate
\begin{equation}\label{on6}
- \frac{1}{2}\phi_x^2 + \frac{(G(h)\phi+h_x\phi_x)^2}{2(1+h_x^2)} =
  \frac{1}{2}\big[B^2(1+h_x^2)-(V+Bh_x)^2\big] = \frac{1}{2}\big[B^2-2VBh_x-V^2\big].
\end{equation}
Moreover, using the formula in the second line of \eqref{on7} and standard paradifferential calculus,
\begin{equation}\label{on7.5}
\frac{\partial_x^2 h}{(1+h_x^2)^{3/2}}
  = \partial_x^2h + \partial_x\Big(\frac{\partial_xh}{(1+h_x^2)^{1/2}}-h_x\Big)
  = \partial_x^2h + \partial_xT_\sigma h_x+\partial_xE_{\geq 3,h},
\end{equation}
where $E_{\geq 3,h}$ is a more smooth cubic error (compare with \eqref{on7}), satisfying, for any $t\in[0,T]$,
\begin{equation}\label{on7.6}
\langle t\rangle^{1-p_0}\|E_{\geq 3,h}(t)\|_{H^{N_0+2}}+\langle t\rangle^{1-4p_0}\|SE_{\geq 3,h}(t)\|_{H^{N_1+2}}+\langle t\rangle^{11/10}\|E_{\geq 3,h}(t)\|_{\widetilde{W}^{N_2+2}}\lesssim \veps_1^3.
\end{equation}

We will also use the formula (see Proposition \ref{ra102})
\begin{equation}\label{on4}
G(h)\phi = |\partial_x|\omega - |\partial_x|T_BP_{\leq 0}h-\partial_x T_V h + G_2 + G_{\geq 3},
\end{equation}
where
\begin{align}\label{on4.1}
G_2 := |\partial_x|T_{|\partial_x|\phi}h - |\partial_x|(h|\partial_x|\phi )
  + \partial_xT_{\partial_x\phi}h - \partial_x(h\partial_x\phi),
\end{align}
and $G_{\geq 3}$ is a cubic error, satisfying, for any $t\in[0,T]$,
\begin{equation}\label{on4.2}
\langle t\rangle^{1-p_0}\|G_{\geq 3}(t)\|_{H^{N_0+1}}+\langle t\rangle^{1-4p_0}\|SG_{\geq 3}(t)\|_{H^{N_1+1}}+\langle t\rangle^{11/10}\|G_{\geq 3}(t)\|_{\widetilde{W}^{N_2+1}}\lesssim \e_1^3.
\end{equation}
The function $G(h)\phi$ satisfies linear estimates with derivative loss (see \eqref{bvd1} for a stronger bound)
\begin{equation}\label{on4.25}
\langle t\rangle^{-p_0}\big\|G(h)\phi\big\|_{H^{N_0-1}}+\langle t\rangle^{-4p_0}\big\|SG(h)\phi\big\|_{H^{N_1-1}}+\langle t\rangle^{1/2}\big\|G(h)\phi\big\|_{\widetilde{W}^{N_2-1}}\lesssim \e_1.
\end{equation}

For simplicity of notation, for $\alpha\in [-2,2]$ let $O_{3,\alpha}$ denote generic functions $F$ on $[0,T]$
that satisfy the ``cubic'' bounds (see also Definition \ref{Oterms})
\begin{equation}\label{on4.3}
\langle t\rangle^{1-p_0}\|F(t)\|_{H^{N_0+\alpha}}+\langle t\rangle^{1-4p_0}\|SF(t)\|_{H^{N_1+\alpha}}+\langle t\rangle^{11/10}\|F(t)\|_{\widetilde{W}^{N_2+\alpha}}\lesssim \e_1^3.
\end{equation}
In this section we proceed with the formal calculations, without proving that the various cubic errors terms that will appear
satisfy indeed the desired bounds.
All the claimed cubic bounds will follow from the assumptions \eqref{bing1} and the definitions, by elliptic estimates.
Detailed proofs are provided in Appendix \ref{aux}.

The first equation in \eqref{on1} becomes
\begin{equation*}
\partial_t h = |\partial_x|\omega - |\partial_x|T_{|\partial_x|\omega}P_{\leq 0}h-\partial_x T_V h + G_2 + G_{\geq 3},
\end{equation*}
with $G_{\geq 3} \in O_{3,1}$,
while the second equation in \eqref{on1} gives
\begin{align}
\label{dtomega1}
\begin{split}
\partial_t\omega &= \frac{\partial_x^2 h}{(1+h_x^2)^{3/2}}
 + \Big( \partial_t \phi - \frac{\partial_x^2 h}{(1+h_x^2)^{3/2}} \Big)
 - T_{\partial_tB}P_{\geq 1}h - T_B\partial_tP_{\geq 1}h\\
&= \frac{\partial_x^2 h}{(1+h_x^2)^{3/2}} + I + II + III,
\end{split}
\end{align}
where
\begin{align*}
\begin{split}
I & = (B^2-V^2)/2 - V B h_x
\\
& = T_B B + R(B,B)/2 - T_V V - R(V,V)/2 - T_V B h_x - T_{Bh_x} V + O_{3,1/2}
\\
& = T_B B - T_{Bh_x} V - T_V \phi_x + R(|\partial_x|\omega,|\partial_x|\omega)/2 - R(\partial_x\omega,\partial_x\omega)/2+O_{3,1/2},
\\
II & = T_{|\partial_x|^3 h} P_{\geq 1}h + O_{3,1/2},
\\
III & =-T_B(G(h)\phi)+T_BP_{\leq 0}(G(h)\phi)=-T_B(G(h)\phi)+T_{|\partial_x|\omega}P_{\leq 0}|\partial_x|\omega+ O_{3,1/2}.
\end{split}
\end{align*}
Notice that
\begin{equation*}
B - G(h)\phi - V h_x = B - G(h)\phi - \phi_x h_x + B h_x^2 = 0.
\end{equation*}
Therefore
\begin{align*}
\begin{split}
I+III & = T_B(Vh_x)-T_{Bh_x}V-T_V\phi_x
\\
& +T_{|\partial_x|\omega}P_{\leq 0}|\partial_x|\omega+R(|\partial_x|\omega,|\partial_x|\omega)/2-R(\partial_x\omega,\partial_x\omega)/2+O_{3,1/2}
\\
& = T_B(Vh_x)-T_{Bh_x}V-T_V(\partial_xT_BP_{\geq 1}h)-T_V\omega_x
\\
& +T_{|\partial_x|\omega}P_{\leq 0}|\partial_x|\omega+R(|\partial_x|\omega,|\partial_x|\omega)/2-R(\partial_x\omega,\partial_x\omega)/2+O_{3,1/2}.
\end{split}
\end{align*}
We show in Proposition \ref{CubicSummary} that
\begin{equation*}
T_B(Vh_x)-T_{Bh_x}V-T_V(\partial_xT_BP_{\geq 1}h) \in O_{3,1/2}.
\end{equation*}
Therefore, using also \eqref{on7.5}, the system \eqref{on1} becomes
\begin{equation}\label{on10}
\begin{cases}
&\partial_th=|\partial_x|\omega- |\partial_x|T_{|\partial_x|\omega}P_{\leq 0}h-\partial_xT_Vh + G_2 + G_{\geq 3},
\\
\\
&\partial_t\omega=\partial_x^2h+\partial_xT_\sigma h_x-T_V\omega_x + H_2 + T_{|\partial_x|^3 h} P_{\geq 1}h + \Omega_{\geq 3},
\end{cases}
\end{equation}
where
\begin{equation}\label{on11}
\begin{cases}
& G_2 := |\partial_x|T_{|\partial_x|\phi}h-|\partial_x|(h|\partial_x|\phi )+\partial_xT_{\partial_x\phi}h-\partial_x(h\partial_x\phi),
\\
\\
& H_2 := T_{|\partial_x|\omega}P_{\leq 0}|\partial_x|\omega+R(|\partial_x|\omega,|\partial_x|\omega)/2-R(\partial_x\omega,\partial_x\omega)/2,
\end{cases}
\end{equation}
and, as proved in Proposition \ref{CubicSummary},
\begin{align*}
G_{\geq 3} \in O_{3,1} , \qquad \Omega_{\geq 3} \in O_{3,1/2} .
\end{align*}

Let $\widetilde{\chi}(x,y):=1-\chi(x,y)-\chi(y,x)$,
\begin{equation}\label{on20}
\begin{split}
m_2(\xi,\eta) & := \widetilde{\chi}(\xi-\eta,\eta)[\xi(\xi-\eta)-|\xi||\xi-\eta|]-\chi(\xi-\eta,\eta)|\xi||\xi-\eta|\varphi_{\leq 0}(\eta),
\\
q_2(\xi,\eta) & := \widetilde{\chi}(\xi-\eta,\eta)[\eta(\xi-\eta)+|\eta||\xi-\eta|]/2+\chi(\xi-\eta,\eta)|\eta||\xi-\eta|\varphi_{\leq 0}(\eta),
\end{split}
\end{equation}
and recall the notation \eqref{Moutm}. To summarize, we proved the following:

\begin{proposition}\label{firstred}
Let $(h,\phi)$ be a solution of \eqref{on1} satisfying the bootstrap assumption \eqref{bing1},
and let $\o,\sigma,V$ be as in \eqref{symm2}. Then
\begin{equation}\label{symm1}
\begin{cases}
& \partial_t h = |\partial_x|\omega - \partial_x T_V h + M_2(\omega,h) + G_{\geq 3},
\\
\\
& \partial_t \omega = \partial_x^2 h + \partial_x T_\sigma h_x - T_V \omega_x + T_{|\partial_x|^3 h} P_{\geq 1}h
  + Q_2(\omega,\omega) + \Omega_{\geq 3},
\end{cases}
\end{equation}
where $M_2$ and $Q_2$ are the operators associated to the multipliers $m_2$ and $q_2$ in \eqref{on20},
and
\begin{align}
\label{firstredcubic}
G_{\geq 3} \in O_{3,1},\qquad \Omega_{\geq 3}  \in O_{3,1/2} .
\end{align}
\end{proposition}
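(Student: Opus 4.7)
The plan is to read off the system \eqref{symm1} from the formal derivation already carried out in the text leading to the statement, by (i) identifying the quadratic operators $M_2$ and $Q_2$ from the explicit quadratic terms already isolated, and (ii) verifying that the remaining contributions belong to the cubic classes $O_{3,1}$ and $O_{3,1/2}$ defined in \eqref{on4.3}. Most of the algebra is already in place; the task is to repackage it and check the symbol calculations and error estimates.

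For the $h$-equation I would substitute the paralinearization \eqref{on4} of $G(h)\phi$ into $\partial_t h = G(h)\phi$ and replace $B$ by $|\partial_x|\omega$ in $-|\partial_x| T_B P_{\leq 0} h$, with the discrepancy absorbed into $O_{3,1}$ using \eqref{on4.25}. What remains beyond $|\partial_x|\omega - \partial_x T_V h + G_{\geq 3}$ is $G_2$ from \eqref{on4.1} together with $-|\partial_x| T_{|\partial_x|\omega}P_{\leq 0}h$. A direct Fourier-space computation, using the convention \eqref{Moutm} with $\omega$ carrying frequency $\xi-\eta$ and $h$ carrying frequency $\eta$, shows that the symbol of $G_2$ equals $[\xi(\xi-\eta)-|\xi||\xi-\eta|]\,[1-\chi(\xi-\eta,\eta)]$. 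Writing $1-\chi(\xi-\eta,\eta)=\widetilde{\chi}(\xi-\eta,\eta)+\chi(\eta,\xi-\eta)$ and using the null identity $\xi(\xi-\eta)=|\xi||\xi-\eta|$ on the support of $\chi(\eta,\xi-\eta)$ (where $|\eta|\ll|\xi-\eta|$ forces $\mathrm{sgn}(\xi)=\mathrm{sgn}(\xi-\eta)$), the $\chi(\eta,\xi-\eta)$ piece drops, and the sum with the symbol of $-|\partial_x| T_{|\partial_x|\omega}P_{\leq 0}h$ equals $m_2(\xi,\eta)$ from \eqref{on20}.

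For the $\omega$-equation the strategy is to expand $\partial_t\omega=\partial_t\phi-T_{\partial_t B}P_{\geq 1}h-T_B\partial_t P_{\geq 1}h$, use the curvature paralinearization \eqref{on7.5} (with remainder controlled by \eqref{on7.6}), the Bernoulli identity \eqref{on6}, and the relation $\phi_x=\omega_x+\partial_x T_B P_{\geq 1}h$, to obtain the decomposition $I+II+III$ already displayed in the text. The key cancellation is the cubic identity
\begin{equation*}
T_B(Vh_x)-T_{Bh_x}V-T_V(\partial_x T_B P_{\geq 1}h)\in O_{3,1/2},
\end{equation*}
a manifestation of the Alinhac good-unknown structure, which is to be established in Proposition \ref{CubicSummary}. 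The quadratic residue is $H_2$ from \eqref{on11}, whose symbol under \eqref{Moutm} computes to $\chi(\xi-\eta,\eta)|\xi-\eta||\eta|\varphi_{\leq 0}(\eta)+\tfrac{1}{2}\widetilde{\chi}(\xi-\eta,\eta)\,[(\xi-\eta)\eta+|\xi-\eta||\eta|]$, which matches $q_2(\xi,\eta)$ in \eqref{on20}. Adding the cubic-looking paradifferential piece $T_{|\partial_x|^3 h}P_{\geq 1}h$ coming from $II$ then yields the second line of \eqref{symm1}.

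The final step is to verify the error bounds \eqref{firstredcubic}. Each cubic contribution arises from one of four sources: the Dirichlet--Neumann remainder \eqref{on4.2}, the curvature remainder \eqref{on7.6}, the replacements $\phi\mapsto\omega$ and $B\mapsto|\partial_x|\omega$ (each generating a factor of $T_B P_{\geq 1}h$ which is already quadratic), and paraproduct remainders of the type \eqref{on7}. All of these carry at least one extra factor of $h$, $\omega$, or $\phi_x-\omega_x$ and can be absorbed into $O_{3,1}$ or $O_{3,1/2}$ using \eqref{bing1} together with \eqref{on4.25}; the detailed bookkeeping is deferred to the appendix. I expect the main obstacle to be Proposition \ref{CubicSummary}: the three summands $T_B(Vh_x)$, $T_{Bh_x}V$, and $T_V(\partial_x T_B P_{\geq 1}h)$ each lose a derivative individually, and recovering the claimed $1/2$-derivative gain requires exploiting the algebraic relation $B-Vh_x-G(h)\phi=0$ together with symmetrization and commutator estimates across the three paraproducts, under the limited low-frequency control provided by \eqref{bing2}.
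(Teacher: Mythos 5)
Your proposal is correct and follows essentially the same route as the paper: substitute the Dirichlet--Neumann paralinearization \eqref{on4} into the first equation, use the Bernoulli identity \eqref{on6} together with $B-G(h)\phi-Vh_x=0$ and the good-unknown cancellation of Proposition \ref{CubicSummary} in the second, and read off $m_2,q_2$ by the explicit Fourier computations you give (including the null identity killing the $\chi(\eta,\xi-\eta)$ piece of $G_2$), deferring the cubic bookkeeping to the appendix. The only quibble is the citation for absorbing the $B\mapsto|\partial_x|\omega$ replacement: the relevant fact is that $B-|\partial_x|\omega$ is quadratic (cf. \eqref{bvd4.5}), rather than the linear bound \eqref{on4.25}.
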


\vskip10pt
\subsection{Symmetrization of the equations}
Recall the water waves system \eqref{symm1} for the surface elevation $h$ and Alinhac's good unknown $\o$,  and the definitions \eqref{symm2}. In this section we aim to diagonalize and symmetrize this system,
and write it as a single scalar equation for a complex valued unknown $u$.
The main result can be summarized as follows:

\begin{proposition}\label{prosymm}
We define the real-valued functions $\gamma$, $p_1$, and $p_0$ by
\begin{align}
\label{symmsymbols}
\gamma := \sqrt{1+\sigma} - 1,\qquad p_1 := \gamma,\qquad p_0 :=  -\frac{3}{4} \partial_x \gamma,
\end{align}
where $\sigma$ is as in \eqref{symm2}, and the {\it{main complex-valued unknown}}
\begin{align}
\label{symm5}
u := |\partial_x| h- i |\partial_x|^{1/2} \o+T_{p_1} P_{\geq 1}|\partial_x| h + T_{p_0} P_{\geq 1}|\partial_x|^{-1} \partial_x h .
\end{align}
Then $u$ satisfies the evolution equation
\begin{align}
\label{symm10}
\partial_t u - i |\partial_x|^{3/2} u - i \Sigma_\gamma (u) = - \partial_x T_V u + \mathcal{N}_2(h,\omega) + \widetilde{U}_{\geq 3},
\end{align}
where
\begin{align}
\label{symm10'}
\widetilde{U}_{\geq 3} \in |\partial_x|^{1/2}O_{3,1/2} ,
\end{align}
the operator $\Sigma_\gamma$ is given by
\begin{align}
\label{symm11}
\Sigma_\gamma(u) = T_\gamma P_{\geq 1}|\partial_x|^{3/2} u
  - \frac{3}{4} T_{\partial_x \gamma} P_{\geq 1}\partial_x |\partial_x|^{-1/2} u ,
\end{align}
and the quadratic terms (expressed in $h$ and $\o$) are
\begin{align}
\label{symmquadterms}
\begin{split}
\mathcal{N}_2(h,\omega) & = - [ |\partial_x| , \partial_x T_{\partial_x \o} ] h
  - i  T_{\partial_x^2 \o} |\partial_x|^{1/2} \omega + i [ |\partial_x|^{1/2}, T_{\partial_x \o} \partial_x] \omega
\\
& + |\partial_x| M_2(\omega,h) - i |\partial_x|^{1/2} Q_2(\omega,\omega) - i |\partial_x|^{1/2} T_{|\partial_x|^3 h} P_{\geq 1}h.
\end{split}
\end{align}
We will express these quadratic terms as functions of $u$ and $\bar{u}$ via \eqref{symm5} later on.
\end{proposition}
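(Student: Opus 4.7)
The plan is to differentiate the definition \eqref{symm5} in time, substitute the system \eqref{symm1} for $\partial_t h$ and $\partial_t\omega$, and organize the resulting terms into the linear dispersion, the transport term, the symmetrization operator $\Sigma_\gamma$, the explicit quadratic forms, and the cubic remainder. First I would check that the linear terms cancel: the contributions from $\partial_t|\partial_x|h$ and $-i|\partial_x|^{1/2}\partial_t\omega$ combine to $|\partial_x|^2\omega+i|\partial_x|^{5/2}h$, which equals $i|\partial_x|^{3/2}(|\partial_x|h-i|\partial_x|^{1/2}\omega)$. The paradifferential corrections $T_{p_1}P_{\geq 1}|\partial_x|h$ and $T_{p_0}P_{\geq 1}|\partial_x|^{-1}\partial_x h$ inserted in \eqref{symm5} are cubic quantities (since $\sigma$, $\gamma$, and $\partial_x\gamma$ are quadratic in $(h,\omega)$), so their mismatch with the corresponding parts of $i|\partial_x|^{3/2}u$ produces only terms controlled by \eqref{on4.3} and \eqref{bing1}.

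Next, I would handle the transport terms. Applying $|\partial_x|$ to $-\partial_x T_V h$ and $-i|\partial_x|^{1/2}$ to $-T_V\omega_x$ yields $-|\partial_x|\partial_x T_V h + i|\partial_x|^{1/2}T_V\omega_x$. Rewriting this as $-\partial_x T_V|\partial_x|h + i\partial_x T_V|\partial_x|^{1/2}\omega$ plus commutators reconstructs $-\partial_x T_V u$ acting on the principal part of $u$, modulo $-\partial_x[|\partial_x|,T_V]h$, $i[|\partial_x|^{1/2},T_V\partial_x]\omega$, and $-iT_{\partial_x V}|\partial_x|^{1/2}\omega$. Replacing $V$ by its linear approximation $\partial_x\omega$ (the difference is cubic and absorbed into $\widetilde{U}_{\geq 3}$) produces the first three summands of $\mathcal{N}_2(h,\omega)$ in \eqref{symmquadterms}.

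The heart of the proof, and where I expect the main obstacle, is the symmetrization identity: the surface-tension term $-i|\partial_x|^{1/2}\partial_x T_\sigma h_x$, combined with the contributions from $\partial_t$ landing on the $|\partial_x|h$ and $|\partial_x|^{-1}\partial_x h$ inside the paradifferential corrections of $u$ (which produce $T_{p_1}P_{\geq 1}|\partial_x|^2\omega$ and $T_{p_0}P_{\geq 1}\partial_x\omega$ plus cubic), must match $i\Sigma_\gamma(u)$ modulo an error in $|\partial_x|^{1/2}O_{3,1/2}$. The choice $\gamma=\sqrt{1+\sigma}-1$ is dictated by $(1+\gamma)^2=1+\sigma$, so that $T_{1+\sigma}|\partial_x|^2$ factors at the level of principal symbols as $T_{1+\gamma}|\partial_x|\cdot T_{1+\gamma}|\partial_x|$; the coefficient $3/4=\tfrac12\cdot\tfrac32$ in $p_0=-\tfrac34\partial_x\gamma$ is the subleading correction forced by the order $3/2$ of the full dispersion operator. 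Verifying this identity reduces to a direct paradifferential symbol computation together with commutator estimates, where the $O_{3,1/2}$ bookkeeping uses the a priori decay of $\gamma$ and its derivatives coming from \eqref{bing1}.

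Finally, the remaining semilinear quadratic terms $M_2(\omega,h)$, $Q_2(\omega,\omega)$, and $T_{|\partial_x|^3 h}P_{\geq 1}h$ from \eqref{symm1} pass through the outer multipliers $|\partial_x|$ and $-i|\partial_x|^{1/2}$ unchanged, yielding the last line of \eqref{symmquadterms}. The cubic remainders $G_{\geq 3}\in O_{3,1}$ and $\Omega_{\geq 3}\in O_{3,1/2}$, after these outer multipliers, land in $|\partial_x|^{1/2}O_{3,1/2}$, as do the remainders from the transport commutators and from the symmetrization step. Collecting everything yields \eqref{symm10}--\eqref{symm10'}. The cubic bookkeeping is routine given Lemma \ref{touse} and the bound class \eqref{on4.3}; the genuinely delicate point is the symmetrization identity described in the third paragraph.
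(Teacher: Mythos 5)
Your overall route --- differentiate \eqref{symm5} in time, substitute \eqref{symm1}, observe the linear cancellation $|\partial_x|^2\omega+i|\partial_x|^{5/2}h=i|\partial_x|^{3/2}(|\partial_x|h-i|\partial_x|^{1/2}\omega)$, reconstruct $-\partial_x T_V u$ through commutators with $V$ replaced by $\partial_x\omega$ up to cubic errors, and match the remaining derivative-losing cubic terms against $i\Sigma_\gamma(u)$ via symbol identities --- is exactly the paper's proof (compare \eqref{symm20}, \eqref{symm25}, \eqref{symm30}, \eqref{symm50}--\eqref{symm51}). There is, however, one concrete misstep in your bookkeeping that would derail the symmetrization step if followed literally.

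In your second paragraph you claim that the mismatch between the corrections $T_{p_1}P_{\geq 1}|\partial_x|h+T_{p_0}P_{\geq 1}|\partial_x|^{-1}\partial_x h$ and the corresponding parts of $i|\partial_x|^{3/2}u$ produces only acceptable errors. It does not: the term $i|\partial_x|^{3/2}T_{p_1}P_{\geq 1}|\partial_x|h$ is cubic but of order $5/2$ in $h$, hence controlled only in $H^{N_0-3/2}$, whereas membership in $|\partial_x|^{1/2}O_{3,1/2}$ requires $H^{N_0}$ control at high frequencies; this is a loss of $3/2$ derivatives and the term cannot be discarded. In the paper these two terms are kept on the left-hand side of \eqref{symm50}, and at the symbol level they supply the $-p_1$ in \eqref{symm101}, so the identity to be solved is $\sigma-p_1=\gamma+\gamma p_1$, i.e.\ $\sigma=2\gamma+\gamma^2$, which is precisely your relation $(1+\gamma)^2=1+\sigma$. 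Your third paragraph lists only $i|\partial_x|^{3/2}T_\sigma|\partial_x|h$ and the $\omega$-terms $T_{p_1}P_{\geq 1}|\partial_x|^2\omega+T_{p_0}P_{\geq 1}\partial_x\omega$ as the quantities to be matched with $i\Sigma_\gamma(u)$; with the order-$5/2$ terms above discarded, the $h$-side matching would instead force $\sigma=\gamma+\gamma^2$, contradicting the formula for $\gamma$ that you correctly state. The fix is to keep $-i|\partial_x|^{3/2}\big(T_{p_1}P_{\geq 1}|\partial_x|h+T_{p_0}P_{\geq 1}|\partial_x|^{-1}\partial_x h\big)$ in the ledger and to expand $i\Sigma_\gamma(u)$ on the full $u$, including its own cubic corrections (which generate the quartic terms $T_\gamma P_{\geq 1}|\partial_x|^{3/2}T_{p_1}P_{\geq 1}|\partial_x|h$, etc., needed for the identity to close at both orders $5/2$ and $3/2$); the rest of your argument then goes through as in the paper.
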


\begin{proof}
We start by calculating:
\begin{align*}
\partial_t u  & = \partial_t \big( |\partial_x| h- i |\partial_x|^{1/2} \o+ T_{p_1} P_{\geq 1}|\partial_x| h + T_{p_0} P_{\geq 1}|\partial_x|^{-1} \partial_x h  \big)
\\
& =  |\partial_x| \partial_t h- i |\partial_x|^{1/2} \partial_t \o+T_{p_1} P_{\geq 1}|\partial_x| \partial_t h + T_{p_0} P_{\geq 1}|\partial_x|^{-1} \partial_x \partial_t h + |\partial_x|^{1/2}O_{3,1/2}
\\
& =  |\partial_x|^2 \omega - |\partial_x| \partial_x T_V h + |\partial_x| M_2(\omega,h)\\
& + i |\partial_x|^{5/2} h + i |\partial_x|^{3/2} T_\sigma |\partial_x| h
+ i |\partial_x|^{1/2}T_V \partial_x \omega - i |\partial_x|^{1/2} Q_2(\omega,\omega)-i|\partial_x|^{1/2}T_{|\partial_x|^3 h} P_{\geq 1}h\\
& + T_{p_1} P_{\geq 1}\big( |\partial_x|^2 \omega - |\partial_x| \partial_x T_V h \big)
+ T_{p_0} P_{\geq 1}\big( \partial_x \omega + |\partial_x| T_V h \big)+ |\partial_x|^{1/2}O_{3,1/2}.
\end{align*}

Gathering appropriately the above terms we can write
\begin{align}
\label{symm20}
 \begin{split}
\partial_t u  - i |\partial_x|^{3/2} u
& = - |\partial_x| \partial_x T_V h - T_{p_1} P_{\geq 1}|\partial_x| \partial_x T_V h + T_{p_0} P_{\geq 1}|\partial_x| T_V h + |\partial_x| M_2(\omega,h)
\\
& + i |\partial_x|^{1/2}T_V \partial_x \omega - i |\partial_x|^{1/2} Q_2(\omega,\omega)-i|\partial_x|^{1/2}T_{|\partial_x|^3 h} P_{\geq 1}h
\\
& + T_{p_1} P_{\geq 1}|\partial_x|^2 \omega + T_{p_0} P_{\geq 1}\partial_x \omega + i |\partial_x|^{3/2} T_\sigma |\partial_x| h
\\
& - i |\partial_x|^{3/2} T_{p_1} P_{\geq 1}|\partial_x| h - i |\partial_x|^{3/2} T_{p_0} P_{\geq 1}|\partial_x|^{-1} \partial_x h + |\partial_x|^{1/2}O_{3,1/2}.
\end{split}
\end{align}
We observe that the expression in the first two lines in the right-hand side of \eqref{symm20} is equal to
\begin{align}
\begin{split}
 \label{symm25}
& - \partial_x T_V u - [ |\partial_x| , \partial_x T_V ] h - i T_{\partial_x V} |\partial_x|^{1/2}  \omega + i [ |\partial_x|^{1/2}, T_V \partial_x] \omega- [ T_{p_1} |\partial_x| P_{\geq 1}, \partial_x T_V ] h\\
  &- [T_{p_0} {|\partial_x|}^{-1} \partial_xP_{\geq 1}, \partial_x T_V] h + |\partial_x| M_2(\omega,h) - i |\partial_x|^{1/2} Q_2(\omega,\omega) -i|\partial_x|^{1/2}T_{|\partial_x|^3 h} P_{\geq 1}h
\\
& = - \partial_x T_V u - [ |\partial_x| , \partial_x T_V ] h
  - i  T_{\partial_x V} |\partial_x|^{1/2} \omega + i [ |\partial_x|^{1/2}, T_V\partial_x] \omega
\\
& + |\partial_x| M_2(\omega,h) - i |\partial_x|^{1/2} Q_2(\omega,\omega) -i|\partial_x|^{1/2}T_{|\partial_x|^3 h}P_{\geq 1} h+ |\partial_x|^{1/2} O_{3,1/2}.
\end{split}
\end{align}
Using the definition of $V$ and $\o$ in \eqref{symm2}, we
see that the above quadratic terms coincide up to $|\partial_x|^{1/2}O_{3.1/2}$ with the quadratic terms appearing in
\eqref{symm10} with \eqref{symmquadterms}.

We then look at the cubic and higher order terms in the last two lines in the right-hand side of \eqref{symm20}.
Our aim is to show that they are of the form  $i \Sigma_\gamma(u)$, see \eqref{symm11}, up to acceptable errors.
For this purpose we first use the definition of $u$ in \eqref{symm5} and write
\begin{align}
\label{symm30}
\begin{split}
i \Sigma_\gamma(u) & = i T_\gamma P_{\geq 1}|\partial_x|^{5/2} h + T_\gamma P_{\geq 1}|\partial_x|^2 \o+i T_\gamma P_{\geq 1}|\partial_x|^{3/2} T_{p_1} P_{\geq 1}|\partial_x| h
\\
& +  i T_\gamma P_{\geq 1}|\partial_x|^{3/2}  T_{p_0} P_{\geq 1}|\partial_x|^{-1} \partial_x h - \frac{3i}{4} T_{\partial_x \gamma} P_{\geq 1}\partial_x |\partial_x|^{1/2} h- \frac{3}{4} T_{\partial_x \gamma} P_{\geq 1}\partial_x \o \\
&- \frac{3i}{4} T_{\partial_x \gamma} P_{\geq 1}\partial_x |\partial_x|^{-1/2} T_{p_1} P_{\geq 1}|\partial_x| h
  -\frac{3i}{4} T_{\partial_x \gamma} P_{\geq 1}\partial_x |\partial_x|^{-1/2}  T_{p_0} P_{\geq 1}|\partial_x|^{-1} \partial_x h.
\end{split}
\end{align}
The last term on the last line is $|\partial_x|^{1/2}O_{3,1/2}$ 
so we can disregard it.

We then compare the expression in \eqref{symm30} above and the last two lines of \eqref{symm20}.
Our proposition will be proven if the identities
\begin{align}
\label{symm50}
\begin{split}
& i |\partial_x|^{3/2} T_\sigma |\partial_x| h
  - i |\partial_x|^{3/2} T_{p_1} P_{\geq 1}|\partial_x| h - i |\partial_x|^{3/2} T_{p_0} P_{\geq 1}|\partial_x|^{-1} \partial_x h
\\
& =   i T_\gamma P_{\geq 1}|\partial_x|^{5/2} h + i T_\gamma P_{\geq 1}|\partial_x|^{3/2} T_{p_1} P_{\geq 1}|\partial_x| h
  +  i T_\gamma P_{\geq 1}|\partial_x|^{3/2}  T_{p_0} P_{\geq 1}|\partial_x|^{-1} \partial_x h
\\
& - \frac{3i}{4} T_{\partial_x \gamma} P_{\geq 1}\partial_x |\partial_x|^{1/2} h
  - \frac{3i}{4} T_{\partial_x \gamma} P_{\geq 1}\partial_x |\partial_x|^{-1/2} T_{p_1} P_{\geq 1}|\partial_x| h
  + |\partial_x|^{1/2}O_{3,1/2}
\\
\end{split}
\end{align}
and
\begin{align}
\label{symm51}
\begin{split}
& T_{p_1} P_{\geq 1}|\partial_x|^2 \omega + T_{p_0} P_{\geq 1}\partial_x \omega = T_\gamma P_{\geq 1}|\partial_x|^2 \o - \frac{3}{4} T_{\partial_x \gamma} P_{\geq 1}\partial_x \o  . 
\end{split}
\end{align}
hold true. We immediately notice that the second equation \eqref{symm51} is satisfied by imposing
$p_1 = \gamma$ and $p_0 = -3\partial_x \gamma/4$, as in \eqref{symmsymbols}.
We then need to verify that \eqref{symm50} can be satisfied for an appropriate choice of the function $\gamma$.

We notice that all the multipliers $P_{\geq 1}$ in \eqref{symm50} can be dropped, at the expense of acceptable errors. Therefore \eqref{symm50} holds provided one has the following two identities for the symbols:
\begin{align}
\label{symm101}
& \sigma - p_1 = \gamma + \gamma p_1,\\
  \label{symm102}
& \frac{3}{2} \partial_x \sigma - \frac{3}{2}\partial_x p_1 + p_0
  =  \frac{3}{2} \gamma \partial_x p_1 -\gamma p_0 +\frac{3}{4} \partial_x \gamma  +\frac{3}{4} \partial_x \gamma p_1 .
\end{align}

Since $p_1 = \gamma$, \eqref{symm101} becomes $2\gamma + \gamma^2 = \sigma$, which is satisfied by imposing the first identity in \eqref{symmsymbols}.
One can then verify that the last equation \eqref{symm102} is automatically satisfied.
\end{proof}

\begin{remark}
\normalfont
We notice that the symmetrization obtained in Proposition \ref{prosymm},
and the formulas \eqref{symmsymbols} for $p_1,p_0$ and $\gamma$, are simpler than the ones of \cite{ABZ1}.
This is not only because we are considering the $1$ dimensional case,
but also because of our choice of the main variables in which we express the system, that is the energy variables
$|\partial_x|h$ and $|\partial_x|^{1/2}\o$.
\end{remark}

\subsubsection{The quadratic terms}\label{seceq}
We analyze now the quadratic terms in \eqref{symmquadterms},
\begin{align}
\label{eqquad}
\begin{split}
&\mathcal{N}_2(h,\omega) = \sum_{k=1}^6 \mathcal{N}_2^k
\\
&\mathcal{N}_2^1 := - [ |\partial_x| , \partial_x T_{\partial_x \o} ] h,\,\,\,\,\,\quad\qquad \mathcal{N}_2^2 := - i  T_{\partial_x^2 \o} |\partial_x|^{1/2} \omega, \qquad\mathcal{N}_2^3 := i [ |\partial_x|^{1/2}, T_{\partial_x \o} \partial_x] \omega,
\\
&\mathcal{N}_2^4 := - i |\partial_x|^{1/2} T_{|\partial_x|^3 h} P_{\geq 1}h,\qquad \mathcal{N}_2^5 := |\partial_x| M_2(\omega,h),\qquad\quad \mathcal{N}_2^6 := - i |\partial_x|^{1/2} Q_2(\omega,\omega).
\end{split}
\end{align}

Since $\gamma = \sqrt{1+\sigma}-1$, using \eqref{symm5} we have
\begin{align}
\label{eqhou}
h & = \frac{1}{2}  |\partial_x|^{-1} (u + \bar{u}) + P_{\geq -4}O_{3,1},\qquad \o = - \frac{1}{2i} |\partial_x|^{-1/2}(u - \bar{u}).
\end{align}
Using these relations we can express the quadratic terms \eqref{eqquad} in terms of $u$ and $\bar{u}$, i.e.
\begin{align*}
\mathcal{N}_2^1 & = \frac{1}{4i} \big[ |\partial_x| , \partial_x T_{\partial_x|\partial_x|^{-1/2} (u - \bar{u}) } \big] |\partial_x|^{-1} (u + \bar{u})+|\partial_x|^{1/2}O_{3,1/2},
\\
\mathcal{N}_2^2 & =\frac{1}{4i}  T_{|\partial_x|^{3/2} (u - \bar{u})} (u - \bar{u}),
\\
\mathcal{N}_2^3 & = \frac{1}{4i} [ |\partial_x|^{1/2}, T_{\partial_x |\partial_x|^{-1/2}(u-\bar{u})} \partial_x] |\partial_x|^{-1/2}(u-\bar{u}),
\\
\mathcal{N}_2^4 & = \frac{1}{4i} |\partial_x|^{1/2} T_{|\partial_x|^{2} (u + \bar{u})} P_{\geq 1}|\partial_x|^{-1} (u + \bar{u})+|\partial_x|^{1/2}O_{3,1/2},
\\
\mathcal{N}_2^5 & = \frac{i}{4} |\partial_x| M_2 \big( |\partial_x|^{-1/2} (u-\bar{u}), |\partial_x|^{-1} (u+\bar{u}) \big)+|\partial_x|^{1/2}O_{3,1/2},
\\
\mathcal{N}_2^6 & = \frac{i}{4} |\partial_x|^{1/2} Q_2( |\partial_x|^{-1/2} (u-\bar{u}), |\partial_x|^{-1/2} (u-\bar{u})).
\end{align*}

We divide these terms into 8 groups, by distinguishing the different types of interactions with respect to the specific pairing of $u$ and $\bar{u}$,
and the type of frequency interactions ($\mathrm{Low}\times\mathrm{High}\to\mathrm{High}$ interactions associated to the symbol $\chi(\xi-\eta,\eta)$ and $\mathrm{High}\times\mathrm{High}\to\mathrm{Low}$ interactions associated to the symbol $\widetilde{\chi}(\xi-\eta,\eta)$). Recall the formulas \eqref{on20}. We define
\begin{align}
\label{a_++}
\begin{split}
a_{++}(\xi,\eta) := \frac{1}{4i} &\chi(\xi-\eta,\eta)\Big[-\frac{\xi (\xi-\eta)}{|\xi-\eta|^{1/2}} \Big(\frac{|\xi|}{|\eta|}-1\Big)
    + |\xi-\eta|^{3/2}
    + \frac{\eta (\xi-\eta)}{|\xi-\eta|^{1/2}} \Big(1 - \frac{|\xi|^{1/2}}{|\eta|^{1/2}} \Big)
\\
  & + |\xi-\eta|^2\frac{|\xi|^{1/2}\varphi_{\geq 1}(\eta)}{|\eta|} +\frac{|\xi|^2-|\xi|^{1/2}|\eta|^{3/2}}{|\eta|}|\xi-\eta|^{1/2}\varphi_{\leq 0}(\eta)\Big] ,
\end{split}
\end{align}
\begin{align}
\label{a_+-}
\begin{split}
a_{+-}(\xi,\eta) := \frac{1}{4i} &\chi(\xi-\eta,\eta)\Big[-\frac{\xi (\xi-\eta)}{|\xi-\eta|^{1/2}} \Big(\frac{|\xi|}{|\eta|}-1\Big)
    -|\xi-\eta|^{3/2}
    -\frac{\eta (\xi-\eta)}{|\xi-\eta|^{1/2}} \Big(1 - \frac{|\xi|^{1/2}}{|\eta|^{1/2}} \Big)
\\
  & + |\xi-\eta|^2\frac{|\xi|^{1/2}\varphi_{\geq 1}(\eta)}{|\eta|} +\frac{|\xi|^2+|\xi|^{1/2}|\eta|^{3/2}}{|\eta|}|\xi-\eta|^{1/2}\varphi_{\leq 0}(\eta)\Big] ,
\end{split}
\end{align}
\begin{align}
\label{a_-+}
\begin{split}
a_{-+}(\xi,\eta) := \frac{1}{4i} &\chi(\xi-\eta,\eta)\Big[\frac{\xi (\xi-\eta)}{|\xi-\eta|^{1/2}} \Big(\frac{|\xi|}{|\eta|}-1\Big)
    -|\xi-\eta|^{3/2}
    -\frac{\eta (\xi-\eta)}{|\xi-\eta|^{1/2}} \Big(1 - \frac{|\xi|^{1/2}}{|\eta|^{1/2}} \Big)
\\
  & + |\xi-\eta|^2\frac{|\xi|^{1/2}\varphi_{\geq 1}(\eta)}{|\eta|}
  +\frac{-|\xi|^2+|\xi|^{1/2}|\eta|^{3/2}}{|\eta|}|\xi-\eta|^{1/2}\varphi_{\leq 0}(\eta)\Big] ,
\end{split}
\end{align}
\begin{align}
\label{a_--}
\begin{split}
a_{--}(\xi,\eta) := \frac{1}{4i} &\chi(\xi-\eta,\eta)\Big[\frac{\xi (\xi-\eta)}{|\xi-\eta|^{1/2}} \Big(\frac{|\xi|}{|\eta|}-1\Big)
    + |\xi-\eta|^{3/2}
    + \frac{\eta (\xi-\eta)}{|\xi-\eta|^{1/2}} \Big(1 - \frac{|\xi|^{1/2}}{|\eta|^{1/2}} \Big)
\\
  & + |\xi-\eta|^2\frac{|\xi|^{1/2}\varphi_{\geq 1}(\eta)}{|\eta|}
  +\frac{-|\xi|^2-|\xi|^{1/2}|\eta|^{3/2}}{|\eta|}|\xi-\eta|^{1/2}\varphi_{\leq 0}(\eta)\Big] ,
\end{split}
\end{align}
and
\begin{align}
\label{b++}
b_{++}(\xi,\eta) & = \frac{i}{4}\frac{|\xi| \widetilde{m}_2(\xi,\eta)}{|\xi-\eta|^{1/2} |\eta|}
  + \frac{i}{4}\frac{|\xi|^{1/2} \widetilde{q}_2(\xi,\eta)}{|\xi-\eta|^{1/2} |\eta|^{1/2}},
\end{align}
\begin{align}
\label{b+-}
\begin{split}
b_{+-}(\xi,\eta) & = \frac{i}{4}\frac{|\xi| \widetilde{m}_2(\xi,\eta)}{|\xi-\eta|^{1/2} |\eta|}
  - \frac{i}{4}\frac{|\xi|^{1/2} \widetilde{q}_2(\xi,\eta)}{|\xi-\eta|^{1/2}|\eta|^{1/2} },
\end{split}
\end{align}
\begin{align}
\label{b-+}
\begin{split}
b_{-+}(\xi,\eta) & = - \frac{i}{4}\frac{|\xi| \widetilde{m}_2(\xi,\eta)}{|\xi-\eta|^{1/2} |\eta|}
  - \frac{i}{4}\frac{|\xi|^{1/2} \widetilde{q}_2(\xi,\eta)}{ |\xi-\eta|^{1/2}|\eta|^{1/2}},
\end{split}
\end{align}
\begin{align}
\label{b--}
b_{--}(\xi,\eta) & = - \frac{i}{4}\frac{|\xi| \widetilde{m}_2(\xi,\eta)}{|\xi-\eta|^{1/2} |\eta|}
  + \frac{i}{4}\frac{|\xi|^{1/2} \widetilde{q}_2(\xi,\eta)}{|\xi-\eta|^{1/2} |\eta|^{1/2}},
\end{align}
where
\begin{align}
\label{m_2q_2}
\begin{split}
\widetilde{m}_2(\xi,\eta) & := \widetilde{\chi}(\xi-\eta,\eta)] \big[ \xi(\xi-\eta)-|\xi||\xi-\eta| \big],
\\
\widetilde{q}_2(\xi,\eta) & := \widetilde{\chi}(\xi-\eta,\eta)\frac{\eta(\xi-\eta)+|\eta||\xi-\eta|}{2}.
\end{split}
\end{align}

Using the operator-symbol notation \eqref{Moutm} and \eqref{on3}, we notice that
\begin{align}
\sum_{k=1}^6\mathcal{N}_2^k =\sum_{X\in\{A,B\}}X_{++} (u,u) + X_{+-}(u,\bar{u}) +  X_{-+}(\bar{u},u) +  X_{--}(\bar{u},\bar{u})
  + |\partial_x|^{1/2} O_{3,1/2} .
\end{align}
Let us also denote
\begin{align}
\label{notsum}
\sum_{\star} := \sum_{(\epsilon_1,\epsilon_2) \in \{ (+,+),(+,-),(-,+),(-,-)\}} .
\end{align}
For any complex-valued function $f$, we use the notation $f_+ := f$, $f_- := \bar{f}$. We summarize the above computations in the following proposition:

\begin{proposition}\label{proequ}
Let $(h,\phi)$ be a solution of \eqref{on1} satisfying the bootstrap assumption \eqref{bing1},
and let $u$ be defined as in \eqref{symm5} with $\o,\sigma,V$ given by \eqref{symm2}.
Then we have
\begin{align}
\label{equ}
\partial_t u & - i |\partial_x|^{3/2} u - i \Sigma_\gamma (u) = - \partial_x T_V u  + \mathcal{N}_u + U_{\geq 3}
\end{align}
with $U_{\geq 3}\in |\partial_x|^{1/2}O_{3,1/2}$,
\begin{align}
\label{equSigma}
\begin{split}
\Sigma_\gamma = T_\gamma P_{\geq 1}|\partial_x|^{3/2}  - \frac{3}{4} T_{\partial_x \gamma} P_{\geq 1}\partial_x |\partial_x|^{-1/2},\qquad \g = \sqrt{1+\sigma}-1 ,
\end{split}
\end{align}
and
\begin{align}
\label{equN}
\mathcal{N}_u & = \sum_{\star}\big[A_{\eps_1\eps_2}(u_{\eps_1},u_{\eps_2})+B_{\eps_1\eps_2}(u_{\eps_1},u_{\eps_2})\big].
\end{align}
The symbols of the quadratic operators are given in \eqref{a_++}--\eqref{b--}. Moreover, for any $t\in[0,T]$,
\begin{equation}\label{bing11}
\langle t\rangle^{-p_0}\mathcal{K}'_{I}(t)+\langle t\rangle^{-4p_0}\mathcal{K}'_{S}(t)+\langle t\rangle^{1/2}\sum_{k\in\mathbb{Z}}(2^{N_2k}+2^{-k/10})\big\|P_ku(t)\big\|_{L^\infty}\lesssim\e_1,
\end{equation}
where, for $\mathcal{O}\in\{I,S\}$,
\begin{equation}\label{bing21}
\big[\mathcal{K}'_{\mathcal{O}}(t)\big]^2:=\int_{\mathbb{R}}|\widehat{\mathcal{O}u}(\xi,t)|^2\cdot \big(|\xi|^{-1}+|\xi|^{2N_{\mathcal{O}}}\big)\mathcal{P}((1+t)^2|\xi|)\,d\xi.
\end{equation}
The initial data $u(0)$ satisfy the stronger bounds (recall that $0<\e_0\leq\e_1\leq\e_0^{2/3}\ll 1$)
\begin{equation}\label{bing31}
\sum_{\mathcal{O}\in\{I,S\}}\big\|\big(|\partial_x|^{-1/2+p_1}+|\partial_x|^{N_{\mathcal{O}}}\big)\mathcal{O}u(0)\big\|_{L^2}\lesssim \e_0.
\end{equation}
\end{proposition}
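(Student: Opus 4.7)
The plan is to observe that Proposition \ref{proequ} is essentially a repackaging of Proposition \ref{prosymm} together with the substitutions \eqref{eqhou}, followed by a routine verification that the norm \eqref{bing21} of $u$ is controlled by the norm \eqref{bing2} of $U$. The main work has already been done in the subsection preceding the proposition; I would organize the argument in three steps.

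First, I would invoke Proposition \ref{prosymm} directly: this yields equation \eqref{symm10} which is exactly \eqref{equ} with $\widetilde{U}_{\geq 3}$ playing the role of $U_{\geq 3}$, the operator $\Sigma_\gamma$ in the form \eqref{equSigma}, and $\mathcal{N}_2(h,\omega)=\sum_{k=1}^{6}\mathcal{N}_2^k$ given by \eqref{eqquad}. The relations \eqref{symmsymbols} and \eqref{symm2} guarantee that $\gamma=\sqrt{1+\sigma}-1$ is a quadratic order-zero expression in $h_x$.

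Second, I would substitute \eqref{eqhou} into each $\mathcal{N}_2^k$ to convert them into bilinear expressions in $u_{\epsilon_1}$ and $u_{\epsilon_2}$. The $P_{\geq -4}O_{3,1}$ remainder in the formula for $h$ contributes only cubic or higher errors, which are absorbed into $|\partial_x|^{1/2}O_{3,1/2}$. Decomposing the symbol of each pairing via the partition $1=\chi(\xi-\eta,\eta)+\widetilde{\chi}(\xi-\eta,\eta)+\chi(\eta,\xi-\eta)$, the paraproducts $\mathcal{N}_2^1,\mathcal{N}_2^2,\mathcal{N}_2^3,\mathcal{N}_2^4$ are by construction supported on the $\chi(\xi-\eta,\eta)$ region and contribute the four summands inside the brackets of \eqref{a_++}--\eqref{a_--}: the commutator $[|\partial_x|,\partial_x T_\cdot]$ of $\mathcal{N}_2^1$ produces the $\xi(\xi-\eta)(|\xi|/|\eta|-1)$ factor, $\mathcal{N}_2^2$ produces the $|\xi-\eta|^{3/2}$ factor, $\mathcal{N}_2^3$ produces the $\eta(\xi-\eta)(1-|\xi|^{1/2}/|\eta|^{1/2})$ factor, and $\mathcal{N}_2^4$ produces the $|\xi-\eta|^2|\xi|^{1/2}\varphi_{\geq 1}(\eta)/|\eta|$ factor. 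The operators $M_2,Q_2$ in $\mathcal{N}_2^5,\mathcal{N}_2^6$ split: their $\chi$-localized parts (carrying the $\varphi_{\leq 0}(\eta)$ cutoff, per \eqref{on20}) complete the last summand inside the brackets of $a_{\epsilon_1\epsilon_2}$, while their $\widetilde{\chi}$-localized parts contribute $b_{\epsilon_1\epsilon_2}$ after the division by $|\xi-\eta|^{1/2}|\eta|^{s}$ produced by inserting \eqref{eqhou} and by the outer factors $|\partial_x|$, $|\partial_x|^{1/2}$. The sign choices in the four cases $(\epsilon_1,\epsilon_2)\in\{\pm,\pm\}$ arise from $h=\tfrac{1}{2}|\partial_x|^{-1}(u_++u_-)$ and $\omega=-\tfrac{1}{2i}|\partial_x|^{-1/2}(u_+-u_-)$, and reading off these signs across the six $\mathcal{N}_2^k$ yields precisely \eqref{a_++}--\eqref{b--}.

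Third, for the norm bounds \eqref{bing11} and \eqref{bing31}, I would combine \eqref{symm5} with \eqref{symm2} to write
\begin{equation*}
u - U = T_{p_1}P_{\geq 1}|\partial_x|h+T_{p_0}P_{\geq 1}|\partial_x|^{-1}\partial_x h+ i|\partial_x|^{1/2}T_B P_{\geq 1}h,
\end{equation*}
where $p_0,p_1,B$ are order-zero expressions that are quadratic in $(h_x,\phi_x)$. The pointwise part of \eqref{bing1} gives $L^\infty$-control of these coefficients, and Lemma \ref{touse}(ii) applied with $p=\infty,q=2$ then bounds the $L^2$-norms (and the $\widetilde{W}^N$-norm) of the correction $u-U$ by $\e_1\cdot\mathcal{K}_{\mathcal{O}}(t)$, hence as a genuine perturbation. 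The projector $P_{\geq 1}$ ensures that the corrections are supported away from low frequencies, so the weight $\mathcal{P}((1+t)^2|\xi|)(|\xi|^{-1}+|\xi|^{2N_{\mathcal{O}}})$ transfers immediately between $U$ and $u$ on the low-frequency piece. The $S$-weighted norm is handled by \eqref{lemcomm}--\eqref{lemcomm2}, which produces only additional paraproduct-type terms that are again cubic. The initial data bound \eqref{bing31} follows identically from \eqref{bing3} at $t=0$, since the quadratic corrections are bounded by $\e_0^2\ll\e_0$. The most delicate point — and the only place where care is really required — is the algebraic bookkeeping in step two, in particular the verification that the various commutator and paraproduct symbols assemble exactly into the claimed symbols $a_{\epsilon_1\epsilon_2}$, $b_{\epsilon_1\epsilon_2}$ without leaving uncompensated high-low asymmetries; no new idea is needed beyond symbol calculus and the paraproduct identities \eqref{on7}.
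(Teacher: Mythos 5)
Your proposal is correct and follows essentially the same route as the paper: Proposition \ref{proequ} is obtained there by invoking Proposition \ref{prosymm}, substituting \eqref{eqhou} into the six terms $\mathcal{N}_2^k$ of \eqref{eqquad} to read off the symbols \eqref{a_++}--\eqref{b--} (with the $\chi$- and $\widetilde{\chi}$-localized pieces of $M_2,Q_2$ distributed exactly as you describe), and deducing \eqref{bing11}--\eqref{bing31} from \eqref{bing1}--\eqref{bing3} via the quadratic, high-frequency-supported correction $u-U$ from \eqref{symm5}. The only quibble is that $B$ is linear (not quadratic) at leading order — the term $i|\partial_x|^{1/2}T_BP_{\geq 1}h$ is quadratic overall, which is what your argument actually uses.
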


The bounds \eqref{bing11}--\eqref{bing31} follow from the apriori assumptions \eqref{bing1}--\eqref{bing3} and the definition \eqref{symm5} (notice that $u=|\partial_x|h-i|\partial_x|^{1/2}\phi$ at very low frequencies).

We notice that the only quasilinear quadratic contributions to the nonlinearity in \eqref{equ}
come from the term $- \partial_x T_V u$ on the right-hand side of \eqref{equ}.
All of the other quadratic contributions do not lose derivatives.
The term $- i \Sigma_\gamma (u)$ arises from the presence of surface tension. This term loses $3/2$ derivative but
it is essentially a self-adjoint operator. We will exploit this structure below to perform energy estimates.

We have thus reduced the water waves system \eqref{on1} to the equation \eqref{equ} above for a single complex valued unknown $u$. From now on we will work with $u$ as our main variable.
We will also keep $V$ as a variable and keep in mind that, in view of \eqref{symm2} and \eqref{eqhou},
\begin{align}
\label{Vu+baru0}
\begin{split}
& V = -\frac{1}{2i} \partial_x |\partial_x|^{-1/2} (u-\overline{u}) + V_2 ,
\qquad V_2 := \partial_x T_B P_{\geq 1}h - B h_x . 
\end{split}
\end{align}

\subsection{Higher order derivatives and weights}\label{operator{D}}
To implement the energy method we need to control the increment of higher order Sobolev norms of the main variable $u$. Because of the presence of the operator $\Sigma_\gamma$, which is of order $3/2$,
one cannot construct higher order energies by applying regular derivatives to the equation.
We apply instead suitably modified versions of derivatives to the equation.
The differential operator we will use, dictated by the structure of the equation, is given by $\D := |\partial_x|^{3/2} + \Sigma_\g$, see the definition \eqref{equSigma}. Let $k\in[1,2N_0/3]$ be an even integer and define
\begin{align}
\label{Wk}
W=W_k:=\D^k u,\qquad \D := |\partial_x|^{3/2} + \Sigma_\g
\end{align}
Below we derive the equation satisfied by $W$.

\begin{proposition}\label{proeqW}
Let $u$ be the solution of \eqref{equ}--\eqref{equN}, and let $W = W_k$ be defined by \eqref{Wk}. Let $N:=3k/2$.
Then we have
\begin{align}
\label{eqW}
\partial_t W - i |\partial_x|^{3/2} W - i \Sigma_\gamma (W) =  \mathcal{Q}_W + |\pa_x|^{N} \mathcal{N}_W + \mathcal{O}_W
\end{align}
where $\Sigma_\gamma$ is as in \eqref{equSigma}, and the nonlinearities are
\begin{align}
\label{eqWN}
\what{\mathcal{Q}_W}(\xi) & := \frac{1}{2\pi} \int_\R  q_{N}(\xi,\eta)\what{V}(\xi-\eta) \what{W}(\eta) \, d\eta,\qquad q_{N}(\xi,\eta):=- i\xi \frac{|\xi|^N}{|\eta|^N} \chi(\xi-\eta,\eta),
\end{align}
and
\begin{align}
\label{eqWN2}
 \mathcal{N}_W & = \sum_{\star}\big[A_{\eps_1\eps_2}\big( u_{\eps_1}, |\pa_x|^{-N}W_{\eps_2} \big)+B_{\eps_1\eps_2}\big( u_{\eps_1}, u_{\eps_2} \big)\big].
\end{align}
The cubic nonlinearity $\mathcal{O}_W$ satisfies the bounds
\begin{equation}\label{cubboundW}
\begin{split}
\langle t\rangle^{1-p_0}\|\mathcal{O}_W(t)\|_{\dot{H}^{N_0-N,-1/2}}+\langle t\rangle^{1-4p_0}\|S\mathcal{O}_W(t)\|_{\dot{H}^{N_1-N,-1/2}}&\lesssim \e_1^3,\qquad\text{ if }N\leq N_1,\\
\langle t\rangle^{1-p_0}\|\mathcal{O}_W(t)\|_{\dot{H}^{N_0-N,-1/2}}&\lesssim \e_1^3,\qquad\text{ if }N\in[N_1,N_0].
\end{split}
\end{equation}
\end{proposition}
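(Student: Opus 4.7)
The approach is to apply $\mathcal{D}^k$ to equation \eqref{equ} and sort the resulting terms into the three pieces on the right-hand side of \eqref{eqW}. Since $\mathcal{D}$ depends on $t$ through $\gamma$, I would first write
$$
\partial_t W - i\mathcal{D} W = \mathcal{D}^k(\partial_t u - i\mathcal{D} u) + [\partial_t, \mathcal{D}^k] u ,
$$
so that the LHS matches the desired $\partial_t W - i|\partial_x|^{3/2} W - i\Sigma_\gamma W$. The commutator $[\partial_t, \mathcal{D}^k]$ generates sums of the form $\mathcal{D}^j T_{\partial_t \gamma} \mathcal{D}^{k-1-j}$ (and analogous terms with $T_{\partial_x \partial_t \gamma}$); since $\gamma$ is quadratic in $h_x$ and $\partial_t h = G(h)\phi$ contributes an additional factor of $u$, each of these terms is cubic and naturally belongs to $\mathcal{O}_W$.

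Next I would substitute \eqref{equ} to obtain $\mathcal{D}^k(\partial_t u - i\mathcal{D} u) = \mathcal{D}^k(-\partial_x T_V u + \mathcal{N}_u + U_{\geq 3})$. Decompose $\mathcal{D}^k = |\partial_x|^N + R_k$, where $R_k$ collects all contributions containing at least one factor of $\gamma$ (hence is quadratic in $u$ as an operator coefficient). Applied to the quadratic terms $-\partial_x T_V u$ and $\mathcal{N}_u$, the remainder $R_k$ yields cubic expressions, and applied to $U_{\geq 3} \in |\partial_x|^{1/2} O_{3,1/2}$ everything is cubic. The principal term $|\partial_x|^N(-\partial_x T_V u)$ is a paraproduct with symbol $-i\xi |\xi|^N \chi(\xi-\eta,\eta)$; inverting $W = |\partial_x|^N u + (\text{cubic})$ gives $u = |\partial_x|^{-N} W + (\text{cubic})$, and substituting this into the high-frequency slot produces precisely $\mathcal{Q}_W$ with the symbol $q_N$ of \eqref{eqWN}, up to cubic errors. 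The term $|\partial_x|^N \mathcal{N}_u$ is treated similarly using $\mathcal{N}_u = \sum_\star(A_{\epsilon_1\epsilon_2} + B_{\epsilon_1\epsilon_2})$: in the low-high pieces $A_{\epsilon_1\epsilon_2}$, the support condition $|\xi-\eta|\ll|\eta|$ gives $|\xi|\sim|\eta|$ so the substitution $u_{\epsilon_2} = |\partial_x|^{-N}W_{\epsilon_2} + (\text{cubic})$ is safe and recovers $|\partial_x|^N A_{\epsilon_1\epsilon_2}(u_{\epsilon_1}, |\partial_x|^{-N} W_{\epsilon_2})$; the high-high-to-low pieces $B_{\epsilon_1\epsilon_2}$ are kept as $|\partial_x|^N B_{\epsilon_1\epsilon_2}(u_{\epsilon_1}, u_{\epsilon_2})$ without any substitution, since no derivative loss arises there.

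The hard part is verifying the cubic bounds \eqref{cubboundW} for $\mathcal{O}_W$ uniformly across all the error contributions, distinguishing the two regimes $N\leq N_1$ (where the weighted $S$-bound must also be controlled) and $N_1\leq N\leq N_0$. The essential tools are the commutation identity \eqref{lemcomm}--\eqref{lemcomm2} distributing $S$ across bilinear operators, the standard paradifferential gain of one derivative in commutators such as $[|\partial_x|^{3/2}, T_\gamma]$ and $[T_V \partial_x, T_\gamma]$, the elliptic estimates relating $\gamma$ to $h$ through \eqref{symmsymbols} together with the bootstrap bounds \eqref{bing1}, and the pointwise decay \eqref{bing11} of $u$. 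With these ingredients every error term produced above falls in the class $O_{3,\alpha}$ at the correct regularity level, yielding \eqref{cubboundW}.
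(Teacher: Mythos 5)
Your proposal follows the paper's proof of Proposition \ref{proeqW} essentially step for step: apply $\mathcal{D}^k$ to \eqref{equ}, absorb $[\partial_t,\mathcal{D}^k]u$ and the action of $\mathcal{D}^k-|\partial_x|^N$ on the nonlinearity into $\mathcal{O}_W$, replace $u$ by $|\partial_x|^{-N}W$ in the high-frequency slots of $-\partial_x T_V u$ and of the $A_{\eps_1\eps_2}$ terms (keeping the $B_{\eps_1\eps_2}$ terms untouched), and defer the verification of the cubic bounds to elliptic estimates; the paper delegates exactly this last step to Proposition \ref{CubicSummary}.

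The one step that fails as literally written is your treatment of the quasilinear term. If you write $-|\partial_x|^N\partial_x T_V u=\mathcal{Q}_W-|\partial_x|^N\partial_x T_V\big(u-|\partial_x|^{-N}W\big)$ and declare the second term a cubic error, you lose a derivative: $u-|\partial_x|^{-N}W=-|\partial_x|^{-N}(\mathcal{D}^k-|\partial_x|^N)u$ is cubic and lies only in (roughly) $H^{N_0}$, so applying the order-$(N+1)$ operator $|\partial_x|^N\partial_x T_V$ lands in $H^{N_0-N-1}$, one derivative short of the $\dot{H}^{N_0-N,-1/2}$ bound demanded by \eqref{cubboundW}; the extra time decay coming from the term being quartic does not recover the missing derivative. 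The fix, which is exactly how the paper's identity \eqref{eqW5} is arranged, is to reorganize this error as $[\partial_x T_V,|\partial_x|^N](u-|\partial_x|^{-N}W)+[\partial_x T_V,\mathcal{D}^k-|\partial_x|^N]u$, where each commutator gains one derivative and the terms then satisfy \eqref{cubboundW}. You do list paradifferential commutator gains among your tools (naming $[|\partial_x|^{3/2},T_\gamma]$ and $[T_V\partial_x,T_\gamma]$), but the commutator you actually need here is $[\partial_x T_V,|\partial_x|^N]$, and this rearrangement is the essential quasilinear point of the proposition, so it must be made explicit. The remaining error terms in your decomposition ($[\partial_t,\mathcal{D}^k]u$, $(\mathcal{D}^k-|\partial_x|^N)\mathcal{N}_u$, $|\partial_x|^N(\mathcal{N}_u-\mathcal{N}_W)$, $\mathcal{D}^k U_{\geq 3}$) are harmless because the operators involved are semilinear in the relevant slot, so your argument goes through for those.
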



\begin{proof}
The starting point is the equation \eqref{equ}. Applying $\D^k$ to this equation we see that
\begin{align*}
\begin{split}
& \pa_t \D ^k u -i  \D \D^k u = \big[ \pa_t, \D^k \big] u
  - \D^k\partial_x T_V  u   + \D^k \mathcal{N}_u +\D^k|\partial_x|^{1/2}O_{3,1/2}.
\end{split}
\end{align*}
Since $W = \D^k u$, we can rewrite this equation as
\begin{align}
\label{eqW5}
\begin{split}
\partial_t W - i \D W  & =
  - |\pa_x|^N \pa_x T_V |\pa_x|^{-N} W + |\partial_x|^N \mathcal{N}_W
\\
& + \big[ \pa_t, \D^k \big] u + \big[ \partial_x T_V, |\partial_x|^N \big ] (u - |\pa_x|^{-N}W)
  + \big[ \partial_x T_V, \D^k - |\partial_x|^N \big ] u
  \\
  & + (\D^k - |\partial_x|^N ) \mathcal{N}_u + |\partial_x|^N (\mathcal{N}_u - \mathcal{N}_W)+\D^k|\partial_x|^{1/2}O_{3,1/2} .
\end{split}
\end{align}
To prove the proposition it suffices to show that all the terms in the last two lines of \eqref{eqW5} satisfy the cubic bounds \eqref{cubboundW}.
This is proved in Proposition \ref{CubicSummary}.
\end{proof}

Define the weighted variable
\begin{align}
\label{defZ}
Z_k := S \D^{k} u , \qquad  \D = |\partial_x|^{3/2} + \Sigma_\g,\qquad k\in[0,2N_1/3],
\end{align}
where $S=(3/2)t\partial_t+x\partial_x$. The next lemma gives the evolution equation for the variables $Z_k$.

\begin{proposition}\label{lemeqZ}
Assume $k\in[0,2N_1/3]\cap\mathbb{Z}$, $N=3k/2$, and let $Z=Z_k$. Then we have
\begin{align}
\label{eqZ}
\pa_t Z - i |\pa_x|^{3/2} Z  - i \Sigma_{\gamma} Z = \mathcal{Q}_Z + \mathcal{N}_{Z,1}
  + \mathcal{N}_{Z,2} + \mathcal{N}_{Z,3} + \mathcal{O}_Z ,
\end{align}
where the quasilinear quadratic nonlinearity $\mathcal{Q}_Z$ is given by
\begin{align}\label{eqZN1}
\mathcal{Q}_Z := Q_{N}(V,Z) &
  = \mathcal{F}^{-1} \Big[\frac{1}{2\pi} \int_{\R} q_{N} (\xi,\eta) \what{V}(\xi-\eta) \what{Z}(\eta) \,d\eta\Big],
\end{align}
and $q_N$ is as in \eqref{eqWN}. The quadratic semilinear terms are given by
\begin{align}
\label{eqZN2}
\begin{split}
\mathcal{N}_{Z,1} & := |\pa_x|^{N} \sum_{\star} A_{\eps_1\eps_2} (u_{\eps_1}, |\pa_x|^{-N} Z_{\eps_2}) ,
\\
\mathcal{N}_{Z,2} & := (i/2) Q_{N} (\partial_x|\partial_x|^{-1/2}(Su - \bar{Su}), |\pa_x|^{N} u)
  \\
  & + |\pa_x|^{N} \sum_{\star} \big[ A_{\eps_1\eps_2} (Su_{\eps_1}, u_{\eps_2})
  + B_{\eps_1\eps_2} (S u_{\eps_1}, u_{\eps_2}) + B_{\eps_1\eps_2} (u_{\eps_1}, Su_{\eps_2}) \big] ,
\\
\mathcal{N}_{Z,3} &:= 
  (i/2) Q_{N} \big(\partial_x|\partial_x|^{-1/2} (u-\bar{u}), |\pa_x|^{N}  u\big)
\\
& + (i/2) \widetilde{Q}_{N} \big(\partial_x|\partial_x|^{-1/2} (u-\bar{u}), |\pa_x|^{N} u\big) + |\pa_x|^{N} (3/2-N)\mathcal{N}_u
\\
  & + |\pa_x|^{N} \sum_{\star} \big[ N A_{\eps_1\eps_2} (u_{\eps_1}, u_{\eps_2})
  + \widetilde{A}_{\eps_1\eps_2} (u_{\eps_1}, u_{\eps_2}) +  \widetilde{B}_{\eps_1\eps_2} (u_{\eps_1}, u_{\eps_2}) \big].
\end{split}
 \end{align}
Here we are using the definition \eqref{lemcomm}-\eqref{lemcomm2} for a bilinear operator $\widetilde{M}$
with symbol $\widetilde{m}$. The remainder term $\mathcal{O}_Z$ is cubic and satisfies
\begin{align}
\label{eqZR}
{\| \mathcal{O}_Z(t) \|}_{\dot{H}^{0,-1/2}} \lesssim \e_1^3 \langle t\rangle^{-1+4p_0} .
\end{align}
\end{proposition}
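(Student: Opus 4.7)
\bigskip\noindent\textbf{Proof proposal.} The plan is to apply the scaling vector-field $S$ to the equation \eqref{eqW} for $W = W_k$ from Proposition \ref{proeqW}, and then to identify on the right-hand side the correct quasilinear quadratic term $\mathcal{Q}_Z$, the three semilinear quadratic groups $\mathcal{N}_{Z,1},\mathcal{N}_{Z,2},\mathcal{N}_{Z,3}$, and an acceptable cubic remainder $\mathcal{O}_Z$. The basic commutation identities are $[S,\partial_t]=-(3/2)\partial_t$, $[S,|\partial_x|^s]=-s|\partial_x|^s$ for any real $s$ (in particular $s=3/2$ and $s=N$), and $[S,\Sigma_\gamma] = -(3/2)\Sigma_\gamma + \Sigma_{S\gamma} + (\text{smoothing paradifferential corrections})$. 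Using these, and writing $R_W := i\Sigma_\gamma W + \mathcal{Q}_W + |\partial_x|^N\mathcal{N}_W + \mathcal{O}_W$ for the right-hand side of \eqref{eqW}, one gets
\begin{equation*}
\partial_t Z - i|\partial_x|^{3/2} Z - i\Sigma_\gamma Z = S R_W + (3/2) R_W + i[S,\Sigma_\gamma]W - (3/2)i\Sigma_\gamma W + i\Sigma_\gamma Z - i\Sigma_\gamma Z.
\end{equation*}
The critical observation is that the $(3/2) i \Sigma_\gamma W$ appearing from $(3/2)R_W$ cancels exactly against the $-(3/2) i \Sigma_\gamma W$ produced by $i[S,\Sigma_\gamma]W$, leaving only the cubic remainder $i\Sigma_{S\gamma}W$ from that commutator, which is placed in $\mathcal{O}_Z$.

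\medskip\noindent
Next I would process $S\mathcal{Q}_W + (3/2)\mathcal{Q}_W$ with $\mathcal{Q}_W = Q_N(V,W)$. Using \eqref{lemcomm}--\eqref{lemcomm2}, $SQ_N(V,W) = Q_N(V,SW) + Q_N(SV,W) + \widetilde{Q}_N(V,W) = \mathcal{Q}_Z + Q_N(SV,W) + \widetilde{Q}_N(V,W)$. I would split $V = V_{\mathrm{lin}} + V_2$ with $V_{\mathrm{lin}} = (i/2)\partial_x|\partial_x|^{-1/2}(u-\bar u)$ from \eqref{Vu+baru0}, apply $[S,\partial_x|\partial_x|^{-1/2}] = -(1/2)\partial_x|\partial_x|^{-1/2}$ to compute $SV_{\mathrm{lin}}$, and keep $Q_N(SV_2,W),\ \widetilde{Q}_N(V_2,W),\ (3/2)Q_N(V_2,W)$ as cubic contributions to $\mathcal{O}_Z$. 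After combining the $(-1/2)Q_N(V_{\mathrm{lin}},W)$ from $SV_{\mathrm{lin}}$ with the $(3/2)Q_N(V_{\mathrm{lin}},W)$ from $(3/2)\mathcal{Q}_W$, one obtains exactly the two cubic-looking but quadratic-in-$u$ $Q_N$ contributions of $\mathcal{N}_{Z,3}$ and the $Su$-in-the-first-slot contribution to $\mathcal{N}_{Z,2}$ (replacing $W$ by $|\partial_x|^N u$ modulo cubic errors, using $W = \mathcal{D}^k u = |\partial_x|^N u + \text{cubic}$).

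\medskip\noindent
For the semilinear piece I would write $S|\partial_x|^N \mathcal{N}_W + (3/2)|\partial_x|^N\mathcal{N}_W = |\partial_x|^N S\mathcal{N}_W + (3/2-N)|\partial_x|^N\mathcal{N}_W$, and then expand $S A_{\eps_1\eps_2}(u,|\partial_x|^{-N}W)$ via \eqref{lemcomm}, using $S|\partial_x|^{-N}W = |\partial_x|^{-N}Z + N|\partial_x|^{-N}W$. This produces $A(u,|\partial_x|^{-N}Z)$ (which is $\mathcal{N}_{Z,1}$), $A(Su,u)$ (part of $\mathcal{N}_{Z,2}$), $NA(u,u) + \widetilde A(u,u)$ (part of $\mathcal{N}_{Z,3}$), each up to cubic errors coming from $|\partial_x|^{-N}W - u$. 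The $B$-terms are handled identically, giving the remaining $B(Su,u) + B(u,Su)$ in $\mathcal{N}_{Z,2}$ and $\widetilde B(u,u)$ in $\mathcal{N}_{Z,3}$. The residual $(3/2-N)|\partial_x|^N\mathcal{N}_W$ becomes $(3/2-N)|\partial_x|^N \mathcal{N}_u$ modulo cubic, supplying the last piece of $\mathcal{N}_{Z,3}$. Collecting everything yields \eqref{eqZ}--\eqref{eqZN2}.

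\medskip\noindent
The main obstacle is the verification of the cubic bound \eqref{eqZR} for $\mathcal{O}_Z$, which collects: (i) $S\mathcal{O}_W + (3/2)\mathcal{O}_W$; (ii) $i\Sigma_{S\gamma}W$ and the smoothing remainders from $[S,\Sigma_\gamma]$; (iii) all contributions of $V_2$ and $SV_2$ via $\mathcal{Q}_W$; (iv) the cubic discrepancies from replacing $W$ by $|\partial_x|^N u$ and $\mathcal{D}^k u$ by $|\partial_x|^N u$. Pieces (i) and (iv) are controlled directly by the weighted cubic bounds \eqref{cubboundW} on $\mathcal{O}_W$ and by estimates of $\mathcal{D}^k - |\partial_x|^N$ from Appendix \ref{aux}. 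The delicate terms are (ii) and (iii), since they involve $S$ landing on the quadratic coefficients $\gamma$ and $V_2$: for these I would estimate $S\gamma$ in $\widetilde W^{N_2+2}$ and the $Su$-dependent factors in $\dot H^{N_1,\cdot}$ using \eqref{bing11}, pairing them against the high-derivative $|\partial_x|^{3/2}W$ in $\dot H^{N_0-N-3/2,\cdot}$. The $-1/2$ low-frequency weight in $\dot H^{0,-1/2}$ is compatible with the $|\xi|^{-1/2}$ denominators that appear when we invert $|\partial_x|^N$ in the conversions, which is exactly the low-frequency loss allowed by the assumptions \eqref{bing11}.
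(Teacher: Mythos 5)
Your proposal follows essentially the same route as the paper's proof: apply $S$ to the $W$-equation \eqref{eqW}, use the commutators $[S,\partial_t - i\Lambda]=-(3/2)(\partial_t-i\Lambda)$, $[S,|\partial_x|^{N}]=-N|\partial_x|^{N}$, $[S,\partial_x|\partial_x|^{-1/2}]=-(1/2)\partial_x|\partial_x|^{-1/2}$, distribute $S$ over $Q_N$ and the semilinear terms via \eqref{lemcomm}--\eqref{lemcomm2}, substitute the linear part of $V$ from \eqref{Vu+baru0}, and defer the cubic bound \eqref{eqZR} to the elliptic estimates of Proposition \ref{CubicSummary}. The only (cosmetic) difference is your insistence on cancelling $(3/2)i\Sigma_\gamma W$ against part of $i[S,\Sigma_\gamma]W$: the paper does not perform this cancellation, since both terms are individually cubic in $u$ and are simply placed in $\mathcal{O}_Z$ in \eqref{NeqZ26}, so the cancellation, while formally correct modulo the $P_{\geq 1}$ and paraproduct-cutoff corrections, is not needed.
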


\begin{proof} Using \eqref{eqW} we have
\begin{align}
\label{NeqZ21}
\begin{split}
\partial_t W_k - i |\partial_x|^{3/2} W_k - i \Sigma_\gamma W_k
  = Q_{N} (V, W_k) + |\pa_x|^{N} \mathcal{N}_{u} + \mathcal{O}_{W_{k}} ,
\end{split}
\end{align}
with $Q_{N_1}$ and $q_{N_1}$ is in \eqref{eqZN1}, and with a remainder $\mathcal{O}_{W_{k_1}}$ satisfying \eqref{cubboundW}. Notice that $[S, \partial_t - i\Lambda] = - (3/2) (\partial_t - i \Lambda)$, where $\Lambda=|\partial_x|^{3/2}$.
Therefore, applying $S$ to \eqref{NeqZ21}, and commuting it with the left-hand side, we obtain
\begin{align}
\label{NeqZ22}
\begin{split}
\partial_t Z - i |\partial_x|^{3/2} Z - i \Sigma_\gamma Z
  &= S Q_{N} (V, \mathcal{D}^{k} u) + S |\pa_x|^{N} \mathcal{N}_u\\
  &+ (3/2) (\partial_t-i\Lambda)\mathcal{D}^{k}u+ i[S,\Sigma_{\gamma}]\mathcal{D}^{k} u+S\mathcal{O}_{W_{k}}.
\end{split}
\end{align}
Recall also the formulas \eqref{Vu+baru0} and
\begin{align*}
 S Q_{N} (V, \mathcal{D}^{k} u) = Q_{N} (V, Z) + Q_{N} (S V, \mathcal{D}^{k} u) + \widetilde{Q}_{N} (V, \mathcal{D}^{k} u) .
\end{align*}
Using also the commutation identities
\begin{equation*}
[S,\partial_x|\partial_x|^{-1/2}] = -(1/2)\partial_x|\partial_x|^{-1/2},\qquad [S,|\partial_x|^{N}] = -N|\partial_x|^{N},
\end{equation*}
it follows that
\begin{equation}\label{NeqZ32}
\begin{split}
S |\pa_x|^{N} \mathcal{N}_u &=
  - {N}|\pa_x|^{N} \mathcal{N}_u + |\pa_x|^{N} \sum_\star \big[ A_{\eps_1\eps_2}(u_{\eps_1}, Su_{\eps_2}) + A_{\eps_1\eps_2}(Su_{\eps_1}, u_{\eps_2})\\
  &+  \widetilde{A}_{\eps_1\eps_2} (u_{\eps_1}, u_{\eps_2})+  B_{\eps_1\eps_2}(Su_{\eps_1}, u_{\eps_2})
  + B_{\eps_1\eps_2}(u_{\eps_1}, Su_{\eps_2}) + \widetilde{B}_{\eps_1\eps_2} (u_{\eps_1},u_{\eps_2}) \big].
\end{split}
\end{equation}
Notice that some of these terms can all found in $\mathcal{N}_{Z,2}$ and $\mathcal{N}_{Z,3}$.

The desired formula \eqref{eqZ} follows from \eqref{NeqZ22} provided that
\begin{equation}\label{NeqZ26}
\begin{split}
\mathcal{O}_Z:&=i[S,\Sigma_{\gamma}]\mathcal{D}^{k} u+S\mathcal{O}_{W_{k}}+ \Big[\widetilde{Q}_{N} (V, \mathcal{D}^{k} u)-\frac{i}{2}\widetilde{Q}_{N}\big(\partial_x|\partial_x|^{-1/2}(u-\overline{u}),|\partial_x|^{N}u\big)\Big]\\
&+ \Big[\frac{3}{2}Q_{N} (V, \mathcal{D}^{k} u)-\frac{3i}{4}Q_{N}\big(\partial_x|\partial_x|^{-1/2}(u-\overline{u}),|\partial_x|^{N}u\big)\Big]\\
&+\Big[Q_{N} (S V, \mathcal{D}^{k} u)-\frac{i}{2}Q_{N}\big(\partial_x|\partial_x|^{-1/2}(Su-\overline{Su}),|\partial_x|^{N}u\big)\\
&+\frac{i}{4}Q_{N}\big(\partial_x|\partial_x|^{-1/2}(u-\overline{u}),|\partial_x|^{N}u\big)\Big]+\frac{3i}{2}\Sigma_\gamma\mathcal{D}^{k}u+\frac{3}{2}\Big[\mathcal{O}_{W_{k}}+|\partial_x|^N(\mathcal{N}_{W_k}-\mathcal{N}_u)\Big]\\
&+|\partial_x|^{N}\sum_{\star}\Big[A_{\eps_1\eps_2}(u_{\eps_1},Su_{\eps_2})-A_{\eps_1\eps_2}(u_{\eps_1},|\partial_x|^{-N}Z_{\eps_2})-NA_{\eps_1\eps_2}(u_{\eps_1},u_{\eps_2})\Big].
\end{split}
\end{equation}
The elliptic cubic bound \eqref{eqZR} is verified in Proposition \ref{CubicSummary}.
\end{proof}

\section{Energy estimates I: high Sobolev estimates}\label{secEE}

In this section we prove the following main proposition:

\begin{proposition}\label{proEE1}
Assume that $u$ satisfies \eqref{bing11}--\eqref{bing31}. Then
\begin{equation}\label{proEE1conc}
 \sup_{t\in[0,T]} (1+t)^{-p_0} {\|P_{\geq -20}u(t)\|}_{H^{N_0}} \lesssim \e_0 .
\end{equation}
\end{proposition}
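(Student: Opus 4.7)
The plan is to take $k = 2N_0/3$ (an integer since $N_0=9$) and work with $W := W_{N_0/3 \cdot 2} = \mathcal{D}^k u$, so that $N = 3k/2 = N_0$. Since $\mathcal{D} = |\partial_x|^{3/2} + \Sigma_\gamma$ is elliptic of order $3/2$ and $\Sigma_\gamma$ contains the cutoff $P_{\geq 1}$, standard para-product estimates will give, under \eqref{bing11},
\begin{equation*}
\big\| P_{\geq -20} u \big\|_{H^{N_0}} \lesssim \|W\|_{L^2} + \big\| P_{[-20,1]} u \big\|_{L^2},
\end{equation*}
and the last term is controlled by $\e_0\langle t\rangle^{p_0}$ by interpolation using the low-frequency component of $\mathcal{K}_I$. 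Thus it suffices to bound the basic quadratic energy
\begin{equation*}
E^{(2)}(t) := \frac{1}{2\pi}\int_{\R} |\widehat{W}(\xi,t)|^2\,d\xi
\end{equation*}
by $\e_0^2\langle t\rangle^{2p_0}$ for all $t\in[0,T]$, and it is enough to propagate this by a bootstrap starting from $E^{(2)}(0)\lesssim \e_0^2$ coming from \eqref{bing31}.

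Next I differentiate $E^{(2)}$ in time using the evolution equation \eqref{eqW} for $W$. The linear dispersive part $i|\partial_x|^{3/2}$ is skew-adjoint so contributes zero. The operator $i\Sigma_\gamma$ is essentially self-adjoint up to a lower-order commutator (that is precisely the point of the choice \eqref{symmsymbols} of $p_0,p_1,\gamma$), so after symmetrization its contribution reduces to a semilinear cubic trilinear form. The quasilinear transport $\mathcal{Q}_W = Q_N(V,W)$ is symmetrized by writing $2\mathrm{Re}\langle Q_N(V,W),W\rangle$ and exploiting that $V$ is real and that $q_N(\xi,\eta)+\overline{q_N(\eta,\xi)}$ is of lower order, producing again a semilinear trilinear remainder. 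The semilinear quadratic terms $|\partial_x|^N\mathcal{N}_W$ contribute trilinear forms directly. The elliptic cubic error $\mathcal{O}_W$ is estimated using \eqref{cubboundW} and integrated against $W$, giving $\lesssim \e_1^3\langle t\rangle^{-1+p_0}\|W\|_{L^2}$. The upshot is
\begin{equation*}
\partial_t E^{(2)}(t) = \sum_{\iota}\int_{\R\times\R} m_\iota(\xi,\eta)\,\widehat{F_\iota}(\xi,t)\widehat{G_\iota}(\eta,t)\widehat{H_\iota}(-\xi-\eta,t)\,d\xi d\eta + \mathrm{higher\ order},
\end{equation*}
with $F_\iota,G_\iota,H_\iota\in\{W,\overline{W},u,\overline{u}\}$ and symbols $m_\iota\in S^\infty$ after para-product truncations.

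Now I apply the quasilinear I-method. For each trilinear form above I introduce a cubic correction
\begin{equation*}
E^{(3)}_\iota(t) := \int_{\R\times\R} \frac{m_\iota(\xi,\eta)}{i\Phi_\iota(\xi,\eta)}\,\widehat{F_\iota}(\xi,t)\widehat{G_\iota}(\eta,t)\widehat{H_\iota}(-\xi-\eta,t)\,d\xi d\eta,
\end{equation*}
where $\Phi_\iota(\xi,\eta) = \pm\Lambda(\xi)\pm\Lambda(\eta)\pm\Lambda(\xi+\eta)$ is the associated resonant phase, $\Lambda(\xi)=|\xi|^{3/2}$. Setting $E := E^{(2)} + \sum_\iota E^{(3)}_\iota$, direct differentiation (using the equations for $u$ and $W$ once more on each factor) yields
\begin{equation*}
\partial_t E(t) = \text{quartic remainders} + \text{elliptic cubic contributions from }\mathcal{O}_W,\ U_{\geq 3}.
\end{equation*}
Since on the support of the integrand the smallest of $|\xi|,|\eta|,|\xi+\eta|$ is comparable to $|\Phi_\iota|^{2/3}$ away from the resonant set but only like $|\Phi_\iota|$ in the $\mathrm{Low}\!\times\!\mathrm{High}\!\to\!\mathrm{High}$ regime (this is \eqref{phase2}), the symbols of the cubic corrections pick up a factor $(\text{low freq})^{-1}$, and of the quartic remainders a factor $(\text{low freq})^{-1/2}$.

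The main obstacle is controlling these singular quartic remainders. For the unweighted energy this is easier than the schematic \eqref{quarticsing} because no $S$ appears: a representative bad term is
\begin{equation*}
\int_{\R} W \cdot u \cdot |\partial_x|^{1/2} u \cdot |\partial_x|^{-1/2} u \, dx.
\end{equation*}
Using the sharp pointwise decay $\|u\|_{\dot{W}^{N_2,-1/10}}\lesssim \e_1 \langle t\rangle^{-1/2}$ from \eqref{bing11} to put two of the $u$ factors in $L^\infty$, and using Lemma~\ref{touse} together with the low-frequency component of $\mathcal{K}_I$ (which gives $\||\partial_x|^{-1/2}P_{\leq 0}u\|_{L^2}\lesssim \e_1 \langle t\rangle^{p_0}$ after accounting for the weight $\mathcal{P}((1+t)^2|\xi|)$) one obtains
\begin{equation*}
|\partial_t E(t)| \lesssim \e_1^2\, \langle t\rangle^{-1+2p_0}\, E^{(2)}(t) + \e_1^3\, \langle t\rangle^{-1+3p_0}\,\sqrt{E^{(2)}(t)}.
\end{equation*}
Finally one checks that $|E^{(3)}(t)|\lesssim \e_1 E^{(2)}(t)$ (using the $L^\infty$ bound on $u$ and $S^\infty$ bounds on the cubic-correction symbols) so that $E\approx E^{(2)}$; Gronwall then gives $E^{(2)}(t)\lesssim \e_0^2 \langle t\rangle^{C\e_1}$, and the smallness $C\e_1 \leq 2p_0$ closes the bootstrap and yields \eqref{proEE1conc}.
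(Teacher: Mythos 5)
Your overall strategy is the paper's: the same choice $W=\mathcal{D}^{2N_0/3}u$, the same quadratic energy, and the same cubic corrections obtained by dividing the trilinear symbols of $\partial_t E^{(2)}$ by the resonant phase. But there is a genuine gap at the crux of the argument. When you differentiate the cubic corrections $E^{(3)}_\iota$ and substitute the equation for $W$ into each factor, the contributions of $i\Sigma_\gamma W$ (order $3/2$) and of the transport term $\mathcal{Q}_W=Q_{N_0}(V,W)$ (order $1$) are \emph{not} generic ``quartic remainders'': $W$ is already at top order, so $\Sigma_\gamma W$ and $\partial_x T_V W$ are not controlled in $L^2$, and the factor $1/\Phi_\iota$ in the correction symbol only recovers about $|\xi-\eta|^{-1}|\eta|^{-1/2}$ in the $\mathrm{Low}\times\mathrm{High}$ regime --- not enough to absorb a loss of $3/2$ (or even $1$) derivatives on the high-frequency factor. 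This is precisely the quasilinear obstruction that distinguishes this construction from the semilinear I-method, and the paper devotes a substantial part of Section \ref{secEE} to it: the dangerous terms are handled by Lemmas \ref{lemSigmabulk} and \ref{lemQbulk}, which exploit a cancellation between the two occurrences of $W$ --- concretely, that symbol differences such as $q(\xi,\rho)-q(\xi+\eta-\rho,\eta)$ are $O\big((|\xi-\rho|^{5/2}+|\eta-\rho|^{5/2})/(|\xi|+|\eta|)\big)$ --- together with the strong ellipticity of the symbols $a^{N_0}_{\eps_1-}$ in the $\eps_2=-$ cases. Your ``direct differentiation yields quartic remainders'' skips all of this and would fail as stated.

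A second, smaller problem is the closing step. A differential inequality of the form $\partial_t E\lesssim\e_1^2\langle t\rangle^{-1+2p_0}E^{(2)}(t)$ does not close by Gronwall to $E^{(2)}(t)\lesssim\e_0^2\langle t\rangle^{2p_0}$: the exponential of $\int_0^t\e_1^2\langle s\rangle^{-1+2p_0}\,ds$ grows like $\exp\big(c\e_1^2\langle t\rangle^{2p_0}/p_0\big)$, and inserting the bootstrap bound $E^{(2)}\lesssim\e_1^2\langle t\rangle^{2p_0}$ into the right-hand side instead yields $\langle t\rangle^{4p_0}$ growth, which does not improve the bootstrap rate. The paper proves the absolute quartic bound $\frac{d}{dt}E_{N_0}\lesssim\e_1^4\langle t\rangle^{-1+2p_0}$ (the $2p_0$ loss coming only from the two $L^2$ factors of $W$, the two $u$-factors being placed in $L^\infty$ with total decay $\langle t\rangle^{-1}$ and no loss) and integrates directly, using $\e_1\lesssim p_0$ and $\e_1^3\leq\e_0^2$ to conclude. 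You should either arrange the coefficient of $E^{(2)}$ to be $\e_1^2\langle t\rangle^{-1}$ with no $p_0$ loss, or follow the direct integration route.
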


\subsection{The higher order energy functional}
Let $W =  \D^{k_0} u$, $N_0=3k_0/2$, $\mathcal{D}=|\partial_x|^{3/2}+\Sigma_\gamma$, and recall the equations \eqref{equ} and  \eqref{eqW} for $u$ and $W$,
\begin{align}
\label{eqW10}
\begin{split}
& \partial_t u - i |\partial_x|^{3/2} u - i \Sigma_\gamma u = - \partial_x T_V u  + \mathcal{N}_u + |\partial_x|^{1/2}O_{3,1/2},
\\
& \qquad  V = -\frac{1}{2i} \partial_x |\partial_x|^{-1/2} (u-\bar{u}) + O_{2,-1/2} ,
\\
& \partial_t W - i |\partial_x|^{3/2} W - i \Sigma_\gamma W = \mathcal{Q}_W + |\pa_x|^{N_0} \mathcal{N}_W + \mathcal{O}_W
\\
& \qquad  \Sigma_\gamma = T_\gamma P_{\geq 1}|\partial_x|^{3/2}
  - \frac{3}{4} T_{\partial_x \gamma} P_{\geq 1} |\partial_x|^{-1/2} \partial_x
  \quad , \qquad \g = \sqrt{1+\sigma}-1 ,
\end{split}
\end{align}
where $\mathcal{N}_u$ is defined in \eqref{equN}, $\mathcal{Q}_W$ is in \eqref{eqWN},
$\mathcal{N}_W$ is defined in \eqref{eqWN2} together with \eqref{a_++}--\eqref{a_--}, \eqref{b++}--\eqref{b--},
and $\mathcal{O}_W$ satisfies the cubic bounds \eqref{cubboundW}. Notice that 
\begin{align}
\label{reluW}
u =  |\pa_x|^{-N_0} W + O_{3,0} ,
\end{align}
so that, using \eqref{bing11}, for any $t\in[0,T]$,
\begin{align}
\label{uWHN_0}
\begin{split}
{\| P_{\geq -20} u(t) \|}_{H^{N_0}} &\lesssim {\| W(t) \|}_{L^2} + \e_1^{3/2} ,
\\
{\| W(t) \|}_{L^2}+\|u\|_{H^{N_0}} &\lesssim \e_1(1+t)^{p_0}.
\end{split}
\end{align}

We define the quadratic energy functional associated to the second equation in \eqref{eqW10} by
\begin{align}
\label{E_N^2}
E_{N_0}^{(2)} (t) = \frac{1}{2} \int_{\R} {|W(x,t)|}^2 \,dx 
  = \frac{1}{4\pi} \int_{\R} \what{W}(\xi,t) \bar{\what{W}}(\xi,t) 
  \, d\xi .
\end{align}
Based on the equation \eqref{eqW10} we define the following cubic energy functional:
\begin{align}
\label{E^31}
\begin{split}
E^{(3)}_{m_{N_0}} (t) & := \frac{1}{4\pi^2} \Re \int_{\R \times\R} \bar{\what{W}}(\xi,t) \what{W}(\eta,t)
  m_{N_0}(\xi,\eta) \what{u}(\xi-\eta,t) \,d\xi d\eta ,
\\
m_{N_0}(\xi,\eta) & := \frac{(\xi-\eta)
  \big[ \xi |\xi|^{N_0} |\eta|^{-{N_0}} \chi(\xi-\eta,\eta) - \eta |\eta|^{N_0} |\xi|^{-{N_0}} \chi(\eta-\xi,\xi) \big]
  }{2|\xi-\eta|^{1/2}(|\xi|^{3/2}-|\xi-\eta|^{3/2}-|\eta|^{3/2})} .
\end{split}
\end{align}
Given the symbols $a_{\eps_1\eps_2}$ and $b_{\eps_1\eps_2}$ in \eqref{a_++}-\eqref{b--}, we also define the cubic functionals
\begin{align}
\label{E^3a}
\begin{split}
& E^{(3)}_{a,\epsilon_1\epsilon_2} (t) := \frac{1}{4\pi^2} \Re \int_{\R \times\R} \bar{\what{W}}(\xi,t)
  \what{W_{\epsilon_2}}(\eta,t) \what{u_{\epsilon_1}}(\xi-\eta,t) \, a_{\eps_1\eps_2}^{N_0}(\xi,\eta) \,d\xi d\eta ,
\\
& a_{\eps_1\eps_2}^{N_0}(\xi,\eta) := \frac{ -i|\xi|^{N_0}
  |\eta|^{-{N_0}} a_{\eps_1\eps_2}(\xi,\eta) }{|\xi|^{3/2} -\eps_2|\eta|^{3/2} -\eps_1|\xi-\eta|^{3/2}}
\end{split}
\end{align}
and
\begin{align}
\label{E^3b}
\begin{split}
& E^{(3)}_{b,\epsilon_1\epsilon_2}(t) := \frac{1}{4\pi^2} \Re \int_{\R \times\R} \bar{\what{W}}(\xi,t)
  \what{u_{\epsilon_2}}(\eta,t) \what{u_{\epsilon_1}}(\xi-\eta,t) \, b_{\eps_1\eps_2}^{N_0}(\xi,\eta) \,d\xi d\eta ,
\\
& b_{\eps_1\eps_2}^{N_0}(\xi,\eta) := \frac{ -i|\xi|^{N_0}
  b_{\eps_1\eps_2}(\xi,\eta) }{|\xi|^{3/2} -\eps_2|\eta|^{3/2} -\eps_1|\xi-\eta|^{3/2}} ,
\end{split}
\end{align}
where, for any function $f$, we use the notation $f_+ := f$, $f_- := \bar{f}$.

Then the cubic correction to the energy is given by
\begin{align}
\label{E_N^3}
E_{N_0}^{(3)} (t) & := E^{(3)}_{m_{N_0}} (t)
  + \sum_{\star} \Big( E^{(3)}_{a,\epsilon_1\epsilon_2}(t) + E^{(3)}_{b,\epsilon_1\epsilon_2}(t) \Big) ,
\end{align}
and the total energy is
\begin{align}
\label{E_N}
E_{N_0}(t) := E_{N_0}^{(2)} (t) + E_{N_0}^{(3)} (t) .
\end{align}

Proposition \ref{proEE1} will follow from the two lemmas below:

\begin{lemma}\label{lemEE1}
Assuming the bounds \eqref{bing11}--\eqref{bing31}, for any $t\in[0,T]$ we have
\begin{equation}\label{lemEE1conc}
| E_{N_0}^{(3)}(t) | \lesssim \e_1^3 {(1+t)}^{2p_0} . 
\end{equation}
\end{lemma}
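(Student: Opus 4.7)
The plan is to bound each of the six trilinear integrals comprising $E_{N_0}^{(3)}$ by combining a Littlewood--Paley decomposition in the frequencies $\xi\sim 2^k$, $\eta\sim 2^{k_2}$, $\xi-\eta\sim 2^{k_1}$ with the trilinear $S^\infty$ bound \eqref{mk6.01}, placing the two $W$-type factors in $L^2$ and the $u$-type factor in $L^\infty$. Together with the bootstrap consequences
\begin{equation*}
\sum_{k\in\mathbb{Z}}\|P_kW(t)\|_{L^2}^2\lesssim\e_1^2(1+t)^{2p_0},\qquad \sum_{k_1\in\mathbb{Z}}\|P_{k_1}u(t)\|_{L^\infty}\lesssim\e_1(1+t)^{-1/2}
\end{equation*}
coming from \eqref{bing11}, the task reduces to uniform control of the $S^\infty$ norms of the LP-truncations of the three symbols $m_{N_0}$, $a^{N_0}_{\eps_1\eps_2}$, and $b^{N_0}_{\eps_1\eps_2}$ on the relevant supports.

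For the quasilinear piece $E^{(3)}_{m_{N_0}}$ the key observation is that the bracket in the numerator of $m_{N_0}$ in \eqref{E^31} is the antisymmetrization in $(\xi,\eta)$ of the quasilinear symbol $q_{N_0}$ of \eqref{eqWN}, introduced precisely to cancel the leading quadratic contribution of $\mathcal{Q}_W$ to $\partial_t E_{N_0}^{(2)}$. In the low--high region $|\xi-\eta|\ll|\xi|\approx|\eta|$ forced by the $\chi$ cutoffs, the identity $\xi^{2N_0+1}-\eta^{2N_0+1}\approx(2N_0+1)\eta^{2N_0}(\xi-\eta)$ shows that the bracket vanishes to order $\xi-\eta$, so the numerator of $m_{N_0}$ is of order $(\xi-\eta)^2$. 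Combining this with the behavior $(3/2)|\eta|^{1/2}(\xi-\eta)$ of the resonant denominator $|\xi|^{3/2}-|\xi-\eta|^{3/2}-|\eta|^{3/2}$ and the explicit factor $|\xi-\eta|^{1/2}$ in \eqref{E^31}, one finds that the LP truncation of $m_{N_0}$ at $(k,k_1,k_2)\in\mathcal{X}$ with $k_1\leq k-10$ has $S^\infty$ norm $\lesssim 2^{(k_1-k)/2}$. This is a non-singular bound, and \eqref{mk6.01} then gives $|E^{(3)}_{m_{N_0}}|\lesssim\e_1^3(1+t)^{-1/2+2p_0}$, substantially stronger than \eqref{lemEE1conc}.

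The semilinear pieces $E^{(3)}_{a,\eps_1\eps_2}$ and $E^{(3)}_{b,\eps_1\eps_2}$ are handled by a case analysis on $(\eps_1,\eps_2)$. When $\eps_2=-$, the resonant phase $|\xi|^{3/2}+|\eta|^{3/2}-\eps_1|\xi-\eta|^{3/2}$ is bounded below by $|\eta|^{3/2}$ in the region supported by $\chi$ (for $a$) or $\widetilde\chi$ (for $b$), so no singularity arises and the estimates are straightforward. When $\eps_2=+$ the phase degenerates to $\approx|\eta|^{1/2}|\xi-\eta|$ in the LH region, but inspection of \eqref{a_++}--\eqref{a_--} shows that the numerator symbols $a_{\eps_1\eps_2}$ already vanish to order $|\xi-\eta|^{3/2}$ there (including in the very low $\eta$ portion captured by $\varphi_{\leq 0}(\eta)$, by a direct Taylor expansion), so after cancellation the LP truncation of $a^{N_0}_{\eps_1\eps_2}$ has $S^\infty$ norm $\lesssim 2^{(k_1-k)/2}$. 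The HH analysis for $b^{N_0}_{\eps_1\eps_2}$ in \eqref{b++}--\eqref{b--} is analogous, and produces geometrically summable truncation bounds via the Sobolev weight ratio $(|\xi|/|\eta|)^{N_0+1/2}$. The main conceptual point, and what makes this lemma much easier than its time-derivative counterpart, is that \emph{no} low-frequency singularity of the form $(\text{low freq})^{-1/2}$ arises in $E_{N_0}^{(3)}$ itself; the ``division problem'' advertised in the introduction appears only at the quartic level in $\partial_t(E_{N_0}^{(2)}+E_{N_0}^{(3)})$ and will be the central obstacle of the next lemma, not of this one. Summing all contributions against \eqref{bing11} yields \eqref{lemEE1conc}.
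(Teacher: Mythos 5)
Your proposal is correct and follows essentially the same route as the paper: establish $S^\infty$ bounds on the Littlewood--Paley truncations of $m_{N_0}$, $a^{N_0}_{\eps_1\eps_2}$, $b^{N_0}_{\eps_1\eps_2}$ (exploiting the order-$(\xi-\eta)$ cancellation in the antisymmetrized bracket, the order-$|\xi-\eta|^{3/2}$ vanishing of $a_{\eps_1+}$ against the degenerate phase, and the non-degeneracy of the phase when $\eps_2=-$), and then apply the trilinear estimate \eqref{mk6.01} with the two $W$-factors in $L^2$ and the $u$-factor in $L^\infty$; these are exactly the bounds \eqref{boundm_N}--\eqref{boundb^N/} (proved via Lemmas \ref{lembounda} and \ref{lemboundb}) and the estimates of subsection \ref{secEsym}. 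The only difference is cosmetic: the paper outsources the symbol analysis to Appendix \ref{appsym}, while you derive it inline, and your claimed decay $(1+t)^{-1/2+2p_0}$ is marginally sharper than the paper's stated $(1+t)^{-1/3}$, both far exceeding what \eqref{lemEE1conc} requires.
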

The above lemma essentially establishes the equivalence of $E_{N_0}$ and $E_{N_0}^{(2)}$ at every fixed time slice.
The next lemma provides improved control on the increment of $E_{N_0}$.

\begin{lemma}\label{lemEE2}
Assuming the bounds \eqref{bing11}--\eqref{bing31}, for any $t\in[0,T]$ we have
\begin{equation}\label{lemEE2conc}
\frac{d}{dt} E_{N_0}(t) \lesssim \e_1^4 {(1+t)}^{-1+2p_0} .
\end{equation}
\end{lemma}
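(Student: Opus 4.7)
The plan is to show that $E_{N_0}^{(3)}$ is a normal-form correction constructed exactly to cancel, on differentiation, the cubic contributions to $\frac{d}{dt}E_{N_0}^{(2)}$, so that $\frac{d}{dt}E_{N_0}$ reduces to quartic (and higher) semilinear integrals plus easily bounded cubic elliptic remainders. Those residuals are then controlled by the sharp decay in \eqref{bing11} together with the low-frequency weighted norm implicit in $\mathcal{K}'_I(t)$, giving the claimed bound $\e_1^4\langle t\rangle^{-1+2p_0}$.

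First I would compute $\partial_t E_{N_0}^{(2)} = \Re\langle\partial_t W, W\rangle_{L^2}$ by inserting the equation \eqref{eqW10}. The free term $i|\partial_x|^{3/2}W$ is skew-adjoint and drops out. The transport/quasilinear piece $\mathcal{Q}_W = Q_{N_0}(V,W)$ is paired with its cousin $-\partial_x T_V u$ (via the relation $W=\mathcal{D}^{k_0}u$) and symmetrized by integration by parts in $\xi,\eta$, using $V = -\frac{1}{2i}\partial_x|\partial_x|^{-1/2}(u-\bar u)+O_{2,-1/2}$; this, combined with the cubic piece coming from $i\Sigma_\gamma W$ (since $\gamma=\sqrt{1+\sigma}-1$ is quadratic in the main variables), produces a cubic integrand with exactly the symmetrized multiplier $m_{N_0}$ of \eqref{E^31}. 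The semilinear piece $|\partial_x|^{N_0}\mathcal{N}_W$ produces the cubic integrands with symbols $a_{\eps_1\eps_2}^{N_0}$ and $b_{\eps_1\eps_2}^{N_0}$ of \eqref{E^3a}--\eqref{E^3b}. The remainder $\Re\langle\mathcal{O}_W,W\rangle$ is handled directly via \eqref{cubboundW} and \eqref{uWHN_0}.

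Next I would differentiate $E_{N_0}^{(3)}$. When $\partial_t$ hits one of the three factors and is replaced by the linear dispersive part, the three phase factors collected from $\bar{\what W}(\xi),\what{W_{\eps_2}}(\eta),\what{u_{\eps_1}}(\xi-\eta)$ sum to $-i\big(|\xi|^{3/2}-\eps_2|\eta|^{3/2}-\eps_1|\xi-\eta|^{3/2}\big)$, which cancels the denominator in \eqref{E^31}--\eqref{E^3b}. By the definition of the numerators, the resulting cubic integrands are exactly the opposites of those produced in the previous step, so the cubic contributions cancel by design. The residual terms are quartic: they arise when $\partial_t$ hits a factor and is replaced by one of the quadratic nonlinearities $\mathcal{N}_u$, $\mathcal{Q}_W$, $|\partial_x|^{N_0}\mathcal{N}_W$, or by the cubic contribution of $\Sigma_\gamma$.

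The main obstacle is the estimation of those quartic residuals. Each one schematically takes the form
\[
\int \bar{\what W}(\xi)\,\what{f_1}(\eta-\sigma)\,\what{f_2}(\sigma)\,\what{f_3}(\xi-\eta)\,\frac{n(\xi,\eta,\sigma)}{|\xi|^{3/2}-\eps_1|\xi-\eta|^{3/2}-\eps_2|\eta|^{3/2}}\,d\xi\,d\eta\,d\sigma,
\]
with $f_j\in\{u,\bar u\}$ and a symbol whose denominator produces a $(\mbox{low freq})^{-1/2}$ singularity in the resonant regime $|\xi|\approx|\eta|\gg|\xi-\eta|$, cf.\ \eqref{phase1}--\eqref{phase2}. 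Outside that regime, a dyadic decomposition together with Lemma \ref{touse}(iii), two $L^\infty$ factors contributing $\e_1\langle t\rangle^{-1/2}$ each by \eqref{bing11}, and two $L^2$ factors (one of them $\|W\|_{L^2}\lesssim\e_1\langle t\rangle^{p_0}$), already closes the estimate. In the singular regime, ordinary $L^\infty$--$L^2$ bounds diverge and it becomes essential to place the lowest-frequency factor in the $|\partial_x|^{-1/2+p_1}L^2$ norm controlled by $\mathcal{K}'_I(t)$: the $p_1$ weight in \eqref{bing21} converts the divergent $(\mbox{low freq})^{-1/2}$ into the integrable $(\mbox{low freq})^{p_1-1/2}$, producing the final bound $\e_1^4\langle t\rangle^{-1+2p_0}$. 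This is the concrete manifestation on the energy side of the ``division problem'' of Subsection \ref{introen}, and is the reason the initial data norms \eqref{bing31} are strictly stronger than $L^2$ at low frequencies.
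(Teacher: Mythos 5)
Your overall architecture is right — $E_{N_0}^{(3)}$ is built so that its time derivative cancels the cubic part of $\partial_t E_{N_0}^{(2)}$ symbol by symbol, leaving quartic residuals — and your symmetrization of the transport term to produce $m_{N_0}$ matches the paper. But there is a genuine gap in how you treat the quartic residuals, and it is precisely where the quasilinear nature of the problem bites. When $\partial_t$ hits one of the two $W$-factors inside $E^{(3)}_{m_{N_0}}$ or $E^{(3)}_{a,\eps_1+}$ and is replaced by $i\Sigma_\gamma W$ or by the transport term $\mathcal{Q}_W$, the resulting quartic integral contains $\Sigma_\gamma W \approx T_\gamma|\partial_x|^{3/2}W$ (a loss of $3/2$ derivatives) or $\partial_x T_V W$ (a loss of one derivative). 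Your schematic residual, with one $\what W$ and three $\what u$ factors estimated by two $L^\infty$ and two $L^2$ norms, simply does not cover these terms: you cannot put $|\partial_x|^{3/2}W$ in $L^2$ and bound it by $\|W\|_{L^2}$. The paper resolves this (terms $I_2$, $I_3$, $I_{6,\eps_1+}$, $I_{7,\eps_1+}$, isolated in Lemmas \ref{lemSigmabulk} and \ref{lemQbulk}) by a second symmetrization: the two contributions where the operator hits $\bar{\what W}(\xi)$ and $\what W(\eta)$ are combined, a change of variables is performed, and the top-order parts cancel up to symbol differences of the form $q(\xi,\rho)-q(\xi+\eta-\rho,\eta)$, which gain the factor $(|\xi-\rho|^{5/2}+|\eta-\rho|^{5/2})/(|\xi|+|\eta|)$ needed to absorb the derivative loss. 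Without this commutator structure the estimate does not close, and your proposal never invokes it. For the $\eps_2=-$ cases no cancellation is needed, but there one must instead use the strong ellipticity $2^{3k_1/2}2^{-3k/2}$ of $a^{N_0}_{\eps_1-}$ from \eqref{bounda^N/}, which your proposal also does not isolate.

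Two smaller inaccuracies. First, the term $A_1=\Re\int\bar{\what W}\,\what{i\Sigma_\gamma W}$ coming from $\partial_t E^{(2)}_{N_0}$ is already quartic (since $\gamma$ is quadratic) and is \emph{not} part of the cubic integrand with multiplier $m_{N_0}$, contrary to what you write; it is bounded directly, and doing so requires the approximate self-adjointness of $\Sigma_\gamma$ up to order $-1/2$ — i.e.\ the specific lower-order corrector $-\tfrac34 T_{\partial_x\gamma}P_{\geq 1}\partial_x|\partial_x|^{-1/2}$ in \eqref{equSigma} — again to avoid losing $3/2$ derivatives. Second, for this particular lemma the $(\mathrm{low\ freq})^{-1/2}$ singularities are actually absorbed by the low-frequency null structure of the nonlinearity (the $2^{\min(k,0)}$ gain in $\|P_k\mathcal{N}_W\|_{L^2}$) together with the $2^{k_1/10}$ factor in the $L^\infty$ bound of \eqref{bing11}, rather than by the $|\partial_x|^{-1/2+p_1}$ weighted $L^2$ norm; the latter becomes essential only in the weighted energy estimates of Sections \ref{secweighted}--\ref{secweightedlow}.
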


\begin{proof}[Proof of Proposition \ref{proEE1}]

Using \eqref{E_N} and \eqref{lemEE2conc}, we see that
\begin{equation*}
 \big| E^{(2)}_{N_0}(t) + E^{(3)}_{N_0}(t) \big| \leq \big| E^{(2)}_{N_0}(0) + E^{(3)}_{N_0}(0) \big|
  + \int_0^t \e_1^3 {(1+s)}^{-1+2p_0} \,ds
\end{equation*}
for any $t \in [0,T]$.
In view of \eqref{lemEE1conc} we then have
\begin{equation*}
 E^{(2)}_{N_0}(t) \lesssim E^{(2)}_{N_0}(0) + \e_1^3(1+t)^{2p_0}\lesssim \e_1^3(1+t)^{2p_0},
\end{equation*}
for any $t\in[0,T]$, and the desired conclusion follows using also \eqref{uWHN_0}.
\end{proof}

\vskip10pt
\subsection{Analysis of the symbols and proof of Lemma \ref{lemEE1}}\label{secEsym}

In order to prove Lemmas \ref{lemEE1} and \ref{lemEE2},
we need to establish bounds on the symbols of the cubic energy functionals in \eqref{E^31}, \eqref{E^3a} and \eqref{E^3b}.
With the definition \eqref{Sinfty}, inspecting the symbol $m_{N_0}$ in \eqref{E^31},
using \eqref{1/phi1}, and standard integration by parts, one can see that
\begin{align}
\label{boundm_N}
& {\| m_{N_0}^{k,k_1,k_2} \|}_{S^\infty} \lesssim 2^{k_1/2} 2^{-k/2} \mathbf{1}_{\mathcal{X}}(k,k_1,k_2)
  \mathbf{1}_{[6,\infty)}(k_2 - k_1) .
\end{align}

Looking at the definition of the symbols in \eqref{E^3a}, and using the bounds \eqref{bounda/1} in Lemma \ref{lembounda},
we see that
\begin{align}
\label{bounda^N/}
\begin{split}
{\big\| (a_{\epsilon_1 +}^{N_0})^{k,k_1,k_2}\big\|}_{S^\infty}
  & \lesssim 2^{k_1/2} 2^{-k/2} \mathbf{1}_{\mathcal{X}}(k,k_1,k_2) \mathbf{1}_{[6,\infty)}(k_2 - k_1) ,
\\
{\big\| (a_{\epsilon_1 -}^{N_0})^{k,k_1,k_2}\big\|}_{S^\infty}
  & \lesssim \big( 2^{3k_1/2} 2^{-3k/2}\mathbf{1}_{[2,\infty)}(k) + 2^{k_1/2} 2^{-k/2}\mathbf{1}_{(-\infty,1]}(k) \big)
  \\ & \qquad \times \mathbf{1}_{\mathcal{X}}(k,k_1,k_2) \mathbf{1}_{[6,\infty)}(k_2 - k_1) ,
\end{split}
\end{align}
while \eqref{boundb/1} and \eqref{E^3b} give
\begin{align}
\label{boundb^N/}
& {\big\| (b_{\epsilon_1\epsilon_2}^{N_0})^{k,k_1,k_2}\big\|}_{S^\infty}
  \lesssim 2^{(N_0+1/2)k} 2^{-k_2/2} \mathbf{1}_{\mathcal{X}}(k,k_1,k_2) \mathbf{1}_{[-15,15]}(k_2 - k_1) .
\end{align}

We now apply Lemma \ref{touse}(ii), together with the bounds established above, to prove \eqref{lemEE1conc}.
Using Lemma \ref{touse}(ii), and the notation \eqref{P'_k}, we can estimate
the term in \eqref{E^31} as follows:
\begin{align*}
\big| E_{m_{N_0}}^{(3)}(t) \big| & \lesssim
  \sum_{k,k_1,k_2 \in \mathbb{Z}} {\| m_{N_0}^{k,k_1,k_2} \|}_{S^\infty}
  {\| P_k^\prime W \|}_{L^2} {\| P_{k_2}^\prime W \|}_{L^2} {\| P_{k_1}^\prime u \|}_{L^\infty} .
\end{align*}
Using the bound \eqref{boundm_N} on $m_{N_0}$, the assumptions in \eqref{bing11}--\eqref{bing31}, and \eqref{reluW},
we obtain
\begin{align*}
\big| E_{m_{N_0}}^{(3)}(t) \big| \lesssim
  \sum_{ (k,k_1,k_2) \in \mathcal{X}, \, |k-k_2| \leq 5} 2^{k_1/2} 2^{-k/2}
  {\| P_k^\prime W \|}_{L^2} {\| P_{k_2}^\prime W \|}_{L^2}
  \e_1 2^{k_1/10} 2^{-k_1^+} \langle t\rangle^{-1/2}\lesssim \e_1^3 \langle t\rangle^{-1/3},
\end{align*}
where $x^+=\max(x,0)$. The cubic energies \eqref{E^3a} can be dealt with in an identical fashion, since the bounds \eqref{bounda^N/}
on their symbols are analogous to the one for $m_{N_0}$ in \eqref{boundm_N}.
The cubic corrections in \eqref{E^3b} can also be treated similarly.
We use Lemma \ref{touse}(ii), \eqref{boundb^N/}, the a priori assumptions in \eqref{bing11}--\eqref{bing31} and \eqref{reluW}, to obtain,
for all $\eps_1,\eps_2 \in \{+,-\}$,
\begin{align*}
\big| E_{b_{\eps_1\eps_2}}^{(3)}(t) &\big|  \lesssim
  \sum_{k,k_1,k_2 \in \mathbb{Z}} {\| b_{\eps_1\eps_2}^{k,k_1,k_2} \|}_{S^\infty}
  {\| P_k^\prime W \|}_{L^2} {\| P_{k_2}^\prime W \|}_{L^2} {\| P_{k_1}^\prime u \|}_{L^\infty} .
\\
& \lesssim \sum_{ (k,k_1,k_2) \in \mathcal{X}, \, |k_1-k_2| \leq 10} 2^{(N_0+1/2)k} 2^{-k_1/2}
  {\| P_k^\prime W \|}_{L^2} {\| P_{k_2}^\prime u \|}_{L^2}
  \e_1 2^{k_1/10} 2^{-k_1^+} \langle t\rangle^{-1/2}\\
	&\lesssim \e_1^3 \langle t\rangle^{-1/3}.
\end{align*}
This concludes the proof of \eqref{lemEE1conc}.

\vskip10pt
\subsection{Proof of Lemma \ref{lemEE2}}
Using the equation for $W$ in \eqref{eqW10} we can calculate
\begin{align*}
\begin{split}
\frac{d}{dt} E_{N_0}^{(2)}(t)
  & = \frac{1}{2\pi} \Re \int_{\R} \bar{\what{W}}(\xi,t) \partial_t \what{W}(\xi,t) \,d\xi 
  = A_1(t) + A_2(t) + A_3(t) + A_4(t) ,
\end{split}
\end{align*}
where
\begin{align}
\label{EE21}
A_1 & := \frac{1}{2\pi} \Re \int_{\R} \bar{\what{W}}(\xi) \, \what{i \Sigma_\g W}(\xi) \,d\xi,
\\
\label{EE22}
A_2 & := \frac{1}{4\pi^2} \Re \int_{\R\times\R} \bar{\what{W}}(\xi)
  \big( -i\xi |\xi|^{N_0} |\eta|^{-N_0} \what{V}(\xi-\eta) \chi(\xi-\eta,\eta) \big) \what{W}(\eta)\,d\xi d\eta ,
\\
\label{EE23}
A_3 & := \frac{1}{2\pi} \Re \int_{\R} \bar{\what{W}}(\xi) \, |\xi|^{N_0} \what{\mathcal{N}_W}(\xi) \,d\xi ,
\\
\label{EE24}
A_4 & := \frac{1}{2\pi} \Re \int_{\R} \bar{\what{W}}(\xi) \what{\mathcal{O}_W}(\xi) \,d\xi .
\end{align}

All cubic contributions coming from the above integrals are matched,
up to acceptable quartic remainder terms, with the contributions from the time evolution of $E_{N_0}^{(3)}$,
see \eqref{E_N^3} and \eqref{E^31}-\eqref{E^3b}.
This fact is established through the following series of lemmas,
which will also prove the desired estimate \eqref{lemEE2conc}.

\begin{lemma}\label{lemEE21}
Under the a priori assumptions \eqref{bing11}--\eqref{bing31}, we have
\begin{align}
\big| A_2(t) +  \frac{d}{dt} E^{(3)}_{m_{N_0}}(t) \big| \lesssim \e_1^4 {(1+t)}^{-1+2p_0} .
\end{align}
\end{lemma}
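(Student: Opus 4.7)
The plan is to time-differentiate $E^{(3)}_{m_{N_0}}(t)$ by the Leibniz rule, substitute the evolution equations \eqref{eqW10} for $\partial_t\what W$ and $\partial_t\what u$, and exploit the defining property of $m_{N_0}$ (division by the resonant phase $|\xi|^{3/2}-|\eta|^{3/2}-|\xi-\eta|^{3/2}$) so that the linear part of $\partial_t E^{(3)}_{m_{N_0}}$ exactly cancels $A_2(t)$, leaving only quartic (and higher) remainders that I must then bound by $\varepsilon_1^4\langle t\rangle^{-1+2p_0}$.

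Concretely, the linear contributions $\partial_t \bar{\what W}(\xi)\sim -i|\xi|^{3/2}\bar{\what W}(\xi)$, $\partial_t\what W(\eta)\sim i|\eta|^{3/2}\what W(\eta)$, $\partial_t\what u(\xi-\eta)\sim i|\xi-\eta|^{3/2}\what u(\xi-\eta)$ combine to give the factor $-i(|\xi|^{3/2}-|\eta|^{3/2}-|\xi-\eta|^{3/2})\,m_{N_0}(\xi,\eta)$, which by \eqref{E^31} equals
\begin{equation*}
-\frac{i(\xi-\eta)}{2|\xi-\eta|^{1/2}}\bigl[\xi|\xi|^{N_0}|\eta|^{-N_0}\chi(\xi-\eta,\eta) - \eta|\eta|^{N_0}|\xi|^{-N_0}\chi(\eta-\xi,\xi)\bigr].
\end{equation*}
On the other side, I would substitute $V = -(2i)^{-1}\partial_x|\partial_x|^{-1/2}(u-\bar u) + V_2$ from \eqref{Vu+baru0} into $A_2$ in \eqref{EE22}. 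The linear-in-$u$ part of $V$ splits $A_2$ into a $\what u(\xi-\eta)$ piece and a $\bar{\what u}(\eta-\xi)$ piece; relabeling $\xi\leftrightarrow\eta$ in the latter and using the $\Re$ symmetry $\Re\alpha=\Re\bar\alpha$ to convert $\bar u$ into $u$, the two pieces recombine into exactly minus the displayed expression. Hence the linear part of $\partial_t E^{(3)}_{m_{N_0}}$ cancels this part of $A_2$ identically.

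What remains to control are the contribution $A_2^{V_2}$ coming from $V_2\in O_{2,-1/2}$, the nonlinear contributions to $\partial_t W$ at both $\xi$ and $\eta$ ($i\Sigma_\gamma W$, $\mathcal{Q}_W$, $|\partial_x|^{N_0}\mathcal{N}_W$, $\mathcal{O}_W$), and the nonlinear contributions to $\partial_t u$ at $\xi-\eta$ ($i\Sigma_\gamma u$, $-\partial_x T_V u$, $\mathcal{N}_u$, $|\partial_x|^{1/2}O_{3,1/2}$). Each such integral is quartic or higher in $u$, and I would bound it using Lemma \ref{touse}(ii) together with the $S^\infty$ estimate \eqref{boundm_N}, $\|m_{N_0}^{k,k_1,k_2}\|_{S^\infty}\lesssim 2^{k_1/2-k/2}\mathbf{1}_{\mathcal X}\mathbf{1}_{k_2-k_1\geq 6}$, combined with the energy bound $\|W\|_{L^2}\lesssim\varepsilon_1\langle t\rangle^{p_0}$ from \eqref{uWHN_0}, the dispersive decay \eqref{bing11}, and the cubic bounds \eqref{cubboundW} and \eqref{on4.3} for $\mathcal{O}_W$ and the $O_{3,\alpha}$-type errors.

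The main obstacle will be the quasilinear pieces, namely $A_2^{V_2}$ and the $\mathcal{Q}_W$ contribution, where a full derivative falls on a high-frequency factor. Both are paradifferential and thus confined to $\mathrm{Low}\times\mathrm{High}\to\mathrm{High}$ interactions, so that the half-derivative gain $2^{k_1/2}$ at the low frequency combined with the $2^{-k/2}$ factor at the high frequency in \eqref{boundm_N} is exactly what compensates the derivative loss and produces a convergent sum over $(k,k_1,k_2)\in\mathcal X$. After distributing each integrand as $L^2\times L^2\times L^\infty$ (or $L^2\times L^2$ against a cubic $L^2$-error), every quartic piece is bounded either by $\varepsilon_1^2\langle t\rangle^{2p_0}\cdot\varepsilon_1^2\langle t\rangle^{-1}$ or by $\varepsilon_1\langle t\rangle^{p_0}\cdot\varepsilon_1^3\langle t\rangle^{-1+p_0}$, both well within the target $\varepsilon_1^4\langle t\rangle^{-1+2p_0}$.
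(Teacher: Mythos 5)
Your cancellation of $A_{2,1}$ against the linear part $I_1$ of $\frac{d}{dt}E^{(3)}_{m_{N_0}}$, and your treatment of $A_{2,2}$ and of the genuinely semilinear remainders, match the paper's argument. The gap is in your claim that for the quasilinear contributions the symbol bound \eqref{boundm_N} alone "exactly compensates the derivative loss." Count the powers: when $\partial_t W$ is replaced by $\mathcal{Q}_W$ the output frequency carries a full derivative ($\|P_k\mathcal{Q}_W\|_{L^2}\lesssim\e_1\langle t\rangle^{-1/2}2^k\|P'_kW\|_{L^2}$), and when it is replaced by $i\Sigma_\gamma W$ it carries $3/2$ derivatives ($2^{3k/2}$). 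The factor $2^{-k/2}$ in $\|m_{N_0}^{k,k_1,k_2}\|_{S^\infty}\lesssim 2^{k_1/2}2^{-k/2}$ buys back only half a derivative at the high frequency $k$, and the $2^{k_1/2}$ gain lives at the \emph{low} frequency $k_1$ and cannot absorb a high-frequency loss. So estimating each of the two terms in $I_2$ (resp.\ $I_3$) separately as $L^2\times L^2\times L^\infty$ leaves an uncompensated factor $2^{k}$ (resp.\ $2^{k/2}$), and the sum over $k$ diverges since $W=\mathcal{D}^{k_0}u$ has no derivatives to spare.

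What is actually needed — and what the paper spends most of the proof on — is a second layer of cancellation between the term where the quasilinear operator hits the $\xi$-slot and the term where it hits the $\eta$-slot. After a change of variables this reduces to showing that differences such as $q_{N_0}(\xi,\rho)-q_{N_0}(\xi+\eta-\rho,\eta)$ and $m_{N_0}(\xi,\rho)-m_{N_0}(\xi+\eta-\rho,\eta)$ gain extra powers of the low frequencies over the high ones (the bounds \eqref{q_Nasy2}, \eqref{boundn_1}, \eqref{m_3'}, packaged as Lemmas \ref{lemSigmabulk} and \ref{lemQbulk}); for $I_2$ one also uses that $\Sigma_\gamma$ is self-adjoint to leading order. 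Without invoking this commutator structure your estimate of $I_2$ and $I_3$ does not close, so the proposal as written has a genuine gap at precisely the quasilinear step it identifies as the "main obstacle."
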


\begin{lemma}\label{lemEE22}
Under the a priori assumptions \eqref{bing11}--\eqref{bing31}, we have
\begin{align}
\Big| A_3(t) +  \frac{d}{dt} \sum_{\star}
  \big( E^{(3)}_{a,\epsilon_1\epsilon_2}(t) + E^{(3)}_{b,\epsilon_1\epsilon_2}(t) \big) \Big| \lesssim \e_1^4 {(1+t)}^{-1+2p_0} .
\end{align}
\end{lemma}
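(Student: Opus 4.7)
The plan is to exploit the normal form design of the cubic correctors: by construction the dispersive part of $\tfrac{d}{dt}\!\sum_\star(E^{(3)}_{a,\eps_1\eps_2}+E^{(3)}_{b,\eps_1\eps_2})$ cancels $A_3$ exactly, so the proof reduces to estimating the quartic remainders by $\e_1^4\langle t\rangle^{-1+2p_0}$.

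First I would differentiate in time, inserting the equations \eqref{eqW10} for $W$ and \eqref{equ} for $u$ into each of the three Fourier factors in the integrands of $E^{(3)}_{a,\eps_1\eps_2}$ and $E^{(3)}_{b,\eps_1\eps_2}$. Each $\partial_t$ splits into: (i) a dispersive piece $\pm i|\cdot|^{3/2}$; (ii) the $i\Sigma_\gamma$ correction; (iii) a quasilinear quadratic piece ($-\partial_x T_V u$ or $\mathcal{Q}_W$); (iv) a semilinear quadratic piece ($\mathcal{N}_u$ or $|\partial_x|^{N_0}\mathcal{N}_W$); and (v) a cubic error ($\mathcal{O}_W$ or $|\partial_x|^{1/2}O_{3,1/2}$). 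Summing the three dispersive contributions produces the Fourier multiplier $i\bigl[-|\xi|^{3/2}+\eps_2|\eta|^{3/2}+\eps_1|\xi-\eta|^{3/2}\bigr]$, which is precisely minus the denominator built into \eqref{E^3a}--\eqref{E^3b}. Division by this denominator leaves the symbols $|\xi|^{N_0}|\eta|^{-N_0}a_{\eps_1\eps_2}$ and $|\xi|^{N_0}b_{\eps_1\eps_2}$; summing over $\star$ and comparing with \eqref{eqWN2} for $\mathcal{N}_W$ and \eqref{EE23} for $A_3$ yields $\text{(dispersive part)}=-A_3(t)$.

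The remaining terms are quartic in the original energy variables. The $\Sigma_\gamma$ pieces are quartic because $\gamma=\sqrt{1+\sigma}-1$ is quadratic in $(h,\phi)$. The quasilinear and semilinear quadratic insertions give quadrilinear integrals whose symbols are products of $a^{N_0}_{\eps_1\eps_2}$ or $b^{N_0}_{\eps_1\eps_2}$ (controlled by \eqref{bounda^N/}, \eqref{boundb^N/}) with the factors from \eqref{a_++}--\eqref{m_2q_2} and standard paraproduct symbols; their $S^\infty$ norms are assembled frequency by frequency as in subsection \ref{secEsym}. Applying Lemma~\ref{touse}(iii) with two $L^2$ slots of mass $\mathcal{K}'_I(t)\lesssim\e_1\langle t\rangle^{p_0}$ and two $L^\infty$ slots using the decay $\|P_k u\|_{L^\infty}\lesssim\e_1\langle t\rangle^{-1/2}$ from \eqref{bing11} yields the desired bound $\e_1^4\langle t\rangle^{-1+2p_0}$. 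The cubic-error contributions from $\mathcal{O}_W$ and $|\partial_x|^{1/2}O_{3,1/2}$ are handled directly by \eqref{cubboundW} and \eqref{on4.3} together with Cauchy--Schwarz against one energy and one $L^\infty$ factor, which in fact gives faster time decay.

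The principal obstacle is the low-frequency singularity $2^{-k/2}$ visible in \eqref{bounda^N/}--\eqref{boundb^N/}: whenever the $\bar{\what W}(\xi)$ slot is at small output frequency $|\xi|\sim 2^k\ll 1$, a plain $L^2$ estimate on $W$ is insufficient. This is the manifestation in this section of the ``division problem'' from~\ref{introen}, caused by the resonant phase $|\xi|^{3/2}\pm|\xi-\eta|^{3/2}\pm|\eta|^{3/2}$ vanishing to first order at low output frequency. To absorb the deficit I will use the weighted low-frequency norm $\mathcal{K}'_I$ from \eqref{bing21}, whose weight $|\xi|^{-1}\mathcal{P}((1+t)^2|\xi|)$ is calibrated exactly for this purpose; when the singular output frequency instead threads through an internal $u$-factor, the improved low-frequency $L^\infty$ bound $\|P_k u\|_{L^\infty}\lesssim\e_1 2^{k/10}\langle t\rangle^{-1/2}$ from \eqref{bing11} absorbs the singularity with room to spare. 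This is exactly where the compatible low-frequency structure assumed in \eqref{bing31} and propagated throughout the paper is used in an essential way.
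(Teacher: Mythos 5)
Your overall architecture is right: differentiating the cubic correctors, matching the dispersive multiplier $i[-|\xi|^{3/2}+\eps_2|\eta|^{3/2}+\eps_1|\xi-\eta|^{3/2}]$ against the denominators in \eqref{E^3a}--\eqref{E^3b} so that it cancels $A_3$ exactly, and then bounding the leftover quartic terms. This is the same skeleton as the paper, which writes $A_3+\frac{d}{dt}\sum_\star(E^{(3)}_{a,\eps_1\eps_2}+E^{(3)}_{b,\eps_1\eps_2})=\sum_\star(I_{6,\eps_1\eps_2}+I_{7,\eps_1\eps_2}+I_{8,\eps_1\eps_2}+I_{10,\eps_1\eps_2})$.

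There is, however, a genuine gap in how you propose to estimate the remainders. You treat all the quartic terms uniformly, ``applying Lemma~\ref{touse}(iii) with two $L^2$ slots of mass $\mathcal{K}'_I(t)$ and two $L^\infty$ slots.'' This fails for the terms in which $\partial_t$ lands on a $W$-factor and produces $i\Sigma_\gamma W$ or $\mathcal{Q}_W$ paired with $\what{W_{+}}(\eta)$ (the $\eps_2=+$ cases, i.e.\ $I_{6,\eps_1+}$ and $I_{7,\eps_1+}$). Since $W\approx|\partial_x|^{N_0}u$ is already at top order, $\Sigma_\gamma W$ loses $3/2$ derivatives (cf.\ \eqref{azx3}, where $\|P_k\Sigma_\gamma F\|_{L^2}\lesssim \e_1^2\langle t\rangle^{-3/4}2^{3k/2}\|P'_kF\|_{L^2}$) and $\mathcal{Q}_W$ loses one derivative; neither is in $L^2$ with a usable bound, so no placement of factors in Lemma~\ref{touse}(iii) closes. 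The paper recovers the loss only by exploiting the \emph{antisymmetric pairing} $\bar{\what{i\Sigma_\gamma W}}(\xi)\what{W}(\eta)+\bar{\what{W}}(\xi)\what{i\Sigma_\gamma W}(\eta)$ (and its analogue for $\mathcal{Q}_W$): after a change of variables one must show that the resulting trilinear symbol involves only \emph{differences} such as $a^{N_0}_{\eps_1+}(\xi,\rho)-a^{N_0}_{\eps_1+}(\xi+\eta-\rho,\eta)$, which gain a full power of the high frequency via the symbol-difference estimates \eqref{boundadiff} and \eqref{lemSigmabulkpro2}. This is the content of Lemmas~\ref{lemSigmabulk} and \ref{lemQbulk}, and it is the real technical heart of the proof; your proposal contains no mechanism for it. (For $\eps_2=-$ the phase is non-resonant and the symbols $a^{N_0}_{\eps_1-}$ gain $2^{-3k/2}$ at high frequency, so there the direct estimate does work.)

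Secondary point: you identify the low-frequency singularity as ``the principal obstacle'' and invoke the $|\xi|^{-1}\mathcal{P}((1+t)^2|\xi|)$ weight of $\mathcal{K}'_I$. For this particular lemma that is not where the difficulty lies: the bounds \eqref{bounda^N/}--\eqref{boundb^N/} carry only mild factors like $2^{-k/2}$ or $2^{-k_2/2}$ at frequencies that are comparable to a $W$- or $u$-frequency appearing in the integrand, and these are absorbed by the $2^{k/10}$ low-frequency gain in the $L^\infty$ bound of \eqref{bing11}, exactly as in the proof of Lemma~\ref{lemEE1}. The genuinely singular $(\text{low frequency})^{-1/2}$ denominators requiring the weighted low-frequency energies appear in the weighted estimates of sections~\ref{secweighted}--\ref{secweightedlow}, not here.
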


\begin{lemma}\label{lemEE23}
Under the a priori assumptions \eqref{bing11}--\eqref{bing31}, we have
\begin{align}
| A_1(t) +  A_4(t) | \lesssim \e_1^4 {(1+t)}^{-1+2p_0} .
\end{align}
\end{lemma}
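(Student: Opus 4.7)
\smallskip
\noindent\textbf{Proof plan for Lemma \ref{lemEE23}.} I would treat $A_4$ and $A_1$ separately, since they are governed by quite different mechanisms.

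For $A_4$, the strategy is direct: by Cauchy--Schwarz,
\begin{equation*}
|A_4(t)| \leq \frac{1}{2\pi}\|W(t)\|_{L^2}\,\|\mathcal{O}_W(t)\|_{L^2}.
\end{equation*}
Since $W=\mathcal{D}^{k_0}u$ with $N:=3k_0/2=N_0$, the first bound in \eqref{cubboundW} applies and yields $\|\mathcal{O}_W(t)\|_{L^2}\leq \|\mathcal{O}_W(t)\|_{\dot H^{0,-1/2}}\lesssim \e_1^3\langle t\rangle^{-1+p_0}$. Combined with $\|W(t)\|_{L^2}\lesssim \e_1\langle t\rangle^{p_0}$ from \eqref{uWHN_0}, this gives $|A_4(t)|\lesssim \e_1^4\langle t\rangle^{-1+2p_0}$, as required.

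For $A_1$, the key observation is that $\Sigma_\gamma$ was designed in \eqref{symmsymbols}--\eqref{equSigma} precisely so that it is self-adjoint modulo a bounded remainder. Since $\gamma$ is real, a short direct computation in Fourier of the difference $\Sigma_\gamma-\Sigma_\gamma^*$ (using the two terms in \eqref{equSigma} together) shows that the principal part, which would otherwise be of order $3/2$ and involve $\gamma$, cancels; what remains is a paradifferential operator of order $0$ whose $L^2\to L^2$ norm is controlled by $\|\gamma\|_{L^\infty}+\|\partial_x\gamma\|_{L^\infty}+\|\partial_x^2\gamma\|_{L^\infty}$, plus a smoothing remainder. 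Writing
\begin{equation*}
2\Re\langle W, i\Sigma_\gamma W\rangle = i\langle W,(\Sigma_\gamma-\Sigma_\gamma^*)W\rangle,
\end{equation*}
one immediately gets $|A_1(t)|\lesssim \|W(t)\|_{L^2}^2\,\big(\|\gamma(t)\|_{L^\infty}+\|\partial_x\gamma(t)\|_{L^\infty}+\|\partial_x^2\gamma(t)\|_{L^\infty}\big)$.

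To close the estimate I use that $\gamma=\sqrt{1+\sigma}-1$ with $\sigma=(1+h_x^2)^{-3/2}-1$ is at least quadratic in $h_x$, so by the dispersive bootstrap assumption \eqref{bing1} (transferred to bounds on $h_x$ via \eqref{symm5} and \eqref{Vu+baru0}) one has $\|\partial_x^m\gamma(t)\|_{L^\infty}\lesssim \e_1^2\langle t\rangle^{-1}$ for all $m$ in the range allowed by $N_2$. Combining with $\|W(t)\|_{L^2}^2\lesssim \e_1^2\langle t\rangle^{2p_0}$ yields $|A_1(t)|\lesssim \e_1^4\langle t\rangle^{-1+2p_0}$, and summing with the bound on $A_4$ finishes the proof. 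The main technical point is the symmetrization identity for $\Sigma_\gamma-\Sigma_\gamma^*$, which is standard paradifferential calculus adapted to the asymmetric paraproduct cutoff $\chi$ of \eqref{on3}, but must be carried out carefully to verify that all remainders really do have order $\leq 0$ and depend only on a bounded number of derivatives of $\gamma$, the bounds for which in turn follow from the apriori hypotheses and the nonlinear elliptic estimates already recorded in \eqref{on7.6}, \eqref{on4.2}.
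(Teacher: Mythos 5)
Your proposal is correct and follows essentially the same route as the paper: $A_4$ is handled by Cauchy--Schwarz together with the cubic bound \eqref{cubboundW}, and $A_1$ by symmetrizing the integral and exploiting that $\Sigma_\gamma$ was built to be self-adjoint up to a lower-order remainder whose symbol carries two factors of $\xi-\eta$, so that it is controlled by $\|W\|_{L^2}^2$ times $L^\infty$ norms of $\gamma$ and its derivatives, which decay like $\e_1^2\langle t\rangle^{-1}$. The only cosmetic difference is that the paper records the remainder symbol as $O\big((\xi-\eta)^2(|\xi|+|\eta|)^{-1/2}\big)$, i.e.\ order $-1/2$ rather than order $0$, but your weaker claim suffices for the stated bound.
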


The rest of this section is concerned with the proofs of these lemmas.

\vskip10pt
\subsubsection{Proof of Lemma \ref{lemEE21}} We start by symmetrizing the term $A_2$ in \eqref{EE22} using the fact that $V$ is real-valued,
\begin{align}
\label{EE211}
\begin{split}
A_2 = \frac{1}{8\pi^2} \Re \int_{\R\times\R} & \bar{\what{W}}(\xi) \what{W}(\eta) \what{V}(\xi-\eta)
  \\
  & \times \big( -i\xi |\xi|^{N_0}|\eta|^{-N_0} \chi(\xi-\eta,\eta) + i\eta |\eta|^{N_0}|\xi|^{-N_0} \chi(\eta-\xi,\xi) \big) \,d\xi d\eta .
\end{split}
\end{align}
Recall, see \eqref{Vu+baru0} and Definition \ref{Oterms}, that
\begin{align}
\label{Vu+baru1}
V 
  = -\frac{1}{2i} \partial_x |\partial_x|^{-1/2} (u-\overline{u}) + V_2 , \qquad V_2 = O_{2,-1/2} .
\end{align}
Thus, we can write $A_2 = A_{2,1} + A_{2,2}$ where
\begin{align}
\begin{split}
\label{A_2}
A_{2,1} & := \frac{1}{4\pi^2} \Re \int_{\R\times\R} \bar{\what{W}}(\xi) \what{W}(\eta) \what{u}(\xi-\eta) q_{N_0}(\xi,\eta) \,d\xi d\eta
\\
& q_{N_0}(\xi,\eta) := \frac{i(\xi-\eta)}{2|\xi-\eta|^{1/2}} \Big[ \frac{\xi|\xi|^{N_0}}{|\eta|^{N_0}} \chi(\xi-\eta,\eta)
       - \frac{ \eta|\eta|^{N_0}}{|\xi|^{N_0}} \chi(\eta-\xi,\xi) \Big] ,
\end{split}
\end{align}
and
\begin{align}
\label{A_22}
\begin{split}
A_{2,2} & := \frac{1}{8\pi^2} \Re \int_{\R\times\R}
  \bar{\what{W}}(\xi) \what{W}(\eta) \what{V_2}(\xi-\eta) a_{2,2}(\xi,\eta) \,d\xi d\eta ,
\\
& a_{2,2}(\xi,\eta) := -\frac{i\xi |\xi|^{N_0}}{|\eta|^{N_0}} \chi(\xi-\eta,\eta)
    + \frac{i\eta |\eta|^{N_0}}{|\xi|^{N_0}} \chi(\eta-\xi,\xi) .
\end{split}
\end{align}

According to \eqref{A_2}, the symbol of $E^{(3)}_{m_{N_0}}$ in \eqref{E^31} is
\begin{align}
\label{E^31again}
m_{N_0}(\xi,\eta) = \frac{q_{N_0}(\xi,\eta) }{i(|\xi|^{3/2}-|\xi-\eta|^{3/2}-|\eta|^{3/2})} .
\end{align}
Using the equation \eqref{eqW10} we can calculate
\begin{align*}
\begin{split}
\frac{d}{dt} E^{(3)}_{m_{N_0}} & = I_1 + I_2 + I_3 + I_4
\end{split}
\end{align*}
where
\begin{align}
\label{evolE311}
I_1 & := \frac{1}{4\pi^2} \Re \int_{\R \times\R} \bar{\what{W}}(\xi) \what{W}(\eta) \what{u}(\xi-\eta)
  \big[ -i|\xi|^{3/2} + i|\xi-\eta|^{3/2} + i|\eta|^{3/2} \big] m_{N_0}(\xi,\eta)  \,d\xi d\eta ,
\\
\label{evolE312}
I_2 & := \frac{1}{4\pi^2} \Re \int_{\R \times\R} \Big[ \bar{\what{i\Sigma_\g W}}(\xi) \what{W}(\eta)
  + \bar{\what{W}}(\xi) \what{i\Sigma_\g W}(\eta) \Big] \what{u}(\xi-\eta) m_{N_0}(\xi,\eta)  \,d\xi d\eta ,
\\
\label{evolE313}
I_3 & := \frac{1}{4\pi^2} \Re \int_{\R \times\R} \Big[ \bar{\what{\mathcal{Q}_W}}(\xi) \what{W}(\eta)
  + \bar{\what{W}}(\xi) \what{\mathcal{Q}_W}(\eta) \Big] \what{u}(\xi-\eta) m_{N_0}(\xi,\eta)  \,d\xi d\eta ,
\end{align}
and
\begin{align}
\label{evolE314}
\begin{split}
I_4 & := \frac{1}{4\pi^2} \Re \int_{\R \times\R} \Big[ |\xi|^{N_0} \bar{\what{\mathcal{N}_W}}(\xi) \what{W}(\eta)
  + \bar{\what{W}}(\xi) |\eta|^{N_0} \what{\mathcal{N}_W}(\eta) \Big] \what{u}(\xi-\eta) m_{N_0}(\xi,\eta)  \,d\xi d\eta
\\
& + \frac{1}{4\pi^2} \Re \int_{\R \times\R} \bar{\what{W}}(\xi) \what{W}(\eta)
  \mathcal{F}(\pa_t u - i |\pa_x|^{3/2} u)(\xi-\eta) m_{N_0}(\xi,\eta)  \,d\xi d\eta
\\
& + \frac{1}{4\pi^2} \Re \int_{\R \times\R} \Big[ \bar{\what{\mathcal{O}_W}}(\xi) \what{W}(\eta)
  + \bar{\what{W}}(\xi) \what{\mathcal{O}_W}(\eta) \Big] \what{u}(\xi-\eta) m_{N_0}(\xi,\eta)  \,d\xi d\eta  .
\end{split}
\end{align}
Using \eqref{A_2}, \eqref{E^31again} and \eqref{evolE311} we see that $A_{2,1} + I_1 = 0$, and, therefore,
\begin{align*}
A_2 + \frac{d}{dt} E_{m_{N_0}}^{(3)} = A_{2,2} + I_2 + I_3 + I_4.
\end{align*}
It then suffices to show that
\begin{align}
\label{estI_234}
| A_{2,2}(t) |+ |I_2(t)| + |I_3(t)| + |I_4(t)| \lesssim \e_1^4 {(1+t)}^{-1+2p_0}.
\end{align}

\vskip10pt
\subsubsection*{Estimate of $A_{2,2}$}
Using integration by parts one can see that the symbol in \eqref{A_22} satisfies
\begin{align*}
a_{2,2}(\xi,\eta) 
  = O \big( |\xi-\eta| \mathbf{1}_{[2^{-5},2^5]}(|\eta|/|\xi|)\big) ,
\end{align*}
see the notation \eqref{Onot} for bilinear symbols.
Then, using Lemma \ref{touse}(ii) and \eqref{Vu+baru1},
\begin{align}
 \label{estA_22}
\begin{split}
| A_{2,2} | & \lesssim
  \sum_{ k,k_1,k_2 \in \mathbb{Z}} {\| (a_{2,2})^{k,k_1,k_2} \|}_{S^\infty}
  {\| P_k^\prime W \|}_{L^2} {\| P_{k_1}^\prime V_2 \|}_{L^\infty}  {\| P_{k_2}^\prime W \|}_{L^2}
\\
& \lesssim \sum_{ (k,k_1,k_2) \in \mathcal{X}, \, |k-k_2| \leq 10} 2^{k_1}
  {\| P_k^\prime W \|}_{L^2} \e_1^2 2^{-k_1^+} \langle t\rangle^{-1} {\| P_{k_2}^\prime W \|}_{L^2}\\
	&\lesssim \e_1^4 \langle t\rangle^{-1+2p_0}.
\end{split}
\end{align}

\vskip10pt
\subsubsection*{Estimate of $I_2$}
The term $I_2$ in \eqref{evolE312} presents a potential loss of $3/2$ derivatives.
However, exploiting the structure of $\Sigma_\g$ and of the symbol $m_{N_0}$, one can recover this loss.
Recall the definition of $\Sigma_\g$ from \eqref{eqW10}. We can then estimate
\begin{equation*}
\begin{split}
|I_2|& \lesssim \Big| - \int_{\R^3} \what{\gamma}(\rho-\xi) \chi(\xi-\rho,\rho)
  |\rho|^{3/2}\Big(1+\frac{3(\xi-\rho)}{4\rho}\Big) \bar{\what{P_{\geq 1}W}}(\rho) \what{W}(\eta) \what{u}(\xi-\eta) m_{N_0}(\xi,\eta)  \,d\xi d\eta d\rho
\\
  &+ \int_{\R^3} \bar{\what{W}}(\xi) \what{\gamma}(\eta-\rho) \chi(\eta-\rho,\rho)
  |\rho|^{3/2}\Big(1+\frac{3(\eta-\rho)}{4\rho}\Big) \what{P_{\geq 1}W}(\rho) \what{u}(\xi-\eta) m_{N_0}(\xi,\eta)  \,d\xi d\eta d\rho \Big|.
\end{split}
\end{equation*}
After changes of variables, it follows that 
\begin{align}
\label{I21}
\begin{split}
|I_2|& \lesssim \Big| \int_{\R^3} \bar{\what{W}}(\xi) \what{W}(\eta) \what{u}(\xi-\rho) \what{\gamma}(\rho-\eta) m'_{2}(\xi,\rho,\eta)
  \,d\xi d\eta d\rho \Big| ,
\\
& m'_{2}(\xi,\rho,\eta) :=
  - \chi(\eta-\rho, \xi) |\xi|^{3/2} \varphi_{\geq 1}(\xi) m_{N_0}(\xi+\eta-\rho,\eta)\Big(1+\frac{3(\eta-\rho)}{4\xi}\Big)
  \\ & \hskip64pt + \chi(\rho-\eta,\eta) |\eta|^{3/2} \varphi_{\geq 1}(\eta) m_{N_0}(\xi,\rho)\Big(1+\frac{3(\rho-\eta)}{4\eta}\Big).
\end{split}
\end{align}

We then want to establish a bound for the symbol in the above expression, showing that it does not cause any derivatives loss.
Let us write
\begin{align}
\label{I_21sym}
\begin{split}
& m'_2 = n_1 + n_2 + n_3+n_4 ,
\\
& n_1(\xi,\eta,\rho) := \big[ \chi(\rho-\eta,\eta) - \chi(\eta-\rho, \xi) \big]
  |\xi|^{3/2} \varphi_{\geq 1}(\xi) m_{N_0}(\xi+\eta-\rho,\eta) ,
\\
& n_2(\xi,\eta,\rho) := \chi(\rho-\eta,\eta)
  \big[ |\eta|^{3/2}\varphi_{\geq 1}(\eta) - |\xi|^{3/2} \varphi_{\geq 1}(\xi) \big] m_{N_0}(\xi,\rho) ,
\\
& n_3(\xi,\eta,\rho) := \chi(\rho-\eta,\eta) |\xi|^{3/2} \varphi_{\geq 1}(\xi) \big[ m_{N_0}(\xi,\rho) -
  m_{N_0}(\xi+\eta-\rho,\eta) \big] ,
\\
& n_4(\xi,\eta,\rho):=\frac{3(\rho-\eta)}{4\xi}\chi(\eta-\rho, \xi) |\xi|^{3/2} \varphi_{\geq 1}(\xi) m_{N_0}(\xi+\eta-\rho,\eta),
\\
& n_5(\xi,\eta,\rho):=\frac{3(\rho-\eta)}{4\eta}\chi(\rho-\eta,\eta) |\eta|^{3/2} \varphi_{\geq 1}(\eta) m_{N_0}(\xi,\rho).
\end{split}
\end{align}

We will often use the observation
\begin{equation}\label{symprod}
\text{ if }\,\,\,f(\xi,\eta,\rho)=f_1(\xi,\rho)f_2(\eta,\rho)\,\,\,\text{ then }\,\,\,\|\mathcal{F}^{-1}f\|_{L^1(\mathbb{R}^3)}\lesssim \|\mathcal{F}^{-1}f_1\|_{L^1(\mathbb{R}^2)}\|\mathcal{F}^{-1}f_2\|_{L^1(\mathbb{R}^2)}.
\end{equation}
Moreover, since $\gamma\in O_{2,0}$, see \eqref{bvd4} and Definition \ref{Oterms}, we have, for any $l\in\mathbb{Z}$,
\begin{equation}\label{estgamma}
\|P_l\gamma\|_{L^\infty}\lesssim \e_1^2\langle t\rangle ^{-1} 2^{-3l^+},\qquad \|P_l\gamma\|_{L^2}\lesssim \e_1^2\langle t\rangle ^{-1/2+p_0} 2^{-3l^+}.
\end{equation}

Using \eqref{symprod} and the bound \eqref{boundm_N} for $m_{N_0}$, we see that
\begin{align}
\label{boundn_1}
n_1(\xi,\eta,\rho) = O \big(  (|\xi-\rho|^{3/2} +  |\rho-\eta|^{3/2})
  \mathbf{1}_{[2^2,\infty)} (|\eta|/|\rho-\eta|) \mathbf{1}_{[2^2,\infty)} (|\xi|/|\xi-\rho|) \big) .
\end{align}
Here we are using the notation \eqref{Onot2}, with \eqref{lemEE1pr1}-\eqref{lemEE1pr2}.
Then, using Lemma \ref{touse}(iii) with the bound \eqref{boundn_1},
the a priori decay assumption in \eqref{bing11} and \eqref{estgamma}, it is easy to show that
\begin{align}\label{azx1}
\begin{split}
\Big| \int_{\R^3} \bar{\what{W}}(\xi)  \what{W}(\eta) \what{u}(\xi-\rho) \what{\gamma}(\rho-\eta)
  n_1(\xi,\eta,\rho)\,d\xi d\eta d\rho \Big| \lesssim {\| W \|}_{L^2}^2 \e_1^3 {(1+t)}^{-7/6} .
\end{split}
\end{align}
Moreover, the symbols $n_2$, $n_4$, $n_5$ satisfy the same bound \eqref{boundn_1}, so their contributions can also be estimated in the same way.

Finally, we look at $n_3$ and we would like to prove the same symbol bound \eqref{boundn_1}. Recall the definition of $m_{N_0}$ and $q_{N_0}$ from \eqref{E^31again} and \eqref{A_2},
and write
\begin{align}
\label{m_Ndiff}
& m_{N_0} (\xi,\rho) - m_{N_0}(\xi+\eta-\rho,\eta) = -i r_1(\xi,\eta,\rho) - i r_2(\xi,\eta,\rho) ,
\\
\nonumber
& r_1 (\xi,\eta,\rho) := \frac{q_{N_0}(\xi,\rho) - q_{N_0}(\xi+\eta-\rho,\eta)}{|\xi+\eta-\rho|^{3/2}-|\xi-\rho|^{3/2}-|\eta|^{3/2}} ,
\\
\nonumber
& r_2 (\xi,\eta,\rho) := q_{N_0}(\xi,\rho) \Big[ \frac{1}{|\xi|^{3/2}-|\xi-\rho|^{3/2}-|\rho|^{3/2}}
  - \frac{1}{|\xi+\eta-\rho|^{3/2}-|\xi-\rho|^{3/2}-|\eta|^{3/2}} \Big] .
\end{align}
Inspecting the formula \eqref{A_2} we see that, when $|\eta-\rho|\leq 2^{-8}|\eta|$,
\begin{align}
\label{q_Nasy2}
q_{N_0}(\xi,\rho) - q_{N_0}(\xi+\eta-\rho,\eta)= O \big( |\xi-\rho|^{5/2}|\xi|^{-1} \mathbf{1}_{[2^6,\infty)} (|\xi|/|\xi-\rho|)\big),
\end{align}
and, therefore,
\begin{align*}
r_1(\xi,\eta,\rho) =O \big( |\xi-\rho|^{3/2} |\xi|^{-3/2} \mathbf{1}_{[2^2,\infty)} (|\xi|/|\xi-\rho|)\big).
\end{align*}
Moreover, one can directly verify that for $|\eta| \geq 2^6 \max(|\xi-\rho|, |\eta-\rho|)$,
\begin{align*}
\begin{split}
|\xi+\eta-\rho|^{3/2}-|\xi-\rho|^{3/2}-|\eta|^{3/2} - (|\xi|^{3/2}-|\xi-\rho|^{3/2}-|\rho|^{3/2})= O \Big( \frac{|\xi-\rho|^2 + |\rho-\eta|^2}{ |\eta|^{1/2} + |\xi|^{1/2} } \Big) .
\end{split}
\end{align*}
It follows that
\begin{align*}
 r_2(\xi,\eta,\rho) = O \Big( \frac{|\xi-\rho|^{3/2} + |\rho-\eta|^{3/2}}{ |\eta|^{3/2} + |\xi|^{3/2} }
   \mathbf{1}_{[2^2,\infty)} (|\eta|/|\xi-\rho|) \Big) ,
\end{align*}
whenever $|\eta| \geq 2^6 |\eta-\rho|$. The desired bound \eqref{boundn_1} follows for the symbol $n_3$. This shows that $| I_2 | \lesssim \e_1^4 \langle t\rangle^{-1}$,
and concludes the proof of the desired bound \eqref{estI_234} for $I_2$.

For later use, namely to estimate $I_{6,\eps_1+}$ in \eqref{I_6+} and $K_{1,1}$ in \eqref{evolE_Z11}, we record below a slighly more general result that was proved in our analysis.

\begin{lemma}\label{lemSigmabulk}
Consider the expression
\begin{align}
\label{lemSigmabulkI}
I(t) = \int_{\R \times\R} \Big[ \bar{\what{i\Sigma_{\g} F}}(\xi) \what{F}(\eta)
  + \bar{\what{F}}(\xi) \what{i\Sigma_{\g} F}(\eta) \Big] \what{u_\pm}(\xi-\eta) m(\xi,\eta)  \,d\xi d\eta ,
\end{align}
with a symbol of the form
\begin{align}
\label{lemSigmabulkm}
m (\xi,\eta) = \frac{q (\xi,\eta)}{|\xi|^{3/2} \mp |\xi-\eta|^{3/2} - |\eta|^{3/2}}
\end{align}
which is supported on a region where $2^8|\xi-\eta| \leq |\eta|$.
Here $u_+ = u$, $u_- = \bar{u}$, and $\gamma$ is in \eqref{eqW10}, \eqref{symm2}.
Assume that the following two properties hold:
\begin{align}
\label{lemSigmabulkpro1}
& q(\xi,\eta) = O \big( |\xi-\eta|^{3/2} \mathbf{1}_{[2^6,\infty)}(|\eta|/|\xi-\eta|) \big) ,
\end{align}
and, whenever $|\eta| \geq 2^6|\eta-\rho|$ and $|\xi| \geq 2^6|\xi-\rho|$,
\begin{align}
\label{lemSigmabulkpro2}
q(\xi,\rho) - q(\xi+\eta-\rho,\eta)= O \Big( \frac{|\xi-\rho|^{5/2} + {|\eta-\rho|}^{5/2} }{ |\xi| + |\eta| } \Big) .
\end{align}
Then
\begin{align}
\label{lemSigmabulkconc}
|I(t)| \lesssim {\| F \|}_{L^2}^2 \e_1^3{(1+t)}^{-7/6} .
\end{align}
\end{lemma}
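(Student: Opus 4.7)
The plan is to mimic, in the general setting, the estimate of the term $I_2$ carried out at \eqref{evolE312}--\eqref{boundn_1}. Expanding $\Sigma_\g F$ according to \eqref{eqW10} introduces a convolution against $\widehat\gamma$ with a factor $|\rho|^{3/2}\bigl(1+\tfrac{3(\xi-\rho)}{4\rho}\bigr)\chi(\xi-\rho,\rho)$ coming from the principal part and the subprincipal correction. After shifting the $\rho$-variable in both of the two terms of the commutator $\overline{\widehat{i\Sigma_\gamma F}}\widehat F + \overline{\widehat F}\widehat{i\Sigma_\gamma F}$, the integral $I(t)$ becomes a quadrilinear expression in $\bar F, F, u_\pm, \gamma$ whose trilinear symbol is
\begin{align*}
m'(\xi,\eta,\rho) & := -\chi(\eta-\rho,\xi)|\xi|^{3/2}\varphi_{\geq 1}(\xi)\Bigl(1+\tfrac{3(\eta-\rho)}{4\xi}\Bigr)m(\xi+\eta-\rho,\eta) \\
& \quad + \chi(\rho-\eta,\eta)|\eta|^{3/2}\varphi_{\geq 1}(\eta)\Bigl(1+\tfrac{3(\rho-\eta)}{4\eta}\Bigr)m(\xi,\rho).
\end{align*}
This is the exact analogue of \eqref{I21} for the general $m$.

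The main step is to bound $m'$ in $\mathcal{F}^{-1}L^1$ by $O\bigl(|\xi-\rho|^{3/2}+|\rho-\eta|^{3/2}\bigr)$, uniformly on the support conditions $|\xi-\rho|\ll|\xi|$, $|\rho-\eta|\ll|\eta|$. I would decompose $m' = n_1+n_2+n_3+n_4+n_5$ exactly as in \eqref{I_21sym}, isolating in turn: the difference of cutoffs $\chi(\rho-\eta,\eta)-\chi(\eta-\rho,\xi)$, the difference of the homogeneous factors $|\eta|^{3/2}\varphi_{\geq 1}(\eta)-|\xi|^{3/2}\varphi_{\geq 1}(\xi)$, the difference $m(\xi,\rho)-m(\xi+\eta-\rho,\eta)$, and the two remainder terms carrying the subprincipal factors $3(\eta-\rho)/(4\xi)$ and $3(\rho-\eta)/(4\eta)$. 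The pieces $n_1, n_2, n_4, n_5$ are handled by the usual smoothness of $\chi$ and $\varphi_{\geq 1}$ together with the assumed bound \eqref{lemSigmabulkpro1} on $q$ (which forces $\|m^{k,k_1,k_2}\|_{S^\infty}\lesssim 1$ with $2^{k_1}\lesssim 2^{k_2}$), combined with the product-of-multipliers observation \eqref{symprod}.

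The heart of the matter is the piece $n_3$ containing the difference $m(\xi,\rho)-m(\xi+\eta-\rho,\eta)$. Writing $m=q/\Phi$ with $\Phi(\alpha,\beta):=|\alpha|^{3/2}\mp|\alpha-\beta|^{3/2}-|\beta|^{3/2}$, I would split
\begin{align*}
m(\xi,\rho)-m(\xi+\eta-\rho,\eta) = \frac{q(\xi,\rho)-q(\xi+\eta-\rho,\eta)}{\Phi(\xi+\eta-\rho,\eta)} + q(\xi,\rho)\Bigl[\frac{1}{\Phi(\xi,\rho)}-\frac{1}{\Phi(\xi+\eta-\rho,\eta)}\Bigr].
\end{align*}
In the support region one checks directly that $|\Phi|\approx|\eta|^{3/2}$ and that
\[
\Phi(\xi,\rho)-\Phi(\xi+\eta-\rho,\eta) = O\bigl((|\xi-\rho|^2+|\rho-\eta|^2)|\eta|^{-1/2}\bigr).
\]
Combining this with the two hypotheses of the lemma (which are precisely the abstractions of the bounds \eqref{q_Nasy2} and its counterpart established for $q_{N_0}$), the first fraction is $O\bigl((|\xi-\rho|^{5/2}+|\eta-\rho|^{5/2})|\eta|^{-5/2}\bigr)$ and the second is $O\bigl((|\xi-\rho|^{3/2}+|\rho-\eta|^{3/2})|\eta|^{-3/2}\bigr)$, so after multiplication by $|\eta|^{3/2}\varphi_{\geq 1}(\eta)\chi(\rho-\eta,\eta)$ one gets the desired symbol-norm bound for $n_3$.

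Finally I apply Lemma \ref{touse}(iii) with $(p_1,p_2,p_3,p_4)=(2,2,\infty,\infty)$, putting $F$ in $L^2$ and $u_\pm,\gamma$ in $L^\infty$. The a priori bound \eqref{bing11} yields $\|P_{k_3}u\|_{L^\infty}\lesssim\e_1 2^{k_3/10}2^{-k_3^+}\langle t\rangle^{-1/2}$, and \eqref{estgamma} yields $\|P_{k_4}\gamma\|_{L^\infty}\lesssim\e_1^2\langle t\rangle^{-1}2^{-3k_4^+}$. Multiplying by the factor $2^{3k_3/2}+2^{3k_4/2}$ from the symbol bound and summing over $k_3,k_4\leq k_2-6$ (a consequence of the support of $m$ and the cutoffs in $m'$) produces a convergent sum that gives the $\e_1^3\langle t\rangle^{-7/6}$ factor. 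The only real obstacle is the verification of the symbol bound for $n_3$; once this is in hand, the rest is routine dyadic summation.
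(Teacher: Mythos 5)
Your overall strategy is exactly the paper's: the lemma is nothing but an abstraction of the estimate of $I_2$ in \eqref{evolE312}, and your decomposition $m'=n_1+\cdots+n_5$ as in \eqref{I_21sym}, the splitting of $m(\xi,\rho)-m(\xi+\eta-\rho,\eta)$ into the two fractions of \eqref{m_Ndiff}, and the final application of Lemma \ref{touse}(iii) with $F$ in $L^2$ and $u_\pm,\gamma$ in $L^\infty$ all coincide with what the paper does.

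There is, however, a concrete error precisely in the step you call the heart of the matter. The claim ``in the support region one checks directly that $|\Phi|\approx|\eta|^{3/2}$'' is false: on the support $|\xi-\eta|\leq 2^{-8}|\eta|$ one has, by \eqref{phiab}, $\big||\alpha|^{3/2}\mp|\alpha-\beta|^{3/2}-|\beta|^{3/2}\big|\approx|\alpha-\beta|\,|\beta|^{1/2}$, so $|\Phi(\xi+\eta-\rho,\eta)|\approx|\xi-\rho|\,|\eta|^{1/2}$ (this resonant smallness is the whole ``division problem'' the paper is organized around, cf.\ \eqref{1/phi1}). With the correct size of $\Phi$ your two asserted bounds do not follow as written. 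For the second fraction, $\Phi(\xi,\rho)\Phi(\xi+\eta-\rho,\eta)\approx|\xi-\rho|^2|\eta|$, so to reach $O\big((|\xi-\rho|^{3/2}+|\rho-\eta|^{3/2})|\eta|^{-3/2}\big)$ you must use the \emph{sharper} cancellation $\Phi(\xi,\rho)-\Phi(\xi+\eta-\rho,\eta)=O\big(|\xi-\rho|(|\xi-\rho|+|\eta-\rho|)|\eta|^{-1/2}\big)$, with the extra factor $|\xi-\rho|$ needed to cancel one power coming from the product of the two denominators; the crude $O\big((|\xi-\rho|^2+|\rho-\eta|^2)|\eta|^{-1/2}\big)$ is not enough by itself. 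For the first fraction, dividing the hypothesis \eqref{lemSigmabulkpro2} by $|\xi-\rho||\eta|^{1/2}$ handles the $|\xi-\rho|^{5/2}$ contribution (giving $|\xi-\rho|^{3/2}|\eta|^{-3/2}$, as in \eqref{q_Nasy2}), but the $|\eta-\rho|^{5/2}$ contribution leaves $|\eta-\rho|^{5/2}|\xi-\rho|^{-1}|\eta|^{-3/2}$, which is \emph{not} dominated by the target \eqref{boundn_1} in the regime $|\eta-\rho|\gg|\xi-\rho|$; that regime has to be treated separately (e.g.\ by estimating the two terms $m(\xi,\rho)$ and $m(\xi+\eta-\rho,\eta)$ without exploiting the difference, or by exploiting the extra high-frequency decay of $\gamma$). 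As written, your derivation of the symbol bound for $n_3$ therefore has a gap, even though the bound you ultimately assert is the one the paper uses.
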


\vskip10pt
\subsubsection*{Estimate of $I_3$}
Directly from the definition of $I_3$ in \eqref{evolE313} we have
\begin{align}
\label{I_30}
| I_3 | & \lesssim \Big| \int_{\R \times\R} \Big[ \bar{\what{\mathcal{Q}_W}}(\xi) \what{W}(\eta)
  + \bar{\what{W}}(\xi) \what{\mathcal{Q}_W}(\eta) \Big] \what{u}(\xi-\eta) m_{N_0}(\xi,\eta)  \,d\xi d\eta \Big|
\end{align}
where $\mathcal{Q}_W$ is defined in \eqref{eqWN}. Using \eqref{eqWN} we have
\begin{align*}
\begin{split}
| I_3 | \lesssim  \Big| \int_{\R^3} \Big[ & \xi|\xi|^{N_0} |\rho|^{-N_0} \chi(\xi-\rho,\rho) \bar{\what{V}}(\xi-\rho) \bar{\what{W}}(\rho) \what{W}(\eta)
  \\
  - & \bar{\what{W}}(\xi) \eta|\eta|^{N_0} |\rho|^{-N_0} \chi(\eta-\rho,\rho) \what{V}(\eta-\rho) \what{W}(\rho) \Big]
  \what{u}(\xi-\eta) m_{N_0}(\xi,\eta)  \,d\xi d\eta d\rho \Big| .
\end{split}
\end{align*}
Applying some changes of variables we get
\begin{align*}
|I_3| \lesssim \Big| \int_{\R^3} \bar{\what{W}}(\xi) \what{W}(\eta) \what{u}(\xi-\rho) \what{V}(\rho-\eta)
  & m'_3(\xi,\eta,\rho) \, d\xi d\eta d\rho \Big| ,
\\
m'_3(\xi,\eta,\rho) :=  (\xi+\eta-\rho) |\xi+\eta-\rho|^{N_0} & |\xi|^{-N_0} \chi(\eta-\rho,\xi) m_{N_0}(\xi+\eta-\rho,\eta)
  \\ & - \rho  |\rho|^{N_0} |\eta|^{-N_0} \chi(\rho-\eta,\eta) m_{N_0}(\xi,\rho) .
\end{align*}
Since $V\in O_{1,-1/2}$, see \eqref{bvd1}, using Lemma \ref{touse}(iii) and \eqref{boundm_N} we can estimate
\begin{align}
\label{I_31}
|I_3(t)| \lesssim I_3^\prime(t) + \e_1^2 {(1+t)}^{-1} {\| W\|}_{L^2}^2
\end{align}
where
\begin{align}
 \label{I_3'}
\begin{split}
& I_3^\prime =  \Big| \int_{\R^3} \bar{\what{W}}(\xi) \what{W}(\eta) \what{u}(\xi-\rho) \what{V}(\rho-\eta)
  m''_3(\xi,\eta,\rho) \, d\xi d\eta d\rho \Big| ,
\\
& m''_3(\xi,\eta,\rho) := \rho \big[ \chi(\eta-\rho,\xi) m_{N_0}(\xi+\eta-\rho,\eta) - \chi(\rho-\eta,\eta) m_{N_0}(\xi,\rho) \big] .
\end{split}
\end{align}
The main observation is that the symbol $m''_3$ above has a similar structure to the symbol $n_3$ in \eqref{I_21sym}.
In particular, starting from \eqref{m_Ndiff} and using the property \eqref{q_Nasy2}, it is easy to see that
\begin{align}
\label{m_3'}
 m''_3(\xi,\eta,\rho) = O \Big( \frac{|\xi-\rho|^{3/2} + |\rho-\eta|^{3/2}}{(|\xi|+|\eta|)^{1/2}}
  \mathbf{1}_{[2^2,\infty)} (|\xi|/|\xi-\rho|)  \mathbf{1}_{[2^2,\infty)} (|\eta|/|\rho-\eta|) \Big) .
\end{align}
Using this bound in combination with Lemma \ref{touse}(iii), recalling the $V\in O_{1,-1/2}$, and using the a priori bounds,
we get
\begin{align*}
| I'_3 | &\lesssim \sum_{|k_1-k_2| \leq 5, \, k_1 \geq k_3, k_2 \geq k_4}
  (2^{k_3} + 2^{k_4}) {\|P_{k_1}^\prime W \|}_{L^2} {\| P_{k_2}^\prime W \|}_{L^2}
  {\|P_{k_3}^\prime V \|}_{L^\infty} {\| P_{k_2}^\prime u \|}_{L^\infty}
\\
& \lesssim  {\| W \|}_{L^2}^2
  \sum_{k_3,k_4 \in \mathbb{Z}} (2^{k_3} + 2^{k_4}) \e_1 2^{k_3/10} 2^{-2k_3^+} \langle t\rangle^{-1/2}
  \e_1 2^{k_4/10} 2^{-2k_4^+} \langle t\rangle^{-1/2}
\\
& \lesssim \e_1^4 {\| W \|}_{L^2}^2 \langle t\rangle^{-1} .
\end{align*}
This gives $|I_3| \lesssim \e_1^4 \langle t\rangle^{-1+2p_0}$, which is the desired bound in \eqref{estI_234}.

We conclude this subsection with a more general lemma that follows from the same estimates. This lemma will be used later on to estimate terms like $I_3$, which have a potential loss of one derivative, namely, $I_{7,\eps_1+}$ in \eqref{evolE3a3} and $K_{1,2}$ in \eqref{evolE_Z12}.

\begin{lemma}\label{lemQbulk}
Let $\mathcal{Q}_F$ be defined according to \eqref{eqWN}, and let
\begin{align}
\label{lemQbulkI}
J(t) = \int_{\R \times\R} \Big[ \bar{\what{\mathcal{Q}_F}}(\xi) \what{F}(\eta)
  + \bar{\what{F}}(\xi) \what{\mathcal{Q}_F}(\eta) \Big] \what{u_\pm}(\xi-\eta,t) m(\xi,\eta)  \,d\xi d\eta ,
\end{align}
where the symbol has the form
\begin{align}
\label{lemQbulkm}
m (\xi,\eta) = \frac{q (\xi,\eta)}{|\xi|^{3/2} \mp |\xi-\eta|^{3/2} - |\eta|^{3/2}} ,
\end{align}
and is supported on a region where $2^8|\xi-\eta| \leq |\eta|$.
Assume that
\begin{align}
\label{lemQbulkpro1}
& q(\xi,\eta) = O \big( |\xi-\eta|^{3/2} \mathbf{1}_{[2^6,\infty)}(|\eta|/|\xi-\eta|) \big) ,
\end{align}
and
\begin{align}
\label{lemQbulkpro2}
q(\xi,\rho) & - q(\xi+\eta-\rho,\eta)= O \Big( \frac{|\xi-\rho|^{5/2} + {|\eta-\rho|}^{5/2} }{ |\xi| + |\eta| } \Big) ,
\end{align}
whenever $|\eta| \geq 2^6|\eta-\rho|$ and $|\xi| \geq 2^6|\xi-\rho|$.
Then
\begin{align}
\label{lemQbulkconc}
|J(t)| \lesssim {\| F \|}_{L^2}^2 \e_1^2{(1+t)}^{-1} .
\end{align}
\end{lemma}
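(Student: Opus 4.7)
The plan is to mirror the argument used for $I_3$ in the proof of Lemma~\ref{lemEE21}, reading off the abstract cancellation in place of the explicit one for $q_{N_0}$. First I will substitute the definition of $\mathcal{Q}_F$ from \eqref{eqWN} into \eqref{lemQbulkI} and perform the same change of variables used to derive \eqref{I_3'}. This produces
\begin{equation*}
J(t) = \int_{\R^3} \bar{\what{F}}(\xi)\what{F}(\eta)\what{u_\pm}(\xi-\rho)\what{V}(\rho-\eta)\,\widetilde{m}(\xi,\eta,\rho)\,d\xi d\eta d\rho + R(t),
\end{equation*}
where $\widetilde{m}(\xi,\eta,\rho) = \rho\big[\chi(\eta-\rho,\xi)m(\xi+\eta-\rho,\eta) - \chi(\rho-\eta,\eta)m(\xi,\rho)\big]$, and $R(t)$ collects the error when replacing the prefactors $(\xi+\eta-\rho)|\xi+\eta-\rho|^N|\xi|^{-N}$ and $\rho|\rho|^N|\eta|^{-N}$ by $\rho$; exactly as in the derivation of \eqref{I_31}, this error is bounded by $\e_1^2(1+t)^{-1}\|F\|_{L^2}^2$ by trivially applying \eqref{lemQbulkpro1} and the $L^\infty$ decay of $V$.

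The heart of the argument is the symbol bound
\begin{equation*}
\widetilde{m}(\xi,\eta,\rho) = O\!\left(\frac{|\xi-\rho|^{3/2}+|\rho-\eta|^{3/2}}{(|\xi|+|\eta|)^{1/2}}\mathbf{1}_{[2^2,\infty)}(|\xi|/|\xi-\rho|)\mathbf{1}_{[2^2,\infty)}(|\eta|/|\rho-\eta|)\right),
\end{equation*}
the analogue of \eqref{m_3'}. To prove this I split $\widetilde{m}=\widetilde{m}_\chi + \widetilde{m}_q$, where $\widetilde{m}_\chi$ collects the contribution from the difference of the cutoff functions $\chi$ (on which both $|\xi-\rho|\lesssim |\xi|$ and $|\rho-\eta|\lesssim |\eta|$ become comparable to the relevant low frequency, so a direct estimate using \eqref{lemQbulkpro1} suffices), and $\widetilde{m}_q$ collects the difference $m(\xi,\rho)-m(\xi+\eta-\rho,\eta)$ on the region where both cutoffs are active. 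For the latter I decompose as in \eqref{m_Ndiff} into the difference of numerators $q(\xi,\rho)-q(\xi+\eta-\rho,\eta)$, controlled by hypothesis \eqref{lemQbulkpro2}, and the difference of reciprocals of the phase $|\cdot|^{3/2}\mp|\cdot|^{3/2}-|\cdot|^{3/2}$, which on the relevant support is handled verbatim as in the analysis of $r_2$ in the proof of Lemma~\ref{lemEE21} using only the constraint $2^8|\xi-\eta|\leq|\eta|$.

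With this symbol bound in hand, I apply Lemma~\ref{touse}(iii) with the distribution of norms $L^2 \cdot L^2 \cdot L^\infty \cdot L^\infty$ on $(F,F,u,V)$. Using the a~priori bounds $\|P_k u\|_{L^\infty}\lesssim \e_1 2^{k/10}2^{-2k^+}\langle t\rangle^{-1/2}$ from \eqref{bing11} and $V \in O_{1,-1/2}$, which gives $\|P_k V\|_{L^\infty}\lesssim \e_1 2^{k/10}2^{-2k^+}\langle t\rangle^{-1/2}$, the $(2^{k_3}+2^{k_4})$ gain from $|\xi-\rho|^{3/2}+|\rho-\eta|^{3/2}$ (together with the square root loss $(|\xi|+|\eta|)^{-1/2}$, which is compensated by $\|F\|_{L^2}^2$ since both $|\xi|$ and $|\eta|$ are comparable to each other by the cutoff support) yields summable dyadic sums and the claimed bound $\|F\|_{L^2}^2\,\e_1^2(1+t)^{-1}$. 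The main obstacle, as expected, is establishing the symbol bound for $\widetilde{m}_q$: it is precisely where the abstract assumption \eqref{lemQbulkpro2} must be invoked in place of the explicit computation \eqref{q_Nasy2}, and one has to be careful that the $|\xi-\rho|^{5/2}+|\rho-\eta|^{5/2}$ gain from \eqref{lemQbulkpro2} combines correctly with the one-derivative loss coming from the factor $\rho$ and the $(|\xi|+|\eta|)^{-3/2}$ decay of the phase reciprocal in the relevant regime.
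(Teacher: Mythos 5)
Your proposal is correct and follows essentially the same route as the paper: the paper itself proves Lemma \ref{lemQbulk} only by remarking that it "follows from the same estimates" as the treatment of $I_3$, i.e.\ the change of variables leading to \eqref{I_3'}, the reduction of the prefactors to $\rho$, the symbol bound \eqref{m_3'} obtained from the decomposition \eqref{m_Ndiff} with \eqref{lemQbulkpro2} replacing \eqref{q_Nasy2}, and the final application of Lemma \ref{touse}(iii) with $F,F$ in $L^2$ and $u,V$ in $L^\infty$. (Only a cosmetic quibble: the factor $(|\xi|+|\eta|)^{-1/2}$ is a gain, not a loss, and combines with $|\xi-\rho|^{3/2}+|\rho-\eta|^{3/2}$ to give exactly the summable factor $2^{k_3}+2^{k_4}$.)
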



\vskip10pt
\subsubsection*{Estimate of $I_4$}
All of the terms in \eqref{evolE314} do not lose derivatives
and are not hard to estimate, given the symbol bound \eqref{boundm_N} on $m_{N_0}$,
the estimates on the nonlinear terms in Lemma \ref{lemboundN_u}, and \eqref{cubboundW}.
We just show how to estimate the term
\begin{align*}
I_{4,1} := \int_{\R \times\R} |\xi|^{N_0} \bar{\what{\mathcal{N}_W}}(\xi) \what{W}(\eta)
  \what{u}(\xi-\eta) m_{N_0}(\xi,\eta)  \,d\xi d\eta  ,
\end{align*}
the other terms being similar or easier.
Applying Lemma \ref{touse}(ii), the symbol bound \eqref{boundm_N},
and using the a priori estimates, and the estimate on the nonlinearity \eqref{boundN_u},
we see that
\begin{align*}
| I_{4,1} | &\lesssim \sum_{k,k_1,k_2 \in \mathbb{Z}} {\| m_{N_0}^{k,k_1,k_2} \|}_{S^\infty}
  2^{N_0k} {\|P_k^\prime \mathcal{N}_W \|}_{L^2} {\| P_{k_2}^\prime W \|}_{L^2} {\| P_{k_1}^\prime u \|}_{L^\infty}
\\
& \lesssim \sum_{(k,k_1,k_2) \in \mathbb{Z}, \, |k-k_2| \leq 5}
  2^{k_1/2} 2^{-k/2} \e_1^2 2^{\min(k,0)} \langle t\rangle^{-1/2+p_0} \e_1 \langle t\rangle^{p_0} \e_1 2^{k_1/10} 2^{-2k_1^+} \langle t\rangle^{-1/2}
\\
& \lesssim \e_1^4 \langle t\rangle^{-1+2p_0}.
\end{align*}
This completes the proof of the bound \eqref{estI_234} and therefore the proof of Lemma \ref{lemEE21}.

\vskip10pt
\subsubsection{Proof of Lemma \ref{lemEE22}}

Recall the definition of $A_3$ in \eqref{EE23} and the definition of $\mathcal{N}_W$ in \eqref{eqWN2}.
Recall our definitions of the energies in $E^{(3)}_{a,\epsilon_1\epsilon_2}$ and $E^{(3)}_{a,\epsilon_1\epsilon_2} (t)$
in \eqref{E^3a}-\eqref{E^3b}, and the notation \eqref{notsum}.
Our aim is to show
\begin{align}
\label{lemEE22conc}
\Big| A_3(t) +  \frac{d}{dt} \sum_{\star}
  \big( E^{(3)}_{a,\epsilon_1\epsilon_2}(t) + E^{(3)}_{b,\epsilon_1\epsilon_2}(t) \big) \Big| \lesssim \e_1^4 {(1+t)}^{-1+2p_0} .
\end{align}

Calculating as in the previous section, using the evolution equations for $W$ in \eqref{eqW10},
we see that for each $(\epsilon_1,\epsilon_2) \in \{ (+,+),(+,-),(-,+),(-,-)\}$
\begin{align*}
\begin{split}
\frac{d}{dt} E^{(3)}_{a, \eps_1\eps_2} & = I_{5,\eps_1\eps_2} + I_{6,\eps_1\eps_2} + I_{7,\eps_1\eps_2} + I_{8,\eps_1\eps_2}
\end{split}
\end{align*}
where
\begin{equation}
\label{evolE3a1}
\begin{split}
I_{5,\eps_1\eps_2} & = \frac{1}{4\pi^2} \Re \int_{\R \times\R} \bar{\what{W}}(\xi) \what{W_{\eps_2}}(\eta) \what{u_{\eps_1}}(\xi-\eta)
  \\ & \hskip100pt \big[ -i|\xi|^{3/2} + i\eps_2|\eta|^{3/2} + i\eps_1|\xi-\eta|^{3/2} \big] a^{N_0}_{\eps_1\eps_2}(\xi,\eta)  \,d\xi d\eta,
\end{split}
\end{equation}
\begin{equation}
\label{evolE3a2}
I_{6,\eps_1\eps_2} = \frac{1}{4\pi^2} \Re \int_{\R \times\R} \Big[ \bar{\what{i\Sigma_\g W}}(\xi) \what{W_{\eps_2}}(\eta)
  + \bar{\what{W}}(\xi) \what{i\eps_2\Sigma_\g W_{\eps_2}}(\eta) \Big]
  \what{u_{\eps_1}}(\xi-\eta) a^{N_0}_{\eps_1\eps_2}(\xi,\eta)  \,d\xi d\eta ,
\end{equation}
\begin{equation}
\label{evolE3a3}
I_{7,\eps_1\eps_2} = \frac{1}{4\pi^2} \Re \int_{\R \times\R} \Big[ \bar{\what{\mathcal{Q}_W}}(\xi) \what{W_{\eps_2}}(\eta)
  + \bar{\what{W}}(\xi) \what{\mathcal{Q}^{\eps_2}_W}(\eta) \Big] \what{u_{\eps_1}}(\xi-\eta) a^{N_0}_{\eps_1\eps_2}(\xi,\eta) \,d\xi d\eta ,
\end{equation}
\begin{equation}\label{evolE3a4}
\begin{split}
I_{8,\eps_1\eps_2} & = \frac{1}{4\pi^2} \Re \int_{\R \times\R} \Big[ |\xi|^{N_0}\bar{\what{\mathcal{N}_W}}(\xi) \what{W_{\eps_2}}(\eta)
  + \bar{\what{W}}(\xi) |\eta|^{N_0}\what{\mathcal{N}_W^{\eps_2}}(\eta) \Big]
  \what{u_{\eps_1}}(\xi-\eta) a^{N_0}_{\eps_1\eps_2}(\xi,\eta) \,d\xi d\eta
\\
& + \frac{1}{4\pi^2} \Re \int_{\R \times\R} \bar{\what{W}}(\xi) \what{W_{\eps_2}}(\eta)
  \mathcal{F} \big(\pa_t u_{\eps_1} - \eps_1 i |\pa_x|^{3/2} u_{\eps_1} \big)(\xi-\eta) a^{N_0}_{\eps_1\eps_2}(\xi,\eta)  \,d\xi d\eta
\\
& + \frac{1}{4\pi^2} \Re \int_{\R \times\R} \Big[ \bar{\what{\mathcal{O}_W}}(\xi) \what{W_{\eps_2}}(\eta)
  + \bar{\what{W}}(\xi) \what{\mathcal{O}_W^{\eps_2}}(\eta) \Big] \what{u_{\eps_1}}(\xi-\eta) a^{N_0}_{\eps_1\eps_2}(\xi,\eta)  \,d\xi d\eta  .
\end{split}
\end{equation}
Here $\mathcal{Q}^+_W=\mathcal{Q}_W$, $\mathcal{Q}^-_W=\overline{\mathcal{Q}_W}$, $\mathcal{N}_W^+ = \mathcal{N}_W$, $\mathcal{N}_W^- = \overline{\mathcal{N}_W}$, $\mathcal{O}^+_W=\mathcal{O}_W$, $\mathcal{O}^-_W=\overline{\mathcal{O}_W}$.
Similarly
\begin{align*}
\begin{split}
\frac{d}{dt} E^{(3)}_{b,\eps_1\eps_2} & = I_{9,\eps_1\eps_2} + I_{10,\eps_1\eps_2}
\end{split}
\end{align*}
where
\begin{equation}
\label{evolE3b1}
\begin{split}
I_{9,\eps_1\eps_2} & = \frac{1}{4\pi^2} \Re \int_{\R \times\R} \bar{\what{W}}(\xi) \what{u_{\eps_2}}(\eta) \what{u_{\eps_1}}(\xi-\eta)
  \\ & \hskip100pt \big[ -i|\xi|^{3/2} + i\eps_2|\eta|^{3/2} + i\eps_1|\xi-\eta|^{3/2} \big] b^{N_0}_{\eps_1\eps_2}(\xi,\eta)  \,d\xi d\eta ,
\end{split}
\end{equation}
\begin{equation}\label{evolE3b2}
\begin{split}
I_{10,\eps_1\eps_2} & = \frac{1}{4\pi^2} \Re \int_{\R \times\R} \bar{\mathcal{F}\big( \pa_t W - i |\pa_x|^{3/2}W\big)}(\xi) \what{u_{\eps_2}}(\eta)
  \what{u_{\eps_1}}(\xi-\eta) b^{N_0}_{\eps_1\eps_2}(\xi,\eta) \,d\xi d\eta
\\
& + \frac{1}{4\pi^2} \Re \int_{\R \times\R} \bar{\what{W}}(\xi)
  \mathcal{F}\big(\pa_t u_{\eps_2} - \eps_2 i |\pa_x|^{3/2} u_{\eps_2} \big)(\eta) \what{u_{\eps_1}}(\xi-\eta)
  b^{N_0}_{\eps_1\eps_2}(\xi,\eta) \,d\xi d\eta
\\
& + \frac{1}{4\pi^2} \Re \int_{\R \times\R} \bar{\what{W}}(\xi) \what{u_{\eps_2}}(\eta)
  \mathcal{F}\big( \pa_t u_{\eps_1} - \eps_1 i |\pa_x|^{3/2} u_{\eps_1} \big)(\xi-\eta) b^{N_0}_{\eps_1\eps_2}(\xi,\eta) \,d\xi d\eta .
\end{split}
\end{equation}
We then see that
\begin{align*}
\begin{split}
A_3 + \frac{d}{dt} \sum_{\star} \big( E^{(3)}_{a,\eps_1\eps_2} + E^{(3)}_{b,\eps_1\eps_2} \big)
  & = \sum_{\star}
  \big( I_{6,\eps_1\eps_2} + I_{7,\eps_1\eps_2} + I_{8,\eps_1\eps_2} + I_{10,\eps_1\eps_2} \big) .
\end{split}
\end{align*}
We show below how to estimate all the terms on the right-hand side above by $C\e_1^4 \langle t\rangle^{-1+2p_0}$.

\vskip10pt
\subsubsection*{Estimate of $I_{6,\eps_1\eps_2}$}
We start by looking at the case when $\eps_2=-1$.
We see from \eqref{bounda^N/} that the symbols $a^{N_0}_{\eps_1 -}$ have a strong ellipticity. Then we use the bound
\begin{equation}\label{azx3}
\|P_k\Sigma_{\gamma}F\|_{L^2}\lesssim \e_1^2\langle t\rangle ^{-3/4}2^{3k/2}\|P'_kF\|_{L^2}\mathbf{1}_{[-4,\infty)}(k),
\end{equation}
for any $k\in\mathbb{Z}$, which is a consequence of \eqref{estgamma} and Lemma \ref{touse} (ii). The potential loss of $3/2$ derivatives coming from $\Sigma_\gamma W$ is compensated by the smoothing property of the symbols $a^{N_0}_{\eps_1-}$ in \eqref{bounda^N/}. As a consequence $|I_{6,\eps_1-}|\lesssim \e_1^4 \langle t\rangle^{-7/6}$, as desired. 

In the cases $(\eps_1,\eps_2) \in \{ (+,+),(-,+) \}$ we have
\begin{align}
\label{I_6+}
| I_{6,\eps_1 +} | & \lesssim \Big| \int_{\R \times\R} \Big[ - \bar{\what{\Sigma_\g W}}(\xi) \what{W}(\eta)
  + \bar{\what{W}}(\xi) \what{\Sigma_\g W}(\eta) \Big] \what{u_{\eps_1}}(\xi-\eta) a^{N_0}_{\eps_1 +}(\xi,\eta) \,d\xi d\eta \Big| .
\end{align}
This is of the form \eqref{lemSigmabulkI}-\eqref{lemSigmabulkm},
with $F = W$, $m = a^{N_0}_{\eps_1 +}$, $q(\xi,\eta) = |\xi|^{N_0} |\eta|^{-N_0} a_{\eps_1 +}(\xi,\eta)$.
We can then apply Lemma \ref{lemSigmabulk}, provided we verify its assumptions for $a^{N_0}_{\eps_1 +}$.
Observe that the symbols $a_{\eps_1+}(\xi,\eta)$ are supported on a region where $2^8|\xi-\eta| \leq |\eta|$.
The bound \eqref{boundaeps_1+} for the symbol $a_{\eps_1 +}(\xi,\eta)$ gives the property in \eqref{lemSigmabulkpro1}.
Moreover, the properties \eqref{boundadiff} 
show that the second assumption \eqref{lemSigmabulkpro2} is satisfied.
Applying Lemma \ref{lemSigmabulk} to $I_{6,\eps_1+}$ gives us the desired bound $| I_{6,\eps_1 +}(t) | \lesssim \e_1^4 \langle t\rangle^{-1+2p_0}$.

\vskip10pt
\subsubsection*{Estimate of $I_{7,\eps_1\eps_2}$} This is similar to the estimates on $I_{6,\eps_1\eps_2}$: in the case $\eps_2=-$ we use the simple bound $\|P_k\mathcal{Q}_W\|_{L^2}\lesssim \e_1\langle t\rangle ^{-1/2}2^k\|P'_{k}W\|_{L^2}$, which is similar to \eqref{azx3}, and the gain of $3/2$ derivatives in the symbols $a_{\eps_1-}^{N_0}$. In the case $\eps_2=+$ we use Lemma \ref{lemQbulk} instead of Lemma \ref{lemSigmabulk}. In both cases we conclude that $| I_{7,\eps_1\eps_2}(t) | \lesssim \e_1^4 {(1+t)}^{-1+2p_0}$, as desired.

\vskip10pt
\subsubsection*{Estimate of $I_{8,\eps_1\eps_2}$}

Observe that all the terms in \eqref{evolE3a4} do not lose derivatives, and moreover they are similar to the terms in \eqref{evolE314}.
By performing the same estimates above one sees that $| I_{8,\eps_1\eps_2}(t) | \lesssim \e_1^4 {(1+t)}^{-1+2p_0}$.

\vskip10pt
\subsubsection*{Estimate of $I_{10,\eps_1\eps_2}$}

We notice that the symbols $b^N_{\eps_1\eps_2}$ are smoothing and non-singular,
see the definition \eqref{E^3b} and the bound \eqref{boundb^N/}.
Therefore, there are no losses of derivatives in \eqref{evolE3b2} and
all these terms are straightforward to estimate.


\vskip10pt
\subsubsection{Proof of Lemma \ref{lemEE23}}\label{prooflemEE23}

The term $A_1$ in \eqref{EE21} can bounded as desired because $\Sigma_\g$ has been constructed as a symmetric operator up to order $-1/2$.
To see that this is indeed the case, recall the definition of $\Sigma_\g$ in \eqref{eqW10}, and write
\begin{align*}
4\pi^2 A_1 & = \Re \int_{\R} \bar{\what{W}}(\xi) i \what{P_{\geq 1}W}(\eta)
  \what{\gamma}(\xi-\eta) \, \Big(|\eta|^{3/2}+\frac{3}{4} \frac{(\xi-\eta)\eta}{|\eta|^{1/2}}\Big) \chi(\xi-\eta,\eta) \,d\eta d\xi
\\
& = \Re \int_{\R} \bar{\what{W}}(\xi) \what{W}(\eta) \what{\gamma}(\xi-\eta)
  \, i \Big[ \frac{1}{2} \Big(|\eta|^{3/2} \varphi_{\geq 1}(\eta) \chi(\xi-\eta,\eta)
  - |\xi|^{3/2} \varphi_{\geq 1}(\xi)\chi(\eta-\xi,\xi) \Big)
  \\ & \hskip200pt + \frac{3}{4}\frac{(\xi-\eta)\eta}{|\eta|^{1/2}} \varphi_{\geq 1}(\eta)\chi(\xi-\eta,\eta) \Big] \,d\eta d\xi .
\end{align*}
Notice that the symbol in the above expression is $O( (\xi-\eta)^2 (|\xi|+|\eta|)^{-1/2} \mathbf{1}_{[2^3,\infty)}(|\eta|/|\xi-\eta|) )$.
We can then proceed, as done several times before, using Lemma \ref{touse}(ii) and \eqref{estgamma}, and obtain
\begin{align}
| A_1 | & \lesssim \sum_{(k,k_1,k_2) \in \mathcal{X}, \,|k-k_2| \leq 5} 
  2^{2k_1} 2^{-k/2} {\|P_k^\prime W \|}_{L^2} {\| P_{k_2}^\prime W \|}_{L^2} {\| P_{k_1} \gamma \|}_{L^\infty}\lesssim \e_1^4 \langle t\rangle^{-1+2p_0} .
\end{align}

Using the cubic estimates on the term $\mathcal{O}_W$ in \eqref{cubboundW},
it is easy to see that also the term $A_4$ in \eqref{EE24} satisfies the desired bound.
We have then completed the proof of Lemma \ref{lemEE23}, and hence of Lemma \ref{lemEE2}. Proposition \ref{proEE1} is proved.

\section{Energy estimates II: low frequencies}\label{secEElow}

\subsection{The basic low frequency energy}
In this section we exploit the null structure of the equation to control the low frequency component of the solution $u$
which we denote by
\begin{align}
\label{defulow}
u_\low := P_{\leq -10} u .
\end{align}
With $\mathcal{P}: [0,\infty) \rightarrow [0,1]$ is as in \eqref{p0}, we define the energy
\begin{align}
\label{defE2low}
E_\low^{(2)}(t) = \frac{1}{4\pi} \int_\R {\big| \what{u_\low}(t,\xi) \big|}^2 {|\xi|}^{-1} \mathcal{P}((1+t)^2|\xi|) \, d\xi.
\end{align}

The main proposition in this section is the following:

\begin{proposition}\label{proEElow}
Assume that $u$ satisfies \eqref{bing11}--\eqref{bing31}. Then
\begin{align}
\label{proEElowconc1}
\sup_{t \in [0,T]} (1+t)^{-2p_0} E_\low^{(2)}(t) \lesssim\e_0^2.
\end{align}
\end{proposition}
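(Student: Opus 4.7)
The strategy mirrors the energy-correction scheme of Section \ref{secEE}, but applied to the weighted low-frequency energy, and the essential new ingredient is the null structure of $\mathcal{N}_u$ at low output frequencies. First I would compute $\partial_t E_\low^{(2)}$. Differentiating the weight yields a term bounded by $10 p_1(1+t)^{-1} E_\low^{(2)}(t)$ via $x\mathcal{P}^\prime(x) \leq 10 p_1 \mathcal{P}(x)$, which is harmless since $p_1 \ll p_0$. Substituting \eqref{equ}, the free part $i|\partial_x|^{3/2} u$ drops out as an imaginary integrand, the symmetric part of $\Sigma_\gamma$ drops out (as in Section \ref{prooflemEE23}), and the commutator contributions together with $-\partial_x T_V u$ and $U_{\geq 3} \in |\partial_x|^{1/2} O_{3,1/2}$ give cubic error terms controlled by the bounds on $\gamma$, $V$ (analogous to \eqref{estgamma} and \eqref{bvd1}) and the definition of $O_{3,\alpha}$. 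The only genuinely quadratic contribution to control is the one from $\mathcal{N}_u$ in \eqref{equN}.

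To handle these quadratic contributions I would introduce a cubic correction
\[
E_\low^{(3)}(t) := \sum_\star \Re \int_{\R\times\R} \bar{\what{u_\low}}(\xi)\,\what{u_{\eps_1}}(\xi-\eta)\,\what{u_{\eps_2}}(\eta)\, n_{\eps_1\eps_2}(\xi,\eta)\,|\xi|^{-1}\mathcal{P}((1+t)^2|\xi|)\,d\xi\,d\eta,
\]
where each $n_{\eps_1\eps_2}$ is obtained by dividing the symbol of the corresponding quadratic contribution to $\partial_t E_\low^{(2)}$ by the resonance phase $\Phi_{\eps_1\eps_2}(\xi,\eta) = |\xi|^{3/2} - \eps_1|\xi-\eta|^{3/2} - \eps_2|\eta|^{3/2}$. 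Since $u_\low$ forces $|\xi| \leq 2^{-9}$, and the quadratic symbols are supported either on Low-High interactions (in which case $|\xi-\eta|, |\eta|$ are also small and $\Phi$ behaves like a lower-order expression) or on High-High to Low interactions (in which case $\Phi \sim |\eta|^{3/2}$ is non-degenerate), the denominator is safely bounded away from zero on the dangerous regime. The key null structure to exploit in the High-High to Low regime lies in the symbols \eqref{b++}--\eqref{b--}: the factor $\widetilde{q}_2(\xi,\eta) = \widetilde{\chi}(\xi-\eta,\eta)(\eta(\xi-\eta)+|\eta||\xi-\eta|)/2$ vanishes when $\eta$ and $\xi-\eta$ have opposite signs, which is exactly the generic configuration for $|\xi| \ll |\eta| \approx |\xi-\eta|$, and $\widetilde{m}_2 = O(|\xi|)$ similarly. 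Together these supply the factor of $|\xi|$ needed to compensate the $|\xi|^{-1}$ weight and produce symbols $n_{\eps_1\eps_2}$ with acceptable $S^\infty$-bounds.

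By construction $\partial_t(E_\low^{(2)} + E_\low^{(3)})$ then reduces to quartic contributions together with cubic terms arising from $\partial_t \mathcal{P}$ acting on $E_\low^{(3)}$ (which are supported on the time-dependent set $|\xi| \approx (1+t)^{-2}$ and gain smallness from its size) and from $\partial_t u$ being replaced by the full nonlinearity in $E_\low^{(3)}$. The goal is to bound all of these by $C \e_1^4 (1+t)^{-1+2p_0}$, after which integration in time and the initial data bound $E_\low^{(2)}(0) \lesssim \e_0^2$, which follows from \eqref{bing31} using that $\mathcal{P}(x) = x^{2p_1}$ for $x \leq 1/2$, gives the conclusion \eqref{proEElowconc1}.

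The hard part will be the quartic estimates: the symbols $n_{\eps_1\eps_2}$ inherit a $(\text{low frequency})^{-1/2}$ singularity from division by $\Phi$, so the natural quartic bound requires control of $|\partial_x|^{-1/2} u_\low$ in $L^2$, which is exactly at the borderline provided by the weighted energy itself. For frequencies $|\xi| \gtrsim (1+t)^{-2}$ this is fine because $\mathcal{P}((1+t)^2|\xi|) \approx 1$ and $\mathcal{K}'_I$ controls $|\partial_x|^{-1/2} u$; for frequencies $|\xi| \lesssim (1+t)^{-2}$ the weight degenerates to $((1+t)^2|\xi|)^{2p_1}$, giving control of $|\partial_x|^{-1/2+p_1} u_\low$ at the cost of a factor $(1+t)^{4p_1}$, which is absorbed into the allowed $(1+t)^{2p_0}$ growth because $10^4 p_1 \leq p_0$. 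This is precisely where the choice of the weight $\mathcal{P}$ encoded in \eqref{p0} is crucial, and why the initial low-frequency assumption \eqref{bing31}, rather than an unwanted momentum condition, suffices to close the estimate.
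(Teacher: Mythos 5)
Your overall architecture matches the paper's: a cubic normal-form correction to $E^{(2)}_\low$ built by dividing the quadratic symbols by the resonance phase, the null structure of the $b$-type symbols supplying the extra factor of $|\xi|$ in the $\mathrm{High}\times\mathrm{High}\to\mathrm{Low}$ regime, a direct treatment of the region $|\xi|\lesssim(1+t)^{-2}$, a Gronwall-type absorption of the $p_1(1+t)^{-1}E^{(2)}_\low$ term coming from $\partial_t\mathcal{P}$, and the quartic estimates closed by the weighted control of $|\partial_x|^{-1/2+p_1}u$. (One organizational difference: the paper inserts the cutoff $\varphi_{\geq 4}((1+t)^2\xi)$ into the correction symbol itself, so that no correction is performed for $|\xi|\lesssim (1+t)^{-2}$ and the uncorrected piece $J_2$ is estimated directly via the $|\xi|^{1/2}$ gain of the nonlinearity at low output frequencies; your version, correcting at all frequencies, should also work but then the $\partial_t\mathcal{P}$ contribution inside $E^{(3)}_\low$ has to be tracked as you indicate.)

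There is, however, one genuine gap: you classify $-\partial_x T_V u$ among the ``cubic error terms controlled by the bounds on $\gamma$, $V$'' and assert that the only quadratic contribution requiring a correction is $\mathcal{N}_u$. This is not correct. Since $V=-\tfrac{1}{2i}\partial_x|\partial_x|^{-1/2}(u-\bar u)+O_{2,-1/2}$, the transport term is quadratic in $u$ at leading order, so its contribution to $\partial_t E^{(2)}_\low$ is a trilinear integral. Even after the symmetrization that exploits the realness of $V$ (which gains a factor $|\xi-\eta|$ relative to $|\xi|$, exactly as in the passage from \eqref{EE22} to \eqref{A_2}), a direct estimate gives only $\sum_{k_1}2^{k_1}\|P_{k_1}V\|_{L^\infty}\cdot E^{(2)}_\low\lesssim\e_1^3\langle t\rangle^{-1/2+2p_0}$, which is not integrable in time. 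This term therefore needs its own cubic correction with symbol obtained by dividing the symmetrized commutator symbol by $|\xi|^{3/2}-|\xi-\eta|^{3/2}-|\eta|^{3/2}$; in the paper this is precisely the first part of the symbol $m^\low_{\eps_1\eps_2}$ in \eqref{mlow}, present only for $(\eps_1,\eps_2)=(+,+)$. Your correction $E^{(3)}_\low$, as defined, covers only the $\mathcal{N}_u$ contributions and therefore leaves this non-integrable term unaccounted for. (Only the genuinely quadratic remainder $V_2=V+\tfrac{1}{2i}\partial_x|\partial_x|^{-1/2}(u-\bar u)$ produces a quartic, directly integrable contribution.) The rest of your argument is sound once this additional piece of the correction is added.
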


\subsection{The cubic low frequency energy}
Recall from Proposition \ref{proequ} the equation \eqref{equ}-\eqref{equSigma}, from which it follows
that
\begin{align}
\label{equlow1}
\begin{split}
\partial_t u_\low - i |\partial_x|^{3/2} u_\low & = P_{\leq -10} ( - \partial_x T_V u  + \mathcal{N}_u ) + |\partial_x|^{1/2}O_{3,1/2} .
\end{split}
\end{align}
According to \eqref{equlow1},  we naturally define the cubic correction to the basic energy $E_\low^{(2)}$
as follows:
\begin{align}
\label{defE3low}
E^{(3)}_\low(t) := & \sum_\star \frac{1}{4\pi^2} \Re \int_{\R \times \R} \bar{\what{u}}(t,\xi) {|\xi|}^{-1/2}
  \what{u_{\eps_1}}(t,\xi-\eta) \what{u_{\eps_2}}(t,\eta) m_{\eps_1\eps_2}^\low(\xi,\eta) \,d\xi d\eta ,
\end{align}
where the symbol is
\begin{align}
\label{mlow}
\begin{split}
m_{\eps_1\eps_2}^\low(\xi,\eta) & := (1+\eps_1)(1+\eps_2)
  \frac{ |\xi|^{1/2} (\xi-\eta) \big[\xi |\xi|^{-1} \varphi_t(\xi) \chi(\xi-\eta,\eta) - \eta |\eta|^{-1} \varphi_t(\eta)
  \chi(\eta-\xi,\xi)\big]}{8|\xi-\eta|^{1/2} (|\xi|^{3/2} -|\xi-\eta|^{3/2} - |\eta|^{3/2})}
\\
& \hskip100pt -i  \frac{\varphi_t(\xi) \big(a_{\eps_1\eps_2}(\xi,\eta) + b_{\eps_1\eps_2}(\xi,\eta)\big) }{
  {|\xi|}^{1/2} (|\xi|^{3/2} -\epsilon_1|\xi-\eta|^{3/2} -\epsilon_2|\eta|^{3/2})} ,
\end{split}
\end{align}
and we have denoted
\begin{align*}
 \varphi_t(\xi) = \varphi_{\geq 4}((1+t)^2\xi) \varphi^2_{\leq -10}(\xi) .
\end{align*}
The first part of $m_{\eps_1\eps_2}^\low$ is non-zero only for $\eps_1=\eps_2=1$,
and takes into account the nonlinear term $-\partial_x T_V u$.
The symbol in the second line of \eqref{mlow} is needed to correct the nonlinear terms in $\mathcal{N}_u$.
Notice that no correction is needed if $(1+t)^{-2}|\xi|\leq 4$.

The total energy for the low frequency part of the solution is given by
\begin{align}
\label{defElow}
E_\low := E_\low^{(2)} + E_\low^{(3)}(t) .
\end{align}

Observe that under the a priori assumptions \eqref{bing11}--\eqref{bing31} we have
\begin{align}
\label{estlow}
\begin{split}
& {\| \varphi_{\geq 4}((1+t)^2\xi) |\xi|^{-1/2} \what{u}(t,\xi) \|}_{L^2}
 \lesssim \e_1 {(1+t)}^{p_0},
\\
& {\| \varphi_{\leq 4}((1+t)^2\xi)|\xi|^{-1/2+p_1} \what{u}(t,\xi) \|}_{L^2}
  \lesssim \e_1 {(1+t)}^{p_0-2p_1}.
\end{split}
\end{align}

As in section \ref{secEE}, Proposition \ref{proEElow} follows from two main lemmas.

\begin{lemma}\label{lemEElow1}
Under the assumptions of Proposition \ref{proEElow}, for any $t\in[0,T]$, we have
\begin{equation}
\label{lemEElow1conc}
\big| E_\low^{(3)}(t) \big| \lesssim \e_1^3{(1+t)}^{2p_0} . 
\end{equation}
\end{lemma}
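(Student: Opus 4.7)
The plan is to mirror the proof of Lemma \ref{lemEE1} in Section \ref{secEsym}: bound the $S^\infty$ norm of the symbol $m_{\eps_1\eps_2}^\low$ on dyadic frequency blocks $(k,k_1,k_2)\in\mathcal{X}$ and then invoke Lemma \ref{touse}(ii) together with the a priori bounds of Proposition \ref{proequ}. I would split $m_{\eps_1\eps_2}^\low=m_{I,\eps_1\eps_2}^\low+m_{II,\eps_1\eps_2}^\low$ along the two lines of \eqref{mlow}: the first piece is non-zero only for $\eps_1=\eps_2=+$ and encodes the transport term $-\partial_xT_Vu$, while the second piece comes from the semilinear nonlinearity $\mathcal{N}_u$.

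For $m_{I,++}^\low$ the cutoff $\varphi_t$ localizes either $\xi$ or $\eta$ in $[2^{-5}(1+t)^{-2},2^{-8}]$, and the paraproduct factors $\chi$ then force $|\xi-\eta|\ll|\xi|\approx|\eta|$, so $(k,k_1,k_2)\in\mathcal{X}$ with $k\approx k_2$ in the low range and $k_1\leq k-10$. In this regime $\sgn\xi=\sgn\eta$ and the bracket in the numerator reduces to $\sgn(\xi)[\varphi_t(\xi)\chi(\xi-\eta,\eta)-\varphi_t(\eta)\chi(\eta-\xi,\xi)]$; using $|\varphi_t'(\xi)|\lesssim|\xi|^{-1}$ on $\mathrm{supp}\,\varphi_t$ and an analogous Lipschitz bound for $\chi$, this difference contributes an $S^\infty$ factor of $O(2^{k_1-k})$. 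On the same region $|\xi|^{3/2}-|\xi-\eta|^{3/2}-|\eta|^{3/2}$ has size $\sim|\xi-\eta|\,|\xi|^{1/2}\sim 2^{k_1+k/2}$, so the effective symbol $|\xi|^{-1/2}m_{I,++}^\low$ satisfies
\begin{equation*}
\bigl\|\bigl(|\xi|^{-1/2}m_{I,++}^\low\bigr)^{k,k_1,k_2}\bigr\|_{S^\infty}\lesssim 2^{k_1/2}\,2^{-3k/2}.
\end{equation*}
For $m_{II,\eps_1\eps_2}^\low$ the same $S^\infty$ bounds on $a_{\eps_1\eps_2}$ and $b_{\eps_1\eps_2}$ underlying \eqref{bounda^N/}--\eqref{boundb^N/} (in the case $N_0=0$), combined with the prefactor $\varphi_t|\xi|^{-1/2}$ and the division by the resonance phase, yield analogous dyadic bounds supported on the same low-frequency regime.

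Applying Lemma \ref{touse}(ii), the contribution from $m_{I,++}^\low$ is controlled by
\begin{equation*}
\sum_{k_1\leq k-10,\,|k-k_2|\leq 5,\,k\in K_t}2^{k_1/2-3k/2}\,\|P_ku\|_{L^2}\,\|P_{k_1}u\|_{L^\infty}\,\|P_{k_2}u\|_{L^2},
\end{equation*}
where $K_t$ is the low-frequency range cut out by $\varphi_t$. On $K_t$ the energy \eqref{bing11}--\eqref{bing21} yields $\|P_ku\|_{L^2}\lesssim 2^{k/2}\e_1(1+t)^{p_0}$ and \eqref{bing11} gives $\|P_{k_1}u\|_{L^\infty}\lesssim\e_1\, 2^{k_1/10}(1+t)^{-1/2}$; summing first in $k_1$ then in $k$ yields a bound $\lesssim\e_1^3(1+t)^{-1/2+2p_0}$, which is stronger than \eqref{lemEElow1conc}. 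The term involving $m_{II,\eps_1\eps_2}^\low$ is estimated in the same way. At the very lowest frequencies $|\xi|\ll(1+t)^{-2}$, where $\mathcal{P}((1+t)^2|\xi|)$ degenerates like $[(1+t)^2|\xi|]^{2p_1}$, I would instead use $\||\xi|^{-1/2+p_1}\what{u}\|_{L^2}\lesssim\e_1(1+t)^{p_0-2p_1}$ from \eqref{estlow}, giving $\|P_ku\|_{L^2}\lesssim 2^{k/2-kp_1}\e_1(1+t)^{p_0-2p_1}$; the same summation then absorbs the extra weight with room to spare.

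The main obstacle is the null-structure cancellation in $m_{I,++}^\low$: one must verify that the bracket genuinely provides the $O(|\xi-\eta|/|\xi|)$ gain needed to offset the singular product of $(|\xi-\eta|\,|\xi|^{1/2})^{-1}$ from the resonance phase and $|\xi|^{-1/2}$ from the energy weight; without this gain the trilinear integral would diverge at low frequencies and the bound \eqref{lemEElow1conc} would be out of reach. This gain is available precisely because $\varphi_t$ forces $|\xi|\approx|\eta|$, bringing both $\chi$-factors into their paraproduct regimes simultaneously and allowing the identity $\xi/|\xi|=\eta/|\eta|$ to cancel the leading symmetric part, leaving only derivatives of $\varphi_t$ and $\chi$ which contribute the required $|\xi-\eta|/|\xi|$ factor.
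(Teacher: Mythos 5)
Your proposal is correct and follows essentially the same route as the paper: dyadic $S^\infty$ bounds on $m^{\low}_{\eps_1\eps_2}$ (with the Taylor-expansion cancellation in the transport bracket supplying the crucial $|\xi-\eta|/|\xi|$ gain, and \eqref{bounda/1}, \eqref{boundb1} handling the semilinear part), followed by Lemma \ref{touse}(ii) with the lowest-frequency factor in $L^\infty$ and the weighted $L^2$ bounds \eqref{estlow} for the other two. The only cosmetic difference is your extra treatment of frequencies $|\xi|\ll(1+t)^{-2}$, which is vacuous here since the cutoff $\varphi_t(\xi)$ in \eqref{mlow} already removes that region.
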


\begin{lemma}\label{lemEElow2}
Under the assumptions of Proposition \ref{proEElow}, for any $t\in[0,T]$, we have
\begin{equation}
\label{lemEElow2conc}
\Big|\frac{d}{dt} E_\low(t)\Big| \leq C\e_1^3 {(1+t)}^{-1+2p_0}+40p_1E^{(2)}_\low(t)(1+t)^{-1}.
\end{equation}
\end{lemma}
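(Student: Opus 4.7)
The plan is to write $\frac{d}{dt}E_\low = \frac{d}{dt}E_\low^{(2)} + \frac{d}{dt}E_\low^{(3)}$ and show, via a normal form / I-method cancellation performed in Fourier space, that the only uncancelled contributions are (i) the time derivative of the low-frequency weight $\mathcal{P}$, which produces exactly the second term on the right of \eqref{lemEElow2conc}, (ii) a ``special'' cubic remainder coming from $\partial_t$ falling on the time-dependent cutoff $\varphi_t(\xi)$, which is supported on a tiny frequency annulus $|\xi|\sim (1+t)^{-2}$, and (iii) singular quartic remainders controlled by the low-frequency a priori bounds \eqref{estlow}.

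First I would differentiate $E_\low^{(2)}$ in \eqref{defE2low}. Using the property $x\mathcal{P}'(x)\leq 10 p_1 \mathcal{P}(x)$ in \eqref{p0}, the weight-derivative produces $\partial_t \mathcal{P}((1+t)^2|\xi|) = 2(1+t)|\xi|\mathcal{P}'((1+t)^2|\xi|) \leq 20 p_1 (1+t)^{-1}\mathcal{P}((1+t)^2|\xi|)$, which contributes at most $40 p_1 E_\low^{(2)}(t)(1+t)^{-1}$. The remaining term is $(2\pi)^{-1}\Re\int \bar{\what{u_\low}}\,\partial_t\what{u_\low}\,|\xi|^{-1}\mathcal{P}((1+t)^2|\xi|)\,d\xi$. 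Substituting \eqref{equlow1}, the linear piece $i|\partial_x|^{3/2}u_\low$ contributes purely imaginarily and drops, while the cubic error $|\partial_x|^{1/2}O_{3,1/2}$ is estimated directly using the bounds \eqref{on4.3} together with the low-frequency control \eqref{estlow}; it is bounded by $C\e_1^3(1+t)^{-1+2p_0}$. What remains are cubic integrals arising from the quadratic nonlinearities $P_{\leq -10}(-\partial_x T_V u + \mathcal{N}_u)$ of \eqref{equlow1}, which have precisely the form needed to match $\partial_t E_\low^{(3)}$.

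Next I would compute $\frac{d}{dt}E_\low^{(3)}$ from the definition \eqref{defE3low}-\eqref{mlow}. When $\partial_t$ hits one of the three $\what{u}$ factors, I substitute $\partial_t u = i|\partial_x|^{3/2} u + \partial_t u - i|\partial_x|^{3/2}u$, using the full equation \eqref{equ} for the second piece. The three dispersive contributions combine to produce the factor $-|\xi|^{3/2}+\eps_1|\xi-\eta|^{3/2}+\eps_2|\eta|^{3/2}$, which by construction of \eqref{mlow} cancels exactly the denominator in $m^\low_{\eps_1\eps_2}$; the resulting cubic terms match and cancel those produced in the previous step (including the quasilinear contribution $-\partial_x T_V u$, which is handled by the first line of \eqref{mlow}, noting that in this case $V = -\frac{1}{2i}\partial_x|\partial_x|^{-1/2}(u-\bar u) + V_2$ from \eqref{Vu+baru0}, with the $V_2$ part contributing a cubic error that is absorbed into the $O_{3,1/2}$ type bounds). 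The non-dispersive parts of $\partial_t u$, i.e.\ the quadratic nonlinearities in \eqref{equ}, produce quartic integrals. Finally, when $\partial_t$ hits the cutoff $\varphi_t(\xi)$ in the symbol, we produce the ``special'' cubic term.

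The main obstacle is the estimation of the singular quartic remainders. These integrals have symbols inheriting the $(\text{low frequency})^{-1/2}$ singularity of $m^\low_{\eps_1\eps_2}$, which comes from the ratio $|\xi|^{-1/2}/(|\xi|^{3/2}\mp|\xi-\eta|^{3/2}\mp|\eta|^{3/2})$ near $\xi\to 0$. To close the estimate I would apply Lemma \ref{touse}(ii)--(iii) with an $L^\infty$-$L^\infty$-$L^2$-$L^2$ distribution, using the pointwise decay $\|P_k u\|_{L^\infty}\lesssim \e_1(1+t)^{-1/2}$ from \eqref{bing11} on two factors, and then absorb the singular denominator using the low-frequency bounds \eqref{estlow}: precisely, the $p_1$-gain in the second line of \eqref{estlow} pays for the $|\xi|^{-1/2}$ singularity (with room to spare, since $|\partial_x|^{-1/2+p_1}\mathcal{O}u \in L^2$ with norm $\e_1(1+t)^{p_0}$), while the symbols $a_{\eps_1\eps_2}$ and $b_{\eps_1\eps_2}$ from \eqref{a_++}--\eqref{b--} already contain low-frequency factors that further tame the singularity; combined, these give $O(\e_1^4 (1+t)^{-1+2p_0}) \leq O(\e_1^3(1+t)^{-1+2p_0})$. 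The ``special'' cubic term from $\partial_t\varphi_t$ is supported on $|\xi|\sim (1+t)^{-2}$ of measure $(1+t)^{-2}$; combining the Cauchy--Schwarz in $\xi$ with the bound $\|P_k u\|_{L^\infty}\lesssim \e_1(1+t)^{-1/2}$ for the two non-integrated factors and the low-frequency weighted $L^2$ bound for the outer factor yields a contribution of size $\e_1^3 (1+t)^{-1+2p_0}$, completing the proof.
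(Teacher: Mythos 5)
Your overall strategy is the same as the paper's (differentiate $E_\low^{(2)}+E_\low^{(3)}$, use the cancellation built into the symbol $m^{\low}_{\eps_1\eps_2}$, estimate separately the weight-derivative term, the symbol-derivative term, and the singular quartic remainders), and your treatment of $J_1$, of the $|\partial_x|^{1/2}O_{3,1/2}$ error, and of the quartic terms is in line with the paper. However, there is a genuine gap in your accounting of the cubic terms. You assert that the cubic integrals produced by the quadratic nonlinearities in $\partial_t E_\low^{(2)}$ ``have precisely the form needed to match $\partial_t E_\low^{(3)}$.'' This is false on the region $|\xi|\lesssim (1+t)^{-2}$: the correction symbol $m^{\low}_{\eps_1\eps_2}$ in \eqref{mlow} carries the factor $\varphi_t(\xi)=\varphi_{\geq 4}((1+t)^2\xi)\varphi_{\leq -10}^2(\xi)$, so no normal-form correction is performed there at all, and the cubic term from $\partial_t E_\low^{(2)}$ in that region (the term $J_2$ of \eqref{evolElow}) survives uncancelled. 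It is a cubic, not quartic, contribution; it is distinct from the term you do identify (which is $J_3$, coming from $\partial_t$ hitting $\varphi_t$ and supported on the annulus $|\xi|\sim(1+t)^{-2}$); and it cannot be dismissed by a ``small support'' argument because the weight $|\xi|^{-1}\mathcal{P}((1+t)^2|\xi|)=|\xi|^{-1+2p_1}(1+t)^{4p_1}$ is singular precisely there.

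Controlling this leftover term is one of the main points of the low-frequency energy estimate: one must use the null structure of the nonlinearity at low output frequencies, namely $\|P_k(\partial_t u-i|\partial_x|^{3/2}u)\|_{L^2}\lesssim \e_1^2\,2^{k/2}\langle t\rangle^{-1+p_0}$ (Lemma \ref{lemboundN_u}), together with the improved bound $\|\varphi_{\leq 4}((1+t)^2\xi)|\xi|^{-1/2+p_1}\widehat{u}\|_{L^2}\lesssim \e_1(1+t)^{p_0-2p_1}$ from \eqref{estlow}; the extra factors $2^{k/2}\cdot 2^{(1/2-p_1)k}$ then absorb the singular weight $2^{(-1+2p_1)k}(1+t)^{4p_1}$ and the dyadic sum over $2^k\lesssim(1+t)^{-2}$ converges, yielding $\e_1^3\langle t\rangle^{-1+2p_0}$. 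Without identifying this term and invoking the low-frequency gain of the nonlinearity, the claimed cancellation is incomplete and the proof does not close.
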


\subsection{Analysis of the symbols and proof of Lemma \ref{lemEElow1}}\label{seclemEElow1}
We first show that the symbol in \eqref{mlow} satisfies the bounds
\begin{align}
\label{boundmlow}
{\| ( m_{\eps_1\eps_2}^\low )^{k,k_1,k_2} \|}_{S^\infty} \lesssim 2^{-\max(k_1,k_2)/2}
  \mathbf{1}_{\mathcal{X}}(k,k_1,k_2) \mathbf{1}_{[- 2\log_2 (2+t) - 10, 0]}(k)
\end{align}
and
\begin{align}
\label{boundmlow'}
{\| m_{\eps_1\eps_2}^\low \varphi_{k_1}(\xi-\eta) \varphi_{k_2}(\eta) \|}_{S^\infty} \lesssim 2^{-\max(k_1,k_2)/2} ,
\end{align}
for all $k,k_1,k_2 \in \mathbb{Z}$.

We start with the first component of $m_{\eps_1\eps_2}^\low$, which we denote by
\begin{align*}
n_1(\xi,\eta) :=
  \frac{ |\xi|^{1/2} (\xi-\eta) \big[\xi |\xi|^{-1} \varphi_t(\xi) \chi(\xi-\eta,\eta) - \eta |\eta|^{-1} \varphi_t(\eta)
  \chi(\eta-\xi,\xi)\big]}{2|\xi-\eta|^{1/2} (|\xi|^{3/2} -|\xi-\eta|^{3/2} - |\eta|^{3/2})} .
\end{align*}
Since $\chi(\xi-\eta,\eta)$ forces $2^3|\xi-\eta| \leq |\eta|$, through Taylor expansions and standard integration by parts, we see that
\begin{align*}
\frac{\xi|\xi|^{-1} \varphi_t(\xi) \chi(\xi-\eta,\eta) - \eta|\eta|^{-1} \varphi_t(\eta) \chi(\eta-\xi,\xi)}{
  |\xi|^{3/2} -|\xi-\eta|^{3/2} - |\eta|^{3/2}}
  = O \big( |\eta|^{-3/2} \mathbf{1}_{[2^2,\infty]}(|\eta|/|\xi-\eta|) \mathbf{1}_{[(1+t)^{-2},0]}(|\xi|) \big) ,
\end{align*}
and deduce
\begin{align*}
n_1(\xi,\eta) = O \big( |\xi-\eta|^{1/2} |\eta|^{-1} \mathbf{1}_{[2^2,\infty]}(|\eta|/|\xi-\eta|) \mathbf{1}_{[(1+t)^{-2},0]}(|\xi|) \big) .
\end{align*}

Let
\begin{align*}
n_2(\xi,\eta) := -i \frac{\varphi_t(\xi) a_{\eps_1\eps_2}(\xi,\eta) 
  }{ {|\xi|}^{1/2} (|\xi|^{3/2} -\epsilon_1|\xi-\eta|^{3/2} -\epsilon_2|\eta|^{3/2})}.
\end{align*}
Using the bound \eqref{bounda/1} in Lemma \ref{lembounda}, one can see that
\begin{align*}
n_2(\xi,\eta) = O \big( |\xi-\eta|^{1/2} |\xi|^{-1} \mathbf{1}_{[2^2,\infty)}(|\eta|/|\xi-\eta|) \mathbf{1}_{[(1+t)^{-2},0]}(|\xi|) \big) ,
\end{align*}
which is a better bound than what we need. Finally, let
\begin{align*}
n_3(\xi,\eta) := -i \frac{\varphi_t(\xi) b_{\eps_1\eps_2}(\xi,\eta) 
  }{ {|\xi|}^{1/2} (|\xi|^{3/2} -\epsilon_1|\xi-\eta|^{3/2} -\epsilon_2|\eta|^{3/2})} ,
\end{align*}
where the symbols $b_{\eps_1\eps_2}$ are defined in \eqref{b++}--\eqref{m_2q_2}.
Using the bound \eqref{boundb1} in Lemma \ref{lemboundb}, we know that 
on the support of $b_{\eps_1\eps_2}$ the sizes of $|\eta|$ and $|\xi-\eta|$ are comparable, so
\begin{align}
n_3(\xi,\eta) = O \big( |\xi-\eta|^{-1/2} \mathbf{1}_{[2^{-15},2^{15}]}(|\eta|/|\xi-\eta|) \mathbf{1}_{[(1+t)^{-2},0]}(|\xi|) \big) .
\end{align}
This suffices to obtain \eqref{boundmlow}.
Similar arguments, reexamining the formulas \eqref{b++}-\eqref{b--},
and integration by parts, also give \eqref{boundmlow'} (only the bound for $n_3$ requires an additional argument).

We now use \eqref{boundmlow} to prove \eqref{lemEElow1conc}.
Let us denote
\begin{align}
\label{min12}
k_- := \min(k_1,k_2) , \quad k_+ := \max(k_1,k_2) .
\end{align}
Using the formula \eqref{defE3low}, the bound \eqref{boundmlow} together with Lemma \eqref{touse}, we estimate
\begin{align*}
\big| E_\low^{(3)}(t) \big| & \lesssim \sum_{\star}
  \sum_{(1+t)^{-2} \leq 2^{k+10} \leq 2^{10}} {\| (m_{\eps_1\eps_2}^\low)^{k,k_1,k_2} \|}_{S^\infty}
  2^{-k/2} {\| P_k^\prime u \|}_{L^2} {\| P_{k_+}^\prime u \|}_{L^2} {\| P_{k_-}^\prime u \|}_{L^\infty} .
\end{align*}
Using  the a priori assumptions \eqref{bing11}--\eqref{bing31} (see also \eqref{estlow}) we obtain
\begin{equation}\label{cvb1}
\begin{split}
\big| E_\low^{(3)}(t) \big| \lesssim \sum_{(k,k_1,k_2) \in \mathcal{X}, \, (1+t)^{-2} \leq 2^{k+10}}
  &2^{-k/2} {\| P_k^\prime u \|}_{L^2}  2^{-k_+/2} {\| P_{k_+}^\prime u \|}_{L^2}\\
	&\times\e_1 2^{k_-/10} 2^{-N_2\max(k_-,0)} \langle t\rangle^{-1/2}\lesssim \e_1^3 \langle t\rangle^{-1/3}.
	\end{split}
\end{equation}
This completes the proof of Lemma \ref{lemEElow1}.

\vskip10pt
\subsection{Proof of Lemma \ref{lemEElow2}}\label{seclemEElow2}
Using the definitions \eqref{defE2low},\eqref{p0}, \eqref{defE3low},
the equation \eqref{equlow1}, and a symmetrization argument similar to the one performed for
the term \eqref{EE22} and leading to \eqref{A_2}, we calculate
\begin{align*}
& \frac{d}{dt} \big( E_\low^{(2)} + E_\low^{(3)} \big) =
  \frac{1}{4\pi}  J_1 + \Re \Big[  \frac{1}{2\pi} J_2
  + \frac{1}{4\pi^2} \sum_\star \big( J_{3,\eps_1\eps_2} + J_{4,\eps_1\eps_2} + J_{5,\eps_1\eps_2} + J_{6,\eps_1\eps_2} \big) \Big] + R ,
\end{align*}
\begin{align}
\label{evolElow}
\begin{split}
& J_1 = \int_\R {\big| \what{u_\low}(\xi) \big|}^2 \, \mathcal{P}^\prime ((1+t)^2|\xi|) \, 2(1+t) \, d\xi ,
\\
& J_2 = \int_\R \bar{\what{u_\low}}(\xi) \mathcal{F}\big(\partial_t u_{\low} - i|\partial_x|^{3/2}u_{\low}\big)(\xi)
  \, \varphi_{\leq 3}((1+t)^2\xi) |\xi|^{-1} \mathcal{P}((1+t)^2|\xi|)\, d\xi ,
\\
& J_{3, \eps_1\eps_2} = \int_{\R \times \R}  {|\xi|}^{-1/2} \bar{\what{u}}(\xi) 
  \, \partial_t m^\low_{\eps_1\eps_2}(\xi,\eta)  \what{u_{\eps_1}}(\xi-\eta) \what{u_{\eps_2}}(\eta) \,  d\xi d\eta ,
\\
& J_{4, \eps_1\eps_2} = \int_{\R \times \R}  {|\xi|}^{-1/2} \bar{\mathcal{F}\big(\partial_t u - i|\partial_x|^{3/2}u\big)}(\xi) 
  \, m^\low_{\eps_1\eps_2}(\xi,\eta)  \what{u_{\eps_1}}(\xi-\eta) \what{u_{\eps_2}}(\eta) \,  d\xi d\eta ,
\\
& J_{5, \eps_1\eps_2} = \int_{\R \times \R}  {|\xi|}^{-1/2} \bar{\what{u}}(\xi) 
  \, m^\low_{\eps_1\eps_2}(\xi,\eta) \mathcal{F} \big(\partial_t u_{\eps_1} - i\eps_1|\partial_x|^{3/2} u_{\eps_1} \big)(\xi-\eta) \what{u_{\eps_2}}(\eta)
  \,d\xi d\eta ,
\\
& J_{6, \eps_1\eps_2} = \int_{\R \times \R}  {|\xi|}^{-1/2} \bar{\what{u}}(\xi) 
  \, m^\low_{\eps_1\eps_2}(\xi,\eta) \what{u_{\eps_1}}(\xi-\eta) \mathcal{F}\big(\partial_t u_{\eps_2} - i\eps_2|\partial_x|^{3/2} u_{\eps_2} \big)(\eta)
  \,d\xi d\eta .
\end{split}
\end{align}
Here $R$ denotes a quartic term which includes the contribution from  $|\partial_x|^{1/2}O_{3,1/2}$ in \eqref{equlow1},
and from the quadratic part of $V$, that is $V_2$ in \eqref{Vu+baru1},
and can be easily seen to satisfy $| R(t) | \lesssim \e_1^4 {(1+t)}^{-1+2p_0}$.
The term $J_1$ comes from differentiating the cutoff $\mathcal{P}$ in the quadratic energy, whereas
$J_2$ is obtained when we differentiate $|\what{u_\low}|^2$ in the region $|\xi| \lesssim  {(1+t)}^{-2}$.
The term $J_{3, \eps_1\eps_2}$ comes from differentiating the symbol in the cubic energy functional.
The remaining three terms in \eqref{evolElow} are the net result of differentiating the quadratic energy when $|\xi| \gtrsim {(1+t)}^{-2}$,
and its cubic correction, after symmetrizations and cancellations.

\subsubsection{Estimate of $J_1$}
Using \eqref{p0}, the definition of $E^{(2)}_\low$ in \eqref{defE2low}, and the a priori assumption \eqref{bing11}--\eqref{bing31},
we immediately see that
\begin{align*}
\frac{1}{4\pi}| J_1 | & \leq \frac{1}{4\pi}\int_\R {\big| \what{u_\low}(t,\xi) \big|}^2 \frac{20p_1}{(1+t)|\xi|} \, \mathcal{P}((1+t)^2|\xi|) \, d\xi
  \leq \frac{20p_1}{1+t} E^{(2)}_\low(t),
\end{align*}
as desired.

\subsubsection{Estimate of $J_2$}
Notice that the term $J_2$ in \eqref{evolElow} is supported on a region where $|\xi| \lesssim {(1+t)}^{-2}$.
Then we use the bounds in Lemma \ref{lemboundN_u}, and the second bound in \eqref{estlow}, to see that
\begin{align*}
| J_2 | & \lesssim \sum_{2^{k-6} \leq \langle t\rangle^{-2}} 2^{(-1+2p_1)k} {\| P_k^\prime u(t) \|}_{L^2}
  {\| P_k^\prime \big( \partial_t u - i |\partial_x|^{3/2}u \big) (t) \|}_{L^2} \langle t\rangle^{4p_1}
  \\
& \lesssim  \langle t\rangle^{4p_1} \sum_{2^k \lesssim \langle t\rangle^{-2}} 2^{(-1+2p_1)k} {\| P_k^\prime u(t) \|}_{L^2}
   \, \e_1^2 2^{k/2} \langle t\rangle^{-1+p_0}
\\
& \lesssim \langle t\rangle^{-1+p_0+2p_1} \e_1^2
  \sum_{2^k \lesssim \langle t\rangle^{-2}} 2^{p_1k}\langle t\rangle^{2p_1}2^{-k/2+p_1k}{\| P_k^\prime u(t) \|}_{L^2}
\\
& \lesssim \e_1^3 \langle t\rangle^{-1+2p_0} .
\end{align*}

\subsubsection{Estimate of $J_{3,\eps_1\eps_2}$}
Recall the notation \eqref{min12}. Using the definition of $m_{\eps_1\eps_2}^\low$ in \eqref{mlow}, and the same arguments in the proof of Lemma \ref{lemEElow1}
that gave us \eqref{boundmlow}, one can see that
\begin{align*}
{\| (\partial_t m_{\eps_1\eps_2})^{k,k_1,k_2} \|}_{S^\infty}
  \lesssim \mathbf{1}_{\mathcal{X}}(k,k_1,k_2) {(1+t)}^{-1} 2^{-\max(k_1,k_2)/2} \mathbf{1}_{(-2\log_2 (2+t) - 10,0]}(k) .
\end{align*}
This is better by a factor of $(1+t)^{-1}$ than the corresponding bound on $m_{\eps_1\eps_2}$. The same argument as in the proof of Lemma \ref{lemEElow1} (see \eqref{cvb1}) shows that $|J_{3,\eps_1\eps_2}|\lesssim \varep_1^3\langle t\rangle^{-4/3}$, as desired.

\subsubsection{Estimate of $J_{4,\eps_1\eps_2}$}We use Lemma \ref{touse} to estimate, for every $\eps_1,\eps_2 = \pm$,
\begin{align*}
| J_{4,\eps_1\eps_2} | & \lesssim \sum_{k,k_1,k_2 \in \mathbb{Z}}  {\| (m_{\eps_1\eps_2}^\low)^{k,k_1,k_2} \|}_{S^\infty}
  2^{-k/2} {\| P_k^\prime (\partial_t - i|\partial_x|^{3/2}) u \|}_{L^2} {\| P_{k_+}^\prime u \|}_{L^2} {\| P_{k_-}^\prime u \|}_{L^\infty} .
\end{align*}
Using the bound \eqref{boundmlow}, Lemma \ref{lemboundN_u}, and \eqref{bing11}, we have
\begin{align*}
| J_{4,\eps_1\eps_2} | \lesssim \e_1^3 \langle t\rangle^{-1+p_0} \sum_{(k,k_1,k_2) \in \mathcal{X}, \, \langle t\rangle^{-2} \leq 2^k \leq 1}
   2^{-k_+/2} {\| P_{k_+}^\prime u \|}_{L^2} 2^{k_-/10} 2^{-N_2 \max(k_-,0)}\lesssim \e_1^4 \langle t\rangle^{-1+2p_0} .
\end{align*}

\subsubsection{Estimates of $J_{5,\eps_1\eps_2}$ and $J_{6, \eps_1\eps_2}$}
Since the terms $J_{5, \eps_1\eps_2}$ and $J_{6, \eps_1\eps_2}$ are similar, we only estimate the first one.
We begin again by using Lemma \ref{touse}, and estimate, for every $\eps_1,\eps_2$,
\begin{align*}
| J_{5,\eps_1\eps_2} | \lesssim {\| \varphi_{\geq 0}((1+t)^2\xi) |\xi|^{-1/2} \what{u} \|}_{L^2}
  \sum_{k_1,k_2} {\| m_{\eps_1\eps_2}^\low \varphi_{k_1}(\xi-\eta) \varphi_{k_2}(\eta) \|}_{S^\infty}
  \\ \times {\| P_{k_1}^\prime (\partial_t - i\Lambda)u \|}_{L^\infty} {\| P_{k_2}^\prime u \|}_{L^2} .
\end{align*}
We then use the bound \eqref{boundmlow'}, Lemma \ref{lemboundN_u}, and the a priori bounds \eqref{bing11}--\eqref{bing31}, to obtain
\begin{align*}
| J_{5,\eps_1\eps_2} | \lesssim \e_1^4 \langle t\rangle^{-1+2p_0} \sum_{k_1,k_2} 2^{-\max(k_1,k_2)/2}2^{k_1/2} 2^{-\max(k_1,0)} 2^{(1/2-p_0)k_2}2^{-\max(k_2,0)}\lesssim \e_1^4 \langle t\rangle^{-1+2p_0} .
\end{align*}
The proof of Lemma \ref{lemEElow2} is completed, and Proposition \ref{proEElow} follows. $\hfill \Box$

\section{Energy estimates III: weighted estimates for high frequencies}\label{secweighted}

In this section we want to control the high frequency component of the weighted Sobolev norm of our solution $u$. The main result of this section is the following weighted energy estimate:

\begin{proposition}\label{proEEZ}
Assume that $u$ satisfies \eqref{bing11}-\eqref{bing31}. Then
\begin{equation}\label{proEEZconc}
 \sup_{t\in[0,T]} (1+t)^{-4p_0} \|P_{\geq -20}Su(t)\|_{H^{N_1}} \lesssim \e_0 .
\end{equation}
\end{proposition}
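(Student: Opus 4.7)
\medskip
\noindent\textbf{Plan of proof.}

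The plan is to mirror the strategy of Proposition \ref{proEE1}, working with the variable $Z=Z_{k_1}=S\D^{k_1}u$ where $k_1 := 2N_1/3$, so that $N_1 = 3k_1/2$. By Proposition \ref{lemeqZ}, $Z$ satisfies the evolution equation \eqref{eqZ}, with cubic remainder $\mathcal{O}_Z$ controlled by \eqref{eqZR}. An elliptic argument analogous to \eqref{reluW}--\eqref{uWHN_0}, combined with \eqref{bing11}, yields
\begin{equation*}
\|P_{\geq -20}Su(t)\|_{H^{N_1}} \lesssim \|Z(t)\|_{L^2} + \e_1^{3/2},
\end{equation*}
so it suffices to bound the basic quadratic energy $E^{(2)}_{S,N_1}(t) := \frac{1}{2}\|Z(t)\|_{L^2}^2$ by $\e_0^2 (1+t)^{8p_0}$ for all $t \in [0,T]$.

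First I would construct a cubic correction $E^{(3)}_{S,N_1}(t)$ by the Fourier-side normal-form recipe used in \eqref{E^31}--\eqref{E^3b}: each cubic term produced by $\mathcal{Q}_Z,\mathcal{N}_{Z,1},\mathcal{N}_{Z,2},\mathcal{N}_{Z,3}$ in the evolution of $E^{(2)}_{S,N_1}$ yields a functional of the schematic shape
\begin{equation*}
\Re\int_{\R\times\R}\bar{\what{Z}}(\xi)\,\what{F^{(1)}_{\eps_1}}(\xi-\eta)\,\what{F^{(2)}_{\eps_2}}(\eta)\,\frac{m(\xi,\eta)}{|\xi|^{3/2}-\eps_1|\xi-\eta|^{3/2}-\eps_2|\eta|^{3/2}}\,d\xi d\eta,
\end{equation*}
where the pair $(F^{(1)},F^{(2)})$ is drawn from $\{u,Z,Su\}$ according to the origin of the cubic term. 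The quasilinear symbol $q_{N_1}$ appearing in $\mathcal{Q}_Z$ can be handled, after using \eqref{Vu+baru1}, exactly as $q_{N_0}$ was in Lemma \ref{lemEE21}. The genuinely new feature is the cross terms from $\mathcal{N}_{Z,2}$ of schematic type $Z\cdot Su\cdot u$, which, after division by the resonant phase, produce symbols with strong singularities of order $(\text{low freq})^{-1/2}$ (compare \eqref{phase1}--\eqref{phase2}). To prove the equivalence $|E^{(3)}_{S,N_1}(t)|\lesssim \e_1^3(1+t)^{8p_0}$ on each time slice, I would split these integrals into the standard frequency regimes and, in the configurations where a low frequency of $u$ or $Su$ is isolated, absorb the $(\text{low freq})^{-1/2}$ singularity using the weighted low-frequency $L^2$ control provided by $\mathcal{K}_I,\mathcal{K}_S$ in \eqref{bing11}--\eqref{bing21}; the remaining high-frequency configurations are handled by the $L^\infty$/$L^2$ estimates of Section \ref{secEsym}.

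Next I would compute $\frac{d}{dt}(E^{(2)}_{S,N_1}+E^{(3)}_{S,N_1})$ and show that it is bounded by $\e_1^4 (1+t)^{-1+8p_0}$. By design, the cubic contributions produced when the free part $\pa_t Z - i|\pa_x|^{3/2}Z$ is inserted into $\frac{d}{dt}E^{(3)}_{S,N_1}$ cancel those coming from $\mathcal{Q}_Z + \mathcal{N}_{Z,1} + \mathcal{N}_{Z,2} + \mathcal{N}_{Z,3}$ in $\frac{d}{dt}E^{(2)}_{S,N_1}$. The residual quartic terms fall into four categories: quasilinear contributions from $\Sigma_\gamma$ and $\mathcal{Q}_Z$ paired with the cubic symbols, controlled via the analogues of Lemmas \ref{lemSigmabulk} and \ref{lemQbulk} after verifying \eqref{lemSigmabulkpro1}--\eqref{lemSigmabulkpro2} and \eqref{lemQbulkpro1}--\eqref{lemQbulkpro2} for the relevant divided symbols; cubic remainder contributions $\mathcal{O}_Z$ paired against $Z$, bounded by \eqref{eqZR} and Cauchy--Schwarz; contributions obtained by inserting $\pa_t u-i|\pa_x|^{3/2}u$ and $\pa_t Su-i|\pa_x|^{3/2}Su$ in place of one factor in $E^{(3)}_{S,N_1}$, estimated via the a priori bounds \eqref{bing11} and standard bilinear estimates on the nonlinearities; and the genuinely quartic commutator and $\widetilde{Q}_{N_1}$ pieces inherent in $\mathcal{N}_{Z,3}$, which are routine. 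The hardest step will be the quartic integrals in the first category arising from the singular $\mathcal{N}_{Z,2}$ configurations: these are the weighted analogue of the model integral \eqref{quarticsing}, and closing them requires simultaneously using the low-frequency $|\partial_x|^{-1/2}$ control of both $u$ and $Su$ built into $\mathcal{K}_I$ and $\mathcal{K}_S$. The extra growth rate $(1+t)^{4p_0}$ (compared to $(1+t)^{p_0}$ in Proposition \ref{proEE1}) is precisely the price paid for the weaker symmetries of the weighted variable $Z$ together with the extra $\langle t\rangle^{4p_0}$ factor carried by $\mathcal{K}_S$ in the bootstrap. Integrating the resulting differential inequality and invoking \eqref{bing31} yields $E^{(2)}_{S,N_1}(t)\lesssim \e_0^2(1+t)^{8p_0}$, which gives \eqref{proEEZconc}.
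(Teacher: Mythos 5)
Your plan is correct and follows essentially the same route as the paper: one works with $Z=S\mathcal{D}^{k_1}u$ and the equation of Proposition \ref{lemeqZ}, builds the cubic corrections $E_{Z,1},E_{Z,2},E_{Z,3}$ by dividing by the resonant phases, treats the quasilinear pieces via Lemmas \ref{lemSigmabulk} and \ref{lemQbulk}, and absorbs the $(\text{low frequency})^{-1/2}$ singularities of the $Z\cdot Su\cdot u$ terms by splitting at frequency $(1+t)^{-2}$ and invoking the $|\partial_x|^{-1/2}$ control encoded in $\mathcal{K}_I,\mathcal{K}_S$. The only cosmetic difference is that the paper inserts the cutoff $\varphi_{\geq -20}^2(\xi)$ into the quadratic energy so that the low-frequency part of $Z$ (handled separately in Section \ref{secweightedlow}) never enters the evolution, which slightly streamlines the bookkeeping relative to using the full $\|Z\|_{L^2}^2$.
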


\vskip10pt
\subsection{The weighted energy functionals}\label{secwenergy} Let $Z=S\mathcal{D}^{k_1}u$, $N_1=3k_1/2$, $\mathcal{D}=|\partial_x|^{3/2}+\Sigma_\gamma$. The quadratic energy we associate to the equation \eqref{eqZ} is
\begin{align}
\label{defE2Z}
E_Z^{(2)} (t):= \frac{1}{4\pi} \int_{\R} \bar{\what{Z}}(t,\xi) \what{Z}(t,\xi) \, \varphi_{\geq -20}^2(\xi)  \, d\xi .
\end{align}
The natural cubic energy is constructed as the sum of three energy functionals,
\begin{align}
\label{defE3Z}
E_Z^{(3)}(t) = E_{Z,1}(t) + E_{Z,2}(t) + E_{Z,3}(t) .
\end{align}
The first energy functional is is the natural correction associated to the nonlinearities $\mathcal{Q}_Z$
and $\mathcal{N}_{Z,1}$ in \eqref{eqZN1}-\eqref{eqZN2},
\begin{align}
\begin{split}
\label{E_Z1}
&E_{Z,1}(t) := \frac{1}{4\pi^2} \sum_{\star} \Re \int_{\R \times\R} \bar{\what{Z}}(\xi,t) \what{Z}_{\eps_2}(\eta,t)
  m_{1,\eps_1 \eps_2}(\xi,\eta) \what{u}_{\eps_1}(\xi-\eta,t) \,d\xi d\eta ,
\\
&m_{1,\eps_1 \eps_2} (\xi,\eta) :=- i \frac{|\xi|^{N_1}|\eta|^{-N_1} \varphi_{\geq -20}^2(\xi) a_{\eps_1\eps_2}(\xi,\eta)}{
  |\xi|^{3/2}-\eps_1|\xi-\eta|^{3/2}-\eps_2|\eta|^{3/2}}+(1+\eps_1)(1+\eps_2)\\
	&\times\frac{(\xi-\eta)
  \big[ \xi |\xi|^{N_1}|\eta|^{-N_1} \varphi_{\geq -20}^2(\xi) \chi(\xi-\eta,\eta)
  - \eta |\eta|^{N_1}|\xi|^{-N_1} \varphi_{\geq -20}^2(\eta) \chi(\eta-\xi,\xi) \big]}{
  8 |\xi-\eta|^{1/2}(|\xi|^{3/2}-|\xi-\eta|^{3/2}-|\eta|^{3/2})},
\end{split}
\end{align}
where the second part of the symbol, which is only present for $\eps_1=\eps_2=1$, is associated to $\mathcal{Q}_Z$,
while the first one corresponds to $\mathcal{N}_{Z,1}$.
The second functional takes into account $\mathcal{N}_{Z,2}$ in \eqref{eqZN2} and is defined as
\begin{align}
\label{E_Z2}
\begin{split}
E_{Z,2}(t) & := \frac{1}{4\pi^2} \sum_\star \Re \int_{\R \times\R} \bar{\what{Z}}(\xi,t) \what{u_{\eps_2}}(\eta,t)
  m_{2,\eps_1\eps_2}(\xi,\eta) \what{S u_{\eps_1}}(\xi-\eta,t) \,d\xi d\eta ,
\\
m_{2,\eps_1\eps_2}(\xi,\eta) & := \frac{\eps_1 (1+\eps_2) (\xi-\eta) \xi |\xi|^{N_1} \varphi_{\geq -20}^2(\xi) \chi(\xi-\eta,\eta)
  }{4|\xi-\eta|^{1/2}(|\xi|^{3/2}-\eps_1|\xi-\eta|^{3/2}-|\eta|^{3/2})}
  \\
  & + \frac{-i |\xi|^{N_1} \varphi_{\geq -20}^2(\xi)
  \big(a_{\eps_1\eps_2}(\xi,\eta) + b_{\eps_1\eps_2}(\xi,\eta) + b_{\eps_1\eps_2}(\xi,\xi-\eta) \big)}{
  |\xi|^{3/2}-\eps_1|\xi-\eta|^{3/2}-\eps_2|\eta|^{3/2}} .
\end{split}
\end{align}
The last functional is
\begin{align}
\label{E_Z3}
\begin{split}
E_{Z,3}(t) &:= \frac{1}{4\pi^2} \sum_\star \Re \int_{\R \times\R} \bar{\what{Z}}(\xi,t) \what{u_{\eps_2}}(\eta,t)
  m_{3,\eps_1\eps_2}(\xi,\eta) \what{u_{\eps_1}}(\xi-\eta,t) \,d\xi d\eta ,
\\
m_{3,\eps_1\eps_2}(\xi,\eta) &:= \frac{i\eps_1 (1+\eps_2) (\xi-\eta) \varphi_{\geq -20}^2(\xi)
  \big( \widetilde{q}_{N_1}(\xi,\eta) |\eta|^{N_1} +
  q_{N_1}(\xi,\eta) |\eta|^{N_1} \big)}{4|\xi-\eta|^{1/2}(|\xi|^{3/2}-\eps_1|\xi-\eta|^{3/2}-|\eta|^{3/2})}
\\
& + \frac{ -i|\xi|^{N_1} \varphi_{\geq -20}^2(\xi) (3/2 - {N_1})\big(a_{\eps_1\eps_2}(\xi,\eta) + b_{\eps_1\eps_2}(\xi,\eta) \big)}{
  |\xi|^{3/2}-\eps_1|\xi-\eta|^{3/2}-\eps_2|\eta|^{3/2}}
\\
& + \frac{ -i|\xi|^{N_1} \varphi_{\geq -20}^2(\xi) \big(N_1 a_{\eps_1\eps_2}(\xi,\eta) + \widetilde{a}_{\eps_1\eps_2}(\xi,\eta)
  + \widetilde{b}_{\eps_1\eps_2}(\xi,\eta) \big)}{|\xi|^{3/2}-\eps_1|\xi-\eta|^{3/2}-\eps_2|\eta|^{3/2}} ,
\end{split}
\end{align}
and is associated to the nonlinear term $\mathcal{N}_{Z,3}$ in \eqref{eqZN2}.
The total weighted energy is
\begin{equation}\label{defEZ}
E_Z(t) := E_Z^{(2)}(t) + E_Z^{(3)}(t) .
\end{equation}

Using \eqref{bing11}, the definitions, and Propositions \ref{proEE1} and \ref{proEElow}, we have
\begin{equation}\label{azx5}
\begin{split}
\|P_{\geq -20}Su(t)\|_{H^{N_1}}&\lesssim \|P_{\geq -20}Z(t)\|_{L^2}+\e_1^{3/2}(1+t)^{p_0},\\
\|Su(t)\|_{H^{N_1}}+\|Z(t)\|_{L^2}&\lesssim \e_1(1+t)^{4p_0}.
\end{split}
\end{equation}
Therefore, Proposition \ref{proEEZ} follows from the following two main lemmas:

\begin{lemma}\label{lemEEZ1}
Under the assumptions of Proposition \ref{proEEZ}, for any $t\in[0,T]$ we have
\begin{equation}\label{lemEEZ1conc}
| E_Z^{(3)}(t) | \lesssim \e_1^3 {(1+t)}^{8p_0} . 
\end{equation}
\end{lemma}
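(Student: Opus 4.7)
The plan is to follow the template of Lemma \ref{lemEE1} from subsection \ref{secEsym}, since each of $E_{Z,1}$, $E_{Z,2}$, $E_{Z,3}$ is a cubic form of exactly the same type as the energies $E^{(3)}_{m_{N_0}}, E^{(3)}_{a,\eps_1\eps_2}, E^{(3)}_{b,\eps_1\eps_2}$ appearing in \eqref{E^31}--\eqref{E^3b}, only with two input functions replaced by $Z$ (or $Su$) and with $N_0$ replaced by $N_1$. The proof will proceed in two steps: first establish $S^\infty$ bounds on the symbols $m_{j,\eps_1\eps_2}$ showing no derivative loss; then apply Lemma \ref{touse}(ii) and the a priori bounds.

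For the symbol analysis, I would verify that on their respective supports
\begin{align*}
& \|m_{1,\eps_1\eps_2}^{k,k_1,k_2}\|_{S^\infty} \lesssim 2^{k_1/2}2^{-k/2}\mathbf{1}_{\mathcal{X}}(k,k_1,k_2)\mathbf{1}_{[6,\infty)}(k_2-k_1),\\
& \|m_{2,\eps_1\eps_2}^{k,k_1,k_2}\|_{S^\infty} \lesssim 2^{k_1/2}2^{-k/2}\mathbf{1}_{\mathcal{X}}(k,k_1,k_2),\\
& \|m_{3,\eps_1\eps_2}^{k,k_1,k_2}\|_{S^\infty} \lesssim 2^{(N_1+1/2)k}2^{-k_+/2}\mathbf{1}_{\mathcal{X}}(k,k_1,k_2),
\end{align*}
with additional $3/2$-derivative smoothing gains in the $(+,-)$ and $(-,-)$ branches (arising from the strong ellipticity of the resonance denominator in those cases). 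These bounds are obtained exactly as in \eqref{boundm_N}, \eqref{bounda^N/}, \eqref{boundb^N/}: the pieces coming from $a_{\eps_1\eps_2}$ and $b_{\eps_1\eps_2}$ divided by the resonant phase are controlled by Lemmas \ref{lembounda}--\ref{lemboundb}; the pieces built from $q_{N_1}$ (present only when $\eps_1=\eps_2=+$) follow from the Taylor-expansion/cancellation argument of \eqref{m_Ndiff}--\eqref{q_Nasy2}; the extra pieces in $m_{3,\eps_1\eps_2}$ involving $\widetilde{a}_{\eps_1\eps_2},\widetilde{b}_{\eps_1\eps_2},\widetilde{q}_{N_1}$ obey the same homogeneity bounds as $a,b,q_{N_1}$, by the commutator formula \eqref{lemcomm}--\eqref{lemcomm2}.

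Once the symbol bounds are in hand, the trilinear estimates are routine via Lemma \ref{touse}(ii). I place $\bar{Z}$ and whichever of $\{Z_{\eps_2}, Su_{\eps_1}, u_{\eps_2}\}$ is present in $L^2$, using $\|Z\|_{L^2}+\|Su\|_{H^{N_1}}\lesssim \varepsilon_1\langle t\rangle^{4p_0}$ from \eqref{azx5} and \eqref{bing11}, and the remaining copy of $u_{\pm}$ in $L^\infty$, using $\|P_k u\|_{L^\infty} \lesssim \varepsilon_1 \langle t\rangle^{-1/2}2^{k/10}2^{-N_2k^+}$ from \eqref{bing11}. For $E_{Z,1}$ and $E_{Z,2}$ this gives $\varepsilon_1^3\langle t\rangle^{-1/2+8p_0}$, which is better than \eqref{lemEEZ1conc}; for $E_{Z,3}$, placing one $u$ in $L^\infty$ and the other in $L^2$ (bounded by $\varepsilon_1\langle t\rangle^{p_0}$) yields $\varepsilon_1^3\langle t\rangle^{-1/2+5p_0}$.

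The main obstacle is verifying the symbol bounds for $m_{3,\eps_1\eps_2}$, where one has to exhibit the precise cancellations between the various terms produced by applying $S$ to the quadratic nonlinearity. The denominator $|\xi|^{3/2}-\eps_1|\xi-\eta|^{3/2}-\eps_2|\eta|^{3/2}$ has its strongest singularity in the $(+,+,+)$ branch when $|\xi-\eta|\to 0$, producing a $|\xi-\eta|^{-1/2}$ factor; this is precisely compensated by the $|\xi-\eta|^{1/2}$ factors supplied by the symbols $a_{++}, q_{N_1}, \widetilde{q}_{N_1}$ and by the $|\xi-\eta|^{3/2}$ factor for $b_{++}$ built into their explicit formulas \eqref{a_++}, \eqref{eqWN}. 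Once this is unpacked in each of the sixteen subcases (four sign combinations times three pieces of $m_{3,\eps_1\eps_2}$), the bound follows, and Lemma \ref{lemEEZ1} is proved.
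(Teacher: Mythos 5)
Your overall architecture (establish $S^\infty$ bounds on $m_{1},m_{2},m_{3}$, then apply Lemma \ref{touse}(ii) with the a priori bounds) is the same as the paper's, and your bound for $m_{1,\eps_1\eps_2}$ is correct. But there is a genuine gap in the symbol analysis for $m_{2,\eps_1\eps_2}$ and $m_{3,\eps_1\eps_2}$: you claim they are non-singular at the low frequency $\xi-\eta$ (you write $2^{k_1/2}2^{-k/2}$ for $m_2$ and $2^{(N_1+1/2)k}2^{-k_+/2}$ for $m_3$), asserting that the $|\xi-\eta|^{-1/2}$ produced by the resonant denominator ``is precisely compensated'' by $|\xi-\eta|^{1/2}$ factors in the numerators. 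This cancellation claim is false. On the support of $\chi(\xi-\eta,\eta)$ the phase satisfies $|\xi|^{3/2}-|\xi-\eta|^{3/2}-|\eta|^{3/2}\approx |\xi-\eta|\,|\eta|^{1/2}$ (first order in $|\xi-\eta|$, see \eqref{phiab}), while the numerator of the $Q_{N_1}$-type piece of $m_2$ is only $(\xi-\eta)\xi|\xi|^{N_1}|\xi-\eta|^{-1/2}\sim |\xi-\eta|^{1/2}|\xi|^{N_1+1}$; the net result is $|\xi|^{N_1+1/2}|\xi-\eta|^{-1/2}$, i.e. a genuine $2^{-k_1/2}$ singularity, matching the paper's bounds \eqref{boundm_2} and \eqref{boundm_3}. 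The extra power of $|\xi-\eta|$ that makes $m_{N_0}$ (and the second part of $m_1$) non-singular comes from the antisymmetrization $\xi|\xi|^{N}|\eta|^{-N}\chi(\xi-\eta,\eta)-\eta|\eta|^{N}|\xi|^{-N}\chi(\eta-\xi,\xi)$, which is absent from $m_2$ and from the $q_{N_1},\widetilde q_{N_1}$ parts of $m_3$ because the equation for $Su$ lacks that symmetry. This is exactly the ``division problem'' the paper is organized around, so the miss is substantive, not cosmetic.

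The gap propagates into your trilinear step. With the correct symbol bound, placing $Su_{\eps_1}$ (which sits at the singular frequency $\xi-\eta$) in $L^2$ via $\|Su\|_{H^{N_1}}\lesssim\e_1\langle t\rangle^{4p_0}$ produces the divergent sum $\sum_{k_1}2^{-k_1/2}\|P_{k_1}Su\|_{L^2}$. Closing the estimate requires the low-frequency information in the bootstrap norm $\mathcal{K}_S$ from \eqref{bing2} (control of $|\xi|^{-1/2}\widehat{Su}$ in $L^2$ down to frequencies $\sim(1+t)^{-2}$, together with the Bernstein-type bound \eqref{Bern} below that threshold), and similarly $\mathcal{K}_I$ for the $E_{Z,3}$ terms — none of which appears in your argument. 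Once the correct singular symbol bounds and these low-frequency norms are used, the estimate does close as in Lemmas \ref{lemEE1} and \ref{lemEElow1}, but as written your proof does not establish \eqref{lemEEZ1conc}.
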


\begin{lemma}\label{lemEEZ2}
Under the assumptions of Proposition \ref{proEEZ}, for any $t\in[0,T]$ we have
\begin{equation}\label{lemEEZ2conc}
\frac{d}{dt} E_Z(t) \lesssim \e_1^4 {(1+t)}^{-1+8p_0} .
\end{equation}
\end{lemma}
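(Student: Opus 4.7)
The plan is to mirror the strategy used for Lemma \ref{lemEE2}, now carried out at the level of the weighted variable $Z=S\mathcal{D}^{k_1}u$. First I would use \eqref{eqZ} to compute $\partial_t E_Z^{(2)}(t)$, producing six pieces: one from $i\Sigma_\gamma Z$, one from $\mathcal{Q}_Z$, three from the semilinear quadratic nonlinearities $\mathcal{N}_{Z,1}$, $\mathcal{N}_{Z,2}$, $\mathcal{N}_{Z,3}$, and a directly acceptable cubic contribution from $\mathcal{O}_Z$ bounded via \eqref{eqZR} together with $\|Z\|_{L^2}\lesssim \e_1\langle t\rangle^{4p_0}$. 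The three cubic correctors $E_{Z,1},E_{Z,2},E_{Z,3}$ have been constructed precisely so that their symbols satisfy
\begin{equation*}
m_{j,\eps_1\eps_2}(\xi,\eta)\cdot\bigl(|\xi|^{3/2}-\eps_1|\xi-\eta|^{3/2}-\eps_2|\eta|^{3/2}\bigr) = q_{j,\eps_1\eps_2}(\xi,\eta),
\end{equation*}
where $q_{j,\eps_1\eps_2}$ is the symbol of the corresponding semilinear quadratic piece of $\mathcal{N}_{Z,j}$ (or, when $j=1$ and $\eps_1=\eps_2=+$, also of $\mathcal{Q}_Z$ after a symmetrization analogous to \eqref{A_2}). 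Differentiating each $E_{Z,j}$ in time and exploiting this identity produces a main term that exactly cancels the corresponding quadratic contribution from $\partial_t E_Z^{(2)}$.

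After these cancellations I am left with quartic errors of four types: (a) the pieces of $\partial_t E_Z^{(2)}$ that pair $i\Sigma_\gamma Z$ or $\mathcal{Q}_Z$ against $Z$, analogues of $I_2$ and $I_3$ in Section \ref{secEE}; (b) the pieces obtained by distributing $i\Sigma_\gamma$ or $\mathcal{Q}_Z$ on the factors of each $E_{Z,j}$; (c) the pieces from replacing $(\partial_t-i|\partial_x|^{3/2})u_{\eps_i}$ or $(\partial_t-i|\partial_x|^{3/2})Su_{\eps_1}$ inside the $E_{Z,j}$ by the right-hand sides of \eqref{equ} and \eqref{eqZ}; (d) the quasilinear tail $V_2$ contributing to $\partial_t E_Z^{(2)}$ via \eqref{Vu+baru0}. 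Items (a) and (b) with $(+,+)$ sign combinations can be estimated by Lemmas \ref{lemSigmabulk} and \ref{lemQbulk}, once I verify that the three symbols $m_{j,++}$ satisfy \eqref{lemSigmabulkpro1}--\eqref{lemSigmabulkpro2}; for the other sign combinations, the strong ellipticity of the denominator makes the symbols smoothing and direct $S^\infty$-estimates via Lemma \ref{touse}, combined with the bound $\|P_k\Sigma_\gamma F\|_{L^2}\lesssim\e_1^2\langle t\rangle^{-3/4}2^{3k/2}\|P'_kF\|_{L^2}$ from \eqref{azx3} and the $L^\infty$-decay in \eqref{bing11}, are sufficient. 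Item (d) is handled in the same way as \eqref{estA_22}.

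The main obstacle will be $E_{Z,2}$, which couples $Z$ to $Su$: the $S$ vector-field does not commute with $\mathcal{Q}_Z$, forcing the appearance of $Q_{N_1}(SV,\mathcal{D}^{k_1}u)$ inside $\mathcal{N}_{Z,2}$. Cancelling this against $\partial_t E_{Z,2}$ requires expressing $SV$ through \eqref{Vu+baru0} in terms of $Su-\overline{Su}$ plus a cubic remainder, which is exactly the role of the first half of the symbol $m_{2,\eps_1\eps_2}$ in \eqref{E_Z2}. Because $\|Su\|_{H^{N_1}}\lesssim \e_1\langle t\rangle^{4p_0}$ grows faster than the unweighted $\|u\|_{H^{N_0}}\lesssim \e_1\langle t\rangle^{p_0}$, every quartic term containing two factors of $Su$-type quantities produces at most the rate $\langle t\rangle^{-1+8p_0}$ rather than the $\langle t\rangle^{-1+2p_0}$ rate of Lemma \ref{lemEE2}; this is precisely the rate recorded in \eqref{lemEEZ2conc}. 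The commutator $[S,\Sigma_\gamma]\mathcal{D}^{k_1}u$ hidden inside $\mathcal{O}_Z$ is controlled via \eqref{estgamma} together with \eqref{azx5}, producing the same rate.
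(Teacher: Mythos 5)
Your plan reproduces the paper's proof in its overall architecture: the same decomposition $E_Z=E_Z^{(2)}+E_{Z,1}+E_{Z,2}+E_{Z,3}$, the same cancellation of the quadratic contributions by symbols divided by the resonant phase (with the $\mathcal{Q}_Z$ piece symmetrized as in \eqref{A_2}), the same use of Lemmas \ref{lemSigmabulk} and \ref{lemQbulk} for the quasilinear $(+,+)$ pieces and of the strong ellipticity \eqref{bounda^N/}, \eqref{azx3} for the other sign combinations, and the same treatment of the $V_2$ tail and of $\mathcal{O}_Z$ via \eqref{eqZR}. Up to that point you are on the paper's track.

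There is, however, one genuine gap, and it is exactly at the point you flag as ``the main obstacle'' but then misdiagnose. The difficulty with $E_{Z,2}$ (and with the resulting quartic terms $K_{2,\eps_1\eps_2}$) is not primarily the non-commutativity of $S$ with $\mathcal{Q}_Z$, nor the faster growth $\langle t\rangle^{4p_0}$ of the weighted norms: it is the low-frequency singularity of the symbol. By \eqref{boundm_2} one has $\|m_{2,\eps_1\eps_2}^{k,k_1,k_2}\|_{S^\infty}\lesssim 2^{N_1k}2^{k/2}2^{-k_1/2}$, where $2^{k_1}$ is the frequency of the $\what{Su_{\eps_1}}(\xi-\eta)$ input. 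If you estimate the corresponding quartic terms using only $\|Su\|_{H^{N_1}}\lesssim\e_1\langle t\rangle^{4p_0}$ and the dispersive bound on $u$, the sum over $k_1\to-\infty$ diverges because of the factor $2^{-k_1/2}$; this is precisely the ``division problem'' that the whole low-frequency energy machinery of sections \ref{secEElow} and \ref{secweightedlow} is built to resolve, and your proposal never invokes that input. The paper's estimate splits at $2^{k_1}\approx(1+t)^{-2}$ and uses two pieces of information that must be cited explicitly: for $2^{k_1}\lesssim(1+t)^{-2}$, the Bernstein-type consequence of the low-frequency part of \eqref{bing11}, namely $\|P_lSu\|_{L^\infty}\lesssim 2^{l(1-p_1)}\e_1(1+t)^{4p_0-2p_1}$; and for $2^{k_1}\gtrsim(1+t)^{-2}$, the bound $\|P_{\geq -2\log_2(1+t)-5}|\partial_x|^{-1/2}Su\|_{L^2}\lesssim\e_1(1+t)^{4p_0}$ coming from the weighted norm $\mathcal{K}_S$ in \eqref{bing2} (i.e.\ from the a priori control of the low-frequency weighted energy of Proposition \ref{proEEZlow}), together with the refined symbol bound \eqref{boundm_2'} in which the $|\xi-\eta|^{-1/2}$ factor has been absorbed. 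Without these two ingredients the estimate of $K_{2,\eps_1\eps_2}$ cannot be closed, so your argument as written does not yield \eqref{lemEEZ2conc}.
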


\vskip10pt
\subsection{Analysis of the symbols and proof of Lemma \ref{lemEEZ1}}\label{secwsym}
To prove Lemmas \ref{lemEEZ1} and \ref{lemEEZ2}, we need bounds for the symbols of
the cubic energy functionals in \eqref{E_Z1}-\eqref{E_Z3}.
The first term in the symbol $m_{1,\eps_1\eps_2}$ in \eqref{E_Z1} can be estimated by using directly \eqref{bounda/1}, while the second part is similar to the symbol $m_{N_0}$ in \eqref{E^31}. It follows that 
\begin{align}
\label{boundm_1}
\begin{split}
{\| m_{1,\eps_1+}^{k,k_1,k_2} \|}_{S^\infty}
  & \lesssim 2^{k_1/2} 2^{-k/2} \mathbf{1}_{\mathcal{X}}(k,k_1,k_2) \mathbf{1}_{[6,\infty)}(k_2 - k_1) ,
\\
{\| m_{1,\eps_1-}^{k,k_1,k_2} \|}_{S^\infty}
  & \lesssim \big( 2^{3k_1/2} 2^{-3k/2} + 2^{k_1/2} 2^{-k/2} \mathbf{1}_{(-\infty,1]}(k) \big)
  \mathbf{1}_{\mathcal{X}}(k,k_1,k_2) \mathbf{1}_{[6,\infty)}(k_2 - k_1) .
\end{split}
\end{align}

Using the bounds \eqref{bounda/1} and \eqref{boundb/1} it is easy to see that
\begin{align}
\label{boundm_2}
\begin{split}
{\| m_{2,\eps_1\eps_2}^{k,k_1,k_2}  \|}_{S^\infty}
  \lesssim \mathbf{1}_{\mathcal{X}}(k,k_1,k_2) 2^{N_1k}2^{k/2-k_1/2}\mathbf{1}_{[-20,\infty)}(k_2 - k).
\end{split}
\end{align}
Moreover, for any $k,k_2\in\mathbb{Z}$,
\begin{align}
\label{boundm_2'}
{\| |\xi-\eta|^{1/2} m_{2,\eps_1\eps_2} (\xi,\eta)\varphi_k(\xi) \varphi_{k_2}(\eta) \|}_{S^\infty}
  \lesssim 2^{(N_1+1/2) k} \mathbf{1}_{[-20,\infty)}(k_2 - k) .  
\end{align}

Finally, for the symbols $m_{3,\eps_1\eps_2}$, we have the similar bounds
\begin{align}
\label{boundm_3}
{\|m_{3,\eps_1\eps_2}^{k,k_1,k_2}  \|}_{S^\infty}
  \lesssim  \mathbf{1}_{\mathcal{X}}(k,k_1,k_2) 2^{N_1k}2^{k/2-k_1/2}\mathbf{1}_{[-20,\infty)}(k_2 - k),
\end{align}
and
\begin{align}
\label{boundm_3'}
{\| |\xi-\eta|^{1/2} m_{3,\eps_1\eps_2} \varphi_k(\xi) \varphi_{k_2}(\eta) \|}_{S^\infty}
  \lesssim 2^{(N_1+1/2) k} \mathbf{1}_{[-20,\infty)}(k_2 - k) .
\end{align}

The proof of \eqref{lemEEZ1conc} is similar to the proof of \eqref{lemEE1conc} in Lemma \ref{lemEE1}
and \eqref{lemEElow1conc} in Lemma \ref{lemEElow1}, using the bounds  \eqref{boundm_1}-\eqref{boundm_3} on the symbols, and Lemma \ref{touse}(ii).

\vskip10pt
\subsection{Proof of Lemma \ref{lemEEZ2}}\label{secwlem2}
We start by computing the time evolution of $E_Z$ in \eqref{defEZ}.
Using the definitions of the energies $E^{(2)}_Z$ in  \eqref{defE2Z}, and $E^{(3)}_Z$ in \eqref{defE3Z}-\eqref{E_Z3},
and the evolution equation for $Z$ derived in Lemma \ref{lemeqZ}, see \eqref{eqZ}-\eqref{eqZR},
we can calculate:
\begin{align}
\label{evolEZ}
& \frac{d}{dt} \big( E_Z^{(2)} + E_Z^{(3)} \big) = K_0
  + \frac{1}{4\pi^2} \Re \sum_\star \big( K_{1,\eps_1\eps_2} + K_{2,\eps_1\eps_2} + K_{3,\eps_1\eps_2} \big) + R ,
\end{align}
where
\begin{align}
\label{evolE_Z0}
\begin{split}
K_0 & := \frac{1}{2\pi} \Re \int_{\R} \bar{\what{Z}}(\xi)  \, i \what{\Sigma_\gamma Z}(\xi) \,\varphi_{\geq -20}^2(\xi)\, d\xi ,
\end{split}
\end{align}
\begin{align}
\label{evolE_Z1}
\begin{split}
K_{1,\eps_1\eps_2} & :=
\int_{\R \times \R} \bar{\mathcal{F}\big(\partial_t Z - i\Lambda Z\big)}(\xi)
  \, m_{1,\eps_1\eps_2} (\xi,\eta) \,
  \what{u_{\eps_1}}(\xi-\eta) \what{Z_{\eps_2}}(\eta) \,  d\xi d\eta
\\
& + \int_{\R \times \R} \bar{\what{Z}}(\xi)
  \, m_{1,\eps_1\eps_2} (\xi,\eta) \,
  \mathcal{F}\big(\partial_t u_{\eps_1} - i\eps_1\Lambda u_{\eps_1}\big)(\xi-\eta) \what{Z_{\eps_2}}(\eta) \,  d\xi d\eta
\\
& + \int_{\R \times \R} \bar{\what{Z}}(\xi)
  \, m_{1,\eps_1\eps_2} (\xi,\eta) \,
  \what{u_{\eps_1}}(\xi-\eta) \mathcal{F}\big(\partial_t Z_{\eps_2} - i\eps_2\Lambda Z_{\eps_2}\big)(\eta) \,  d\xi d\eta ,
\end{split}
\end{align}
with $m_{1,\eps_1\eps_2}$ as in \eqref{E_Z1},
\begin{align}
\label{evolE_Z2}
\begin{split}
K_{2, \eps_1\eps_2} & = K_{2, \eps_1\eps_2}^1 + K_{2, \eps_1\eps_2}^2 + K_{2, \eps_1\eps_2}^3 ,
\\
K_{2, \eps_1\eps_2}^1 & := \int_{\R \times \R} \bar{\mathcal{F}\big(\partial_t Z - i\Lambda Z\big)}(\xi)
  \, m_{2,\eps_1\eps_2} (\xi,\eta) \,
  \what{S u_{\eps_1}}(\xi-\eta) \what{u_{\eps_2}}(\eta) \,  d\xi d\eta ,
\\
K_{2, \eps_1\eps_2}^2 & := \int_{\R \times \R} \bar{\what{Z}}(\xi)
  \, m_{2,\eps_1\eps_2} (\xi,\eta) \,
  \mathcal{F}\big(\partial_t S u_{\eps_1} - i\eps_1\Lambda S u_{\eps_1}\big)(\xi-\eta) \what{u_{\eps_2}}(\eta) \,  d\xi d\eta ,
\\
K_{2, \eps_1\eps_2}^3 & := \int_{\R \times \R} \bar{\what{Z}}(\xi)
  \, m_{2,\eps_1\eps_2} (\xi,\eta) \,
  \what{S u_{\eps_1}}(\xi-\eta) \mathcal{F}\big(\partial_t u_{\eps_2} - i\eps_2\Lambda u_{\eps_2}\big)(\eta) \,  d\xi d\eta ,
\end{split}
\end{align}
where $m_{2,\eps_1\eps_2}$ is defined in \eqref{E_Z2}, and
\begin{align}
\label{evolE_Z3}
\begin{split}
K_{3, \eps_1\eps_2} & = K_{3, \eps_1\eps_2}^1 + K_{3, \eps_1\eps_2}^2 + K_{3, \eps_1\eps_2}^3 ,
\\
K_{3, \eps_1\eps_2}^1 & := \int_{\R \times \R} \bar{\mathcal{F}\big(\partial_t Z - i\Lambda Z\big)}(\xi)
  \, m_{3,\eps_1\eps_2} (\xi,\eta) \,
  \what{u_{\eps_1}}(\xi-\eta) \what{u_{\eps_2}}(\eta) \,  d\xi d\eta
\\
K_{3, \eps_1\eps_2}^2 & := \int_{\R \times \R} \bar{\what{Z}}(\xi)
  \, m_{3,\eps_1\eps_2} (\xi,\eta) \,
  \mathcal{F}\big(\partial_t u_{\eps_1} - i\eps_1\Lambda u_{\eps_1}\big)(\xi-\eta) \what{u_{\eps_2}}(\eta) \,  d\xi d\eta
\\
K_{3, \eps_1\eps_2}^2 & := \int_{\R \times \R} \bar{\what{Z}}(\xi)
  \, m_{3,\eps_1\eps_2} (\xi,\eta) \,
  \what{u_{\eps_1}}(\xi-\eta) \mathcal{F}\big(\partial_t u_{\eps_2} - i\eps_2\Lambda u_{\eps_2}\big)(\eta) \,  d\xi d\eta ,
\end{split}
\end{align}
where $m_{3,\eps_1\eps_2}$ is defined in \eqref{E_Z3}.
The remainder $R$ comes from quartic terms involving the remainder $\mathcal{O}_Z$ in \eqref{eqZ},
and quartic terms involving the quadratic part of the function $V$ as it appears in \eqref{eqZN1}, that is $V_2$ in \eqref{Vu+baru1}.
Using the estimate \eqref{eqZR} for the first,
and arguments similar to the one used for $A_{2,2}$, see \eqref{A_22} and \eqref{estA_22}, for the second,
we see that
\begin{align*}
 | R(t) | \lesssim \e_1^4 {(1+t)}^{-1+8p_0} .
\end{align*}
We now show that all of the terms in \eqref{evolE_Z1}-\eqref{evolE_Z3} are bounded by $\e_1^4 {(1+t)}^{-1+8p_0}$ as well.

\subsubsection{Estimate of $K_0$}
The term in \eqref{evolE_Z0} is like to $A_1$ in \eqref{EE21}, with $Z$ instead of $W$.
We can then estimate it in the same way we estimated $A_1$ in subsection \ref{prooflemEE23}, $|K_0(t)| \lesssim {\| Z(t) \|}_{L^2}^2 \e_1^2\langle t\rangle^{-1}$.

\subsubsection{Estimate of $K_{1,\eps_1\eps_2}$}
We rearrange the integrals in \eqref{evolE_Z1}, using the equation \eqref{eqZ} for $Z$,
and identify the terms that could potentially lose derivatives and need additional arguments.
Let $\mathcal{N}_Z := \mathcal{N}_{Z,1} + \mathcal{N}_{Z,2} + \mathcal{N}_{Z,3}$,
see \eqref{eqZN2}, and write
\begin{align*}
& \sum_\star K_{1,\eps_1\eps_2} = K_{1,1} + K_{1,2} + K_{1,3}
\end{align*}
where
\begin{align}
\label{evolE_Z11}
K_{1,1} & := \sum_{\eps_1 = \pm } \int_{\R \times \R}
  \big[ \bar{i \what{\Sigma_\gamma Z}}(\xi) \what{Z}(\eta) + \bar{\what{Z}}(\xi) i \what{\Sigma_\gamma Z}(\eta) \big]
  m_{1,\eps_1 +} (\xi,\eta) \what{u_{\eps_1}}(\xi-\eta) \,  d\xi d\eta ,
\\
\label{evolE_Z12}
K_{1,2} & := \sum_{\eps_1 = \pm } \int_{\R \times \R}
  \big[ \bar{\what{\mathcal{Q}_Z}}(\xi) \what{Z}(\eta) + \bar{\what{Z}}(\xi) \what{\mathcal{Q}_Z}(\eta) \big]
  m_{1,\eps_1 +} (\xi,\eta) \what{u_{\eps_1}}(\xi-\eta) \,  d\xi d\eta ,
\end{align}
and
\begin{align}
\label{evolE_Z13}
\begin{split}
K_{1,3} & := \sum_{\eps_1 = \pm } \int_{\R \times \R}
  \big[ \bar{\what{\mathcal{N}_Z}}(\xi) \what{Z}(\eta) + \bar{\what{Z}}(\xi) \what{\mathcal{N}_Z}(\eta) \big]
  m_{1,\eps_1 +} (\xi,\eta) \what{u_{\eps_1}}(\xi-\eta) \,  d\xi d\eta
\\
&+\sum_{\eps_1 = \pm } \int_{\R \times \R}
  \big[ \bar{\what{\mathcal{O}_Z}}(\xi) \what{Z}(\eta) + \bar{\what{Z}}(\xi) \what{\mathcal{O}_Z}(\eta) \big]
  m_{1,\eps_1 +} (\xi,\eta) \what{u_{\eps_1}}(\xi-\eta) \,  d\xi d\eta ,
\end{split}
\end{align}
\begin{align}
\label{evolE_Z14}
\begin{split}
K_{1,4} & := \sum_\star \int_{\R \times \R} \bar{\what{Z}}(\xi)
  \, m_{1,\eps_1\eps_2} (\xi,\eta) \,
  \mathcal{F}\big(\partial_t u_{\eps_1} - i\eps_1\Lambda u_{\eps_1}\big)(\xi-\eta) \what{Z_{\eps_2}}(\eta) \,  d\xi d\eta
  \\
  & + \sum_{\eps_1 = \pm } \int_{\R \times \R} \bar{\mathcal{F}\big(\partial_t Z - i\Lambda Z\big)}(\xi)
  \, m_{1,\eps_1 -} (\xi,\eta) \,
  \what{u_{\eps_1}}(\xi-\eta) \what{Z_{-}}(\eta) \,  d\xi d\eta
  \\
  & + \sum_{\eps_1 = \pm }\int_{\R \times \R} \bar{\what{Z}}(\xi)
  \, m_{1,\eps_1 -} (\xi,\eta) \,
  \what{u_{\eps_1}}(\xi-\eta) \mathcal{F}\big(\partial_t Z_{-} + i\Lambda Z_{-}\big)(\eta) \,  d\xi d\eta.
\end{split}
\end{align}

\subsubsection*{Estimates of $K_{1,1}$ and $K_{1,2}$} These terms are estimated using Lemmas \ref{lemSigmabulk} and \ref{lemQbulk} with $F=Z$, 
\begin{equation*}
|K_{1,1}|+|K_{1,2}|\lesssim \e_1^4(1+t)^{-1+8p_0}
\end{equation*}
The symbol conditions on the multipliers $m_{1,\eps_1+}$ are satisfied because the two components of $m_{1,\eps_1+}$ are similar to the multipliers $a_{\eps_1+}^{N_0}$ and $m_{N_0}$ defined in \eqref{E^31}-\eqref{E^3a}, and the desired properties have been verified in section \ref{secEE}.

\subsubsection*{Estimates of $K_{1,3}$ and $K_{1,4}$}
In all the terms appearing in \eqref{evolE_Z13} and \eqref{evolE_Z14} there are no losses of derivatives since $\mathcal{N}_Z$ is a semilinear term,
and the symbols $m_{3,\eps_1-}$ are strongly elliptic, as we can see from the second bound in \eqref{boundm_1}. The desired estimate follows by the same arguments as before, using Lemma \ref{touse} (ii). We always estimate $Z$, $\mathcal{N}_Z$, $\mathcal{O}_Z$, and $\partial_t Z-i\Lambda Z$ in $L^2$ (using \eqref{azx5}, \eqref{estNZ}, \eqref{eqZR}, and \eqref{estLZ}) and $u$ and $\partial_t u-i\Lambda u$ in $L^\infty$ (using \eqref{bing11} and \eqref{boundLu}).

\subsubsection{Estimate of $K_{2,\eps_1\eps_2}$}
The main difficulty in estimating the terms in \eqref{evolE_Z2} comes from the singularity in the symbol $m_{2,\eps_1\eps_2}$,
see \eqref{boundm_2}. We can overcome this using the low frequencies information in \eqref{bing11}.
We begin with the first term in \eqref{evolE_Z2}, and split it into two pieces depending on the size of the frequency $\xi-\eta$,
\begin{align*}
& K_{2, \eps_1\eps_2}^1 = I + II ,
\\
& I := \int_{\R \times \R} \bar{\mathcal{F}\big(\partial_t Z - i\Lambda Z\big)}(\xi)
  \, m_{2,\eps_1\eps_2} (\xi,\eta) \,
  \varphi_{\leq 0}((1+t)^2(\xi-\eta)) \what{S u_{\eps_1}}(\xi-\eta) \what{u_{\eps_2}}(\eta) \,  d\xi d\eta ,
\\
& II := \int_{\R \times \R} \bar{\mathcal{F} \big(\partial_t Z - i\Lambda Z\big)}(\xi)
  \, m_{2,\eps_1\eps_2} (\xi,\eta) \,
  \varphi_{\geq 1}((1+t)^2(\xi-\eta)) \what{S u_{\eps_1}}(\xi-\eta) \what{u_{\eps_2}}(\eta) \,  d\xi d\eta .
\end{align*}
We observe that, in view of \eqref{bing11}, we have
\begin{align*}
{\| P_l S u \|}_{L^\infty} \lesssim 2^{l(1-p_1)} \e_1 {(1+t)}^{4p_0 - 2p_1} , \quad \mbox{for} \quad 2^l \lesssim {(1+t)}^{-2} .
\end{align*}
Then, to estimate the first term it suffices to use Lemma \ref{touse}(ii)
together with the bound on $m_{2,\eps_1\eps_2}$ in \eqref{boundm_2},
followed by \eqref{estLZ}, and the a priori assumptions \eqref{bing11}:
\begin{equation*}
\begin{split}
| I | &\lesssim \sum_{k,k_1,k_2 \in \mathbb{Z}, \, 2^{k_1} \lesssim \langle t\rangle^{-2}}
  {\| m_{2,\eps_1\eps_2}^{k,k_1,k_2} \|}_{S^\infty} {\| P_k^\prime (\partial_t - i \Lambda) Z \|}_{L^2}
  {\| P_{k_1}^\prime S u \|}_{L^\infty} {\| P_{k_2}^\prime u \|}_{L^2}
\\
&\lesssim  \sum_{(k,k_1,k_2) \in \mathcal{X}, \, 2^{k_1} \lesssim \langle t\rangle^{-2}, \, k_2 \geq k - 20}
  2^{k/2} 2^{-k_1/2}2^{N_1 k}  \e_1^2 \langle t\rangle^{-1/2+4p_0} 2^{k/2} 2^{\max(k,0)}\\
	& \times \e_1  2^{k_1(1-p_1)} \langle t\rangle^{4p_0 - 2p_1} \e_1 2^{-N_0 k_2^+}\langle t\rangle^{p_0}\lesssim \e_1^4 \langle t\rangle^{-1} .
\end{split}
\end{equation*}
To estimate $II$ we use Lemma \ref{touse}(ii) together with the symbol bound in \eqref{boundm_2'},
\eqref{estLZ}, the a priori assumption \eqref{bing11}, and the inequality $N_2\geq N_1+2$:
\begin{align*}
| II | \lesssim {\|  P_{\geq -2\log_2(1+t) - 5} |\partial_x|^{-1/2} S u \|}_{L^2}
  \sum_{k,k_2 \in \mathbb{Z}} {\| |\xi-\eta|^{1/2} m_{2,\eps_1\eps_2}(\xi,\eta) \varphi_k(\xi) \varphi_{k_2}(\eta) \|}_{S^\infty}
  \\ \times  {\| P_k^\prime (\partial_t - i \Lambda) Z \|}_{L^2}
  {\| P_{k_2}^\prime u \|}_{L^\infty}
\\
\lesssim \e_1 \langle t\rangle^{4p_0} \sum_{k,k_2 \in \mathbb{Z}, \, k_2 \geq k - 20} 2^{(N_1+1/2) k}
  \e_1^2 \langle t\rangle^{-1/2+4p_0} 2^{k/2} 2^{k^+}{\| P_{k_2}^\prime u \|}_{L^\infty}
\\
\lesssim \e_1^4 {(1+t)}^{-1+8p_0} .
\end{align*}

The terms $K_{2,\eps_1\eps_2}^2$ and $K_{2,\eps_1\eps_2}^3$ in \eqref{evolE_Z2} are similar (in fact easier). We provide the details only for $K_{2,\eps_1\eps_2}^3$.
We first write
\begin{align*}
& K_{2, \eps_1\eps_2}^3 = III + IV ,
\\
& III := \int_{\R \times \R} \bar{\what{Z}}(\xi)
  \, m_{2,\eps_1\eps_2} (\xi,\eta) \, \varphi_{\leq 0}((1+t)^2(\xi-\eta)) \what{S u_{\eps_1}}(\xi-\eta)
  \mathcal{F}\big(\partial_t u_{\eps_2} - i\eps_2\Lambda u_{\eps_2}\big)(\eta) \,  d\xi d\eta ,
\\
& IV := \int_{\R \times \R} \bar{\what{Z}}(\xi)
  \, m_{2,\eps_1\eps_2} (\xi,\eta) \, \varphi_{\geq 1}((1+t)^2(\xi-\eta)) \what{S u_{\eps_1}}(\xi-\eta)
  \mathcal{F}\big(\partial_t u_{\eps_2} - i\eps_2\Lambda u_{\eps_2}\big)(\eta) \,  d\xi d\eta .
\end{align*}
Then we use Lemma \ref{touse}(ii), the symbol bound \eqref{boundm_2},
the estimates \eqref{boundLu}, and the a priori assumptions \eqref{bing11} to obtain
\begin{equation*}
\begin{split}
| III | &\lesssim \sum_{k,k_1,k_2 \in \mathbb{Z}, \, 2^{k_1} \leq {(1+t)}^{-2}}
  {\| m_{2,\eps_1\eps_2}^{k,k_1,k_2} \|}_{S^\infty} {\| P_k^\prime Z \|}_{L^2}
  {\| P_{k_1}^\prime S u \|}_{L^\infty} {\| P_{k_2}^\prime  (\partial_t - i \Lambda)u \|}_{L^2}
\\
&\lesssim  \e_1^4 \langle t\rangle^{-1/2+9p_0} \sum_{(k,k_1,k_2) \in \mathcal{X}, \, 2^{k_1} \leq \langle t\rangle^{-2}, \, k_2 \geq k - 20}
  2^{N_1 k} 2^{k/2} 2^{-k_1/2}\cdot 2^{k_1(1-p_1)} 2^{k_2/2} 2^{-(N_0 -2)k_2^+}
\\
&\lesssim \e_1^4 \langle t\rangle^{-1} .
\end{split}
\end{equation*}
To estimate $IV$ we use again Lemma \ref{touse}(ii), this time together with the bound \eqref{boundm_2'},
the low frequency assumption in \eqref{bing11}, and the estimates \eqref{boundLu}, and see that
\begin{align*}
| IV | \lesssim {\|  P_{\geq -2\log_2(1+t) - 5} |\partial_x|^{-1/2} S u \|}_{L^2} \sum_{k,k_2 \in \mathbb{Z}}
  {\| |\xi-\eta|^{1/2} m_{2,\eps_1\eps_2}(\xi,\eta) \varphi_k(\xi) \varphi_{k_2}(\eta) \|}_{S^\infty}\\
	\times {\| P_k^\prime Z \|}_{L^2}{\| (\partial_t - i \Lambda) P_{k_2}^\prime u \|}_{L^\infty}
\\
\lesssim  \e_1 \langle t\rangle^{4p_0} \sum_{k,k_2 \in \mathbb{Z}, \, k_2 \geq k - 20} 2^{(N_1 + 1/2) k} \e_1 \langle t\rangle^{4p_0}{\| (\partial_t - i \Lambda) P_{k_2}^\prime u \|}_{L^\infty}
   \\
\lesssim \e_1^4 \langle t\rangle^{-1+8p_0} .
\end{align*}
This concludes the desired estimate for the integrals in \eqref{evolE_Z2}.

\subsubsection{Estimate of $K_{3,\eps_1\eps_2}$}
We observe that the symbols $m_{3,\eps_1\eps_2}$ in \eqref{evolE_Z3}
satisfy the same bounds as the symbols $m_{2,\eps_1\eps_2}$, see \eqref{boundm_2}-\eqref{boundm_2'} and \eqref{boundm_3}-\eqref{boundm_3'}.
Moreover, the terms $K_{3,\eps_1\eps_2}^j$, $j=1,2,3$, in \eqref{evolE_Z3} are trilinear expressions of $(Z,u,u)$,
while the terms $K_{2,\eps_1\eps_2}^j$, $j=1,2,3$, in \eqref{evolE_Z2} are trilinear expressions of $(Z,u,Su)$.
Thus, it is clear that estimating the terms in \eqref{evolE_Z3} is easier than estimating those in \eqref{evolE_Z2},
because the bounds we assume on $u$ are stronger than those we have on $Su$.
This concludes the proof of Proposition \ref{proEEZ}.

\section{Energy estimates IV: weighted estimates for low frequencies}\label{secweightedlow}

In this section we improve our control on the low frequency component of $S u$. The basic quadratic energy is
\begin{align}
\label{defE2Zlow}
E_{Su_\low}^{(2)}(t) = \frac{1}{4\pi} \int_\R {\big| \what{S u}(t,\xi) \big|}^2 {|\xi|}^{-1}
  \mathcal{P}((1+t)^2|\xi|) \varphi^2_{\leq -10}(\xi) \, d\xi ,
\end{align}
where $\mathcal{P}$ is defined in \eqref{p0}. We will prove the following:

\begin{proposition}\label{proEEZlow}
Assume that $u$ satisfies \eqref{bing11}--\eqref{bing31}. Then
\begin{equation}
\label{proEEZlowconc}
 \sup_{t\in[0,T]} (1+t)^{-8p_0} E_{Su_\low}^{(2)}(t) \lesssim \e_0^2.
\end{equation}
\end{proposition}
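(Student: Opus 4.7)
The plan is to mirror closely the proof of Proposition \ref{proEElow}, with $Su$ in place of $u$ and $p_0$ replaced by $4p_0$ throughout. Setting $Su_\low := P_{\leq -10} Su$, first I would apply $P_{\leq -10} S$ to \eqref{equ} and use the commutators $[S, \partial_t - i|\partial_x|^{3/2}] = -(3/2)(\partial_t - i|\partial_x|^{3/2})$ together with \eqref{lemcomm}-\eqref{lemcomm2} to derive
\begin{align*}
\partial_t Su_\low - i|\partial_x|^{3/2} Su_\low = P_{\leq -10}\bigl[-\partial_x T_V Su - \partial_x T_{SV} u + \mathcal{N}^S_u\bigr] + \mathcal{R}_\low,
\end{align*}
where $\mathcal{N}^S_u$ gathers the quadratic terms $A_{\eps_1\eps_2}(Su_{\eps_1}, u_{\eps_2}) + A_{\eps_1\eps_2}(u_{\eps_1}, Su_{\eps_2}) + \widetilde{A}_{\eps_1\eps_2}(u_{\eps_1}, u_{\eps_2})$ and the analogous $B$-type terms, while $\mathcal{R}_\low$ collects cubic contributions from $[S,\Sigma_\gamma]u$ and $|\partial_x|^{1/2}\,S\,O_{3,1/2}$, controlled by the elliptic bounds behind Proposition \ref{CubicSummary}.

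Next, I would define a cubic correction $E^{(3)}_{Su_\low}$ as a sum of trilinear expressions modeled on \eqref{defE3low} and the low-frequency portions of \eqref{E_Z2}-\eqref{E_Z3}: schematically,
\begin{align*}
E^{(3)}_{Su_\low}(t) = \sum_{\star} \Re \int_{\R\times\R} \bar{\what{Su}}(\xi)\,|\xi|^{-1/2}\,\what{Su_{\eps_2}}(\eta)\,\what{u_{\eps_1}}(\xi-\eta)\, \nu^\low_{\eps_1\eps_2}(\xi,\eta)\,d\xi d\eta + (\text{analogous }(u,u)\text{-terms}),
\end{align*}
with symbols $\nu^\low_{\eps_1\eps_2}$ obtained by dividing the quadratic symbols by the resonant phase $|\xi|^{3/2}-\eps_1|\xi-\eta|^{3/2}-\eps_2|\eta|^{3/2}$ and cutting off by $\varphi_t(\xi) = \varphi_{\geq 4}((1+t)^2\xi)\varphi^2_{\leq -10}(\xi)$. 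Using the bounds on $a_{\eps_1\eps_2},b_{\eps_1\eps_2}$ together with the phase lower bound (as in \eqref{boundmlow}-\eqref{boundmlow'}), these symbols satisfy the natural analogues of those estimates. The equivalence bound $|E^{(3)}_{Su_\low}(t)| \lesssim \e_1^3(1+t)^{8p_0}$ then follows from Lemma \ref{touse}(ii), estimating one factor in $L^\infty$ via \eqref{bing11} and the other two in the weighted low-frequency $L^2$ norm provided by \eqref{bing2}, in exact analogy with the calculation \eqref{cvb1}.

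For the time derivative of $E_{Su_\low} := E^{(2)}_{Su_\low} + E^{(3)}_{Su_\low}$, I would split into contributions $J_1,\ldots,J_6$ as in \eqref{evolElow}. The contribution $J_1$ from $\partial_t \mathcal{P}((1+t)^2|\xi|)$ produces the allowed growth term $\leq 40 p_1 E^{(2)}_{Su_\low}(t)/(1+t)$; the symbol time-derivative term $J_3$ gains a factor $(1+t)^{-1}$ over the equivalence estimate and is easily bounded by $\e_1^3(1+t)^{-1+8p_0}$; and the quartic integrals $J_2, J_4, J_5, J_6$ are bounded using (i) the bounds on $P_k(\partial_t-i|\partial_x|^{3/2})Su$ of the type in Lemma \ref{lemboundN_u} after commuting with $S$, (ii) the sharp $L^\infty$ decay \eqref{bing11}, and (iii) the very-low-frequency weighted bound $\||\partial_x|^{-1/2+p_1} Su\|_{L^2} \lesssim \e_1(1+t)^{4p_0-2p_1}$ extracted from \eqref{bing1}-\eqref{bing2} via the profile $\mathcal{P}$. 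Once the analogue of \eqref{lemEElow2conc} is established with $8p_0$ in place of $2p_0$, the desired estimate \eqref{proEEZlowconc} follows by Gronwall's inequality together with the initial bound \eqref{bing3} and the equivalence of $E^{(2)}_{Su_\low}$ and $E_{Su_\low}$, using that $40p_1 < 8p_0$ as enforced by \eqref{constants}.

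The main obstacle will be the analogues of the terms $K_{2,\eps_1\eps_2}$ in \eqref{evolE_Z2}, which at low output frequencies involve two factors of $Su$ coupled by a singular symbol of the form $(\text{low frequency})^{-1/2}$, as described around \eqref{quarticsing}. I would resolve this by a dyadic splitting of the low input frequency $2^{k_1}$: when $2^{k_1} \lesssim (1+t)^{-2}$ the weight $\mathcal{P}$ in $E^{(2)}_{Su_\low}$ supplies an extra $(1+t)^{2p_1}$, matched by the $p_1$-improved $L^2$ norm on $Su$ so as to absorb the singularity; when $2^{k_1} \gtrsim (1+t)^{-2}$, the control $\||\partial_x|^{-1/2}Su\|_{L^2(\{|\xi|\gtrsim(1+t)^{-2}\})} \lesssim \e_1 (1+t)^{4p_0}$ provided by $\mathcal{K}_S$ is combined with an $L^\infty$ bound on another slot via \eqref{bing11}. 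Together these replicate the arguments of subsection \ref{seclemEElow2} in the weighted setting and yield the required $\e_1^3(1+t)^{-1+8p_0}$ increment, closing the estimate.
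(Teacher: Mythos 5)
Your proposal is correct and follows essentially the same route as the paper: the paper also derives the equation for $P_{\leq -10}Su$ from Proposition \ref{lemeqZ} with $k=0$, builds the cubic correction \eqref{defE3Zlow}--\eqref{E_Z2low} by dividing by the resonant phase with the cutoff $\varphi_t(\xi)=\varphi_{\geq 4}((1+t)^2\xi)\varphi^2_{\leq -10}(\xi)$, proves the equivalence and increment lemmas (Lemmas \ref{lemEEZlow1}--\ref{lemEEZlow2}) with the allowed $40p_1 E^{(2)}_{Su_\low}(t)(1+t)^{-1}$ growth term, and handles the singular two-$Su$ interactions by exactly the dyadic split at the $(1+t)^{-2}$ frequency threshold you describe.
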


\subsection{The cubic low frequency weighted energy} We start from Proposition \ref{lemeqZ} with $k=0$ and apply the projection operator $P_{\leq -10}$. It follows that
\begin{align}
 \label{eqSu}
P_{\leq -10}(\partial_t - i\Lambda) (S u) = P_{\leq -10} Q_0(V, Su) + P_{\leq -10} \mathcal{N}_{Su,1} +
  P_{\leq -10} \mathcal{N}_{Su,2} + P_{\leq -10}\mathcal{O}_{Su} .
\end{align}
Here the symbol of $Q_0$ is $q_0(\xi,\eta) = -i\xi\chi(\xi-\eta,\eta)$, the nonlinear terms are
\begin{align}
\label{eqSuN}
\begin{split}
&\mathcal{N}_{Su,1}  := 
(i/2) Q_0\big( \partial_x |\partial_x|^{-1/2} (Su - \bar{Su}), u\big)
\\
& + \sum_\star \big[A_{\eps_1\eps_2}(Su_{\eps_1},u_{\eps_2}) + A_{\eps_1\eps_2}(u_{\eps_1},S u_{\eps_2})
  + B_{\eps_1\eps_2}(Su_{\eps_1},u_{\eps_2}) + B_{\eps_1\eps_2}(u_{\eps_1},S u_{\eps_2})\big] ,
\\
&\mathcal{N}_{Su,2}  := 
(i/2) \widetilde{Q}_0( \partial_x |\partial_x|^{-1/2} (u-\bar{u}, u) )
+ (i/2) Q_0\big( \partial_x |\partial_x|^{-1/2} (u-\bar{u}), u\big)
\\
& + \sum_\star \big[\widetilde{A}_{\eps_1\eps_2}(u_{\eps_1},u_{\eps_2}) + (3/2) A_{\eps_1\eps_2}(u_{\eps_1},u_{\eps_2})
 + \widetilde{B}_{\eps_1\eps_2} (u_{\eps_1},u_{\eps_2}) + (3/2) B_{\eps_1\eps_2}(u_{\eps_1},u_{\eps_2})\big],
\end{split}
\end{align}
and the remainder satisfies
\begin{align}
\label{estRS}
{\| |\partial_x|^{-1/2}\mathcal{O}_{Su}(t) \|}_{L^2} \lesssim \e_1^3 {(1+t)}^{-1+4p_0} .
\end{align}


%

 The natural cubic energy associated to \eqref{eqSu}-\eqref{eqSuN} is given by
\begin{align}
\label{defE3Zlow}
E^{(3)}_{Su_\low} = E_{Su_\low,1} + E_{Su_\low,2} ,
\end{align}
where, with $\varphi_t(\xi) = \varphi_{\geq 4}((1+t)^2\xi) \varphi^2_{\leq -10}(\xi)$,\begin{align}
\begin{split}
\label{E_Z1low}
E_{Su_\low,1}(t) := & \frac{1}{4\pi^2} \sum_\star \Re \int_{\R \times\R} |\xi|^{-1/2} \bar{\what{Su}}(\xi,t) \what{Su_{\eps_2}}(\eta,t)
  \what{u_{\eps_1}}(\xi-\eta,t) q_{\eps_1 \eps_2}(\xi,\eta) \,d\xi d\eta ,
\\
q_{\eps_1 \eps_2}(\xi,\eta) & := (1+\eps_1)(1+\eps_2)\frac{|\xi|^{1/2}(\xi-\eta)
  \big[ \xi|\xi|^{-1} \varphi_t(\xi) \chi(\xi-\eta,\eta) - \eta|\eta|^{-1}  \varphi_t(\eta)\chi(\eta-\xi,\xi) \big]}{
  8 |\xi-\eta|^{1/2}(|\xi|^{3/2}-\eps_1|\xi-\eta|^{3/2}-\eps_2|\eta|^{3/2})}
\\
& +  \frac{ i (1+\eps_1) \eps_2\varphi_t(\xi) \,\eta \, q_0(\xi,\xi-\eta)}{
  4|\eta|^{1/2} |\xi|^{1/2} (|\xi|^{3/2} -\eps_1|\xi-\eta|^{3/2} -\eps_2|\eta|^{3/2})}
\\
& - i \frac{\varphi_t(\xi) \big( a_{\eps_1\eps_2}(\xi,\xi-\eta) + a_{\eps_1\eps_2}(\xi,\eta) +
   b_{\eps_1\eps_2}(\xi,\xi-\eta) + b_{\eps_1\eps_2}(\xi,\eta) \big) }{
  |\xi|^{1/2} (|\xi|^{3/2} -\epsilon_1|\xi-\eta|^{3/2} -\epsilon_2|\eta|^{3/2})} ,
\end{split}
\end{align}
and
\begin{align}
\begin{split}
\label{E_Z2low}
E_{Su_\low,2}(t) & := \frac{1}{4\pi^2} \sum_\star \Re \int_{\R \times\R} |\xi|^{-1/2} \bar{\what{Su}}(\xi,t) \what{u_{\eps_2}}(\eta,t)
  r_{\eps_1 \eps_2}(\xi,\eta) \what{u_{\eps_1}}(\xi-\eta,t) \,d\xi d\eta ,
\\
r_{\eps_1 \eps_2}(\xi,\eta) & := \frac{i \eps_1 (1+\eps_2)\varphi_t(\xi)  (\xi-\eta)[\widetilde{q_0}(\xi,\eta) + q_0(\xi,\eta)] }{
  4|\xi-\eta|^{1/2} |\xi|^{1/2} (|\xi|^{3/2} -\epsilon_1|\xi-\eta|^{3/2} -\epsilon_2|\eta|^{3/2})}
\\
& + \frac{-i \varphi_t(\xi) \big(\widetilde{a}_{\eps_1\eps_2}(\xi,\eta) + (3/2)a_{\eps_1\eps_2}(\xi,\eta)
  + \widetilde{b}_{\eps_1\eps_2}(\xi,\eta) + (3/2)b_{\eps_1\eps_2}(\xi,\eta) \big) }{
  |\xi|^{1/2} (|\xi|^{3/2} -\epsilon_1|\xi-\eta|^{3/2} -\epsilon_2|\eta|^{3/2})} ,
\end{split}
\end{align}
where we are using the notation \eqref{lemcomm2}.
The first part of the symbol $q_{\eps_1\eps_2}$ in \eqref{E_Z1low} is needed to correct the quadratic interaction
$Q_0(V, Su)$ in \eqref{eqSu}, after the proper symmetrization.
This is similar to the symbols in \eqref{E^31}, \eqref{mlow}, and \eqref{E_Z1}.
The rest of the symbol $q_{\eps_1\eps_2}$ takes into account the nonlinear term $\mathcal{N}_{Su_\low,1}$ in \eqref{eqSuN}.
The symbol $r_{\eps_1\eps_2}$ is naturally associated to the nonlinear term $\mathcal{N}_{Su_\low,2}$ in \eqref{eqSuN}.

The estimate \eqref{proEEZlowconc} follows from the two lemmas below.

\begin{lemma}\label{lemEEZlow1}
Under the assumptions of Proposition \ref{proEEZlow}, for any $t\in[0,T]$ we have
\begin{equation}
\label{lemEEZlow1conc}
| E_{Su_\low}^{(3)}(t) | \lesssim \e_1^3 {(1+t)}^{8p_0} .
\end{equation}
\end{lemma}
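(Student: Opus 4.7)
The proof follows the template of Lemmas \ref{lemEE1}, \ref{lemEElow1}, and \ref{lemEEZ1}. I plan to proceed in two steps.

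\emph{Step 1 (symbol bounds).} The first task is to establish uniform $S^\infty$ estimates on the symbols $q_{\eps_1\eps_2}$ and $r_{\eps_1\eps_2}$ defined in \eqref{E_Z1low}--\eqref{E_Z2low}. I aim for
\begin{align*}
\|q_{\eps_1\eps_2}^{k,k_1,k_2}\|_{S^\infty}+\|r_{\eps_1\eps_2}^{k,k_1,k_2}\|_{S^\infty}
\lesssim 2^{-\max(k_1,k_2)/2}\,\mathbf{1}_{\mathcal{X}}(k,k_1,k_2)\,\mathbf{1}_{[-2\log_2(2+t)-10,\,0]}(k),
\end{align*}
together with the analogous bounds in which the symbols are localized only in the $(\xi-\eta,\eta)$ variables, to handle the High$\,\times\,$High$\,\to\,$Low interactions (compare with \eqref{boundmlow'}). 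These bounds are obtained exactly as \eqref{boundmlow}--\eqref{boundmlow'}: the quasilinear piece (the first line of \eqref{E_Z1low}, which only survives for $\eps_1=\eps_2=+$) is treated by a Taylor expansion of the cancellation in the numerator together with the expansion $|\xi|^{3/2}-|\xi-\eta|^{3/2}-|\eta|^{3/2}\approx -|\eta|^{3/2}$ on the support of $\chi(\xi-\eta,\eta)$; the contributions containing $q_0$, $\widetilde{q}_0$, $a_{\eps_1\eps_2}$, $b_{\eps_1\eps_2}$, $\widetilde{a}_{\eps_1\eps_2}$, $\widetilde{b}_{\eps_1\eps_2}$ are bounded using the symbol estimates \eqref{bounda/1}--\eqref{boundb/1} of Lemmas \ref{lembounda}--\ref{lemboundb}, together with the fact that the differentiation in \eqref{lemcomm2} preserves the $S^\infty$ class.

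\emph{Step 2 (trilinear estimate).} With the symbol bounds at hand, I would apply Lemma \ref{touse}(ii). For $E_{Su_\low,1}$ I place the two $Su$ factors in $L^2$ and $u_{\eps_1}$ in $L^\infty$, using the weighted bound from \eqref{bing11},
\begin{align*}
\|\varphi_{\geq -2\log_2(2+t)-5}(\xi)\,|\xi|^{-1/2}\widehat{Su}(t,\xi)\|_{L^2_\xi}\lesssim \e_1(1+t)^{4p_0},
\end{align*}
together with $\|Su(t)\|_{L^2}\lesssim \e_1(1+t)^{4p_0}$ and $\|P_j u(t)\|_{L^\infty}\lesssim \e_1 2^{j/10}2^{-N_2\max(j,0)}(1+t)^{-1/2}$. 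Writing $k_+=\max(k_1,k_2)$, $k_-=\min(k_1,k_2)$, a dyadic sum analogous to \eqref{cvb1} gives
\begin{align*}
|E_{Su_\low,1}(t)|&\lesssim \sum_{\substack{(k,k_1,k_2)\in\mathcal{X}\\ (1+t)^{-2}\lesssim 2^k\leq 2^{-7}}}\!\!2^{-k/2}2^{-k_+/2}\|P_k'Su\|_{L^2}\|P_{k_+}'Su\|_{L^2}\|P_{k_-}'u\|_{L^\infty}\\
&\lesssim \e_1^3(1+t)^{8p_0-1/2},
\end{align*}
which is even stronger than the claimed bound. The High$\,\times\,$High$\,\to\,$Low contribution is handled using the second symbol bound together with \eqref{estlow}. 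The analogous estimate for $E_{Su_\low,2}$ is strictly easier because it contains only one factor of $Su$ and two factors of $u$, so the decay and Sobolev bounds in \eqref{bing11} suffice.

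\emph{Main obstacle.} The real work is in Step 1, because of the multiple singular sources in the symbols: the resonant denominator $|\xi|^{3/2}-\eps_1|\xi-\eta|^{3/2}-\eps_2|\eta|^{3/2}$, the weight $|\xi|^{-1/2}$, and the explicit $|\xi-\eta|^{-1/2}$ appearing in several of the numerators. One has to separate the Low$\,\times\,$High$\,\to\,$High region ($|\xi-\eta|\ll |\eta|\approx|\xi|$) from the High$\,\times\,$High$\,\to\,$Low region ($|\xi-\eta|\approx|\eta|\gg|\xi|$), and in each region exploit the cancellations in \eqref{E_Z1low}--\eqref{E_Z2low} (together with the null structure already used for $m_{\eps_1\eps_2}^\low$) to absorb the half-power $|\xi|^{-1/2}$ against the denominator. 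Once this symbolic bookkeeping is done, the rest of the argument is a direct application of Lemma \ref{touse}(ii) and the a priori assumptions, exactly as in section \ref{seclemEElow1}.
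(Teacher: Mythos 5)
Your overall strategy (establish $S^\infty$ bounds on $q_{\eps_1\eps_2}$, $r_{\eps_1\eps_2}$, then apply Lemma \ref{touse}(ii) exactly as in the proof of Lemma \ref{lemEElow1}) is the same as the paper's. However, the key symbol bound you announce in Step 1 is false, and this is not a cosmetic issue: you claim $\|q_{\eps_1\eps_2}^{k,k_1,k_2}\|_{S^\infty}\lesssim 2^{-\max(k_1,k_2)/2}$, whereas the singularity of these symbols sits at the \emph{minimum} of the two input frequencies. Concretely, in the region $|\eta|\ll|\xi-\eta|\approx|\xi|$ (i.e.\ $k_2\leq k_1-6$, $k_1\approx k$), the second line of \eqref{E_Z1low} contributes $\eta\, q_0(\xi,\xi-\eta)/\big(|\eta|^{1/2}|\xi|^{1/2}(|\xi|^{3/2}-\eps_1|\xi-\eta|^{3/2}-\eps_2|\eta|^{3/2})\big)$, which is of size $2^{k_2}2^{k}\cdot 2^{-k_2/2}2^{-k/2}\cdot(2^{k_2}2^{k/2})^{-1}=2^{-k_2/2}$, and the term $a_{\eps_1\eps_2}(\xi,\xi-\eta)/(|\xi|^{1/2}(\cdots))$ is of the same size. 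So the correct uniform bound is $2^{-k_2/2}=2^{-\min(k_1,k_2)/2}$ (this is \eqref{boundqlow} in the paper), not $2^{-\max(k_1,k_2)/2}$; the same phenomenon occurs for $r_{\eps_1\eps_2}$, whose singularity is $2^{-k_1/2}$.

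This error propagates into your Step 2: your dyadic sum places $P_{k_-}'u$ in $L^\infty$ and pairs the symbol gain $2^{-k_+/2}$ with the high-frequency $Su$ factor, which silently assumes the $u_{\eps_1}$ factor (at frequency $\xi-\eta$) always carries the lower frequency. In the region above it is the $Su_{\eps_2}$ factor (at frequency $\eta$) that carries the low frequency, and the symbol is singular precisely there. The estimate still closes, but only if you put $P_{k_2}'Su$ in $L^2$ and absorb the $2^{-k_2/2}$ singularity with the $|\xi|^{-1/2}$-weighted low-frequency energy \eqref{estZlow1} (for $2^{k_2}\gtrsim (1+t)^{-2}$; for smaller frequencies one uses the $\mathcal{P}$-weighted bound), while $P_{k_1}'u$ goes in $L^\infty$. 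This is exactly the reason the low-frequency weighted energies were introduced, and it is the step your write-up skips. You should also record the variants of the symbol bounds localized in $(\xi-\eta,\eta)$ only after multiplying by the appropriate half-power of the singular frequency (as in \eqref{boundqlowkk_1}--\eqref{boundqlowk_1k_2} and \eqref{boundrlow'}), since those are what make the High$\times$High$\to$Low and the singular Low$\times$High regions estimable.
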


\begin{lemma}
\label{lemEEZlow2}
Under the assumptions of Proposition \ref{proEEZlow}, for any $t\in[0,T]$ we have
\begin{equation}\label{lemEEZlow2conc}
\frac{d}{dt} E_{Su_\low}(t) \leq C\e_1^3 {(1+t)}^{-1+8p_0} +40p_1E_{Su_{low}}^{(2)}(t){(1+t)}^{-1}.
\end{equation}
\end{lemma}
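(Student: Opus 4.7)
\medskip
\noindent\textbf{Proof plan for Lemma \ref{lemEEZlow2}.}
The strategy mirrors the proof of Lemma \ref{lemEElow2}, with $u_{\low}$ replaced by $Su_{\low}$ and with the weaker a priori bounds on $Su$ (growth $\langle t\rangle^{4p_0}$ instead of $\langle t\rangle^{p_0}$) absorbed into the allowed growth $\langle t\rangle^{8p_0}$ of the estimate. The first step is to establish $S^\infty$ bounds for the symbols $q_{\eps_1\eps_2}$ and $r_{\eps_1\eps_2}$ defined in \eqref{E_Z1low}--\eqref{E_Z2low}. Using the bounds \eqref{bounda/1} and \eqref{boundb1} from Lemmas \ref{lembounda}--\ref{lemboundb}, together with the cancellation between the two branches of the symbol when $\eps_1=\eps_2=+$ (as in the analysis leading to \eqref{boundmlow}), one obtains
\begin{equation*}
\|q_{\eps_1\eps_2}^{k,k_1,k_2}\|_{S^\infty}+\|r_{\eps_1\eps_2}^{k,k_1,k_2}\|_{S^\infty}\lesssim 2^{-\max(k_1,k_2)/2}\mathbf{1}_{\mathcal{X}}(k,k_1,k_2)\mathbf{1}_{[-2\log_2(2+t)-10,0]}(k),
\end{equation*}
together with analogous bounds where the $\max$ is replaced by $k_2$ on integrals with the cutoff $\varphi_k(\xi)\varphi_{k_2}(\eta)$, mirroring \eqref{boundmlow'}.

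\medskip
\noindent The second step is to compute $\tfrac{d}{dt}(E^{(2)}_{Su_{\low}}+E^{(3)}_{Su_{\low}})$ by using the evolution equation \eqref{eqSu} and the definitions \eqref{defE2Zlow}, \eqref{defE3Zlow}. As in \eqref{evolElow}, this produces a list of terms analogous to $J_1,J_2$ and $J_{3,\eps_1\eps_2},\ldots,J_{6,\eps_1\eps_2}$ (with $\bar{\what{u_{\low}}}$ replaced by $\bar{\what{Su}}$, and with both $q$ and $r$ symbols contributing), plus a quartic remainder $R$ that now collects contributions from $\mathcal{O}_{Su}$ via \eqref{estRS} and from the quadratic piece $V_2$ in \eqref{Vu+baru1}. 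The $R$ term is immediately bounded by $\e_1^4\langle t\rangle^{-1+8p_0}$ by Lemma \ref{touse}(ii) and \eqref{bing11}. The term $J_1$, coming from differentiating $\mathcal{P}((1+t)^2|\xi|)$, is controlled exactly as in the proof of Lemma \ref{lemEElow2}: the property $x\mathcal{P}'(x)\leq 10p_1\mathcal{P}(x)$ yields $J_1\leq 40p_1 E^{(2)}_{Su_{\low}}(t)\langle t\rangle^{-1}$, which accounts for the second term in \eqref{lemEEZlow2conc}. The term $J_2$, supported where $|\xi|\lesssim\langle t\rangle^{-2}$, is handled using the low-frequency bound for $|\partial_x|^{-1/2+p_1}\what{Su}$ implied by \eqref{bing11}--\eqref{bing21}, combined with the analog of Lemma \ref{lemboundN_u} for $\partial_t Su-i\Lambda Su$ derived from \eqref{eqSu} and the bounds \eqref{estNZ}--\eqref{estLZ}; this gives $|J_2|\lesssim \e_1^3 \langle t\rangle^{-1+8p_0}$.

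\medskip
\noindent The term $J_{3,\eps_1\eps_2}$ gains a factor of $\langle t\rangle^{-1}$ from differentiating the time-dependent cutoff $\varphi_{\geq 4}((1+t)^2\xi)$ in the symbols, so it is better than needed. The key terms are $J_{4,\eps_1\eps_2}$, $J_{5,\eps_1\eps_2}$, and $J_{6,\eps_1\eps_2}$, arising from differentiating the three factors in the cubic energy and substituting the equations for $\partial_t Su-i\Lambda Su$ and $\partial_t u-i\Lambda u$. By construction, the symbols $q_{\eps_1\eps_2}$ and $r_{\eps_1\eps_2}$ are obtained by dividing the quadratic multipliers in \eqref{eqSuN} and in $Q_0(V,Su)$ by the resonant phase $|\xi|^{3/2}-\eps_1|\xi-\eta|^{3/2}-\eps_2|\eta|^{3/2}$; hence the leading cubic contributions cancel, leaving only quartic expressions involving at least one of $\mathcal{N}_u$, $\mathcal{N}_{Su,1}$, $\mathcal{N}_{Su,2}$, $\mathcal{O}_u$, or $\mathcal{O}_{Su}$. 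Applying Lemma \ref{touse}(ii) with the symbol bounds of Step 1, and distributing the $L^2$ and $L^\infty$ norms so that $Su$ is placed in $L^2$ (using the $|\xi|^{-1/2}\varphi_t(\xi)$ factor together with the low frequency energy \eqref{bing21}) and $u$ is placed in $L^\infty$ whenever possible, each such term is bounded by $\e_1^4\langle t\rangle^{-1+8p_0}$.

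\medskip
\noindent The main obstacle is the appearance of two $Su$ factors in those pieces of $J_{4,++}$ and $J_{6,++}$ that come from the $\eps_2=+$ half of $E_{Su_{\low},1}$: here one loses the full $\langle t\rangle^{8p_0}$ budget from $\|Su\|_{L^2}^2$, so the $u$ factor must be placed in $L^\infty$ and both the singular weight $|\xi|^{-1/2}$ on the $Su$ frequency and the low frequency cutoff on the other $Su$ must be handled simultaneously. This is precisely where the symbol bound $\|q_{\eps_1\eps_2}^{k,k_1,k_2}\|_{S^\infty}\lesssim 2^{-\max(k_1,k_2)/2}$ combined with the low-frequency piece of \eqref{bing21}, analogous to \eqref{estlow} but for $Su$, closes the estimate and yields the bound $\e_1^3\langle t\rangle^{-1+8p_0}$ claimed in \eqref{lemEEZlow2conc}.
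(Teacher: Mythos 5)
Your plan follows the paper's proof of Lemma \ref{lemEEZlow2} quite closely (same energy decomposition, same role for the cutoff $\mathcal{P}$, same cancellation of the cubic terms by construction of $q_{\eps_1\eps_2}$ and $r_{\eps_1\eps_2}$), but the symbol bound you state in Step 1 is false, and the rest of your argument is built on it. You claim $\|q_{\eps_1\eps_2}^{k,k_1,k_2}\|_{S^\infty}\lesssim 2^{-\max(k_1,k_2)/2}$. Consider the second component of $q_{\eps_1\eps_2}$ in \eqref{E_Z1low}, namely $i(1+\eps_1)\eps_2\varphi_t(\xi)\,\eta\, q_0(\xi,\xi-\eta)/\bigl(4|\eta|^{1/2}|\xi|^{1/2}(|\xi|^{3/2}-\eps_1|\xi-\eta|^{3/2}-\eps_2|\eta|^{3/2})\bigr)$, which comes from the term $(i/2)Q_0(\partial_x|\partial_x|^{-1/2}(Su-\overline{Su}),u)$ in $\mathcal{N}_{Su,1}$. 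Since $q_0(\xi,\xi-\eta)=-i\xi\chi(\eta,\xi-\eta)$, this piece is supported where $2^3|\eta|\leq|\xi-\eta|$, i.e.\ $k_2\leq k_1-3$ and $k\approx k_1$. Its numerator has size $2^{k_2}2^{k}$, the factor $|\eta|^{1/2}|\xi|^{1/2}$ has size $2^{k_2/2}2^{k/2}$, and the phase has size $2^{k_2}2^{k/2}$ (cf.\ \eqref{1/phi0}), so the component has size $2^{-k_2/2}=2^{-\min(k_1,k_2)/2}$, which is much larger than $2^{-\max(k_1,k_2)/2}$ when $k_2\ll k_1$. The same happens for the $a_{\eps_1\eps_2}(\xi,\xi-\eta)$ contribution in the third component. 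The correct uniform bound is the one in \eqref{boundqlow}, i.e.\ $2^{-k_2/2}$: the $(\text{low frequency})^{-1/2}$ singularity genuinely sits on the frequency $\eta$ of the second $Su$ factor.

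This is not a cosmetic point, because the whole case analysis in the terms $L_4,L_5,L_6$ (your $J_4,J_5,J_6$) depends on where that singularity sits. With your claimed bound the term you single out as ``the main obstacle'' would not actually be singular in $\eta$ at all, so your description of how the estimate closes does not match the real difficulty. What the paper does instead is to supplement \eqref{boundqlow} with the refined bounds \eqref{boundqlowkk_1}--\eqref{boundqlowk_1k_2}, of the form $\||\eta|^{1/2}q_{\eps_1\eps_2}(\xi,\eta)\varphi_k(\xi)\varphi_{k_1}(\xi-\eta)\|_{S^\infty}\lesssim 1$, which transfer the $|\eta|^{-1/2}$ weight onto the factor at frequency $\eta$ (either $\what{Su}(\eta)$, controlled by the low-frequency weighted energy \eqref{estZlow1}, or $(\partial_t-i\Lambda)Su$, controlled by \eqref{estLSu}); one then splits each of $L_4,L_5,L_6$ into regimes according to whether $2^{k_1}$ or $2^{k_2}$ is below $(1+t)^{-2}$, using \eqref{Bern} in the very-low-frequency regime. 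Your argument needs this repair: state and prove the correct symbol bounds $2^{-k_2/2}$ for $q$ and $2^{-k_1/2}$ for $r$, together with the $|\eta|^{1/2}$-weighted variants, and redo the pairing of norms accordingly. The remaining steps of your plan ($L_1$ via \eqref{p0}, $L_2$ on $|\xi|\lesssim(1+t)^{-2}$, the gain of $(1+t)^{-1}$ in $L_3$, and the quartic remainder via \eqref{estRS}) are consistent with the paper.
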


\vskip10pt
\subsection{Analysis of the symbols and proof of Lemma \ref{lemEEZlow1}}\label{secwlowsym}
To prove Lemma \ref{lemEEZlow1} and \ref{lemEEZlow2}
we need appropriate bounds for the symbols in \eqref{E_Z1low} and \eqref{E_Z2low}.
These can be obtained as in the previous three sections.
In particular, using the bounds \eqref{bounda/1} and \eqref{boundb/1}, 
it is not hard to verify that
\begin{align}
\label{boundqlow}
\begin{split}
{\| q_{\eps_1,\eps_2}^{k,k_1,k_2} \|}_{S^\infty} & \lesssim 2^{-k_2/2}
  \mathbf{1}_{\mathcal{X}}(k,k_1,k_2)  \mathbf{1}_{[-2\log_2(2+t)-10,0]}(k) .
\end{split}
\end{align}
This is a somewhat rougher bound than what actually holds true, but it will be sufficient for our estimates.
Furthermore, using also
\begin{align*}
{\Big\| \frac{|\xi|^{1/2} \eta  \chi(\eta,\xi-\eta) }{
  |\xi|^{3/2} - |\xi-\eta|^{3/2}-\eps_2|\eta|^{3/2}} \varphi_{k}(\xi) \varphi_{k_1}(\xi-\eta) \Big\|}_{S^\infty} \lesssim 1 ,
\end{align*}
one can verify that
\begin{align}
\label{boundqlowkk_1}
& {\| |\eta|^{1/2} q_{\eps_1,\eps_2}(\xi,\eta) \varphi_{k}(\xi)\varphi_{k_1}(\xi-\eta) \|}_{S^\infty} \lesssim \mathbf{1}_{[-2\log_2(2+t)-10,0]}(k) .
\end{align}
We also have
\begin{align}
\label{boundqlowk_1k_2}
& {\| |\eta|^{1/2} q_{\eps_1\eps_2}(\xi,\eta) \varphi_{k_1}(\xi-\eta)\varphi_{k_2}(\eta) \|}_{S^\infty} \lesssim 1
\end{align}



Similarly, using again \eqref{bounda/1} and \eqref{boundb/1} one can see that
\begin{align}
\label{boundrlow}
\begin{split}
{\| r_{\eps_1,\eps_2}^{k,k_1,k_2} \|}_{S^\infty} & \lesssim 2^{-k_1/2} \mathbf{1}_{\mathcal{X}}(k,k_1,k_2)
  \mathbf{1}_{[-2\log_2(2+t)-10,0]}(k) .
\end{split}
\end{align}
and
\begin{align}
\label{boundrlow'}
\begin{split}
{\| |\xi-\eta|^{1/2} r_{\eps_1,\eps_2}\varphi_{k_1}(\xi-\eta)\varphi_{k_2}(\eta) \|}_{S^\infty} & \lesssim 1.
\end{split}
\end{align}

The proof of \eqref{lemEEZlow1conc} can then be done in a similar fashion to what was done before in section \ref{secEElow}
in the proof of Lemma \ref{lemEElow1}, by using the bounds \eqref{boundqlow}-\eqref{boundrlow'} above.

\subsection{Proof of Lemma \ref{lemEEZlow2}}
As in section \ref{seclemEElow2}, we use the definition of the quadratic energy \eqref{defE2Zlow},
the equation \eqref{eqSu}-\eqref{eqSuN},
the formulas for the cubic energies \eqref{defE3Zlow}-\eqref{E_Z2low},
and a symmetrization argument like the one performed for the term \eqref{EE22} and leading to \eqref{A_2}, to calculate
\begin{align*}
\frac{d}{dt} \big( E_{Su_\low}^{(2)} + E_{Su_\low}^{(3)} \big) =
  \frac{1}{2\pi}  L_1 &+ \frac{1}{2\pi} \Re L_2
  + \frac{1}{4\pi^2} \Re \sum_\star \big( L_{3,\eps_1\eps_2} + L_{4,\eps_1\eps_2} + L_{5,\eps_1\eps_2} + L_{6,\eps_1\eps_2} \big)
\\
& + \frac{1}{4\pi^2} \Re \sum_\star \big( L_{7,\eps_1\eps_2} + L_{8,\eps_1\eps_2} + L_{9,\eps_1\eps_2} + L_{10,\eps_1\eps_2} \big)
+ R ,
\end{align*}
where
\begin{align*}
L_1 & = \int_\R {\big| \what{S u}(\xi) \big|}^2 \, (1+t) \mathcal{P}^\prime ((1+t)^2|\xi|) \varphi^2_{\leq -10}(\xi) \, d\xi ,
\\
L_2 & = \int_\R \bar{\what{Su}}(\xi) \mathcal{F}\big(\partial_t S u - i\Lambda S u \big)(\xi)
  \, \varphi_{\leq 3}((1+t)^2\xi)|\xi|^{-1}\mathcal{P}((1+t)^2|\xi|) \varphi^2_{\leq -10}(\xi) \, d\xi ,
\end{align*}
\begin{align}
\label{evolEZlow}
\begin{split}
L_{3, \eps_1\eps_2} & = \int_{\R \times \R}  {|\xi|}^{-1/2} \bar{\what{S u}}(\xi)
  \, \partial_t q_{\eps_1\eps_2}(\xi,\eta)  \what{u_{\eps_1}}(\xi-\eta) \what{S u_{\eps_2}}(\eta) \,  d\xi d\eta ,
\\
L_{4, \eps_1\eps_2} & = \int_{\R \times \R}  {|\xi|}^{-1/2} \bar{\mathcal{F}\big(\partial_t S u - i\Lambda S u\big)}(\xi)
  \, q_{\eps_1\eps_2}(\xi,\eta)  \what{u_{\eps_1}}(\xi-\eta) \what{S u_{\eps_2}}(\eta) \,  d\xi d\eta ,
\\
L_{5, \eps_1\eps_2} & = \int_{\R \times \R}  {|\xi|}^{-1/2} \bar{\what{S u}}(\xi)
  \, q_{\eps_1\eps_2}(\xi,\eta) \mathcal{F} \big(\partial_t u_{\eps_1} - i\eps_1\Lambda u_{\eps_1} \big)(\xi-\eta) \what{Su_{\eps_2}}(\eta)
  \,d\xi d\eta ,
\\
L_{6, \eps_1\eps_2} & = \int_{\R \times \R}  {|\xi|}^{-1/2} \bar{\what{Su}}(\xi)
  \, q_{\eps_1\eps_2}(\xi,\eta) \what{u_{\eps_1}}(\xi-\eta)
  \mathcal{F}\big(\partial_t S u_{\eps_2} - i\eps_2\Lambda S u_{\eps_2} \big)(\eta) \,d\xi d\eta ,
\end{split}
\end{align}
\begin{align}
\label{evolEZlow2}
\begin{split}
L_{7, \eps_1\eps_2} & = \int_{\R \times \R}  {|\xi|}^{-1/2} \bar{\what{S u}}(\xi)
  \, \partial_t r_{\eps_1\eps_2}(\xi,\eta)  \what{u_{\eps_1}}(\xi-\eta) \what{u_{\eps_2}}(\eta) \,  d\xi d\eta ,
\\
L_{8, \eps_1\eps_2} & = \int_{\R \times \R}  {|\xi|}^{-1/2} \bar{\mathcal{F}\big(\partial_t S u - i\Lambda S u\big)}(\xi)
  \, r_{\eps_1\eps_2}(\xi,\eta)  \what{u_{\eps_1}}(\xi-\eta) \what{u_{\eps_2}}(\eta) \,  d\xi d\eta ,
\\
L_{9, \eps_1\eps_2} & = \int_{\R \times \R}  {|\xi|}^{-1/2} \bar{\what{S u}}(\xi)
  \, r_{\eps_1\eps_2}(\xi,\eta) \mathcal{F} \big(\partial_t u_{\eps_1} - i\eps_1\Lambda u_{\eps_1} \big)(\xi-\eta) \what{u_{\eps_2}}(\eta)
  \,d\xi d\eta ,
\\
L_{10, \eps_1\eps_2} & = \int_{\R \times \R}  {|\xi|}^{-1/2} \bar{\what{Su}}(\xi)
  \, r_{\eps_1\eps_2}(\xi,\eta) \what{u_{\eps_1}}(\xi-\eta)
  \mathcal{F}\big(\partial_t u_{\eps_2} - i\eps_2\Lambda u_{\eps_2} \big)(\eta) \,d\xi d\eta.
\end{split}
\end{align}
The remainder $R$ satisfies $| R(t) | \lesssim \e_1^4 {(1+t)}^{-1+8p_0}$, in view of \eqref{estRS}.

Recall that under the assumption \eqref{bing11} we have
\begin{align}
\label{estlow1}
\begin{split}
& {\| \varphi_{\geq 4}((1+t)^2\xi) |\xi|^{-1/2} \what{u}(t,\xi) \|}_{L^2} \lesssim \e_1 {(1+t)}^{p_0} ,
\\
& {\| \varphi_{\leq 4}((1+t)^2\xi) |\xi|^{-1/2+p_1} \what{u} \|}_{L^2} \lesssim \e_1  {(1+t)}^{p_0-2p_1} ,
\end{split}
\end{align}
and
\begin{align}
\label{estZlow1}
\begin{split}
& {\| \varphi_{\geq 4}((1+t)^2\xi) |\xi|^{-1/2} \what{Su}(t,\xi) \|}_{L^2}\lesssim \e_1 {(1+t)}^{4p_0} ,
\\
& {\| \varphi_{\leq 4}((1+t)^2\xi) |\xi|^{-1/2+p_1} \what{Su}(t,\xi) \|}_{L^2} \lesssim \e_1  {(1+t)}^{4p_0-2p_1} .
\end{split}
\end{align}

\subsubsection{Estimate of $L_1$}
Using \eqref{p0} 
the definition of $E^{(2)}_{Su_\low}$ in \eqref{defE2Zlow}, 
we see that
\begin{align*}
\frac{1}{2\pi}| L_1 | & \leq \frac{1}{4\pi}\int_\R {\big| \what{S u}(t,\xi) \big|}^2 \frac{20p_1}{(1+t)|\xi|} \, \mathcal{P}((1+t)^2|\xi|) \varphi^2_{\leq -10}(\xi)\, d\xi
  \leq 20p_1E^{(2)}_{Su_\low}(t)(1+t)^{-1},
\end{align*}
as desired.

\subsubsection{Estimate of $L_2$}
Notice that the integrand in $L_2$ is supported on a region where $|\xi| \lesssim {(1+t)}^{-2}$,
so that we can use the bound in \eqref{estLSu}, and \eqref{estZlow1}, to obtain
\begin{align*}
| L_2 |  &\lesssim \sum_{k\in\mathbb{Z}, \, 2^{k-10} \leq (1+t)^{-2}} 2^{ k(-1+2p_1)} {\| P_k^\prime S u(t) \|}_{L^2}
  {\| P_k^\prime \big( \partial_t - i \Lambda) S u (t) \|}_{L^2} {(1+t)}^{4p_1}
  \\
&\lesssim \sum_{k\in\mathbb{Z}, \, 2^{k-10} \leq (1+t)^{-2}} 2^{ k(-1+2p_1)} {\| P_k^\prime S u(t) \|}_{L^2}
   \e_1^2 2^{k/2} {(1+t)}^{-1+4p_0+4p_1}
\\
&\lesssim  \e_1 {(1+t)}^{-1+8p_0+2p_1} \e_1^2 \sum_{k\in\mathbb{Z}, \, 2^{k-10} \leq (1+t)^{-2}} 2^{p_1k}\\
&\lesssim \e_1^3 {(1+t)}^{-1+8p_0} .
\end{align*}

\subsubsection{Estimate of $L_{3,\eps_1\eps_2}$}
Looking at the definition of $q_{\eps_1\eps_2}$ in \eqref{E_Z1low}, and using \eqref{boundqlow},
one can see that
\begin{align*}
{\| (\partial_t q_{\eps_1\eps_2})^{k,k_1,k_2} \|}_{S^\infty}
  \lesssim {(1+t)}^{-1} 2^{-k_2/2} \mathbf{1}_{\mathcal{X}}(k,k_1,k_2) \mathbf{1}_{[-2\log_2 (2+t)-10, -2\log_2 (2+t) + 10]}(k) .
\end{align*}
We can then use Lemma \ref{touse} to estimate, for every $\eps_1,\eps_2 \in\{+,-\}$,
\begin{align*}
& | L_{3,\eps_1\eps_2} | \lesssim L_{3,1} + L_{3,2} ,
\\
& L_{3,1} :=  \langle t\rangle^{-1} \sum_{ (k,k_1,k_2) \in \mathcal{X}, \, k_1 \leq k_2+10, \, 2^k \approx \langle t\rangle^{-2}}
  2^{-k_2/2} 2^{-k/2} {\| P_k^\prime S u \|}_{L^2} {\| P_{k_1}^\prime u \|}_{L^\infty} {\| P_{k_2}^\prime S u \|}_{L^2} ,
\\
& L_{3,2} :=  \langle t\rangle^{-1} \sum_{ (k,k_1,k_2) \in \mathcal{X}, \, k_1 \geq k_2+10, \, 2^k \approx \langle t\rangle^{-2}}
  2^{-k_2/2} 2^{-k/2} {\| P_k^\prime S u \|}_{L^2} {\| P_{k_1}^\prime u \|}_{L^2} {\| P_{k_2}^\prime S u \|}_{L^\infty} .
\end{align*}
Using \eqref{estZlow1} and the $L^\infty$ bound in \eqref{bing11} we see that
\begin{align*}
L_{3,1} \lesssim \e_1^3 \langle t\rangle^{-1+4p_0}
  \sum_{k_1,k_2 \in \mathbb{Z}, \, 2^{k_2+20} \geq \langle t\rangle^{-2}}
  2^{k_1/10} 2^{-N_2 \max(k_1,0)} \langle t\rangle^{-1/2} 2^{-k_2/2} {\| P_{k_2}^\prime S u \|}_{L^2}\lesssim \e_1^3 \langle t\rangle^{-1} .
\end{align*}
For the second term we can use \eqref{estZlow1} and the inequality
\begin{align}
\label{Bern}
& {\| P_l Su \|}_{L^\infty} \lesssim 2^{(1-p_1) l} \e_1 {(1+t)}^{4p_0 - 2p_1} , \qquad \mbox{if } 2^l \lesssim {(1+t)}^{-2} ,
\end{align}
to obtain
\begin{align*}
L_{3,2}  \lesssim \e_1^3 \langle t\rangle^{-1+10p_0} \sum_{k_1,k_2 \in \mathbb{Z}, \, 2^{k_2} \lesssim \langle t\rangle^{-2}}
  2^{-k_2/2}  \min \big(2^{(1/2-p_1)k_1}, 2^{-N_0 \max(k_1,0)}) 2^{(1-p_1) k_2} 
  \lesssim \e_1^3 \langle t\rangle^{-4/3} .
\end{align*}
The desired bound $|L_{3,\eps_1\eps_2}|\lesssim \e_1^3(1+t)^{-1+8p_0}$ follows.

\subsubsection{Estimate of $L_{4,\eps_1\eps_2}$}
To deal with the term $L_{4,\eps_1\eps_2}$ we first use Lemma \ref{touse}(ii) to estimate
\begin{align*}
& | L_{4,\eps_1\eps_2} | \lesssim  L_{4,1} + L_{4,2} + L_{4,3} ,
\\
\\
\begin{split}
& L_{4,1} := \sum_{(k,k_1,k_2) \in \mathcal{X}, \, 2^{k_2+10} \leq (1+t)^{-2}}  {\| q_{\eps_1\eps_2}^{k,k_1,k_2} \|}_{S^\infty}
  2^{-k/2} {\| P_k^\prime (\partial_t - i\Lambda) S u \|}_{L^2} {\| P_{k_1}^\prime u \|}_{L^2} {\| P_{k_2}^\prime Su \|}_{L^\infty} ,
\\
\\
& L_{4,2} := {\| \varphi_{\geq -20}((1+t)^2\eta) |\eta|^{-1/2} \what{Su}(\eta) \|}_{L^2}
  \sum_{k,k_1 \in \mathbb{Z}, \, |k-k_1| \leq 10}
  {\| |\eta|^{1/2} q_{\eps_1\eps_2}(\xi,\eta) \varphi_{k}(\xi) \varphi_{k_1}(\xi-\eta) \|}_{S^\infty}
  \\ & \hskip200pt \times 2^{-k/2} {\| P_k^\prime (\partial_t - i\Lambda)Su \|}_{L^2} {\| P_{k_1}^\prime u \|}_{L^\infty} ,
\end{split}
\\
\\
& L_{4,3} :=
  \sum_{(k,k_1,k_2) \in \mathcal{X}, \, k_2 + 10 \geq k_1}  {\| q_{\eps_1\eps_2}^{k,k_1,k_2} \|}_{S^\infty}
  2^{-k/2} {\| P_k^\prime (\partial_t - i\Lambda) S u \|}_{L^2}
  {\| P_{k_1}^\prime u \|}_{L^\infty} {\| P_{k_2}^\prime Su \|}_{L^2} .
\end{align*}

Using the bounds \eqref{boundqlow}, \eqref{estLSu}, \eqref{bing11}, and \eqref{Bern}, we see that
\begin{align*}
L_{4,1} \lesssim \sum_{(k,k_1,k_2)\in \mathcal{X}, \, 2^{k_2} \leq \langle t\rangle^{-2}, \, |k-k_1| \leq 10}
  2^{-k_2/2} \e_1^2 \langle t\rangle^{-1/2+4p_0}  \e_1 2^{(1/2-p_1)k_1}2^{-N_0\max(k_1,0)} \langle t\rangle^{p_0}
  \\ \times \e_1 2^{(1-p_1)k_2} \langle t\rangle^{4p_0}
\lesssim \e_1^4 \langle t\rangle^{-1}.
\end{align*}
Using \eqref{estZlow1}, the symbol bound \eqref{boundqlowkk_1}, and \eqref{estLSu}, we obtain
\begin{align*}
L_{4,2} \lesssim \e_1 \langle t\rangle^{4p_0} \sum_{k,k_1 \in \mathbb{Z}, \, |k-k_1| \leq 10}
  \e_1^2 \langle t\rangle^{-1/2+4p_0}  \e_1 2^{k_1/10} 2^{-\max(k_1,0)} \langle t\rangle^{-1/2}\lesssim \e_1^4 \langle t\rangle^{-1+8p_0}.
\end{align*}
To estimate the last term we use \eqref{estZlow1}, \eqref{boundqlow}, and \eqref{estLSu}, and see that
\begin{align*}
L_{4,3} \lesssim \sum_{(k,k_1,k_2) \in \mathcal{X}, \, \langle t\rangle^{-2} \leq 2^k \leq 2^{10} , \, k_2 + 10 \geq k_1}
  \e_1^2 \langle t\rangle^{-1/2+4p_0} \big(2^{k/2} + \langle t\rangle^{-1/2})
\\ \times \e_1 2^{k_1/10} 2^{-\max(k_1,0)} \langle t\rangle^{-1/2} \e_1 \langle t\rangle^{4p_0}\lesssim \e_1^4 \langle t\rangle^{-1+8p_0}.
\end{align*}
The desired bound $|L_{4,\eps_1\eps_2}|\lesssim \e_1^3(1+t)^{-1+8p_0}$ follows.

\subsubsection{Estimate of $L_{5,\eps_1\eps_2}$}
We first use Lemma \ref{touse}(ii) to bound
\begin{align*}
& | L_{5,\eps_1\eps_2} | \lesssim  L_{5,1} + L_{5,2} + L_{5,3} ,
\\
\begin{split}
& L_{5,1} := \sum_{(k,k_1,k_2) \in \mathcal{X}, \, 2^{k_2+10} \leq (1+t)^{-2}}  {\| q_{\eps_1\eps_2}^{k,k_1,k_2} \|}_{S^\infty}
  2^{-k/2} {\| P_k^\prime Su \|}_{L^2} {\| P_{k_1}^\prime (\partial_t - i\Lambda) u \|}_{L^2} {\| P_{k_2}^\prime Su \|}_{L^\infty} ,
\\
& L_{5,2} := {\| \varphi_{\geq -20}((1+t)^2\eta) |\eta|^{-1/2} \what{Su}(\eta)\|}_{L^2}
  \sum_{k,k_1 \in \mathbb{Z}, \, |k-k_1| \leq 10}
  {\| |\eta|^{1/2} q_{\eps_1\eps_2}(\xi,\eta) \varphi_{k}(\xi) \varphi_{k_1}(\xi-\eta) \|}_{S^\infty}
  \\ & \hskip200pt \times 2^{-k/2} {\| P_k^\prime Su \|}_{L^2} {\| P_{k_1}^\prime (\partial_t - i\Lambda) u \|}_{L^\infty} ,
\end{split}
\\
& L_{5,3} := {\| \varphi_{\geq 0}((1+t)^2\xi) |\xi|^{-1/2} \what{Su}(\xi)\|}_{L^2}
  \sum_{k_1,k_2 \in \mathbb{Z}, \, k_2 + 10 \geq k_1}
  {\| |\eta|^{1/2} q_{\eps_1\eps_2}(\xi,\eta) \varphi_{k_1}(\xi-\eta) \varphi_{k_2}(\eta) \|}_{S^\infty}
  \\ & \hskip200pt \times {\| P_{k_1}^\prime (\partial_t - i\Lambda) u \|}_{L^\infty} 2^{-k_2/2} {\| P_{k_2}^\prime Su \|}_{L^2} .
\end{align*}

Using the symbol bound \eqref{boundqlow}, \eqref{estLSu},
\eqref{bing11}, and \eqref{Bern}, we get:
\begin{align*}
L_{5,1} \lesssim \sum_{k,k_1,k_2\in \mathbb{Z}, \, 2^{k_2+10} \leq \langle t\rangle^{-2}, \, |k-k_1| \leq 10}
  2^{-k_2/2} \e_1 \langle t\rangle^{4p_0}  \e_1^2 2^{k_1/2} 2^{-\max(k_1,0)} \langle t\rangle^{-1/2+p_0}
  \\ \times \e_1 2^{(1-p_1)k_2} \langle t\rangle^{4p_0-2p_1}
\lesssim \e_1^4 \langle t\rangle^{-1}.
\end{align*}
Using \eqref{estZlow1}, the symbol bound \eqref{boundqlowkk_1}, and \eqref{boundLu}, we obtain
\begin{align*}
L_{5,2} \lesssim \e_1 \langle t\rangle^{4p_0} \sum_{k,k_1 \in \mathbb{Z}, \, |k-k_1| \leq 10}
  \e_1 \langle t\rangle^{4p_0}  \e_1^2 2^{k_1/2} 2^{-\max(k_1,0)} \langle t\rangle^{-1}\lesssim \e_1^4 \langle t\rangle^{-1+8p_0}.
\end{align*}
Similarly, using \eqref{estZlow1}, \eqref{boundqlowk_1k_2}, and \eqref{boundLu}, we see that
\begin{align*}
L_{5,3} \lesssim \e_1 \langle t\rangle^{4p_0} \sum_{k_1,k_2 \in \mathbb{Z}, \, k_2 + 10 \geq k_1}
  \e_1^2 2^{k_1/2} 2^{-\max(k_1,0)} \langle t\rangle^{-1} \e_1 2^{-p_1 k_2} \langle t\rangle^{4p_0}\lesssim \e_1^4 \langle t\rangle^{-1+8p_0}.
\end{align*}
The desired bound $|L_{5,\eps_1\eps_2}|\lesssim \e_1^3(1+t)^{-1+8p_0}$ follows.

\subsubsection{Estimate of $L_{6,\eps_1\eps_2}$}Using Lemma \ref{touse}(ii) we can bound
\begin{align*}
& | L_{6,\eps_1\eps_2} | \lesssim  L_{6,1} + L_{6,2},
\\
& L_{6,1} := \sum_{k,k_1,k_2 \in \mathbb{Z}, \, 2^{k_2} \leq \langle t\rangle^{-2}}  {\| q_{\eps_1\eps_2}^{k,k_1,k_2} \|}_{S^\infty}
  2^{-k/2} {\| P_k^\prime Su \|}_{L^2} {\| P_{k_1}^\prime u \|}_{L^2} {\| P_{k_2}^\prime (\partial_t - i\Lambda) Su \|}_{L^\infty} ,
\\
& L_{6,2} := {\| \varphi_{\geq 0}((1+t)^2\xi) |\xi|^{-1/2} \what{Su}(\xi)\|}_{L^2}
  \sum_{k_1,k_2 \in \mathbb{Z}, \, 2^{k_2} \geq \langle t\rangle^{-2}}
  {\| |\eta|^{1/2} q_{\eps_1\eps_2}(\xi,\eta) \varphi_{k_1}(\xi-\eta) \varphi_{k_2}(\eta) \|}_{S^\infty}
  \\ & \hskip200pt \times {\| P_{k_1}^\prime u \|}_{L^\infty} 2^{-k_2/2} {\| P_{k_2}^\prime (\partial_t - i\Lambda)Su \|}_{L^2} .
\end{align*}

Using \eqref{boundqlow}, \eqref{estlow1}--\eqref{estZlow1}, and \eqref{estLSu}, we have
\begin{align*}
L_{6,1} \lesssim \sum_{k,k_1,k_2\in \mathbb{Z}, \, 2^{k_2} \leq \langle t\rangle^{-2} , \, k_2-10 \leq k \leq 10}
  2^{-k_2/2} \e_1 \langle t\rangle^{4p_0} \e_1 2^{(1/2-p_1)k_1} 2^{-\max(k_1,0)} \langle t\rangle^{p_0}
  \\ \times 2^{k_2/2} \e_1^2 2^{k_2/2} \langle t\rangle^{-1/2+4p_0}\lesssim \e_1^4 \langle t\rangle^{-1}.
\end{align*}
Using \eqref{estZlow1}, \eqref{boundqlowk_1k_2}, \eqref{bing11}, and \eqref{estLSu}, we have
\begin{align*}
L_{6,2} &\lesssim \e_1 \langle t\rangle^{4p_0} \sum_{k_1,k_2 \in \mathbb{Z}, \, 2^{k_2} \geq \langle t\rangle^{-2}}
  \e_1^2 2^{k_1/10} 2^{-\max(k_1,0)} \langle t\rangle^{-1/2}
  \\
&\times \e_1^2 \langle t\rangle^{-1/2+4p_0} ( 2^{k_2/2} + \langle t\rangle^{-1/2} ) 2^{-\max(k_2,0)}\lesssim \e_1^4 \langle t\rangle^{-1+8p_0}.
\end{align*}
The desired bound $|L_{6,\eps_1\eps_2}|\lesssim \e_1^3(1+t)^{-1+8p_0}$ follows.

\subsubsection{Estimate of $L_{j,\eps_1\eps_2}$, $j=7,\dots,10$}
Observe that the terms $L_{j, \eps_1\eps_2}$, for $j=7, \dots, 10$ in \eqref{evolEZlow2}
are easier to estimate than the terms $L_{j, \eps_1\eps_2}$, for $j=3, \dots, 6$ in \eqref{evolEZlow}.
This is because the bounds for the symbols are essentially the same,
see \eqref{boundqlow}-\eqref{boundrlow'}, but we have stronger information on $u$ than on $Su$.
Therefore, the integrals $L_{j, \eps_1\eps_2}$, for $j=7, \dots, 10$ can be treated similarly to the ones
that we have just estimated.
We can then conclude the desired bound of $\e_1^3 {(1+t)}^{-1+8p_0}$ for the evolution of $E_{Su_\low}$.
This gives Lemma \ref{lemEEZlow2}, and hence Proposition \ref{proEEZlow}.


\section{Decay estimates}\label{secdecay}

To prove decay we return to the Eulerian variables $(h,\phi)$ and prove the following:

\begin{proposition}\label{prodecay}
With $U=|\partial_x|h-i|\partial_x|^{1/2}\phi$ as in Proposition \ref{MainProp}, we have
\begin{equation}\label{prodecayconc}
 \sup_{t\in[0,T]}(1+t)^{1/2} \sum_{k \in \mathbb{Z}} \big(2^{-k/10} + 2^{N_2k} \big) {\|P_k U(t)\|}_{L^\infty}  \lesssim \e_0.
\end{equation}
\end{proposition}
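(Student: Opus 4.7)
The plan is to deduce the pointwise decay \eqref{prodecayconc} from the linear dispersive estimate of Lemma \ref{dispersive} applied to a suitable renormalized profile associated to the scalar unknown of Proposition \ref{proequ}. I would first work at the level of $u$ defined in \eqref{symm5} rather than $U$ directly: since $u - U$ is a sum of paraproducts of $h$ and $\phi$ which are quadratic (with one high-frequency factor), the desired Littlewood--Paley bound on $U$ follows from the same bound on $u$ plus Sobolev-type estimates that use only the energy control of Propositions \ref{proEE1}--\ref{proEEZlow}. Next I would perform a bilinear normal-form change of variables $v = u + B(u,\bar u)$ chosen so that $\partial_t v - i|\partial_x|^{3/2}v - i\Sigma_\gamma v$ contains, modulo the unavoidable quasilinear paraproduct $-\partial_x T_V v$, only cubic and higher order terms. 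The time-resonance relation $|\xi|^{3/2} \pm |\xi-\eta|^{3/2} \pm |\eta|^{3/2}$ vanishes to first order on the coordinate axes, so the symbols of $B$ carry low-frequency singularities of order $(\min|\zeta|)^{-1/2}$; these are precisely the objects that the weighted low-frequency norms $\mathcal{K}_I,\mathcal{K}_S$ in \eqref{bing11}--\eqref{bing21} have been designed to absorb, guaranteeing that $u$ and $v$ are comparable in all $L^2$-based norms of interest.

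Setting $f := e^{-it\Lambda}v$ with $\Lambda = |\partial_x|^{3/2}$, the dispersive inequality \eqref{disperse} and the interpolation Lemma \ref{interpolation} show that
\begin{equation*}
\|e^{it\Lambda}P_k v\|_{L^\infty} \lesssim |t|^{-1/2}2^{k/4}\|\widehat{f}\|_{L^\infty} + |t|^{-3/5}2^{-2k/5}\bigl[2^k\|\partial\widehat{f}\|_{L^2} + \|\widehat{f}\|_{L^2}\bigr].
\end{equation*}
The two $L^2$ pieces are controlled by Propositions \ref{proEE1}--\ref{proEEZlow}: the $\partial_\xi\widehat{f}$ norm corresponds to the weighted $Su$ via $\widehat{Sf}(\xi) = -\xi\partial_\xi\widehat{f}(\xi) + \tfrac32 t \partial_t\widehat{f}(\xi)$, and at very small $|\xi|$ the low-frequency energies supply the missing $|\xi|^{-1}$ weight. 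The proof of \eqref{prodecayconc} therefore reduces to a uniform bound
\begin{equation*}
\|\widehat{f}(\cdot,t)\|_Z \lesssim \e_0,\qquad \|g\|_Z := \sup_\xi \bigl(|\xi|^{N_2+1/4} + |\xi|^{-1/10 - 1/4}\bigr)|g(\xi)|,
\end{equation*}
on the linear profile, with weights matched precisely to those in the statement.

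The bulk of the work is to establish this $Z$-norm bound by a bootstrap argument. I would write Duhamel's formula for $\widehat{f}$ so that $\widehat{f}(\xi,t_2) - \widehat{f}(\xi,t_1)$ is a sum of trilinear oscillatory integrals as in \eqref{Duhamelintro}, and split each of them dyadically by the sizes of $\xi$, $\xi-\eta$, $\eta-\sigma$, $\sigma$, and by a time threshold. In the non-resonant regions--where the cubic phase $\Phi^{\epsilon_1\epsilon_2\epsilon_3}$ or its gradient in $(\eta,\sigma)$ is bounded below--I would integrate by parts in time or in $(\eta,\sigma)$ and close the estimate with the Sobolev and weighted bounds from Propositions \ref{proEE1}--\ref{proEEZlow} together with the $Z$-bootstrap itself. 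The genuine space--time resonances occur only for the sign pattern $(+,+,-)$ at $\sigma = \xi$, $\eta-\sigma = \xi$; stationary phase in $(\eta,\sigma)$ there extracts a contribution of the form $\int_{t_1}^{t_2} (ia(\xi)/s)\,|\widehat{f}(\xi,s)|^2\widehat{f}(\xi,s)\,ds$ with a real coefficient $a(\xi)$, which forces a logarithmic phase correction. To accommodate it I would renormalize by $\widetilde{f}(\xi,t) := e^{iH(\xi,t)}\widehat{f}(\xi,t)$ with $\partial_t H = a(\xi)|\widehat{f}|^2/t$, so that $\widetilde{f}$ obeys an equation with purely quartic right-hand side, on which the $Z$-bootstrap closes and from which modified scattering can be read off.

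The hard part, as in sections \ref{prooftech1}--\ref{prooftech3}, is controlling the quartic remainders produced by this renormalization when the symbols still carry the singular low-frequency weights $|\xi-\eta|^{-1/2}$ and $|\eta-\sigma|^{-1/2}$ inherited from $B$. In contrast with the gravity case, the denominators here behave like a full power of the low frequency rather than its square root, so no $L^2$-symmetrization alone removes them, and pointwise norms do not admit the kind of integration by parts that rescues the $L^2$ estimates. The remedy is to invoke exactly the low-frequency weighted bounds of Propositions \ref{proEElow} and \ref{proEEZlow}: the control they provide on $|\xi|^{-1/2}|\widehat{Su}(\xi)|$, and on $|\xi|^{-1/2+p_1}|\widehat{Su}(\xi)|$ for $|\xi| \ll (1+t)^{-2}$, is precisely what is needed to absorb the singular weights in the quartic integrals with only an admissible $t^{Cp_0}$ loss--far less than the $t^{1/2}$ decay being extracted. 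Once the $Z$-bootstrap closes in this way, applying the dispersive estimate band by band and transferring from $v$ back to $u$ and finally to $U$ gives \eqref{prodecayconc}.
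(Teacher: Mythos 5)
Your overall strategy (bilinear normal form, linear profile, a weighted $L^\infty_\xi$ bootstrap with a logarithmic phase correction, space--time resonance analysis of the trilinear integrals, and $L^2$ control of the quartic remainders) is the same as the paper's. But there is a genuine gap in your treatment of the quadratic terms, and it sits exactly where the quasilinear and the semilinear parts of the argument must be separated.

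You choose to work with the symmetrized unknown $u$ of \eqref{symm5} and to define $v=u+B(u,\bar u)$ so that the equation for $v$ still contains the paraproduct $-\partial_x T_V v$ (which you call ``unavoidable'') and still carries $\Sigma_\gamma$ on the linear side. Two steps later you write Duhamel's formula for $\widehat f=e^{-it\Lambda}\widehat v$ as a sum of \emph{trilinear} oscillatory integrals only. These two statements are incompatible: if $-\partial_x T_V v$ survives the normal form, then $\partial_t\widehat f$ contains a genuinely quadratic term whose Duhamel integral decays like $s^{-1/2}$ and is not integrable in time, so the $Z$-norm bootstrap cannot close on the cubic terms alone; and if $\Sigma_\gamma$ is kept on the left, $e^{-it\Lambda}$ is not the correct conjugation and $f$ is not a profile for the free flow. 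The paraproduct is in fact \emph{not} unavoidable here: for the decay estimate one only needs $N_2=5$ derivatives in $L^\infty$ while the energy estimates supply $N_0=9$ derivatives in $L^2$, so derivative losses in the normal form are harmless. This is precisely why the paper abandons the paralinearized unknown at this stage, returns to the Eulerian variable $U$, rewrites the system in the fully semilinear form \eqref{equ100}--\eqref{R_4} of Lemma \ref{lemcubicfinal} (with explicit bilinear symbols $q_{\eps_1\eps_2}$ and with $\Sigma_\gamma$ absorbed into the cubic terms), and then removes \emph{all} quadratic terms with the normal form \eqref{defv}--\eqref{nf}. Without this step your equation for $v$ is not purely cubic and the rest of your argument does not apply as stated; you would either have to perform a second normal form on the surviving paraproduct (which reproduces the paper's construction) or estimate a non-integrable bilinear Duhamel term, which you do not do.

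A secondary issue: your $Z$-norm weight $|\xi|^{-1/10-1/4}$ at low frequencies requires $|\widehat f(\xi)|\lesssim|\xi|^{7/20}$ near $\xi=0$, which is incompatible with the available low-frequency information (the bounds \eqref{profL2}--\eqref{profS} and Lemma \ref{interpolation} only give $\|\widehat{P_kf}\|_{L^\infty}\lesssim 2^{-p_1k}$, i.e.\ mild growth as $k\to-\infty$). Even after correcting the apparent sign error to $|\xi|^{-1/10+1/4}=|\xi|^{3/20}$, the dyadic sum $\sum_{k\le 0}2^{3k/20}\|\widehat{P_kf}\|_{L^\infty}$ fails to converge by a logarithm; one needs a weight strictly below $3/20$ and at least $p_1$, as in the paper's choice $|\xi|^{1/10}$ in \eqref{Znorm}.
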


Assuming this proposition, we can complete the proof of the main Proposition \ref{MainProp}.

\begin{proof}[Proof of Proposition \ref{MainProp}] It follows from Propositions \ref{proEE1}, \ref{proEElow}, \ref{proEEZ}, and \ref{proEEZlow} that
\begin{equation*}
\langle t\rangle^{-p_0}\mathcal{K}'_I(t)+\langle t\rangle^{-4p_0}\mathcal{K}'_S(t)+\lesssim \e_0
\end{equation*}
for any $t\in[0,T]$. It follows from \eqref{symm5} that $u-U\in O_{2,0}$ is a quadratic expression that does not lose derivatives and vanishes at frequencies $\leq 2^{-10}$. The desired conclusion \eqref{bing4} follows. 
\end{proof}

The proof of Proposition \ref{prodecay} will be given through a series of steps below.
The strategy follows the general approach of \cite{IoPu2,IoPu3}.

\subsection{Set up}
Recall that for any suitable multiplier $m:\mathbb{R}^d \rightarrow \mathbb{C}$
we define the associated bilinear and trilinear operators $M$ by the formulas
\begin{align*}
\mathcal{F} \big[M(f,g)\big](\xi) & = \frac{1}{2\pi} \int_{\R} m(\xi,\eta) \what{f}(\xi-\eta) \what{g}(\eta)\,d\eta ,
\\
\mathcal{F} \big[M(f,g,h)\big](\xi) & = \frac{1}{4\pi^2} \int_{\R \times \R} m(\xi,\eta,\sigma) \what{f}(\xi-\eta) \what{g}(\eta-\sigma)
  \what{h}(\sigma) \,d\eta d\sigma .
\end{align*}

In view of Lemma \ref{lemcubicfinal}, the variable $U$ satisfies the equation
\begin{align}
\label{equ100}
(\partial_t-i\Lambda)U= \mathcal{Q}_U + \mathcal{C}_U + \mathcal{R}_{\geq 4},\qquad \Lambda = |\partial_x|^{3/2},
\end{align}
where:

\setlength{\leftmargini}{1.5em}
\begin{itemize}

\smallskip
\item The quadratic nonlinear terms are
\begin{equation}\label{equN100}
\mathcal{Q}_U=\sum_{(\eps_1\eps_2)\in\{(++),(+-),(--)\}}Q_{\eps_1\eps_2}(U_{\eps_1},U_{\eps_2}),
\end{equation}
where $U_+=U$, $U_-=\overline{U}$, and the operators $Q_{++},Q_{+-},Q_{--}$ are defined by the symbols
\begin{equation}\label{equq_0}
\begin{split}
q_{++}(\xi,\eta)&:=\frac{i|\xi|(\xi\eta-|\xi||\eta|)}{8|\eta|^{1/2}|\xi-\eta|}+\frac{i|\xi|(\xi(\xi-\eta)-|\xi||\xi-\eta|)}{8|\eta||\xi-\eta|^{1/2}}+\frac{i|\xi|^{1/2}(\eta(\xi-\eta)+|\eta||\xi-\eta|)}{8|\eta|^{1/2}|\xi-\eta|^{1/2}},\\
q_{+-}(\xi,\eta)&:=-\frac{i|\xi|(\xi\eta-|\xi||\eta|)}{4|\eta|^{1/2}|\xi-\eta|}+\frac{i|\xi|(\xi(\xi-\eta)-|\xi||\xi-\eta|)}{4|\eta||\xi-\eta|^{1/2}}-\frac{i|\xi|^{1/2}(\eta(\xi-\eta)+|\eta||\xi-\eta|)}{4|\eta|^{1/2}|\xi-\eta|^{1/2}},\\
q_{--}(\xi,\eta)&:=-\frac{i|\xi|(\xi\eta-|\xi||\eta|)}{8|\eta|^{1/2}|\xi-\eta|}-\frac{i|\xi|(\xi(\xi-\eta)-|\xi||\xi-\eta|)}{8|\eta||\xi-\eta|^{1/2}}+\frac{i|\xi|^{1/2}(\eta(\xi-\eta)+|\eta||\xi-\eta|)}{8|\eta|^{1/2}|\xi-\eta|^{1/2}}.
\end{split}
\end{equation}

\smallskip
\item  The cubic terms have the form
\begin{align}
\label{equcubic100}
\mathcal{C}_U := M_{++-}(U, U, \bar{U}) + M_{+++}(U,U,U) + M_{--+}(\bar{U}, \bar{U}, U) + M_{---}(\bar{U}, \bar{U}, \bar{U}) ,
\end{align}
with purely imaginary symbols $m_{\iota_1\iota_2\iota_3}$ such that
\begin{align}
\label{equcubicsym}
\begin{split}
& {\big\| \mathcal{F}^{-1}\big[m_{\iota_1\iota_2\iota_3}(\xi,\eta,\sigma)\cdot 
  \varphi_k(\xi)\varphi_{k_1}(\xi-\eta)\varphi_{k_2}(\eta-\sigma)\varphi_{k_3}(\sigma)\big] \big\|}_{L^1} \lesssim 2^{k/2} 2^{\max(k_1,k_2,k_3)}
\end{split}
\end{align}
for all $(\iota_1\iota_2\iota_3) \in \{ (++-),(--+),(+++),(---) \}$.
Moreover, with $d_1 = 1/8$,
\begin{align}
\label{val00}
m_{++-} (\xi,0,-\xi) = id_1|\xi|^{3/2}.
\end{align}

\smallskip
\item $\mathcal{R}_{\geq 4}$ is a quartic remainder satisfying
\begin{align}
\label{R_4}
{\| \mathcal{R}_{\geq 4} \|}_{L^2} +{\| S \mathcal{R}_{\geq 4} \|}_{L^2} \lesssim \e_1^4 \langle t \rangle^{-5/4}. 
\end{align}
Moreover
\begin{align}
 \label{equRR4}
\mathcal{C}_U+\mathcal{R}_{\geq 4}\in |\partial_x|^{1/2} O_{3,-1} .
\end{align}
\end{itemize}

\subsection{The ``semilinear'' normal form transformation}\label{secnf}
We follow the classical normal form approach of Shatah \cite{shatahKGE} 
to define a modified variable which is a quadratic perturbation of $u$ and satisfies a cubic equation.
Let
\begin{equation}
\label{defv}
v := U + M_{++}(U,U) + M_{+-}(U,\bar{U})+ M_{--}(\bar{U}, \bar{U}) ,
\end{equation}
where, for any $(\eps_1,\eps_2) \in \{(++),(+-),(--)\}$, the bilinear operators $M_{\eps_1\eps_2}$ are defined by the multipliers
\begin{equation}\label{nf}
m_{\eps_1\eps_2}(\xi,\eta) := -i\frac{q_{\eps_1\eps_2}(\xi,\eta)}{|\xi|^{3/2}-\eps_1|\xi-\eta|^{3/2}-\eps_2|\eta|^{3/2}}.
\end{equation}
A direct computation shows that $v$ solves the equation 
\begin{align}
\label{eqv}
(\partial_t-i\Lambda)v ={\sum_{\ast}}^\prime\big[M_{\eps_1\eps_2}( (\mathcal{L} U)_{\eps_1}, U_{\eps_2})+ M_{\eps_1\eps_2}(U_{\eps_1}, (\mathcal{L}U)_{\eps_2})\big]
  + \mathcal{C}_U+ \mathcal{R}_{\geq 4}
\end{align}
where ${\sum_{\ast}}^\prime:=\sum_{(\eps_1,\eps_2) \in \{(++),(+-),(--)\}}$ and $\mathcal{L}:=(\partial_t-i\Lambda)$.

We now prove several bounds on the new variable $v$.
\begin{lemma}\label{lemv}
Let $v$ be defined by \eqref{defv}-\eqref{nf}. Then for any $t\in[0,T]$ and $k \in \mathbb{Z}$ we have
\begin{align}
\label{lemvinfty}
{\|P_k(U(t)-v(t))\|}_{L^\infty}&\lesssim \e_1^2 \min \big( 2^{k/2}, 2^{-(N_2-1/2) k} \big) \langle t \rangle^{-3/4+2p_0},
\\
\label{lemvL2}
{\|P_k(U(t) - v(t))\|}_{L^2} &\lesssim \e_1^2 \min(2^{k/2},2^{-(N_0-1/2)k}) \langle t \rangle^{-1/4+3p_0} ,
\\
\label{lemvS}
{\|P_k S (U(t)-v(t)) \|}_{L^2} &\lesssim \e_1^2 \min\big( 2^{k/2}, 2^{-(N_1-1/2)k} \big)  \langle t \rangle^{-1/4+6p_0} .
\end{align}
Furthermore, we have
\begin{align}
\label{lemvL22}
& {\|P_k(U(t) - v(t))\|}_{L^2} \lesssim \e_1^2 \langle t \rangle^{-1/2+3p_0} \min \big( 2^{k/2}, 2^{-(N_2-1/2)k} \big) .
\end{align}
\end{lemma}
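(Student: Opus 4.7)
\smallskip
\textbf{Proof plan for Lemma \ref{lemv}.} By the definition \eqref{defv}, the difference $U - v$ equals $-\sum_{\ast}' M_{\eps_1\eps_2}(U_{\eps_1}, U_{\eps_2})$, so the lemma reduces to quantitative bilinear estimates on the three normal-form operators with multipliers $m_{\eps_1\eps_2} = -iq_{\eps_1\eps_2}/\Phi_{\eps_1\eps_2}$, where $\Phi_{\eps_1\eps_2}(\xi,\eta) = |\xi|^{3/2} - \eps_1|\xi-\eta|^{3/2} - \eps_2|\eta|^{3/2}$. The plan is to first prove dyadic $S^\infty$ bounds on each $m_{\eps_1\eps_2}^{k,k_1,k_2}$ and then insert them into Lemma \ref{touse}(ii), using the a priori control \eqref{bing1}.

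The first step, the symbol analysis, is the main obstacle. Dyadic localization forces $(k,k_1,k_2) \in \mathcal{X}$, and the denominator $\Phi_{\eps_1\eps_2}$ vanishes only where two of $\xi, \xi-\eta, \eta$ align (with appropriate signs), the strongest singularity being $|\Phi| \sim 2^{\kmin} 2^{k/2}$ when $\kmin = \min(k_1,k_2) \leq k - 10$, as noted in \eqref{phase2}. The numerator $q_{\eps_1\eps_2}$ is of homogeneous degree $3/2$ and its building blocks $\xi\eta - |\xi||\eta|$ and $\xi(\xi-\eta) - |\xi||\xi-\eta|$ carry precisely the null structure vanishing on the relevant resonant sets. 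Combining this cancellation with the lower bound on $|\Phi|$ via standard Taylor expansion and integration-by-parts arguments (as used for the symbols \eqref{boundm_N}, \eqref{bounda^N/}, \eqref{boundb^N/} in Section \ref{secEE}), I expect to establish
\begin{equation*}
\|m_{\eps_1\eps_2}^{k,k_1,k_2}\|_{S^\infty} \lesssim 2^{k/2}\mathbf{1}_{\mathcal{X}}(k,k_1,k_2),
\end{equation*}
with a smoothing factor of $2^{-\max(k_1,k_2)/2}$ when $\eps_2 = -1$ or in the balanced region $|k_1 - k_2| \leq 10$. Because both $q_{\eps_1\eps_2}$ and $\Phi_{\eps_1\eps_2}$ are homogeneous of the same degree, the scaling derivative $\widetilde{m}_{\eps_1\eps_2} = -(\xi\pa_\xi + \eta\pa_\eta)m_{\eps_1\eps_2}$ appearing in \eqref{lemcomm2} obeys the same $S^\infty$ bounds.

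Once these symbol bounds are in place, the three displayed estimates follow by routine insertion into Lemma \ref{touse}(ii). For \eqref{lemvinfty} one pairs both inputs in $L^\infty$, using $\|P_k U\|_{L^\infty} \lesssim \e_1\langle t\rangle^{-1/2}(2^{-N_2 k^+} + 2^{k/10})$ from \eqref{bing1}; the summed dyadic sum over $(k,k_1,k_2)\in\mathcal{X}$ yields the frequency profile $\min(2^{k/2}, 2^{-(N_2-1/2)k})$ and the decay rate (the stated $\langle t\rangle^{-3/4+2p_0}$ rather than the bilinear $\langle t\rangle^{-1}$ reflects the interpolation loss needed to afford Bernstein at high $k$ via the $L^2$ bound, which costs $\langle t\rangle^{p_0}$ factors). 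For \eqref{lemvL2} one pairs one input in $L^\infty$ and the other in $L^2$, using the high Sobolev and low-frequency $L^2$ bounds in \eqref{bing1}. For \eqref{lemvS} one applies \eqref{lemcomm}, producing three pieces: two with one $SU_{\eps_j}$ factor, controlled in $L^2$ via the weighted bound $\mathcal{K}_S$, the third with the symbol $\widetilde{m}_{\eps_1\eps_2}$ treated as in \eqref{lemvL2}.

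Finally, the improved bound \eqref{lemvL22} is obtained by pairing both factors in $L^\infty$ with their full $\langle t\rangle^{-1/2}$ decay and recovering the $L^2$ norm via Bernstein on the output dyadic block (which is why the frequency profile involves $N_2$ rather than $N_0$). The main subtlety throughout is ensuring that the low-frequency singularity $|\Phi|^{-1}\sim 2^{-\kmin}$ is absorbed by the null structure of $q_{\eps_1\eps_2}$ before summing in $\kmin$; this is the local-in-scale manifestation of the ``division problem'' discussed in Subsection \ref{introen}, and is the only nontrivial ingredient.
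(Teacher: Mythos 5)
Your overall architecture (reduce to bilinear bounds on $M_{\eps_1\eps_2}$, prove dyadic $S^\infty$ bounds on $m_{\eps_1\eps_2}$, insert into Lemma \ref{touse}(ii), use \eqref{lemcomm} for the weighted bound) matches the paper's. However, your key symbol estimate is wrong, and this is not a minor slip: you claim $\|m_{\eps_1\eps_2}^{k,k_1,k_2}\|_{S^\infty}\lesssim 2^{k/2}$, asserting that the null structure of $q_{\eps_1\eps_2}$ absorbs the resonant denominator. It does not. In the $\mathrm{Low}\times\mathrm{High}$ region ($\kmin=\min(k_1,k_2)\le k-10$) one has $\|q_{\eps_1\eps_2}^{k,k_1,k_2}\|_{S^\infty}\lesssim 2^{k}2^{\kmin/2}$ while $|\Phi_{\eps_1\eps_2}|\sim 2^{\kmin}2^{k/2}$ (for the $(++)$ and $(+-)$ cases), so the correct bound — the one the paper proves in \eqref{lemvsym} — is
\begin{equation*}
\|m_{\eps_1\eps_2}^{k,k_1,k_2}\|_{S^\infty}\lesssim 2^{k/2}\,2^{-\min(k_1,k_2)/2}\,\mathbf{1}_{\mathcal{X}}(k,k_1,k_2),
\end{equation*}
which is \emph{singular} as $\kmin\to-\infty$. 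This residual $(\text{low frequency})^{-1/2}$ is precisely the ``division problem'' of Subsection \ref{introen}; it is not resolved inside the symbol estimate but must be paid for in the dyadic summation. With your claimed non-singular bound the lemma would follow with rate $\langle t\rangle^{-1}$ in \eqref{lemvinfty}, which is false; the actual rate $\langle t\rangle^{-3/4+2p_0}$ is exactly the price of the singularity. Your explanation of that rate (an ``interpolation loss to afford Bernstein at high $k$'') is therefore also incorrect: the $\langle t\rangle^{1/4}$ loss comes from low frequencies, by splitting the sum at $2^{k_1}\sim\langle t\rangle^{-1/2}$ and either summing $2^{-k_1/2}$ over $2^{k_1}\ge\langle t\rangle^{-1/2}$ (costing $\langle t\rangle^{1/4}$ against the $L^\infty\times L^\infty$ decay $\langle t\rangle^{-1}$) or, for $2^{k_1}\le\langle t\rangle^{-1/2}$, beating $2^{-k_1/2}$ with the low-frequency $L^2$ control $\|P_{k_1}U\|_{L^2}\lesssim\e_1 2^{(1/2-p_1)k_1}\langle t\rangle^{p_0}$ plus Bernstein. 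A $\langle t\rangle^{p_0}$ loss cannot produce a $\langle t\rangle^{1/4}$ loss.

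The same gap propagates to \eqref{lemvL22}. Your route — ``pair both factors in $L^\infty$ and recover $L^2$ via Bernstein on the output block'' — does not work: on $\mathbb{R}$ there is no bound of $\|P_k f\|_{L^2}$ by $\|P_kf\|_{L^\infty}$, and even granting one it would not control the singular factor $2^{-k_1/2}$. The paper instead pairs $L^2\times L^\infty$ with the threshold moved to $2^{k_1}\sim\langle t\rangle^{-4}$, so that $\sum 2^{-k_1/2}\cdot 2^{(1/2-p_1)k_1}=\sum 2^{-p_1k_1}$ over $2^{k_1}\ge\langle t\rangle^{-4}$ costs only $\langle t\rangle^{4p_1}\le\langle t\rangle^{p_0}$, yielding the full $\langle t\rangle^{-1/2+3p_0}$ decay at the expense of the weaker frequency profile $2^{-(N_2-1/2)k}$. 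To repair your proof you must (i) state and prove the symbol bound with the factor $2^{-\min(k_1,k_2)/2}$, and (ii) in every application carry out the $t$-dependent low-frequency splitting, using the norm $\||\xi|^{-1/2+p_1}\widehat{U}\|_{L^2}$ built into \eqref{bing1}/\eqref{lemvboundu}; this is where the hypotheses on $p_1$ actually enter.
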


The bounds \eqref{lemvinfty}-\eqref{lemvS} show that $v$ and $u$ have the same relevant norms,
and that all the a priori assumptions on $u$ transfer without significant losses 
to $v$. The bound \eqref{lemvL22} is a variant of the $L^2$ bound \eqref{lemvL2} which provides more decay in time,
but less decay at high frequencies. This will be used later on to bound quartic remainder terms.

\begin{remark}
\normalfont
Observe that \eqref{lemvinfty}, \eqref{lemvL2} and Sobolev embedding, imply
\begin{align}
\label{lemvinfty2}
& {\|P_k(U(t)-v(t))\|}_{L^\infty} \lesssim \e_1^2  \langle t \rangle^{-1/2}  \min\big(  2^{-(N_2+1) k}, 2^{k/8} \big) .
\end{align}
This shows that in order to obtain \eqref{prodecayconc} is suffices to show
\begin{align}
\label{prodecayconcv}
\sup_{t\in[0,T]}\langle t \rangle^{1/2} \sum_{k \in \mathbb{Z}} \big(2^{-k/10} + 2^{N_2k} \big) {\|P_k v(t)\|}_{L^\infty} \lesssim \e_0 .
\end{align}
\end{remark}

\begin{proof}[Proof of Lemma \ref{lemv}]
In view of \eqref{defv}, we see that we have to estimate the bilinear terms $M_{\eps_1\eps_2}$.
Notice that the formulas \eqref{equq_0} and \eqref{nf} show easily that
\begin{align}
\label{lemvsym}
\begin{split}
{\| q_{\eps_1\eps_2}^{k,k_1,k_2} \|}_{S^\infty}
  & \lesssim
  2^{k} 2^{\min(k,k_1,k_2)/2} \mathbf{1}_{\mathcal{X}}(k,k_1,k_2),\\
{\| m_{\eps_1\eps_2}^{k,k_1,k_2} \|}_{S^\infty}
  & \lesssim
  2^{k/2} 2^{-\min(k_1,k_2)/2} \mathbf{1}_{\mathcal{X}}(k,k_1,k_2).
\end{split}
\end{align}

The a priori bounds \eqref{bing1} show that, for any $l\in\mathbb{Z}$ and $t\in[0,T]$,
\begin{equation}
\label{lemvboundu}
\begin{split}
{\|P_lU(t)\|}_{L^2} & \lesssim \e_1 \langle t \rangle^{p_0} \min \big( 2^{l(1/2-p_1)}, 2^{-N_0l} \big),
\\
{\|P_lU(t) \|}_{L^\infty} & \lesssim \e_1 \langle t \rangle^{-1/2} \min( 2^{l/10}, 2^{-N_2l} \big),
\\
{\|P_l SU(t) \|}_{L^2} & \lesssim \e_1 \langle t \rangle^{4p_0} \min \big( 2^{(1/2-p_1)l}, 2^{-N_1l} \big) .
\end{split}
\end{equation}
Therefore, using Lemma \ref{touse}(ii), \eqref{lemvsym}, and \eqref{lemvboundu}, we estimate, for any $k\in\mathbb{Z}$ and $t\in[0,T]$,
\begin{align*}
& {\| P_k M_{\eps_1\eps_2} ( U_{\eps_1}, U_{\eps_2}) \|}_{L^\infty}
  \lesssim \sum_{k_1\leq k_2} {\| m_{\eps_1\eps_2}^{k,k_1,k_2}\|}_{S^\infty}
  {\| P_{k_1}^\prime U \|}_{L^\infty} {\| P_{k_2}^\prime U\|}_{L^\infty}
\\
& \lesssim \e_1^2 \langle t \rangle^{-1} \sum_{k_1\leq k_2,\, 2^{k_1} \geq \langle t \rangle^{-1/2}}
  \mathbf{1}_\mathcal{X}(k,k_1,k_2) 2^{(k-k_1)/2} \min(2^{-N_2k_1}, 2^{k_1/10}) \min(2^{-N_2k_2}, 2^{k_2/10})
\\
& + \e_1^2 \langle t \rangle^{-1/2+p_0} \sum_{k_1\leq k_2,\, 2^{k_1}\leq \langle t \rangle^{-1/2}}
  \mathbf{1}_\mathcal{X}(k,k_1,k_2) 2^{(k-k_1)/2} 2^{k_1/2} 2^{(1/2-p_1)k_1} \min(2^{-N_2k_2}, 2^{k_2/10})
\\
& \lesssim \e_1^2 \min \big( 2^{k/2}, 2^{-(N_2-1/2)k}\big) \langle t \rangle^{-3/4+2p_0},
\end{align*}
which suffices. We proceed similarly to obtain \eqref{lemvL2}:
\begin{align*}
& {\| P_k M_{\eps_1\eps_2} ( U_{\eps_1}, U_{\eps_2} ) \|}_{L^2}
  \lesssim \sum_{k_1\leq k_2} {\| m_{\eps_1\eps_2}^{k,k_1,k_2}\|}_{S^\infty}
  {\| P_{k_1}^\prime U \|}_{L^\infty} {\| P_{k_2}^\prime U\|}_{L^2}
\\
& \lesssim \e_1^2 \langle t \rangle^{-1/2+p_0} \sum_{k_1\leq k_2, \, 2^{k_1} \geq \langle t \rangle^{-1/2}}
  \mathbf{1}_\mathcal{X}(k,k_1,k_2) 2^{(k-k_1)/2} \min( 2^{-N_0k_2}, 2^{(1/2-p_1)k_2} )
\\
& + \e_1^2 \langle t \rangle^{2p_0} \sum_{k_1\leq k_2,\, 2^{k_1}\leq \langle t \rangle^{-1/2}}
  \mathbf{1}_\mathcal{X}(k,k_1,k_2) 2^{(k-k_1)/2} 2^{(1-p_1)k_1} \min(2^{-N_0k_2}, 2^{(1/2-p_1)k_2})
\\
& \lesssim \e_1^2 \min \big( 2^{k/2}, 2^{-(N_0-1/2)k}\big) \langle t\rangle^{-1/4+3p_0}.
\end{align*}

Using \eqref{lemcomm}-\eqref{lemcomm2} we have
\begin{align*}
S M_{\eps_1\eps_2}(f,g) = M_{\eps_1\eps_2}(Sf,g) + M_{\eps_1\eps_2}(f,Sg) + \widetilde{M}_{\eps_1\eps_2}(f,g)
\end{align*}
where $\widetilde{M}_{\eps_1\eps_2}$ is the operator associated to
$\widetilde{m}_{\eps_1,\eps_2}(\xi,\eta) = -(\xi\partial_\xi +\eta \partial_\eta ) m_{\eps_1\eps_2}(\xi,\eta)$.
It is not hard to verify that the symbols $\widetilde{m}_{\eps_1,\eps_2}$ satisfy the same bounds \eqref{lemvsym} as
the symbols $m_{\eps_1,\eps_2}$ and, therefore, estimating as above
\begin{align*}
{\| \widetilde{M}_{\eps_1\eps_2}(U_{\eps_1}, U_{\eps_2}) \|}_{L^2}
  \lesssim \e_1^2 \min \big( 2^{k/2}, 2^{-(N_0-1/2)k}\big) \langle t \rangle^{-1/4+3p_0}.
\end{align*}
Thus, to prove \eqref{lemvS} it suffices to estimate
$M_{\eps_1\eps_2}(S U_{\eps_1}, U_{\eps_2})$ and $M_{\eps_1\eps_2}(U_{\eps_1},S U_{\eps_2})$.
As in the proof of \eqref{lemvL2}, we use Lemma \ref{touse}(ii), followed by \eqref{lemvsym},
and \eqref{lemvboundu}, to obtain
\begin{equation}\label{cxz30}
\begin{split}
& {\| P_k M_{\eps_1\eps_2} ( SU_{\eps_1}, U_{\eps_2} ) \|}_{L^2}\\
&\lesssim \sum_{k_1,k_2\in\mathbb{Z}} {\| m_{\eps_1\eps_2}^{k,k_1,k_2}\|}_{S^\infty}
  \min \big( {\| P_{k_1}^\prime S U \|}_{L^2} {\| P_{k_2}^\prime U\|}_{L^\infty},
  {\| P_{k_1}^\prime S U \|}_{L^\infty} {\| P_{k_2}^\prime U\|}_{L^2} \big).
	\end{split}
\end{equation}
We use the bound ${\| P_{k_1}^\prime S U \|}_{L^2} {\| P_{k_2}^\prime U\|}_{L^\infty}$ when $k_2\leq k_1$ or when $k_1\leq k_2$ and $2^{k_1}\geq \langle t\rangle^{1/2}$. We use the bound ${\| P_{k_1}^\prime S U \|}_{L^\infty} {\| P_{k_2}^\prime U\|}_{L^2}$ when $k_1\leq k_2$ and $2^{k_1}\leq \langle t\rangle^{1/2}$. It follows that the left-hand side of \eqref{cxz30} is bounded by 
\begin{equation*}
\begin{split}
&C\e_1^2 \langle t \rangle^{-1/2+4p_0} \sum_{k_2\leq k_1,\,2^{k_2}\geq\langle t\rangle^{-1/2}}
  \mathbf{1}_\mathcal{X}(k,k_1,k_2) 2^{(k-k_2)/2} 2^{-N_2k_2^+} \min(2^{-N_1k_1}, 2^{(1/2-p_1)k_1})\\
	&+C\e_1^2 \langle t \rangle^{5p_0} \sum_{k_2\leq k_1,\,2^{k_2}\leq\langle t\rangle^{-1/2}}
  \mathbf{1}_\mathcal{X}(k,k_1,k_2) 2^{(k-k_2)/2} 2^{(1-p_1)k_2}\min(2^{-N_1k_1}, 2^{(1/2-p_1)k_1})\\
	&+C\e_1^2 \langle t \rangle^{-1/2+4p_0} \sum_{k_1\leq k_2,\,2^{k_1}\geq\langle t\rangle^{-1/2}}
  \mathbf{1}_\mathcal{X}(k,k_1,k_2) 2^{(k-k_1)/2} 2^{-N_2k_2^+}\min(2^{-N_1k_1}, 2^{(1/2-p_1)k_1})\\
	&+C\e_1^2 \langle t \rangle^{5p_0} \sum_{k_1\leq k_2,\,2^{k_1}\leq\langle t\rangle^{-1/2}}
  \mathbf{1}_\mathcal{X}(k,k_1,k_2) 2^{(k-k_1)/2} 2^{-N_2k_2^+}2^{(1-p_1)k_1}\\
	&\lesssim \e_1^2 \min \big( 2^{k/2}, 2^{-(N_1-1/2)k}\big) \langle t \rangle^{-1/4+6p_0}.
\end{split}
\end{equation*}
This completes the proof for ${\| P_k M_{\eps_1\eps_2} ( SU_{\eps_1}, U_{\eps_2} ) \|}_{L^2}$. The proof for ${\| P_k M_{\eps_1\eps_2} ( U_{\eps_1}, SU_{\eps_2} ) \|}_{L^2}$ is similar, due to the symmetric bounds on the symbols $m_{\eps_1\eps_2}$. 

To prove \eqref{lemvL22}, which gives better time decay than \eqref{lemvL2}, we use again Lemma \ref{touse}(ii), the symbol bounds \eqref{lemvsym},
and the a priori bounds on $u$ in \eqref{lemvboundu},
\begin{align*}
& {\| P_k M_{\eps_1\eps_2} ( U_{\eps_1}, U_{\eps_2} ) \|}_{L^2}\lesssim \sum_{k_1\leq k_2} {\| m_{\eps_1\eps_2}^{k,k_1,k_2}\|}_{S^\infty}
  \min \big( {\| P_{k_1}^\prime U\|}_{L^2} {\| P_{k_2}^\prime U\|}_{L^\infty},
  {\| P_{k_1}^\prime U \|}_{L^\infty} {\| P_{k_2}^\prime U\|}_{L^2} \big)
\\
& \lesssim \e_1^2 \langle t \rangle^{-1/2+p_0} \sum_{k_1\leq k_2,\,2^{k_1} \geq \langle t \rangle^{-4}}
  \mathbf{1}_\mathcal{X}(k,k_1,k_2) 2^{(k-k_1)/2} \min(2^{(1/2-p_1)k_1},2^{-N_0k_1}) \min( 2^{-N_2k_2}, 2^{k_2/10} )
\\
& + \e_1^2 \langle t \rangle^{2p_0} \sum_{k_1\leq k_2,\, 2^{k_1}\leq \langle t \rangle^{-4}}
  \mathbf{1}_\mathcal{X}(k,k_1,k_2) 2^{(k-k_1)/2} 2^{(1-p_1)k_1} \min(2^{-N_0k_2}, 2^{(1/2-p_1)k_2})
\\
& \lesssim \e_1^2 \min \big( 2^{k/2}, 2^{-(N_2-1/2)k}\big) \langle t \rangle^{-1/2+3p_0}.
\end{align*}
This completes the proof of the lemma.
\end{proof}

\subsection{The profile $f$}

For $t\in[0,T]$ we define the profile of the solution of \eqref{eqv} as
\begin{equation}
\label{prof}
f(t) := e^{-it\Lambda}v(t).
\end{equation}
In the next proposition we summarize the main properties of the function $f$.

\begin{proposition}[Bounds for the profile]\label{proprof}
With $v$ and $f$ are defined as above, we have
\begin{align}
\label{eqprof}
\begin{split}
& e^{it\Lambda} \partial_t f = (\partial_t - i\Lambda) v = \mathcal{N}^\prime
\\
& \mathcal{N}^\prime := {\sum_\star}^\prime M_{\eps_1\eps_2}( (\mathcal{L}U)_{\eps_1}, U_{\eps_2}) + M_{\eps_1\eps_2}(U_{\eps_1}, (\mathcal{L}U)_{\eps_2})
  + \mathcal{C}_U + \mathcal{R}_{\geq 4},
\end{split}
\end{align}
where the bilinear operators $M_{\eps_1\eps_2}$ are defined via \eqref{nf}, and $\mathcal{C}_U$ and $\mathcal{R}_{\geq 4}$ satisfy 
\eqref{equcubic100}--\eqref{R_4}.

Moreover, for any $t\in[0,T]$ and $k\in\mathbb{Z}$, we have the estimates
\begin{align}
\label{profLinfty}
{\big\| P_k ( e^{it\Lambda}f(t) ) \big\|}_{L^\infty} & \lesssim \e_1 \min( 2^{k/10}, 2^{-N_2k} ) \langle t \rangle^{-1/2},
\\
\label{profL2}
{\|P_k f(t)\|}_{L^2} & \lesssim \e_0 \langle t \rangle^{6p_0} \min \big(2^{(1/2-p_1)k}, 2^{-(N_0-1/2)k} \big),
\\
\label{profS}
{\|P_k(x\partial_x f(t)) \|}_{L^2} & \lesssim \e_0 \langle t \rangle^{6p_0}\min\big(2^{(1/2-p_1)k}, 2^{-(N_1-1/2)k} \big).
\end{align}

\end{proposition}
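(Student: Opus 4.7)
\textbf{Proof proposal for Proposition \ref{proprof}.} The identity \eqref{eqprof} is immediate from the definition $f = e^{-it\Lambda}v$: differentiating in $t$ gives $\partial_t f = e^{-it\Lambda}(\partial_t - i\Lambda)v$, and the right-hand side is $e^{-it\Lambda}\mathcal{N}'$ by \eqref{eqv}, where $\mathcal{N}'$ collects the semilinear terms obtained after the normal form transformation. Inheriting the properties of $\mathcal{C}_U$ and $\mathcal{R}_{\geq 4}$ from \eqref{equcubic100}--\eqref{R_4} is routine.

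For the $L^\infty$ estimate \eqref{profLinfty}, I use $e^{it\Lambda}f = v = U + \sum_\ast' M_{\eps_1\eps_2}(U_{\eps_1}, U_{\eps_2})$. The bootstrap assumption \eqref{bing1} directly controls $\|P_k U(t)\|_{L^\infty} \lesssim \e_1\langle t\rangle^{-1/2}\min(2^{k/10}, 2^{-N_2 k})$, while Lemma \ref{lemv}, bound \eqref{lemvinfty}, gives the quadratic correction with an $\e_1^2 \langle t\rangle^{-3/4+2p_0}$ bound that is strictly better. Summing yields \eqref{profLinfty}.

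For the $L^2$ estimate \eqref{profL2}, since $e^{-it\Lambda}$ is an isometry on $L^2$, we have $\|P_k f\|_{L^2} = \|P_k v\|_{L^2}$, and I split $v = U + (v-U)$. The quadratic correction is handled by \eqref{lemvL2}, which is quartic in $\e_0$ (using $\e_1 \leq \e_0^{2/3}$) and hence negligible. The main contribution comes from $U$, for which I appeal to the \emph{improved} energy estimates already proved in this paper: Proposition \ref{proEE1} provides $\|P_k U\|_{L^2} \lesssim \e_0\langle t\rangle^{p_0} 2^{-N_0 k^+}$ for $k \geq -20$, while Proposition \ref{proEElow} controls the low frequency contribution in terms of the norm $E_\low^{(2)}$ from \eqref{defE2low}. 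Reinterpreting the bound on $E_\low^{(2)}$ on dyadic shells, on $2^k \geq \langle t\rangle^{-2}$ one has $\mathcal{P} \equiv 1$ and recovers $\|P_k U\|_{L^2} \lesssim \e_0\langle t\rangle^{p_0} 2^{k/2}$, while on $2^k \leq \langle t\rangle^{-2}$ the factor $((1+t)^2|\xi|)^{2p_1}$ yields $\e_0 \langle t\rangle^{p_0 - 2p_1} 2^{(1/2-p_1)k}$. These combine to match the desired profile of \eqref{profL2}.

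For the weighted $L^2$ estimate \eqref{profS}, the core computation is the commutation
\begin{equation*}
x\partial_x f = e^{-it\Lambda}(Sv) - \tfrac{3}{2} t\, e^{-it\Lambda}\mathcal{N}',
\end{equation*}
which follows from $[x\partial_x, \Lambda] = -(3/2)\Lambda$ combined with the definition $Sv = (3/2)t\partial_t v + x\partial_x v$ and \eqref{eqprof}. The first term is bounded via $Sv = SU + S(v-U)$: the bound on $SU$ is supplied by the improved weighted estimates of Propositions \ref{proEEZ} (high frequencies, yielding $\e_0\langle t\rangle^{4p_0}$) and \ref{proEEZlow} (low frequencies, decomposed exactly as for $U$ above via $\mathcal{P}$), while the bilinear correction is controlled by \eqref{lemvS}. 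For the second term, $\mathcal{N}'$ is cubic (and higher) by construction: the terms $M_{\eps_1\eps_2}((\mathcal{L}U)_{\eps_1}, U_{\eps_2})$ are products of the quadratic $\mathcal{L}U$ with $U$, while $\mathcal{C}_U$ and $\mathcal{R}_{\geq 4}$ are cubic and quartic with the explicit bounds \eqref{equcubicsym}--\eqref{R_4}. Using the symbol bound \eqref{lemvsym} on $m_{\eps_1\eps_2}$ and paraproduct-type estimates as in the proof of Lemma \ref{lemv}, one obtains $\|P_k \mathcal{N}'\|_{L^2} \lesssim \e_1^3 \langle t\rangle^{-1+C p_0}$ on the relevant frequency shells, so that $t\|P_k \mathcal{N}'\|_{L^2}$ is absorbed into $\e_0\langle t\rangle^{6p_0}$ using $\e_1^3 \leq \e_0^2$.

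The main technical obstacle will be controlling $t\|P_k\mathcal{N}'\|_{L^2}$ uniformly in $k$, because the normal form symbols $m_{\eps_1\eps_2}$ carry the low-frequency singularity $2^{-\min(k_1,k_2)/2}$ from \eqref{lemvsym}, and multiplying by $t$ threatens to overwhelm the cubic decay at very low output frequencies. This will require carefully pairing the singular symbol with the low-frequency control \eqref{bing2} on $U$ and the corresponding bound on $\mathcal{L}U$ (from the equation itself), exactly in the spirit of the estimates performed in sections \ref{secEElow} and \ref{secweightedlow}, to ensure that the growing factor $t$ is always compensated by the sharp $t^{-1/2}$ dispersive decay of one of the three cubic factors.
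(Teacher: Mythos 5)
Your proposal follows the same route as the paper's proof: \eqref{eqprof} from the definition of $f$, the $L^\infty$ and $L^2$ bounds from Lemma \ref{lemv} together with the bootstrap/energy estimates, and \eqref{profS} from the commutation identity $[S,e^{it\Lambda}]=0$, reducing everything to a bound on $\langle t\rangle\|P_k(\partial_t-i\Lambda)v\|_{L^2}$. However, that last bound — \eqref{estN'} in the paper — is precisely where the proposition's content lies, and you do not prove it: you name it as ``the main technical obstacle'' and describe a strategy, which is a statement of intent rather than an argument. What is actually needed, and what the paper supplies, is (i) the bound \eqref{lemprofLu} on $\mathcal{L}U=(\partial_t-i\Lambda)U$, in particular the low-frequency gain $\|P_l\mathcal{L}U\|_{L^2}\lesssim\e_1^2\,2^{l/2}\langle t\rangle^{-1/2+p_0}$ (and its $L^\infty$ analogue with $\langle t\rangle^{-1}$), which comes from the symbol bounds \eqref{lemvsym} on $q_{\eps_1\eps_2}$ and \eqref{equRR4}; it is exactly this $2^{l/2}$ that neutralizes the $2^{-\min(k_1,k_2)/2}$ singularity of $m_{\eps_1\eps_2}$ when the low frequency sits on the $\mathcal{L}U$ factor; and (ii) when the low frequency sits on the $U$ factor, an $L^2\times L^\infty$ versus $L^\infty\times L^2$ case analysis with a frequency threshold at $2^{k_1}\approx\langle t\rangle^{-4}$, using $\|P_{k_1}U\|_{L^2}\lesssim\e_1 2^{(1/2-p_1)k_1}\langle t\rangle^{p_0}$ above the threshold and Bernstein below it. Without carrying out at least this much, the claim $\langle t\rangle\|P_k\mathcal{N}'\|_{L^2}\lesssim\e_0\langle t\rangle^{6p_0}\min(2^{(1/2-p_1)k},2^{-(N_1-1/2)k})$ is unsubstantiated, and it is not a routine paraproduct estimate precisely because of the singular symbol you flag.

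Two smaller points. First, for \eqref{profLinfty} the bound \eqref{lemvinfty} alone does not suffice at high frequencies: its profile $2^{-(N_2-1/2)k}$ loses a factor $2^{k/2}$ against the target $2^{-N_2k}$, and the extra $\langle t\rangle^{-1/4+2p_0}$ of time decay cannot absorb this uniformly in $k$; one must use \eqref{lemvinfty2}, i.e.\ interpolate with the $L^2$ bound \eqref{lemvL2} via Sobolev embedding, which is what the paper cites together with \eqref{lemvboundu}. Second, your treatment of $\mathcal{C}_U$ and $\mathcal{R}_{\geq 4}$ in \eqref{estN'} is fine but could be stated more simply: by \eqref{equRR4} they lie in $|\partial_x|^{1/2}O_{3,-1}$, and the definition \eqref{cubb} already gives the required $L^2$ bound with the correct frequency weights.
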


\begin{proof}
The equation \eqref{eqprof} follows from the definition \eqref{prof} and the equation \eqref{eqv}.
The $L^\infty$ bound \eqref{profLinfty} follows from \eqref{lemvinfty2} and \eqref{lemvboundu}.
The $L^2$ bound \eqref{profL2} follows from the energy estimates in Propositions \ref{proEE1} and \ref{proEElow}, and the bounds \eqref{lemvL2}.

To prove \eqref{profS} we start from the identity
\begin{equation*}
Sv = e^{it\Lambda}(x\partial_xf) + (3/2)te^{it\Lambda}(\partial_tf),
\end{equation*}
which is a consequence of the commutation identity $[S,e^{it\Lambda}]=0$.
Therefore, for any $t\in[0,T]$ and $k\in\mathbb{Z}$,
\begin{equation}
\label{P_kxdxf}
{\|P_k(x\partial_xf(t))\|}_{L^2} \lesssim {\|P_k(Sv(t))\|}_{L^2} + (1+t) {\|P_k (\partial_t - i\Lambda) v(t)\|}_{L^2}.
\end{equation}
Using Proposition \ref{proEEZ} and Proposition \ref{proEEZlow},
we know, in particular, that
\begin{align}
\label{P_kSu}
{\| P_k SU(t)\|}_{L^2} \lesssim \e_0 \min(2^{(1/2-p_1)k }, 2^{-N_1k}) \langle t \rangle^{4p_0} ,
\end{align}
for all $k \in \mathbb{Z}$.
Together with \eqref{lemvS} and \eqref{P_kxdxf}, this gives
\begin{align*}
{\| P_k ( x\partial_xf(t) ) \|}_{L^2}
  \lesssim \langle t \rangle{\| P_k (\partial_t - i \Lambda) v(t) \|}_{L^2} + \e_1^2 \big( 2^{k/2}, 2^{-(N_1-1/2)k} \big)  \langle t \rangle^{-1/4+6p_0}
\\
 + \e_0 \min(2^{(1/2-p_1)k }, 2^{-N_1k}) \langle t \rangle^{4p_0} .
\end{align*}
It then suffices to show
\begin{equation}
\label{estN'}
{\| P_k (\partial_t - i \Lambda) v(t) \|}_{L^2} \lesssim \langle t \rangle^{-1+6p_0} \e_1^2 \big( 2^{(1/2-p_1)k}, 2^{-(N_1-1/2)k} \big) .
\end{equation}

Since $\mathcal{C}_U+\mathcal{R}_{\geq 4}\in |\partial_x|^{1/2}O_{3,-1}$, see \eqref{equRR4}, its contribution is already bounded by the right-hand side above,
according to the definition of $O_{3,\alpha}$, see \eqref{cubb}.
To complete the proof of \eqref{estN'} it suffices to estimate
$M_{\eps_1\eps_2}(U_{\eps_1}, (\mathcal{L}U)_{\eps_2})$ and $M_{\eps_1\eps_2}( (\mathcal{L}U)_{\eps_1}, U_{\eps_2})$.

It follows from \eqref{equRR4} and the symbol bounds on $q_{\eps_1\eps_2}$ in \eqref{lemvsym} that
\begin{align}
\label{lemprofLu}
\begin{split}
& {\| P_l \mathcal{L}U(t) \|}_{L^2} \lesssim \e_1^2 \min(2^{l/2}, 2^{-(N_0-2)l}) \langle t \rangle^{-1/2+p_0},
\\
& {\| P_l \mathcal{L}U(t) \|}_{L^\infty} \lesssim \e_1^2 \min(2^{l/2}, 2^{-(N_2-2)l}) \langle t \rangle^{-1}.
\end{split}
\end{align}
Then, using Lemma \ref{touse}(ii) with the symbol bounds \eqref{lemvsym}, \eqref{lemvboundu} and \eqref{lemprofLu}, we can estimate
\begin{equation}\label{cxz31}
\begin{split}
& {\| P_k M_{\eps_1\eps_2} ( U_{\eps_1}(t), (\mathcal{L}U)_{\eps_2}(t) ) \|}_{L^2}
\\
& \lesssim \sum_{k_1,k_2\in\mathbb{Z}} {\| m_{\eps_1\eps_2}^{k,k_1,k_2}\|}_{S^\infty}
  \min \big(  {\| P_{k_1}^\prime U(t) \|}_{L^2} {\| P_{k_2}^\prime \mathcal{L}U(t)\|}_{L^\infty},
  {\| P_{k_1}^\prime U(t) \|}_{L^\infty} {\| P_{k_2}^\prime \mathcal{L}U(t)\|}_{L^2} \big).
	\end{split}
\end{equation}
We use the bound ${\| P_{k_1}^\prime U(t) \|}_{L^2} {\| P_{k_2}^\prime \mathcal{L}U(t)\|}_{L^\infty}$ when ($k_1\leq k_2$ and $2^{k_1}\geq \langle t\rangle ^{-4}$) or when ($k_2\leq k_1$ and $2^{k_2}\leq \langle t\rangle ^{-4}$). We use the bound ${\| P_{k_1}^\prime U(t) \|}_{L^\infty} {\| P_{k_2}^\prime \mathcal{L}U(t)\|}_{L^2}$ when ($k_1\leq k_2$ and $2^{k_1}\leq \langle t\rangle ^{-4}$) or when ($k_2\leq k_1$ and $2^{k_2}\geq \langle t\rangle ^{-4}$). It follows that the left-hand side of \eqref{cxz31} s bounded by
\begin{equation*}
\begin{split}
&C\e_1^2 \langle t\rangle^{-1+p_0} \sum_{k_1\leq k_2,\,2^{k_1} \geq \langle t\rangle^{-4}}
  \mathbf{1}_\mathcal{X}(k,k_1,k_2) 2^{(k-k_1)/2} \min(2^{(1/2-p_1)k_1},2^{-N_0k_1}) \min( 2^{k_2/2}, 2^{-(N_2-2)k_2})\\
	& + C\e_1^2\langle t\rangle^{-1/2+2p_0} \sum_{k_1\leq k_2,\,
  2^{k_1}\leq \langle t\rangle^{-4}}
  \mathbf{1}_\mathcal{X}(k,k_1,k_2) 2^{(k-k_1)/2} 2^{(1-p_1)k_1} \min(2^{k_2/2}, 2^{-(N_0-2)k_2})\\
	&+C\e_1^2 \langle t\rangle^{-1+p_0} \sum_{k_2\leq k_1,\,2^{k_2} \geq \langle t\rangle^{-4}}
  \mathbf{1}_\mathcal{X}(k,k_1,k_2) 2^{(k-k_2)/2} 2^{-N_2k_1^+} \min( 2^{k_2/2}, 2^{-k_2})\\
	& + C\e_1^2\langle t\rangle^{-1/2+2p_0} \sum_{k_2\leq k_1,\,
  2^{k_2}\leq \langle t\rangle^{-4}}
  \mathbf{1}_\mathcal{X}(k,k_1,k_2) 2^{(k-k_2)/2} 2^{-N_2k_1^+}2^{k_2}\\
	&\lesssim \langle t \rangle^{-1+6p_0} \e_1^2 \big( 2^{(1/2-p_1)k}, 2^{-(N_1-1/2)k} \big) .
\end{split}
\end{equation*}
The bound on ${\| P_k M_{\eps_1\eps_2} ( (\mathcal{L}U)_{\eps_1}(t), U_{\eps_2}(t) ) \|}_{L^2}$ is similar. This completes the proof of \eqref{estN'} and the proposition.
\end{proof}

\subsection{The $Z$-norm and proof of Proposition \ref{prodecay}}
For any function $h \in L^2(\R)$ let
\begin{equation}
\label{Znorm}
{\| h \|}_Z := {\big\| \big(|\xi|^{1/10} + |\xi|^{N_2+1/2} \big) \what{h}(\xi) \big\|}_{L^\infty_\xi} .
\end{equation}

\begin{proposition}
\label{proZ}
Let $f$ be defined as in \eqref{prof} and assume that for $T'\in[0,T]$
\begin{equation}
\label{proZapr}
\sup_{t\in[0,T']}\|f(t)\|_Z\leq \e_1 .
\end{equation}
Then
\begin{equation}
\label{proZconc}
\sup_{t\in[0,T']} {\|f(t)\|}_Z \lesssim \e_0.
\end{equation}
\end{proposition}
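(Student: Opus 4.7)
The plan is to prove the improved $Z$-bound by applying Duhamel's formula to the equation \eqref{eqprof} for the profile $f$, and analyzing the resulting oscillatory trilinear integrals via a space-time resonance decomposition combined with a renormalization to absorb the resonant contribution. Writing
\begin{equation*}
\widehat{f}(\xi,t) = \widehat{f}(\xi,0) + \int_0^t e^{-is|\xi|^{3/2}} \widehat{\mathcal{N}'}(\xi,s) \, ds,
\end{equation*}
I would first dispose of the easy contributions. The $Z$-norm of $\widehat{f}(\xi,0)$ is $\lesssim \varepsilon_0$ by \eqref{bing31} together with the bilinear bounds from the proof of Lemma \ref{lemv}. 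The quartic remainder $\mathcal{R}_{\geq 4}$ obeys \eqref{R_4}, so Lemma \ref{interpolation} applied to $\mathcal{R}_{\geq 4}$ and $S \mathcal{R}_{\geq 4}$ gives an $L^\infty_\xi$ bound with integrable time decay, contributing $O(\varepsilon_1^4)$ to the $Z$-norm. The bilinear terms $M_{\epsilon_1 \epsilon_2}((\mathcal{L}U)_{\epsilon_1}, U_{\epsilon_2})$ (and symmetric) are effectively quartic by \eqref{equRR4} and \eqref{lemprofLu}, and are treated analogously.

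The main term is the cubic contribution from $\mathcal{C}_U$. For each signature $(\iota_1\iota_2\iota_3)$ it is an integral of the form
\begin{equation*}
I(\xi,t) = \int_0^t \int_{\mathbb{R}^2} e^{is\Phi(\xi,\eta,\sigma)} m(\xi,\eta,\sigma) \prod_{j=1}^3 \widehat{f^{\iota_j}}(\cdot ,s)\, d\eta\, d\sigma\, ds,
\end{equation*}
with phase $\Phi = -|\xi|^{3/2} + \iota_1 |\xi-\eta|^{3/2} + \iota_2 |\eta-\sigma|^{3/2} + \iota_3|\sigma|^{3/2}$. I would partition the $(\eta,\sigma,s)$ space dyadically according to the sizes of the four frequencies $|\xi|, |\xi-\eta|, |\eta-\sigma|, |\sigma|$, the size of $\Phi$, and the size of $\nabla_{\eta,\sigma}\Phi$. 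In regions where $|\Phi| \gtrsim s^{-1+\delta}$ one integrates by parts in $s$, gaining a factor of $1/s$; the resulting boundary term and the interior term involving $\partial_s \widehat{f}$ (itself a cubic expression in $f$) are bounded using the symbol bounds \eqref{equcubicsym}, \eqref{profLinfty}, \eqref{profL2}, and the a priori assumption \eqref{proZapr}. In regions where $|\Phi|$ is small but $|\nabla_{\eta,\sigma} \Phi|$ is large, one integrates by parts in $\eta$ or $\sigma$, at the cost of a weight $x$ falling on one input; this is absorbed by \eqref{profS}.

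The only region that remains is the space-time resonant one. For the $(++-)$ signature this is concentrated near $\eta=0,\ \sigma=-\xi$ (and the two symmetric points from relabeling the first two $+$ inputs); the other signatures have no output-frequency resonance. A standard stationary phase argument in $(\eta,\sigma)$ at fixed $s$ reduces the leading contribution to
\begin{equation*}
I_{\mathrm{res}}(\xi,t) \approx \int_0^t \frac{i d_1 |\xi|^{3/2}}{s} |\widehat{f}(\xi,s)|^2 \widehat{f}(\xi,s)\, ds,
\end{equation*}
in view of \eqref{val00}. This logarithmically divergent phase is exactly what the renormalization in \eqref{nf50} removes: setting $g(\xi,s) := e^{-iH(\xi,s)} \widehat{f}(\xi,s)$ with $\partial_s H(\xi,s) = d_1|\xi|^{3/2}|\widehat{f}(\xi,s)|^2/s$ (and checking a posteriori that $H$ is real and uniformly bounded in $\xi$ by $\varepsilon_1^2 \log\langle t\rangle$, which is harmless since $\|g\|_Z = \|f\|_Z$), one shows that $\partial_s g$ contains only the previously handled non-resonant cubic contributions and genuinely quartic remainders. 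The hard part of the whole scheme, and what drives the choice of the low-frequency norms \eqref{bing1}--\eqref{bing2}, is that the cubic symbols carry a factor of $|\xi|^{1/2}$ (see \eqref{equcubicsym}) while the non-resonant decomposition loses low-frequency powers of the form $(\text{low freq})^{-1/2}$ from the division by $\Phi$; consequently the quartic byproducts of the integration by parts in $s$ are genuinely singular at low frequencies. Closing the $Z$-estimate for those quartic pieces requires simultaneously the $Z$-bound on $f$, the weighted $L^2$ bound \eqref{profS} on $x\partial_x f$, and the improved low-frequency $L^2$ control propagated by Propositions \ref{proEElow} and \ref{proEEZlow} — this is the main technical obstacle, and the place where the framework developed in sections \ref{secEElow} and \ref{secweightedlow} is indispensable.
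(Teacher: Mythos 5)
Your overall scheme --- Duhamel for the profile, a dyadic space-time resonance analysis with integration by parts in $s$ or in $(\eta,\sigma)$ according to lower bounds on $\Phi$ or $\nabla_{\eta,\sigma}\Phi$, a logarithmic phase renormalization, and interpolation plus weighted $L^2$ bounds for the quartic remainder --- is exactly the paper's (Lemmas \ref{mainlem} and \ref{mainlemma2}). However, there is a genuine error in your classification of the terms in $\mathcal{N}'$. You dismiss $M_{\eps_1\eps_2}((\mathcal{L}U)_{\eps_1},U_{\eps_2})$ and its symmetric counterpart as ``effectively quartic''. They are not: $\mathcal{L}U=\mathcal{Q}_U+\mathcal{C}_U+\mathcal{R}_{\geq 4}$, whose leading part $\mathcal{Q}_U$ is quadratic --- this is precisely what \eqref{lemprofLu} records, since $\|\mathcal{L}U\|_{L^\infty}\lesssim \e_1^2\langle t\rangle^{-1}$ is a quadratic-type bound. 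Hence $M_{\eps_1\eps_2}((\mathcal{Q}_U)_{\eps_1},U_{\eps_2})$ is a cubic term with only $t^{-1}$ decay, i.e.\ borderline non-integrable in time, and it must enter the space-time resonance analysis on the same footing as $\mathcal{C}_U$. The bound \eqref{equRR4} only makes the piece with $\mathcal{C}_U+\mathcal{R}_{\geq 4}$ substituted into the bilinear operators quartic (that is the last line of \eqref{eqprof12}), not the whole bilinear term.

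This omission breaks the argument in two places. First, the resonant coefficient: the full cubic symbol at the space-time resonance is $c^{++-}(\xi,0,-\xi)=d_2|\xi|^{3/2}$ with $d_2=-1/16$ (see \eqref{csymbols} and \eqref{c++-value}), which combines $m_{++-}(\xi,0,-\xi)$ with products such as $m_{+-}(\xi,-\xi)\,q_{++}(2\xi,\xi)$ coming from the composed quadratic terms. Your renormalization uses only $m_{++-}(\xi,0,-\xi)$ from \eqref{val00}, so it removes the wrong logarithmic phase; the leftover resonant contribution from the discarded terms still produces a $\log t$ divergence of $\widehat{f}(\xi,t)$ and the $Z$-bound cannot close. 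Second, the non-resonant treatment of the composed cubic terms is not ``analogous'' to that of $\mathcal{C}_U$: their symbols inherit the $(\text{low frequency})^{-1/2}$ singularity of the normal-form multipliers $m_{\eps_1\eps_2}$ (see \eqref{lemvsym} and \eqref{cbounds}), whereas the symbols $m_{\iota_1\iota_2\iota_3}$ of $\mathcal{C}_U$ satisfy the much better bound \eqref{equcubicsym}; the division-problem singularities you correctly flag at the end of your proposal live precisely in the terms you discarded. The fix is the paper's decomposition \eqref{eqprof11}--\eqref{eqprof12}: replace $\mathcal{L}U$ by $\mathcal{Q}_v$ (and $U$ by $v$) up to genuinely quartic errors, and carry the full cubic symbols $c^{\iota_1\iota_2\iota_3}$ through the resonance analysis of Lemmas \ref{lemma1}--\ref{lemma15}.
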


We now show how to prove Proposition \ref{prodecay} using Proposition \ref{proZ} above.

\begin{proof}[Proof of Proposition \ref{prodecay}]
Define $z(t):= {\|f(t)\|}_Z$ and notice that $z:[0,T]\to\mathbb{R}_+$ is a continuous function. We show first that
\begin{equation}
\label{z_0}
 z(0)\lesssim \e_0 .
\end{equation}
Indeed, using the definitions and Lemma \ref{interpolation} we get
\begin{equation*}
\begin{split}
 z(0) & \lesssim \sup_{k \in \mathbb{Z} } \,( 2^{k/10} + 2^{(N_2+1/2)k}) {\| \what{P_kv(0)} \|}_{L^\infty}
\\
& \lesssim \sup_{k\in\mathbb{Z}} \,(2^{k/10}+2^{(N_2+1/2)k})2^{-k/2}
  {\| \what{P'_kv(0)} \|}_{L^2}^{1/2} \big( 2^k {\|\partial \widehat{P^\prime_kv(0)}\|}_{L^2}
  + {\|\widehat{P'_kv(0)}\|}_{L^2} \big)^{1/2}.
\end{split}
\end{equation*}
Thanks to \eqref{lemvL2}-\eqref{lemvL22} with $t=0$, and the initial data assumptions \eqref{bing31}, we have
\begin{align*}
& {\| \what{P'_k v(0)} \|}_{L^2} \lesssim \e_0 \min (2^{(1/2-p_1)k}, 2^{-(N_0-1/2)k}) ,
\\
& 2^k {\|\partial \widehat{P^\prime_k v(0)}\|}_{L^2} \lesssim \e_0 \min (2^{(1/2-p_1)k}, 2^{-(N_1-1/2)k}) ,
\end{align*}
so that \eqref{z_0} follows, using also $(N_0+N_1)/2 \geq N_2+1$, see \eqref{constants}.

We apply now Proposition \ref{proZ}.
By continuity, $z(t) \lesssim  \e_0$ for any $t\in[0,T]$, provided that
$\e_0$ is  sufficiently small and  $\e_0 \ll \e_1 \leq \e_0^{2/3} \ll 1$ as in \eqref{constants}.
Therefore, for any $k \in \mathbb{Z}$ and $t\in[0,T]$,
\begin{equation}
\label{prodecay1}
(2^{k/10} + 2^{(N_2+1/2)k}) {\| \what{P_kf}(t) \|}_{L^\infty} \lesssim \e_0.
\end{equation}

Recall that we aim to prove, for all $t\in[1,T]$, the decay bound
\begin{align}
\label{prodecay0}
\sup_{t\in[0,T]} \langle t \rangle^{1/2} \sum_{k \in \mathbb{Z}} \big(2^{-k/10} + 2^{N_2k} \big) {\|P_k v(t)\|}_{L^\infty} \lesssim \e_0 ,
\end{align}
which, as already observed, implies \eqref{prodecayconc} via \eqref{lemvinfty2} (the bound for $t\in[0,1]$ is a consequence of Propositions \ref{proEE1} and \ref{proEElow}).
Also, observe that Lemma \ref{dispersive} applied to $v = e^{it\Lambda}f$ gives
\begin{align}
\label{prodecay2}
{\| P_k v(t) \|}_{L^\infty} \lesssim t^{-1/2}2^{k/4} {\|\what{P_k^\prime f}(t)\|}_{L^\infty}
   + t^{-3/5} 2^{-2k/5} \big[ 2^k {\|\partial \what{P_k^\prime f}(t)\|}_{L^2} + {\|\what{P_k^\prime f}(t)\|}_{L^2} \big]
\end{align}
and
\begin{equation}
\label{prodecay3}
{\| P_k v(t) \|}_{L^\infty} \lesssim t^{-1/2} 2^{k/4} {\| P_k^\prime f(t) \|}_{L^1}.
\end{equation}

Recall that from \eqref{profL2} and \eqref{profS} we have
\begin{align}
 \label{prodecay4}
2^k {\|\partial \what{P_k^\prime f}(t)\|}_{L^2} + {\|\what{P_k^\prime f}(t)\|}_{L^2}
  \lesssim \e_0 \langle t \rangle^{6p_0} 2^{(1/2-p_1)k}
\end{align}
We can then use \eqref{prodecay1} and \eqref{prodecay4} in \eqref{prodecay2}, to obtain
\begin{align*}
{\| P_k v(t) \|}_{L^\infty} \lesssim t^{-1/2}2^{k/4} \frac{\e_0}{2^{k/10} + 2^{(N_2+1/2)k}}
   + t^{-3/5} 2^{-2k/5} \e_0 \langle t \rangle^{6p_0} 2^{(1/2-p_1)k} ,
\end{align*}
which is enough to show that
\begin{align}
\label{prodecay20}
\sum_{k\in \mathbb{Z}, \, \langle t \rangle^{-100p_0} \leq 2^k \leq \langle t \rangle^{100p_0}} \big(2^{-k/10} + 2^{N_2k} \big) {\|P_k v(t)\|}_{L^\infty}
\lesssim \e_0 \langle t \rangle^{-1/2}.
\end{align}

Combining \eqref{prodecay3}, with Lemma \ref{interpolation}, and \eqref{profL2}-\eqref{profS} we see that
\begin{equation}
{\| P_k v(t) \|}_{L^\infty} \lesssim t^{-1/2} 2^{k/4} \e_0 {(1+t)}^{6p_0} \min(2^{-kp_1}, 2^{-(N_0+N_1)k/2}).
\end{equation}
Using also \eqref{constants} it follows that
\begin{align}
\label{prodecay30}
\sum_{k\in \mathbb{Z}, \, 2^k \leq \langle t \rangle^{-100p_0}} 2^{-k/10} {\|P_k v(t)\|}_{L^\infty}
  + \sum_{k\in \mathbb{Z}, \, 2^k \geq \langle t \rangle^{100p_0}} 2^{N_2k} {\|P_k v(t)\|}_{L^\infty}
  \lesssim \e_0 {(1+t)}^{-1/2} .
\end{align}
The estimates \eqref{prodecay20} and \eqref{prodecay30} give us \eqref{prodecay0}, and conclude the proof of Proposition \ref{prodecay}.
\end{proof}

\subsection{The equation for $v$ and proof of Proposition \ref{proZ}}
We now derive an equation for $v$ with cubic terms that only involve $v$ itself.
Recall that $u$ solves \eqref{equ100}-\eqref{R_4}, 
$v$ solves \eqref{eqprof}, and they are related via \eqref{defv}-\eqref{nf}.

According to \eqref{equN100} and \eqref{equq_0} we define
\begin{align}
\label{Q_v}
& \mathcal{Q}_v := {\sum_{\ast}}'Q_{\eps_1\eps_2}(v_{\eps_1},v_{\eps_2})=\sum_{(\eps_1\eps_2)\in\{(++),(+-),(--)\}}Q_{\eps_1\eps_2}(v_{\eps_1},v_{\eps_2}),
\end{align}
where the operators $Q_{\eps_1,\eps_2}$ are defined by the symbols $q_{\eps_1\eps_2}$. We use the notation
\begin{align*}
\sum_{\star\star} := \sum_{(\eps_1\eps_2\eps_3) \in \{ (++-), (+++), (--+), (---)\} }
\end{align*}
and rewrite \eqref{eqprof} in the form
\begin{align}
\label{eqprof10}
 \partial_t f = e^{-it\Lambda} (\mathcal{N}^{''} + \mathcal{R}_{\geq 4}^{''})
\end{align}
where
\begin{align}
\label{eqprof11}
\begin{split}
\mathcal{N}^{''} := &
  {\sum_\star}' M_{\eps_1\eps_2}( (\mathcal{Q}_v)_{\eps_1}, v_{\eps_2}) + M_{\eps_1\eps_2}(v_{\eps_1}, (\mathcal{Q}_v)_{\eps_2})
  + \sum_{\star\star}  M_{\eps_1\eps_2\eps_3}(v_{\eps_1},v_{\eps_2}, v_{\eps_3})
\end{split}
\end{align}
and
\begin{align}
\begin{split}
\label{eqprof12}
\mathcal{R}_{\geq 4}^{''} := &
  {\sum_\star}'  \big[ M_{\eps_1\eps_2}\big( (\mathcal{Q}_U)_{\eps_1}, U_{\eps_2} \big)
  - M_{\eps_1\eps_2} \big((\mathcal{Q}_v)_{\eps_1}, v_{\eps_2}\big) \big]
  \\
+ & {\sum_\star}' \big[ M_{\eps_1\eps_2}\big(U_{\eps_1}, (\mathcal{Q}_U)_{\eps_2}\big)
  - M_{\eps_1\eps_2}\big(v_{\eps_1}, (\mathcal{Q}_v)_{\eps_2}\big) \big]
\\
+ & \sum_{\star\star} \big[ M_{\eps_1\eps_2\eps_3}(U_{\eps_1}, U_{\eps_2}, U_{\eps_3})
  - M_{\eps_1\eps_2\eps_3}(v_{\eps_1},v_{\eps_2}, v_{\eps_3}) \big] + \mathcal{R}_{\geq 4}
\\ + & {\sum_\star}' M_{\eps_1\eps_2} \big( (\mathcal{C}_U+\mathcal{R}_{\geq 4})_{\eps_1}, U_{\eps_2} \big)
   + M_{\eps_1\eps_2} \big(U_{\eps_1}, (\mathcal{C}_U+\mathcal{R}_{\geq 4})_{\eps_2} \big) .
\end{split}
\end{align}
The point of the above decomposition is to identify $\mathcal{N}^{''}$
as the main ``cubic'' part of the nonlinearity, which can be
expressed only in terms of $v(t)=e^{it\Lambda}f(t)$. $\mathcal{R}_{\geq 4}^{''}$ can be thought of as a quartic
remainder, due to the quadratic nature of $u-v$, see Lemma \ref{lemv}.

To analyze the equation \eqref{eqprof10}, and identify the asymptotic logarithmic phase correction,
we need to distinguish among different types of interactions in the nonlinearity $\mathcal{N}^{''}$. We write
\begin{align}
\label{N''}
\mathcal{N}^{''} & = \mathcal{C}^{++-} + \mathcal{C}^{+++} + \mathcal{C}^{--+} + \mathcal{C}^{---},
\end{align}
where, recalling that the operators $Q_{++},M_{++},Q_{--},M_{--}$ are symmetric, 
\begin{align}
\label{C++-}
\begin{split}
\mathcal{C}^{++-} & = \mathcal{C}^{++-}(v,v,\bar{v})
  := 2M_{++}( v, Q_{+-}(v,\bar{v}))+ M_{+-}( Q_{++}(v,v), \bar{v})\\
	&+ M_{+-}(v, \bar{Q_{+-}(v,\bar{v})})+ 2M_{--}( \bar{Q_{--}(\bar{v},\bar{v})}, \bar{v})
  + M_{++-}(v,v, \bar{v})  ,
\end{split}
\end{align}
\begin{align*}
\begin{split}
\mathcal{C}^{+++} &= \mathcal{C}^{+++}(v,v,v)
  := 2M_{++}(v, Q_{++}(v,v))+ M_{+-}(v, \bar{Q_{--}(\bar{v},\bar{v})}) + M_{+++}(v,v,v) ,
\end{split}
\\
\begin{split}
\mathcal{C}^{--+} &= \mathcal{C}^{--+}(\bar{v},\bar{v},v)
  := 2M_{++}( v, Q_{--}(\bar{v},\bar{v}))+ M_{+-}( Q_{+-}(v,\bar{v}), \bar{v})+ M_{+-}(v, \bar{Q_{++}(v,v)})
  \\ & + 2M_{--}( \bar{Q_{+-}(v,\bar{v})}, \bar{v}) + M_{--+}(\bar{v},\bar{v}, v)  ,
\end{split}
\\
\begin{split}
\mathcal{C}^{---} &= \mathcal{C}^{---}(\bar{v},\bar{v},\bar{v})
  := M_{+-}( Q_{--}(\bar{v},\bar{v}), \bar{v})+ 2M_{--}( \bar{v}, \bar{Q_{++}(v,v)} )  + M_{---}(\bar{v},\bar{v},\bar{v}).
\end{split}
\end{align*}

Notice that
\begin{align}
 \bar{Q_{\eps_1\eps_2}(g_1, g_2)} = - Q_{\eps_1\eps_2}(\bar{g_1}, \bar{g_2}).
\end{align}
Letting $v^+ = v, v^- = \bar{v}$, we expand
\begin{equation}
\label{Chat}
\what{\mathcal{C}^{\iota_1\iota_2\iota_3}}(\xi)=\frac{i}{4\pi^2} \int_{\R\times\R} c^{\iota_1\iota_2\iota_3}(\xi,\eta,\sigma)
  \what{v^{\iota_1}}(\xi-\eta) \what{v^{\iota_2}}(\eta-\sigma)\what{v^{\iota_3}}(\sigma)\,d\eta d\sigma
\end{equation}
for $(\iota_1,\iota_2,\iota_3)\in\{(++-),(+++),(--+),(---)\}$, where
\begin{equation}
\label{csymbols}
\begin{split}
ic^{++-}(\xi,\eta,\sigma) :&=2m_{++}(\xi,\eta) q_{+-}(\eta,\sigma)+ m_{+-}(\xi,\sigma) q_{++}(\xi-\sigma, \xi-\eta)\\
& - m_{+-}(\xi,\eta) q_{+-}(\eta, \eta-\sigma)- 2m_{--}(\xi,\sigma) q_{--}(\xi-\sigma, \xi-\eta)+ m_{++-}(\xi,\eta,\sigma),
\\
ic^{+++}(\xi,\eta,\sigma) :&=2m_{++}(\xi,\eta) q_{++}(\eta,\sigma)- m_{+-}(\xi,\eta) q_{--}(\eta,\sigma)+ m_{+++}(\xi,\eta,\sigma) ,\\
ic^{--+}(\xi,\eta,\sigma) :&=2m_{++}(\xi,\sigma) q_{--}(\xi-\sigma,\xi-\eta)+ m_{+-}(\xi,\xi-\eta) q_{+-}(\eta,\eta-\sigma)\\
&- m_{+-}(\xi,\xi-\sigma) q_{++}(\xi-\sigma, \xi-\eta)- 2m_{--}(\xi,\eta) q_{+-}(\eta,\sigma)+  m_{--+}(\xi,\eta,\sigma) ,
\\
ic^{---}(\xi,\eta,\sigma) :&= m_{+-}(\xi,\xi-\eta) q_{--}(\eta,\sigma)- 2m_{--}(\xi,\eta) q_{++}(\eta,\sigma)  + m_{---}(\xi,\eta,\sigma) .
\end{split}
\end{equation}

Using the definitions of the quadratic symbols \eqref{nf} and \eqref{equq_0}, we see that the cubic symbols $c^{\iota_1\iota_2\iota_3}$ are real-valued.
Recalling the formulas
\begin{align*}
\what{v^+}(\xi,t) = \what{f}(\xi,t)e^{it|\xi|^{3/2}} , \qquad \what{v^-}(\xi,t) = \what{\bar{f}}(\xi,t)e^{-it|\xi|^{3/2}} ,
\end{align*}
we can rewrite
\begin{equation}\label{nf46}
\mathcal{F} \big(e^{-it\Lambda} \mathcal{N}^{''}(t)\big)(\xi,t)
 = \frac{i}{4\pi^2} \big[ I^{++-}(\xi,t) + I^{+++}(\xi,t) + I^{--+}(\xi,t) + I^{---}(\xi,t) \big],
\end{equation}
where
\begin{equation}\label{nf47}
\begin{split}
I^{\iota_1\iota_2\iota_3}(\xi):=\int_{\R\times\R}
  & e^{it(-|\xi|^{3/2}+\iota_1|\xi-\eta|^{3/2}+\iota_2|\eta-\sigma|^{3/2}+\iota_3|\sigma|^{3/2})}
\\
  & \times c^{\iota_1\iota_2\iota_3}(\xi,\eta,\sigma)
  \what{f^{\iota_1}}(\xi-\eta) \what{f^{\iota_2}}(\eta-\sigma) \what{f^{\iota_3}}(\sigma) \,d\eta d\sigma
\end{split}
\end{equation}
for $(\iota_1,\iota_2,\iota_3)\in\{(++-),(+++),(--+),(---)\}$.
The formulas \eqref{eqprof10}-\eqref{eqprof12} become
\begin{equation}\label{nf48}
(\partial_t\what{f})(\xi,t)=\frac{i}{4\pi^2}\big[I^{++-}(\xi,t) + I^{+++}(\xi,t) + I^{--+}(\xi,t) + I^{---}(\xi,t) \big]
+ e^{-it|\xi|^{3/2}}\what{\mathcal{R}_{\geq 4}^{''}}(\xi,t).
\end{equation}

In analyzing the formula \eqref{nf48}, the main contribution comes from the stationary points of the phase functions $(t,\eta,\sigma)\to t\Psi^{\iota_1\iota_2\iota_3}(\xi,\eta,\sigma)$, where
\begin{equation}\label{nf49}
\Psi^{\iota_1\iota_2\iota_3}(\xi,\eta,\sigma)
  := -|\xi|^{3/2}+\iota_1|\xi-\eta|^{3/2}+\iota_2|\eta-\sigma|^{3/2}+\iota_3|\sigma|^{3/2}.
\end{equation}
More precisely, one needs to understand the contribution of the {\it{spacetime resonances}},
i.e., the points where
\begin{align*}
 \Psi^{\iota_1\iota_2\iota_3}(\xi,\eta,\sigma) = (\partial_\eta\Psi^{\iota_1\iota_2\iota_3})(\xi,\eta,\sigma)
  = (\partial_\sigma\Psi^{\iota_1\iota_2\iota_3})(\xi,\eta,\sigma) = 0 .
\end{align*}
In our case, it can be easily verified that the only spacetime resonances (except for $(\xi,\eta,\sigma)=(0,0,0)$) correspond to $(\iota_1\iota_2\iota_3)=(++-)$
and $(\xi,\eta,\sigma) = (\xi,0,-\xi)$.
Moreover, the contribution from these points is not absolutely integrable in time,
and we have to identify and eliminate its leading order term using a suitable logarithmic phase correction.
More precisely, see also \eqref{c++-value}, we define, with $d_2=-1/16$, see \eqref{c++-value},
\begin{equation}\label{nf50}
\begin{split}
& \widetilde{c}(\xi) := -\frac{8\pi|\xi|^{1/2}}{3} c^{++-}(\xi,0,-\xi) = -\frac{8\pi d_2|\xi|^2}{3}=\frac{\pi|\xi|^2}{6}  ,
\\
& L(\xi,t) := \frac{\widetilde{c}(\xi)}{4\pi^2}\int_0^t {|\what{f}(\xi,s)|}^2 \frac{1}{s+1}\,ds,
\\
& g(\xi,t) := e^{iL(\xi,t)} \what{f}(\xi,t).
\end{split}
\end{equation}
The formula \eqref{nf48} then becomes
\begin{equation}\label{nf51}
\begin{split}
(\partial_tg)(\xi,t) &= \frac{i}{4\pi^2} e^{iL(\xi,t)}
  \Big[I^{++-}(\xi,t) + \widetilde{c}(\xi)\frac{|\what{f}(\xi,t)|^2}{t+1} \what{f}(\xi,t) \Big]
\\
&+ \frac{i}{4\pi^2} e^{iL(\xi,t)} \big[I^{+++}(\xi,t) + I^{--+}(\xi,t) + I^{---}(\xi,t) \big]
\\ &+ \, e^{-it|\xi|^{3/2}} e^{iL(\xi,t)} \what{\mathcal{R}_{\geq 4}^{''}}(\xi,t).
\end{split}
\end{equation}
Notice that the phase $L$ is real-valued.
Therefore, to complete the proof of Proposition \ref{proZ}, it suffices to prove the following main lemma:

\begin{lemma}\label{mainlem}
Recall the bounds \eqref{profL2}-\eqref{profS} and the
apriori assumption \eqref{proZapr}. Then, for any $m \in \{1,2,\ldots\}$ and any $t_1 \leq t_2 \in [2^{m}-2,2^{m+1}] \cap [0,T']$,
we have
\begin{equation}
\label{Zcontrolconc}
{\big\| (|\xi|^{1/10} + |\xi|^{N_2+1/2}) [g(\xi,t_2)-g(\xi,t_1)] \big\|}_{L^\infty_\xi} \lesssim \e_0 2^{-p_0m}.
\end{equation}
\end{lemma}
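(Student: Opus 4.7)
\medskip
\noindent\textbf{Proof plan for Lemma \ref{mainlem}.}

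The plan is to integrate the evolution equation \eqref{nf51} from $t_1$ to $t_2$ and bound the three resulting contributions in the $L^\infty_\xi$ norm with the weight $|\xi|^{1/10}+|\xi|^{N_2+1/2}$. The quartic remainder $e^{-it|\xi|^{3/2}}e^{iL(\xi,t)}\widehat{\mathcal{R}_{\geq 4}^{''}}$ is the easiest: expanding \eqref{eqprof12} one sees that every term is a product of one factor of size $u-v$ (controlled by Lemma \ref{lemv}) times three copies of $U$ or $v$, plus the genuinely quartic piece $\mathcal{R}_{\geq 4}$ which obeys \eqref{R_4}. Applying Lemma \ref{interpolation} to reduce $L^\infty_\xi$ of the Fourier transform to a weighted $L^2$, combined with the quartic improvement, yields an $O(2^{-m/4})$ bound after integration on $[t_1,t_2]$, which is far better than required.

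For the cubic terms $I^{+++}, I^{--+}, I^{---}$, the phases $\Psi^{\iota_1\iota_2\iota_3}$ have no space-time resonances outside the origin. The first step is a dyadic Littlewood--Paley decomposition in the four frequencies $|\xi|\sim 2^k$, $|\xi-\eta|\sim 2^{k_1}$, $|\eta-\sigma|\sim 2^{k_2}$, $|\sigma|\sim 2^{k_3}$, together with bounds on the symbols $c^{\iota_1\iota_2\iota_3}$ derived from \eqref{nf} and \eqref{lemvsym}. The trilinear integrals are then estimated in one of three standard ways: (i) in the main regime where all frequencies are comparable and of size $\approx 1$, apply stationary phase in $(\eta,\sigma)$ to extract a $t^{-1}$ factor, then bound each $\widehat{f^{\iota_j}}$ using the $Z$-norm; (ii) when some frequency is much smaller than the others, integrate by parts in $\eta$ or $\sigma$ (the gradient of $\Psi$ is comparable to the largest frequency to the power $1/2$) to generate decay; (iii) when phases oscillate rapidly in time, integrate by parts in $s$, paying quadratic cost which is offset by $L^2$ energy control from Propositions \ref{proEE1}--\ref{proEEZlow}. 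The low-frequency singularity in the $c^{\iota\iota\iota}$ symbols is absorbed by the improved $L^2$ bound $\||\xi|^{-1/2+p_1}\hat f\|_{L^2}\lesssim \e_1\langle t\rangle^{p_0-2p_1}$ coming from the low-frequency energies.

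The key term is $I^{++-}$, which contains the unique (non-origin) space-time resonance at $(\eta,\sigma)=(0,-\xi)$. Here I would split
\begin{equation*}
I^{++-}(\xi,t) = I^{++-}_{\rm res}(\xi,t) + I^{++-}_{\rm non-res}(\xi,t),
\end{equation*}
where the first piece localizes $(|\eta|,|\xi+\sigma|)$ to a ball of radius $\sim t^{-1/2+\delta}$ around the resonance, and the second piece is its complement. On the non-resonant side, the same three mechanisms as above (stationary phase in the bulk, integration by parts in $\eta,\sigma$ off the critical set, integration by parts in $s$ when the phase is large) yield an integrable-in-time bound $\lesssim \e_0 \e_1^2 (1+t)^{-1-\delta'}$. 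On the resonant side, a Taylor expansion of the smooth factor of $c^{++-}$ around $(\xi,0,-\xi)$ combined with the stationary phase lemma on the Gaussian-type oscillatory integral in $(\eta,\sigma)$ (the Hessian of $\Psi^{++-}$ at the resonance is non-degenerate in the directions orthogonal to the origin) isolates the leading contribution
\begin{equation*}
-\widetilde c(\xi)\,\frac{|\widehat f(\xi,t)|^2}{t+1}\,\widehat f(\xi,t),
\end{equation*}
which is exactly cancelled by the phase-correction term built into \eqref{nf51} via $L(\xi,t)$. The remainder of the Taylor expansion gains an extra power of $t^{-1/2+\delta}$ on each side, hence is integrable.

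The principal obstacle, and what forces most of the technical work, is the combination of the strong space-time resonance with the $(\text{low frequency})^{-1/2}$ singularities of the symbols $c^{\iota_1\iota_2\iota_3}$ coming from the normal-form denominators in \eqref{nf}. These singularities are not compatible with a pure $L^\infty_\xi$ estimate weighted only by $|\xi|^{1/10}$; the remedy is to always pair the $Z$-norm of the highest-output factor with the low-frequency $L^2$ control from Propositions \ref{proEElow} and \ref{proEEZlow} for any argument whose frequency lies below a suitable threshold, and with dispersive $L^\infty$ decay otherwise, choosing the threshold dyadically as a function of $t$. The balance of these three pieces is delicate and must be performed regime by regime; this is the content of the subsections on \emph{prooftech1}, \emph{prooftech2}, and \emph{prooftech3}, which together yield the improvement factor $2^{-p_0 m}$ in \eqref{Zcontrolconc}.
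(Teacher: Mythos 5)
Your plan follows the paper's proof essentially verbatim: after discarding extreme frequencies via Lemma \ref{interpolation} and the bounds \eqref{profL2}--\eqref{profS}, the cubic integrals are decomposed dyadically and treated by integration by parts in $\eta,\sigma$ or in $s$ in the non-resonant regimes, by stationary phase (using the quadratic expansion of the phase near $(\eta,\sigma)=(0,-\xi)$) plus cancellation against the logarithmic correction $L(\xi,t)$ at the unique $++-$ space-time resonance, and the quartic remainder is controlled through the $L^2$ and $S$-weighted $L^2$ bounds of Lemma \ref{lemmaestR} combined with Lemma \ref{interpolation}. The one execution detail to watch in the last step is that the interpolation lemma must be applied to the time-integrated quantity, so that the $s\partial_s$ component of $\xi\partial_\xi\big[e^{-is|\xi|^{3/2}}\widehat{\mathcal{R}_{\geq 4}^{''}}\big]$ can be removed by an integration by parts in $s$; a slice-by-slice application would require a bound on $\|x\,\mathcal{R}_{\geq 4}^{''}(s)\|_{L^2}$, which is not directly available from $\|S\mathcal{R}_{\geq 4}^{''}(s)\|_{L^2}$.
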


\section{Proof of Lemma \ref{mainlem}}\label{secprmainlem}
In this section we provide the proof of Lemma \ref{mainlem}.
We first notice that the desired conclusion can be easily proved for large and small enough frequencies.
Indeed, for any $t \in [2^{m}-2,2^{m+1}] \cap[0,T']$, and any $|\xi| \approx 2^k$ with $k \in \mathbb{Z}$ and
\begin{align*}
k \in (-\infty, -80p_0m] \cup [20p_0m,\infty) ,
\end{align*}
we can use the interpolation inequality \eqref{interp1} and the bounds \eqref{profL2}-\eqref{profS} to obtain
\begin{equation*}
\begin{split}
\big( |\xi|^{1/10} + |\xi|^{N_2+1/2} \big) |g(\xi,t)| & \lesssim (2^{k/10} + 2^{(N_2+1/2)k}) \big[ 2^{-k} {\|\what{P_kf}\|}_{L^2} \big(2^k {\|\partial \what{P_kf} \|}_{L^2}
  + {\|\what{P_kf}\|}_{L^2} \big) \big]^{1/2}
  \\
& \lesssim \e_0 \langle t \rangle^{6p_0} \min \big( 2^{(1/10-p_0)k} , 2^{-k/2} \big) ,
\\
& \lesssim \e_0\langle t \rangle^{-p_0},
\end{split}
\end{equation*}
having also used $(N_0+N_1)/2 \geq N_2+1$, see \eqref{constants}.

It remains to prove \eqref{Zcontrolconc} in the intermediate range $|\xi| \in [(1+t)^{-80p_0},(1+t)^{20p_0}]$.
For $k\in\mathbb{Z}$ let $f_k^+ := P_k f$ and $f_k^- := P_k \bar{f}$ and, for any $k_1,k_2,k_3 \in \mathbb{Z}$,  let
\begin{align}
\label{rak0}
\begin{split}
I^{\iota_1\iota_2\iota_3}_{k_1,k_2,k_3}(\xi,t):=\int_{\R\times\R} &
  e^{it(-|\xi|^{3/2} + \iota_1|\xi-\eta|^{3/2} + \iota_2|\eta-\sigma|^{3/2} + \iota_3|\sigma|^{3/2})}
\\
& \times c^{\iota_1\iota_2\iota_3}(\xi,\eta,\sigma) \what{f^{\iota_1}_{k_1}}(\xi-\eta)
  \what{f^{\iota_2}_{k_2}}(\eta-\sigma) \what{f^{\iota_3}_{k_3}}(\sigma)\,d\eta d\sigma.
\end{split}
\end{align}
Using \eqref{profL2}-\eqref{profS}, \eqref{proZapr} and Lemma \ref{dispersive} we know that for any $t\in[0,T']$ and $l \leq 0$
\begin{align}
\label{boundsflow}
\begin{split}
{\|\what{f_l^{\pm}(t)}\|}_{L^2} + 2^l {\|\partial\what{f_l^{\pm}(t)} \|}_{L^2}
  & \lesssim \e_1 2^{(1/2-p_0)l} \langle t \rangle^{6p_0},
\\
{\|e^{\pm it\Lambda}f_l^{\pm}(t)\|}_{L^\infty} & \lesssim \e_1 2^{l/10} \langle t \rangle^{-1/2},
\\
{\|\what{f_l^{\pm}(t)} \|}_{L^\infty} & \lesssim \e_1 2^{-l/10} ,
\end{split}
\end{align}
whereas, for $l\geq 0$, using also Lemma \ref{interpolation} and \eqref{disperseEa},
\begin{align}
\label{boundsfhigh}
\begin{split}
{\|\what{f_l^{\pm}(t)}\|}_{L^2} & \lesssim \e_1 2^{-(N_0-1/2)l} \langle t \rangle^{6p_0},
\\
{\|\what{f_l^{\pm}(t)}\|}_{L^2} + 2^l {\|\partial\what{f_l^{\pm}(t)} \|}_{L^2}
  & \lesssim \e_1 2^{-(N_1-1/2)l} \langle t \rangle^{6p_0},
\\
{\|e^{\pm it\Lambda}f_l^{\pm}(t)\|}_{L^\infty} & \lesssim \e_1 2^{-(N_2-1/2)l} \langle t \rangle^{-1/2},
\\
{\|\what{f_l^{\pm}(t)} \|}_{L^\infty} & \lesssim \e_1 2^{-(N_2+1/2) l} .
\end{split}
\end{align}

Using \eqref{csymbols} and the symbol bounds \eqref{equcubicsym} and \eqref{lemvsym}, it is not hard to see that
\begin{equation}
\label{boundcsym0}
\begin{split}
\big| c^{\iota_1\iota_2\iota_3} &(\xi,\eta,\sigma) \cdot \varphi'_{k_1}(\xi-\eta)\varphi'_{k_2}(\eta-\sigma)\varphi'_{k_3}(\sigma)\big|
\\
&\lesssim 2^{3\max(k_1,k_2,k_3)/2} 2^{[\mathrm{med}(k_1,k_2,k_3)-\min(k_1,k_2,k_3)]/2} . 
\end{split}
\end{equation}
Using this one can decompose the integrals $I^{\iota_1\iota_2\iota_3}$ into sums of the integrals
$I_{k_1,k_2,k_3}^{\iota_1\iota_2\iota_3}$,  and then estimate the terms corresponding to
large frequencies, and the terms corresponding to small frequencies (relative to $m$),
using only the bounds \eqref{boundsfhigh}-\eqref{boundcsym0}.
As in 
\cite[Section 6]{IoPu2}, we can then reduce matters to proving the following:

\begin{lemma}\label{mainlemma2}
Assume that $k \in [-80p_0m, 20p_0m]$, $|\xi| \in [2^k,2^{k+1}] \cap \mathbb{Z}$,
$m \geq 1$, $t_1\leq t_2 \in [2^{m}-2,2^{m+1}] \cap [0,T']$, and $k_1,k_2,k_3$ are integers satisfying
\begin{equation}
\label{mainlemma2cond}
\begin{split}
& k_1,k_2,k_3\in [-3m, 3m/N_0 - 1000],
\\
& \min(k_1,k_2,k_3)/2 + 3\mathrm{med}(k_1,k_2,k_3)/2 \geq -m(1+10p_0).
\end{split}
\end{equation}
Then
\begin{equation}
\label{mainlemma2conc1}
\Big|\int_{t_1}^{t_2} e^{iL(\xi,s)} \Big[ I^{++-}_{k_1,k_2,k_3}(\xi,s)
  + \widetilde{c}(\xi)\frac{\widehat{f^+_{k_1}}(\xi,s)\widehat{f^+_{k_2}}(\xi,s)\widehat{f^-_{k_3}}(-\xi,s)}{s+1}\Big]\,ds\Big|
  \lesssim \e_1^32^{-120p_0m},
\end{equation}
and, for $(\iota_1,\iota_2,\iota_3)\in\{(+++),(--+),(---)\}$,
\begin{equation}
\label{mainlemma2conc2}
\Big| \int_{t_1}^{t_2}e^{iL(\xi,s)} I^{\iota_1\iota_2\iota_3}_{k_1,k_2,k_3}(\xi,s) \,ds\Big| \lesssim \e_1^3 2^{-120p_0m}.
\end{equation}
Moreover
\begin{equation}
\label{mainlemma2conc3}
\Big|\int_{t_1}^{t_2}e^{iL(\xi,s)}e^{-is|\xi|^{3/2}} \what{\mathcal{R}_{\geq 4}^{''}}(\xi,s)\,ds\Big| \lesssim \eps_1^3 2^{-120p_0m}.
\end{equation}
\end{lemma}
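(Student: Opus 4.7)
\medskip

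\textbf{Proof proposal for Lemma \ref{mainlemma2}.}

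The plan is to treat the three bounds \eqref{mainlemma2conc1}--\eqref{mainlemma2conc3} by separate mechanisms, since the first involves extracting a genuine stationary-phase contribution while the other two are purely non-resonant. Throughout, I will work on the fixed dyadic shell $|\xi|\approx 2^k$, $|\xi-\eta|\approx 2^{k_1}$, $|\eta-\sigma|\approx 2^{k_2}$, $|\sigma|\approx 2^{k_3}$, and exploit the apriori $L^2$, weighted-$L^2$, $L^\infty_\xi$ and $L^\infty_x$ bounds \eqref{boundsflow}--\eqref{boundsfhigh} on the modulated profile $f$, together with the symbol bound \eqref{boundcsym0}. Note that the phase $L(\xi,s)$ is real-valued, so factors of $e^{iL}$ never help or hurt moduli, but we must be careful when integrating by parts in time that $\partial_s L$ introduces an extra factor of $|\widehat{f}(\xi,s)|^2/(s+1)$ which is under apriori $L^\infty_\xi$ control by \eqref{proZapr}.

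For \eqref{mainlemma2conc2} (the genuinely non-resonant signs $(+++),(--+),(---)$), the phase $\Psi^{\iota_1\iota_2\iota_3}$ of \eqref{nf49} has no critical points in $(\eta,\sigma)$ at which simultaneously $\Psi=0$ on the support of $c^{\iota_1\iota_2\iota_3}$, by the standard fact that the dispersion relation $|\xi|^{3/2}$ is superadditive for like signs. Concretely, $|\partial_\eta \Psi|+|\partial_\sigma \Psi|\gtrsim 2^{\mathrm{med}(k_1,k_2,k_3)/2}$ on the support, while $|\Psi|\gtrsim 2^{3\max(k_1,k_2,k_3)/2}$. I will split each such $I^{\iota_1\iota_2\iota_3}_{k_1,k_2,k_3}$ into two regimes depending on whether $2^m\cdot 2^{3\max(k_j)/2}\geq 2^{\delta m}$ (use integration by parts in $s$, gaining $s^{-1}$ and paying $\partial_s\widehat{f}=e^{-is\Lambda}\mathcal{N}''$ which is cubic in $f$ and therefore already $O(\varepsilon_1^3 s^{-1})$ by \eqref{boundsflow}--\eqref{boundsfhigh}) or not (estimate directly in $L^\infty_\xi\cdot L^2_{\eta-\sigma}\cdot L^2_\sigma$ via Cauchy--Schwarz, bounding one profile in $L^\infty$ and two in $L^2$, and using the symbol bound \eqref{boundcsym0}); the combinatorial loss from summing over $k_1,k_2,k_3$ satisfying \eqref{mainlemma2cond} is absorbed by the $2^{-120p_0m}$ room on the right-hand side.

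For \eqref{mainlemma2conc1}, the hard case, the phase $\Psi^{++-}$ has its \emph{only} space-time resonance at $(\eta,\sigma)=(0,-\xi)$, where $\nabla^2_{\eta,\sigma}\Psi$ is a non-degenerate $2\times 2$ matrix whose determinant can be computed explicitly from $|\xi|^{3/2}$; this is exactly the stationary-phase contribution we must subtract. I will first reduce to the range where $\min(2^{k_1},2^{k_2})\lesssim 2^{-m/2+\delta m}$ and the remaining frequency is $\approx 2^k$ (all other dyadic triples are treated as in the non-resonant case above, by $\eta$- or $\sigma$-integration by parts since $|\nabla\Psi|$ stays large). On this range, decompose into regions $|\eta|\approx 2^{p}$ and $|\sigma+\xi|\approx 2^{q}$ for $p,q\leq 0$, localize the amplitude and replace $c^{++-}(\xi,\eta,\sigma)$, $\widehat{f^+_{k_1}}(\xi-\eta)$, $\widehat{f^+_{k_2}}(\eta-\sigma)$, $\widehat{f^-_{k_3}}(\sigma)$ by their values at the stationary point plus a Taylor remainder. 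The principal Gaussian integral contributes
\[
\frac{2\pi}{s\sqrt{|\det\nabla^2\Psi(\xi,0,-\xi)|}}\,e^{it\Psi(\xi,0,-\xi)}\, c^{++-}(\xi,0,-\xi)\,\widehat{f^+}(\xi,s)\widehat{f^+}(\xi,s)\widehat{f^-}(-\xi,s),
\]
which, after computing $\det\nabla^2\Psi(\xi,0,-\xi)$ and using $\Psi(\xi,0,-\xi)=-|\xi|^{3/2}+|\xi|^{3/2}+|\xi|^{3/2}-|\xi|^{3/2}=0$, is exactly $-\widetilde{c}(\xi)(s+1)^{-1}\widehat{f^+_{k_1}}\widehat{f^+_{k_2}}\widehat{f^-_{k_3}}$ by the definition \eqref{nf50} and the identification \eqref{val00}; this produces the cancellation with the subtracted term. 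The Taylor remainders are estimated by the weighted-$L^2$ bound on $\partial_\xi\widehat{f}$ from \eqref{boundsflow}--\eqref{boundsfhigh}, yielding $\varepsilon_1^3 s^{-1-\delta}$ which is integrable in $s\in[t_1,t_2]$ and gives the required decay.

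The estimate \eqref{mainlemma2conc3} is the easiest: using the $L^2$ bound on $\mathcal{R}_{\geq 4}$ from \eqref{R_4}, together with the analogous $L^2$ control on $\mathcal{R}_{\geq 4}^{''}$ which follows from Lemma \ref{lemv} (since $U-v$ is quadratic with extra time decay by \eqref{lemvL22}) and Bernstein on the dyadic shell $|\xi|\approx 2^k$ with $k\in[-80p_0m, 20p_0m]$, one gets $|\widehat{\mathcal{R}_{\geq 4}^{''}}(\xi,s)|\lesssim 2^{-k/2}\|\mathcal{R}_{\geq 4}^{''}\|_{L^2}\lesssim \varepsilon_1^3 2^{40p_0 m}\cdot 2^{-5m/4}$, which after integration in $s\in[2^m-2,2^{m+1}]$ is $O(\varepsilon_1^3 2^{-m/4+40p_0 m})$, comfortably beating $2^{-120p_0m}$.

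The main obstacle is the third paragraph: carrying out the stationary-phase analysis while keeping uniform control in $\xi$ across the window $|\xi|\in[2^{-80p_0m},2^{20p_0m}]$, and making the Hessian computation match \eqref{val00} on the nose so that the correction coefficient $\widetilde{c}(\xi)=\pi|\xi|^2/6$ comes out with the right sign and constant. The weighted-$L^2$ bound \eqref{profS} on $x\partial_x f$ is just barely sufficient to absorb the Taylor remainders in the stationary-phase expansion, and one must carefully balance the dyadic parameters $p,q$ in the $(\eta,\sigma+\xi)$-localization against the time-resonance integration-by-parts tail.
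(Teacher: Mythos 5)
Your overall architecture (stationary phase at the space-time resonance for $(++-)$, spatial/temporal integration by parts elsewhere, and a direct bound on the quartic remainder) is the right one, and your Hessian computation for the resonant contribution is consistent with the normalization \eqref{nf50}, \eqref{val00}. But there are two genuine gaps.

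First, your treatment of \eqref{mainlemma2conc3} rests on a false inequality. You claim $|\what{\mathcal{R}_{\geq 4}^{''}}(\xi,s)|\lesssim 2^{-k/2}\|\mathcal{R}_{\geq 4}^{''}\|_{L^2}$ "by Bernstein on the dyadic shell." Bernstein goes the other way: for a function $F$ with $\what{F}$ supported where $|\xi|\approx 2^k$ one has $\|\what F\|_{L^2}\lesssim 2^{k/2}\|\what F\|_{L^\infty}$, but an $L^\infty_\xi$ bound on $\what F$ cannot be extracted from $\|F\|_{L^2}$ alone (it requires $\|F\|_{L^1}$, i.e.\ spatial localization). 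This is exactly why the paper proves, in Lemma \ref{lemmaestR}, the \emph{weighted} bound $\|P_l S\mathcal{R}_{\geq 4}^{''}\|_{L^2}\lesssim \e_1^4\langle t\rangle^{-1+20p_0}$ in addition to the unweighted one, and then applies the interpolation inequality of Lemma \ref{interpolation} to the time-integrated quantity $F(\xi)=\varphi_k(\xi)\int_{t_1}^{t_2}e^{iL}e^{-is|\xi|^{3/2}}\what{\mathcal{R}''_{\geq 4}}(\xi,s)\,ds$, controlling $\|\partial_\xi F\|_{L^2}$ by splitting $\xi\partial_\xi$ into the commuting combination $\xi\partial_\xi-(3/2)s\partial_s$ (which produces $S$ acting on the remainder) plus $s\partial_s$ (handled by integration by parts in $s$). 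Without the weighted estimate your argument for \eqref{mainlemma2conc3} does not close. (Your decay rate $\langle t\rangle^{-5/4}$ for $\mathcal{R}''_{\geq 4}$ is also more than is actually available — the paper only gets $\langle t\rangle^{-9/8+10p_0}$ — but that is a minor point.)

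Second, the uniform phase lower bounds you invoke for the non-resonant estimates are not valid on the whole support, so the dichotomy "spatial IBP when the gradient is large, direct estimate otherwise" does not cover all dyadic configurations. For $(\iota_1\iota_2\iota_3)=(--+)$ the phase $\Psi^{--+}$ \emph{does} vanish on nontrivial sets away from the frequency origin (this is the extra resonant structure created by the superlinear dispersion $|\xi|^{3/2}$ that the paper emphasizes), so $|\Psi|\gtrsim 2^{3k_{\max}/2}$ is false in general; and $|\partial_\eta\Psi|=|\Lambda'(\xi+\eta)-\Lambda'(\xi+\eta+\sigma)|$ degenerates when the two arguments have comparable size, which happens precisely in some of the allowed dyadic triples. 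The paper's proof therefore runs a five-case analysis for each sign combination (Lemmas \ref{lemma1}--\ref{lemma5} and \ref{lemma11}--\ref{lemma15}): in the regime $k_{\mathrm{med}}\geq -48m/100$, $k_{\min}\leq -49m/100$ neither spatial IBP nor a direct estimate suffices and one must integrate by parts in time using only $|\Phi|\gtrsim 2^{k_{\mathrm{med}}}2^{k/2}$ (a much weaker bound than $2^{3k_{\max}/2}$), paying $\partial_s\what f$ in $L^2$ via \eqref{prs} and exploiting the low-frequency constraint in \eqref{mainlemma2cond} to absorb the singular factor $2^{-k_{\min}/2}$ from the symbol bound \eqref{cbounds}. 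Your proposal needs this third mechanism and the accompanying bookkeeping of the constraint $\min(k_j)/2+3\,\mathrm{med}(k_j)/2\geq -m(1+10p_0)$ to close; as written, the case $k_3\leq -49m/100$ with $k_1,k_2$ of order one falls through the cracks.
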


The rest of this section is concerned with the proof of this lemma.
We will often use the alternative formulas
\begin{equation}
\label{I}
I^{\iota_1\iota_2\iota_3}_{k_1,k_2,k_3}(\xi,t) = \int_{\R \times \R}
  e^{it\Phi^{\iota_1\iota_2\iota_3}(\xi,\eta,\sigma)}
  c^{\ast,\iota_1\iota_2\iota_3}_{\xi;k_1,k_2,k_3}(\eta,\sigma) \what{f^{\iota_1}_{k_1}}(\xi+\eta)
  \what{f^{\iota_2}_{k_2}}(\xi+\sigma) \what{f^{\iota_3}_{k_3}}(-\xi-\eta-\sigma)\,d\eta d\sigma,
\end{equation}
where
\begin{equation}
\label{I'}
\begin{split}
& \Phi^{\iota_1\iota_2\iota_3}(\xi,x,y) := -\Lambda(\xi) + \iota_1\Lambda(\xi+x) + \iota_2\Lambda(\xi+y) + \iota_3\Lambda(\xi+x+y),
\\
& c^{\ast,\iota_1\iota_2\iota_3}_{\xi;k_1,k_2,k_3}(x,y) := c^{\iota_1\iota_2\iota_3}(\xi,-x,-\xi-x-y)
  \cdot \varphi'_{k_1}(\xi+x) \varphi'_{k_2}(\xi+y)\varphi'_{k_3}(\xi+x+y).
\end{split}
\end{equation}
These formulas follow from \eqref{rak0} via simple changes of variables.

We now recall that the symbols $c^{\iota_1\iota_2\iota_3}$ are given by the explicit formulas \eqref{csymbols},
together with the formulas \eqref{nf} for $m_{\eps_1\eps_2}$, and \eqref{equq_0} for $q_{\eps_1\eps_2}$.
Using the symbol bounds \eqref{lemvsym} and the explicit formulas \eqref{cxz40} one can verify that the symbols $c^{\ast,\iota_1\iota_2\iota_3}_{\xi;k_1,k_2,k_3}$ satisfy the $S^\infty$ estimates
\begin{equation}
\label{cbounds}
{\big\| \mathcal{F}^{-1}\big(c^{\ast,\iota_1\iota_2\iota_3}_{\xi;k_1,k_2,k_3}\big) \big \|}_{L^1(\R^2) }
  \lesssim 2^{3k_{\max}/2}2^{(k_{\mathrm{med}}-k_{\min})/2} ,
\end{equation}
and, with $\psi_{\xi;l_1,l_2,l_3}(x,y)=\varphi_{l_1}(x)\varphi_{l_2}(y)\varphi_{l_3}(2\xi+x+y)$,
\begin{equation}
\label{cbounds'}
\begin{split}
& {\big\|\mathcal{F}^{-1}\big[(\partial_x c^{\ast,\iota_1\iota_2\iota_3}_{\xi;k_1,k_2,k_3})(x,y)\cdot \psi_{\xi;l_1,l_2,l_3}(x,y)\big]\big\|}_{L^1(\R^2)}
  \lesssim 2^{3k_{\max}/2}2^{(k_{\mathrm{med}}-k_{\min})/2}2^{-\min(k_1,k_3)},
\\
& {\big\|\mathcal{F}^{-1}\big[(\partial_y c^{\ast,\iota_1\iota_2\iota_3}_{\xi;k_1,k_2,k_3})(x,y)\cdot \psi_{\xi;l_1,l_2,l_3}(x,y)\big]\big\|}_{L^1(\R^2)}
  \lesssim 2^{3k_{\max}/2}2^{(k_{\mathrm{med}}-k_{\min})/2}2^{-\min(k_2,k_3)},
\end{split}
\end{equation}
for any $\xi \in \R$ and $k_1,k_2,k_3,l_1,l_2,l_3\in\mathbb{Z}$,
where $k_{\max} := \max(k_1,k_2,k_3)$, $k_{\mathrm{med}} := \mathrm{med}(k_1,k_2,k_3)$, $k_{\min} := \min(k_1,k_2,k_3)$. In fact the contributions of the cubic symbols $m_{\iota_1\iota_2\iota_3}$ satisfy stronger bounds, with a factor of $|\xi|^{1/2}2^{k_{\max}}$ instead of $2^{3k_{\max}/2}2^{(k_{\mathrm{med}}-k_{\min})/2}$, see \eqref{cxz40}. 

The cutoff factors $\psi_{\xi;l_1,l_2,l_3}(x,y)$ are needed in order for the bounds \eqref{cbounds'} to hold, due to the presence of factors such as $|x|$ in the symbols. These cutoffs do not play an important role in the proof of Lemma \ref{mainlemma2}. Inserting these cutoffs only leads to an additional logarithmic loss in $\langle t\rangle$ in some of the estimates, which we can easily tolerate.

\subsection{Proof of \eqref{mainlemma2conc1}}\label{prooftech1}

We divide the proof of the bound \eqref{mainlemma2conc1} into several lemmas.
Since in this subsection we will only deal with interactions of the type $++-$,
for simplicity of notation we denote $\Phi := \Phi^{++-}$ and $c^{\ast}_{\mathbf{k}} := c^{\ast,++-}_{\xi;k_1,k_2,k_3}$.

\begin{lemma}\label{lemma1}
The bound \eqref{mainlemma2conc1} holds provided that \eqref{mainlemma2cond} holds, and
\begin{equation*}
\max \big(|k-k_1|, |k-k_2|, |k-k_3|\big)\leq 20 .
\end{equation*}
\end{lemma}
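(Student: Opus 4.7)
The plan is to establish \eqref{mainlemma2conc1} by a stationary-phase analysis of the oscillatory integral $I^{++-}_{k_1,k_2,k_3}(\xi,s)$, isolating the $s^{-1}$ leading term at the space-time resonance and showing that this leading term cancels pointwise the correction $\widetilde{c}(\xi)\widehat{f^+_{k_1}}(\xi,s)\widehat{f^+_{k_2}}(\xi,s)\widehat{f^-_{k_3}}(-\xi,s)/(s+1)$ by construction of $\widetilde{c}$ in \eqref{nf50}. Work with the shifted representation \eqref{I}--\eqref{I'}. The phase $\Phi^{++-}(\xi,\eta,\sigma)=-|\xi|^{3/2}+|\xi+\eta|^{3/2}+|\xi+\sigma|^{3/2}-|\xi+\eta+\sigma|^{3/2}$ has a unique critical point in $(\eta,\sigma)$ at $(0,0)$, where the Hessian has diagonal entries zero and off-diagonal entry $-\tfrac34|\xi|^{-1/2}$, hence $|\det \Phi''(0,0)|=\tfrac{9}{16}|\xi|^{-1}$ and signature zero. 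Since $k_1,k_2,k_3$ are all within $20$ of $k$, the symbol satisfies $|c^{\ast,++-}_{\xi;k_1,k_2,k_3}(0,0)|\lesssim 2^{3k/2}$ by \eqref{cbounds}, and $(0,0)$ lies inside the support of the frequency cutoffs in \eqref{I'}.

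Introduce a dyadic partition $1=\sum_{\ell}\varphi'_\ell(|(\eta,\sigma)|)$ on $(\eta,\sigma)$-space and write $I^{++-}_{k_1,k_2,k_3}=\sum_\ell J_\ell$. Fix the stationary-phase scale $2^{\ell_0}\approx s^{-1/2}\,2^{k/4}$. On annuli with $\ell\geq \ell_0+C$, the bound $|\nabla_{\eta,\sigma}\Phi^{++-}|\gtrsim |\xi|^{-1/2}\,2^\ell$ permits repeated integration by parts in $(\eta,\sigma)$; combining the $S^\infty$ bounds \eqref{cbounds}--\eqref{cbounds'} for the symbol with the $L^2$ bounds \eqref{boundsflow}--\eqref{boundsfhigh} on $\widehat{f_{k_j}^{\pm}}$ and $\partial_\xi\widehat{f_{k_j}^{\pm}}$, one gains arbitrary powers of $(s\,2^{2\ell}|\xi|^{1/2})^{-1}$ and those pieces integrate to an error $\lesssim \e_1^3\, 2^{-\delta m}$ for some $\delta>0$. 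On the bulk annulus $2^\ell\lesssim 2^{\ell_0}s^{\delta'}$ for a small $\delta'>0$, Taylor-expand each factor $\widehat{f^{\pm}_{k_j}}(\xi+\cdot,s)=\widehat{f^{\pm}_{k_j}}(\xi,s)+O(|\cdot|\,\|\partial_\xi\widehat{f^{\pm}_{k_j}}(s)\|_{L^\infty})$ and the symbol $c^{\ast,++-}_{\xi;k_1,k_2,k_3}(\eta,\sigma)$ around $(0,0)$, freeze the leading values, and evaluate the resulting Gaussian oscillatory integral with quadratic phase to extract
\[
I^{++-}_{k_1,k_2,k_3}(\xi,s)\;=\;\frac{8\pi|\xi|^{1/2}}{3\,s}\,c^{++-}(\xi,0,-\xi)\,\widehat{f^+_{k_1}}(\xi,s)\widehat{f^+_{k_2}}(\xi,s)\widehat{f^-_{k_3}}(-\xi,s)\;+\;\mathrm{Err}(\xi,s).
\]
By the definition $\widetilde{c}(\xi)=-\tfrac{8\pi|\xi|^{1/2}}{3}c^{++-}(\xi,0,-\xi)$ the leading term differs from the subtraction in \eqref{mainlemma2conc1} only by the harmless factor $s^{-1}-(s+1)^{-1}=O(s^{-2})$ times the profile cubic, which is trivially time-integrable on $[2^m-2,2^{m+1}]$.

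It remains to bound the error $\mathrm{Err}$ and the IBP annuli after $s$-integration by $\e_1^3\,2^{-120p_0 m}$. The Taylor-remainder terms carry at least one extra factor of $|(\eta,\sigma)|$, which the Gaussian integration converts into the gain $s^{-1/2}|\xi|^{1/4}$, and the resulting $\partial_\xi\widehat{f^{\pm}_{k_j}}$ factor is controlled by $\|\partial_\xi\widehat{f^{\pm}_{k_j}}\|_{L^2}\lesssim \e_1\,2^{-k/2+6p_0 m}$ from \eqref{boundsflow}--\eqref{boundsfhigh} (which is the key place where the weighted bound from Proposition \ref{proEEZ} enters). Together with $\|\widehat{f^\pm_{k_j}}\|_{L^\infty}\lesssim \e_1$ from \eqref{proZapr} for the two remaining factors, and Plancherel/Cauchy--Schwarz for the integration in the Gaussian variable $(\eta,\sigma)$ localised on a set of area $s^{-1+2\delta'}|\xi|^{1/2}$, $\mathrm{Err}$ is bounded pointwise by $\e_1^3\,s^{-1-\delta''}$ with $\delta''>0$ depending only on $\delta'$ and the range of $k$. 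Since $k\in[-80 p_0 m,20 p_0 m]$ and $p_0\leq 10^{-10}$, the $|\xi|$-factors are absorbed into the $s$-gain. Time-integration on $[t_1,t_2]\subset[2^m-2,2^{m+1}]$ then yields a total bound $\lesssim \e_1^3\,2^{-\delta'' m}\ll \e_1^3\,2^{-120p_0 m}$, as required.

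\emph{Main obstacle.} The principal technical difficulty is carrying out the stationary-phase expansion in a quantitative way without $L^1$-control of $\widehat{f}$. All profile factors must be estimated by an interplay between the $Z$-norm (giving $\|\widehat{f}\|_{L^\infty}$), the weighted Sobolev norm (giving $\|\partial_\xi \widehat{f}\|_{L^2}$ through $S$, cf. Proposition \ref{proEEZ}), and the dispersive bound \eqref{disperse} applied to $e^{is\Lambda}f$; correspondingly, one has to be careful in distributing the three factors between $L^\infty_\xi$ and $L^2_\xi$ in each piece of the decomposition, and in handling the symbol cutoffs $\psi_{\xi;l_1,l_2,l_3}$ appearing in \eqref{cbounds'}. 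Tracking logarithmic losses in $s$ is harmless since $p_0$ is extremely small, but the bookkeeping of the Hessian cancellation (which was precisely the reason for the specific form \eqref{nf50} of $\widetilde{c}$) must be done explicitly to verify that the renormalization matches the stationary-phase leading term up to $s^{-2}$ corrections.
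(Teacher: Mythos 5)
Your plan is correct and follows essentially the same route as the paper, which disposes of this case by the stationary-phase argument of \cite[Lemma 6.4]{IoPu2} adapted to $\Lambda(\xi)=|\xi|^{3/2}$: the one ingredient the paper records, namely the expansion $\Phi^{++-}(\xi,\eta,\sigma)=-\tfrac{3\eta\sigma}{4|\xi|^{1/2}}+O\big(2^{-3k/2}(|\eta|^3+|\sigma|^3)\big)$, is exactly the purely off-diagonal Hessian you compute, and your determinant $\tfrac{9}{16}|\xi|^{-1}$ reproduces the coefficient $\tfrac{8\pi|\xi|^{1/2}}{3}$ in \eqref{nf50}, so the leading $s^{-1}$ term cancels the correction as you claim. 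One small repair: the profile Taylor remainder should be bounded by Cauchy--Schwarz as $|\eta|^{1/2}\|\partial_\xi\widehat{f^{\pm}_{k_j}}\|_{L^2}$ rather than $|\eta|\,\|\partial_\xi\widehat{f^{\pm}_{k_j}}\|_{L^\infty}$ (the latter norm is not controlled by the bootstrap), which is what your final error accounting in fact uses, and likewise the integration by parts in $(\eta,\sigma)$ can only be performed once since only one $\xi$-derivative of $\widehat{f}$ is available, though a single integration by parts already yields the needed gain.
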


\begin{proof}
This is the case which gives the precise form of the correction.
However, the proof is similar to the proof of Lemma 6.4 in \cite{IoPu2}.
The only difference is that in the present case $\Lambda(\xi)=|\xi|^{3/2}$ instead of $|\xi|^{1/2}$.
Therefore one has the expansion
\begin{equation*}
\Big| \Lambda(\xi) - \Lambda(\xi+\eta) - \Lambda(\xi+\sigma) + \Lambda(\xi+\eta+\sigma)
  - \frac{3\eta\sigma}{4|\xi|^{1/2}} \Big| \lesssim 2^{-3k/2}(|\eta|^3+|\sigma|^3).
\end{equation*}
One can then follow the same argument in \cite[Lemma 6.4]{IoPu2} to obtain the desired bound.
\end{proof}

\begin{lemma}\label{lemma2}
The bound \eqref{mainlemma2conc1} holds provided that \eqref{mainlemma2cond} holds and, in addition,
\begin{align}
\label{lemma2cond}
\begin{split}
& \max (|k-k_1|, |k- k_2|, |k-k_3|) \geq 21,
\\
& \max (|k_1 - k_3| , |k_2-k_3|) \geq 5 \quad \mbox{and} \quad \min (k_1,k_2,k_3) \geq -\frac{49}{100}m .
\end{split}
\end{align}
\end{lemma}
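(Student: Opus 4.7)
The proof has three main ingredients. First, the correction term in \eqref{mainlemma2conc1} vanishes identically under the hypothesis of Lemma \ref{lemma2}: since $\max(|k-k_j|: j=1,2,3)\geq 21$ and $|\xi|\sim 2^k$, at least one of the factors $\widehat{f^{\pm}_{k_{j_0}}}(\pm\xi)$ is zero by the Littlewood--Paley localization, so the triple product $\widehat{f^+_{k_1}}(\xi,s)\widehat{f^+_{k_2}}(\xi,s)\widehat{f^-_{k_3}}(-\xi,s)$ vanishes. Thus it suffices to prove
\[
\Big|\int_{t_1}^{t_2}e^{iL(\xi,s)}\,I^{++-}_{k_1,k_2,k_3}(\xi,s)\,ds\Big|\lesssim \e_1^3\,2^{-120p_0 m}.
\]
By the second condition in \eqref{lemma2cond}, either $|k_1-k_3|\geq 5$ or $|k_2-k_3|\geq 5$; the two are analogous, so I treat the former and integrate by parts in $\eta$ (the latter is handled by integrating by parts in $\sigma$).

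Second, I perform a non-stationary-phase integration by parts in $\eta$. From $\Lambda'(\xi)=\tfrac{3}{2}\xi|\xi|^{-1/2}$ and the definition of $\Phi^{++-}$ in \eqref{I'}, we have $\partial_\eta\Phi^{++-}(\xi,\eta,\sigma)=\Lambda'(\xi+\eta)-\Lambda'(\xi+\eta+\sigma)$. On the support of $\varphi'_{k_1}(\xi+\eta)\varphi'_{k_3}(\xi+\eta+\sigma)$ the two arguments have magnitudes $\sim 2^{k_1}$ and $\sim 2^{k_3}$ differing by at least a factor of $2^5$, so
\[
|\partial_\eta\Phi^{++-}(\xi,\eta,\sigma)|\gtrsim 2^{\max(k_1,k_3)/2}.
\]
Writing $e^{is\Phi^{++-}}=(is\partial_\eta\Phi^{++-})^{-1}\partial_\eta e^{is\Phi^{++-}}$ and integrating by parts in $\eta$ (boundary terms vanishing by compact support) rewrites
\[
I^{++-}_{k_1,k_2,k_3}(\xi,s)=\frac{i}{s}\sum_{\ell=1}^{3}J_\ell(\xi,s),
\]
where $J_1, J_2, J_3$ correspond to $\partial_\eta$ falling on the ratio $c^\ast_{\mathbf{k}}/\partial_\eta\Phi^{++-}$, on $\widehat{f^+_{k_1}}(\xi+\eta)$, or on $\widehat{f^-_{k_3}}(-\xi-\eta-\sigma)$, respectively.

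Third, each $J_\ell$ is estimated pointwise in $\xi$ using Lemma \ref{touse}(iii) with an $L^\infty\times L^\infty\times L^2$ Hölder split of the three remaining Fourier factors: the two $L^\infty$ factors provide $s^{-1}$ decay via the dispersive bounds \eqref{boundsflow}--\eqref{boundsfhigh} on $e^{\pm is\Lambda}f_{k_j}$, while the $L^2$ factor is placed on the profile whose $\widehat{f_{k_j}}$ or $\partial\widehat{f_{k_j}}$ (as appropriate) is cheapest, again via \eqref{boundsflow}--\eqref{boundsfhigh}. The symbol of $J_\ell$ is controlled through \eqref{cbounds}--\eqref{cbounds'} together with the lower bound on $|\partial_\eta\Phi^{++-}|$. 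This yields a pointwise estimate
\[
|I^{++-}_{k_1,k_2,k_3}(\xi,s)|\lesssim \e_1^3\,s^{-2+Cp_0}\,2^{A(k,k_1,k_2,k_3)},
\]
and integration over $s\in[t_1,t_2]$, of length $\lesssim 2^m$, produces $\e_1^3\,2^{-m+Cp_0 m}\,2^{A}$, which is $\lesssim \e_1^3\,2^{-120p_0 m}$ provided $A\leq -\delta m$ for some $\delta>(C+120)p_0$.

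The principal difficulty will be the frequency accounting needed to verify $A\leq -\delta m$. Each IBP in $\eta$ costs $2^{-\max(k_1,k_3)/2}$ from $|\partial_\eta\Phi^{++-}|^{-1}$; when $\partial_\eta$ lands on the denominator or on the cutoffs in $c^\ast_{\mathbf{k}}$, one picks up an extra $2^{-\min(k_1,k_3)}$ from \eqref{cbounds'}; the symbol bound \eqref{cbounds} carries the intrinsic loss $2^{(k_{\mathrm{med}}-k_{\min})/2}$; and $\|\partial\widehat{f_l}\|_{L^2}$ loses $2^{-l}$ relative to $\|\widehat{f_l}\|_{L^2}$ at low frequencies. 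The constraints in \eqref{mainlemma2cond}---in particular $\min(k_j)\geq -49m/100$, the combined bound $\min(k_j)/2+3\,\mathrm{med}(k_j)/2\geq -m(1+10p_0)$, and $k_j\leq 3m/N_0-1000$---are calibrated precisely so that the cumulative low- and high-frequency losses stay well below the temporal gain $2^{-m}$, producing a net decay $2^{-\delta m}$ for some $\delta$ much larger than $120p_0$ (recall $p_0\leq 10^{-10}$). Should one IBP prove insufficient in a particular subregion where $\partial_\eta$ lands on the multiplier and the loss $2^{-\min(k_1,k_3)}$ is near-critical, a second IBP in $\eta$ combined with the improved symbol bounds \eqref{cbounds'} will close the argument.
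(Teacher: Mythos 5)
Your skeleton matches the paper's: the correction term in \eqref{mainlemma2conc1} vanishes by the Littlewood--Paley supports since $\max_j|k-k_j|\geq 21$, and the remaining integral is handled by one integration by parts in $\eta$ (or $\sigma$) using $|\partial_\eta\Phi^{++-}|=|\Lambda'(\xi+\eta)-\Lambda'(\xi+\eta+\sigma)|\gtrsim 2^{\max(k_1,k_3)/2}$, which is valid since $|k_1-k_3|\geq 5$ after relabeling. However, your third step contains a genuine error: the H\"older split $L^\infty\times L^\infty\times L^2$ is inadmissible for a pointwise-in-$\xi$ bound. One needs $|\mathcal{F}[M(f_1,f_2,f_3)](\xi)|\leq\|M(f_1,f_2,f_3)\|_{L^1}$, and Lemma \ref{touse} then requires exponents with $1/p_1+1/p_2+1/p_3=1$; yours sum to $1/2$. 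The admissible split here is $L^2\times L^\infty\times L^2$ (put $\partial\widehat{f^+_{k_1}}$ and $\widehat{f^-_{k_3}}$ in $L^2$, $e^{is\Lambda}f^+_{k_2}$ in $L^\infty$), which buys only \emph{one} factor of $s^{-1/2}$ from dispersion. The honest pointwise bound is therefore $\e_1^3\,s^{-3/2+Cp_0}\,2^{A}$, not $s^{-2+Cp_0}2^A$, so after integrating in $s$ you need $A\leq m/2-\delta m$ rather than $A\leq m-\delta m$: half the room you believe you have.

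This matters because the frequency accounting you defer is precisely where that lost $2^{m/2}$ bites, and it is the real content of the lemma. Combining the gain $2^{-\max(k_1,k_3)/2}$ from the phase with \eqref{cbounds}, the low-frequency cost $2^{-k_1(1/2+p_0)}$ of $\|\partial\widehat{f_{k_1}}\|_{L^2}$, and the factor $2^{-k_1}+2^{-k_3}$ when $\partial_\eta$ lands on the symbol, the net loss is $\lesssim 2^{-k_{\min}}$, giving $|I^{++-}_{k_1,k_2,k_3}(\xi,s)|\lesssim\e_1^3 2^{-3m/2}2^{-k_{\min}}2^{15p_0m}$. The hypothesis $\min(k_1,k_2,k_3)\geq-\tfrac{49}{100}m$ then yields $2^{-3m/2}2^{49m/100}=2^{-m}2^{-m/100}$, i.e.\ the whole argument survives by a margin of $m/100$ coming from $3/2-\tfrac{49}{100}>1$; with your claimed $s^{-2}$ the threshold $\tfrac{49}{100}$ would be far from sharp, which should have been a warning sign. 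No second integration by parts is needed: one suffices, carried out in the two cases $k_1\leq k_3$ and $k_1\geq k_3$. (Also note that $\min(k_j)\geq-\tfrac{49}{100}m$ is the hypothesis \eqref{lemma2cond} of this lemma, not one of the constraints in \eqref{mainlemma2cond}.)
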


\begin{proof}
In this case we will show the stronger bound
\begin{equation}
\label{lemma2conc}
|I^{++-}_{k_1,k_2,k_3}(\xi,s)|\lesssim \e_1^3 2^{-m} 2^{-200p_0m}.
\end{equation}
Without loss of generality, by symmetry we can assume that $|k_1-k_3| \geq 5$ and $k_2 \leq \max(k_1,k_3)+5$.
Under the assumptions \eqref{lemma2cond} we have
\begin{align}
\label{lowerbound1}
 |(\partial_\eta\Phi)(\xi,\eta,\sigma)| = |\Lambda'(\xi+\eta) - \Lambda'(\xi+\eta+\sigma)| \gtrsim 2^{k_{\max}/2}.
\end{align}
Therefore we can integrate by parts in $\eta$ in the integral expression \eqref{I} for $I_{k_1,k_2,k_3}^{++-}$.
This gives
\begin{align*}
|I_{k_1,k_2,k_3}^{++-}(\xi,s)| & \lesssim  |K_1(\xi,s)| + |K_2(\xi,s)| + |K_3(\xi,s)| + |K_4(\xi,s)|,
\end{align*}
where
\begin{align}
\label{IBPeta}
\begin{split}
K_1(\xi) & := \int_{\mathbb{R}\times\mathbb{R}} e^{is\Phi(\xi,\eta,\sigma)}
   m_1(\eta,\sigma) c^\ast_{\mathbf{k}}(\eta,\sigma)
  (\partial\what{f_{k_1}^{+}})(\xi+\eta) \what{f_{k_2}^{+}}(\xi+\sigma) \what{f_{k_3}^{-}}(-\xi-\eta-\sigma) \,d\eta d\sigma,
\\
K_2(\xi) & := \int_{\mathbb{R}\times\mathbb{R}} e^{is\Phi(\xi,\eta,\sigma)}
   m_1(\eta,\sigma) c^\ast_{\mathbf{k}}(\eta,\sigma)
  \what{f_{k_1}^{+}}(\xi+\eta) \what{f_{k_2}^{+}}(\xi+\sigma) (\partial\what{f_{k_3}^{-}})(-\xi-\eta-\sigma) \,d\eta d\sigma,
\\
K_3(\xi) & := \int_{\mathbb{R}\times\mathbb{R}} e^{is\Phi(\xi,\eta,\sigma)}
  (\partial_\eta  m_1)(\eta,\sigma) c^\ast_{\mathbf{k}}(\eta,\sigma)
  \what{f_{k_1}^{+}}(\xi+\eta) \what{f_{k_2}^{+}}(\xi+\sigma) \what{f_{k_3}^{-}}(-\xi-\eta-\sigma) \,d\eta d\sigma,
\\
K_4(\xi) & := \int_{\mathbb{R}\times\mathbb{R}} e^{is\Phi(\xi,\eta,\sigma)}
   m_1(\eta,\sigma) (\partial_\eta c^\ast_{\mathbf{k}})(\eta,\sigma)
  \what{f_{k_1}^{+}}(\xi+\eta) \what{f_{k_2}^{+}}(\xi+\sigma) \what{f_{k_3}^{-}}(-\xi-\eta-\sigma) \,d\eta d\sigma,
\end{split}
\end{align}
having denoted
\begin{align}
\label{symIBPeta}
 m_1(\eta,\sigma) := \frac{1}{s \partial_\eta \Phi(\xi,\eta,\sigma)} \varphi'_{k_1}(\xi+\eta)\varphi'_{k_3}(\xi+\eta+\sigma).
\end{align}

Observe that, under the restrictions \eqref{lemma2cond}, we have
\begin{align}
\label{symIBPetaest1}
{\| m_1(\eta,\sigma) \|}_{S^\infty} \lesssim 2^{-m} 2^{-k_{\max}/2}.
\end{align}
We can then estimate $K_1$ using Lemma \ref{touse}(ii), the estimate on $c^\ast_{\mathbf{k}}$ in \eqref{cbounds},
the bounds \eqref{boundsflow}-\eqref{boundsfhigh}, and the last constraint in \eqref{lemma2cond}.
More precisely, if $k_1\leq k_3$ (so that $2^{k_3} \approx 2^{k_{\max}}$) then we estimate
\begin{align*}
|K_1&(\xi,s)| \lesssim {\| m_1(\eta,\sigma) \|}_{S^\infty}  {\| c^\ast_{\mathbf{k}}(\eta,\sigma) \|}_{S^\infty}
  {\| \partial \what{f_{k_1}^{+}}(s) \|}_{L^2} {\| e^{is\Lambda} f_{k_2}^{+}(s) \|}_{L^\infty} {\| \what{f_{k_3}^{-}}(s) \|}_{L^2}
\\
& \lesssim 2^{-m} 2^{-k_3/2} \cdot 2^{3 k_3^+} 2^{-k_{\min}/2}\cdot
  \e_1 2^{-k_1(1/2+p_0)} 2^{6mp_0} \cdot \e_1 2^{-m/2} \cdot \e_1 2^{6mp_0}2^{-(N_0-1)k_3^+}
\\
& \lesssim \e_1^3 2^{-3m/2} 2^{12p_0m} 2^{-k_{\min}/2}2^{-k_1(1/2+p_0)}
\\
&\lesssim \e_1^3 2^{15p_0m}2^{-3m/2}2^{-k_{\min}}.
\end{align*}
This suffices to prove \eqref{lemma2conc} because of the last inequality in \eqref{lemma2cond}.
On the other hand, if $k_1\geq k_3$ (so that $2^{k_1}\approx 2^{k_{\max}} \gtrsim 2^k$) then
\begin{align*}
|K_1(\xi,s)| &\lesssim {\| m_1(\eta,\sigma) \|}_{S^\infty}  {\| c^\ast_{\mathbf{k}}(\eta,\sigma) \|}_{S^\infty}
  {\| \partial \what{f_{k_1}^{+}}(s) \|}_{L^2}{\| e^{is\Lambda} f_{k_2}^{+}(s) \|}_{L^\infty} {\| \what{f_{k_3}^{-}}(s) \|}_{L^2}
\\
& \lesssim 2^{-m} 2^{-k_1/2} \cdot 2^{2k_1} 2^{-k_{\min}/2} \cdot \e_1 2^{-k_1-k_1^+}2^{6mp_0}\cdot \e_12^{-m/2} \cdot \e_12^{6p_0m}
\\
& \lesssim \e_1^3 2^{-3m/2} 2^{14p_0m} 2^{-k_{\min}/2}.
\end{align*}
This suffices to prove \eqref{lemma2conc}.
The estimate for $|K_2(\xi,s)|$ is identical. 
The estimates for $|K_3(\xi,s)|$ and $|K_4(\xi,)|$ are also similar, because differentiation with respect 
to $\eta$ is equivalent\footnote{Some care is needed when the derivative hits $c^\ast_{\mathbf{k}}$, 
because of the slightly weaker bounds in \eqref{cbounds'}. 
In this case we insert cutoff functions localizing the variables $\eta,\sigma,2\xi+\eta+\sigma$ 
to dyadic intervals and estimate in the same way. The final bound is multiplied by $Cm^3$, which is acceptable.} 
to multiplication by a factor of $(2^{-k_1}+2^{-k_3})$, and one can use similar estimates as before 
\end{proof}

\begin{lemma}\label{lemma3}
The bound \eqref{mainlemma2conc1} holds provided that \eqref{mainlemma2cond} holds and, in addition,
\begin{align}
\label{lemma3cond}
\begin{split}
& \max (|k-k_1|, |k- k_2|, |k-k_3|) \geq 21,
\\
& \max (|k_1 - k_3| , |k_2-k_3|) \geq 5 \quad \mbox{and} \quad \min (k_1,k_2) \leq -\frac{48m}{100}.
\end{split}
\end{align}
\end{lemma}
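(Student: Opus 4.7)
The aim is the pointwise-in-time estimate
\begin{equation*}
|I^{++-}_{k_1,k_2,k_3}(\xi,s)|\lesssim \e_1^3\,2^{-m-200p_0 m},
\end{equation*}
which, integrated over $s\in[t_1,t_2]$ of length $\lesssim 2^m$, yields the bound \eqref{mainlemma2conc1} (the resonance correction $\widetilde{c}(\xi)\frac{|\widehat{f}|^2\widehat{f}}{s+1}$ is irrelevant in the present regime since $\max(|k-k_1|,|k-k_2|,|k-k_3|)\geq 21$). By the symmetry between the roles of $\eta$ and $\sigma$, it suffices to treat the case $k_1\leq -48m/100$; the case $k_2\leq -48m/100$ is addressed at the end.

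The essential point is that the argument of Lemma~\ref{lemma2}, which integrates by parts in $\eta$, fails here: $\partial_\eta$ would land on $\widehat{f^+_{k_1}}(\xi-\eta)$, producing the weighted norm $\|\partial\widehat{f^+_{k_1}}\|_{L^2}\lesssim \e_1\,2^{-(1/2+p_0)k_1}\langle s\rangle^{6p_0}$, which is uncontrollably large as $k_1\to-\infty$. I propose integrating by parts in $\sigma$ instead. A direct computation gives
\begin{equation*}
\partial_\sigma\Phi^{++-}(\xi,\eta,\sigma)=-\tfrac{3}{2}\bigl[\operatorname{sgn}(\eta-\sigma)|\eta-\sigma|^{1/2}+\operatorname{sgn}(\sigma)|\sigma|^{1/2}\bigr],
\end{equation*}
so $|\partial_\sigma\Phi|\gtrsim 2^{\max(k_2,k_3)/2}$ on the bulk of the support. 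Derivatives arising from IBP fall on $\widehat{f^+_{k_2}}(\eta-\sigma)$, $\widehat{f^-_{k_3}}(\sigma)$, the symbol $c^\ast_{\mathbf{k}}$, or on $1/\partial_\sigma\Phi$; since $k_2,k_3$ are moderate under \eqref{mainlemma2cond} and $k_1=k_{\min}$, all these terms are controlled by \eqref{boundsflow}--\eqref{boundsfhigh} and \eqref{cbounds}--\eqref{cbounds'}.

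Apply Lemma~\ref{touse}(iii) in the H\"older configuration $L^\infty\cdot L^2\cdot L^2$, placing the low-frequency factor $e^{is\Lambda}P_{k_1}f^+$ in $L^\infty$. The dispersive gain $\|e^{is\Lambda}P_{k_1}f^+\|_{L^\infty}\lesssim\e_1\,2^{k_1/10}\,2^{-m/2}$ from \eqref{boundsflow}, combined with the IBP factor $2^{-m}\cdot 2^{-\max(k_2,k_3)/2}$, the symbol bound $\|c^\ast_{\mathbf{k}}\|_{S^\infty}\lesssim 2^{3k_{\max}/2}\cdot 2^{(k_{\mathrm{med}}-k_1)/2}$ from \eqref{cbounds}, and the $L^2$ bounds $\|\widehat{f^\pm_{k_j}}\|_{L^2}\lesssim \e_1\,2^{6p_0 m}$ ($j=2,3$), yields schematically
\begin{equation*}
|I^{++-}_{k_1,k_2,k_3}(\xi,s)|\lesssim \e_1^3\,2^{-3m/2\,-\,2k_1/5\,+\,O(p_0 m)}.
\end{equation*}
Since $-2k_1/5\leq 24m/125$, the exponent is bounded by $-13m/10+O(p_0 m)$; integrated over $s\in[t_1,t_2]$ this gives $\e_1^3\,2^{-m/4+O(p_0 m)}$, which since $p_0\leq 10^{-10}$ comfortably dominates $2^{-120p_0 m}$.

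The main obstacle I anticipate is the degenerate subregime in which $k_2\approx k_3$ and $\operatorname{sgn}(\eta-\sigma)=-\operatorname{sgn}(\sigma)$ on a nontrivial set, so $\partial_\sigma\Phi$ fails to be uniformly bounded below. The remedy is to split off the resonant strip, whose $\sigma$-measure is $\lesssim 2^{\min(k_2,k_3)/2}$, with a smooth cutoff: on this strip one estimates directly without IBP, using its short length to supply the needed gain, while on the complement the argument above applies uniformly. For the symmetric case $k_2\leq -48m/100$, one cannot IBP in either $\eta$ or $\sigma$ individually (both would differentiate $\widehat{f^+_{k_2}}(\eta-\sigma)$); instead, change variables to $u=\eta-\sigma$, $v=\eta+\sigma$ and integrate by parts in $v$, where
\begin{equation*}
\partial_v\Phi=-\tfrac{3}{4}\bigl[\operatorname{sgn}(\xi-\eta)|\xi-\eta|^{1/2}+\operatorname{sgn}(\sigma)|\sigma|^{1/2}\bigr]
\end{equation*}
provides the required non-degeneracy away from its own thin cancellation set, and derivatives fall only on $\widehat{f^+_{k_1}}$, $\widehat{f^-_{k_3}}$, and the symbol, leaving the small-frequency factor $\widehat{f^+_{k_2}}(u)$ untouched. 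The book-keeping of these sub-cases is the bulk of the technical work.
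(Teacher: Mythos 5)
Your strategic idea is the right one and is, up to a linear change of variables, what the paper does: integrate by parts along a direction in $(\eta,\sigma)$ that never differentiates the very-low-frequency profile and along which the phase gradient is bounded below because $|\xi|\geq 2^{-80p_0m}$. (In fact the paper needs no case split at all: it works in the representation \eqref{I}, where the $\eta$-derivative in \eqref{IBPeta} only ever hits $\what{f_{k_1}}(\xi+\eta)$ and $\what{f_{k_3}}(-\xi-\eta-\sigma)$, so after relabeling so that $k_2=\min(k_1,k_2)$ a single integration by parts in $\eta$ covers both of your sub-cases; your assertion that ``the argument of Lemma \ref{lemma2} fails'' comes from reading that integration by parts in the coordinates of \eqref{rak0} rather than \eqref{I}.) Also, your lower bound $|\partial_\sigma\Phi|\gtrsim 2^{\max(k_2,k_3)/2}$ is false when $k_2\approx k_3$ with opposite signs, but the degeneracy you then try to excise is vacuous: $\partial_\sigma\Phi$ vanishes only at $\eta=0$, while on the support $|\eta|=|\xi-(\xi-\eta)|\geq |\xi|-2^{k_1+1}\gtrsim 2^k$ since $2^{k_1}\leq 2^{-48m/100}\ll 2^{-80p_0m}\leq|\xi|$. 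The correct uniform bound is $|\partial_\sigma\Phi|\gtrsim 2^k2^{-\max(k_2,k_3)/2}$ (the exact analogue of the paper's $|\partial_\eta\Phi|\gtrsim 2^k2^{-k_{\max}/2}$), and the entire strip-splitting machinery, as well as the worry about a cancellation set for $\partial_v\Phi$ in your second case, can be deleted.

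The genuine gap is quantitative: your H\"older placement does not close the estimate. Putting the low-frequency factor in $L^\infty$ gives $\|e^{is\Lambda}f_{k_1}\|_{L^\infty}\lesssim\e_1 2^{k_1/10}2^{-m/2}$, while \eqref{cbounds} carries the singular factor $2^{-k_{\min}/2}=2^{-k_1/2}$, for a net loss $2^{-2k_1/5}$. You then write ``$-2k_1/5\leq 24m/125$,'' but the inequality is reversed: $k_1\leq -48m/100$ gives $-2k_1/5\geq 24m/125$, and the only upper bound available from \eqref{mainlemma2cond} is $-2k_1/5\leq 6m/5$. Even ignoring the extra loss from the corrected phase bound and from the derivative landing on an $L^2$ factor, the pointwise bound becomes at best $\e_1^32^{-3m/2+6m/5+O(p_0m)}$, which after integration over a time interval of length $2^m$ diverges like $2^{7m/10}$. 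The repair is the one the paper uses: place the low-frequency factor $\what{f_{k_{\min}}}$ in $L^2$, so that $\|\what{f_{k_{\min}}}\|_{L^2}\lesssim\e_12^{(1/2-p_0)k_{\min}}2^{6p_0m}$ cancels the symbol singularity $2^{-k_{\min}/2}$ up to $2^{-p_0k_{\min}}$, keep the dispersive $L^\infty$ gain on a moderate-frequency factor, and use the second restriction in \eqref{mainlemma2cond}, $k_{\min}+3k_{\mathrm{med}}\geq-2m(1+10p_0)$, to absorb the remaining exponents. As written, your proof of the pointwise bound does not go through.
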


\begin{proof}
By symmetry we may assume that $k_2 = \min(k_1,k_2)$.
The main observation is that we still have the strong lower bound
\begin{align*}
 |(\partial_\eta \Phi)(\xi,\eta,\sigma)| = |\Lambda'(\xi+\eta) - \Lambda'(\xi+\eta+\sigma)| \gtrsim 2^{-k_{\max}/2} 2^k.
\end{align*}
This is easy to verify since $|\xi|\geq 2^{-80p_0m}$, $|\xi+\sigma|\leq 2^{k_2+1} \leq 2^{-48m/100 + 1}$.
We can then integrate by parts in $\eta$, and estimate the resulting integrals as in Lemma \ref{lemma2},
by placing $\what{f_{k_2}}$ in $L^2$, and using the restriction $k_{\min} + 3k_{\mathrm{med}} \geq -2m(1+10p_0)$
in \eqref{mainlemma2cond}.
\end{proof}

\begin{lemma}\label{lemma4}
The desired bound \eqref{mainlemma2conc1} holds provided that \eqref{mainlemma2cond} holds and,
in addition,
\begin{align}
\label{lemma4cond}
\begin{split}
& \max (|k-k_1|, |k- k_2|, |k-k_3|) \geq 21,
\\
& \max (|k_1 - k_3| , |k_2-k_3|) \geq 5 \quad \mbox{and} \quad
  \min (k_1,k_2) \geq -\frac{48m}{100}\quad \mbox{and} \quad k_3 \leq -\frac{49m}{100}.
\end{split}
\end{align}
\end{lemma}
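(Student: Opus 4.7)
The plan is to integrate by parts in the $\sigma$ variable, exploiting the nondegeneracy of the phase in that direction. In this regime the two scales $|\eta-\sigma|\approx 2^{k_2}\geq 2^{-48m/100}$ and $|\sigma|\approx 2^{k_3}\leq 2^{-49m/100}$ are well separated, so
\[
|\partial_\sigma\Phi^{++-}(\xi,\eta,\sigma)| \;=\; \tfrac{3}{2}\bigl||\eta-\sigma|^{1/2}\mathrm{sgn}(\eta-\sigma)+|\sigma|^{1/2}\mathrm{sgn}(\sigma)\bigr| \;\gtrsim\; 2^{k_2/2}.
\]
Writing $e^{is\Phi^{++-}}=(is\partial_\sigma\Phi^{++-})^{-1}\partial_\sigma e^{is\Phi^{++-}}$ and integrating by parts once in $\sigma$, I rewrite $I^{++-}_{k_1,k_2,k_3}(\xi,s)$ as $1/(is)$ times a sum of four trilinear integrals, corresponding to the four places where the $\sigma$-derivative may land: on $\widehat{f^-_{k_3}}(\sigma)$, on $\widehat{f^+_{k_2}}(\eta-\sigma)$, on the symbol $c^{++-}$, or on the inverse phase derivative $1/\partial_\sigma\Phi^{++-}$.

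Next, I would estimate each of the four resulting integrals using Lemma~\ref{touse} with a suitable H\"older distribution together with the symbol bounds \eqref{cbounds}-\eqref{cbounds'} and the a priori estimates \eqref{boundsflow}-\eqref{boundsfhigh}. The typical choice places $\widehat{f^+_{k_1}}$ in $L^\infty$ (losing only a support-size factor $2^{k_1}$ from the $\eta$-integration), $\widehat{f^+_{k_2}}$ in $L^2$, and $\widehat{f^-_{k_3}}$ (or its $\sigma$-derivative) in $L^2$. The hardest term will be the one where $\partial_\sigma$ hits $\widehat{f^-_{k_3}}$, since the bound $\|\partial\widehat{f^-_{k_3}}\|_{L^2}\lesssim \varepsilon_1 2^{-(1/2+p_0)k_3}\langle t\rangle^{6p_0}$ (a consequence of the $Sf$ control in \eqref{profS}) is very large for $k_3$ strongly negative and must be balanced against the combined gain $2^{-m-k_2/2}$ from the integration by parts and the singular symbol bound $|c^{++-}|\lesssim 2^{3\max(k_1,k_2)/2+(\min(k_1,k_2)-k_3)/2}$.

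The main obstacle will be precisely this tension between the symbol singularity $\sim 2^{-k_3/2}$ arising from the normal-form denominator $m_{+-}(\xi,\sigma)\sim |\xi|^{1/2}|\sigma|^{-1/2}$ inside $c^{++-}$, and the large low-frequency $L^2$ norm of $\partial\widehat{f^-_{k_3}}$. The key compensation is that the low-frequency $L^2$ bound $\|\widehat{f^-_{k_3}}\|_{L^2}\lesssim \varepsilon_1 2^{(1/2-p_0)k_3}\langle t\rangle^{6p_0}$ gains a factor of $2^{k_3/2}$ relative to the pointwise $L^\infty$ bound, exactly absorbing the symbol singularity; this is the manifestation, in the decay estimate, of the low-frequency structure propagated by the energy estimates of sections \ref{secEElow} and \ref{secweightedlow}. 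Combining these bounds with the auxiliary constraint $k_3/2+3\min(k_1,k_2)/2\geq -m(1+10p_0)$ from \eqref{mainlemma2cond}, the ranges $\min(k_1,k_2)\geq -48m/100$ and $k_3\leq -49m/100$ that hold in this case, and a careful bookkeeping of the exponents $k_1,k_2,k_3,m$, will produce the pointwise estimate $|I^{++-}_{k_1,k_2,k_3}(\xi,s)|\lesssim \varepsilon_1^3\, 2^{-m(1+200p_0)}$, from which \eqref{mainlemma2conc1} follows by integrating over $s\in[t_1,t_2]\subset[2^m-2,2^{m+1}]$. The other three IBP terms are strictly easier, as the $\sigma$-derivative on $\widehat{f^+_{k_2}}$, $c^{++-}$, or $1/\partial_\sigma\Phi^{++-}$ costs at most $2^{-\min(k_1,k_2)}+2^{-k_3}$ from the derivative of cutoffs, which is dominated by the compensation just described.
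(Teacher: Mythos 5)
Your route is genuinely different from the paper's (which integrates by parts in time in this regime, not in a frequency variable), and it does not close. You correctly flag the dangerous term --- the one where $\partial_\sigma$ lands on $\widehat{f^-_{k_3}}(\sigma)$ --- but the compensation you invoke addresses the wrong factor. The gain $2^{k_3/2}$ of $\|\widehat{f_{k_3}}\|_{L^2}\lesssim \e_1 2^{(1/2-p_0)k_3}\langle t\rangle^{6p_0}$ over the pointwise bound absorbs the symbol singularity $2^{-k_3/2}$ only in terms where $\widehat{f_{k_3}}$ is \emph{not} differentiated; the term with $\partial\widehat{f_{k_3}}$ instead carries $\|\partial\widehat{f_{k_3}}\|_{L^2}\lesssim \e_1 2^{-(1/2+p_0)k_3}\langle t\rangle^{6p_0}$, a loss of $2^{-k_3}$ relative to $\|\widehat{f_{k_3}}\|_{L^2}$, against a gain of only $(s|\partial_\sigma\Phi|)^{-1}\lesssim 2^{-m}2^{-k_2/2}$ from the integration by parts. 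In this regime the constraint in \eqref{mainlemma2cond} (with $k_{\min}=k_3$, $k_{\mathrm{med}}=\min(k_1,k_2)$) allows $-k_3$ as large as $2m(1+10p_0)+3\min(k_1,k_2)$. Concretely, take $k\approx k_1\approx k_2\approx 0$ and $k_3\approx -2m$: the worst post-IBP term is bounded only by $2^{-m}\cdot 2^{-k_3/2}\cdot\|\widehat{f_{k_1}}\|_{L^2}\,\|e^{is\Lambda}f_{k_2}\|_{L^\infty}\,\|\partial\widehat{f_{k_3}}\|_{L^2}\approx 2^{-m}\cdot 2^{m}\cdot 2^{-m/2}\cdot 2^{m}\,\e_1^3=\e_1^3 2^{m/2}$ (up to $2^{Cp_0m}$), which after integrating over $s\in[t_1,t_2]$ misses the target $\e_1^3 2^{-120p_0m}$ by roughly $2^{3m/2}$. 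No rearrangement of the H\"older pairing repairs this, since at very low frequency there is no bound on $\partial\widehat{f}$ better than $\e_1 2^{-(1/2+p_0)k_3}$ in $L^2$; and integration by parts in $\eta$ fails for a different reason (in these variables $\partial_\eta\Phi$ involves $|\xi-\eta|^{1/2}$ and $|\eta-\sigma|^{1/2}$, which can cancel when $k_1\approx k_2$). A secondary issue: your lower bound $|\partial_\sigma\Phi|\gtrsim 2^{k_2/2}$ requires either matching signs or a genuine gap between $2^{k_2}$ and $2^{k_3}$, whereas the hypotheses only give $k_2-k_3\geq m/100$.

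The device that works here, and the one the paper uses, is integration by parts in $s$: because $k_3\leq -49m/100$ while $\min(k_1,k_2)\geq -48m/100$, the full phase satisfies $|\Phi|\gtrsim 2^{k_2}2^{k/2}$ (via \eqref{phiab}), the boundary terms are small because $\|\widehat{f_{k_3}}\|_{L^2}\lesssim\e_1 2^{(1/2-p_0)k_3}\langle t\rangle^{6p_0}$ is itself $O(2^{-m/10})$, and the bulk terms are time-integrable because $\partial_s\widehat{f}$ carries an extra factor $\langle s\rangle^{-1}$ (the renormalized equation for the profile is cubic, cf. \eqref{prs}).
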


\begin{proof}
In this case we need to integrate by parts in time.
Without loss of generality, we may again assume $k_2 = \min (k_1,k_2)$.
Therefore, also in view of \eqref{mainlemma2cond}, we have
\begin{align}
\label{conf10}
k_3 \leq -49m/100 \leq -48m/100 \leq k_2 \leq k_1,\qquad k_1 \geq k-10 \geq -80p_0m -10.
\end{align}

Recall that
\begin{align*}
\Phi(\xi,\eta,\sigma) = -\Lambda(\xi) + \Lambda(\xi+\eta) + \Lambda(\xi+\sigma) - \Lambda(\xi+\eta+\sigma).
\end{align*}
For $|\xi+\eta| \approx 2^{k_1}$, $|\xi+\sigma| \approx |\eta| \approx 2^{k_2}$, $|\xi+\eta+\sigma| \approx 2^{k_3}$,
with $k_1,k_2,k_3$ satisfying \eqref{conf10}, one can use \eqref{phiab} to show that
\begin{align}
\label{IBPs0}
| \Phi(\xi,\eta,\sigma) | & \geq |- \Lambda(\xi) + \Lambda(\xi+\eta) + \Lambda(\eta) | - 2^{10}|\xi+\eta+\sigma| 2^{k_2/2}
  \gtrsim 2^{k_2} 2^{k/2}.
\end{align}
Thanks to this lower bound we can integrate by parts in $s$ to obtain
\begin{align}
\label{IBPs}
\begin{split}
\Big| \int_{t_1}^{t_2} & e^{iL(\xi,s)} I_{k_1,k_2,k_3}^{++-}(\xi,s) \, ds \Big|
  \lesssim  | N_1 (\xi,t_1) | + | N_1 (\xi,t_2) |
  \\ & + \int_{t_1}^{t_2} | N_2(\xi,s)|+ | N_3(\xi,s) | + | N_4(\xi,s)| + |(\partial_sL)(\xi,s)|| N_1(\xi,s) | \, ds,
\end{split}
\end{align}
where
\begin{align}
\label{IBPs2}
\begin{split}
& N_1(\xi) := \int_{\R\times\R} e^{is\Phi(\xi,\eta,\sigma)} \frac{c^\ast_{\mathbf{k}} (\eta,\sigma)}{i \Phi(\xi,\eta,\sigma)}
  \what{f_{k_1}^{+}}(\xi+\eta) \what{f_{k_2}^{+}}(\xi+\sigma) \what{f_{k_3}^{-}}(-\xi-\eta-\sigma) \,d\eta d\sigma ,
\\
& N_2(\xi) := \int_{\R\times\R} e^{is\Phi(\xi,\eta,\sigma)} \frac{c^\ast_{\mathbf{k}}(\eta,\sigma)}{i \Phi(\xi,\eta,\sigma)}
 \, (\partial_s\what{f_{k_1}^{+}})(\xi+\eta) \what{f_{k_2}^{+}}(\xi+\sigma) \what{f_{k_3}^{-}}(-\xi-\eta-\sigma) \,d\eta d\sigma ,
\\
& N_3(\xi) := \int_{\R\times\R} e^{is\Phi(\xi,\eta,\sigma)} \frac{c^\ast_{\mathbf{k}}(\eta,\sigma)}{i \Phi(\xi,\eta,\sigma)}
  \what{f_{k_1}^{+}}(\xi+\eta)(\partial_s\what{f_{k_2}^{+}})(\xi+\sigma) \what{f_{k_3}^{-}}(-\xi-\eta-\sigma) \,d\eta d\sigma ,
\\
& N_4(\xi) := \int_{\R\times\R} e^{is\Phi(\xi,\eta,\sigma)} \frac{c^\ast_{\mathbf{k}}(\eta,\sigma)}{i \Phi(\xi,\eta,\sigma)}
  \what{f_{k_1}^{+}}(\xi+\eta) \what{f_{k_2}^{+}}(\xi+\sigma) (\partial_s\what{f_{k_3}^{-}}) (-\xi-\eta-\sigma) \,d\eta d\sigma .
\end{split}
\end{align}

To estimate the first term in \eqref{IBPs} we first notice that we have the pointwise bound
\begin{align}
\label{IBPssym}
\Big| \frac{c^\ast_{\mathbf{k}} (\eta,\sigma)}{ \Phi(\xi,\eta,\sigma)} \Big| \lesssim 2^{2k_1^+} 2^{-k/2} 2^{-k_2/2} 2^{-k_3/2},
\end{align}
see \eqref{IBPs0} and \eqref{cbounds}.
Using also \eqref{boundsflow}-\eqref{boundsfhigh} we can obtain, for any $s\in[t_1,t_2]$,
\begin{align*}
|N_1(\xi,s)| & \lesssim 2^{2k_1^+} 2^{-k/2} 2^{-k_2/2} 2^{-k_3/2}
  {\| \what{f_{k_1}^{+}}(s) \|}_{L^\infty}
  2^{k_2/2} {\| \what{f_{k_2}^{+}}(s) \|}_{L^2}  2^{k_3/2} {\| \what{f_{k_3}^{-}}(s) \|}_{L^2}
\\
& \lesssim \e_1^3 2^{k_3(1/2-p_0)} 2^{100p_0m}
\\
&\lesssim \e_1^3 2^{-m/10}.
\end{align*}
Moreover, the definition of $L$ in \eqref{nf50}, and the a priori assumptions \eqref{boundsflow}-\eqref{boundsfhigh}, show that
\begin{align*}
\big|(\partial_sL)(\xi,s) \big| \lesssim \e_1^2 2^{-m}.
\end{align*}
Therefore
\begin{align}
\label{N1bound}
| N_1 (\xi,t_1) | + | N_1 (\xi,t_2) | + \int_{t_1}^{t_2} |(\partial_sL)(\xi,s)|| N_1(\xi,s) | \, ds\lesssim \e_1^3 2^{-m/10}.
\end{align}

Using the equation for $\partial_t f$ in \eqref{eqprof}, and the estimate \eqref{estN'}, we have
\begin{equation}
\label{prs}
{\big\| (\partial_s\what{f_{l}^{\pm}})(s) \big\|}_{L^2}\lesssim \e_1^2 \min(2^{(1/2-p_0)l}, 2^{-2l}) 2^{-m+6p_0m} .
\end{equation}
Using this $L^2$ bound, and the pointwise bound \eqref{IBPssym} on the symbol, the term $N_2$ can be estimated as follows:
 \begin{align*}
\begin{split}
|N_2(\xi,s)| & \lesssim 2^{2k_1^+} 2^{-k/2} 2^{-k_2/2} 2^{-k_3/2}
    {\big\|(\partial_s\what{f_{k_1}^{+}})(s) \big\|}_{L^2}
    {\| \what{f_{k_2}^{+}}(s) \|}_{L^2} {\| \what{f_{k_3}^{-}}(s) \|}_{L^2} 2^{k_3/2}
\\
& \lesssim \e_1^3 2^{2\max(k_1,0)} 2^{-m + 100p_0m} 2^{-4k_2^+}2^{k_3(1/2-p_0)}
\\
& \lesssim \e_1^3 2^{-m} 2^{-m/10}.
\end{split}
\end{align*}
The integral $|N_3(\xi,s)|$ can be estimated in the same way.
To deal with the last term in \eqref{IBPs2} we use again \eqref{IBPssym}, \eqref{prs}, and
the a priori bounds \eqref{boundsflow}-\eqref{boundsfhigh} to obtain:
 \begin{align*}
\begin{split}
|N_4(\xi,s)| & \lesssim 2^{2\max(k_1,0)} 2^{-k/2} 2^{-k_2/2} 2^{-k_3/2}
    {\big\|\what{f_{k_1}^{+}}(s) \big\|}_{L^2}
    {\| \what{f_{k_2}^{+}}(s) \|}_{L^2} {\| (\partial_s\what{f_{k_3}^{-}})(s) \|}_{L^2} 2^{k_3/2}
\\
& \lesssim \e_1^3 2^{-m + 100p_0m} 2^{k_3(1/2-p_0)}
\\
& \lesssim \e_1^3 2^{-m}2^{-m/10}.
\end{split}
\end{align*}

We deduce that
\begin{align*}
\int_{t_1}^{t_2} | N_2(\xi,s)| + |N_3(\xi,s)| + |N_4(\xi,s)| \, ds \lesssim \e_1^3 2^{-m/10} ,
\end{align*}
and the lemma follows from \eqref{IBPs} and \eqref{N1bound}.
\end{proof}

\begin{lemma}\label{lemma5}
The bound \eqref{mainlemma2conc1} holds provided that \eqref{mainlemma2cond} holds and, in addition,
\begin{equation}
\label{lemma5cond}
\max (|k-k_1|, |k- k_2|, |k-k_3|) \geq 21 \qquad \mbox{and} \qquad \max (|k_1 - k_3|, |k_2-k_3|) \leq 4.
\end{equation}
\end{lemma}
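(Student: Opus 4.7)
My plan is to reduce this remaining case to a variant of the integration-by-parts-in-$\eta$ argument used in Lemma \ref{lemma2}. Set $K:=\max(k_1,k_2,k_3)$; the hypotheses $|k_i-k_3|\leq 4$ for $i=1,2$ together with $\max(|k-k_j|)\geq 21$ force $|k_i-K|\leq 8$ for all $i$ and $k\leq K-17$, so $|\xi|\leq 2^{K-17}$. With $a:=\xi+\eta$, $b:=\xi+\sigma$, $c:=\xi+\eta+\sigma$, the frequency support of the integrand in \eqref{I} forces $|a|\approx 2^{k_1}$, $|b|\approx 2^{k_2}$, $|c|\approx 2^{k_3}$, all comparable to $2^K$, while $c-a=\sigma$ satisfies $|\sigma|\approx|b|\approx 2^K$ since $|\xi|\ll|b|$.

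The key claim is that on this support one has the uniform lower bound
\[
|(\partial_\eta\Phi)(\xi,\eta,\sigma)|=(3/2)\bigl|\operatorname{sgn}(a)|a|^{1/2}-\operatorname{sgn}(c)|c|^{1/2}\bigr|\gtrsim 2^{K/2}.
\]
The case $\operatorname{sgn}(a)\neq\operatorname{sgn}(c)$ gives the lower bound trivially since both square roots are $\approx 2^{K/2}$; in the opposite sign case one writes $\bigl||a|^{1/2}-|c|^{1/2}\bigr|=|a-c|/(|a|^{1/2}+|c|^{1/2})$ and uses $|a-c|=|\sigma|\gtrsim 2^K$. The algebraic constraint $c=a+b-\xi$ with $|\xi|\ll 2^K$ rules out the only potentially dangerous configurations in which $a$ and $c$ nearly cancel.

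Given this lower bound, I repeat the integration by parts in $\eta$ exactly as in the proof of Lemma \ref{lemma2}: with the multiplier $m_1(\eta,\sigma)=[s\partial_\eta\Phi]^{-1}\varphi'_{k_1}(\xi+\eta)\varphi'_{k_3}(\xi+\eta+\sigma)$ satisfying $\|m_1\|_{S^\infty}\lesssim 2^{-m-K/2}$, one decomposes $I^{++-}_{k_1,k_2,k_3}$ as the sum of four pieces $K_1,\ldots,K_4$ of the form \eqref{IBPeta}. Each piece is estimated by Lemma \ref{touse}(ii) using the symbol bound $\|c^\ast_{\mathbf{k}}\|_{S^\infty}\lesssim 2^{3K/2}$ from \eqref{cbounds} and placing the factor carrying the $\partial$ in $L^2$, one of the remaining factors in $L^\infty$ via the dispersive estimate from \eqref{boundsflow}--\eqref{boundsfhigh}, and the last factor in $L^2$. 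Combining these we find, for $K\geq 0$, a bound proportional to $\e_1^3 2^{-3m/2+12p_0 m}\,2^{K(1-N_0-N_1-N_2)}$ which is amply summable in $K$ since $N_0+N_1+N_2-1=16$; and for $K<0$, since $K\geq k+17\geq-80p_0m+17$, the bound $\e_1^3 2^{-3m/2}2^{K(11/10-2p_1)}$ is again negligible. The resulting pointwise bound $|I^{++-}_{k_1,k_2,k_3}(\xi,s)|\lesssim\e_1^3 2^{-m-200p_0m}$ integrated over $[t_1,t_2]\subset[2^m-2,2^{m+1}]$ yields \eqref{mainlemma2conc1}.

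The main technical obstacle will be the case analysis for the lower bound on $|\partial_\eta\Phi|$, which must be carried out carefully to cover all sign and magnitude configurations permitted by $|k_i-k_j|\leq 8$ and the sum condition $a+b-c=\xi$. A secondary technical point is the control of the $S^\infty$ (rather than merely $L^\infty$) norm of $m_1$ and of $\partial_\eta c^\ast_{\mathbf{k}}$ in the terms $K_3,K_4$: as in Lemma \ref{lemma2} this is handled by inserting dyadic cutoffs $\varphi_{l_1}(\eta)\varphi_{l_2}(\sigma)\varphi_{l_3}(2\xi+\eta+\sigma)$ and using \eqref{cbounds'}, at the cost of at most a logarithmic loss in $m$ that is absorbed by the strong gain in $K$.
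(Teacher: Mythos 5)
Your proposal is correct and follows essentially the same route as the paper: the observation that \eqref{lemma5cond} forces all three input frequencies to be comparable and much larger than $|\xi|$, the resulting lower bound $|\partial_\eta\Phi|=|\Lambda'(\xi+\eta)-\Lambda'(\xi+\eta+\sigma)|\gtrsim 2^{k_{\max}/2}$ (which the paper states with the justification left implicit, and which you verify via the two sign cases and $|a-c|=|\sigma|\approx 2^{k_{\max}}$), and then the same integration by parts in $\eta$ with the multiplier $m_1$ and the same $K_1,\ldots,K_4$ estimates as in Lemma \ref{lemma2}. The paper's proof is exactly this reduction, so no further comment is needed.
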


\begin{proof}
In this case we have $\min (k_1,k_2,k_3) \geq  k + 10$, so that, in particular, $|\sigma|\approx 2^{k_2}$.
Since all input frequencies are comparable in view of the second assumption in \eqref{lemma5cond},
we can see that
\begin{align*}
|(\partial_\eta \Phi)(\xi,\eta,\sigma)| = |\Lambda'(\xi+\eta)-\Lambda'(\xi+\eta+\sigma)| 
  \gtrsim 2^{k_{\max}/2} ,
\end{align*}
which is the same lower bound as in \eqref{lowerbound1}.
We can then integrate by parts in $\eta$ similarly to what was done before in Lemma \ref{lemma2}.
This gives
\begin{align*}
|I_{k_1,k_2,k_3}^{++-}(\xi,s)| & \lesssim  |K_1(\xi,s)| + |K_2(\xi,s)| + |K_3(\xi,s)| + |K_4(\xi,s)|
\end{align*}
where the term $K_j$, $j=1,\dots 4$ are defined in \eqref{IBPeta}-\eqref{symIBPeta},
and the bound \eqref{symIBPeta} is satisfied.
Then, the same estimates that followed \eqref{symIBPetaest1}
show that $|I^{++-}_{k_1,k_2,k_3}(\xi,s)|\lesssim \eps_1^32^{-m}2^{-200p_0m}$, which suffices to prove the lemma.
\end{proof}

\subsection{Proof of \eqref{mainlemma2conc2}}\label{prooftech2}

We divide the proof of the bound \eqref{mainlemma2conc2} into several lemmas.
We only consider in detail the case $(\iota_1\iota_2\iota_3)=(--+)$,
since the cases $(\iota_1\iota_2\iota_3) = (+++)$ or $(---)$ are very similar.
In the rest of this subsection we let $\Phi := \Phi^{--+}$ and $c^{\ast}_{\mathbf{k}} := c^{\ast,--+}_{\xi;k_1,k_2,k_3}$.

\begin{lemma}\label{lemma11}
The bound \eqref{mainlemma2conc2} holds provided that \eqref{mainlemma2cond} holds and, in addition,
\begin{align}
\label{lemma11cond}
\max (|k_1 - k_3| , |k_2-k_3|) \geq 5 \quad \mbox{and} \quad \min (k_1,k_2,k_3) \geq -\frac{49}{100}m.
\end{align}
\end{lemma}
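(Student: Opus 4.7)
The plan is to prove the stronger pointwise bound $|I^{--+}_{k_1,k_2,k_3}(\xi,s)|\lesssim \e_1^3\,2^{-m}2^{-200p_0 m}$ for all $s\in[t_1,t_2]$, from which \eqref{mainlemma2conc2} follows after integration over an interval of length $\lesssim 2^m$. This mirrors the strategy of Lemma \ref{lemma2}, with the only new input being that the analogue of the lower bound \eqref{lowerbound1} continues to hold for the phase $\Phi^{--+}(\xi,\eta,\sigma)=-|\xi|^{3/2}-|\xi+\eta|^{3/2}-|\xi+\sigma|^{3/2}+|\xi+\eta+\sigma|^{3/2}$.

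Since $\Phi^{--+}$ is symmetric in the variables $\eta$ and $\sigma$ (because $\iota_1=\iota_2=-$), I may assume without loss of generality that $|k_1-k_3|\geq 5$. Computing
\[
\partial_\eta \Phi^{--+}(\xi,\eta,\sigma)=\tfrac{3}{2}\bigl[\mathrm{sgn}(\xi+\eta+\sigma)|\xi+\eta+\sigma|^{1/2}-\mathrm{sgn}(\xi+\eta)|\xi+\eta|^{1/2}\bigr],
\]
I would establish the lower bound $|\partial_\eta \Phi^{--+}|\gtrsim 2^{\max(k_1,k_3)/2}$ on the support of the integrand: if $\xi+\eta$ and $\xi+\eta+\sigma$ have opposite signs the two square-root terms add constructively, while if they have the same sign the separation $|k_1-k_3|\geq 5$ forces the magnitudes to differ by at least a factor $2^4$, so that the strict monotonicity of $x\mapsto |x|^{1/2}$ on the positive axis yields the claimed lower bound. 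The crucial structural point is that, unlike the $(++-)$ interaction, the $(--+)$ phase has no spacetime resonance away from the origin, precisely because the derivative $\partial_\eta\Phi^{--+}$ cannot vanish when $|\xi+\eta|\not\approx|\xi+\eta+\sigma|$.

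Given this lower bound, I would integrate by parts in $\eta$ using the multiplier $m_1(\eta,\sigma):=[s\,\partial_\eta \Phi^{--+}(\xi,\eta,\sigma)]^{-1}\varphi'_{k_1}(\xi+\eta)\varphi'_{k_3}(\xi+\eta+\sigma)$, which satisfies $\|m_1\|_{S^\infty}\lesssim 2^{-m}2^{-\max(k_1,k_3)/2}$, in complete analogy with \eqref{symIBPeta}--\eqref{symIBPetaest1}. This produces four terms $K_1,\dots,K_4$ according to whether the $\partial_\eta$ derivative falls on $\widehat{f^-_{k_1}}$, $\widehat{f^+_{k_3}}$, the multiplier $m_1$, or the symbol $c^\ast_{\mathbf{k}}$. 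Each is estimated by Lemma \ref{touse}(ii) together with the symbol bounds \eqref{cbounds}--\eqref{cbounds'} and the profile bounds \eqref{boundsflow}--\eqref{boundsfhigh}, placing the differentiated profile in $L^2$, the factor at frequency $2^{k_2}$ in $L^\infty$ after applying the linear propagator (gaining the dispersive factor $2^{-m/2}$), and the remaining factor in $L^2$. The hypothesis $\min(k_1,k_2,k_3)\geq -49m/100$ ensures that the $L^2$ norms are controlled without a catastrophic low-frequency loss, exactly as in Lemma \ref{lemma2}; a calculation identical to the one there yields the desired pointwise bound. The cases $(\iota_1\iota_2\iota_3)\in\{(+++),(---)\}$ are treated in the same way — in fact they are easier since the phase $\Phi^{+++}$ has no sign cancellation at all in $\partial_\eta$ when $|k_1-k_3|\geq 5$, and $(---)$ is the complex conjugate of $(+++)$. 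The main technical nuisance is the casework on the signs of $\xi+\eta$ and $\xi+\eta+\sigma$ needed to verify the phase lower bound, but this is routine given the explicit form of $\Lambda'$.
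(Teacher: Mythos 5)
Your proof is correct and follows essentially the same route as the paper: reduce by the $\eta\leftrightarrow\sigma$ symmetry to $|k_1-k_3|\geq 5$, verify the lower bound $|\partial_\eta\Phi^{--+}|\gtrsim 2^{\max(k_1,k_3)/2}$ by the sign casework, and then integrate by parts in $\eta$ exactly as in Lemma \ref{lemma2}, using $\min(k_1,k_2,k_3)\geq -49m/100$ to absorb the low-frequency losses. The only minor imprecision is the claim that $\Phi^{+++}$ has ``no sign cancellation at all'' in $\partial_\eta$ — when $\xi+\eta$ and $\xi+\eta+\sigma$ have opposite signs the two terms of $\Lambda'(\xi+\eta)+\Lambda'(\xi+\eta+\sigma)$ do cancel, but the separation $|k_1-k_3|\geq 5$ still yields the same lower bound, so the conclusion stands.
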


\begin{proof}
This case is similar to the proof of Lemma \ref{lemma2}.
Without loss of generality, by symmetry we can assume that $|k_1-k_3| \geq 5$ and $k_2\leq \max(k_1,k_3) + 5$.
Under the assumptions \eqref{lemma11cond} we still have the strong lower bound
\begin{align*}
 |(\partial_\eta \Phi)(\xi,\eta,\sigma)| = | \Lambda'(\xi+\eta)-\Lambda'(\xi+\eta+\sigma) | \gtrsim 2^{k_{\max}/2} .
\end{align*}
The proof can then proceed exactly as in Lemma \ref{lemma2}, using integration by parts in $\eta$.
\end{proof}

\begin{lemma}\label{lemma12}
The bound \eqref{mainlemma2conc2} holds provided that \eqref{mainlemma2cond} holds and, in addition,
\begin{align}
\label{lemma12cond}
\max (|k_1 - k_3| , |k_2-k_3|) \geq 5 \quad \mbox{and} \quad \mathrm{med} (k_1,k_2,k_3) \leq -48m/100.
\end{align}
\end{lemma}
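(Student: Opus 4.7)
The plan is to mimic Lemma \ref{lemma3}, integrating by parts in $\eta$ or $\sigma$ depending on which variable provides a sufficiently large phase derivative. Both $\Phi^{--+}$ and $c^{\ast,--+}_{\xi;k_1,k_2,k_3}$ are symmetric under $(\eta,\sigma)\leftrightarrow(\sigma,\eta)$, so we may assume $k_1 \leq k_2$. Since $k_{\mathrm{med}} \leq -48m/100$ while $k \geq -80p_0m$, the conservation relation $-(\xi+\eta) - (\xi+\sigma) + (\xi+\eta+\sigma) = -\xi$ forces exactly one of the three arguments $\xi+\eta,\ \xi+\sigma,\ \xi+\eta+\sigma$ to have modulus comparable to $|\xi|\approx 2^k$, while the other two are of size $\leq 2^{-48m/100+O(1)}$.

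Under $k_1\leq k_2$, this gives two subcases. \emph{Case (i):} $k_1 \leq k_2 \leq k_3$, so $k_1, k_2 \leq -48m/100$ and $|\xi+\eta+\sigma|\approx|\xi|$; then
$$|\partial_\eta \Phi^{--+}(\xi,\eta,\sigma)| = |\Lambda'(\xi+\eta)-\Lambda'(\xi+\eta+\sigma)|\gtrsim 2^{k/2},$$
and we integrate by parts in $\eta$. \emph{Case (ii):} $k_3 \leq k_1 \leq k_2$ or $k_1 \leq k_3 \leq k_2$, so $k_1, k_3 \leq -48m/100$ and $|\xi+\sigma|\approx|\xi|$; then
$$|\partial_\sigma \Phi^{--+}(\xi,\eta,\sigma)| = |\Lambda'(\xi+\sigma)-\Lambda'(\xi+\eta+\sigma)|\gtrsim 2^{k/2},$$
and we integrate by parts in $\sigma$.

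In either case the resulting four integrals are analogous to \eqref{IBPeta}--\eqref{symIBPeta} with weight ${\|m_1\|}_{S^\infty}\lesssim 2^{-m}2^{-k/2}$, and we estimate them via Lemma \ref{touse}(ii), using \eqref{cbounds}--\eqref{cbounds'} for the symbols and \eqref{boundsflow}--\eqref{boundsfhigh} for the profiles: we place the profile at $k_{\min}$ in $L^2$, the profile at $k_{\mathrm{med}}$ in $L^\infty$ via the dispersive bound, and the profile of size $\approx 2^k$ in $L^2$. The low-frequency constraint $k_{\min}/2 + 3k_{\mathrm{med}}/2 \geq -m(1+10p_0)$ from \eqref{mainlemma2cond} is used exactly as in Lemma \ref{lemma3} to absorb the low-frequency powers produced by $\partial\widehat{f_{k_{\min}}}$. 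The main obstacle is simply verifying the two phase-derivative lower bounds above, which is elementary once the size hierarchy has been extracted; in particular this case is easier than Lemma \ref{lemma4}, because $\Phi^{--+}$ admits no true spacetime resonance (solving $\partial_\eta\Phi^{--+}=\partial_\sigma\Phi^{--+}=0$ forces $\eta=\sigma=0$, and then $\Phi^{--+}(\xi,0,0)=-2|\xi|^{3/2}\neq 0$), so no integration by parts in time is required.
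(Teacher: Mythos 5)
Your argument is correct and is essentially the paper's proof: a frequency-space integration by parts with the lower bound $|\nabla_{\eta,\sigma}\Phi^{--+}|\gtrsim 2^{k_{\max}/2}$, followed by the Lemma \ref{lemma2}/\ref{lemma3}-type estimates using the constraint $k_{\min}+3k_{\mathrm{med}}\geq -2m(1+10p_0)$. The only difference is organizational: the paper symmetrizes so that $k_2=\min(k_1,k_2)$, notes that this forces $|k_1-k_3|\geq 5$, and then always integrates by parts in $\eta$ (whose phase derivative sees exactly the frequencies $k_1,k_3$), which avoids your case split and the $\sigma$-integration; also, in your subcase $k_1\leq k_3\leq k_2$ the $\partial$ from the integration by parts can land on the $k_{\mathrm{med}}$ profile, so the norm placement must be reshuffled in the standard way (derivative factor in $L^2$, the $2^k$-sized profile in $L^\infty$ via dispersion), but this is routine.
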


\begin{proof}
This is similar to the situation in Lemma \ref{lemma3}.
By symmetry we may assume that $k_2 = \min(k_1,k_2)$. The main observation is that in this case we must have
$|k_1-k_3 |\geq 5$, because of the second assumption in \eqref{lemma12cond} and $k \geq -80p_0 m$.
We then have the lower bound
\begin{align*}
  |(\partial_\eta \Phi)(\xi,\eta,\sigma)| = |\Lambda'(\xi+\eta) - \Lambda'(\xi+\eta+\sigma)| \gtrsim 2^{k_{\max}/2}.
\end{align*}
Thus, we integrate by parts in $\eta$ and estimate the resulting integrals as in Lemma \ref{lemma2},
by placing $\what{f_{k_2}^+}$ in $L^2$, and using the restriction $k_{\min} + 3k_{\mathrm{med}}\geq -2m(1+10p_0)$,
see \eqref{mainlemma2cond}.
\end{proof}

\begin{lemma}\label{lemma13}
The bound \eqref{mainlemma2conc2} holds provided that \eqref{mainlemma2cond} holds and, in addition,
\begin{align*}
\max (|k_1 - k_3| , |k_2-k_3|) \geq 5 \quad \mbox{and} \quad k_{\mathrm{med}}\geq -\frac{48m}{100}
  \quad \mbox{and} \quad k_{\min}\leq -\frac{49m}{100}.
\end{align*}
\end{lemma}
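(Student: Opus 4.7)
The plan is to proceed in direct analogy with Lemma \ref{lemma4}, integrating by parts in time in the integral $\int_{t_1}^{t_2}e^{iL(\xi,s)}I^{--+}_{k_1,k_2,k_3}(\xi,s)\,ds$. The essential preliminary step is to establish a suitable lower bound on $\Phi^{--+}(\xi,\eta,\sigma) = -\Lambda(\xi)-\Lambda(\xi+\eta)-\Lambda(\xi+\sigma)+\Lambda(\xi+\eta+\sigma)$; unlike the $(++-)$ case, the three minus signs in the $(--+)$ phase prevent destructive cancellation among $\Lambda(\xi),\Lambda(\xi+\eta),\Lambda(\xi+\sigma)$ and should yield a stronger non-resonance estimate under the present hypotheses.

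By the symmetry of the $(--+)$ integrand under $(\eta,k_1)\leftrightarrow(\sigma,k_2)$, we may assume $k_2\leq k_1$. We split into two subcases according to which of $k_1,k_2,k_3$ realizes $k_{\min}$. In the first subcase $k_3=k_{\min}\leq -49m/100$: on the support we have $|\xi+\eta+\sigma|\approx 2^{k_3}$, so $\Lambda(\xi+\eta+\sigma)\lesssim 2^{3k_3/2}$ is negligible, while the three non-negative terms $\Lambda(\xi)+\Lambda(\xi+\eta)+\Lambda(\xi+\sigma)$ all have the same sign. Since $k_{\mathrm{med}}\geq -48m/100$ and $\xi\approx (\xi+\eta)+(\xi+\sigma)$ modulo the tiny vector $\xi+\eta+\sigma$, at least two of $k,k_1,k_2$ are comparable to $k_{\max}$, giving $|\Phi^{--+}|\gtrsim 2^{3k_{\mathrm{med}}/2}\geq 2^{-72m/100}$. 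In the second subcase $k_2=k_{\min}\leq-49m/100$ (hence also $k_1=k_{\min}$ by the symmetry reduction is excluded, or handled identically): we have $|\xi+\sigma|\approx 2^{k_2}$ tiny, so on the support $\sigma\approx-\xi$; substituting, $\Phi^{--+}=-\Lambda(\xi)-\Lambda(\xi+\eta)+\Lambda(\eta)+O(2^{3k_2/2}\cdot 2^{k_{\max}/2})$, which is precisely a phase of $(++-)$-type in the frequencies $(\xi+\eta,\eta,\xi)$, with none of them small since $k,k_1=k_{\max}$ and $k_3\geq k_{\mathrm{med}}$ are all $\geq -48m/100$ (using $|k_1-k_3|\geq 5$ or $|k_2-k_3|\geq 5$ in combination with $k_2=k_{\min}$ to force $k_3\approx k_{\mathrm{med}}$). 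An application of \eqref{phiab} then yields $|\Phi^{--+}|\gtrsim 2^{k_{\mathrm{med}}}2^{k/2}\gtrsim 2^{-49m/100}$.

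Having the lower bound $|\Phi^{--+}|\gtrsim 2^{-m+m/100}$ in both subcases (with plenty of margin), we integrate by parts in $s$ exactly as in \eqref{IBPs}--\eqref{IBPs2}, producing boundary terms $N_1(\xi,t_1),N_1(\xi,t_2)$, the phase-correction contribution $\int |\partial_s L||N_1|$, and three interior terms $N_2,N_3,N_4$ in which $\partial_s\hat{f}_{k_j}^{\iota_j}$ replaces $\hat{f}_{k_j}^{\iota_j}$. The pointwise size $|c^{\ast}_{\mathbf{k}}/\Phi^{--+}|\lesssim 2^{3k_{\max}/2}2^{(k_{\mathrm{med}}-k_{\min})/2}\cdot 2^{m-m/100}$ from \eqref{cbounds} (with $3k_{\mathrm{med}}/2+k_{\min}/2\geq -m(1+10p_0)$ from \eqref{mainlemma2cond}), combined with the a priori bounds \eqref{boundsflow}--\eqref{boundsfhigh}, the improved estimate \eqref{prs} on $\partial_s\hat f$, and the pointwise bound $|\partial_s L|\lesssim\varepsilon_1^2 2^{-m}$, give the desired bound $\varepsilon_1^3 2^{-120p_0m}$ on each piece.

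The main obstacle is the sharp estimate on $|\Phi^{--+}|$ in the second subcase: while the reduction to a $(++-)$-type phase is natural, it requires carefully exploiting $|k_1-k_3|\geq 5$ or $|k_2-k_3|\geq 5$ together with the frequency localization forced by $k_2=k_{\min}$ and $k\geq-80p_0m$ to ensure that the two remaining frequencies in the reduced phase stay separated. Once this non-resonance is in hand, all subsequent estimates parallel those of Lemma \ref{lemma4} essentially verbatim, with the symbol bound \eqref{cbounds} absorbing the slight derivative loss coming from the $(--+)$ symbol $c^{\ast,--+}_{\xi;k_1,k_2,k_3}$.
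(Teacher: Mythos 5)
Your proposal is correct and follows essentially the same route as the paper: the paper's proof of this lemma consists precisely of asserting the lower bound $|\Phi^{--+}(\xi,\eta,\sigma)|\gtrsim 2^{\min(k_{\mathrm{med}},k)}2^{k/2}$ and then integrating by parts in time exactly as in Lemma \ref{lemma4}, which is what you do, with your case analysis (the $k_3=k_{\min}$ case being trivially non-resonant since the three negative terms cannot cancel, and the $k_1$ or $k_2=k_{\min}$ case reducing via $\sigma\approx-\xi$ to a two-frequency resonance controlled by \eqref{phiab}) supplying a correct justification of that bound. The only blemishes are cosmetic: the Taylor-expansion error in your second subcase should read $O(2^{k_{\min}}2^{k_{\max}/2})$ rather than $O(2^{3k_{\min}/2}2^{k_{\max}/2})$, and $k_3$ need not equal $k_{\mathrm{med}}$ there — but neither affects the argument, since in every admissible configuration the error is still dominated by the main term.
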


\begin{proof}
This situation is similar to the one in Lemma \ref{lemma4}. The main observation is that we have the lower bound
\begin{align*}
| \Phi(\xi,\eta,\sigma) | \gtrsim 2^{\min(k_{\mathrm{med}},k)} 2^{k/2},
\end{align*}
so that we can integrate by parts in time and estimate the resulting integrals as in the proof of Lemma \ref{lemma4}.
\end{proof}

\begin{lemma}\label{lemma14}
The bound \eqref{mainlemma2conc2} holds provided that \eqref{mainlemma2cond} holds and, in addition,
\begin{equation*}
\max (|k-k_1|, |k- k_2|, |k-k_3|) \geq 21 \qquad \mbox{and} \qquad \max (|k_1 - k_3|, |k_2-k_3|) \leq 4.
\end{equation*}
\end{lemma}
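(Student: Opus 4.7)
My plan is to reduce Lemma~\ref{lemma14} to an integration-by-parts-in-$s$ argument modeled on Lemma~\ref{lemma4}, using the fact that in the present regime the phase $\Phi^{\iota_1\iota_2\iota_3}$ is uniformly large. The assumptions $\max(|k-k_j|)\geq 21$ and $\max(|k_1-k_3|,|k_2-k_3|)\leq 4$ force $k_1\approx k_2\approx k_3$ and $\min(k_1,k_2,k_3)\geq k+10$, so all three input frequencies have comparable magnitude $\approx 2^{k_1}$, and $|\xi|\approx 2^k \ll 2^{k_1}$.

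The main step is the phase lower bound $|\Phi^{\iota_1\iota_2\iota_3}(\xi,\eta,\sigma)|\gtrsim 2^{3k_1/2}$ on the support of the integrand. For $(+++)$ and $(---)$ this is immediate: all three input terms in the phase carry the same sign, so $|\Phi^{\pm\pm\pm}|\geq 3\cdot 2^{3(k_1-1)/2}-|\xi|^{3/2}\gtrsim 2^{3k_1/2}$. For $(--+)$, set $a=\xi-\eta$, $b=\eta-\sigma$, $c=\sigma$, so that $a+b+c=\xi$ with $|a|,|b|,|c|\in[2^{k_1-4},2^{k_1+4}]$, and write $\Phi^{--+}=-|\xi|^{3/2}+F$ with $F=|c|^{3/2}-|a|^{3/2}-|b|^{3/2}$. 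A short case analysis on the signs of $a,b,c$ gives the lower bound $|F|\gtrsim 2^{3k_1/2}$: (i) all three of one sign is excluded since that would force $|\xi|=|a+b+c|\gtrsim 2^{k_1}$; (ii) when the two negative-signed frequencies $a,b$ agree in sign, then $|c|\approx |a|+|b|$ and strict superadditivity of $t\mapsto t^{3/2}$ gives $F\gtrsim 2^{3k_1/2}$; (iii) when $a,b$ have opposite signs, the constraint $|c|\approx 2^{k_1}$ forces $||a|-|b||\approx 2^{k_1}$, and then the same superadditivity yields $F<-|\min(|a|,|b|)|^{3/2}\lesssim -2^{3k_1/2}$. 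Since $|\xi|^{3/2}\ll 2^{3k_1/2}$, the claimed lower bound on $\Phi^{--+}$ follows.

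With the phase lower bound in hand, I integrate by parts in $s$ exactly as in \eqref{IBPs}--\eqref{IBPs2}, producing boundary terms $N_1(\xi,t_j)$ and interior terms $N_2,N_3,N_4$ with amplitude $c^{\ast}_{\mathbf k}/\Phi$. Here $\|c^{\ast}_{\mathbf k}/\Phi\|_{S^\infty}\lesssim 1$ by \eqref{cbounds} and the lower bound on $\Phi$. The key for bounding $N_1$ is to use the identity
\[
e^{is\Phi^{\iota_1\iota_2\iota_3}(\xi,\eta,\sigma)}\widehat{f^{\iota_j}_{k_j}}(\cdot)
=e^{-is\Lambda(\xi)}\prod_{j}\widehat{v^{\iota_j}_{k_j}}(\cdot),
\]
which recasts $N_1(\xi,t_j)$ as $e^{-it_j\Lambda(\xi)}$ times the Fourier transform at $\xi$ of the trilinear form with symbol $c^{\ast}_{\mathbf k}/(i\Phi)$ applied to $(v^{\iota_1}_{k_1},v^{\iota_2}_{k_2},v^{\iota_3}_{k_3})$. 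Placing one $v$-factor in $L^\infty$ (exploiting the dispersive decay $\|v^{\iota_j}_{k_j}(s)\|_{L^\infty}\lesssim \varepsilon_1 2^{-m/2}(2^{k_j/10}+2^{-(N_2-1/2)k_j})$ from \eqref{boundsflow}--\eqref{boundsfhigh}) and the other two in $L^2$ via \eqref{boundsflow}--\eqref{boundsfhigh}, Lemma~\ref{touse}(ii) then yields
\[
|N_1(\xi,t_j)|\lesssim \varepsilon_1^3\, 2^{-m/2+12 p_0 m},
\]
which is easily $\leq \varepsilon_1^3 2^{-120 p_0 m}$ for $m$ large, since $p_0\leq 10^{-10}$. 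The interior terms $N_2,N_3,N_4$ are estimated in the same way, picking up the extra smallness \eqref{prs} from $\partial_s\widehat{f}$, which comfortably absorbs the integration $\int_{t_1}^{t_2}\,ds\sim 2^m$. The factor $e^{iL(\xi,s)}$ and the contribution $|\partial_s L|\lesssim \varepsilon_1^2 2^{-m}$ are handled exactly as in the proof of Lemma~\ref{lemma4}.

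The main obstacle is the phase lower bound in the $(--+)$ case; the other cases are easier because their phases have a definite sign. Once that case analysis is established, the rest of the argument is an almost line-by-line adaptation of Lemma~\ref{lemma4}, with the crucial improvement that one of the three profile factors can be placed in $L^\infty$ via the dispersive estimate, providing the $2^{-m/2}$ gain that makes the boundary terms manifestly smaller than $\varepsilon_1^3 2^{-120 p_0 m}$.
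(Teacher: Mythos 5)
Your proof is correct, but it takes a genuinely different route from the paper's. The paper disposes of this case exactly as in Lemma \ref{lemma5}: since all three input frequencies are comparable and $\min(k_1,k_2,k_3)\geq k+10$, one has the \emph{space non-resonance} bound $|\partial_\eta\Phi|=|\Lambda'(\xi+\eta)-\Lambda'(\xi+\eta+\sigma)|\gtrsim 2^{k_{\max}/2}$, and a single integration by parts in $\eta$ yields the pointwise bound $|I^{\iota_1\iota_2\iota_3}_{k_1,k_2,k_3}(\xi,s)|\lesssim \e_1^3 2^{-m}2^{-200p_0m}$, after which the $s$-integral is trivial. You instead exploit the \emph{time non-resonance} bound $|\Phi|\gtrsim 2^{3k_{\max}/2}$ and integrate by parts in $s$, which is essentially the mechanism the paper reserves for the diagonal piece in Lemma \ref{lemma15}; your sign case analysis for $\Phi^{--+}$ (via \eqref{phiab}) is correct, and the numerology closes since $2^{-m/2}\ll 2^{-120p_0m}$. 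What the paper's route buys is economy: no phase case analysis, no need to control $\partial_s\widehat{f}$ or the correction $L(\xi,s)$, and a stronger pointwise-in-$s$ decay. Your route requires two additional points that you should make explicit: (i) the bound $\|c^\ast_{\mathbf k}/\Phi\|_{S^\infty}\lesssim 1$ does not follow formally from \eqref{cbounds} by division (quotients of $S^\infty$ symbols need not lie in $S^\infty$ with the quotient norm), but must be justified by integration by parts on the localized support, exactly as the paper does for the analogous symbol in Lemma \ref{lemma15}; and (ii) in the interior terms $N_2,N_3,N_4$ one must still place one undifferentiated factor in $L^\infty$ via the dispersive estimate --- putting both remaining factors in $L^2$ gives only $\e_1^4 2^{18p_0m}$ after the $s$-integration, which is not sufficient. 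With these two points spelled out, your argument is a complete and valid alternative proof.
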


\begin{proof}
This case is similar to that of Lemma \ref{lemma5}.
Observing that $k+10 \leq \min(k_1,k_2,k_3)$, we see that
\begin{align*}
 |(\partial_\eta \Phi)(\xi,\eta,\sigma)| = |\Lambda'(\xi+\eta)-\Lambda'(\xi+\eta+\sigma)| \gtrsim 2^{k_{\max}/2} .
\end{align*}
We can then use again integration by parts in $\eta$ to obtain the desired bound.
\end{proof}

\begin{lemma}\label{lemma15}
The bound \eqref{mainlemma2conc2} holds provided that \eqref{mainlemma2cond} holds and, in addition,
\begin{align}
\label{lemma15cond}
\max (|k-k_1|, |k- k_2|, |k-k_3|) \leq 20.
\end{align}
\end{lemma}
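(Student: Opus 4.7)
The feature that distinguishes the three sign combinations $(\iota_1\iota_2\iota_3)\in\{(+++),(--+),(---)\}$ from the $(++-)$ case handled in Lemma \ref{lemma5} is that every critical point of the map $(\eta,\sigma)\mapsto\Phi^{\iota_1\iota_2\iota_3}(\xi,\eta,\sigma)$ inside the support of the integrand is time-nonresonant, in the strong sense $|\Phi|\gtrsim 2^{3k/2}$. I will begin by identifying these critical points: writing $a=\xi+\eta$, $b=\xi+\sigma$, $c=\xi+\eta+\sigma$ and using that $\Lambda'(x)=(3/2)|x|^{1/2}\mathrm{sgn}(x)$ is odd and strictly monotone on $\R$, the system $\partial_\eta\Phi=\partial_\sigma\Phi=0$ combined with $c=a+b-\xi$ has a unique solution $(\eta,\sigma)=(0,0)$ (with $a=b=c=\xi$) in the $(--+)$ case, and a unique solution $(\eta,\sigma)=(-2\xi/3,-2\xi/3)$ (with $a=b=\xi/3$, $c=-\xi/3$) in the $(+++)$ and $(---)$ cases. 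A direct calculation at the critical point gives $|\Phi|=2|\xi|^{3/2}$ in the former and $|\xi|^{3/2}(1\mp 3^{-1/2})$ in the latter.

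For $(---)$ one has in addition the global lower bound $|\Phi|=|\xi|^{3/2}+|a|^{3/2}+|b|^{3/2}+|c|^{3/2}\gtrsim 2^{3k/2}$ uniformly on the support, and in that case \eqref{mainlemma2conc2} follows by a single integration by parts in $s$ exactly as in Lemmas \ref{lemma4} and \ref{lemma13}, using \eqref{prs} to control $\partial_s\widehat{f}$ and the elementary bound $|\partial_s L(\xi,s)|\lesssim\varepsilon_1^2|\xi|^{9/5}\langle s\rangle^{-1}$ on the modulation. For the other two sign combinations my plan is to introduce a smooth partition of unity $1=\chi_{\rm near}+\chi_{\rm far}$ in the $(\eta,\sigma)$-plane, with $\chi_{\rm near}$ supported in a ball of radius $\delta 2^k$ around the single critical point for a sufficiently small absolute constant $\delta$, so that $|\Phi|\gtrsim 2^{3k/2}$ on $\mathrm{supp}(\chi_{\rm near})$ and $|\partial_\eta\Phi|+|\partial_\sigma\Phi|\gtrsim 2^{k/2}$ on $\mathrm{supp}(\chi_{\rm far})$. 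The far piece will be handled by a single integration by parts in whichever of $\eta,\sigma$ carries the larger partial derivative of $\Phi$, exactly as in Lemma \ref{lemma2}; combining the $\langle s\rangle^{-1}2^{-k/2}$ gain with the symbol bounds \eqref{cbounds}--\eqref{cbounds'}, the dispersive bound in \eqref{boundsflow}--\eqref{boundsfhigh} on one factor, and $L^2$ bounds on the other two factors of $\widehat{f}$, one obtains a pointwise estimate on $I^{\iota_1\iota_2\iota_3}_{k_1,k_2,k_3}(\xi,s)$ that is integrable in $s$. The near piece is the delicate one: the Hessian of $\Phi$ at the critical point is non-degenerate with $|\det\mathrm{Hess}|\sim|\xi|^{-1}$, so stationary phase in $(\eta,\sigma)$ expresses $I_{\rm near}(\xi,s)$ as a leading order $A(\xi,s)e^{is\Phi(\mathrm{crit})}/s$ of size $\varepsilon_1^3 2^{17k/10}/s$ (obtained from $|c^*(\mathrm{crit})|\lesssim 2^{3k/2}$, $|\det\mathrm{Hess}|^{-1/2}\sim 2^{k/2}$, and the $Z$-norm bound $|\widehat{f}(\xi)|\lesssim\varepsilon_1 2^{-k/10}$), plus a remainder of order $\varepsilon_1^3 s^{-3/2}2^{O(k)}$ which is harmless once integrated in $s$. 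One then integrates this leading order once more in $s$, using the lower bound $|\Phi(\mathrm{crit})|\gtrsim 2^{3k/2}$ to produce boundary terms of size $\varepsilon_1^3|\xi|^{1/5}/s$ and volume terms at least as small (thanks again to \eqref{prs} and the bound on $\partial_s L$); in the allowed range $k\in[-80p_0m,20p_0m]$ all of these are $\lesssim\varepsilon_1^3 2^{-m(1-4p_0)}\leq\varepsilon_1^3 2^{-120p_0m}$, given that $p_0\leq 10^{-10}$.

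The main technical obstacle I anticipate is the bookkeeping of polynomial losses in $|\xi|\approx 2^k$ that accumulate from several different sources: the symbol size $\lesssim 2^{3k/2}$ from \eqref{cbounds}, the modulation symbol $\widetilde c(\xi)=\pi|\xi|^2/6$ that governs $\partial_s L$, the Hessian-inverse scale $|\xi|^{1/2}$ produced by the stationary phase formula, and the $Z$-norm control $|\widehat{f}(\xi)|\lesssim\varepsilon_1|\xi|^{-1/10}$ (much better at positive frequencies, thanks to the $|\xi|^{N_2+1/2}$ tail of the $Z$-norm). These combine multiplicatively to $2^{Ck}$ for a fixed absolute constant $C$, which in the range $k\in[-80p_0m,20p_0m]$ is at worst $2^{Cp_0m}$; this is comfortably absorbed by the $\langle s\rangle^{-1}\sim 2^{-m}$ gain from the integrations by parts, provided $p_0$ is sufficiently small — the quantitative constraints in \eqref{constants} are tailored exactly for this balance to close. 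The separate quartic-remainder estimate \eqref{mainlemma2conc3} is independent of the present argument and is proved using \eqref{R_4} together with the bilinear bounds of Lemma \ref{lemv}.
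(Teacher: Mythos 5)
Your overall strategy coincides with the paper's: the key structural fact is that $\Phi^{+++},\Phi^{--+},\Phi^{---}$ have no spacetime resonances, and the argument splits the $(\eta,\sigma)$-integral into a small neighborhood of the unique critical point of the phase and its complement, integrating by parts in $\eta$ or $\sigma$ on the far piece (where $|\nabla_{\eta,\sigma}\Phi|\gtrsim 2^{k/2}$) and exploiting $|\Phi|\gtrsim 2^{3k/2}$ on the near piece. Your identification of the critical points ($(\eta,\sigma)=(0,0)$ for $(--+)$, $(-2\xi/3,-2\xi/3)$ for $(+++)$ and $(---)$), the values of $\Phi$ there, and the observation that $(---)$ admits the global lower bound $|\Phi|\geq|\xi|^{3/2}$ are all correct; the paper writes out only the $(--+)$ case (with a dyadic decomposition in $|\eta|,|\sigma|$ centered at the origin) and declares the other two similar, so your explicit recentering at the actual critical point is a fine way to make "similar" precise.

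The gap is in your treatment of the near piece. You invoke a two-dimensional stationary phase expansion with remainder $O(s^{-3/2}2^{O(k)})$, but such a remainder requires pointwise control of (at least) two derivatives of the amplitude, hence of $\partial^2\widehat{f}$. The available a priori information, \eqref{boundsflow}--\eqref{boundsfhigh}, controls only $\|\widehat{f_k}\|_{L^\infty}$, $\|\widehat{f_k}\|_{L^2}$ and $2^k\|\partial\widehat{f_k}\|_{L^2}$; by Sobolev embedding this gives at best $C^{1/2}$ regularity of $\widehat{f_k}$, for which the stationary phase error is no better than roughly $s^{-5/4}$, and even that would need a careful ad hoc justification nowhere supplied. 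The step is also unnecessary: since, as you yourself note, $|\Phi|\gtrsim 2^{3k/2}$ holds on the \emph{entire} support of $\chi_{\mathrm{near}}$ and not merely at the critical point, you can integrate by parts in $s$ directly on the near piece without any prior expansion. The boundary terms are then controlled by the $S^\infty$ bound \eqref{cbounds} on $c^{\ast}_{\mathbf{k}}/\Phi$ together with the dispersive estimate on one factor and $L^2$ bounds on the other two, and the bulk terms by \eqref{prs}; this requires no $\xi$-regularity of $\widehat{f}$ beyond what is available, and is exactly how the paper closes the estimate. With that substitution (and keeping your far-piece and $(---)$ arguments as they stand) the proof of \eqref{mainlemma2conc2} in the regime \eqref{lemma15cond} is complete.
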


\begin{proof}
This is the main case where there is a substantial difference between the integrals $I_{k_1,k_2,k_3}^{++-}$ and $I_{k_1,k_2,k_3}^{--+}$.
The main point is that the phase function $\Phi^{--+}$ does not have any spacetime resonances,
i.e. there are no $(\eta,\sigma)$ solutions of the equations
\begin{equation*}
\Phi^{--+}(\xi,\eta,\sigma) = (\partial_\eta\Phi^{--+})(\xi,\eta,\sigma) = (\partial_\sigma\Phi^{--+})(\xi,\eta,\sigma) = 0.
\end{equation*}

For any $l,j \in \mathbb{Z}$ satisfying $l \leq j$ define
\begin{equation*}
\varphi_j^{(l)} :=
\begin{cases}
\varphi_j & \text{ if } j \geq l+1 ,
\\
\varphi_{\leq l} & \text{ if } j=l .
\end{cases}
\end{equation*}
Let $\bar{l} := k - 20$ and decompose
\begin{align*}
I^{--+}_{k_1,k_2,k_3} & = \sum_{l_1,l_2 \in [\bar{l},k+40] } J_{l_1,l_2},
\\
J_{l_1,l_2}(\xi,t) & := \int_{\R \times \R} e^{it\Phi(\xi,\eta,\sigma)} c^{\ast}_{\mathbf{k}}(\eta,\sigma)
  \varphi_{l_1}^{(\bar{l})}(\eta) \varphi_{l_2}^{(\bar{l})}(\sigma)
  \what{f^{-}_{k_1}}(\xi+\eta) \what{f^{-}_{k_2}}(\xi+\sigma) \what{f^{+}_{k_3}}(-\xi-\eta-\sigma) \, d\eta d\sigma.
\end{align*}

The contributions of the integrals $J_{l_1,l_2}$ for $(l_1,l_2) \neq (\bar{l},\bar{l})$
can be estimated by integration by parts either in $\eta$ or in $\sigma$
(depending on the relative sizes of $l_1$ and $l_2$), since the $(\eta,\sigma)$ gradient of the phase function
$\Phi$ is bounded from below by $c 2^{k/2}$ in the support of these integrals.

On the other hand, to estimate the contribution of the integral $J_{\overline{l},\overline{l}}$
we notice that
\begin{align*}
|\Phi(\xi,\eta,\sigma)| \gtrsim 2^{3k/2}
\end{align*}
in the support of the integral, so that we can integrate by parts in $s$. This gives
\begin{align}
\begin{split}
\label{sum1}
\Big| \int_{t_1}^{t_2} &e^{iL(\xi,s)} J_{\overline{l},\overline{l}}(\xi,s) \, ds \Big|
  \lesssim  | L_4 (\xi,t_1) | + | L_4 (\xi,t_2) | \\
& + \int_{t_1}^{t_2} | L_1(\xi,s)|+ | L_2(\xi,s) | + |L_3(\xi,s)| + |(\partial_sL)(\xi,s)| | L_4(\xi,s) | \, ds,
\end{split}
\end{align}
where
\begin{align*}
\begin{split}
& L_1(\xi) := \int_{\R\times\R} e^{is\Phi(\xi,\eta,\sigma)}
  \frac{c^\ast_{\mathbf{k}}(\eta,\sigma)\varphi_{\leq \overline{l}}(\eta)\varphi_{\leq \overline{l}}(\sigma)}{i \Phi(\xi,\eta,\sigma)}
  \,(\partial_s\what{f_{k_1}^{-}})(\xi+\eta) \what{f_{k_2}^{-}}(\xi+\sigma) \what{f_{k_3}^{+}}(-\xi-\eta-\sigma) \,d\eta d\sigma ,
\\
& L_2(\xi) := \int_{\R\times\R} e^{is\Phi(\xi,\eta,\sigma)}
  \frac{c^\ast_{\mathbf{k}}(\eta,\sigma)\varphi_{\leq \overline{l}}(\eta)\varphi_{\leq \overline{l}}(\sigma)}{i \Phi(\xi,\eta,\sigma)}
  \what{f_{k_1}^{-}}(\xi+\eta)(\partial_s\what{f_{k_2}^{-}})(\xi+\sigma) \what{f_{k_3}^{+}}(-\xi-\eta-\sigma) \,d\eta d\sigma,
  \\
  & L_3(\xi) := \int_{\R\times\R} e^{is\Phi(\xi,\eta,\sigma)}
  \frac{c^\ast_{\mathbf{k}}(\eta,\sigma)\varphi_{\leq \overline{l}}(\eta)\varphi_{\leq \overline{l}}(\sigma)}{i \Phi(\xi,\eta,\sigma)}
  \what{f_{k_1}^{-}}(\xi+\eta)\what{f_{k_2}^{-}}(\xi+\sigma) (\partial_s\what{f_{k_3}^{+}})(-\xi-\eta-\sigma) \,d\eta d\sigma,
  \\
  & L_4(\xi) := \int_{\R\times\R} e^{is\Phi(\xi,\eta,\sigma)}
  \frac{c^\ast_{\mathbf{k}} (\eta,\sigma)\varphi_{\leq \overline{l}}(\eta)\varphi_{\leq \overline{l}}(\sigma)}{i \Phi(\xi,\eta,\sigma)}
  \what{f_{k_1}^{-}}(\xi+\eta) \what{f_{k_2}^{-}}(\xi+\sigma) \what{f_{k_3}^{+}}(-\xi-\eta-\sigma) \,d\eta d\sigma.
\end{split}
\end{align*}

To estimate the integrals $L_1, L_2, L_3, L_4$ we notice that using \eqref{cbounds} and \eqref{lemma15cond} we have
\begin{equation*}
{\Big\|\frac{c^\ast_{\mathbf{k}} (\eta,\sigma)
  \varphi_{\leq \overline{l}}(\eta) \varphi_{\leq \overline{l}}(\sigma)}{i \Phi(\xi,\eta,\sigma)}\Big\|}_{S^\infty}
  \lesssim 2^{3k^+} .
\end{equation*}
Therefore, using Lemma \ref{touse}(ii) and the a priori bounds \eqref{boundsflow}-\eqref{boundsfhigh} we see that
\begin{align*}
|L_4(\xi,s)|\lesssim {\Big\|\frac{c^\ast_{\mathbf{k}} (\eta,\sigma)
  \varphi_{\leq \overline{l}}(\eta)\varphi_{\leq \overline{l}}(\sigma)}{i\Phi(\xi,\eta,\sigma)}\Big\|}_{S^\infty}
  {\| \what{f_{k_1}^{-}}(s) \|}_{L^2} {\| e^{is\Lambda} f_{k_2}^{-}(s) \|}_{L^\infty} {\| \what{f_{k_3}^{+}}(s) \|}_{L^2}
  \lesssim \e_1^3 2^{-m/4}.
\end{align*}
Using also \eqref{prs} we obtain
\begin{align*}
|L_1(\xi,s)| \lesssim {\Big\| \frac{c^\ast_{\mathbf{k}} (\eta,\sigma)
  \varphi_{\leq \overline{l}}(\eta)\varphi_{\leq \overline{l}}(\sigma)}{i\Phi(\xi,\eta,\sigma)}\Big\|}_{S^\infty}
  {\| (\partial_s\what{f_{k_1}^{-}})(s) \|}_{L^2} {\| e^{is\Lambda} f_{k_2}^{-}(s) \|}_{L^\infty}
  {\| \what{f_{k_3}^{+}}(s) \|}_{L^2} \lesssim \e_1^3 2^{-5m/4}.
\end{align*}
The bounds on $|L_2(\xi,s)|$ and $|L_3(\xi,s)|$ are similar.
Recalling also the bound $|(\partial_sL)(\xi,s) | \lesssim \e_1^2 2^{-m}$, see the definition \eqref{nf50},
it follows that the right-hand side of \eqref{sum1} is dominated by $C\e_1^3 2^{-m/10}$,
which completes the proof of the lemma.
\end{proof}

\subsection{Proof of \eqref{mainlemma2conc3}}\label{prooftech3}
We consider now the quartic remainder term $\mathcal{R}^{''}_{\geq 4}$ defined in \eqref{eqprof12}. Our main aim is to show the following $L^2$ estimates:
\begin{lemma}\label{lemmaestR}
For any $t\in[0,T^\prime]$ and $l \in \mathbb{Z}$ we have
\begin{align}
\label{estR}
\begin{split}
{\| P_l \mathcal{R}_{\geq 4}^{''} (t) \|}_{L^2} & \lesssim \e_1^4 \langle t \rangle^{-9/8+10p_0}, 
  \qquad {\| P_l S\mathcal{R}_{\geq 4}^{''} (t) \|}_{L^2} \lesssim \e_1^4 \langle t \rangle^{-1+20p_0} .
\end{split}
\end{align}
\end{lemma}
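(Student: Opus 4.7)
\emph{Plan of proof.} The strategy is to decompose $\mathcal{R}_{\geq 4}^{''}$ into its constituent pieces as in \eqref{eqprof12} and estimate each one using the multilinear bounds of Lemma \ref{touse}, the symbol estimates \eqref{lemvsym} and \eqref{equcubicsym}, the a priori bounds \eqref{bing1} on $U$, the $L^2$, $L^\infty$ and $S$ estimates on $U-v$ from Lemma \ref{lemv}, and the elliptic cubic bound $\mathcal{C}_U+\mathcal{R}_{\geq 4}\in|\partial_x|^{1/2}O_{3,-1}$ together with \eqref{R_4}. The bilinear differences $M_{\eps_1\eps_2}((\mathcal{Q}_U)_{\eps_1},U_{\eps_2})-M_{\eps_1\eps_2}((\mathcal{Q}_v)_{\eps_1},v_{\eps_2})$ are rewritten as a telescoping sum,
\begin{equation*}
M_{\eps_1\eps_2}\big((\mathcal{Q}_U-\mathcal{Q}_v)_{\eps_1},U_{\eps_2}\big)+M_{\eps_1\eps_2}\big((\mathcal{Q}_v)_{\eps_1},(U-v)_{\eps_2}\big),
\end{equation*}
and similarly for $\sum_{\star\star}[M_{\eps_1\eps_2\eps_3}(U,U,U)-M_{\eps_1\eps_2\eps_3}(v,v,v)]$, so that each difference contains at least one explicit factor $U-v$ which is genuinely quadratic by Lemma \ref{lemv}. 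This exhibits $\mathcal{R}_{\geq 4}^{''}$ as a sum of honestly quartic terms (plus the already-estimated $\mathcal{R}_{\geq 4}$), which is why decay beyond $t^{-1}$ is available.

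For the first bound in \eqref{estR} the plan is to place, in each quartic integral, two factors in $L^\infty$ (using the decay $\|P_lU(t)\|_{L^\infty}+\|P_lv(t)\|_{L^\infty}\lesssim \e_1\langle t\rangle^{-1/2}$ from \eqref{lemvboundu} and \eqref{lemvinfty2}) and the remaining $L^2$ factor containing $U-v$, whose $L^2$ norm decays like $\langle t\rangle^{-1/2+3p_0}$ thanks to \eqref{lemvL22}; this delivers $\langle t\rangle^{-3/2+3p_0}$, well above the target $\langle t\rangle^{-9/8+10p_0}$. The terms involving $\mathcal{C}_U+\mathcal{R}_{\geq 4}$ use the cubic bound from \eqref{equRR4} and Definition of $O_{3,\alpha}$, namely $\|\mathcal{C}_U+\mathcal{R}_{\geq 4}\|_{\dot H^{\,\cdot\,,-1/2}}\lesssim \e_1^3\langle t\rangle^{-1+p_0}$, paired with $U$ in $L^\infty$, yielding $\langle t\rangle^{-3/2+p_0}$; the contribution of $\mathcal{R}_{\geq 4}$ itself is handled directly by \eqref{R_4}. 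The symbol bounds \eqref{lemvsym} give $\|m_{\eps_1\eps_2}^{k,k_1,k_2}\|_{S^\infty}\lesssim 2^{k/2}2^{-\min(k_1,k_2)/2}$, and the mild $2^{-\min(k_1,k_2)/2}$ singularity at low input frequencies is absorbed using the low-frequency a priori estimates in \eqref{bing1}, which provide the extra factor $2^{p_1k}$ needed to sum over dyadic frequencies.

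For the weighted estimate on $SP_l\mathcal{R}_{\geq 4}^{''}$ the plan is to distribute $S$ onto each quartic product via the commutator identity \eqref{lemcomm}--\eqref{lemcomm2}, producing terms with $S$ applied to one of the factors $v$, $U$, $U-v$, or $\mathcal{Q}_v$, plus remainders $\widetilde{M}$ whose symbols satisfy the same bounds as $M$. One then uses the weighted $L^2$ estimates on $SU$ (from \eqref{P_kSu}), on $Sv$ (combining \eqref{P_kSu} with \eqref{lemvS}), and on $S(U-v)$ (from \eqref{lemvS}), along with $\|S\mathcal{Q}_v\|_{L^2}\lesssim \|Q(Sv,v)\|_{L^2}+\|\widetilde{Q}(v,v)\|_{L^2}$ estimated by placing $v$ in $L^\infty$ and $Sv$ in $L^2$; for the terms containing $S(\mathcal{C}_U+\mathcal{R}_{\geq 4})$ the corresponding weighted bound is inherited from the weighted part of \eqref{on4.3} and from \eqref{R_4}. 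In every case the growth $\langle t\rangle^{4p_0}$ coming from $S$ combines with an extra decay factor of at least $\langle t\rangle^{-1/2}$ (either from one $L^\infty$ factor or from the $L^2$ decay of $U-v$), which comfortably yields the claimed $\langle t\rangle^{-1+20p_0}$. The main technical obstacle is the bookkeeping at low frequencies: one must exploit both the $p_1$-weighted bounds in \eqref{bing1} and the decay of $U-v$ to absorb the $2^{-\min(k_1,k_2)/2}$ singularity of the paradifferential symbol $m_{\eps_1\eps_2}$, especially in the $S$-weighted terms where the available decay is weaker; apart from this, the argument is a systematic application of Lemma \ref{touse} and the symbol bounds \eqref{lemvsym}.
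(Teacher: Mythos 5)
Your overall strategy coincides with the paper's: telescope each difference so that every term carries an explicit quadratic factor $U-v$ or a cubic factor $\mathcal{Q}_U-\mathcal{Q}_v$ (the paper isolates the bounds on $\mathcal{Q}_U$, $\mathcal{Q}_v$ and $\mathcal{Q}_U-\mathcal{Q}_v$ as a separate Lemma \ref{lemnonlinear}, which you would need as an intermediate step), estimate each bilinear/trilinear piece with Lemma \ref{touse} and the symbol bounds \eqref{lemvsym}, \eqref{equcubicsym}, handle the $\mathcal{C}_U+\mathcal{R}_{\geq 4}$ contributions via \eqref{equRR4} and \eqref{R_4}, and distribute $S$ by the commutator identity, always placing the $S$-factor in $L^2$.

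The one step that would fail as written is your treatment of the $2^{-\min(k_1,k_2)/2}$ singularity. The a priori bounds \eqref{bing1} do \emph{not} supply a summable factor $2^{p_1 k}$ against this singularity: for $2^k\gtrsim\langle t\rangle^{-2}$ they give only $\|P_kU\|_{L^2}\lesssim \e_1 2^{k/2}\langle t\rangle^{p_0}$, so $2^{-k/2}\|P_kU\|_{L^2}\approx 1$ per dyadic block, and below $\langle t\rangle^{-2}$ the bound degrades to $2^{k(1/2-p_1)}$, so $2^{-k/2}\|P_kU\|_{L^2}\approx 2^{-p_1 k}$, which \emph{grows} as $k\to-\infty$; the $L^\infty$ bound $\|P_kU\|_{L^\infty}\lesssim 2^{k/10}\langle t\rangle^{-1/2}$ is even worse, leaving $2^{-2k/5}$. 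The paper's resolution is different: it splits the frequency sum at a time-dependent threshold ($2^{\min(k_1,k_2)}\lessgtr\langle t\rangle^{-2}$ for the unweighted bound, $\langle t\rangle^{-1/2}$ for the weighted one), and in the very-low-frequency region it puts the lowest-frequency factor in $L^\infty$ via Bernstein from its $L^2$ norm, gaining an extra $2^{\min(k_1,k_2)/2}$ that over-compensates the singularity and makes the sum converge to a negative power of $\langle t\rangle$; in the complementary region the leftover $2^{-\min(k_1,k_2)/2}$ (or the logarithmic divergence) is bounded by a positive power of $\langle t\rangle$ using the lower frequency cutoff, and is absorbed by the slack between the raw decay $\langle t\rangle^{-3/2}$ and the targets $\langle t\rangle^{-9/8+10p_0}$, $\langle t\rangle^{-1+20p_0}$ — which also explains why these exponents are what they are. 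Without this splitting-plus-Bernstein step your dyadic sums do not close, so you should add it to the low-frequency bookkeeping you already flagged as the main obstacle.
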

The desired conclusion \eqref{mainlemma2conc3} can then obtained using also the interpolation inequality in Lemma \ref{interpolation}; the precise argument is given after the proof of Lemma \ref{lemmaestR}.

To prove \eqref{estR} we first recall that from the a priori assumptions on $u$ and Lemma \ref{lemv} we have
the linear bounds
\begin{align}
\label{estuv}
\begin{split}
{\|P_lU(t)\|}_{L^2} + {\| P_lv(t)\|}_{L^2} & \lesssim \e_1 \min \big(2^{(1/2-p_0)l}, 2^{-(N_0-1/2)l}\big) \langle t \rangle^{p_0} ,
\\
{\|P_lU(t)\|}_{L^\infty} + {\|P_lv(t)\|}_{L^\infty} & \lesssim \e_1 \min \big( 2^{l/10}, 2^{-(N_2-1/2)l} \big) \langle t \rangle^{-1/2},
\\
{\|P_l S U(t) \|}_{L^2} + {\|P_l S v(t) \|}_{L^2} & \lesssim \e_1 \min\big( 2^{(1/2-p_0)l}, 2^{-(N_1-1/2)l} \big) \langle t \rangle^{4p_0} ,
\end{split}
\end{align}
and the quadratic bounds
\begin{align}
\label{estu-v}
\begin{split}
{\|P_l(U(t) - v(t))\|}_{L^2} & \lesssim \e_1^2 \min \big(2^{l/2}, 2^{-(N_2-1/2) l} \big) \langle t \rangle^{-1/2 + 6p_0} ,
\\
{\|P_l(U(t)-v(t))\|}_{L^\infty} & \lesssim \e_1^2 \min \big( 2^{l/2}, 2^{-(N_2-1/2) l} \big) \langle t \rangle^{-3/4 + 2p_0},
\\
{\|P_l S (U(t)-v(t)) \|}_{L^2} & \lesssim \e_1^2 \min \big( 2^{l/2}, 2^{-(N_1-1/2)l} \big) \langle t \rangle^{-1/4+6p_0} .
\end{split}
\end{align}

Using these we now prove a few more nonlinear estimates.
\begin{lemma}\label{lemnonlinear}
For any $t\in[0,T']$ and $l \in \mathbb{Z}$ we have the quadratic-type bounds
\begin{equation}
\label{estnonlinear1}
\begin{split}
{\| P_l\mathcal{Q}_U(t) \|}_{L^2} + {\|P_l\mathcal{Q}_v(t)\|}_{L^2} & \lesssim
  \e_1^2 \min \big( 2^{l/2}, 2^{-(N_0-3/2)l} \big) \langle t \rangle^{-1/2 + p_0} ,
\\
{\| P_l\mathcal{Q}_U(t) \|}_{L^\infty} + {\|P_l \mathcal{Q}_v(t) \|}_{L^\infty} & \lesssim
  \e_1^2 \min \big( 2^{l/2}, 2^{-3l} \big) \langle t \rangle^{-1} ,
\\
{\|P_l S\mathcal{Q}_U(t) \|}_{L^2} + {\|P_l S\mathcal{Q}_v(t)\|}_{L^2} & \lesssim
  \e_1^2 \min \big(2^{l/2} , 2^{-l} \big) \langle t \rangle^{-1/2 + 4p_0} ,
\end{split}
\end{equation}
and the cubic-type bounds
\begin{align}
\label{estnonlinear2}
\begin{split}
{\|P_l(\mathcal{Q}_U(t) - \mathcal{Q}_v(t)) \|}_{L^2} & \lesssim \e_1^3 \langle t \rangle^{-7/8+6p_0} \min(2^{l/2}, 2^{-3l}),
\\
{\|P_l(\mathcal{Q}_U(t) - \mathcal{Q}_v(t)) \|}_{L^\infty} & \lesssim \e_1^3 \langle t \rangle^{-5/4+2p_0} \min(2^{l/2}, 2^{-3l}),
\\
{\|P_lS(\mathcal{Q}_U(t) - \mathcal{Q}_v(t)) \|}_{L^2} & \lesssim \e_1^3 \langle t \rangle^{-3/4+8p_0} \min(2^{l/2}, 2^{-l}).
\end{split}
\end{align}
\end{lemma}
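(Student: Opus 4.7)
The plan is to treat each of the six estimates as a routine application of the bilinear paraproduct machinery of Lemma \ref{touse}(ii), with the symbol bounds for $q_{\eps_1\eps_2}$ recorded in \eqref{lemvsym}, combined with the linear $L^2$/$L^\infty$ bounds \eqref{estuv} for $U,v$ and their quadratic differences \eqref{estu-v} from Lemma \ref{lemv}. Concretely, for each $\eps_1,\eps_2$ we write
\[
P_l Q_{\eps_1\eps_2}(F_{\eps_1},G_{\eps_2}) = \sum_{k_1,k_2} P_l Q_{\eps_1\eps_2}(P_{k_1}F_{\eps_1},P_{k_2}G_{\eps_2}),
\]
restrict to the set $\mathcal{X}$ from \eqref{lemEE1pr2}, and apply Lemma \ref{touse}(ii) with the bound $\|q_{\eps_1\eps_2}^{k,k_1,k_2}\|_{S^\infty}\lesssim 2^{k}2^{\min(k,k_1,k_2)/2}$.

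For the quadratic bounds \eqref{estnonlinear1}, I would split into the standard three regimes ($k_1\le k_2$ high--low, $k_2\le k_1$ low--high, and comparable frequencies with both large), placing the factor with the larger frequency in $L^2$ and the other in $L^\infty$. In the high-frequency regime $l\gtrsim 1$ the factor $2^k$ in the symbol combines with the $2^{-(N_0-1/2)k_+}$ from \eqref{estuv} to give $2^{-(N_0-3/2)l}$; in the low-frequency regime $l\le 0$ the $2^{k}2^{\min/2}$ factor yields the desired $2^{l/2}$ after summing over $k_1,k_2\in\mathcal{X}$. The same scheme using the $L^\infty\times L^\infty$ bound on both factors gives the $L^\infty$ estimate (with rate $t^{-1}$ instead of $t^{-1/2+p_0}$), and pairing an $L^2$ norm of $Su$ with an $L^\infty$ norm of $u$ (together with the commutation identity \eqref{lemcomm}--\eqref{lemcomm2} applied below) yields the weighted $S$-bound. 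The estimates for $\mathcal{Q}_v$ are identical since $v$ obeys the same linear bounds \eqref{estuv} as $U$.

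For the cubic differences \eqref{estnonlinear2}, I would use bilinearity to write
\[
\mathcal{Q}_U-\mathcal{Q}_v = {\sum_\ast}' \bigl[Q_{\eps_1\eps_2}(U_{\eps_1}-v_{\eps_1},U_{\eps_2}) + Q_{\eps_1\eps_2}(v_{\eps_1},U_{\eps_2}-v_{\eps_2})\bigr],
\]
then repeat the same dyadic analysis but invoking the quadratic bounds \eqref{estu-v} on the factor $U-v$ (which provides the extra $\e_1\langle t\rangle^{-1/2+6p_0}$ or $\e_1\langle t\rangle^{-3/4+2p_0}$ over the linear bound) and the linear bounds \eqref{estuv} on the other factor. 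The resulting powers of $\langle t\rangle$ are exactly those claimed. For the weighted bound $\|P_l S(\mathcal{Q}_U-\mathcal{Q}_v)\|_{L^2}$ I would apply \eqref{lemcomm}: $SQ_{\eps_1\eps_2}(F,G) = Q_{\eps_1\eps_2}(SF,G)+Q_{\eps_1\eps_2}(F,SG)+\widetilde{Q}_{\eps_1\eps_2}(F,G)$, where $\widetilde{q}_{\eps_1\eps_2}=-(\xi\partial_\xi+\eta\partial_\eta)q_{\eps_1\eps_2}$ satisfies the same $S^\infty$ bound as $q_{\eps_1\eps_2}$ (by the homogeneity of the explicit symbols in \eqref{equq_0}, the differentiation produces another symbol of the same homogeneity on each dyadic piece). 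The term $\widetilde{Q}_{\eps_1\eps_2}(U-v,U) + \widetilde{Q}_{\eps_1\eps_2}(v,U-v)$ is bounded by the quadratic bound on $U-v$ without weights, while the terms $Q_{\eps_1\eps_2}(S(U-v),U)$, $Q_{\eps_1\eps_2}(U-v,SU)$ and their $v$-analogues are handled by pairing the weighted $S$ estimate from \eqref{estuv} or \eqref{estu-v} with the $L^\infty$ bound on the other factor.

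The main obstacle, such as it is, is bookkeeping: each of the six estimates requires splitting into several frequency regimes, and in each regime one must decide whether to place $U-v$ (or $S(U-v)$) in $L^2$ or $L^\infty$ so as to extract the time-decay improvement from \eqref{estu-v} without losing derivatives against the $2^k$ factor built into the symbol $q_{\eps_1\eps_2}$. The delicate case is the weighted $S$-bound on the difference, where one must verify that the commutator symbol $\widetilde{q}_{\eps_1\eps_2}$ satisfies \eqref{lemvsym} (so that no extra derivative is lost) and that the restriction to $\mathcal{X}$ still permits summation; this is straightforward from the explicit homogeneous formulas in \eqref{equq_0}, as $\widetilde{q}_{\eps_1\eps_2}$ is again a sum of homogeneous pieces of the same total degree.
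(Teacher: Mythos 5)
Your proposal is correct and follows essentially the same route as the paper: the quadratic bounds come from Lemma \ref{touse}(ii) with the symbol bound $\|q_{\eps_1\eps_2}^{k,k_1,k_2}\|_{S^\infty}\lesssim 2^k2^{\min(k,k_1,k_2)/2}$ and the linear estimates \eqref{estuv}, while the cubic bounds use the telescoping identity for $\mathcal{Q}_U-\mathcal{Q}_v$ together with \eqref{estu-v}, and the weighted bounds use \eqref{lemcomm}--\eqref{lemcomm2} with the homogeneity of $q_{\eps_1\eps_2}$ (so that $\widetilde{q}_{\eps_1\eps_2}$ is just a constant multiple of $q_{\eps_1\eps_2}$ by Euler's identity), always placing the $S$-factor in $L^2$. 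The only cosmetic difference is the order in which you telescope the difference, which is immaterial.
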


\begin{proof}
To obtain \eqref{estnonlinear1} is suffices to recall the definition of $\mathcal{Q}_U$ and $\mathcal{Q}_v$,
in \eqref{equN100} and \eqref{Q_v}, and use the symbol bounds \eqref{lemvsym},
\begin{align}
\label{boundq_0}
{\| q_{\eps_1\eps_2}^{k,k_1,k_2} \|}_{S^\infty}
  \lesssim \mathbf{1}_\mathcal{X}(k,k_1,k_2) 2^k 2^{\min(k,k_1,k_2)/2} 
\end{align}
the commutation identity \eqref{lemcomm}-\eqref{lemcomm2}, and the linear bounds \eqref{estuv} (see the proof of Lemma \ref{OtermsProd} for a similar argument).

We now prove the inequalities \eqref{estnonlinear2}. In view of the definitions \eqref{equN100} and \eqref{Q_v} we have
\begin{align}
\label{Q_u-Q_v}
\mathcal{Q}_U - \mathcal{Q}_v & ={\sum_\star}'
  \big[Q_{\eps_1\eps_2}(U_{\eps_1}, U_{\eps_2} - v_{\eps_2}) + Q_{\eps_1\eps_2}(U_{\eps_1}-v_{\eps_1}, v_{\eps_2})\big].
\end{align}

Using Lemma \ref{touse}(ii) with \eqref{boundq_0}, the $L^\infty$ estimate in \eqref{estuv} and
the first $L^2$ estimate in \eqref{estu-v}, we get
\begin{equation*}
\begin{split}
\|P_k&Q_{\eps_1\eps_2}(U_{\eps_1}, U_{\eps_2} - v_{\eps_2})(t)\|_{L^2}\lesssim \sum_{k_1,k_2\in\mathbb{Z}}{\| q_{\eps_1\eps_2}^{k,k_1,k_2} \|}_{S^\infty}{\| P_{k_1}^\prime U(t)\|}_{L^\infty} {\| P_{k_2}^\prime (U-v)(t) \|}_{L^2}\\
&\lesssim \e_1^3\langle t\rangle^{-1+6p_0}\sum_{k_1,k_2\in\mathbb{Z}}\mathbf{1}_\mathcal{X}(k,k_1,k_2) 2^k 2^{\min(k_1,k_2)/2}2^{-(N_2-1/2)k_1^+}2^{-(N_2-1/2)k_2^+}\\
&\lesssim \e_1^3\langle t\rangle^{-1+6p_0}\min(2^{k/2},2^{-(N_2-3/2)k}).
\end{split}
\end{equation*}

The first bound in \eqref{estnonlinear2} follows,
after estimating similarly the $L^2$ norm of the second term in \eqref{Q_u-Q_v}.
The $L^\infty$ bound in \eqref{estnonlinear2} can also be obtained similarly using
the identity \eqref{Q_u-Q_v} and the $L^\infty$ bounds in \eqref{estuv}-\eqref{estu-v}. For the last bound in \eqref{estnonlinear2} we first use \eqref{Q_u-Q_v} and \eqref{lemcomm}-\eqref{lemcomm2} (notice that the symbols $q_{\eps_1\eps_2}$ are homogeneous). Then we estimate the $L^2$ norm in the same way, placing $SU$ and $S(U-v)$ in $L^2$ and $U-v$ and $U$ in $L^\infty$.
\end{proof}

We are now ready to prove \eqref{estR}.

\begin{proof}[Proof of Lemma \ref{lemmaestR}]
We examine the formula \eqref{eqprof12} for $\mathcal{R}_{\geq 4}^{''}$, and begin by looking at the terms in
the second line:
\begin{align}
\label{prestR1}
\begin{split}
& M_{\eps_1\eps_2}(U_{\eps_1}, (\mathcal{Q}_U)_{\eps_2}) - M_{\eps_1\eps_2}(v_{\eps_1}, (\mathcal{Q}_v)_{\eps_2})
  \\
  & = M_{\eps_1\eps_2}(U_{\eps_1}-v_{\eps_1}, (\mathcal{Q}_U)_{\eps_2})
  + M_{\eps_1\eps_2}(v_{\eps_1}, (\mathcal{Q}_U)_{\eps_2} - (\mathcal{Q}_v)_{\eps_2}) .
\end{split}
\end{align}
We recall the bound \eqref{lemvsym},
\begin{align}
\label{boundmpmpm}
{\| m_{\eps_1\eps_2}^{k,k_1,k_2} \|}_{S^\infty} \lesssim
  2^{k/2} 2^{-\min(k_1,k_2)/2} \mathbf{1}_{\mathcal{X}}(k,k_1,k_2),
\end{align}
and remark that the difficulty in estimating the terms in \eqref{prestR1} comes from the
low frequency singularity in this estimate.
Using Lemma \ref{touse}(ii), \eqref{estu-v}, and \eqref{estnonlinear1} we have
\begin{align*}
\begin{split}
&\big\| P_l M_{\eps_1\eps_2}(U_{\eps_1}-v_{\eps_1}, (\mathcal{Q}_U)_{\eps_2}) (t)\big\|_{L^2}\lesssim I+II,\\
&I:=\sum_{k_1,k_2 \in \mathbb{Z}, \, 2^{\min(k_1,k_2)} \leq \langle t \rangle^{-2}} {\|m_{\eps_1\eps_2}^{l,k_1,k_2}\|}_{S^\infty}2^{\min(k_1,k_2)/2}
  {\| P_{k_1}^\prime(U-v)(t)\|}_{L^2} {\| P_{k_2}^\prime \mathcal{Q}_U(t) \|}_{L^2},\\
	&II:=\sum_{k_1,k_2 \in \mathbb{Z}, \, 2^{\min(k_1,k_2)} \geq \langle t \rangle^{-2}} {\|m_{\eps_1\eps_2}^{l,k_1,k_2}\|}_{S^\infty}
  {\| P_{k_1}^\prime(U-v)(t)\|}_{L^2} {\| P_{k_2}^\prime \mathcal{Q}_U(t) \|}_{L^\infty}.
\end{split}
\end{align*}
Then we estimate 
\begin{align*}
\begin{split}
I \lesssim \sum_{2^{\min(k_1,k_2)} \leq \langle t\rangle^{-2}}
  \e_1^2 \langle t\rangle^{-1/2+6p_0} \e_1^2 \langle t\rangle^{-1/2+p_0} 2^{\min(k_1,k_2)/2}2^{-3\max(k_1,k_2,0)}
  \lesssim \e_1^4 
  \langle t\rangle^{-3/2}
\end{split}
\end{align*}
and
\begin{align*}
\begin{split}
II \lesssim \sum_{2^{\min(k_1,k_2)} \geq \langle t\rangle^{-2}}
  \e_1^2 \langle t\rangle^{-1/2+6p_0} \e_1^2 \langle t\rangle^{-1}2^{-2\max(k_1,k_2,0)}
  \lesssim \e_1^4 
  \langle t\rangle^{-5/4} .
\end{split}
\end{align*}

Similarly, using \eqref{estuv} (only the $L^2$ estimate in the first line) and \eqref{estnonlinear2} instead of \eqref{estu-v} and \eqref{estnonlinear1}, we estimate
\begin{align*}
{\big\| P_l M_{\eps_1\eps_2} (v_{\eps_1}, (\mathcal{Q}_U)_{\eps_2} - (\mathcal{Q}_v)_{\eps_2} ) (t)\big\|}_{L^2} \lesssim \e_1^4 
  \langle t\rangle^{-5/4+10p_0}.
\end{align*}
We have obtained, for any $l\in\mathbb{Z}$ and $t\in[0,T']$,
\begin{align}
 \label{prestR5}
{\big\| P_l \big[ M_{\eps_1\eps_2}(U_{\eps_1}, (\mathcal{Q}_U)_{\eps_2})
  - M_{\eps_1\eps_2}(v_{\eps_1}, (\mathcal{Q}_v)_{\eps_2})\big] (t) \big\|}_{L^2} \lesssim \e_1^4 
  \langle t\rangle^{-9/8} .
\end{align}

Using \eqref{prestR1} and the commutation identity \eqref{lemcomm}-\eqref{lemcomm2} we compute
\begin{align}
\label{prestR2}
\begin{split}
& S \big( M_{\eps_1\eps_2}(U_{\eps_1}, (\mathcal{Q}_U)_{\eps_2}) - M_{\eps_1\eps_2}(v_{\eps_1}, (\mathcal{Q}_v)_{\eps_2}) \big)
  \\
  & = M_{\eps_1\eps_2}( S(U_{\eps_1}-v_{\eps_1}), (\mathcal{Q}_U)_{\eps_2})
  + M_{\eps_1\eps_2}(U_{\eps_1}-v_{\eps_1}, (S \mathcal{Q}_U)_{\eps_2})
  + \widetilde{M}_{\eps_1\eps_2}(U_{\eps_1}-v_{\eps_1}, (\mathcal{Q}_U)_{\eps_2})
\\
  & + M_{\eps_1\eps_2}(S v_{\eps_1}, (\mathcal{Q}_U - \mathcal{Q}_v)_{\eps_2})
  + M_{\eps_1\eps_2}(v_{\eps_1}, S ( \mathcal{Q}_U - \mathcal{Q}_v)_{\eps_2})
  + \widetilde{M}_{\eps_1\eps_2}(v_{\eps_1}, (\mathcal{Q}_U - \mathcal{Q}_v)_{\eps_2}) .
\end{split}
\end{align}
By homogeneity, the symbols $\widetilde{m}_{\eps_1\eps_2}$
satisfy the same bounds as the symbols $m_{\eps_1\eps_2}$. Therefore
\begin{align*}
{\big\| \widetilde{M}_{\eps_1\eps_2}(U_{\eps_1}, (\mathcal{Q}_U)_{\eps_2})(t)
  - \widetilde{M}_{\eps_1\eps_2}(v_{\eps_1}, (\mathcal{Q}_v)_{\eps_2})(t) \big\|}_{L^2} \lesssim \e_1^4 \langle t \rangle^{-9/8} .
\end{align*}

The terms containing the vector-field $S$ can also be estimated in the same way, using the bounds \eqref{estuv}--\eqref{estnonlinear2} and \eqref{boundmpmpm}. We always place the factor containing $S$ in $L^2$. For example, for the most difficult term, we can estimate 
\begin{equation*}
\begin{split}
&{\big\| P_lM_{\eps_1\eps_2} (v_{\eps_1}, S (\mathcal{Q}_U- \mathcal{Q}_v)_{\eps_2})(t) \big\|}_{L^2}\lesssim I_S+II_S,\\
&I_S=\sum_{2^{\min(k_1,k_2)}\leq \langle t\rangle^{-1/2}} 2^{l/2}\mathbf{1}_{\mathcal{X}}(l,k_1,k_2)
  {\| P_{k_1}^\prime v(t) \|}_{L^2} {\| P_{k_2}^\prime S (\mathcal{Q}_U - \mathcal{Q}_v)(t)\big\|}_{L^2},
\\
&II_S:=\sum_{2^{\min(k_1,k_2)}\geq \langle t\rangle^{-1/2}} 2^{l/2}2^{-\min(k_1,k_2)/2}\mathbf{1}_{\mathcal{X}}(l,k_1,k_2)
  {\| P_{k_1}^\prime v(t) \|}_{L^\infty} {\| P_{k_2}^\prime S (\mathcal{Q}_U - \mathcal{Q}_v)(t)\big\|}_{L^2}. 
\end{split}
\end{equation*}
Then we estimate 
\begin{align*}
\begin{split}
I_S \lesssim \sum_{2^{\min(k_1,k_2)} \leq \langle t\rangle^{-1/2}}
  \e_1 \langle t\rangle^{p_0} \e_1^3 \langle t\rangle^{-3/4+8p_0} 2^{\min(k_1,k_2)(1/2-p_0)}2^{-\max(k_1,k_2,0)/2}
  \lesssim \e_1^4 
  \langle t\rangle^{-1+20p_0}
\end{split}
\end{align*}
and
\begin{align*}
\begin{split}
II_S \lesssim \sum_{2^{\min(k_1,k_2)} \geq \langle t\rangle^{-1/2}}
  \e_1 \langle t\rangle^{-1/2} \e_1^3 \langle t\rangle^{-3/4+8p_0}2^{-\min(k_1,k_2)/2}2^{-\max(k_1,k_2,0)/2}
  \lesssim \e_1^4 
  \langle t\rangle^{-1+20p_0} .
\end{split}
\end{align*}
The bound \eqref{estR} for the terms in the second line of \eqref{eqprof12} follows, using also \eqref{prestR5} and \eqref{prestR2}.

The terms in the first line of \eqref{eqprof12} are similar to the terms in the second line, since the bounds satisfied by the symbols of the operators $M_{\eps_1\eps_2}$ are symmetric in $k_1$ and $k_2$. Next, we look at the terms in the third line of \eqref{eqprof12}:
\begin{align}
\label{prestR10}
\begin{split}
M_{\eps_1\eps_2\eps_3}&(U_{\eps_1}, U_{\eps_2}, U_{\eps_3})
  - M_{\eps_1\eps_2\eps_3}(v_{\eps_1},v_{\eps_2}, v_{\eps_3})= M_{\eps_1\eps_2\eps_3}\big( {(U-v)}_{\eps_1}, U_{\eps_2}, U_{\eps_3} \big)\\
  &+ M_{\eps_1\eps_2\eps_3}\big( v_{\eps_1}, {(U-v)}_{\eps_2}, U_{\eps_3} \big)
  + M_{\eps_1\eps_2\eps_3}\big( v_{\eps_1}, v_{\eps_2}, {(U-v)}_{\eps_3} \big) .
\end{split}
\end{align}
Recall that the symbols $m_{\eps_1\eps_2\eps_3}$ satisfy the strong (non-singular) bounds \eqref{equcubicsym}.
Using 
\eqref{estuv} and \eqref{estu-v},
one can estimate 
\begin{align*}
 {\big\| P_l \big[ M_{\eps_1\eps_2\eps_3}(U_{\eps_1}, U_{\eps_2}, U_{\eps_3})
  - M_{\eps_1\eps_2\eps_3}(v_{\eps_1},v_{\eps_2}, v_{\eps_3})\big](t) \big\|}_{L^2} \lesssim 
  \e_1^4 \langle t \rangle^{-9/8} .
\end{align*}

The analogue of \eqref{lemcomm} for symbols of three variables is
\begin{align}
 \label{lemcomm3}
\begin{split}
SM(f,g,h) & = M(Sf,g,h) + M(f,Sg,h) + M(f,g,Sh) + \widetilde{M}(f,g,h)
\\
\widetilde{m}(\xi,\eta,\sigma) & = -(\xi\partial_\xi +\eta \partial_\eta +\sigma\partial_\sigma) m(\xi,\eta,\sigma).
\end{split}
\end{align}
By homogeneity, the symbols $\widetilde{m}_{\eps_1\eps_2\eps_3}$ satisfy the same bounds \eqref{equcubicsym}.
Then, applying $S$ to the identity \eqref{prestR10} above, and using \eqref{estuv} and \eqref{estu-v},
we can obtain
\begin{align*}
 {\big\| P_lS \big[ M_{\eps_1\eps_2\eps_3}(U_{\eps_1}, U_{\eps_2}, U_{\eps_3})
  - M_{\eps_1\eps_2\eps_3}(v_{\eps_1},v_{\eps_2}, v_{\eps_3}) \big](t) \big\|}_{L^2} \lesssim \e_1^4 \langle t \rangle^{-9/8} .
\end{align*}

Since the term $\mathcal{R}_{\geq 4}$ in \eqref{eqprof12} already satisfies the desired bounds, see \eqref{R_4}, in view of \eqref{equRR4} it remains to estimate operators of the form
\begin{align}
\label{prestR20}
M_{\eps_1\eps_2}( F_{\eps_1}, U_{\eps_2}) \quad \mbox{and} \quad M_{\eps_1\eps_2}(U_{\eps_1}, F_{\eps_2}) ,
\qquad F\in |\partial_x|^{1/2}O_{3,-1} .
\end{align}
The $O_{3,\alpha}$ notation in \eqref{cubb} implies that we have the following estimate
\begin{align*}
\begin{split}
{\| P_l F(t) \|}_{L^2} & \lesssim \e_1^3 \min\big( 2^{l/2}, 2^{-(N_0-2)l} \big) \langle t \rangle^{-1+p_0} ,
\\
{\| P_l S F(t) \|}_{L^2} & \lesssim \e_1^3 \min\big( 2^{l/2}, 2^{-(N_1-2)l} \big) \langle t \rangle^{-1+4p_0} .
\end{split}
\end{align*}
Using these estimates, Lemma \ref{touse}(ii), the symbol bound \eqref{boundmpmpm}, and \eqref{estuv}, we can bound
\begin{equation*}
\begin{split}
\big\| P_l M_{\eps_1\eps_2}&(F_{\eps_1}, U_{\eps_2})(t) \big\|_{L^2}
\lesssim 
  \sum_{k_1,k_2 \in \mathbb{Z}, \, 2^{\min(k_1,k_2)} \leq \langle t \rangle^{-1/2}} 2^{\max(k_1,k_2)/2}
  {\| P_{k_1}^\prime F(t)\|}_{L^2} {\| P_{k_2}^\prime U(t) \|}_{L^2}
\\
&+ \sum_{k_1,k_2 \in \mathbb{Z}, \, 2^{\min(k_1,k_2)} \geq \langle t \rangle^{-1/2}} 2^{\max(k_1,k_2)/2} 2^{-\min(k_1,k_2)/2}
  {\| P_{k_1}^\prime F(t)\|}_{L^2} {\| P_{k_2}^\prime U(t) \|}_{L^\infty}\\
	&\lesssim \e_1^4 \langle t \rangle^{-9/8} .
\end{split}
\end{equation*}
The same estimate also holds for $P_l \widetilde{M}_{\eps_1\eps_2}(F_{\eps_1}, U_{\eps_2})$.
Similarly, we have
\begin{align*}
\begin{split}
{\big\| P_l M_{\eps_1\eps_2}(S F_{\eps_1}, U_{\eps_2})(t) \big\|}_{L^2}
  \lesssim \sum_{k_1,k_2 \in \mathbb{Z}} 2^{\max(k_1,k_2)/2}
  {\| P_{k_1}^\prime S F(t)\|}_{L^2} {\| P_{k_2}^\prime U(t) \|}_{L^2} \lesssim \e_1^4 \langle t \rangle^{-1+10p_0} ,
\end{split}
\end{align*}
\begin{align*}
\begin{split}
{\big\| P_l M_{\eps_1\eps_2}(F_{\eps_1}, S U_{\eps_2})(t) \big\|}_{L^2} 
  \lesssim \sum_{k_1,k_2 \in \mathbb{Z}} 2^{\max(k_1,k_2)/2}
  {\| P_{k_1}^\prime F(t)\|}_{L^2} {\| P_{k_2}^\prime S U(t) \|}_{L^2} \lesssim \e_1^4 \langle t \rangle^{-1+10p_0} .
\end{split}
\end{align*}
The estimates for the terms $M_{\eps_1\eps_2}(U_{\eps_1}, F_{\eps_2})$ are similar. This completes the proof of \eqref{estR}.
\end{proof}

We can now complete the proof of the estimate \eqref{mainlemma2conc3}.

\begin{proof}[Proof of \eqref{mainlemma2conc3}] 
Assume that $k\in[-80p_0m,20p_0m]$, $|\xi_0|\in [2^k,2^{k+1}]$, $m\geq 1$, $t_1\leq t_2\in[2^{m}-2,2^{m+1}]\cap[0,T']$. We would like to prove that
\begin{equation}\label{lmj20}
\Big|\varphi_k(\xi_0)\int_{t_1}^{t_2}e^{iL(\xi_0,s)}e^{-is|\xi_0|^{3/2}}\widehat{\mathcal{R}''_{\geq 4}}(\xi_0,s)\,ds\Big|\lesssim \eps_1^32^{-200p_0m}.
\end{equation}

Let
\begin{equation}\label{lmj21}
F(\xi):=\varphi_k(\xi)\int_{t_1}^{t_2}e^{iL(\xi_0,s)}e^{-is|\xi|^{3/2}}\widehat{\mathcal{R}''_{\geq 4}}(\xi,s)\,ds.
\end{equation}
In view of Lemma \ref{interpolation}, it suffices to prove that 
\begin{equation*}
2^{-k}\|F\|_{L^2}\big[2^k\|\partial F\|_{L^2}+\|F\|_{L^2}\big]\lesssim \eps_1^62^{-400p_0m}.
\end{equation*}
Since $\|F\|_{L^2}\lesssim \eps_1^42^{-m/8+10p_0m}$, see the first inequality in \eqref{estR}, it suffices to prove that
\begin{equation}\label{lmj22}
2^k\|\partial F\|_{L^2}\lesssim \eps_1^42^{k}2^{(1/8-500p_0)m}.
\end{equation}

To prove \eqref{lmj22} we write
\begin{equation*}
|\xi\partial_\xi F(\xi)|\leq |F_1(\xi)|+|F_2(\xi)|+|F_3(\xi)|,
\end{equation*}
where
\begin{equation*}
\begin{split}
&F_1(\xi):=\xi(\partial_\xi\varphi_k)(\xi)\int_{t_1}^{t_2}e^{iL(\xi_0,s)}\big[e^{-is|\xi|^{3/2}}\widehat{\mathcal{R}''_{\geq 4}}(\xi,s)\big]\,ds,\\
&F_2(\xi):=\varphi_k(\xi)\int_{t_1}^{t_2}e^{iL(\xi_0,s)}\big[\xi\partial_\xi-(3/2)s\partial_s\big]\big[e^{-is|\xi|^{3/2}}\widehat{\mathcal{R}''_{\geq 4}}(\xi,s)\big]\,ds,\\
&F_3(\xi):=\frac{3}{2}\varphi_k(\xi)\int_{t_1}^{t_2}e^{iL(\xi_0,s)}s\partial_s\big[e^{-is|\xi|^{3/2}}\widehat{\mathcal{R}''_{\geq 4}}(\xi,s)\big]\,ds.
\end{split}
\end{equation*}
Using \eqref{estR} and the commutation identity $\big[\big[\xi\partial_\xi-(3/2)s\partial_s\big], e^{-is|\xi|^{3/2}}\big]=0$, we have
\begin{equation*}
\|F_1\|_{L^2}+\|F_2\|_{L^2}\lesssim \eps_1^42^{30p_0m}.
\end{equation*}
Moreover, using integration by parts in $s$ and the bound $\big|\partial_s\big[e^{iL(\xi_0,s)}\big]\big|\lesssim 2^{-m}$, see the definition \eqref{nf50}, we can also estimate $\|F_3\|_{L^2}\lesssim \eps_1^42^{30p_0m}$. The desired bound \eqref{lmj22} follows.
\end{proof}

\medskip
\section{Modified scattering}\label{Sca1}

In this section we provide a precise description of the asymptotic behavior of solutions. Our main result is the following:

\begin{theorem}[Modified Scattering] \label{ScaThm}
Assume that $N_0,N_1,N_2, p_0, p_1$ are as in Theorem \ref{MainTheo}, and $(h,\phi)$ is the global solution of the system \eqref{CPW}. Let 
\begin{equation*}
U=|\partial_x|\phi-i|\partial_x|^{1/2}\phi\in C\big([0,\infty) : \dot{H}^{N_0,p_1-1/2}). 
\end{equation*}

\setlength{\leftmargini}{1.8em}
\begin{itemize}

\item[(i)] (Modified scattering in the Fourier space) Let
\begin{equation}\label{Sca1.5}
L'(\xi,t) = \frac{|\xi|^2}{24\pi}\int_0^t {|\what{U}(\xi,s)|}^2 \frac{1}{s+1}\,ds,
\end{equation}
compare with \eqref{nf50}. Then there is $w_\infty$ with $\|(1+|\xi|^{N_2})w_\infty(\xi)\|_{L^2_\xi}\lesssim \e_0$ such that
\begin{equation*}
e^{iL'(\xi,t)}e^{-it|\xi|^{3/2}}\widehat{U}(\xi,t)\,\,\,\text{ converges to }\,\,\, w_\infty(\xi)\,\,\,\text{ as }\,\,\, t\to\infty\,,
\end{equation*}
more precisely
\begin{equation}\label{Sca1.6}
(1+t)^{p_0/2}\big\|(|\xi|^{-1/4}+|\xi|^{N_2})[e^{iL'(\xi,t)}e^{-it|\xi|^{3/2}}\widehat{U}(\xi,t)-w_\infty(\xi)]\big\|_{L^2_\xi}\lesssim\e_0.
\end{equation}

\item[(ii)] (Modified scattering in the physical space) There exists a unique asymptotic profile $f_\infty$, with $\| (|\xi|^{-1/10} + |\xi|^4)  f_\infty \|_{L^\infty_\xi} \lesssim \e_0$,
such that for all $t \geq 1$
\begin{equation}\label{proasymconc}
\begin{split}
\Big\| U(x,t)
  - \frac{e^{- i t (4/27) |x/t|^3 }}{\sqrt{1+t}}  f_\infty (x/t)  e^{-id_0
  |x/t|^3 {| f_\infty (x/t) |}^2 \log(1+t)} \Big\|_{L^\infty_x}\lesssim \e_0 \langle t\rangle^{-1/2-p_0/2} \, ,
\end{split}
\end{equation}
where $d_0=1/54$.

\end{itemize}

\end{theorem}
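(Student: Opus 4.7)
My plan is to derive Theorem \ref{ScaThm} as a post-hoc consequence of the bootstrap machinery of Section \ref{secprmainlem}. For part (i), the central observation is that Lemma \ref{mainlem} is really a Cauchy estimate: summing the dyadic bounds
\begin{align*}
\|(|\xi|^{1/10}+|\xi|^{N_2+1/2})(g(\xi,t_2)-g(\xi,t_1))\|_{L^\infty_\xi} \lesssim \e_0\,2^{-p_0 m}, \quad t_1,t_2\in[2^m-2,2^{m+1}],
\end{align*}
over $m$ produces a unique limit $w_\infty$ of the renormalized profile $g(\xi,t)=e^{iL(\xi,t)}\widehat{f}(\xi,t)$, with convergence rate $\e_0(1+t)^{-p_0}$ in this weighted $L^\infty_\xi$ norm. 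I then transfer to $\widetilde{g}(\xi,t):=e^{iL'(\xi,t)}e^{-it|\xi|^{3/2}}\widehat{U}(\xi,t)$ by bounding separately the quadratic difference $\widehat{U}-\widehat{v}$ via Lemma \ref{lemv} and the phase difference $L-L'$ via the identity $|\widehat{f}|^2-|\widehat{U}|^2=|\widehat{v}|^2-|\widehat{U}|^2$, which yields an integrand that is uniformly integrable in time. The weighted $L^2$ estimate \eqref{Sca1.6} then follows by dyadic interpolation: at each frequency scale $2^k$ I trade off the $L^\infty$ rate $(1+t)^{-p_0}$ (multiplied by the Littlewood--Paley volume factor $2^{k/2}$) against the $L^2$ energy bounds from Propositions \ref{proprof}, \ref{proEElow} and \ref{proEEZlow} (which grow as $(1+t)^{6p_0}$), picking the break-frequency to balance the two and yielding the rate $(1+t)^{-p_0/2}$. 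The absolute bound $\|(1+|\xi|^{N_2})w_\infty\|_{L^2_\xi}\lesssim \e_0$ is recovered by evaluating \eqref{Sca1.6} at $t=0$ (where $L'\equiv 0$) and invoking the initial data hypothesis \eqref{h0p0}.

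For part (ii), I run an explicit stationary phase analysis on the Fourier representation of $U$. Substituting the approximation from part (i) into the inverse Fourier formula, the phase $\Phi(x,t,\xi)=x\xi+t|\xi|^{3/2}$ has a unique non-degenerate critical point $\xi_c(y)=-\mathrm{sgn}(y)(4/9)y^2$ at $y=x/t$, with stationary value $-(4/27)t|y|^3$ and second derivative $9t/(8|y|)$. A standard stationary phase computation, combined with a dyadic decomposition in $|\xi|$ and integration by parts in the non-stationary regions, yields the leading order
\begin{align*}
U(x,t)\sim \frac{1}{\sqrt{1+t}}\cdot \frac{2\sqrt{|y|}}{3\sqrt{\pi}}\, e^{i\pi/4}\, w_\infty(\xi_c(y))\, e^{-it(4/27)|y|^3}\, e^{-iL'(\xi_c(y),t)}.
\end{align*}
Since $|g(\xi_c,s)|^2-|w_\infty(\xi_c)|^2=O(s^{-p_0})$ pointwise, the difference $L'(\xi_c,t)-(|\xi_c|^2/(24\pi))|w_\infty(\xi_c)|^2\log(1+t)$ is uniformly bounded in $t$; absorbing this bounded phase correction into the definition
\begin{align*}
f_\infty(y):=\frac{2\sqrt{|y|}}{3\sqrt{\pi}}\, e^{i\pi/4+i\kappa(y)}\, w_\infty(\xi_c(y))
\end{align*}
and then using the arithmetic identity
\begin{align*}
\frac{|\xi_c(y)|^2}{24\pi}|w_\infty(\xi_c)|^2 = \frac{1}{54}\,|y|^3\cdot \frac{4|y|}{9\pi}\,|w_\infty(\xi_c)|^2 = d_0\,|y|^3\,|f_\infty(y)|^2
\end{align*}
fixes $d_0=1/54$ and produces \eqref{proasymconc}. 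The weighted $L^\infty$ bound on $f_\infty$ is inherited from the $Z$-type bound on $w_\infty$ obtained in part (i).

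The hardest step will be controlling the stationary-phase remainder uniformly in $L^\infty_x$ at the sharp rate $(1+t)^{-1/2-p_0/2}$. Two technical issues arise. First, $w_\infty$ is only regular in the weighted-$L^\infty$ ($Z$) sense, whereas a sharp stationary-phase estimate classically demands one derivative of the amplitude; the substitute will be the $L^2$ bound on $x\partial_x f$ from Proposition \ref{proEEZ}, which in Fourier space is an $L^2$-bound on $\partial_\xi\widehat{f}$ and, combined with dyadic localization, plays the role of a weak derivative estimate, in the spirit of Lemma \ref{dispersive} sharpened as Lemma \ref{ScaDecay}. Second, the low-frequency region $|\xi|\lesssim (1+t)^{-2/3}$---where $\xi_c(y)$ tends to the origin and $\partial_\xi^2\Phi$ degenerates---requires the refined low-frequency bounds from Propositions \ref{proEElow} and \ref{proEEZlow}, rather than the generic $Z$-norm control. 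This is precisely where the compatible vector-field structure \eqref{vfstr} pays off at the level of pointwise asymptotics. The remaining steps---the Cauchy-limit argument, the dyadic interpolation, and the bookkeeping to extract $d_0=1/54$---are essentially routine.
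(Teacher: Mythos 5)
Your proposal is correct and follows essentially the same route as the paper: part (i) is exactly the paper's comparison of $H(\xi,t)=e^{iL'}e^{-it\Lambda}\widehat{U}$ with the renormalized profile $g$, using Lemma \ref{mainlem} as a Cauchy estimate, Lemma \ref{lemv} for $\widehat{U}-\widehat{v}$ and the phase difference $L-L'$, and interpolation with the $L^2$ bounds of Proposition \ref{proprof}; part (ii) is the paper's Lemma \ref{ScaDecay}(ii) (stationary phase applied to the profile $f$, with the scaling trick handling the degenerating Hessian at low frequency), followed by the identification of the logarithmic phase and a bounded correction $\kappa(y)$ (the paper's $A_\infty$). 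Your amplitude and the arithmetic yielding $d_0=1/54$ both check out against the paper's computation.
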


We remark that the first statement of modified scattering, in the Fourier space, is stronger because of the stronger norm of convergence. 
One can, in fact, make this convergence even stronger, by changing the norm in \eqref{Sca1.6} to an $L^\infty_\xi$ based norm. 

Modified scattering in the physical space follows by an argument similar to the one used by Hayashi--Naumkin \cite{HN} (see also \cite{IoPunote}).  

To prove Theorem \ref{ScaThm} we need a more precise scale invariant linear dispersive estimate:

\begin{lemma}\label{ScaDecay}

\setlength{\leftmargini}{1.8em}
\begin{itemize}

\item[(i)] For any $t\in\mathbb{R}\setminus\{0\}$, $k\in\mathbb{Z}$, and $f\in L^2(\mathbb{R})$ we have
\begin{equation}
\label{Sca2}
 \|e^{i t \Lambda}P_kf\|_{L^\infty}\lesssim |t|^{-1/2}2^{k/4}\|\widehat{f}\|_{L^\infty}
  + |t|^{-3/5}2^{-2k/5}\big[\|\widehat{f}\|_{L^2}+2^k\|\partial \widehat{f}\|_{L^2}\big]
\end{equation}
and
\begin{equation}\label{Sca3}
 \|e^{i t \Lambda}P_kf\|_{L^\infty}\lesssim |t|^{-1/2}2^{k/4}\|f\|_{L^1}.
\end{equation}

\item[(ii)] Moreover, if $\xi_0 := -4x^2/(9t^2) \,\mathrm{sgn}(x/t)$ and $\widehat{f}$ is continuous then
\begin{align}
\label{disperse2}
 \begin{split}
\Big| e^{i t\Lambda} f (x) - \sqrt{\frac{2i}{3\pi t}} e^{- it (4/27) |x/t|^3 } {|\xi_0|}^{1/4} \what{f} (\xi_0 ) \Big|
    \lesssim |t|^{-3/5}
    \big({\| |\xi|^{-2/5} \what{f} \|}_{L^2}+{\| |\xi|^{3/5} \partial\what{f} \|}_{L^2}  \big).
 \end{split}
\end{align}

\end{itemize}

\end{lemma}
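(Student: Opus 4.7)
The proof rests on stationary phase analysis of the oscillatory integral
\begin{equation*}
(e^{it\Lambda}P_k f)(x) = \frac{1}{2\pi}\int_{\mathbb{R}} e^{i\Phi(\xi;x,t)}\varphi_k(\xi)\widehat{f}(\xi)\,d\xi,\qquad \Phi(\xi;x,t) := t|\xi|^{3/2}+x\xi.
\end{equation*}
The equation $\Phi'(\xi) = (3t/2)|\xi|^{1/2}\mathrm{sgn}(\xi)+x = 0$ has the unique critical point $\xi_0 = -(4x^2/(9t^2))\,\mathrm{sgn}(x/t)$, at which a direct computation yields $\Phi(\xi_0) = -(4/27)t|x/t|^3$ and $\Phi''(\xi_0) = (3t/4)|\xi_0|^{-1/2}$.

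For part (i), I would first exploit the dilation identity $(e^{it\Lambda}P_k f)(x) = (e^{it 2^{3k/2}\Lambda}P_0 g)(2^k x)$, with $\widehat{g}(\eta) = \widehat{f}(2^k\eta)$, to reduce both \eqref{Sca2} and \eqref{Sca3} to the case $k=0$; a direct tracking of the relevant Fourier norms under this rescaling (with effective time $s = t\, 2^{3k/2}$) recovers the stated $k$-dependent weights. For \eqref{Sca3} with $k=0$ one writes $e^{it\Lambda}P_0 f = K_t*f$ with $K_t(x) = (2\pi)^{-1}\int e^{i\Phi}\varphi_0(\xi)\,d\xi$; since $|\Phi''(\xi)|\sim |t|$ uniformly on $\mathrm{supp}\,\varphi_0$, van der Corput's lemma gives $\|K_t\|_{L^\infty}\lesssim |t|^{-1/2}$ and Young's inequality concludes. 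For \eqref{Sca2} I would dyadically decompose $\varphi_0 = \sum_{l\leq 0}\psi_l$ with $\psi_l$ localizing $|\xi-\xi_0|\sim 2^l$: on each such piece one estimates either trivially by $2^l\|\widehat{f}\|_{L^\infty}$, or, using $|\Phi'(\xi)|\sim |t|\,|\xi-\xi_0|$ on the support of $\psi_l$ together with integration by parts and Cauchy--Schwarz, by $\lesssim |t|^{-1}2^{-l/2}[\|\widehat{f}\|_{L^2}+\|\partial\widehat{f}\|_{L^2}]$. Balancing these at $2^l\sim |t|^{-2/5}$ produces the claimed mixed bound; when $\xi_0\notin\mathrm{supp}\,\varphi_0$, a single non-stationary integration by parts gives strictly stronger decay.

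For part (ii), after restricting to the regime $|\xi_0|\sim 1$ where the asserted leading term is concentrated, I would insert a smooth cut-off $\chi$ supported in a window of length $\sim |t|^{-2/5}$ about $\xi_0$ and apply the Morse lemma to reduce the phase to a quadratic: the resulting Fresnel integral produces exactly
\begin{equation*}
\frac{1}{2\pi}\sqrt{\frac{2\pi}{|\Phi''(\xi_0)|}}\,e^{i\pi\,\mathrm{sgn}(t)/4}\,\widehat{f}(\xi_0)\,e^{i\Phi(\xi_0)} = \sqrt{\frac{2i}{3\pi t}}\,|\xi_0|^{1/4}\widehat{f}(\xi_0)\,e^{-it(4/27)|x/t|^3},
\end{equation*}
using $|\Phi''(\xi_0)|=(3|t|/4)|\xi_0|^{-1/2}$. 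The remainder splits into two pieces: (a) the contribution in the stationary window of $[\widehat{f}(\xi)-\widehat{f}(\xi_0)]\chi(\xi)$, which is controlled by writing $\widehat{f}(\xi)-\widehat{f}(\xi_0) = \int_{\xi_0}^{\xi}\partial\widehat{f}$ and Cauchy--Schwarz; and (b) the contribution outside the stationary window, which is bounded by the integration-by-parts scheme of part (i) applied with the full $\widehat f$. In both pieces the weights $|\xi|^{-2/5}$ and $|\xi|^{3/5}$ arise naturally from the scaling $|\xi_0|\sim (x/t)^2$ combined with $|\Phi''(\xi_0)|^{-1}\sim |t|^{-1}|\xi_0|^{1/2}$.

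The main obstacle will be making the remainder in \eqref{disperse2} uniform in $(x,t)$ in a scale-invariant manner. The weighted $L^2$ norms $\||\xi|^{-2/5}\widehat f\|_{L^2}$ and $\||\xi|^{3/5}\partial\widehat f\|_{L^2}$ are precisely the scale-invariant quantities dictated by the dispersion relation $|\xi|^{3/2}$, and verifying that both the stationary-phase window of width $|t|^{-2/5}|\xi_0|^{1/2}$ and the non-stationary integration-by-parts estimates combine into bounds with these specific weights—consistently across all $|\xi_0|$, including the delicate low-frequency regime where $\widehat f$ may fail to be bounded pointwise but its weighted $L^2$ norms remain finite—is the principal technical point.
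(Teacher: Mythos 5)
Your outline follows essentially the same route as the paper: locate the stationary point $\xi_0$, peel off the Fresnel leading term, subtract $\what{f}(\xi_0)$ in the stationary window, and balance trivial bounds against integration by parts at a window of width corresponding to $|t'|^{-2/5}$ with $t'\sim |t||\xi_0|^{3/2}$. Two points, however, are left in a state that would not survive being written out.

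First, the uniformity in $\xi_0$, which you correctly flag as ``the principal technical point,'' is not something you can defer: in part (ii) the function $f$ is not frequency--localized, $\xi_0$ ranges over all of $\mathbb{R}$ as $x$ varies, and ``restricting to the regime $|\xi_0|\sim 1$'' is not an available move --- it has to be \emph{achieved} by a change of variables. The paper does exactly this: setting $\xi=b\eta$ with $b=\xi_0$ and $t'=t\cdot 4|x|^3/(9|t|^3)$ turns the phase into $t'(2|\eta|^{3/2}/3-\eta)$ with stationary point $\eta=1$, and the identity $\||\eta|^{-2/5}\what{f_b}\|_{L^2}+\||\eta|^{3/5}\partial\what{f_b}\|_{L^2}=|b|^{-1/10}\big(\||\xi|^{-2/5}\what{f}\|_{L^2}+\||\xi|^{3/5}\partial\what{f}\|_{L^2}\big)$ combines with the Jacobian factor $|b|$ and $|t'|^{-3/5}\sim(|t||b|^{3/2})^{-3/5}$ to produce exactly $|t|^{-3/5}$. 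Without this (or an equivalent bookkeeping in the original variables, where the correct window width is $|t|^{-2/5}|\xi_0|^{2/5}$, not $|t|^{-2/5}$ or $|t|^{-2/5}|\xi_0|^{1/2}$), the exponents in \eqref{disperse2} cannot be confirmed; after the rescaling, one still has to dispose of the frequencies $|\eta|\not\sim 1$ by a separate dyadic integration by parts with a Cauchy--Schwarz against the weights $|\eta|^{-2/5}$ and $|\eta|^{3/5}$, as the paper does before reducing to \eqref{Sca15}.

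Second, a quantitative slip: a single integration by parts on $|\xi-\xi_0|\sim 2^l$ produces, besides $\int|\partial\what f|/(|t|2^l)$, the term $\int |\what f|\,|\Phi''|/|\Phi'|^2\lesssim |t|^{-1}2^{-3l/2}\|\what f\|_{L^2}$, not $|t|^{-1}2^{-l/2}\|\what f\|_{L^2}$; summed over $2^l\geq |t|^{-2/5}$ this gives only $|t|^{-2/5}$. This is precisely why the subtraction of $\what f(\xi_0)$ (giving the pointwise gain $|\what f(\xi)-\what f(\xi_0)\chi|\lesssim 2^{l/2}$ from Cauchy--Schwarz) is essential in part (ii), and why in part (i) the undifferentiated term should be estimated with $\|\what f\|_{L^\infty}$ rather than $\|\what f\|_{L^2}$. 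With these repairs your argument coincides with the paper's; note also that the paper obtains \eqref{Sca2} directly as a corollary of \eqref{disperse2} rather than by a separate argument.
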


\begin{proof} (i) This is similar to the proof of Lemma 2.3 in \cite{IoPu1}. The bound \eqref{Sca3} is a standard dispersive estimate. Also, \eqref{Sca2} is a consequence of \eqref{disperse2}, so we focus on this last bound.

We start from the identity
\begin{equation*}
e^{i t\Lambda} f (x)=\frac{1}{2\pi}\int_{\mathbb{R}}e^{i(t|\xi|^{3/2}+x\xi)}\widehat{f}(\xi)\,d\xi.
\end{equation*}
We would like to use the standard stationary phase approximate formula
\begin{equation}\label{Sca4}
\int_{\mathbb{R}}e^{i\Phi(x)}\Psi(x)\,dx\approx e^{i\Phi(x_0)}\Psi(x_0)\sqrt\frac{2\pi}{-i\Phi''(x_0)} + \mathrm{error},
\end{equation}
in a neighborhood of a stationary point $x_0$. To justify this we make the change of variables
\begin{equation*}
\xi=b\eta,\qquad b:= -\frac{4x^2}{9t^2}\,\mathrm{sgn}(x/t),\qquad t' := t\cdot \frac{4|x|^3}{9|t|^3}.
\end{equation*}
and notice that
\begin{equation*}
t|\xi|^{3/2}+x\xi=t'(2|\eta|^{3/2}/3-\eta).
\end{equation*}
Therefore, letting $\widehat{f_b}(\eta):=\widehat{f}(b\eta)$, we have
\begin{equation}\label{Sca8}
e^{i t\Lambda} f (x)=\frac{1}{2\pi}\frac{4x^2}{9t^2}\int_{\mathbb{R}}e^{it'(2|\eta|^{3/2}/3-\eta)}\widehat{f_b}(\eta)\,d\eta.
\end{equation}

Notice that
\begin{equation}\label{Sca9}
\||\eta|^{-2/5}\widehat{f_b}(\eta)\|_{L^2_\eta}+\||\eta|^{3/5}(\partial\widehat{f_b})(\eta)\|_{L^2_\eta}=|b|^{-1/10}\big(\||\xi|^{-2/5}\widehat{f}(\xi)\|_{L^2_\xi}+\||\xi|^{3/5}(\partial\widehat{f})(\xi)\|_{L^2_\xi}\big).
\end{equation}
For \eqref{disperse2} it suffices to prove that
\begin{equation}\label{Sca10}
\Big|\int_{\mathbb{R}}e^{it'(2|\eta|^{3/2}/3-\eta)}\widehat{f_b}(\eta)\,d\eta-e^{-it'/3}\widehat{f_b}(1)\sqrt{\frac{4\pi i}{t'}}\Big|\lesssim |t'|^{-3/5}
    \big({\| |\eta|^{-2/5} \what{f_b} \|}_{L^2}+{\| |\eta|^{3/5} \partial\what{f_b} \|}_{L^2}  \big).
\end{equation}

Let $g:=\widehat{f_b}$ and $g_k:=g\cdot\varphi_k$, $k\in\mathbb{Z}$. Let 
\begin{equation*}
a_k:=2^{-2k/5}\|g_k\|_{L^2}+2^{3k/5}\|\partial g_k\|_{L^2}.
\end{equation*}
Let
\begin{equation}\label{Sca11}
\Psi(\eta):=2|\eta|^{3/2}/3-\eta,\qquad \Psi'(\eta)=\eta|\eta|^{-1/2}-1,\qquad \Psi''(\eta)=|\eta|^{-1/2}/2.
\end{equation}
Clearly, for any $k\in\mathbb{Z}$,
\begin{equation*}
\Big|\int_{\mathbb{R}}e^{it'\Psi(\eta)}g_k(\eta)\,d\eta\Big|\lesssim \|g_k\|_{L^1}\lesssim 2^{9k/10}a_k.
\end{equation*}
Moreover, if $|k|\geq 3$ then we can integrate by parts in $\eta$ to estimate
\begin{equation*}
\Big|\int_{\mathbb{R}}e^{it'\Psi(\eta)}g_k(\eta)\,d\eta\Big|\lesssim \frac{\|g_k\|_{L^1}}{|t'|2^{3k/2}}+\frac{\|\partial g_k\|_{L^1}}{|t'|2^{k/2}}\lesssim \frac{a_k2^{-3k/5}}{|t'|}.
\end{equation*}
These last two inequalities show that
\begin{equation*}
\sum_{|k|\geq 3}\Big|\int_{\mathbb{R}}e^{it'\Psi(\eta)}g_k(\eta)\,d\eta\Big|\lesssim |t'|^{-3/5}\sup_{k\in\mathbb{Z}}a_k.
\end{equation*}
One can also estimate, in the same way, the contribution of the negative frequencies of the functions $g_k$, $|k|\leq 2$. For \eqref{Sca10} it remains to show that
\begin{equation}\label{Sca15}
\Big|\int_{\mathbb{R}}e^{it'\Psi(\eta)}g(\eta)\,d\eta-e^{-it'/3}g(1)\sqrt{\frac{4\pi i}{t'}}\Big|\lesssim |t'|^{-3/5}
    \big({\| g\|}_{L^2}+{\| \partial g\|}_{L^2}  \big).
\end{equation}
for any continuous function $g$ supported in $[1/10,10]$.

In proving \eqref{Sca15} we may assume that $|t'|$ is large, say $|t'|\geq 2^{10}$. Let $\chi(\eta)=\varphi_{\leq -10}(\eta-1)$ denote a cutoff function supported around 1. For \eqref{Sca15} it suffices to prove that
\begin{equation}\label{Sca16}
\Big|\int_{\mathbb{R}}e^{it'\Psi(\eta)}[g(\eta)-g(1)\chi_1(\eta)]\,d\eta\Big|\lesssim |t'|^{-3/5}
    \big({\| g\|}_{L^2}+{\| \partial g\|}_{L^2}  \big)
\end{equation}
and, using also Lemma \ref{interpolation},
\begin{equation}\label{Sca17}
\Big|\int_{\mathbb{R}}e^{it'\Psi(\eta)}\chi_1(\eta)\,d\eta-e^{-it'/3}\sqrt{\frac{4\pi i}{t'}}\Big|\lesssim |t'|^{-3/5}.
\end{equation}

The bound \eqref{Sca17} follows easily by stationary phase and \eqref{Sca11} (compare with \eqref{Sca4}). To prove \eqref{Sca16} we define
\begin{equation*}
J_l:=\int_{\mathbb{R}}e^{it'\Psi(\eta)}\varphi_l(\eta-1)[g(\eta)-g(1)\chi_1(\eta)]\,d\eta,
\end{equation*}
for $l\leq 10$. By linearity we may assume that ${\| g\|}_{L^2}+{\| \partial g\|}_{L^2}=1$. Notice that 
\begin{equation}\label{Sca18}
\text{ if }\quad|\eta-1|\lesssim 2^l\quad\text{ then }\quad|g(\eta)-g(1)\chi_1(\eta)|\lesssim |\eta-1|^{1/2}\lesssim 2^{l/2}.
\end{equation}
Therefore $|J_l|\lesssim 2^{3l/2}$. Moreover, if $2^l\geq |t'|^{-2/5}$ then we integrate by parts in $\eta$ and estimate
\begin{equation*}
\begin{split}
|J_l|&\lesssim \int_{\mathbb{R}}\Big|\frac{d}{d\eta}\frac{\varphi_l(\eta-1)[g(\eta)-g(1)\chi_1(\eta)]}{|t'|\Psi'(\eta)}\Big|\,d\eta\\
&\lesssim \int_{|\eta-1|\leq 2^{l+1}}\frac{2^{-l}|g(\eta)-g(1)\chi_1(\eta)|+|g'(\eta)|+1}{2^{l}|t'|}\,d\eta.
\end{split}
\end{equation*}
Using \eqref{Sca18} it follows that $|J_l|\lesssim |t'|^{-1}2^{-l/2}$. The desired bound \eqref{Sca16} follows, which completes the proof.
\end{proof}

We can complete now the proof of Theorem \ref{ScaThm}:

\begin{proof}[Proof of Theorem \ref{ScaThm}] (i) Recall the formulas \eqref{nf50}, 
\begin{equation*}
L(\xi,t) = \frac{|\xi|^2}{24\pi}\int_0^t {|\what{v}(\xi,s)|}^2 \frac{1}{s+1}\,ds,\qquad g(\xi,t)=e^{iL(\xi,t)}e^{-it|\xi|^{3/2}}\widehat{v}(\xi,t).
\end{equation*}
Assume $1\leq t_1\leq t_2$, let $H(t):=e^{iL'(\xi,t)}e^{-it|\xi|^{3/2}}\widehat{U}(\xi,t)$, and estimate
\begin{equation}\label{Sca20}
\begin{split}
\big|[H(\xi,t_2)&-H(\xi,t_1)]-[g(\xi,t_2)-g(\xi,t_1)]e^{i(L'(\xi,t_1)-L(\xi,t_1))}\big|\\
=\big|&[e^{i(L'(\xi,t_2)-L'(\xi,t_1))}e^{-it_2|\xi|^{3/2}}\widehat{U}(\xi,t_2)-e^{-it_1|\xi|^{3/2}}\widehat{U}(\xi,t_1)]\\
&-[e^{i(L(\xi,t_2)-L(\xi,t_1))}e^{-it_2|\xi|^{3/2}}\widehat{v}(\xi,t_2)-e^{-it_1|\xi|^{3/2}}\widehat{v}(\xi,t_1)]\big|\\
\lesssim |&\widehat{U}(\xi,t_2)-\widehat{v}(\xi,t_2)|+|\widehat{U}(\xi,t_1)-\widehat{v}(\xi,t_1)|\\
+\,|&\widehat{v}(\xi,t_2)||(L'(\xi,t_2)-L'(\xi,t_1))-(L(\xi,t_2)-L(\xi,t_1))|.
\end{split}
\end{equation}

It follows from Lemma \ref{mainlem} that $(|\xi|^{1/10}+|\xi|^{N_2+1/2})|\widehat{v}(\xi,t)|\lesssim \e_0$ uniformly in $t$. Therefore
\begin{equation*}
|\widehat{v}(\xi,t_2)||(L'(\xi,t_2)-L'(\xi,t_1))-(L(\xi,t_2)-L(\xi,t_1))|\lesssim \int_{t_1}^{t_2}|\what{v}(\xi,s)-\widehat{U}(\xi,s)| \frac{1}{s+1}\,ds.
\end{equation*}
Using \eqref{lemvL2} and \eqref{Sca20} it follows that
\begin{equation*}
\big\|(|\xi|^{-1/4}+|\xi|^{N_2})\{[H(\xi,t_2)-H(\xi,t_1)]-[g(\xi,t_2)-g(\xi,t_1)]e^{i(L'(\xi,t_1)-L(\xi,t_1))}\}\big\|_{L^2}\lesssim \e_0\langle t_1\rangle^{-1/6}.
\end{equation*} 
Using now Lemma \ref{mainlem} and the bound \eqref{profL2} it follows that
\begin{equation*}
\big\|(|\xi|^{-1/4}+|\xi|^{N_2})[H(\xi,t_2)-H(\xi,t_1)]\big\|_{L^2}\lesssim \e_0\langle t_1\rangle^{-2p_0/3},
\end{equation*}
and the desired conclusion \eqref{Sca1.6} follows.

(ii) In view of \eqref{estu-v},
\begin{equation}\label{Sca30}
\|U(t)-v(t)\|_{L^\infty}\lesssim \eps_0(1+t)^{-2/3}.
\end{equation}
Moreover, since $\widehat{v}(\xi,t)=\widehat{f}(\xi,t)e^{it|\xi|^{3/2}}=e^{it|\xi|^{3/2}}e^{-iL(\xi,t)}g(\xi,t)$, and using also \eqref{profL2}--\eqref{profS} and \eqref{disperse2}, it follows that, for any $x\in\mathbb{R}$ and $t\geq 1$,
\begin{equation}\label{Sca30.5}
\Big|\widehat{v}(x,t)- \sqrt{\frac{2i}{3\pi t}} e^{- it (4/27) |x/t|^3 }  e^{-iL(\xi_0,t)}{|\xi_0|}^{1/4}g (\xi_0,t) \Big|
    \lesssim \langle t\rangle^{-1/2-10p_0},
\end{equation}
where $\xi_0 := -4x^2/(9t^2) \,\mathrm{sgn}(x/t)$. 

Let
\begin{equation}\label{Sca31}
g_\infty(\xi):=\lim_{t\to\infty}{|\xi|}^{1/4}g (\xi,t),
\end{equation}
where the limit exists due to Lemma \ref{mainlem}; more precisely for any $t\geq 1$
\begin{equation}\label{Sca32}
\Big\|g_\infty(\xi)-{|\xi|}^{1/4}g (\xi,t)\Big\|_{L^\infty_\xi}\lesssim \e_0(1+t)^{-p_0},\qquad \|{|\xi|}^{1/4}g (\xi,t)\|_{L^\infty_\xi}\lesssim \e_0.
\end{equation}
Then, for any $\xi\in\mathbb{R}$,
\begin{equation*}
L(\xi,t)-\frac{|\xi|^{3/2}}{24\pi}|g_\infty(\xi)|^2\ln(t+1)=\frac{|\xi|^{3/2}}{24\pi}\int_0^t\big[{|\xi|}^{1/2}|g (\xi,t)|^2-|g_\infty(\xi)|^2\big]\frac{1}{s+1}\,ds
\end{equation*}
Using \eqref{Sca2} it follows that there is a real-valued function $A_{\infty}$ such that
\begin{equation}\label{Sca33}
\Big\|L(\xi,t)-\frac{|\xi|^{3/2}}{24\pi}|g_\infty(\xi)|^2\ln(t+1)-A_{\infty}(\xi)\Big\|_{L^\infty_\xi}\lesssim \e_0(1+t)^{-2p_0/3}.
\end{equation}
Using \eqref{Sca30.5}, \eqref{Sca32}, and \eqref{Sca33}, it follows that
\begin{equation*}
\Big|\widehat{v}(x,t)- \sqrt{\frac{2i}{3\pi t}} e^{- it (4/27) |x/t|^3 }  e^{i[\frac{|\xi_0|^{3/2}}{24\pi}|g_\infty(\xi_0)|^2\ln(t+1)+A_{\infty}(\xi_0)]}g_{\infty}(\xi_0) \Big|
    \lesssim \langle t\rangle^{-1/2-2p_0/3}.
\end{equation*}
Then we define 
\begin{equation*}
f_{\infty}(y):=\sqrt{\frac{2i}{3\pi}}g_{\infty}(\xi(y)) e^{iA_{\infty}(\xi(y))}, \qquad \xi(y) := -\frac{4}{9}y^2\mathrm{sgn}(y).
\end{equation*}
The desired conclusion \eqref{proasymconc} follows using also \eqref{Sca30}.
\end{proof}

\appendix

\medskip
\section{Analysis of symbols}\label{appsym}

\subsection{Notation}
Recall the definition of the class of symbols
\begin{equation}
\label{Sinfty1}
S^\infty := \{m: \R^d \to \mathbb{C} : \,m \text{ continuous and } {\| m \|}_{S^\infty} := {\|\mathcal{F}^{-1}(m)\|}_{L^1} < \infty \} ,
\end{equation}
and the notation
\begin{equation}
\label{defm^k}
m^{k,k_1,k_2}(\xi,\eta) := m(\xi,\eta) \varphi_k(\xi) \varphi_{k_1}(\xi-\eta) \varphi_{k_2}(\eta).
\end{equation}
Recall that $\mathcal{X} = \{(k,k_1,k_2) \in \mathbb{Z}^3 : \max(k,k_1,k_2) - \mathrm{med}(k,k_1,k_2) \leq 6\}$. Recall also the notation
\begin{align}
\label{Onot1}
m(\xi,\eta) = O\big( f(|\xi|,|\xi-\eta|,|\eta|) \big)
\, \Longleftrightarrow \, {\| m^{k,k_1,k_2}(\xi,\eta) \|}_{S^\infty} \lesssim f(2^k,2^{k_1},2^{k_2}) \mathbf{1}_{\mathcal{X}}(k,k_1,k_2).
\end{align}

\subsection{Quadratic symbols} In the following lemma we collect several estimates on the symbols $a_{\pm\pm}$ that are used throughout the paper.

\begin{lemma}\label{lembounda}
Let $\eps_1,\eps_2 \in \{+,-\}$, and $a_{\eps_1\eps_2}$ be the symbols in \eqref{a_++}--\eqref{a_--}, then
\begin{align}
\label{bounda0}
\begin{split}
{\| a_{\eps_1\eps_2}^{k,k_1,k_2} \|}_{S^\infty} & \lesssim 2^{3k_1/2}
  \mathbf{1}_{\mathcal{X}}(k,k_1,k_2) \mathbf{1}_{[6,\infty)}(k_2-k_1)
  \\
& + 2^{k_1/2} 2^k \mathbf{1}_{(-\infty,1]}(k) \mathbf{1}_{\mathcal{X}}(k,k_1,k_2) \mathbf{1}_{[6,\infty)}(k_2-k_1) .
\end{split}
\end{align}
Moreover, for $\eps_1 \in \{+,-\}$, the following bounds holds
\begin{align}
\label{boundaeps_1+}
& {\| a_{\eps_1+}^{k,k_1,k_2} \|}_{S^\infty} \lesssim 2^{3k_1/2}
  \mathbf{1}_{\mathcal{X}}(k,k_1,k_2) \mathbf{1}_{[6,\infty)}(k_2-k_1) ,
\\ \label{boundaeps_1->}
& {\| a_{\eps_1-}^{k,k_1,k_2} \|}_{S^\infty} \lesssim
  \big( 2^{3k_1/2} \mathbf{1}_{[2,\infty)}(k)+ 2^{k_1/2} 2^k \mathbf{1}_{(-\infty,1]}(k) \big)
  \mathbf{1}_{\mathcal{X}}(k,k_1,k_2) \mathbf{1}_{[6,\infty)}(k_2-k_1) .
\end{align}
As a consequence, we have
\begin{align}
\label{bounda/1}
\begin{split}
{\Big\| \frac{ a_{\eps_1\eps_2}^{k,k_1,k_2}(\xi,\eta)  }{|\xi|^{3/2} -\eps_1|\xi-\eta|^{3/2} - |\eta|^{3/2}} \Big\|}_{S^\infty}
  & \lesssim 2^{k_1/2} 2^{-k/2} \mathbf{1}_{\mathcal{X}}(k,k_1,k_2) \mathbf{1}_{[6,\infty)}(k_2 - k_1) ,
\\
{\Big\| \frac{ a_{\epsilon_1 -}^{k,k_1,k_2}(\xi,\eta)}{
  |\xi|^{3/2} -\eps_1|\xi-\eta|^{3/2} + |\eta|^{3/2}} \Big\|}_{S^\infty}
  & \lesssim \big( 2^{3k_1/2} 2^{-3k/2} \mathbf{1}_{[2,\infty)}(k) + 2^{k_1/2} 2^{-k/2} \mathbf{1}_{(-\infty,1]}(k) \big)
  \\ & \qquad \times \mathbf{1}_{\mathcal{X}}(k,k_1,k_2) \mathbf{1}_{[6,\infty)}(k_2 - k_1) .
\end{split}
\end{align}

Furthermore, for $\e_1 \in \{+,-\}$ let us define the symbol
\begin{align}
\label{adiff}
\alpha_{\eps_1 +}(\xi,\eta,\rho) := a_{\eps_1 +}(\xi,\rho) - a_{\eps_1 +}(\xi+\eta-\rho,\eta) .
\end{align}
Then we have the following: if $k_3 \leq k_1-4$, $k_4\leq k_2-4$, and $k_1 \geq 8$, then
\begin{align}
\begin{split}
\label{boundadiff}
{\| \alpha_{\eps_1 +}(\xi,\eta,\rho)\cdot\varphi_{k_1}(\xi)\varphi_{k_2}(\eta)\varphi_{k_3}(\rho-\xi)\varphi_{k_4}(\rho-\eta)\|}_{S^\infty} \lesssim
  (2^{5k_3/2} + 2^{5k_4/2}) {(2^{k_1} + 2^{k_2})}^{-1}.
\end{split}
\end{align}
\end{lemma}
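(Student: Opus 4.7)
The plan is to prove the lemma by direct analysis of the explicit formulas \eqref{a_++}--\eqref{a_--}. The cutoff $\chi(\xi-\eta,\eta)$ restricts the support to the region $|\xi-\eta| \leq 2^{-8}|\eta|$, which forces $k_2 - k_1 \geq 6$ and $|k-k_2| \leq 4$, placing us inside $\mathcal{X}$. On this support one should think of each symbol as a product of a ``high-frequency'' piece smooth in $(\xi,\eta)$ at scale $2^k \approx 2^{k_2}$ and a ``low-frequency'' piece homogeneous in $\xi-\eta$. First I would establish pointwise bounds for each of the five terms in the bracket; the upgrade from $L^\infty$ to $S^\infty$ bounds then follows from standard integration-by-parts arguments (in the spirit of Lemma \ref{touse}) using that the symbols are Schwartz after appropriate dyadic localization away from the singular locus $\xi=\eta$.

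Of the five bracketed terms, the second ($|\xi-\eta|^{3/2}$) is trivially $O(2^{3k_1/2})$, and the fourth ($|\xi-\eta|^2 |\xi|^{1/2} \varphi_{\geq 1}(\eta)/|\eta|$) is $O(2^{2k_1-k/2}) \leq O(2^{3k_1/2})$ since $k_1 \leq k-6$. The first and third terms contain factors $(|\xi|/|\eta|-1)$ or $(1 - |\xi|^{1/2}/|\eta|^{1/2})$, each of size $O(|\xi-\eta|/|\eta|) \sim 2^{k_1-k}$ on the support of $\chi$ (using that $\xi,\eta$ have the same sign and $||\xi|-|\eta|| \lesssim |\xi-\eta|$); combined with the prefactors of size $\sim 2^{k+k_1/2}$ this yields $O(2^{3k_1/2})$. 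The fifth term, supported on $|\eta| \leq 1$ (hence $k \leq 1$), is the only place the sign $\eps_2$ matters: the numerator is $|\xi|^2 \mp |\xi|^{1/2}|\eta|^{3/2}$. For $\eps_2 = +$ I factor $|\xi|^{1/2}(|\xi|^{3/2}-|\eta|^{3/2})$ and use $||\xi|^{3/2}-|\eta|^{3/2}| \lesssim |\xi-\eta|\cdot|\eta|^{1/2}$ to recover $O(2^{3k_1/2})$; for $\eps_2 = -$ no algebraic cancellation is available and the term is only $O(2^{k} \cdot 2^{k_1/2})$. This dichotomy is exactly what is recorded in \eqref{bounda0}--\eqref{boundaeps_1->}.

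For \eqref{bounda/1} I divide the numerator bounds by phase lower bounds. When $\eps_2 = +$, on the support $||\xi|^{3/2}-|\eta|^{3/2}| \gtrsim |\xi-\eta|\cdot|\eta|^{1/2} \sim 2^{k_1+k/2}$ by the same mean value computation, while $|\xi-\eta|^{3/2} \ll 2^{k_1+k/2}$, so the full phase satisfies $||\xi|^{3/2}\mp|\xi-\eta|^{3/2}-|\eta|^{3/2}| \gtrsim 2^{k_1+k/2}$. Dividing $2^{3k_1/2}$ by this produces $2^{k_1/2-k/2}$. When $\eps_2 = -$ the phase $|\xi|^{3/2}\mp|\xi-\eta|^{3/2}+|\eta|^{3/2} \sim 2^{3k/2}$ (no cancellation among positive quantities), and dividing the two pieces of \eqref{boundaeps_1->} yields both components of the second bound in \eqref{bounda/1}. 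The $S^\infty$ upgrade uses that $1/\text{phase}$ is smooth at scale $2^k$ in the high variable and scale $2^{k_1}$ in the low variable, which can be checked directly by differentiation (exploiting homogeneity, analogously to the symbol bound \eqref{1/phi1} referenced earlier).

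For the difference bound \eqref{boundadiff}, the key observation is that both evaluations share the same low-frequency argument $\xi-\rho \sim 2^{k_3}$ but differ in the high-frequency coordinate by $\rho-\eta \sim 2^{k_4}$. Re-expressing $a_{\eps_1+}(\xi_1,\xi_2) = \tilde a(\xi_1-\xi_2, \xi_2)$, one has $\alpha_{\eps_1+}(\xi,\eta,\rho) = \tilde a(\xi-\rho,\rho) - \tilde a(\xi-\rho,\eta)$, and the fundamental theorem of calculus gives
\[
|\alpha_{\eps_1+}| \lesssim |\rho-\eta| \sup_{t\in[0,1]} \big|[(\partial_1+\partial_2)a_{\eps_1+}]\big(\xi-\rho+\rho+t(\eta-\rho),\rho+t(\eta-\rho)\big)\big|.
\]
The crucial analytic input is the improved bound $|(\partial_1+\partial_2) a_{\eps_1+}(\xi,\eta)| \lesssim |\xi-\eta|^{3/2}/|\eta|$, which I verify term-by-term: the $(\partial_1+\partial_2)$ operator preserves the low-frequency variable $\xi-\eta$ but extracts a factor $1/|\eta|$ from the smooth high-frequency coefficients, and one checks (for the first three terms, and separately for the low-frequency fifth term in the $\eps_2=+$ case) that this derivative enjoys the same $|\xi-\eta|^{3/2}$-level cancellation as the original symbol. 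The resulting bound $2^{k_4+3k_3/2}/2^{\max(k_1,k_2)}$ is dominated by $(2^{5k_3/2}+2^{5k_4/2})/(2^{k_1}+2^{k_2})$ regardless of the relative sizes of $k_3,k_4$. The main obstacle throughout is the careful identification of these algebraic cancellations (particularly in the fifth term when $\eps_2=+$, where the factorization $|\xi|^2-|\xi|^{1/2}|\eta|^{3/2}=|\xi|^{1/2}(|\xi|^{3/2}-|\eta|^{3/2})$ is essential) together with the verification that all pointwise bounds lift to genuine $S^\infty$ bounds via the smoothness of the high-frequency coefficients and the $\chi$ cutoff keeping us away from the singular locus.
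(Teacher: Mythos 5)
Your proposal is correct and follows essentially the same route as the paper: term-by-term pointwise bounds on the five bracketed components exploiting the cancellations $\big|\tfrac{|\xi|}{|\eta|}-1\big|+\big|1-\tfrac{|\xi|^{1/2}}{|\eta|^{1/2}}\big|\lesssim\tfrac{|\xi-\eta|}{|\eta|}$ and the factorization $|\xi|^2-|\xi|^{1/2}|\eta|^{3/2}=|\xi|^{1/2}(|\xi|^{3/2}-|\eta|^{3/2})$, the phase lower bounds $\sim|\xi-\eta|\,|\eta|^{1/2}$ (for $\eps_2=+$) and $\sim|\eta|^{3/2}$ (for $\eps_2=-$), and for \eqref{boundadiff} the derivative bound $|\xi-\eta|^{3/2}/|\eta|$ in the high-frequency direction, which is just the integrated form of the paper's Taylor expansions $a_j(\xi,\eta)=c_j|\xi-\eta|^{3/2}+O(|\xi-\eta|^{5/2}|\eta|^{-1})$. (One small simplification you missed: since $k_1\geq 8$ in \eqref{boundadiff}, the $\varphi_{\leq 0}(\eta)$ term vanishes identically and need not be treated there.)
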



\begin{proof}
We begin by recalling that the cutoff $\chi$, see \eqref{Tab}, is supported on a region where $2^3|\xi-\eta| \leq |\eta|$.
Using integration by parts, one can verify that 
\begin{align}
\label{lema1}
\begin{split}
&  \frac{\xi (\xi-\eta)}{|\xi-\eta|^{1/2}} \Big(\frac{|\xi|}{|\eta|} - 1 \Big) \chi(\xi-\eta,\eta)
  = O \big( {|\xi-\eta|}^{3/2} \mathbf{1}_{[2^8,\infty)} (|\eta|/|\xi-\eta|)  \big) ,
\\
& \frac{\eta (\xi-\eta)}{|\xi-\eta|^{1/2}} \Big(1 - \frac{|\xi|^{1/2}}{|\eta|^{1/2}} \Big) \chi(\xi-\eta,\eta)
  = O \big( |\xi-\eta|^{3/2} \mathbf{1}_{[2^8,\infty)} (|\eta|/|\xi-\eta|)  \big) ,
\\
& \frac{|\xi-\eta|^2 |\xi|^{1/2}}{|\eta|} \chi(\xi-\eta,\eta)
  = O \big( {|\xi-\eta|}^2 |\eta|^{-1/2} \mathbf{1}_{[2^8,\infty)} (|\eta|/|\xi-\eta|)  \big) ,
\\
&  \frac{|\xi|^2-|\xi|^{1/2}|\eta|^{3/2}}{|\eta|} |\xi-\eta|^{1/2} \chi(\xi-\eta,\eta)
  = O \big( |\xi-\eta|^{3/2} \mathbf{1}_{[2^8,\infty)} (|\eta|/|\xi-\eta|)  \big)
\\
&  \frac{|\xi|^2 + |\xi|^{1/2}|\eta|^{3/2}}{|\eta|} |\xi-\eta|^{1/2} \varphi_{\leq 0}(\eta) \chi(\xi-\eta,\eta)
  = O \big( |\xi| |\xi-\eta|^{1/2} \mathbf{1}_{[2^8,\infty)} (|\eta|/|\xi-\eta|)  \mathbf{1}_{(0,2^3]} (|\xi|) \big) ,
\end{split}
\end{align}
where we are using the notation \eqref{Sinfty1}-\eqref{Onot1}.
Since the bound for $|\xi-\eta|^{3/2}$ is obvious,
using \eqref{lema1}, and inspecting the formulas \eqref{a_++}-\eqref{a_--}, one immediately obtains \eqref{bounda0}.
The bound \eqref{boundaeps_1+} follows from the first four identities in \eqref{lema1}.
\eqref{boundaeps_1->} follows directly from \eqref{bounda0}.

To prove \eqref{bounda/1} we notice first that
\begin{equation}
\label{phiab}
(a+b)^{3/2} - b^{3/2} - a^{3/2} \in [ab^{1/2}/4, 4ab^{1/2}] \qquad \text{if} \quad  0 \leq a \leq b.
\end{equation}
Therefore, using standard integration by parts, we see that
\begin{align}
\label{1/phi0}
& {\Big\| \frac{\varphi_k(\xi)\varphi_{k_1}(\xi-\eta)\varphi_{k_2}(\eta)}{|\xi|^{3/2} -|\xi-\eta|^{3/2} -|\eta|^{3/2}}
   \Big\|}_{S^\infty}
  \lesssim \frac{1}{2^{\min(k_1,k_2)} 2^{\max(k,k_2)/2}} , 
\end{align}
for all $k,k_1,k_2 \in \mathbb{Z}$.
In particular, whenever $k \geq k_1 + 3$, we have
\begin{align}
\label{1/phi1}
& {\Big\| \frac{\varphi_k(\xi)\varphi_{k_1}(\xi-\eta)\varphi_{k_2}(\eta)}{|\xi|^{3/2} -\eps_1|\xi-\eta|^{3/2} -|\eta|^{3/2}}
   \Big\|}_{S^\infty}
  \lesssim \frac{1}{2^{k_1} 2^{\max(k,k_2)/2}} , 
\\
\label{1/phi2}
& {\Big\| \frac{\varphi_k(\xi)\varphi_{k_1}(\xi-\eta)\varphi_{k_2}(\eta)}{|\xi|^{3/2} -\eps_1|\xi-\eta|^{3/2} + |\eta|^{3/2}}
  \Big\|}_{S^\infty}
  \lesssim \frac{1}{2^{3\max(k,k_2)/2}} .
\end{align}
Thus, \eqref{boundaeps_1+} and \eqref{1/phi1} give the first inequality in \eqref{bounda/1}. 
The second inequality in \eqref{bounda/1} is a consequence of \eqref{boundaeps_1->} and \eqref{1/phi2}.

To prove \eqref{boundadiff} we write down the four components of the symbols,
\begin{align}
\label{lema5}
\begin{split}
a_1(\xi,\eta) & := \frac{\xi (\xi-\eta)}{|\xi-\eta|^{1/2}} \Big(\frac{|\xi|}{|\eta|} - 1 \Big) ,
\qquad
a_2(\xi,\eta) := |\xi-\eta|^{3/2} ,
\\
a_3(\xi,\eta) & := \frac{\eta (\xi-\eta)}{|\xi-\eta|^{1/2}} \Big(1 - \frac{|\xi|^{1/2}}{|\eta|^{1/2}} \Big) ,
\qquad
a_4(\xi,\eta) := \frac{|\xi-\eta|^2 |\xi|^{1/2}}{|\eta|} .
\end{split}
\end{align}
Notice that the last component in the formulas \eqref{a_++} and \eqref{a_-+} has been disregarded, since
we are only interested in the case $k_1 \geq 5$ in \eqref{boundadiff}.
It suffices to prove that
\begin{align}
\label{lema7}
\begin{split}
{\| \alpha_j(\xi,\eta,\rho)\cdot \varphi_{k_1}(\xi)\varphi_{k_2}(\eta)\varphi_{k_3}(\rho-\xi)\varphi_{k_4}(\rho-\eta)\|}_{S^\infty} \lesssim
  (2^{5k_3/2} + 2^{5k_4/2}) {(2^{k_1} + 2^{k_2})}^{-1},
\\
\mbox{for} \quad \alpha_j(\xi,\eta,\rho) := a_j(\xi,\rho) - a_j(\xi+\eta-\rho,\eta) , \qquad j=1,\dots,4 ,
\end{split}
\end{align}
whenever $k_3 \leq k_1-4$ and $k_4\leq k_2-4$. By Taylor expansion, one easily sees that for $4|\xi-\eta| \leq |\eta|$
\begin{align*}
a_1(\xi,\eta) = |\xi-\eta|^{3/2}+ O \big( |\xi-\eta|^{5/2} |\eta|^{-1} \big), \quad a_2(\xi,\eta) = |\xi-\eta|^{3/2}\\
a_3(\xi,\eta) = -\frac{1}{2} |\xi-\eta|^{3/2} + O \big( |\xi-\eta|^{5/2} |\eta|^{-1} \big),\quad a_4(\xi,\eta) = |\xi-\eta|^{2} |\eta|^{-1/2} + O \big( |\xi-\eta|^{3} |\eta|^{-3/2} \big).
\end{align*}
The desired conclusions in \eqref{lema7} follow.
\end{proof}

We consider now the symbols $b_{\pm\pm}$. Recall that $\widetilde{\chi}(x,y) = 1 - \chi(x,y) - \chi(y,x)$.

\begin{lemma}\label{lemboundb}
With $b_{\eps_1\eps_2}$ as \eqref{b++}--\eqref{b--} and \eqref{m_2q_2}, we have
\begin{align}
\label{boundb1}
& {\| b_{\eps_1\eps_2}^{k,k_1,k_2}  \|}_{S^\infty}
  \lesssim 2^{3k/2} \mathbf{1}_{\mathcal{X}}(k,k_1,k_2) \mathbf{1}_{[-15,15]}(k_1 - k_2) ,
\end{align}
and
\begin{align}
\label{boundb/1}
& {\Big\| \frac{ b_{\eps_1\eps_2}^{k,k_1,k_2} }{ |\xi|^{3/2} - \eps_1|\xi-\eta|^{3/2} - \eps_2|\eta|^{3/2} } \Big\|}_{S^\infty}
  \lesssim 2^{k/2} 2^{-k_1/2} \mathbf{1}_{\mathcal{X}}(k,k_1,k_2) \mathbf{1}_{[-15,15]}(k_1 - k_2) ,
\end{align}
\end{lemma}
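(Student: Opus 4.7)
\textbf{Proof plan for Lemma \ref{lemboundb}.} The cutoff $\widetilde{\chi}(\xi-\eta,\eta) = 1 - \chi(\xi-\eta,\eta) - \chi(\eta,\xi-\eta)$ is, by construction, supported where $2^{-3} \leq |\xi-\eta|/|\eta| \leq 2^{3}$, which yields the factor $\mathbf{1}_{[-15,15]}(k_1 - k_2)$ in both assertions. On this region, the triangle inequality gives $|\xi| \leq 2|\eta| \approx 2^{k_1}$, so in particular $k \leq k_1 + O(1)$, which together with the trivial constraint $(k,k_1,k_2) \in \mathcal{X}$ completely determines the geometry of the support.

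To prove \eqref{boundb1}, I would split $b_{\eps_1\eps_2}$ into its two components (coming from $\widetilde{m}_2$ and $\widetilde{q}_2$) and exploit their null structures. Writing $\widetilde{m}_2 = \widetilde{\chi}\,[\xi(\xi-\eta) - |\xi||\xi-\eta|]$, one sees that $\widetilde{m}_2 \equiv 0$ when $\xi$ and $\xi-\eta$ share a sign, and $\widetilde{m}_2 = -2|\xi||\xi-\eta|\widetilde{\chi}$ otherwise; on this non-vanishing branch one has $|\eta| = |\xi| + |\xi-\eta|$, so $|\xi| \leq |\xi-\eta|$, i.e.\ $k \leq k_1 + O(1)$, and the pointwise size of the first component is $|\xi|^2|\xi-\eta|^{1/2}/|\eta| \lesssim 2^{2k-k_1/2} \leq 2^{3k/2}$. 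Analogously, $\widetilde{q}_2 = \widetilde{\chi}\,[\eta(\xi-\eta)+|\eta||\xi-\eta|]/2$ equals $|\eta||\xi-\eta|\widetilde{\chi}$ when $\eta$ and $\xi-\eta$ share a sign and vanishes otherwise; on the non-vanishing branch $|\xi| = |\eta| + |\xi-\eta| \approx 2^{k_1}$, so $k \approx k_1 \approx k_2$, and $|\xi|^{1/2}|\widetilde{q}_2|/(|\xi-\eta|^{1/2}|\eta|^{1/2}) \approx 2^{3k/2}$. To upgrade these pointwise estimates to $S^\infty$ bounds, I would further decompose each dyadic cutoff by smooth sign-indicators $\varphi_k^{\pm}(\xi) := \varphi_k(\xi)\mathbf{1}_{\pm \xi > 0}$ (which are smooth bump functions since $\varphi_k$ vanishes near the origin); on each sign branch the null terms are either identically zero or agree with smooth, fully-homogeneous symbols of order $2^{3k/2}$ in the variables $(\xi, \xi-\eta, \eta)$, which are easily seen to satisfy $\|\cdot\|_{S^\infty} \lesssim 2^{3k/2}$ by routine integration by parts.

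For \eqref{boundb/1} I would combine \eqref{boundb1} with a lower bound on the phase $\Phi_{\eps_1\eps_2} := |\xi|^{3/2} - \eps_1|\xi-\eta|^{3/2} - \eps_2|\eta|^{3/2}$ on the support $|\xi-\eta| \approx |\eta|$, $k \leq k_1 + O(1)$. For $\eps_1 = \eps_2 = +$, \eqref{1/phi0} delivers $\|\varphi_k\varphi_{k_1}\varphi_{k_2}/\Phi_{++}\|_{S^\infty} \lesssim 2^{-k_1}2^{-k_1/2}$, so that the product with \eqref{boundb1} gives $2^{3k/2 - 3k_1/2} \leq 2^{k/2-k_1/2}$. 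For the remaining sign patterns, a Taylor expansion yields $\eps_1|\xi-\eta|^{3/2} + \eps_2|\eta|^{3/2} = (\eps_1+\eps_2)|\eta|^{3/2} + O(|\xi|\cdot 2^{k_1/2})$, so that when $\eps_1\eps_2 = -$ the phase is bounded below by $c\cdot 2^{k_1/2}(|\xi-\eta|-|\eta|)$ up to $|\xi|^{3/2}$ corrections, giving $|\Phi| \gtrsim 2^{k_1/2}\cdot 2^k$; after sign-branch decomposition, $1/\Phi$ is smooth on the support and satisfies $\|\cdot\|_{S^\infty} \lesssim 2^{-k}2^{-k_1/2}$. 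Multiplying by \eqref{boundb1} yields the desired bound $2^{k/2}2^{-k_1/2}$ in every case.

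The main technical obstacle will be the last step: verifying that the reciprocal of the phase $1/\Phi_{\eps_1\eps_2}$ is controlled in $S^\infty$ (not merely pointwise) in the regime $k \ll k_1 \approx k_2$ for the "mixed-sign" cases $(\eps_1\eps_2)\in\{(+-),(-+),(--)\}$, where neither \eqref{1/phi0} nor \eqref{phiab} applies directly. This will require a careful expansion of the phase, after splitting $\varphi_{k_1}$ and $\varphi_{k_2}$ into positive/negative sign branches so that each $|\cdot|^{3/2}$ becomes a smooth homogeneous function on the relevant branch; the remaining quotient can then be estimated via rescaling to the case $k_1 = 0$ and a direct bound on a smooth symbol, with explicit control of derivatives up to any desired order.
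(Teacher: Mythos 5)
Your proposal is correct and follows essentially the same route as the paper: the null structure of $\widetilde{m}_2$ and $\widetilde{q}_2$ together with the support constraints $k_1\approx k_2$ and $k\leq k_1+O(1)$ give \eqref{boundb1}, and \eqref{boundb/1} then follows by dividing by the phase, whose reciprocal is bounded in $S^\infty$ by $2^{-k}2^{-k_1/2}$ on this support (the paper simply cites \eqref{1/phi0} for this). You are right that the mixed-sign phases need an argument beyond \eqref{1/phi0}; just be aware that in the regime $k\approx k_1$ your ``main term'' $2^{k_1/2}(|\xi-\eta|-|\eta|)$ can vanish while the phase does not, so there the lower bound $|\Phi_{\eps_1\eps_2}|\gtrsim 2^{3k/2}\approx 2^{k}2^{k_1/2}$ should instead come from homogeneity plus the observation that $\Phi_{\eps_1\eps_2}$ vanishes only when one of the three frequencies $\xi,\xi-\eta,\eta$ does --- which is exactly the care you flag in your final paragraph.
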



\begin{proof}
Inspecting the formula \eqref{m_2q_2} it is easy to see that
\begin{align*}
\widetilde{m}_2(\xi,\eta) & = O\big( |\xi| |\xi-\eta|
  \mathbf{1}_{[2^{-13},2^{13}]}(|\xi-\eta|/|\eta|) \big),
\end{align*}
and therefore
\begin{align*}
\frac{|\xi| \widetilde{m}_2(\xi,\eta)}{|\xi-\eta|^{1/2} |\eta|}
  & = O\big( |\xi|^{3/2} \mathbf{1}_{[2^{-13},2^{13}]}(|\xi-\eta|/|\eta|) \big),
\end{align*}
which is consistent with the bound in \eqref{boundb1}. Similarly
\begin{align*}
\frac{|\xi|^{1/2} \widetilde{q}_2(\xi,\eta)}{|\xi-\eta|^{1/2} |\eta|^{1/2}}
& = O\big( |\xi|^{3/2} \mathbf{1}_{[2^{-13},2^{13}]}(|\xi-\eta|/|\eta|) \big) .
\end{align*}
The desired conclusion \eqref{boundb1} follows from these bounds and the formulas \eqref{b++}-\eqref{b--}.
The bounds in \eqref{boundb/1} follow from \eqref{boundb1} and \eqref{1/phi0}.
\end{proof}

\section{The Dirichlet-Neumann operator}\label{secDN}

Recall the spaces $\mathcal{C}_0$, $\dot{H}^{N,b}$, $\dot{W}^{N,b}$,
 and $\widetilde{W}^N$ defined in \eqref{normC} and \eqref{norms0}. Assume in this section that $h\in C\big([0,T]:\mathcal{C}_0\cap {\dot{H}^{N_0+1,1/2+p_1}}\big)$ satisfies the bounds
\begin{equation}\label{ra1}
\begin{split}
\big\|h(t)\big\|_{\dot{H}^{N_0+1,1/2+p_1,}}&\lesssim\e_1\langle t\rangle^{p_0},\\
\big\|h(t)\big\|_{\dot{W}^{N_2+1,9/10}}&\lesssim\e_1\langle t\rangle^{-1/2},\\
\big\|Sh(t)\big\|_{\dot{H}^{N_1+1,1/2+p_1}}&\lesssim\e_1\langle t\rangle^{4p_0},
\end{split}
\end{equation}
for any $t\in[0,T]$, which follow from the bootstrap assumption \eqref{bing1}. For $\alpha\in[-1,1]$ let
\begin{equation}\label{ra28}
\mathcal{E}^\alpha_{w,T}:=\Big\{f\in C([0,T]:\dot{H}^{N_0+\alpha,p_1}):\,\|f\|_{\mathcal{E}^\alpha_{w,T}}:=\sup_{t\in[0,T]}\|f(t)\|_{\mathcal{E}^\alpha_{w}}\Big\},
\end{equation}
where
\begin{equation}\label{ra28.5}
\begin{split}
\|f(t)\|_{\mathcal{E}^\alpha_{w}}:&=\langle t\rangle^{-p_0}\|f(t)\|_{\dot{H}^{N_0+\alpha,p_1}}+\langle t\rangle^{1/2}\big\|f(t)\big\|_{\dot{W}^{N_2+\alpha,2/5}}+\langle t\rangle^{-4p_0}\|Sf(t)\|_{\dot{H}^{N_1+\alpha,p_1}}.
\end{split}
\end{equation}

Let $G(h)$ denote the Dirichlet-Neumann operator defined in \eqref{defG0}. Our main result in this section is the following paralinearization of the operator $G(h)$:

\begin{proposition}\label{ra102}
Assume $\phi\in \mathcal{E}^{1/2}_{w,T}$ and $h$ satisfies \eqref{ra1}, and define 
\begin{equation}\label{zxc1}
B := \frac{G(h)\phi + h_x\phi_x}{1+h_x^2} , \qquad V :=\frac{\phi_x-h_xG(h)\phi}{1+h_x^2}.
\end{equation}
Then, for any $f\in\{\phi_x,G(h)\phi\}$ and $t\in[0,T]$,
\begin{equation}\label{ra43}
\langle t\rangle^{-p_0}\|f\|_{\dot{H}^{N_0-1/2,-1/10}}+\langle t\rangle^{1/2}\|f\|_{\dot{W}^{N_2-1/2,-1/10}} 
  + \langle t\rangle^{-4p_0}\|Sf\|_{\dot{H}^{N_1-1/2,-1/10}}\lesssim \|\phi\|_{\mathcal{E}^{1/2}_{w,T}}.
\end{equation}
Moreover, we have the decomposition
\begin{equation}\label{zxc2}
G(h)\phi=|\partial_x|\phi-|\partial_x|T_Bh-\partial_xT_Vh+G_2(h,\phi)+G_{\geq 3},
\end{equation}
where
\begin{equation}\label{na72}
\widehat{G_2(h,\phi)}(\xi)=\frac{1}{2\pi}\int_{\mathbb{R}}\widehat{h}(\eta)\widehat{\phi}(\xi-\eta)\big[1-\chi(\xi-\eta,\eta)\big]\big[\xi(\xi-\eta)
  -|\xi||\xi-\eta|\big]\,d\eta,
\end{equation}
and, for any $t\in[0,T]$,
\begin{equation}\label{ra73}
\langle t\rangle^{1-p_0}\|G_{\geq 3}(t)\|_{H^{N_0+1}}+\langle t\rangle^{11/10}\|G_{\geq 3}\|_{\widetilde{W}^{N_2+1}}
  +\langle t\rangle^{1-4p_0}\|SG_{\geq 3}(t)\|_{H^{N_1+1}}\lesssim \e_1^2\|\phi\|_{\mathcal{E}^{1/2}_{w,T}}.
\end{equation}
\end{proposition}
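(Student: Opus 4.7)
\medskip

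\textbf{Proof plan for Proposition \ref{ra102}.}

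The plan is to follow the Alazard--M\'etivier/Alazard--Burq--Zuily paralinearization scheme for the Dirichlet--Neumann operator, but adapted so that all estimates depend only on the homogeneous norms $\|h\|_{\dot{H}^{N_0+1,1/2+p_1}}$ and $\|\phi\|_{\mathcal{E}^{1/2}_w}$, without ever using $\|h\|_{L^2}$. As a preliminary step I would first establish the \emph{a priori} linear bounds \eqref{ra43}: these follow from the representation $G(h)\phi = \Phi_y - h_x \Phi_x$ (at $y=h$), once one has elliptic bounds for the harmonic extension $\Phi$ of $\phi$ into $\Omega_t = \{y<h(x,t)\}$. To make the elliptic theory work in our function spaces, I would extend $\phi$ by convolution with a Poisson-type kernel away from the interface to produce an approximate harmonic extension, then correct it by solving a variable-coefficient elliptic problem for the error; the low-frequency norm $|\partial_x|^{p_1}\phi\in L^2$ is enough to run this construction and control $\Phi_x$, $\Phi_y$ in $L^\infty_y$-based spaces. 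The $S$-weighted bounds follow by commuting $S = (3/2)t\partial_t+x\partial_x$ with the elliptic problem, using that the relevant coordinate change intertwines $S$ with a genuine scaling vector field on the flattened strip.

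For the paralinearization \eqref{zxc2}, I would flatten the domain by the change of variables $\widetilde{y} = y - h(x)$, mapping $\Omega_t$ to $\{\widetilde{y}<0\}$. The harmonic extension $\Phi$ then satisfies, in the new coordinates, a uniformly elliptic equation
\begin{equation*}
(1+h_x^2)\partial_{\widetilde{y}}^2 \Phi + \partial_x^2 \Phi - 2 h_x \partial_x \partial_{\widetilde{y}} \Phi - h_{xx}\partial_{\widetilde{y}} \Phi = 0,
\end{equation*}
and $G(h)\phi = (1+h_x^2)\partial_{\widetilde{y}}\Phi|_{\widetilde{y}=0} - h_x \partial_x \Phi|_{\widetilde{y}=0}$. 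Paralinearizing each coefficient using the Bony decomposition \eqref{on7}, one replaces the multiplication operators by paradifferential operators of symbol $1+T_{2h_x}$ etc., up to smooth bilinear remainders. The resulting paradifferential equation $\mathcal{P}\Phi = R_\phi$ can then be factored via Alazard--M\'etivier symbolic calculus as $\mathcal{P} = (\partial_{\widetilde{y}} - T_A)(\partial_{\widetilde{y}} + T_a) + $ smoothing, where $a(x,\xi) = |\xi| + \text{(subprincipal)}$ is the symbol whose restriction to the boundary produces $|\partial_x|\omega - \partial_x T_V h$ after the shift from $\phi$ to the ``good unknown'' $\omega = \phi - T_B P_{\geq 1}h$. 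Tracking only the terms linear in $h$ in the paralinearization, one recovers the explicit quadratic term \eqref{na72}: the cutoff $1-\chi(\xi-\eta,\eta)$ appears precisely because the $\mathrm{Low}\times\mathrm{High}$ portion of $h\cdot\phi_x$ and $h\cdot|\partial_x|\phi$ is absorbed into the paraproducts $T_V h$ and $T_B h$.

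For the cubic remainder bound \eqref{ra73}, I would collect all errors from three sources: (i) the Bony remainders $R(a,b)$ and $R_F(a)$ in \eqref{on7}, which are quadratic in $h$ times derivatives of $\phi$ and gain regularity; (ii) the symbolic-calculus error in the factorization of $\mathcal{P}$, which is cubic because it involves at least two factors of the paradifferential symbols of order $\leq 1$ built from $h$ and one factor of $\phi$; (iii) the commutator errors from the change of coordinates near $\widetilde{y}=0$. Each of these is handled by writing it as a sum of (possibly multilinear) operators with symbols in $S^\infty$ and applying Lemma \ref{touse} together with the a priori bounds \eqref{ra1} on $h$ and $Sh$, and the bounds on $\phi$ from $\mathcal{E}^{1/2}_{w,T}$. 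The $S$-weighted control is obtained by the commutation identity \eqref{lemcomm}, exploiting that the multipliers produced by $S$ differentiating the symbols satisfy the same $S^\infty$ estimates by homogeneity.

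The main obstacle will be (iii): at low frequencies, standard paralinearization theorems require $h\in H^{s}$ for $s$ large, but here we only control $|\partial_x|^{1/2+p_1}h\in L^2$, so we lack $\|h\|_{L^2}$ itself. This forces us to rewrite every occurrence of ``multiplication by $h$'' as ``multiplication by $|\partial_x|^{-1/2-p_1}\cdot|\partial_x|^{1/2+p_1}h$'' and check that the resulting symbols, including those inside the factorization of $\mathcal{P}$, still satisfy $S^\infty$ bounds with only low-frequency loss of the form $2^{-k_{\mathrm{low}}(1/2+p_1)}$ that is absorbed by the loss from $|\partial_x|^{1/2+p_1}h$. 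In particular, one must verify that in the formula for $G_2$ the cutoff $1-\chi(\xi-\eta,\eta)$ together with the factor $\xi(\xi-\eta)-|\xi||\xi-\eta|$ produces a symbol with enough decay at low output frequency to compensate, since $\xi(\xi-\eta) - |\xi||\xi-\eta|$ vanishes when $\xi$ and $\xi-\eta$ have the same sign. The full details of the resulting multilinear $S^\infty$ bookkeeping, together with the $S$-commutation, are what drive the proof.
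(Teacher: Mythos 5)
Your plan is a genuinely different route from the paper's. You propose the flattening/paradifferential scheme of Alazard--M\'etivier and Alazard--Burq--Zuily (change of variables $\widetilde y=y-h$, paralinearization of the elliptic equation, factorization $(\partial_{\widetilde y}-T_A)(\partial_{\widetilde y}+T_a)$, good unknown), which is essentially the Alazard--Delort derivation of \eqref{zxc2}. The paper instead works with the boundary integral formulation: it introduces the perturbed Hilbert transform $\mathcal{H}_\gamma$ along the curve $\gamma(x)=x+ih(x)$, constructs the harmonic conjugate $\psi$ of $\phi$ from $(I-\mathcal{H}_\gamma)(\phi+i\psi)=0$ so that $G(h)\phi=-\psi_x$, expands $\mathcal{H}_\gamma=H_0-T_1+iT_2$ with $T_1=\sum(-1)^nR_{2n}$, $T_2=\sum(-1)^nR_{2n+1}$, and computes the Fourier multipliers $M_{n+1}(\xi;\eta_1,\dots,\eta_{n+1})$ of each $R_n$ explicitly (formulas \eqref{na13.3}, \eqref{na15}, \eqref{na19}). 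The decomposition \eqref{zxc2} then drops out of exact algebraic identities ($V+iH_0B=-T_1V+T_2B$, $-B+iH_0V=T_2V+T_1B$, and the cancellation $p(\xi,\eta)=i\xi[1-\chi][1-\mathrm{sgn}(\xi)\mathrm{sgn}(\xi-\eta)]$ acting on $V-\phi_x=-h_xB$, which makes the residual term automatically cubic), and \eqref{ra73} follows from the kernel bounds on $R_n$ proved by induction in Lemma \ref{na149}. What the paper's method buys is exactly what the proposition is for: the multilinear symbols are explicit, so the low-frequency null structure (e.g.\ $M_1(\xi;\eta_1)=\pi(\xi-\eta_1)[\mathrm{sgn}(\xi)-\mathrm{sgn}(\xi-\eta_1)]$, and $M_{n+1}=0$ when $|\eta_1|+\cdots+|\eta_{n+1}|\le|\xi|$) can be read off and the bounds close under only the homogeneous assumptions \eqref{ra1}, with no recourse to $\|h\|_{L^2}$ or $\|\phi\|_{L^2}$.

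This points to the one place where your plan has a real gap rather than just unexecuted routine work. You correctly identify that the standard paradifferential machinery presupposes $h\in H^s$ and $\phi\in H^{s'}$ including the $L^2$ level, and that here only $|\partial_x|^{1/2+p_1}h$ and $|\partial_x|^{p_1}\phi$ are in $L^2$. But your proposed remedy --- inserting $|\partial_x|^{-1/2-p_1}|\partial_x|^{1/2+p_1}$ in front of every occurrence of $h$ and hoping the resulting symbols still lie in $S^\infty$ with an absorbable low-frequency loss --- is precisely the step that is not justified and is where the difficulty of the proposition lives. In the factorization of $\mathcal{P}$ and in the elliptic estimates for the corrector in the strip, one needs control of the trace of $\Phi$ and of $\nabla_{x,\widetilde y}\Phi$ in $L^\infty_{\widetilde y}$-based spaces down to $\widetilde y=0$, and the quadratic and cubic remainders contain genuine products $h\cdot(\cdot)$ (not just $h_x\cdot(\cdot)$), for instance in $T_Bh$, $T_Vh$ and in $G_2(h,\phi)$ itself; whether the sign cancellation in $\xi(\xi-\eta)-|\xi||\xi-\eta|$ and its analogues at every order of the symbolic calculus supply enough low-frequency decay is exactly the content of the claim, not a bookkeeping afterthought. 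Similarly, the assertion that $S$ ``intertwines with a genuine scaling vector field on the flattened strip'' ignores that the flattening itself depends on $h(x,t)$, so commuting $S$ through generates terms in $Sh$ inside the elliptic coefficients that must be re-estimated. None of this is fatal in principle (Alazard--Delort carried out a version of this program in inhomogeneous spaces), but as written the plan defers, rather than resolves, the specific point that distinguishes Proposition \ref{ra102} from the existing paralinearization results.
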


The rest of this section is concerned with the proof of this proposition. We will need several intermediate results. To estimate products we often use the following simple general lemma:

\begin{lemma}\label{algebra}
Assume $a_0,a_2\in[1/100,100]$, $A_0,A_2\in(0,\infty)$, and $f,g\in L^2(\mathbb{R})$ satisfy
\begin{equation}\label{na60}
A_0^{-1}\big(\|f\|_{H^{a_0}}+\|g\|_{H^{a_0}}\big)+A_2^{-1}\big(\|f\|_{\widetilde{W}^{a_2}}+\|g\|_{\widetilde{W}^{a_2}}\big)\leq 1.
\end{equation}
Then
\begin{equation}\label{na61}
A_0^{-1}\|fg\|_{H^{a_0}}+A_2^{-1}\|fg\|_{\widetilde{W}^{a_2}}\lesssim A_2.
\end{equation}
\end{lemma}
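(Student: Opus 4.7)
The plan is to prove this product estimate by a standard paraproduct decomposition $fg = T_f g + T_g f + R(f,g)$, followed by frequency-by-frequency $L^2$/$L^\infty$ estimates. Since the conclusion $A_0^{-1}\|fg\|_{H^{a_0}} + A_2^{-1}\|fg\|_{\widetilde{W}^{a_2}} \lesssim A_2$ splits into the two bounds $\|fg\|_{H^{a_0}} \lesssim A_0 A_2$ and $\|fg\|_{\widetilde{W}^{a_2}} \lesssim A_2^2$, I would treat these separately.

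For the $H^{a_0}$ bound, the two paraproduct pieces $T_f g$ and $T_g f$ are easy: since $T_a b$ essentially localizes $b$ at a frequency $\gg$ that of $a$, one has the pointwise-in-frequency estimate $\|P_k T_f g\|_{L^2} \lesssim \|f\|_{L^\infty}\, \|P'_k g\|_{L^2}$, so that $\|T_f g\|_{H^{a_0}} \lesssim \|f\|_{L^\infty}\,\|g\|_{H^{a_0}} \leq A_2 A_0$, and symmetrically for $T_g f$. For the remainder $R(f,g)$, which is the sum over $(k_1,k_2)$ with $|k_1-k_2|\leq 10$, I would write
\[
\|P_k R(f,g)\|_{L^2} \lesssim \sum_{k_1 \geq k-20} \|P_{k_1} f\|_{L^\infty}\, \|P'_{k_1} g\|_{L^2},
\]
and then use $\|P_{k_1} f\|_{L^\infty} \lesssim \min(\|f\|_{L^\infty}, 2^{-a_2 k_1}\|f\|_{\widetilde{W}^{a_2}}) \lesssim A_2 \cdot 2^{-a_2 k_1^+}$ (which uses $a_2 > 0$). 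Multiplying by $2^{a_0 k}$ and taking the $\ell^2_k$ norm, the sum converges because $a_2 > 1/100 > 0$ and $a_0$ is bounded, yielding $\|R(f,g)\|_{H^{a_0}} \lesssim A_2 A_0$.

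For the $\widetilde{W}^{a_2}$ bound, the pointwise part $\|fg\|_{L^\infty} \leq \|f\|_{L^\infty}\|g\|_{L^\infty} \leq A_2^2$ is immediate. For the high-frequency sum, I decompose $P_k(fg) = \sum_{k_1,k_2} P_k(P_{k_1} f\, P_{k_2} g)$ for $k \geq 0$; the support condition forces $\max(k_1,k_2) \geq k - 5$, and I estimate
\[
\|P_k(P_{k_1} f\, P_{k_2} g)\|_{L^\infty} \lesssim \|P_{k_1} f\|_{L^\infty}\, \|P_{k_2} g\|_{L^\infty},
\]
again using $\|P_l f\|_{L^\infty} \lesssim A_2 \cdot 2^{-a_2 l^+}$ (and analogously for $g$), together with the trivial bound $\|P_l f\|_{L^\infty} \lesssim \|f\|_{L^\infty} \leq A_2$ for $l \leq 0$. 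Summing over $(k_1,k_2)$ in the three regions (LH, HL, HH) and then weighting by $2^{a_2 k}$ and summing in $k \geq 0$ gives $\sum_{k\geq 0} 2^{a_2 k}\|P_k(fg)\|_{L^\infty} \lesssim A_2^2$, since $a_2 > 0$ makes every geometric series convergent.

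I don't expect a real obstacle here: the argument is essentially a textbook paraproduct estimate, and all Sobolev/Besov sums are controlled by the strict positivity of the regularity indices $a_0, a_2 \geq 1/100$. The one small point to check carefully is that $\widetilde{W}^N$ only weights frequencies $k\geq 0$ and includes a plain $L^\infty$ term, so bounding the low-frequency part of $fg$ is done solely with $L^\infty \times L^\infty$, while the high-frequency pieces exploit the dyadic decay $2^{-a_2 k_1^+}$ inherent in the $\widetilde{W}^{a_2}$ norm. Both pieces produce a factor of $A_2^2$, matching the $\widetilde{W}^{a_2}$ side of the claimed bound.
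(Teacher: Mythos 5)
Your argument is correct and is essentially the paper's own proof: the paper likewise bounds $\|fg\|_{L^\infty}\lesssim A_2^2$ and $\|fg\|_{L^2}\lesssim A_0A_2$ for the low frequencies, and for $k\geq 0$ uses $\|P_k(fg)\|_{L^p}\lesssim A_2\sum_{k'\geq k-4}(\|P_{k'}f\|_{L^p}+\|P_{k'}g\|_{L^p})$, $p\in\{2,\infty\}$, i.e.\ the same dyadic decomposition with the dominant-frequency factor in $L^p$ and the other in $L^\infty$, the sums converging by $a_0,a_2\geq 1/100$. Just make sure that in the LH/HL regions of your $\widetilde{W}^{a_2}$ estimate you group the low-frequency factor as $P_{\leq k_1-10}$ (as your paraproduct setup already does) rather than summing $\|P_{k_2}g\|_{L^\infty}\leq A_2$ over all $k_2\leq 0$, which would diverge term by term.
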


\begin{proof}
Clearly,
\begin{equation*}
\|fg\|_{L^\infty}\lesssim A_2^2,\qquad \|fg\|_{L^2}\lesssim A_0A_2.
\end{equation*}
Moreover, for any $k\geq 0$ and $p\in\{2,\infty\}$
\begin{equation*}
\|P_k(fg)\|_{L^p}\lesssim A_2\sum_{k'\geq k-4}\big[\|P_{k'}f\|_{L^p}+\|P_{k'}g\|_{L^p}\big].
\end{equation*}
The desired estimate follows.
\end{proof}

\subsection{The perturbed Hilbert transform and proof of Proposition \ref{ra102}} With $h$ as in \eqref{ra1} let
\begin{equation}\label{na2}
\gamma(x):=x+ih(x)\qquad\text{ and }\qquad\Omega:=\{x+iy\in\mathbb{C}:y\leq h(x)\}.
\end{equation}
Let $D:=\|h\|_{L^\infty}$. The finiteness of the (large) constant $D$ is used to justify the convergence of integrals and some
identities, but $D$ itself does not appear in the main quantitative bounds.

For any $f\in L^2(\mathbb{R})$ we define the {\it{perturbed Hilbert transform}}
\begin{equation}\label{na3}
\begin{split}
&(\mathcal{H}_\gamma f)(\alpha):=\frac{1}{\pi i}\mathrm{p.v.} \int_{\mathbb{R}}
\frac{f(\beta)\gamma'(\beta)}{\gamma(\alpha)-\gamma(\beta)}\,d\beta:=\lim_{\eps\to 0}(\mathcal{H}^\eps_\gamma f)(\alpha),\\
&(\mathcal{H}^\eps_\gamma f)(\alpha):=\frac{1}{\pi i}\int_{|\beta-\alpha|\geq\eps}
\frac{f(\beta)\gamma'(\beta)}{\gamma(\alpha)-\gamma(\beta)}\,d\beta.
\end{split}
\end{equation}
For any $z\in\Omega$ and $f\in L^2(\mathbb{R})$ we define
\begin{equation}\label{na4}
F_f(z):=\frac{1}{2\pi i}\int_{\mathbb{R}}
\frac{f(\beta)\gamma'(\beta)}{z-\gamma(\beta)}\,d\beta.
\end{equation}
Clearly, $F_f$ is an analytic function in $\Omega$. For $\eps>0$ let
\begin{equation}\label{na5}
(\mathcal{T}_\gamma^\eps f)(\alpha):=F_f(\gamma(\alpha)-i\eps)=\frac{1}{2\pi i}\int_{\mathbb{R}}
\frac{f(\beta)\gamma'(\beta)}{\gamma(\alpha)-i\eps-\gamma(\beta)}\,d\beta.
\end{equation}
By comparing with the unperturbed case $h=0$, it is easy to verify that, for any $p\in (1,\infty)$,
\begin{equation}\label{na6}
\big\|\sup_{\eps\in(0,\infty)}|(\mathcal{H}^\eps_\gamma f)(\alpha)|\big\|_{L^p_\alpha}
+\big\|\sup_{\eps\in(0,\infty)}|(\mathcal{T}^\eps_\gamma f)(\alpha)|\big\|_{L^p_\alpha}\lesssim_{D,p} \|f\|_{L^p},
\end{equation}
and
\begin{equation}\label{na7}
\lim_{\eps\to 0}\mathcal{H}_\gamma^\eps f=\mathcal{H}_\gamma f,
\qquad \lim_{\eps\to 0}\mathcal{T}_\gamma^\eps f=\frac{1}{2}(I+\mathcal{H}_\gamma)f
\end{equation}
in $L^p$ for any $f\in L^p(\mathbb{R})$, $p\in(1,\infty)$.

The perturbed Hilbert transform can be used to derive explicit formulas for $G(h)\phi$ (see Lemma \ref{na101}). For this we will need a technical lemma:

\begin{lemma}\label{Wboundedness}
Given $h$ as in \eqref{ra1} and $n\in\mathbb{Z}_+$, we define the real operators
\begin{equation}\label{na8}
 (R_nf)(\alpha):=\frac{1}{\pi}\int_{\mathbb{R}}
\frac{h(\alpha)-h(\beta)-h'(\beta)(\alpha-\beta)}{\alpha-\beta}\Big(\frac{h(\alpha)-h(\beta)}{\alpha-\beta}\Big)^n\frac{f(\beta)}{\alpha-\beta}\,d\beta.
\end{equation}
Assume that $f\in \mathcal{E}^{-1}_{w,T}$ and $g\in\dot{W}^{N_2-1,b}$. There is a constant $C'\geq 1$ such that
\begin{equation}\label{ra10}
\begin{split}
&\langle t\rangle^{1/2-p_0}\|R_0f\|_{H^{N_0+1}}+\langle t\rangle\|R_0 f\|_{\dot{W}^{N_2+1,2/5}} 
  + \langle t\rangle^{1/2-4p_0} \|SR_0f\|_{H^{N_1+1}}\leq C'\e_1\|f\|_{\mathcal{E}^{-1}_{w,T}},
\\
& \langle t\rangle\|R_0 g\|_{\dot{W}^{N_2+1,b}}\leq C'\e_1\langle t\rangle^{1/2}\|g\|_{\dot{W}^{N_2-1,b}},\qquad b\in[1/100,2/5],
\end{split}
\end{equation}
for any $t\in[0,T]$. Moreover
\begin{equation}\label{ra11}
\begin{split}
\big\|R_1f\big\|_{H^{N_0+1}}&\leq (C'\e_1)^{2}\langle t\rangle^{p_0-1}\|f\|_{\mathcal{E}^{-1}_{w,T}},\\
\big\|R_1f\big\|_{\dot{W}^{N_2+1,-1/10}}&\leq (C'\e_1)^{2}\langle t\rangle^{-11/10}\|f\|_{\mathcal{E}^{-1}_{w,T}},\\
\big\|SR_1f\big\|_{H^{N_1+1}}&\leq (C'\e_1)^{2}\langle t\rangle^{4p_0-1}\|f\|_{\mathcal{E}^{-1}_{w,T}}.
\end{split}
\end{equation}
and, for any $n\geq 2$,
\begin{equation}\label{ra11.5}
\big\|R_nf\big\|_{H^{N_0+1}}+\big\|SR_nf\big\|_{H^{N_1+1}}\leq (C'\e_1)^{n+1}\langle t\rangle^{-5/4}\|f\|_{\mathcal{E}^{-1}_{w,T}}.
\end{equation}
\end{lemma}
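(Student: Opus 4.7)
The starting point is the Taylor identity
\begin{equation*}
\frac{h(\alpha)-h(\beta)-h'(\beta)(\alpha-\beta)}{\alpha-\beta}=(\alpha-\beta)\int_0^1(1-s)\,h''(\beta+s(\alpha-\beta))\,ds,
\end{equation*}
which eliminates the singularity in the definition of $R_n$ and absorbs one factor of $(\alpha-\beta)^{-1}$. Substituting also the analogous representation
\begin{equation*}
\frac{h(\alpha)-h(\beta)}{\alpha-\beta}=\int_0^1 h'(\beta+r(\alpha-\beta))\,dr,
\end{equation*}
one sees that $R_nf$ is actually an $(n+2)$-linear operator $R_nf=T_n(h,\ldots,h,h'',f)$ with a completely non-singular kernel; in particular $R_0f=T(h'',f)$ is a plain bilinear operator with an $S^\infty$ symbol of size $O(1)$.

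First I would treat $n=0$. Freezing the Taylor parameter $s\in[0,1]$ and taking a Fourier transform in $\alpha$, the bilinear operator $f\mapsto R_0f$ has symbol of the form $m(\xi,\eta)=\int_0^1(1-s)\chi_s(\xi-\eta)\,ds$ for suitable $\chi_s\in S^\infty$, uniformly in $s$. Hence by Lemma \ref{touse}(ii) one obtains $\|P_k R_0f\|_{L^p}\lesssim \sum_{k_1,k_2}\|P_{k_1}h''\|_{L^{p_1}}\|P_{k_2}f\|_{L^{p_2}}$ for any admissible Hölder triple. Combining this with the a priori bounds \eqref{ra1} on $h''$ (which follow from the $\dot{W}^{N_2+1,9/10}$ control with the sharp $t^{-1/2}$ decay) and with the definition \eqref{ra28.5} of $\|f\|_{\mathcal{E}^{-1}_{w}}$, I obtain the three bounds in \eqref{ra10} directly. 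The $S$-bound follows from the commutation identity \eqref{lemcomm}-\eqref{lemcomm2}, distributing $S$ across the two slots $h''$ and $f$ and noting that the commutator symbol satisfies the same $S^\infty$ estimate.

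For $n\geq 1$, the strategy is to peel off the $n$ extra copies of $(h(\alpha)-h(\beta))/(\alpha-\beta)$ by placing them in $L^\infty$. Since each such factor is pointwise bounded by $\|h'\|_{L^\infty}\lesssim\e_1\langle t\rangle^{-1/2}$ (using $N_2\geq 2$ and \eqref{ra1}), this costs exactly one factor of $\e_1\langle t\rangle^{-1/2}$ per additional $h$. The remaining $T(h'',f)$-piece is then handled exactly as in the $n=0$ case. This directly yields \eqref{ra11.5} for $n\geq 2$, where the geometric series in $\e_1$ converges trivially and the time decay is strictly better than $\langle t\rangle^{-5/4}$. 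The $S$-derivative is distributed by Leibniz over all $n+2$ slots, each resulting piece being estimated as above (with the $S$-derivative landing on one factor in $L^2$ and the remaining factors in $L^\infty$).

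The main obstacle is the sharp $n=1$ bound \eqref{ra11}, and in particular the $\dot{W}^{N_2+1,-1/10}$ estimate with its $\langle t\rangle^{-11/10}$ decay: here one must be genuinely trilinear in $(h,h'',f)$ and cannot afford any logarithmic loss. I would split the trilinear operator by Littlewood--Paley into the familiar $\mathrm{low}\times\mathrm{low}\times\mathrm{high}$, $\mathrm{low}\times\mathrm{high}\times\mathrm{high}$ and $\mathrm{high}\times\mathrm{high}\times\mathrm{low}$ frequency pieces, applying Lemma \ref{touse}(iii) in the $S^\infty$ class to each piece, with $h''$ estimated in $\dot{W}^{N_2+1,9/10}$ or $H^{N_0+1}$ as dictated by the frequency configuration, and $h'$ estimated in $L^\infty$ to provide the extra factor of $\e_1\langle t\rangle^{-1/2}$. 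Summing over the dyadic decomposition using Lemma \ref{algebra} (or direct $\ell^1$ summation in the low-frequency parameter) closes the estimate. The analogous $H^{N_0+1}$ and $SH^{N_1+1}$ bounds in \eqref{ra11} follow by the same dyadic decomposition, with $S$ distributed as before.
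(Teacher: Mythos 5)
Your Taylor-expansion starting point is fine as an identity, but the way you then package $R_0$ is where the argument breaks. For fixed $s$ the map $f\mapsto\int_{\mathbb R}h''(\beta+s(\alpha-\beta))f(\beta)\,d\beta$ is \emph{not} a bilinear multiplier with a uniformly bounded $S^\infty$ symbol (in Fourier variables it is a measure supported on the line $\zeta=-(1-s)\eta$); only the $s$-average is an honest bilinear operator, and its symbol is not of the form $\chi(\xi-\eta)$ but rather $\tfrac12(\xi-\eta)\bigl[\sgn(\xi)-\sgn(\xi-\eta)\bigr]$ as in \eqref{na16}. The point you lose by writing the symbol as a function of $\xi-\eta$ of size $O(1)$ is precisely the support property $\sgn(\xi)\neq\sgn(\xi-\eta)$, which forces $|\xi|\le|\eta|$ and $|\xi-\eta|\le|\eta|$, i.e.\ $h$ always carries the dominant frequency. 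This is not a technicality: the target norm $H^{N_0+1}$ is two derivatives \emph{above} what $f\in\mathcal{E}^{-1}_{w,T}$ provides ($f$ is only in $\dot H^{N_0-1,p_1}$), so in the frequency region where $f$ would dominate the output frequency no distribution of H\"older exponents can close the estimate. Your displayed bound $\|P_kR_0f\|_{L^p}\lesssim\sum_{k_1,k_2}\|P_{k_1}h''\|_{L^{p_1}}\|P_{k_2}f\|_{L^{p_2}}$, with an unrestricted sum, therefore cannot yield \eqref{ra10}; you must first extract the vanishing/support property of the symbol (this is exactly what the paper's explicit computation \eqref{na15}, and for $n\ge1$ the null condition \eqref{na202} that $M'_{n+1}$ vanishes when $|\eta_1|+\cdots+|\eta_{n+1}|\le|\xi|$, provides).

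The same issue defeats the ``peel off the extra difference quotients in $L^\infty$'' reduction for $n\ge1$. The factors $\bigl(h(\alpha)-h(\beta)\bigr)/(\alpha-\beta)$ are coupled to the rest of the kernel through the common variable $\beta$, so bounding them pointwise by $\|h'\|_{L^\infty}$ destroys the frequency bookkeeping; more importantly, the hard configuration for the $H^{N_0+1}$ bound is precisely the one in which \emph{one of these peeled-off copies of $h$ carries the top frequency} (which then also dominates the output frequency, by the null condition). In that case that copy must be placed in $L^2$ with all $N_0+1$ derivatives while everything else goes in $L^\infty$, so a uniform $L^\infty$ treatment of the $n$ extra factors cannot work, and the time decay you get is not simply $(\e_1\langle t\rangle^{-1/2})^{n}$ better than the $n=0$ case. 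The paper handles this by symmetrizing the $n+1$ copies of $h$ (see \eqref{na13.2}--\eqref{na13.3}), isolating the region where the largest $h$-frequency is far above all the others, and running an induction on $n$ with a kernel splitting $K_{k_{n+1}}=\widetilde K_{k_{n+1}}+2^{2k_{n+1}}\chi_0'$; this induction is also why the final decay in \eqref{ra11.5} is only $\langle t\rangle^{-5/4}$ rather than $\langle t\rangle^{-(n+1)/2}$. Your $n=1$ outline is closer in spirit, but it too needs the explicit symbol \eqref{na19} and its vanishing on $\{|\eta_1|+|\eta_2|\le|\xi|\}$ before the trilinear Littlewood--Paley estimate can close.
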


This is proved in subsection \ref{zxc100} below. In rest of this subsection we show how to use it to prove Proposition \ref{ra102}. We consider first a suitable decomposition of the operators $\mathcal{H}_\gamma$.

\begin{lemma}\label{boundedness2}
Let
\begin{equation}\label{na20}
(H_0 f)(\alpha):=\frac{1}{\pi i}\mathrm{p.v.} \int_{\mathbb{R}}\frac{f(\beta)}{\alpha-\beta}\,d\beta,
\end{equation}
denote the unperturbed Hilbert transform, and consider the operators $T_1$ and $T_2$ defined by
\begin{equation}\label{na21}
\begin{split}
&(T_1 f)(\alpha):=\frac{1}{\pi}\mathrm{p.v.}\int_{\mathbb{R}}
\frac{h(\alpha)-h(\beta)-h'(\beta)(\alpha-\beta)}{|\gamma(\alpha)-\gamma(\beta)|^2}f(\beta)\,d\beta,\\
&(T_2 f)(\alpha):=\frac{1}{\pi}\mathrm{p.v.}\int_{\mathbb{R}}
\frac{h(\alpha)-h(\beta)-h'(\beta)(\alpha-\beta)}{|\gamma(\alpha)-\gamma(\beta)|^2}\frac{h(\alpha)-h(\beta)}{\alpha-\beta}f(\beta)\,d\beta.
\end{split}
\end{equation}
Then
\begin{equation}\label{na22}
\mathcal{H}_\gamma=H_0-T_1+iT_2,
\end{equation}
and
\begin{equation}\label{na23}
T_1=\sum_{n\geq 0}(-1)^{n}R_{2n},\qquad T_2=\sum_{n\geq 0}(-1)^{n}R_{2n+1}.
\end{equation}

Moreover, if $f\in \mathcal{E}^{-1}_{w,T}$ and $t\in[0,T]$ then
\begin{equation}\label{ra25}
\begin{split}
\big\|(T_1-R_0)f\big\|_{H^{N_0+1}}+\big\|T_2f\big\|_{H^{N_0+1}}&\lesssim \e_1^2\langle t\rangle^{p_0-1}\|f\|_{\mathcal{E}^{-1}_{w,T}},\\
\big\|(T_1-R_0)f\big\|_{\dot{W}^{N_2+1,-1/10}}+\big\|T_2f\big\|_{\dot{W}^{N_2+1,-1/10}}&\lesssim \e_1^2\langle t\rangle^{-11/10}\|f\|_{\mathcal{E}^{-1}_{w,T}},\\
\big\|S(T_1-R_0)f\big\|_{H^{N_1+1}}+\big\|ST_2f\big\|_{H^{N_1+1}}&\lesssim \e_1^2\langle t\rangle^{4p_0-1}\|f\|_{\mathcal{E}^{-1}_{w,T}}.
\end{split}
\end{equation}
In particular
\begin{equation}\label{ra29}
\|\mathcal{H}_\gamma f\|_{\mathcal{E}^{-1}_w}\lesssim \|f\|_{\mathcal{E}^{-1}_{w,T}},
\end{equation}
\begin{equation}\label{ra30.5}
\|R_0 f\|_{\mathcal{E}^1_w}+\|T_1 f\|_{\mathcal{E}^1_w}+\|T_2f\|_{\mathcal{E}^1_w}\lesssim \e_1\langle t\rangle^{-1/2}\|f\|_{\mathcal{E}^{-1}_{w,T}}.
\end{equation}
\end{lemma}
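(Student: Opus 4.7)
The proof has three natural parts: the algebraic decomposition \eqref{na22}, the series expansions \eqref{na23}, and the quantitative bounds \eqref{ra25}, \eqref{ra29}, \eqref{ra30.5}. The plan is to first establish \eqref{na22} by writing
\[
\frac{\gamma'(\beta)}{\gamma(\alpha)-\gamma(\beta)} = \frac{(1+ih'(\beta))\bigl[(\alpha-\beta)-i(h(\alpha)-h(\beta))\bigr]}{|\gamma(\alpha)-\gamma(\beta)|^2}
\]
and separating real and imaginary parts of the numerator. The imaginary piece $h'(\beta)(\alpha-\beta)-(h(\alpha)-h(\beta))=-[h(\alpha)-h(\beta)-h'(\beta)(\alpha-\beta)]$ produces exactly $-T_1f$ when multiplied by $1/\pi$. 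For the real piece, the direct algebraic identity (verified by clearing denominators)
\[
\frac{1}{\alpha-\beta} - \frac{(\alpha-\beta)+h'(\beta)(h(\alpha)-h(\beta))}{|\gamma(\alpha)-\gamma(\beta)|^2} = \frac{h(\alpha)-h(\beta)}{\alpha-\beta}\cdot\frac{h(\alpha)-h(\beta)-h'(\beta)(\alpha-\beta)}{|\gamma(\alpha)-\gamma(\beta)|^2}
\]
shows that after subtracting $H_0f$ the remainder is exactly $iT_2f$, yielding \eqref{na22}.

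To prove \eqref{na23} I would introduce $u(\alpha,\beta):=(h(\alpha)-h(\beta))/(\alpha-\beta)$, so that $|\gamma(\alpha)-\gamma(\beta)|^2=(\alpha-\beta)^2(1+u^2)$ and $|u|\leq\|h'\|_{L^\infty}\lesssim\e_1\langle t\rangle^{-1/2}\ll 1$ by \eqref{ra1}. The geometric series $(1+u^2)^{-1}=\sum_{n\geq 0}(-1)^n u^{2n}$ then converges uniformly, and substitution into the definitions \eqref{na21} of $T_1$ and $T_2$ produces formally the expansions in \eqref{na23}. Convergence of the resulting operator series in all norms of interest is immediate from \eqref{ra10}--\eqref{ra11.5}, since the bounds grow at most like $(C'\e_1)^{n+1}$ and $\e_1$ is small.

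Given \eqref{na23}, the estimate \eqref{ra25} is obtained by writing $T_1-R_0=\sum_{n\geq 1}(-1)^n R_{2n}$ and $T_2=\sum_{n\geq 0}(-1)^n R_{2n+1}$ and summing: the $R_1$ term in $T_2$ is controlled by \eqref{ra11} and already gives the claimed bound, while all $R_n$ with $n\geq 2$ contribute only a geometric tail of size $\e_1^3$ by \eqref{ra11.5}, which is absorbed. For \eqref{ra29} I would observe that $H_0$ is a Fourier multiplier of modulus one commuting with $\partial_x$ and with $S$, hence bounded on every $\dot H^{N,b}$ and $\dot W^{N,b}$ norm, so $\|H_0 f\|_{\mathcal{E}^{-1}_w}\lesssim\|f\|_{\mathcal{E}^{-1}_w}$; combining this with \eqref{ra25} and the embedding $\mathcal{E}^1_w\hookrightarrow\mathcal{E}^{-1}_w$ together with the fact that $T_1,T_2$ map into $\mathcal{E}^1_w$ by \eqref{ra10}, the conclusion follows. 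Finally \eqref{ra30.5} is direct: the $R_0$ bound is the first line of \eqref{ra10}, and $\|T_1f\|_{\mathcal{E}^1_w}$, $\|T_2f\|_{\mathcal{E}^1_w}$ follow by writing $T_1=R_0+(T_1-R_0)$ and $T_2=R_1+(T_2-R_1)$ and applying \eqref{ra11}--\eqref{ra11.5}, whose contributions are all of order $\e_1^2\langle t\rangle^{-1}$ or better, hence subleading compared to the $\e_1\langle t\rangle^{-1/2}$ bound for $R_0$.

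The only genuinely delicate point is verifying that the geometric decay $(C'\e_1)^n$ in Lemma \ref{Wboundedness} dominates uniformly across all the norms appearing in $\mathcal{E}^\alpha_w$; since this smallness is already built into the hypothesis \eqref{ra1} through the bootstrap parameter $\e_1\ll 1$, the argument reduces to careful bookkeeping of the $\langle t\rangle$-weights provided by Lemma \ref{Wboundedness}, with no additional harmonic-analytic input required beyond that lemma.
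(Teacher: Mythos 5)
Your proof is correct and takes essentially the same route as the paper, which likewise obtains \eqref{na22}--\eqref{na23} directly from the algebraic identity $\gamma'(\beta)/(\gamma(\alpha)-\gamma(\beta))=(1+ih'(\beta))\overline{(\gamma(\alpha)-\gamma(\beta))}/|\gamma(\alpha)-\gamma(\beta)|^2$ together with the geometric series in $u=(h(\alpha)-h(\beta))/(\alpha-\beta)$, and then deduces \eqref{ra25}--\eqref{ra30.5} from Lemma \ref{Wboundedness} and the fact that $H_0$ is the bounded multiplier $-\sgn(\xi)$. The one bookkeeping point worth making explicit is that the tails $\sum_{n\geq 2}(-1)^nR_n$ must be controlled in $\dot{W}^{N_2+1,-1/10}$ via the embedding $\|g\|_{\dot{W}^{N_2+1,-1/10}}\lesssim\|g\|_{H^{N_0+1}}$ (valid since $N_0\geq N_2+2$), because \eqref{ra11.5} only supplies $L^2$-based bounds for $n\geq 2$.
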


\begin{proof}
The identities \eqref{na22} and \eqref{na23} follow directly from definitions. The bounds \eqref{ra25} follow from \eqref{ra11}--\eqref{ra11.5}. Notice that
\begin{equation}\label{na27}
\widehat{H_0g}(\xi)=-\sgn(\xi)\widehat{g}(\xi).
\end{equation}
The bounds \eqref{ra29}--\eqref{ra30.5} follow using \eqref{na22} and the bounds \eqref{ra25} and \eqref{ra10}.
\end{proof}

We are now ready to define the conjugate pair $(\phi,\psi)$.

\begin{lemma}\label{na100}
(i) We have
\begin{equation}\label{na31}
\mathcal{H}_\gamma^2=I\text{ on }\mathcal{E}^{-1}_{w,T}.
\end{equation}
Moreover, if $\phi\in \mathcal{E}^{-1}_{w,T}$ is a real-valued function then there is a unique real-valued
function $\psi\in \mathcal{E}^{-1}_{w,T}$ with the property that
\begin{equation}\label{na32}
(I-\mathcal{H}_\gamma)(\phi+i\psi)=0.
\end{equation}

(ii) The function $F:\Omega\to\mathbb{C}$,
\begin{equation}\label{na33}
F(z):=\frac{1}{2\pi i}\int_{\mathbb{R}}
\frac{(\phi+i\psi)(\beta)\gamma'(\beta)}{z-\gamma(\beta)}\,d\beta
\end{equation}
in a bounded analytic function in $\Omega$, which extends to a $C^1$ function in $\overline{\Omega}$
with the property that $F(x+ih(x))=(\phi+i\psi)(x)$
for any $x\in\mathbb{R}$.
\end{lemma}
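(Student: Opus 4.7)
\medskip

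\noindent\textbf{Proof proposal for Lemma \ref{na100}.}

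My strategy for the identity $\mathcal{H}_\gamma^2=I$ is the classical Plemelj-Sokhotski argument. Given $f\in\mathcal{E}^{-1}_{w,T}$, the Cauchy-type integral $F_f$ defined in \eqref{na4} is holomorphic in $\Omega$, and by \eqref{na7} its boundary trace from inside is $g:=\frac{1}{2}(I+\mathcal{H}_\gamma)f$. Because $F_f$ decays at infinity (thanks to the homogeneous low-frequency weight $|\partial_x|^{p_1-1/2}$ built into $\mathcal{E}^{-1}_{w,T}$), it is reproduced as the Cauchy integral of its own trace, i.e.\ $F_f=F_g$. Applying \eqref{na7} a second time to $g$ yields $g=\frac{1}{2}(I+\mathcal{H}_\gamma)g$, which upon rearrangement gives $(I+\mathcal{H}_\gamma)^2=2(I+\mathcal{H}_\gamma)$, equivalent to $\mathcal{H}_\gamma^2=I$.

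For the existence and uniqueness of $\psi$, I would write $\mathcal{H}_\gamma=(H_0-T_1)+iT_2$ as in Lemma \ref{boundedness2}, and note that for real-valued $u$ the operator $H_0u=-i\mathcal{H}u$ is purely imaginary (where $\mathcal{H}$ denotes the classical Hilbert transform) while $T_1,T_2$ preserve real-valuedness. Splitting $(I-\mathcal{H}_\gamma)(\phi+i\psi)=0$ into real and imaginary parts, the imaginary-part equation reduces to
\begin{equation*}
(I+T_1)\psi=-\mathcal{H}\phi+T_2\phi.
\end{equation*}
By the bound \eqref{ra30.5}, $T_1$ has operator norm $O(\e_1)$ on $\mathcal{E}^{-1}_{w,T}$, so $I+T_1$ is invertible by Neumann series and produces a unique real $\psi\in\mathcal{E}^{-1}_{w,T}$. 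To verify that the real-part equation is then automatic, set $r:=2\,\mathrm{Re}[P_-(\phi+i\psi)]$ with $P_-:=\frac{1}{2}(I-\mathcal{H}_\gamma)$; by construction $P_-(\phi+i\psi)=r/2$. Since $\mathcal{H}_\gamma^2=I$ implies $P_-^2=P_-$, applying $P_-$ once more gives $P_-r=r$, and computing $P_-r$ explicitly for real $r$ via the decomposition above yields $r=T_1r$; the smallness estimate for $T_1$ then forces $r=0$. For part (ii), analyticity of $F$ in $\Omega$ is immediate from differentiating under the integral, boundedness follows from the maximal-function bound \eqref{na6} applied to $\phi+i\psi$, and the boundary trace is determined by \eqref{na7}:
\begin{equation*}
\lim_{\e\to 0}F(\gamma(\alpha)-i\e)=\tfrac{1}{2}(I+\mathcal{H}_\gamma)(\phi+i\psi)(\alpha)=(\phi+i\psi)(\alpha),
\end{equation*}
using $\mathcal{H}_\gamma(\phi+i\psi)=\phi+i\psi$ from \eqref{na32}. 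The $C^1$ extension to $\overline{\Omega}$ is obtained by differentiating $F$ once under the integral, integrating by parts to shift the derivative onto $\phi+i\psi$, and invoking the Sobolev/$\widetilde{W}^N$ regularity of $\phi+i\psi$ inherited from $\mathcal{E}^{-1}_{w,T}$.

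The main obstacle I anticipate is justifying the Cauchy reproduction $F_f=F_g$ in the first paragraph: this requires enough decay of $F_f$ at infinity within $\Omega$ to apply a standard Hardy-space reproduction argument, and is precisely the point at which the weighted low-frequency structure of $\mathcal{E}^{-1}_{w,T}$ (designed to avoid momentum conditions on $\phi$) is essential. A secondary delicate point is controlling the boundary behavior uniformly as $\e\to 0$ in \eqref{na7}, which I would handle by combining the maximal estimate \eqref{na6} with a dominated convergence argument; the $\dot{W}^{N_2,2/5}$ component of the norm guarantees enough pointwise regularity for this to succeed.
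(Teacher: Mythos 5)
Your proposal is correct and follows essentially the same route as the paper: the identity $\mathcal{H}_\gamma^2=I$ via the Cauchy integral formula and the second limit in \eqref{na7}, the construction of $\psi$ by inverting $I+T_1$ in the imaginary-part equation \eqref{na35}, the verification that the real part of $(I-\mathcal{H}_\gamma)(\phi+i\psi)$ vanishes by combining $\mathcal{H}_\gamma^2=I$ with the smallness of $T_1$ (your $r=T_1r\Rightarrow r=0$ step is the paper's $P=0$ argument in disguise), and part (ii) from \eqref{na7}. The only quibble is that the decay of $F_f$ at infinity needed for the Cauchy reproduction already follows from $f\gamma'\in L^2$ and is not where the low-frequency weight of $\mathcal{E}^{-1}_{w,T}$ is used.
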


\begin{proof}
The identity \eqref{na31} is a standard consequence of the Cauchy integral formula applied to the analytic function $F_f$ defined in \eqref{na4},
and the second limit in \eqref{na7}.

The uniqueness of $\psi$ satisfying \eqref{na32} follows from Lemma \ref{boundedness2}: if $\psi_1,\psi_2$ are real-valued solutions of \eqref{na32} then
\begin{equation*}
(I-\mathcal{H}_\gamma)(\psi_1-\psi_2)=0.
\end{equation*}
Using the formula \eqref{na32} and taking the real part, it follows that $(I+T_1)(\psi_1-\psi_2)=0$. Since $\|T_1\|_{\mathcal{E}^{-1}_w\to\mathcal{E}^{-1}_w}\lesssim\e_1$ (see \eqref{ra30.5}), this shows that $\psi_1-\psi_2=0$.

To prove existence we use that $T_1$ is a contraction on $\mathcal{E}^{-1}_{w,T}$ and define $\psi$ such that
\begin{equation}\label{na35}
(I+T_1)\psi=(-iH_0+T_2)\phi.
\end{equation}
Let $P:=(I-\mathcal{H}_\gamma)(-i\phi+\psi)$ and notice that $(I+\mathcal{H}_\gamma)P=0$ (as a consequence
of \eqref{na31}). Moreover, $\Re P=0$, as a consequence of \eqref{na35}. Therefore $P=0$.
This completes the proof of part (i). The claims in (ii) follow from the second identity in \eqref{na7}.
\end{proof}

Assume $\phi,\psi$ and $F=G+iH$ are as in Lemma \ref{na100}. Let
\begin{equation*}
v_1:=\partial_xG=\partial_yH,\qquad v_2:=\partial_yG=-\partial_xH.
\end{equation*}
Notice that
\begin{equation*}
G(x+ih(x))=\phi(x),\qquad H(x+ih(x))=\psi(x).
\end{equation*}
Therefore
\begin{equation*}
-\psi_x=\partial_yG(x+ih(x))-ih'(x)\partial_xG(x+ih(x))=G(h)\phi.
\end{equation*}
Let
\begin{equation*}
\widetilde{V}(x):=v_1(x+ih(x)),\qquad \widetilde{B}(x):=v_2(x+ih(x)),\qquad x\in\mathbb{R}.
\end{equation*}

Taking derivatives, we have $\phi_x=\widetilde{V}+h_x\widetilde{B}$ and $-\psi_x=\widetilde{B}-h_x\widetilde{V}$. Therefore
\begin{equation}\label{na37}
\widetilde{V}=\frac{\phi_x+h_x\psi_x}{1+h_x^2}=V,\qquad \widetilde{B}=\frac{h_x\phi_x-\psi_x}{1+h_x^2}=B,
\end{equation}
where $V,B$ are defined in \eqref{zxc1}. Notice that
\begin{equation*}
 \partial_x\mathcal{H}_\gamma f=\gamma_x\mathcal{H}_\gamma(f_x/\gamma_x).
\end{equation*}
Therefore the identity \eqref{na32} gives
\begin{equation*}
 (I-\mathcal{H}_\gamma)\Big(\frac{\phi_x+i\psi_x}{\gamma_x}\Big)=0.
\end{equation*}
Notice also that $\phi_x+i\psi_x=(1+ih_x)(V-iB)$, as a consequence of \eqref{na37}. Therefore
\begin{equation*}
 (I-\mathcal{H}_\gamma)\big(V-iB\big)=0.
\end{equation*}

To summarize, we have the following lemma:
\begin{lemma}\label{na101}
Assume $\phi\in\mathcal{E}^{1/2}_{w,T}$ and define $\psi$ as in Lemma \ref{na100}. Then $\psi\in\mathcal{E}^{1/2}_{w,T}$ and
\begin{equation}\label{na40}
\|\psi\|_{\mathcal{E}^{1/2}_{w,T}}\lesssim \|\phi\|_{\mathcal{E}^{1/2}_{w,T}}.
\end{equation}
Moreover,
\begin{equation}\label{na41}
G(h)\phi=-\psi_x,\qquad V=\frac{\phi_x+h_x\psi_x}{1+h_x^2},\qquad B=\frac{h_x\phi_x-\psi_x}{1+h_x^2}.
\end{equation}
and
\begin{equation}\label{na42}
\phi_x=V+h_xB,\qquad -\psi_x=B-h_xV,\qquad (I-\mathcal{H}_\gamma)\big(V-iB\big)=0.
\end{equation}
In addition, for any $t\in[0,T]$ and any $g\in\{\phi_x,\psi_x\}$,
\begin{equation}\label{na42.5}
\langle t\rangle^{-p_0}\|g\|_{\dot{H}^{N_0-1/2,-1/10}}+\langle t\rangle^{1/2}\|g\|_{\dot{W}^{N_2-1/2,-1/10}}+\langle t\rangle^{-4p_0}\|Sg\|_{\dot{H}^{N_1-1/2,-1/10}}\lesssim \|\phi\|_{\mathcal{E}^{1/2}_{w,T}},
\end{equation}
and, for any $f\in\{\phi_x,\psi_x,V,B\}$,
\begin{equation}\label{na43}
\langle t\rangle^{-p_0}\|f\|_{H^{N_0-1/2}}+\langle t\rangle^{1/2}\|f\|_{\widetilde{W}^{N_2-1/2}}+\langle t\rangle^{-4p_0}\|Sf\|_{H^{N_1-1/2}}\lesssim \|\phi\|_{\mathcal{E}^{1/2}_{w,T}}.
\end{equation}
\end{lemma}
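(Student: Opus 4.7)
The identities \eqref{na41}--\eqref{na42} are already established in the paragraphs immediately preceding the statement of the lemma. Specifically, $G(h)\phi = -\psi_x$ follows by differentiating $H(x+ih(x))=\psi(x)$ and reading off the boundary normal derivative; the formulas for $V,B$ are derived in \eqref{na37} via the Cauchy--Riemann identification $F'\circ\gamma = V-iB$; the decompositions $\phi_x = V + h_xB$ and $-\psi_x = B - h_xV$ follow by direct algebra; and $(I-\mathcal{H}_\gamma)(V-iB) = 0$ is obtained by differentiating \eqref{na32} and using $\partial_x\mathcal{H}_\gamma f = \gamma_x \mathcal{H}_\gamma(f_x/\gamma_x)$. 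The substantive content of the lemma is therefore the quantitative bounds \eqref{na40}, \eqref{na42.5}, and \eqref{na43}.

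The crux is \eqref{na40}. Starting from the scalar equation \eqref{na35}, namely $(I+T_1)\psi = (-iH_0 + T_2)\phi$ (obtained by taking the imaginary part of $(I-\mathcal{H}_\gamma)(\phi+i\psi)=0$ in the decomposition $\mathcal{H}_\gamma = H_0 - T_1 + iT_2$, noting that $-iH_0$ coincides with the standard real Hilbert transform on real-valued functions), the plan is to invert $I + T_1$ on $\mathcal{E}^{1/2}_{w,T}$ by a Neumann series. For this I would first record the trivial continuous embeddings $\mathcal{E}^{1/2}_{w,T}\hookrightarrow \mathcal{E}^{-1}_{w,T}$ and $\mathcal{E}^1_{w,T}\hookrightarrow \mathcal{E}^{1/2}_{w,T}$, both immediate from the Littlewood--Paley definitions (common low-frequency exponent $p_1$ and monotonicity of $|\xi|^N$ on $|\xi|\geq 1$), and combine them with the smoothing estimate \eqref{ra30.5} from Lemma \ref{boundedness2} to upgrade
\begin{equation*}
\|T_j f\|_{\mathcal{E}^{1/2}_w} \lesssim \e_1 \langle t\rangle^{-1/2}\|f\|_{\mathcal{E}^{1/2}_w},\qquad j=1,2.
\end{equation*}
Since $-iH_0$ is a Fourier multiplier of modulus $1$ and is bounded on each frequency-localized piece in both the $\dot H$ and $\dot W$ components, it is bounded on $\mathcal{E}^{1/2}_w$. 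The Neumann series $\psi = \sum_{n\geq 0}(-T_1)^n(-iH_0 + T_2)\phi$ then converges in $\mathcal{E}^{1/2}_{w,T}$ with norm $\lesssim \|\phi\|_{\mathcal{E}^{1/2}_{w,T}}$. The $S$-component is handled by applying $S$ to the equation: using $[S, H_0] = 0$ (a direct Fourier computation, since $H_0$ has a homogeneous-of-degree-zero symbol), one obtains $(I+T_1)S\psi = (-iH_0 + T_2)S\phi + [S,T_2]\phi - [S,T_1]\psi$, and the commutators $[S,T_j]$ are dominated using the $S$-variants in the third lines of \eqref{ra10}--\eqref{ra11.5}, together with the bound $\|Sh\|_{\dot H^{N_1+1,p_1+1/2}} \lesssim \e_1\langle t\rangle^{4p_0}$ from \eqref{ra1}.

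Given \eqref{na40}, the bounds \eqref{na42.5} on $\phi_x,\psi_x$ are Fourier-multiplier facts: at high frequency $|\xi|^{N_0-1/2}\cdot|\xi| = |\xi|^{N_0+1/2}$ is controlled, and at low frequency $|\xi|^{-1/10}\cdot|\xi| = |\xi|^{9/10}\leq |\xi|^{p_1}$ on $|\xi|\leq 1$ since $p_1\leq 10^{-10}\ll 9/10$; the $\dot W^{N_2-1/2,-1/10}$ and the $S$-weighted pieces are identical. For \eqref{na43} the low-frequency weight becomes $|\xi|^0=1$, still dominated by $|\xi|^{p_1}$ on $\{|\xi|\leq 1\}$. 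The $V,B$ bounds follow by expanding $\frac{1}{1+h_x^2} = \sum_{n\geq 0}(-h_x^2)^n$, which converges in the relevant Sobolev--$L^\infty$ scales since $\|h_x\|_{L^\infty}$ is small by \eqref{ra1}, and then applying Lemma \ref{algebra} iteratively to bound the products $h_x^{2n}\phi_x$ and $h_x^{2n+1}\psi_x$ term by term. The main obstacle I anticipate is the $S$-weighted part of \eqref{na40}: the operator $\mathcal{H}_\gamma$ depends genuinely on the profile $h$, so the commutators $[S,T_j]$ involve kernels in which some copies of $h$ are replaced by $Sh$ (with correspondingly weaker time decay $\langle t\rangle^{4p_0}$ rather than $\langle t\rangle^{-1/2}$). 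These must be carefully absorbed via the $S$-estimates in Lemma \ref{Wboundedness}, and the Neumann series must be propagated with the accompanying loss in $t$; once this is done, the rest reduces to standard multiplier and product arguments.
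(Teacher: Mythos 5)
Your proposal is correct and follows essentially the same route as the paper: invert $I+T_1$ in the scalar equation \eqref{na35} using the smoothing/contraction bound \eqref{ra30.5}, deduce \eqref{na42.5} by elementary multiplier considerations on the low- and high-frequency weights, and obtain \eqref{na43} for $V,B$ from the algebraic formulas \eqref{na41} together with Lemma \ref{algebra}. The obstacle you flag concerning $[S,T_j]$ is not actually open: the norm $\mathcal{E}^1_w$ in \eqref{ra30.5} already contains the $S$-weighted component, with the relevant commutator structure (copies of $h$ replaced by $Sh$) handled inside the proof of Lemma \ref{Wboundedness} (see \eqref{ra13}), so the contraction argument closes directly on the full $\mathcal{E}^{1/2}_w$ norm.
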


\begin{proof}
The inequality \eqref{na40} follows from \eqref{ra30.5} and \eqref{na35}. The formulas \eqref{na42} were derived earlier.
 The bounds \eqref{na42.5} follow as a consequence of \eqref{na40} and the definition. The function $h_x$ satisfies similar bounds, for any $t\in[0,T]$,
\begin{equation}\label{na43.5}
\langle t\rangle^{-p_0}\|h_x\|_{\dot{H}^{N_0,-1/2+p_1}}+\langle t\rangle^{1/2}\|h_x\|_{\dot{W}^{N_2,-1/10}}+\langle t\rangle^{-4p_0}\|Sh_x\|_{\dot{H}^{N_1,-1/2+p_1}}\lesssim \e_1,
\end{equation}
see \eqref{ra1}. The desired bounds \eqref{na43} for $V,B$ follow from \eqref{na41} and Lemma \ref{algebra}.
\end{proof}

We can now complete the proof of Proposition \ref{ra102}.

\begin{proof}[Proof of Proposition \ref{ra102}]
We may assume $\|\phi\|_{\mathcal{E}^{1/2}_{w,T}}=1$. The bound \eqref{ra43} was already proved, see \eqref{na43}. So we define $G_{\geq 3}$ according to \eqref{zxc2}, and we need to prove the bounds \eqref{ra73}. 

The last identity in \eqref{na42} gives
\begin{equation*}
(I-H_0)(V-iB)+(T_1-iT_2)(V-iB)=0
\end{equation*}
Taking real and imaginary parts we have
\begin{equation}\label{na74}
V+iH_0B=-T_1V+T_2B,\qquad -B+iH_0V=T_2V+T_1B.
\end{equation}
Using also the formulas $-\psi_x=B-h_xV$, see \eqref{na42}, and $|\partial_x|\phi=iH_0(\phi_x)$, we have
\begin{equation}\label{na75}
\begin{split}
|\partial_x|\phi-G(h)\phi&=iH_0(\phi_x)-B+h_xV\\
&=T_2V+T_1B+h_xV+iH_0(h_xB).
\end{split}
\end{equation}

Let
\begin{equation}\label{na76}
D:=T_2V+T_1B=iH_0V-B.
\end{equation}
Therefore
\begin{equation}\label{na77}
\begin{split}
-G_{\geq 3}&=|\partial_x|\phi-G(h)\phi-|\partial_x|T_Bh-\partial_xT_Vh+G_2=I+II,\\
I:&=h_xV-H_0(h_xH_0V)-|\partial_x|T_{iH_0V}h-\partial_xT_Vh+G_2+R_0(iH_0V),\\
II:&=D-R_0(iH_0V)-iH_0(h_xD)+|\partial_x|T_Dh.
\end{split}
\end{equation}

Using the definitions and \eqref{na16}, we write
\begin{equation*}
\widehat{I}(\xi)=\frac{1}{2\pi}\int_{\mathbb{R}}\widehat{h}(\eta)\widehat{V}(\xi-\eta)p(\xi,\eta)\,d\eta+\widehat{G_2}(\xi)
\end{equation*}
where
\begin{equation*}
\begin{split}
p(\xi,\eta):&=i\eta-i\eta\sgn(\xi)\sgn(\xi-\eta)+i|\xi|\sgn(\xi-\eta)\chi(\xi-\eta,\eta)-i\xi\chi(\xi-\eta,\eta)\\
&+i(\xi-\eta)[1-\sgn(\xi)\sgn(\xi-\eta)]\\
&=i\xi[1-\chi(\xi-\eta,\eta)][1-\sgn(\xi)\sgn(\xi-\eta)].
\end{split}
\end{equation*}
Using also the formulas \eqref{na72} and \eqref{na42}, we have
\begin{equation*}
\widehat{I}(\xi)=\frac{1}{2\pi}\int_{\mathbb{R}}
  \widehat{h}(\eta)\mathcal{F}(V-\phi_x)(\xi-\eta)p(\xi,\eta)\,d\eta
  =\frac{-1}{2\pi}\int_{\mathbb{R}}\widehat{h}(\eta)\mathcal{F}(h_xB)(\xi-\eta)p(\xi,\eta)\,d\eta.
\end{equation*}
In view of \eqref{na43}--\eqref{na43.5}, and Lemma \ref{algebra},
\begin{equation*}
\langle t\rangle^{1/2-p_0}\|h_xB\|_{H^{N_0-1/2}}+\langle t\rangle\|h_xB\|_{\widetilde{W}^{N_2-1/2}}+\langle t\rangle^{1/2-4p_0}\|S(h_xB)\|_{H^{N_1-1/2}}\lesssim \varep_1.
\end{equation*}
In other words, $\varep_1 h_xB\in O_{2,-1/2}$, see Definition \ref{Oterms}, and $h_x\in O_{1,0}$, see \eqref{na43.5}. In view of Lemma \ref{OtermsProd} (applied to $m_2(\xi,\eta)=
p(\xi,\eta)/\eta$), $\varep_1I\in O_{3,1}$, which gives, for any $t\in[0,T]$,
\begin{equation}\label{na78}
\langle t\rangle^{1-p_0}\|I\|_{H^{N_0+1}}+\langle t\rangle^{11/10}\|I\|_{\widetilde{W}^{N_2+1}}+\langle t\rangle^{1-4p_0}\|SI\|_{H^{N_1+1}}\lesssim \e_1^2.
\end{equation}

We consider now the contribution of $II$. It follows from \eqref{ra25} and \eqref{na43} that
\begin{equation*}
\begin{split}
\big\|(T_1-R_0)B\big\|_{H^{N_0+1}}+\big\|T_2V\big\|_{H^{N_0+1}}&\lesssim \e_1^2\langle t\rangle^{p_0-1},\\
\big\|(T_1-R_0)B\big\|_{\dot{W}^{N_2+1,-1/10}}+\big\|T_2V\big\|_{\dot{W}^{N_2+1,-1/10}}&\lesssim \e_1^2\langle t\rangle^{-11/10},\\
\big\|S(T_1-R_0)B\big\|_{H^{N_1+1}}+\big\|ST_2V\big\|_{H^{N_1+1}}&\lesssim \e_1^2\langle t\rangle^{4p_0-1}.
\end{split}
\end{equation*}
Therefore, using the formula \eqref{na76},
\begin{equation}\label{na79}
\langle t\rangle^{1-p_0}\|D-R_0B\|_{H^{N_0+1}} +\langle t\rangle^{11/10}\|D-R_0B\big\|_{\widetilde{W}^{N_2+1}}+\langle t\rangle^{1-4p_0}\|S(D-R_0B)\|_{H^{N_1+1}}\lesssim \e_1^2.
\end{equation}
Therefore, using also \eqref{ra10},
\begin{equation}\label{na80}
\langle t\rangle^{1/2-p_0}\|D\|_{H^{N_0+1}}+\langle t\rangle\|D\|_{\dot{W}^{N_2+1,1/100}}+\langle t\rangle^{1/2-4p_0}\|SD\|_{H^{N_1+1}}\lesssim \e_1.
\end{equation}
Using \eqref{na76}, in the form $R_0D=R_0(iH_0V-B)$, and the bounds \eqref{ra10} and \eqref{na80},
\begin{equation}\label{na81}
\begin{split}
\langle t\rangle^{1-p_0}\|R_0B-R_0(iH_0V)\|_{H^{N_0+1}}&+\langle t\rangle^{3/2}\|R_0B-R_0(iH_0V)\|_{\dot{W}^{N_2+1,1/100}}\\
&+\langle t\rangle^{1-4p_0}\|S[R_0B-R_0(iH_0V)]\|_{H^{N_1+1}}\lesssim\e_1^2.
\end{split}
\end{equation}
Moreover,
\begin{equation*}
\mathcal{F}\big[-iH_0(h_xD)+|\partial_x|T_Dh\big](\xi)=\frac{1}{2\pi}\int_{\mathbb{R}}\widehat{D}(\xi-\eta)\widehat{h}(\eta)\sgn(\xi)[\xi\chi(\xi-\eta,\eta)-\eta]\,d\eta.
\end{equation*}
Using \eqref{ra1} and \eqref{na80}, it follows from Lemma \ref{OtermsProd}
\begin{equation}\label{na82}
\begin{split}
\big\|-iH_0(h_xD)+|\partial_x|T_Dh\big\|_{H^{N_0+1}}&\lesssim \e_1^2\langle t\rangle^{-1+p_0},\\
\big\|-iH_0(h_xD)+|\partial_x|T_Dh\big\|_{\widetilde{W}^{N_2+1}}&\lesssim \e_1^2\langle t\rangle^{-11/10},\\
\big\|S[-iH_0(h_xD)+|\partial_x|T_Dh]\big\|_{H^{N_1+1}}&\lesssim \e_1^2\langle t\rangle^{-1+4p_0}.
\end{split}
\end{equation}
The desired bound \eqref{ra73} follows from \eqref{na77} and the bounds \eqref{na78}--\eqref{na82}.
\end{proof}

\subsection{Proof of Lemma \ref{Wboundedness}}\label{zxc100}We rewrite first the operators $R_n$.
We take the Fourier transform in $\alpha$ and make the change of variables $\alpha\to\beta+\rho$ to write
\begin{equation*}
\mathcal{F}\big[R_nf\big](\xi)=\frac{1}{\pi}\int_{\mathbb{R}^2}
\frac{f(\beta)}{\rho}\Big(\frac{h(\beta+\rho)-h(\beta)}{\rho}\Big)^n\frac{h(\beta+\rho)-h(\beta)-\rho h'(\beta)}{\rho}e^{-i\xi\beta}e^{-i\xi\rho}\,d\beta d\rho.
\end{equation*}
Notice that
\begin{equation*}
\frac{h(\beta+\rho)-h(\beta)}{\rho}=\frac{1}{2\pi}\int_{\mathbb{R}}\widehat{h}(\eta)e^{i\eta\beta}\frac{e^{i\eta\rho}-1}{\rho}\,d\eta.
\end{equation*}
Therefore
\begin{equation*}
\begin{split}
\mathcal{F}\big[R_nf\big](\xi)=&\frac{1}{\pi(2\pi)^{n+1}}\int_{\mathbb{R}^2\times \mathbb{R}^n}
\frac{f(\beta)}{\rho}e^{-i\xi\beta}e^{-i\xi\rho}e^{i(\eta_1+\ldots+\eta_{n+1})\beta}\\
&\times\widehat{h}(\eta_1)\cdot\ldots\cdot\widehat{h}(\eta_{n+1})\frac{e^{i\eta_{n+1}\rho}-1-i\eta_{n+1}\rho}{\rho}\prod_{l=1}^n\frac{e^{i\eta_l\rho}-1}{\rho}\,d\beta d\rho d\eta_1\ldots d\eta_n.
\end{split}
\end{equation*}
This can be rewritten in the form
\begin{equation}\label{na12}
\begin{split}
\mathcal{F}\big[R_nf\big](\xi)=\frac{1}{\pi(2\pi)^{n+1}}\int_{\mathbb{R}^{n+1}}
\widetilde{M}_{n+1}&(\xi;\eta_1,\ldots,\eta_{n+1})\widehat{f}(\xi-\eta_1-\ldots-\eta_{n+1})\\
&\times\widehat{h}(\eta_1)\cdot\ldots\cdot\widehat{h}(\eta_{n+1})\, d\eta_1\ldots d\eta_{n+1},
\end{split}
\end{equation}
where
\begin{equation}\label{na13}
\begin{split}
\widetilde{M}_{n+1}(\xi;\eta_1,\ldots,\eta_{n+1}):=\int_{\mathbb{R}}\frac{e^{-i\xi\rho}}{\rho}\frac{e^{i\eta_{n+1}\rho}-1-i\eta_{n+1}\rho}{\rho}\prod_{l=1}^n\frac{e^{i\eta_l\rho}-1}{\rho}\,d\rho.
\end{split}
\end{equation}

Using the formula
\begin{equation*}
\frac{d}{d\rho}\frac{e^{-i\eta\rho}-1}{\rho}=\frac{1-e^{-i\eta\rho}-i\eta\rho e^{-i\eta\rho}}{\rho^2},
\end{equation*}
and integration by parts in $\rho$ in \eqref{na13}, we have
\begin{equation}\label{na13.1}
\begin{split}
\widetilde{M}_{n+1}(\xi;\eta_1,\ldots,\eta_{n+1})&=-\int_{\mathbb{R}}\frac{e^{-i\eta_{n+1}\rho}-1}{\rho}\frac{d}{d\rho}\Big[e^{-i(\xi-{\eta_{n+1}})\rho}\prod_{l=1}^n\frac{e^{i\eta_l\rho}-1}{\rho}\Big]\,d\rho\\
&=-i(\xi-\eta_{n+1})\int_{\mathbb{R}}e^{-i\xi\rho}\prod_{l=1}^{n+1}\frac{e^{i\eta_l\rho}-1}{\rho}\,d\rho\\
&+\sum_{j=1}^n\int_{\mathbb{R}}e^{-i\xi\rho}\frac{i\eta_j\rho e^{i\eta_j\rho}-e^{i\eta_j\rho}+1}{\rho^2}\prod_{l=1,\,l\neq j}^{n+1}\frac{e^{i\eta_l\rho}-1}{\rho}\,d\rho.
\end{split}
\end{equation}
For $j\in\{1,\ldots,n\}$ let $\pi_j:\mathbb{R}^{n+1}\to\mathbb{R}^{n+1}$ denote the map that permutes the variables $\eta_j$ and $\eta_{n+1}$,
\begin{equation*}
\pi_j(\eta_1,\ldots,\eta_j,\ldots,\eta_n,\eta_{n+1}):=(\eta_1,\ldots,\eta_{n+1},\ldots,\eta_n,\eta_j).
\end{equation*}
The formulas \eqref{na13} and \eqref{na13.1} show that
\begin{equation*}
\begin{split}
\widetilde{M}_{n+1}(\xi;\underline{\eta})+\sum_{j=1}^n\widetilde{M}_{n+1}(\xi;\pi_j(\underline{\eta}))&=-i(\xi-\eta_{n+1})\int_{\mathbb{R}}e^{-i\xi\rho}\prod_{l=1}^{n+1}\frac{e^{i\eta_l\rho}-1}{\rho}\,d\rho\\
&+\sum_{j=1}^n\int_{\mathbb{R}}e^{-i\xi\rho}\frac{i\eta_j\rho e^{i\eta_j\rho}-e^{i\eta_j\rho}+1}{\rho^2}\prod_{l=1,\,l\neq j}^{n+1}\frac{e^{i\eta_l\rho}-1}{\rho}\,d\rho\\
&+\sum_{j=1}^n\int_{\mathbb{R}}\frac{e^{-i\xi\rho}}{\rho}\frac{e^{i\eta_{j}\rho}-1-i\eta_{j}\rho}{\rho}\prod_{l=1,\,l\neq j}^{n+1}\frac{e^{i\eta_l\rho}-1}{\rho}\,d\rho\\
&=-i(\xi-\eta_1-\ldots-\eta_{n+1})\int_{\mathbb{R}}e^{-i\xi\rho}\prod_{l=1}^{n+1}\frac{e^{i\eta_l\rho}-1}{\rho}\,d\rho,
\end{split}
\end{equation*}
where $\underline{\eta}:=(\eta_1,\ldots,\eta_{n+1})$. Letting
\begin{equation}\label{na13.2}
\begin{split}
M_{n+1}(\xi;\underline{\eta}):&=\frac{1}{n+1}\big[\widetilde{M}_{n+1}(\xi;\underline{\eta})+\sum_{j=1}^n\widetilde{M}_{n+1}(\xi;\pi_j(\underline{\eta}))\big]\\
&=\frac{-i(\xi-\eta_1-\ldots-\eta_{n+1})}{n+1}\int_{\mathbb{R}}e^{-i\xi\rho}\prod_{l=1}^{n+1}\frac{e^{i\eta_l\rho}-1}{\rho}\,d\rho,
\end{split}
\end{equation}
it follows by symmetrization from \eqref{na12} that
\begin{equation}\label{na13.3}
\begin{split}
\mathcal{F}\big[R_nf\big](\xi)=\frac{1}{\pi(2\pi)^{n+1}}\int_{\mathbb{R}^{n+1}}M_{n+1}(\xi;\underline{\eta})&\widehat{f}(\xi-\eta_1-\ldots-\eta_{n+1})\\
&\times\widehat{h}(\eta_1)\cdot\ldots\cdot\widehat{h}(\eta_{n+1})\, d\underline{\eta}.
\end{split}
\end{equation}

\subsubsection{The operator $R_0$} We prove now the bounds  \eqref{ra10}. Recall the formulas
\begin{equation}\label{na14}
\int_{\mathbb{R}}e^{-i\xi x}\,dx=2\pi\delta_0(\xi),\qquad \int_{\mathbb{R}}e^{-i\xi x}\frac{1}{x}\,dx=-i\pi\,\mathrm{sgn}(\xi),\qquad\int_{\mathbb{R}}\frac{e^{-i\xi x}-1}{x^2}\,dx=-\pi\,|\xi|,
\end{equation}
for any $\xi\in\mathbb{R}$. Using these formulas, the symbol $M_1(\xi;\eta_1)$ can be calculated easily,
\begin{equation}\label{na15}
M_{1}(\xi;\eta_{1})=\pi(\xi-\eta_1)\big[\,\mathrm{sgn}(\xi)-\,\mathrm{sgn}(\xi-\eta_1)\big].
\end{equation}
Therefore
\begin{equation}\label{na16}
\mathcal{F}\big[R_0f\big](\xi)=\frac{1}{2\pi}\int_{\mathbb{R}}(\xi-\eta_1)\big[\,\mathrm{sgn}(\xi)-\,\mathrm{sgn}(\xi-\eta_1)\big]\widehat{f}(\xi-\eta_1)\widehat{h}(\eta_1)\, d\eta_1.
\end{equation}

The bounds \eqref{ra10} can be proved easily. We may assume that $\|f\|_{\mathcal{E}^{-1}_{w,T}}\lesssim 1$. It follows from \eqref{na16} that
\begin{equation*}
\begin{split}
\|P_kR_0f\|_{L^2}&\lesssim \sum_{k_2+10\geq \max(k,k_1)}2^{k_1}\|P'_{k_1}f\|_{L^\infty}\|P'_{k_2}h\|_{L^2}\lesssim \langle t\rangle^{-1/2}\sum_{k_2+10\geq k}2^{3\min(k_2,0)/5}\|P'_{k_2}h\|_{L^2},
\end{split}
\end{equation*}
for any $k\in\mathbb{Z}$. Moreover,
\begin{equation*}
\|R_0f\|_{L^2}\lesssim \sum_{k_2+10\geq k_1}2^{k_1}\|P'_{k_1}f\|_{L^\infty}\|P'_{k_2}h\|_{L^2}\lesssim \e_1\langle t\rangle^{p_0-1/2}.
\end{equation*}
The $L^2$ bound on $R_0f$ in \eqref{ra10} follows using the assumptions \eqref{ra1}. The $L^\infty$ bound on $R_0g$ in the second line of \eqref{ra10} follows in a similar way, and this implies the $L^\infty$ bound on $R_0f(t)$ in the first line of \eqref{ra10} (by setting $b=2/5$). The weighted bound in \eqref{ra10} also follows in the same way, using \eqref{lemcomm} and the homogeneity of the symbol defining $R_0$.

\subsubsection{The operator $R_1$} We prove now the bounds \eqref{ra11}. Using \eqref{na14} again
\begin{equation}\label{na19}
M_{2}(\xi;\eta_{1},\eta_2):= \frac{i\pi}{2} (\xi-\eta_1-\eta_{2}) \big[|\xi-\eta_1-\eta_2|+|\xi|-|\xi-\eta_1|-|\xi-\eta_2|\big].
\end{equation}
The main observation is that $M_{2}(\xi;\eta_{1},\eta_2)=0$ if $|\eta_1|+|\eta_2|\leq|\xi|$. It is easy to see that
\begin{equation*}
\Big\|\mathcal{F}^{-1}\big[M_{2}(\xi;\eta_{1},\eta_2)\varphi_{k_1}(\eta_1)\varphi_{k_2}(\eta_2)\varphi_{k_3}(\xi-\eta_1-\eta_2)\big]\Big\|_{L^1(\mathbb{R}^3)}\lesssim 2^{k_3}2^{\min(k_1,k_2)}.
\end{equation*}
Therefore, using Lemma \ref{touse}, the assumption $\|f\|_{\mathcal{E}^{-1}_{w,T}}\lesssim 1$, and the bounds \eqref{ra1}
\begin{equation}\label{na19.5}
\begin{split}
\|P_kR_1f\|_{L^p}&\lesssim \sum_{k_2+10\geq\max(k,k_1,k_3)}2^{k_1+k_3}\|P'_{k_1}h\|_{L^\infty}\|P'_{k_2}h\|_{L^p}\|P'_{k_3}f\|_{L^\infty}\\
&\lesssim \e_1\langle t\rangle^{-1}\sum_{k_2+10\geq k}2^{3\min(k_2,0)/5}\|P'_{k_2}h\|_{L^p},
\end{split}
\end{equation}
for $p\in\{2,\infty\}$ and any $k\in\mathbb{Z}$. In particular,
\begin{equation}\label{na19.6}
\|P_{\geq 0}R_1f\|_{H^{N_0+1}}\lesssim \e_1^2\langle t\rangle^{-1+p_0}\quad\text{ and }\quad\|P_{\geq 0}R_1f\|_{\dot{W}^{N_2+1,-1/10}}\lesssim\e_1^2\langle t\rangle^{-3/2}.
\end{equation}
Moreover
\begin{equation}\label{na19.7}
\begin{split}
\|R_1f\|_{L^2}&\lesssim \sum_{k_2+10\geq\max(k_1,k_3)}2^{k_1+k_3}\|P'_{k_1}h\|_{L^\infty}\|P'_{k_2}h\|_{L^2}\|P'_{k_3}f\|_{L^\infty}\lesssim \e_1^2\langle t\rangle^{p_0-1}.
\end{split}
\end{equation}
Finally, as a consequence of \eqref{ra1}, \eqref{na19.5}, and Sobolev embedding,
\begin{equation}\label{na19.8}
\begin{split}
&2^{-k/10}\|P_kR_1f\|_{L^\infty}\lesssim 2^{2k/5}\|P_kR_1f\|_{L^2}\lesssim 2^{2k/5}\e_1^2\langle t\rangle^{p_0-1},\\
&2^{-k/10}\|P_kR_1f\|_{L^\infty}\lesssim 2^{-k/10}\e_1\langle t\rangle^{-1}\cdot 2^{-2k/5}\e_1\langle t\rangle^{-1/2}\lesssim \e_1^2\langle t\rangle^{-3/2}2^{-k/2},
\end{split}
\end{equation}
for any integer $k\leq 0$. The bounds in the first two lines of \eqref{ra11} follow from \eqref{na19.6}--\eqref{na19.8}. The weighted bound in the last line follows in a similar way, using an identity similar to \eqref{lemcomm} and the homogeneity of the symbol.

\subsubsection{The operator $R_n$, $n\geq 2$} To prove \eqref{ra11.5} we would like to use induction over $n$. For this we need to prove slightly stronger bounds. We start with a lemma:

\begin{lemma}\label{na149}
For any $\underline{k}=(k_1,\ldots,k_{n+1})\in\mathbb{Z}^{n+1}$ and $n\geq 1$ let $F_{n;\underline{k}}(f_{\underline{k}})$ be defined by
\begin{equation}\label{na150}
\begin{split}
\mathcal{F}\big(F_{n;\underline{k}}(f_{\underline{k}})\big)(\xi):=\int_{\mathbb{R}^{n+1}}
&M'_{n+1}(\xi;\underline{\eta})\varphi_{k_1}(\eta_1)\ldots \varphi_{k_{n+1}}(\eta_{n+1})\\
&\times\widehat{f_{\underline{k}}}(\xi-\eta_1-\ldots-\eta_{n+1})\cdot\widehat{h}(\eta_1)\cdot\ldots\cdot\widehat{h}(\eta_{n+1})\, d\underline{\eta},
\end{split}
\end{equation}
where $\underline{\eta}=(\eta_1,\ldots,\eta_{n+1})$ and
\begin{equation}\label{na150.1}
M'_{n+1}(\xi;\underline{\eta}):=\int_{\mathbb{R}}e^{-i\xi\rho}\prod_{l=1}^{n+1}\frac{e^{i\eta_l\rho}-1}{\rho}\,d\rho.
\end{equation}
Assume that the functions $f_{\underline{k}}$ satisfy the uniform bounds
\begin{equation}\label{na151}
\|f_{\underline{k}}\|_{\dot{W}^{1/5,0}}\leq\langle t\rangle^{-1/2}.
\end{equation}
Then there is a constant $C_0\geq 1$ such that if $n\geq 2$ then
\begin{equation}\label{na152}
\Big\|\sum_{\underline{k}\in\mathbb{Z}^{n+1}}F_{n;\underline{k}}(f_{\underline{k}})\Big\|_{H^{N_0+1}}\leq (C_0\e_1)^{n+1}\langle t\rangle^{-5/4}.
\end{equation}
\end{lemma}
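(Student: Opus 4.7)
\emph{Proof plan.} The starting point is to obtain a useful closed-form representation of the multilinear symbol $M'_{n+1}$. Writing $\frac{e^{i\eta_l\rho}-1}{\rho}=i\eta_l\int_0^1 e^{is_l\eta_l\rho}\,ds_l$ for each factor and integrating $\rho$ out against the remaining exponential yields
\[
M'_{n+1}(\xi;\underline{\eta})
=2\pi\,i^{n+1}\prod_{l=1}^{n+1}\eta_l\cdot K_{n+1}(\xi;\underline{\eta}),
\qquad
K_{n+1}(\xi;\underline{\eta}):=\int_{[0,1]^{n+1}}\delta\Big(\xi-\sum_{l=1}^{n+1}s_l\eta_l\Big)\,d\underline{s}.
\]
This contains the two key pieces of information for the proof. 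First, $K_{n+1}$ is the density of the pushforward of Lebesgue measure on $[0,1]^{n+1}$ under $\underline{s}\mapsto\sum s_l\eta_l$, hence $K_{n+1}\ge 0$, is supported in $|\xi|\le\sum_l|\eta_l|$, and satisfies the pointwise bound $K_{n+1}\lesssim 1/\max_l|\eta_l|$; this forces $k_0\le k_\ast+C$, where $k_\ast:=\max_l k_l$, and produces the symbol bound $|M'_{n+1}|\lesssim\prod_{l\ne l_\ast}|\eta_l|$. Second, the prefactor $\prod\eta_l$ converts each Fourier factor $\widehat h(\eta_l)$ into $\widehat{\partial h}(\eta_l)$, which is crucial for handling the low-frequency growth of $\|P_kh\|_{L^\infty}$ as $k\to-\infty$.

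Inserting the $\delta$-representation back into \eqref{na150} and integrating out $\xi$ and then $\underline{\eta}$ gives the explicit position-space identity
\[
F_{n;\underline{k}}(f_{\underline{k}})(x)
=C_n\int_{\R} f_{\underline{k}}(y)\int_{[0,1]^{n+1}}\prod_{l=1}^{n+1}(\partial P_{k_l}h)\bigl(s_l x+(1-s_l)y\bigr)\,d\underline{s}\,dy.
\]
One then estimates the $L^2_x$ norm of this expression via an $(n+2)$-linear analogue of Lemma \ref{touse}(ii), built out of the symbol bound above. Placing $f_{\underline{k}}$ in $L^\infty$ using \eqref{na151} and Bernstein, placing the factor $\partial P_{k_{l_\ast}}h$ corresponding to the largest frequency $k_\ast$ in $L^2$, and placing each remaining factor $\partial P_{k_l}h$ in $L^\infty$, one obtains for $k_0\le k_\ast+C$
\[
\|P_{k_0}F_{n;\underline{k}}(f_{\underline{k}})\|_{L^2}
\lesssim\langle t\rangle^{-1/2}\,\|\partial P_{k_{l_\ast}}h\|_{L^2}\prod_{l\ne l_\ast}\|\partial P_{k_l}h\|_{L^\infty}.
\]

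To pass to $H^{N_0+1}$ we pay the factor $2^{(N_0+1)k_0^+}\le 2^{(N_0+1)k_\ast^+}$, which is absorbed into $\|\partial P_{k_{l_\ast}}h\|_{L^2}$ by the $\dot H^{N_0+1,p_1+1/2}$-bound in \eqref{ra1}, producing a single $\varepsilon_1\langle t\rangle^{p_0}$. Each of the remaining $n$ factors contributes $2^{k_l}\|P_{k_l}h\|_{L^\infty}$: for $k_l\ge 0$ the $\dot W^{N_2+1,9/10}$-bound gives $\varepsilon_1\langle t\rangle^{-1/2}2^{(1-N_2)k_l}$, and for $k_l<0$ the $2^{k_l}$ prefactor, combined with the Bernstein-improved bound $\|P_{k_l}h\|_{L^\infty}\lesssim\varepsilon_1\langle t\rangle^{p_0}2^{-p_1 k_l}$ obtained from $\dot H^{N_0+1,p_1+1/2}$, yields a convergent geometric tail with factor $2^{(1-p_1)k_l}$. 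Summing over all $k_l$ and $k_0$ therefore produces $(C\varepsilon_1)^{n+1}$ times $\langle t\rangle^{-1/2+p_0}$ from the $(n+1)$-st factor and at least two factors $\langle t\rangle^{-1/2}$, so in total $\langle t\rangle^{-3/2+p_0}\le\langle t\rangle^{-5/4}$ for $n\ge 2$ and $p_0$ small enough.

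\emph{Main obstacle.} The principal technical difficulty is the low-frequency behaviour: without the $\prod\eta_l$ prefactor the sum over $k_l\to-\infty$ would diverge because $\|P_kh\|_{L^\infty}$ grows like $2^{-9k/10}$ under the bootstrap. The above argument is organised precisely so that each low-frequency $h$-factor comes paired with a $2^{k_l}$, yielding the decaying weight $2^{(1-p_1)k_l}$. A secondary issue is making the constant in $(C_0\varepsilon_1)^{n+1}$ independent of $n$; this is handled cleanly by reformulating the multilinear estimate above inductively in $n$, so that each step loses only a fixed geometric factor coming from the symbol bound and the $h$-estimates, rather than a combinatorial factor growing in $n$.
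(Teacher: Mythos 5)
Your representation of $M'_{n+1}$ via $\frac{e^{i\eta\rho}-1}{\rho}=i\eta\int_0^1e^{is\eta\rho}\,ds$, the resulting nonnegative kernel $K_{n+1}$ with $K_{n+1}\lesssim 1/\max_l|\eta_l|$, and the position-space identity are all correct and close in spirit to the paper's kernel computation. But there is a concrete error in your $(n+2)$-linear estimate that makes the argument fail at high frequencies: you place $\|\partial P_{k_{l_*}}h\|_{L^2}$, i.e.\ a full derivative, on the largest-frequency factor. This is inconsistent with your own symbol bound: the gain $K_{n+1}\lesssim 1/|\eta_{l_*}|$ exists precisely to cancel the factor $\eta_{l_*}$ in the prefactor $\prod_l\eta_l$, so the correct estimate carries $\|P_{k_{l_*}}h\|_{L^2}$ with \emph{no} derivative on the dominant factor (compare \eqref{na209}, where the largest frequency enters as $\|P'_{\widetilde{k}_{n+1}}h\|_{L^2}$ and the weights $2^{\widetilde{k}_l}$ sit only on the $n$ smaller ones). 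With your version the passage to $H^{N_0+1}$ breaks down: $2^{(N_0+1)k_*^+}\|\partial P_{k_*}h\|_{L^2}\approx 2^{(N_0+2)k_*}\|P_{k_*}h\|_{L^2}\lesssim\e_1\langle t\rangle^{p_0}2^{k_*}$ for $k_*\geq 0$ by \eqref{ra1}, which is not summable; the a priori bound controls exactly $N_0+1$ derivatives of $h$ and the target is $H^{N_0+1}$ of the output, so there is no room for an extra derivative on the dominant factor.

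Second, the key multilinear estimate is only asserted, via ``an $(n+2)$-linear analogue of Lemma \ref{touse}(ii)'' applied to a pointwise symbol bound. Lemma \ref{touse} requires an $L^1$ bound on the inverse Fourier transform of the localized symbol, not a pointwise bound, and producing that bound -- with a constant $C^{n+1}$ uniform in the frequency configuration and with the derivative removed from exactly one factor -- is the actual content of the lemma. Your position-space formula does not yield it by naive H\"older: to bound $\int f(y)\prod_lg_l(s_lx+(1-s_l)y)\,dy$ with $f$ in $L^\infty$ one must first integrate out one $s_l$ to create a kernel that is integrable in $y$, and the change of variables $y\mapsto s_lx+(1-s_l)y$ degenerates as $s_l\to 1$. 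The paper carries this out through the kernels $K_k(y,\rho)$ and the bound $\int|K_k(y,\rho)|\,dy\lesssim\min(|\rho|^{-1},2^k)$; the resulting estimate \eqref{na209} comes with a factor $(1+|\widetilde{k}_{n+1}-\widetilde{k}_n|)$, which is why the regime in which one frequency dominates all others while $f$ sits at low frequency must be treated separately, by an induction on $n$ with a finer decomposition of one kernel. Your summation argument does not isolate this regime, and the uniform-in-$n$ constant you invoke ``by induction'' is precisely what that missing step has to supply.
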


\begin{proof}
Notice that
\begin{equation}\label{na201}
M'_{n+1}(\xi,\eta_1,\ldots,\eta_{n+1})=0\qquad\text{ if }\qquad \eta_1\cdot\ldots\cdot\eta_{n+1}=0.
\end{equation}
Taking partial derivatives in $\eta$ it follows that
\begin{equation}\label{na202}
M'_{n+1}(\xi,\eta_1,\ldots,\eta_{n+1})=0\qquad\text{ if }\qquad |\eta_1|+\ldots+|\eta_{n+1}|\leq|\xi|.
\end{equation}

Let $\chi_0:=\mathcal{F}^{-1}(\varphi_0)$ and let $\chi'_0$ denote the derivative of $\chi_0$. Let $\widetilde{k}_1\leq\ldots\leq\widetilde{k}_{n+1}$ denote the increasing rearrangement of the integers $k_1,\ldots,k_{n+1}$. In view of \eqref{na202},
\begin{equation}\label{na202.5}
F_{n,\underline{k}}(f_{\underline{k}})=F_{n,\underline{k}}(P_{\leq \widetilde{k}_{n+1}+5n} f_{\underline{k}}).
\end{equation}

Using the formula
\begin{equation}\label{na206}
\frac{1}{2\pi}\int_{\mathbb{R}}\varphi_k(\mu)e^{iy\mu}\frac{e^{i\mu\rho}-1}{\rho}\,d\mu=\frac{2^k\big[\chi_0(2^k(y+\rho))-\chi_0(2^ky)\big]}{\rho}=:K_k(y,\rho),
\end{equation}
and the definition \eqref{na150}, we rewrite
\begin{equation}\label{na205}
F_{n,\underline{k}}(f_{\underline{k}})(x)=(2\pi)^{n+1}\int_{\mathbb{R}^{n+2}}f_{\underline{k}}(x-\rho)\prod_{l=1}^{n+1}P'_{k_l}h(x-\rho-y_l)\cdot \prod_{l=1}^{n+1}K_{k_l}(y_l,\rho)\,d\rho d\underline{y}.
\end{equation}

We notice that
\begin{equation}\label{na207}
\int_{\mathbb{R}}\big|K_k(y,\rho)\big|\,dy\lesssim \min\big(|\rho|^{-1},2^k\big).
\end{equation}
 The inequalities \eqref{na207} show that
\begin{equation}\label{na208}
\int_{\mathbb{R}^{n+2}}\Big|\prod_{l=1}^{n+1}K_{k_l}(y_l,\rho)\Big|\,d\rho d\underline{y}\leq C^{n+1} 2^{\widetilde{k}_1+\ldots+\widetilde{k}_n}(1+|\widetilde{k}_{n+1}-\widetilde{k}_n|),
\end{equation}
for some constant $C\geq 1$ (which is allowed to change from now on from line to line). Therefore, using \eqref{na205},
\begin{equation}\label{na209}
\big\|F_{n,\underline{k}}(f_{\underline{k}})\big\|_{L^2}\leq C^{n+1}(1+|\widetilde{k}_{n+1}-\widetilde{k}_n|)\cdot\|f_{\underline{k}}\|_{L^\infty}\|P'_{\widetilde{k}_{n+1}}h\|_{L^2}\prod_{l=1}^n2^{\widetilde{k}_l}\|P'_{\widetilde{k}_{l}}h\|_{L^\infty}.
\end{equation}

Therefore
\begin{equation}\label{na212}
\begin{split}
\sum_{(k_1,\ldots,k_{n+1})\in J_1}\big\|F_{n,\underline{k}}(f_{\underline{k}})\big\|_{H^{N_0+1}}&\leq (C\e_1)^{n+1}\langle t\rangle^{-(n+1)/2+p_0},\\
\sum_{(k_1,\ldots,k_{n+1})\in J_2}\big\|F_{n,\underline{k}}(P_{\geq \widetilde{k}_{n+1}-10n+1}f_{\underline{k}})\big\|_{H^{N_0+1}}&\leq (C\e_1)^{n+1}\langle t\rangle^{-(n+1)/2+p_0},
\end{split}
\end{equation}
where
\begin{equation}\label{na214}
\begin{split}
&J_1:=\big\{(k_1,\ldots,k_{n+1})\in\mathbb{Z}^{n+1}:\,\widetilde{k}_{n+1}\leq\max(0,\widetilde{k}_n)+10n\big\},\\
&J_2:=\big\{(k_1,\ldots,k_{n+1})\in\mathbb{Z}^{n+1}:\,k_1=\widetilde{k}_{n+1}\geq\max(0,\widetilde{k}_n)+10n\big\}.
\end{split}
\end{equation}

It remains to prove that if $n\geq 1$ then
\begin{equation}\label{na213}
\Big\|\sum_{(k_1,\ldots,k_{n+1})\in J_2}F_{n,\underline{k}}(P_{\leq k_1-10n}f_{\underline{k}})\Big\|_{H^{N_0+1}}\leq (C_1\e_1)^{n+1}\langle t\rangle^{-e_n}
\end{equation}
for some constant $C_1\geq 1$, where $e_1=1-p_0$ and $e_n=5/4$ if $n\geq 2$.

We prove the inequalities in \eqref{na213} using induction over $n$ (the case $n=1$ follows from our discussion of the operator $R_1$ before, noticing that only $L^\infty$ norms on $\partial_xf$ were used in \eqref{na19.5}--\eqref{na19.7}). We decompose
\begin{equation*}
K_{k_{n+1}}(y_{n+1},\rho)=\widetilde{K}_{k_{n+1}}(y_{n+1},\rho)+2^{2k_{n+1}}\chi'_0(2^{k_{n+1}}y_{n+1})
\end{equation*}
where
\begin{equation*}
\widetilde{K}_k(y,\rho):=\frac{2^{k}\big[\chi_0(2^{k}(y+\rho))-\chi_0(2^{k}y)-2^k\rho\chi'_0(2^ky)\big]}{\rho}.
\end{equation*}
Then we decompose
\begin{equation*}
F_{n,\underline{k}}(P_{\leq k_1-10n}f_{\underline{k}})=F^1_{n,\underline{k}}(P_{\leq k_1-10n}f_{\underline{k}})
+F^2_{n,\underline{k}}(P_{\leq k_1-10n}f_{\underline{k}}),
\end{equation*}
where
\begin{equation}\label{na216}
F^1_{n,\underline{k}}(g)(x):=C^{n+1}\int_{\mathbb{R}^{n+2}}g(x-\rho)\prod_{l=1}^{n+1}P'_{k_l}h(x-\rho-y_l)\cdot \widetilde{K}_{k_{n+1}}(y_{n+1},\rho)\prod_{l=1}^{n}K_{k_l}(y_l,\rho)\,d\rho d\underline{y},
\end{equation}
and
\begin{equation}\label{na217}
\begin{split}
F^2_{n,\underline{k}}(g)(x):=C^{n+1}&\int_{\mathbb{R}^{n+2}}g(x-\rho)\prod_{l=1}^{n+1}P'_{k_l}h(x-\rho-y_l)\\
&\times 2^{2k_{n+1}}\chi'_0(2^{k_{n+1}}y_{n+1})\prod_{l=1}^{n}K_{k_l}(y_l,\rho)\,d\rho d\underline{y}.
\end{split}
\end{equation}

We notice that
\begin{equation*}
\int_{\mathbb{R}}\big|\widetilde{K}_k(y,\rho)\big|\,dy\lesssim 2^k\rho\min\big(|\rho|^{-1},2^k\big).
\end{equation*}
Using also \eqref{na207} it follows that if $(k_1,\ldots,k_{n+1})\in J_2$ then
\begin{equation*}
\int_{\mathbb{R}^{n+2}}\Big|\widetilde{K}_{k_{n+1}}(y_{n+1},\rho)\prod_{l=1}^{n}K_{k_l}(y_l,\rho)\Big|\,d\rho d\underline{y}\leq C^{n+1}(1+|\widetilde{k}_n-\widetilde{k}_{n-1}|)2^{\widetilde{k}_1+\ldots+\widetilde{k}_n},
\end{equation*}
which is slightly stronger than the inequality \eqref{na208}. Therefore
\begin{equation*}
\Big\|F^1_{n,\underline{k}}(P_{\leq k_1-10n}f_{\underline{k}})\Big\|_{L^2}\leq C^{n+1}(1+|\widetilde{k}_n-\widetilde{k}_{n-1}|)\cdot\|f_{\underline{k}}\|_{L^\infty}\|P'_{k_1}h\|_{L^2}\prod_{l=1}^n2^{\widetilde{k}_l}\|P'_{\widetilde{k}_{l}}h\|_{L^\infty}.
\end{equation*}
Therefore, for any $l\geq 0$,
\begin{equation*}
\Big\|P_l\Big[\sum_{\underline{k}\in J_2}F^1_{n,\underline{k}}(P_{\leq k_1-10n}f_{\underline{k}})\Big]\Big\|_{L^2}\leq C^{n+1}(\e_1\langle t\rangle^{-1/2})^n\langle t\rangle^{-1/2}\sum_{|k_1-l|\leq 10}\|P'_{k_1}h\|_{L^2}.
\end{equation*}
Therefore
\begin{equation}\label{na218}
\Big\|\sum_{\underline{k}\in J_2}F^1_{n,\underline{k}}(P_{\leq k_1-10n}f_{\underline{k}})\Big\|_{H^{N_0+1}}\leq (C\e_1)^{n+1}\langle t\rangle^{-5/4}.
\end{equation}

We estimate now the contributions of the terms $F^2_{n,\underline{k}}(P_{\leq k_1-10n}f_{\underline{k}})$. We integrate the variable $y_{n+1}$ in the defining formula \eqref{na217}. Let
\begin{equation*}
g_{(k_1,\ldots,k_{n})}(z):=\sum_{k_{n+1}\leq k_1-10n}f_{(k_1,\ldots,k_{n},k_{n+1})}(z)\int_{\mathbb{R}}P'_{k_{n+1}}h(z-y_{n+1})2^{2k_{n+1}}\chi'_0(2^{k_{n+1}}y_{n+1})\,dy_{n+1}.
\end{equation*}
Using the assumptions \eqref{na151} and \eqref{ra1}, it is easy to see that
\begin{equation*}
\sum_{l\in\mathbb{Z}}(2^{l/5}+1)\|P_lg_{(k_1,\ldots,k_n)}\|_{L^\infty}\lesssim\langle t\rangle^{-1/2}\cdot\e_1\langle t\rangle^{-2/5}.
\end{equation*}
The induction hypothesis shows that
\begin{equation*}
\Big\|\sum_{\underline{k}\in J_2}F^2_{n,\underline{k}}(P_{\leq \widetilde{k}_{n+1}-10n}f_{\underline{k}})\Big\|_{H^{N_0+1}}\leq (C_1\e_1)^{n}\langle t\rangle^{-1+p_0}\cdot C\e_1\langle t\rangle^{-2/5}.
\end{equation*}
The desired conclusion follows, using also \eqref{na218} provided that $C_1$ is sufficiently large.
\end{proof}

The inequality on the first term in \eqref{ra11.5} follows from this lemma with $f_{\underline{k}}=\partial_xf$. To prove the bound on the second term we start from the formula \eqref{na13.3} and write, as in \eqref{lemcomm},
\begin{equation}\label{ra13}
\begin{split}
\mathcal{F}\big[SR_nf\big]&(\xi)=\frac{c_n}{(2\pi)^{n+1}}\int_{\mathbb{R}^{n+1}}M'_{n+1}(\xi;\underline{\eta})\widehat{S\partial_xf}(\xi-\eta_1-\ldots-\eta_{n+1})\widehat{h}(\eta_1)\ldots\widehat{h}(\eta_{n+1})\, d\underline{\eta}\\
&+\frac{(n+1)c_n}{(2\pi)^{n+1}}\int_{\mathbb{R}^{n+1}}M'_{n+1}(\xi;\underline{\eta})\widehat{\partial_xf}(\xi-\eta_1-\ldots-\eta_{n+1})\widehat{h}(\eta_1)\ldots\widehat{Sh}(\eta_{n+1})\, d\underline{\eta}\\
&+\frac{c_n}{(2\pi)^{n+1}}\int_{\mathbb{R}^{n+1}}\widetilde{M}'_{n+1}(\xi;\underline{\eta})\widehat{\partial_xf}(\xi-\eta_1-\ldots-\eta_{n+1})\widehat{h}(\eta_1)\ldots\widehat{h}(\eta_{n+1})\, d\underline{\eta},
\end{split}
\end{equation}
where $c_n:=-1/(\pi(n+1))$, $M'_{n+1}(\xi;\underline{\eta})$ is as in \eqref{na150.1}, and
\begin{equation*}
\widetilde{M}'_{n+1}(\xi;\underline{\eta}):=-\Big(\xi\partial_\xi+\sum_{j=1}^{n+1}\eta_j\partial_{\eta_j}\Big)\big(M'_{n+1}\big)(\xi;\underline{\eta}).
\end{equation*}
We notice that $M'_{n+1}(\lambda\xi;\lambda\underline{\eta})=\lambda^nM'_{n+1}(\xi;\underline{\eta})$, if $\lambda>0$. Taking the $\lambda$ derivative it follows that
\begin{equation}\label{ra15}
\widetilde{M}'_{n+1}(\xi;\underline{\eta})=-nM'_{n+1}(\xi;\underline{\eta}).
\end{equation}

The estimate on $SR_nf$ in \eqref{ra11.5} follows by the same argument as in the proof of Lemma \ref{na149}, using dyadic decompositions and the bounds \eqref{ra1}. As a general rule, we always estimate the factor that has the vector-field $S$ in $L^2$ and the remaining factors in $L^\infty$. This completes the proof of Lemma \ref{Wboundedness}.

\section{Elliptic bounds}\label{aux}

In this appendix we prove elliptic-type bounds on several multilinear expressions that appear
in the course of the proofs, mainly in the derivation of the equations done in section \ref{Equations},
and in the energy estimates in sections \ref{secEE}-\ref{secweightedlow}. 

\subsection{The spaces $O_{m,\alpha}$} We start with a definition (recall \eqref{norms0}).

\begin{definition}\label{Oterms}
Assume $\alpha\in[-2,2]$ and let $b:=-1/10$. Let $O_{1,\alpha}$ denote the set of functions $f_1\in C([0,T]:L^2)$ that satisfy the ``linear'' bounds, for any $t\in[0,T]$,
\begin{equation}\label{linb}
\langle t\rangle^{-p_0}\big\|f_1(t)\big\|_{\dot{H}^{N_0+\alpha,b}}+\langle t\rangle^{-4p_0}\big\|Sf_1(t)\big\|_{\dot{H}^{N_1+\alpha,b}}+\langle t\rangle^{1/2}\|f_1(t)\|_{\dot{W}^{N_2+\alpha,b}}\lesssim \e_1.
\end{equation}
Let $O_{2,\alpha}$ denote the set of functions $f_2\in C([0,T]:L^2)$ that satisfy the ``quadratic'' bounds
\begin{equation}\label{quab}
\langle t\rangle^{1/2-p_0}\big\|f_2(t)\big\|_{H^{N_0+\alpha}}+\langle t\rangle^{1/2-4p_0}\big\|Sf_2(t)\big\|_{H^{N_1+\alpha}}+\langle t\rangle\|f_2(t)\|_{\widetilde{W}^{N_2+\alpha}}\lesssim \e_1^2.
\end{equation}
Let $O_{3,\alpha}$ denote the set of functions $f_3\in C([0,T]:L^2)$ that satisfy the ``cubic'' bounds
\begin{equation}\label{cubb}
\langle t\rangle^{1-p_0}\big\|f_3(t)\big\|_{H^{N_0+\alpha}}+\langle t\rangle^{1-4p_0}\big\|Sf_3(t)\big\|_{H^{N_1+\alpha}}+\langle t\rangle^{11/10}\|f_3(t)\|_{\widetilde{W}^{N_2+\alpha}}\lesssim \e_1^3.
\end{equation}
\end{definition}

In other words, the generic notation $O_{m,\alpha}$ measures (1) the degree of the function
(linear, quadratic, or cubic) represented by the exponent $m$, and
(2) the number of derivatives under control, relative to the Hamiltonian variables
$|\partial_x|h,|\partial_x|^{1/2}\phi$ which correspond to $\alpha=0$.
Notice that $O_{3,\alpha}\subseteq O_{2,\alpha}$;
in the linear case $O_{1,\alpha}$ we make slightly stronger assumptions on the low frequency part of the $L^2$ norms.

We will often use the following lemma to estimate products and paraproducts of functions.

\begin{lemma}\label{OtermsProd}
Assume that $m_1,m_2:\mathbb{R}\times\mathbb{R}\to\mathbb{C}$ are continuous functions with
\begin{equation}\label{prods0}
\begin{split}
&\mathrm{supp}\,m_1\subseteq\{(\xi,\eta)\in\mathbb{R}^2:\,|\xi-\eta|\leq |\eta|/2^4\},\\
&\mathrm{supp}\,m_2\subseteq\{(\xi,\eta)\in\mathbb{R}^2:\,|\xi-\eta|/|\eta|\in[2^{-20},2^{20}]\}.
\end{split}
\end{equation}
Assume that
\begin{equation}\label{prods0.5}
\big\|\mathcal{F}^{-1}[m_1(\xi,\eta)\varphi_k(\xi)]\big\|_{L^1}\lesssim 1,\qquad \big\|\mathcal{F}^{-1}[m_2(\xi,\eta)\varphi_{k_1}(\xi-\eta)\varphi_{k_2}(\eta)]\big\|_{L^1}\lesssim 1,
\end{equation}
for any $k,k_1,k_2\in\mathbb{Z}$. Let $\widetilde{m}_l:=-(\xi\partial_\xi+\eta\partial_\eta)m_l$, see \eqref{lemcomm2}, and assume that the multipliers $\widetilde{m}_1,\widetilde{m}_2$ satisfy the bounds \eqref{prods0.5} as well. Let $M_1$ and $M_2$ denote the bilinear  operators associated to $m_1$ and $m_2$, see \eqref{Moutm}. Assume that $m,n\in\{1,2,3\}$. Then, for any $\alpha\in[-2,2]$,
\begin{equation}\label{prods1}
\text{ if }\,f\in O_{m,-2},\quad g\in O_{n,\alpha}\qquad\text{ then }\qquad M_1(f,g)\in O_{\min(m+n,3),\alpha}\cap O_{1,\alpha},
\end{equation}
and, for any $\alpha\in[-2,0]$,
\begin{equation}\label{prods1.1}
\text{ if }\,f\in O_{m,\alpha},\quad g\in O_{n,\alpha}\qquad\text{ then }\qquad M_2(f,g)\in O_{\min(m+n,3),\alpha+2}\cap O_{1,\alpha+2}.
\end{equation}
\end{lemma}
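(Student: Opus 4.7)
The two assertions are proved by the same Littlewood--Paley strategy applied to the two different frequency configurations fixed by the support assumption \eqref{prods0}. For (i), the symbol $m_1$ restricts to the paraproduct regime $|\xi-\eta| \ll |\eta|$, so the dyadic decomposition
\[
 P_k M_1(f,g) \;=\; \sum_{k_1 \leq k-5} M_1(P_{k_1}f, P'_k g)\;+\;\text{error}
\]
reduces every bound to a sum in which $f$ carries the low frequency and $g$ carries the output frequency. In view of \eqref{prods0.5} and Lemma~\ref{touse}(ii) all $S^\infty$ norms of the dyadic localizations of $m_1$ are uniformly bounded, so Hölder-type estimates apply directly. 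I would first extract the $\widetilde W^{N_2+\alpha}$ piece of $O_{m+n,\alpha}$ from Lemma~\ref{algebra}, which already absorbs both the $L^\infty$ and high-frequency $L^\infty$ information. The $H^{N_0+\alpha}$ piece follows from $\|P_kM_1(f,g)\|_{L^2} \lesssim \|f\|_{L^\infty}\|P'_kg\|_{L^2}$ together with the embedding $\widetilde W^{N_2-2}\hookrightarrow L^\infty$ (using $N_2\geq 0$), with the decay $\langle t\rangle^{-m/2}$ from the $L^\infty$ norm of $f$ combining with the Sobolev growth $\langle t\rangle^{(n-1)p_0}$ of $g$ to land exactly in the class $O_{\min(m+n,3),\alpha}$.

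For the weighted estimate I apply the commutation identity \eqref{lemcomm}, write $SM_1(f,g)=M_1(Sf,g)+M_1(f,Sg)+\widetilde M_1(f,g)$, and use that $\widetilde m_1$ satisfies exactly the same symbol hypothesis \eqref{prods0.5} as $m_1$. The first term is bounded by placing $Sf$ in $L^2$ (using its $\dot H^{N_1-2,b}$ control, with $N_1-2\geq 0$) and $g$ in $\widetilde W^{N_1+\alpha}$ via $N_1\leq N_2$; the second is handled by placing $f$ in $L^\infty$ and $Sg$ in $\dot H^{N_1+\alpha,b}$, again with $N_1\leq N_0$. The hardest step is verifying the $O_{1,\alpha}$ containment, i.e.\ the low-frequency $\dot H^{N_0+\alpha,b}$ bound with $b=-1/10$: when $g\in O_{n,\alpha}$ for $n\geq 2$ the definition \eqref{quab}--\eqref{cubb} provides only $H^{N_0+\alpha}$ control with no weighted low-frequency decay, so a direct estimate would give a divergent sum $\sum_{k\to-\infty}2^{-k/5}$. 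The fix is to use Bernstein on $g$ at low frequency: $\|P_k g\|_{L^2}\lesssim 2^{k/2}\|g\|_{L^\infty}$, which produces a factor $2^{k/2}$ that dominates $2^{-k/10}$ and restores convergence. For $n=1$ one simply uses the $\dot H^{N+\alpha,b}$ control directly.

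For (ii) the symbol $m_2$ is supported where $|\xi-\eta|\approx|\eta|$, so in every Littlewood--Paley piece one has $k_1=k_2+O(1)$ and $k\leq k_1+O(1)$. Thus both high-high-to-low ($k\ll k_1$) and high-high-to-comparable ($k\approx k_1$) configurations arise. For the comparable case one estimates
\[
2^{(N_0+\alpha+2)k}\|P_kM_2(f,g)\|_{L^2}\lesssim 2^{(N_2+\alpha)k_1}\|P_{k_1}f\|_{L^\infty}\cdot 2^{(N_0+\alpha)k_2}\|P_{k_2}g\|_{L^2}\cdot 2^{(N_0-N_2+2)(k-k_1)},
\]
which is summable provided $N_2+\alpha\geq 2$, i.e.\ precisely for $\alpha\geq -3$, leaving ample margin inside $\alpha\in[-2,0]$. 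For the high-high-to-low case ($k\leq k_1-5$) I use $\|P_kM_2(f,g)\|_{L^2}\lesssim 2^{k/2}\|P_{k_1}f\|_{L^2}\|P'_{k_1}g\|_{L^2}$ via Bernstein and $L^1\hookrightarrow L^2$, and then summing $\sum_{k\leq k_1}2^{2(N_0+\alpha+2)k}\cdot 2^{k}\lesssim 2^{(2N_0+2\alpha+5)k_1}$, which again balances against the square of the $H^{N_0+\alpha}$ norms of $f$ and $g$ thanks to $\alpha\leq 0$ (the restriction $\alpha\leq 0$ is exactly what makes the weight $2^{\alpha k_1}$ compensate the unit gain $2^{5k_1/2}$ after using $\|P_{k_1}f\|_{L^2}\cdot 2^{k_1/2}\lesssim\|P_{k_1}f\|_{L^\infty}$). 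The $\widetilde W^{N_2+\alpha+2}$ bound, the $S$-commutator term, and the $O_{1,\alpha+2}$ low-frequency statement are all treated by direct analogues of the arguments in (i), with the Bernstein trick again used at very low output frequency. The main obstacle throughout is bookkeeping of the four parameters $(m,n,\alpha,\text{norm})$ so that the restriction $\alpha\in[-2,0]$ in (ii) is genuinely used (it is needed precisely in the high-high-to-comparable subcase to close the Sobolev estimate), while in (i) no restriction on $\alpha$ beyond $[-2,2]$ is required because the low-frequency factor $f\in O_{m,-2}$ is always placed in $L^\infty$ via $\widetilde W^{N_2-2}$.
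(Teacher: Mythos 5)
Your overall route is the same as the paper's: a dyadic decomposition adapted to the two support configurations in \eqref{prods0}, the $S^\infty$ bounds \eqref{prods0.5} combined with Lemma \ref{touse}(ii) to place one factor in $L^2$ and the other in $L^\infty$, the commutation identity \eqref{lemcomm} together with the hypothesis on $\widetilde m_l$ for the $S$-weighted norms, and a Bernstein/Sobolev step to produce a gain of $2^{k/2}$ at low output frequencies in the $M_2$ case. (The paper only writes out the case $m=n=1$ and declares the other cases similar, so your extra attention to the inputs in $O_{n,\alpha}$ with $n\geq 2$, which carry no weighted low-frequency $L^2$ control, is a reasonable refinement rather than a deviation.)

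There is, however, one concretely false step, and it occurs exactly where you say the argument is hardest. The inequality you invoke, $\|P_k g\|_{L^2}\lesssim 2^{k/2}\|g\|_{L^\infty}$, is not true: Bernstein only raises the Lebesgue exponent on a frequency-localized function ($\|P_kg\|_{L^\infty}\lesssim 2^{k/2}\|P_kg\|_{L^2}$ and $\|P_kg\|_{L^2}\lesssim 2^{k/2}\|P_kg\|_{L^1}$), and no bound of an $L^2$ norm by an $L^\infty$ norm can hold on the line (test it on a bump of width $R\to\infty$). The correct ways to produce the $2^{k/2}$ gain are: (a) apply Bernstein to the frequency-localized \emph{output}, $\|P_kM(f,g)\|_{L^2}\lesssim 2^{k/2}\|M(f,g)\|_{L^1}$, and bound the $L^1$ norm by Cauchy--Schwarz on the two $L^2$ factors --- this is precisely what the paper does for $M_2$ via \eqref{prods11.5} and the embedding $\|P_kh\|_{L^2}\lesssim 2^{k/2}\|P_kh\|_{L^1}$; or (b) for $M_1$, swap which factor is measured in $L^\infty$: since $|\xi|\approx|\eta|$ on the support of $m_1$, one has $\|P_kM_1(f,g)\|_{L^2}\lesssim\|P_{\leq k+2}f\|_{L^2}\,\|P'_kg\|_{L^\infty}\lesssim 2^{k/2}\|f\|_{L^2}\,\|P'_kg\|_{L^2}$, using the legitimate Bernstein on $P'_kg$. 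Note also that when $m=1$ the low-frequency $\dot H^{N_0+\alpha,-1/10}$ bound for $M_1(f,g)$ already comes for free from the other factor, since $f\in O_{1,-2}$ carries $\dot W^{N_2-2,-1/10}$ control and hence $\|P_{\leq k}f\|_{L^\infty}\lesssim\e_1\langle t\rangle^{-1/2}2^{k/10}$ for $k\leq 0$; the Bernstein device is only needed when both inputs are at least quadratic. With the inequality corrected in one of these two ways, the remainder of your argument goes through and coincides with the paper's.
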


\begin{proof}
We only show in detail how to prove the bounds \eqref{prods1} and \eqref{prods1.1} when $m=n=1$, since the other cases are similar. We assume that $t\in[0,T]$ is fixed and sometimes drop it from the notation. Using Lemma \ref{touse}, the definition \eqref{linb}, and  the assumption \eqref{prods0.5},
\begin{equation*}
\big\|P_kM_1(f,g)\big\|_{L^p}\lesssim \|P'_kg\|_{L^p}\|P_{\leq k}f\|_{L^\infty}\lesssim \e_1\langle t\rangle^{-1/2}\|P'_kg\|_{L^p}
\end{equation*}
for $p\in\{2,\infty\}$ and for any $k\in\mathbb{Z}$. Therefore
\begin{equation}\label{prods5}
\begin{split}
\big\|M_1(f,g)\big\|_{\dot{H}^{N_0+\alpha,b}}&\lesssim \e_1^2\langle t\rangle^{-1/2+p_0},\\
\big\|M_1(f,g)\big\|_{\dot{W}^{N_2+\alpha,b}}&\lesssim \e_1^2\langle t\rangle^{-1}.
\end{split}
\end{equation}
Similarly, using Lemma \ref{touse}, the definition \eqref{linb}, and  the assumption \eqref{prods0.5},
\begin{equation*}
\big\|P_kM_1(f,Sg)\big\|_{L^2}\lesssim \|P'_kSg\|_{L^2}\|P_{\leq k}f\|_{L^\infty}\lesssim \e_1\langle t\rangle^{-1/2}\|P'_kSg\|_{L^2},
\end{equation*}
and
\begin{equation*}
\big\|P_kM_1(Sf,g)\big\|_{L^2}\lesssim \|P'_kg\|_{L^\infty}\|P_{\leq k}Sf\|_{L^2}\lesssim \e_1\langle t\rangle^{4p_0}\e_1\langle t\rangle^{-1/2}\min(2^{-2bk},2^{-k(N_2+\alpha)}).
\end{equation*}
Therefore
\begin{equation}\label{prods7}
\big\|M_1(f,Sg)\big\|_{\dot{H}^{N_1+\alpha,b}}+\big\|M_1(Sf,g)\big\|_{\dot{H}^{N_1+\alpha,b}}\lesssim \e_1^2\langle t\rangle^{-1/2+4p_0}.
\end{equation}
Moreover, since $\widetilde{m}_1$ satisfies the same bounds as $m_1$, we have as in the proof of \eqref{prods5},
\begin{equation}\label{prods8}
\big\|\widetilde{M}_1(f,g)\big\|_{\dot{H}^{N_1+\alpha,b}}\lesssim\e_1^2\langle t\rangle^{-1/2+4p_0}.
\end{equation}
The desired identities \eqref{prods1} follow from the bounds \eqref{prods5}--\eqref{prods8} and the identity \eqref{lemcomm}.

We consider now the operator $M_2$ and prove \eqref{prods1.1} when $m=n=1$. We estimate first, using as before Lemma \ref{touse}, the definition \eqref{linb}, and  the assumption \eqref{prods0.5},
\begin{equation}\label{prods10}
\big\|M_2(f,g)\big\|_{L^2}\lesssim \sum_{|k_1-k_2|\leq 30}\|P_{k_1}f\|_{L^2}\|P_{k_2}g\|_{L^\infty}\lesssim\e_1^2\langle t\rangle^{-1/2+p_0}
\end{equation}
and, similarly,
\begin{equation}\label{prods11}
\big\|M_2(f,g)\big\|_{L^\infty}\lesssim \e_1^2\langle t\rangle^{-1},
\end{equation}
\begin{equation}\label{prods11.5}
\big\|M_2(f,g)\big\|_{L^1}\lesssim \e_1^2\langle t\rangle^{2p_0},
\end{equation}
\begin{equation}\label{prods12}
\big\|M_2(Sf,g)\big\|_{L^2}+\big\|M_2(f,Sg)\big\|_{L^2}+\big\|\widetilde{M}_2(f,g)\big\|_{L^2}\lesssim \e_1^2\langle t\rangle^{-1/2+4p_0},
\end{equation}
\begin{equation}\label{prods12.5}
\big\|M_2(Sf,g)\big\|_{L^1}+\big\|M_2(f,Sg)\big\|_{L^1}+\big\|\widetilde{M}_2(f,g)\big\|_{L^1}\lesssim \e_1^2\langle t\rangle^{8p_0}.
\end{equation}
These estimates and Sobolev embedding (in the form $\|P_kh\|_{L^2}\lesssim 2^{k/2}\|P_kh\|_{L^1}$) provide the desired estimates on low frequencies,
\begin{equation*}
\begin{split}
\langle t\rangle^{1/2-p_0}\big\|P_{\leq 4}M_2(f,g)\big\|_{H^{N_0+2}}\negmedspace+\langle t\rangle^{1/2-4p_0}\big\|P_{\leq 4}SM_2(f,g)\big\|_{H^{N_1+2}}\negmedspace+\langle t\rangle\|P_{\leq 4}M_2(f,g)\|_{\widetilde{W}^{N_2+2}}\lesssim \e_1^2
\end{split}
\end{equation*}
and
\begin{equation*}
\begin{split}
\langle t\rangle^{-p_0}\big\|P_{\leq 4}M_2(f,g)\big\|_{\dot{H}^{N_0+2,b}}&+\langle t\rangle^{-4p_0}\big\|P_{\leq 4}SM_2(f,g)\big\|_{\dot{H}^{N_1+2,b}}\\
&+\langle t\rangle^{1/2}\|P_{\leq 4}M_2(f,g)\|_{\dot{W}^{N_2+2,b}}\lesssim \e_1^2.
\end{split}
\end{equation*}

To estimate the high frequencies we notice that, for $k\geq 0$ and $p\in\{2,\infty\}$
\begin{equation*}
\big\|P_kM_2(f,g)\big\|_{L^p}\lesssim \sum_{k_1\geq k-30}\big\|P_{k_1}f\big\|_{L^p}\cdot\e_1\langle t\rangle^{-1/2}2^{-k_1(N_2+\alpha)}.
\end{equation*}
Therefore
\begin{equation}\label{prods13}
\begin{split}
\big\|P_{\geq 0}M_2(f,g)\big\|_{H^{N_0+\alpha+2}}&\lesssim \e_1^2\langle t\rangle^{-1/2+p_0},\\
\sum_{k\geq 0}2^{(N_2+\alpha+2)k}\big\|P_kM_2(f,g)\big\|_{L^\infty}&\lesssim \e_1^2\langle t\rangle^{-1}.
\end{split}
\end{equation}
Similarly
\begin{equation*}
\begin{split}
\big\|P_{\geq 0}M_2(Sf,g)\big\|_{H^{N_1+\alpha+2}}&+\big\|P_{\geq 0}M_2(f,Sg)\big\|_{H^{N_1+\alpha+2}}+\big\|P_{\geq 0}\widetilde{M}_2(f,g)\big\|_{H^{N_1+\alpha+2}}\lesssim \e_1^2\langle t\rangle^{-1/2+4p_0}.
\end{split}
\end{equation*}
The desired conclusions in \eqref{prods1.1} follow.
\end{proof}

\subsection{Linear, quadratic, and cubic bounds}\label{CubicBounds0}

In this subsection we prove the elliptic bounds on the quadratic and the cubic terms used implicitly to justify the calculations in section \ref{Equations}. 
Recall the formulas derived in section \ref{Equations},
\begin{equation}\label{bvd3}
\begin{split}
& \sigma=(1+h_x^2)^{-3/2}-1,\qquad \gamma=(1+h_x^2)^{-3/4}-1,\qquad p_1=\gamma,\qquad p_0=-(3/4)\gamma_x,
\\
& B= \frac{G(h)\phi + h_x\phi_x}{(1+h_x^2)}, \qquad V = \phi_x-Bh_x,\qquad \omega=\phi-T_BP_{\geq 1}h.
\end{split}
\end{equation}
Recall the bounds \eqref{bing1}.
With the notation in Definition \ref{Oterms}, and using also Proposition \ref{ra102} and Lemma \ref{OtermsProd}, we have
\begin{equation}\label{bvd1}
h_x\in O_{1,0},\qquad \phi_x\in O_{1,-1/2},\qquad G(h)\phi\in O_{1,-1/2},\qquad B\in O_{1,-1/2},\qquad V\in O_{1,-1/2}.
\end{equation}
Using \eqref{bvd1} and Lemma \ref{OtermsProd}, it follows that
\begin{equation}\label{bvd4}
\begin{split}
&\sigma\in O_{2,0},\quad \gamma\in O_{2,0},\quad p_1\in O_{2,0},\quad p_0=O_{2,-1},\\
&\omega_x-\phi_x\in O_{2,0},\quad |\partial_x|\omega-|\partial_x|\phi\in O_{2,0},\quad V-\phi_x\in O_{2,-1/2},\quad B-G(h)\phi\in O_{2,-1/2}.
\end{split}
\end{equation}
Using Proposition \ref{ra102} it follows that
\begin{equation}\label{bvd4.5}
G(h)\phi-|\partial_x|\phi\in O_{2,0},\qquad B - |\partial_x|\omega \in O_{2,-1/2} ,\qquad G_2(h,\phi)\in O_{2,1},\qquad G_{\geq 3}\in O_{3,1}.
\end{equation}
The main evolution system \eqref{on1} shows that
\begin{equation*}
\partial_th\in O_{1,-1/2},\qquad \partial_t\phi\in O_{1,-1}.
\end{equation*}
The formula \eqref{na35}, that is $(I+T_1)\psi=(-iH_0+T_2)\phi$, now shows that
\begin{equation*}
\partial_t(G(h)\phi)-iH_0\partial_t\phi_x\in O_{2,-2}.
\end{equation*}
Using again the main system \eqref{on1} and previous identities,
\begin{equation}\label{bvd4.7}
\begin{split}
&\partial_th-|\partial_x|\phi\in O_{2,0},\qquad \partial_t\phi-h_{xx}\in O_{2,-1},\qquad\partial_t(G(h)\phi)+|\partial_x|^3h\in O_{2,-2},\\
&\partial_tB+|\partial_x|^3h\in O_{2,-2},\qquad \partial_tV-\partial_x^3h\in O_{2,-2}.
\end{split}
\end{equation}
Finally, the formulas \eqref{symm5} and \eqref{symmquadterms} show that
\begin{equation}\label{bvd4.8}
u\in O_{1,0},\qquad \mathcal{N}_2(h,\omega)\in O_{2,0}, \qquad V-\frac{i}{2}\partial_x|\partial_x|^{-1/2}(u-\overline{u})\in O_{2,-1/2}.
\end{equation}

The following proposition provides suitable bounds on the cubic remainders.

\begin{proposition}\label{CubicSummary}
Let $(G_{\geq 3},\Omega_{\geq 3})$, $U_{\geq 3}$, $\mathcal{O}_{W_k}$, and $\mathcal{O}_{Z_k}$ denote the cubic remainders in Proposition \ref{firstred}, Proposition \ref{proequ}, Proposition \ref{proeqW}, and Lemma \ref{lemeqZ} respectively. Then, with $N=3k/2$,
\begin{equation}\label{cubsum1}
G_{\geq 3}\in O_{3,1}, \qquad \Omega_{\geq 3}\in O_{3,1/2},\qquad U_{\geq 3} \in |\partial_x|^{1/2} O_{3,1/2},
\end{equation}
\begin{equation}\label{cubsum2}
\begin{split}
\langle t\rangle^{1-p_0}\|\mathcal{O}_W(t)\|_{\dot{H}^{N_0-N,-1/2}}+\langle t\rangle^{1-4p_0}\|S\mathcal{O}_W(t)\|_{\dot{H}^{N_1-N,-1/2}}&\lesssim \e_1^3,\qquad\text{ if }N\leq N_1,\\
\langle t\rangle^{1-p_0}\|\mathcal{O}_W(t)\|_{\dot{H}^{N_0-N,-1/2}}&\lesssim \e_1^3,\qquad\text{ if }N\in[N_1,N_0],
\end{split}
\end{equation}
and
\begin{equation}\label{cubsum3}
\langle t\rangle^{1-4p_0}\big\|\mathcal{O}_Z(t)\big\|_{\dot{H}^{0,-1/2}}\lesssim \e_1^3.
\end{equation}
\end{proposition}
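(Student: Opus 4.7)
The plan is to verify the four cubic bounds by systematically tracing the derivations of the various remainders back to their definitions, and at each step identifying each contribution either as an explicit product/paraproduct of known linear ($O_{1,\alpha}$) and quadratic ($O_{2,\alpha}$) quantities, or as a commutator/Taylor remainder that can be estimated by hand. The main tools are the algebra property in Lemma \ref{OtermsProd}, the formulas \eqref{bvd1}--\eqref{bvd4.8} already established for $h_x,\phi_x,G(h)\phi,V,B,\sigma,\gamma,p_0,p_1,\omega$ and their time derivatives, and Proposition \ref{ra102} for $G(h)\phi$.

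First, $G_{\geq 3}\in O_{3,1}$ is already part of Proposition \ref{ra102}. For $\Omega_{\geq 3}$, I would re-examine the reduction leading to \eqref{on10}--\eqref{on11}: the identity \eqref{on7.5} contributes the explicit remainder $\partial_x E_{\geq 3,h}$, which is cubic in $h$ via a smooth function, hence in $O_{3,1/2}$ by Lemma \ref{OtermsProd} applied to the Taylor expansion of $F(h_x)=(1+h_x^2)^{-1/2}-1-\text{quadratic}$; the paraproduct reorganizations in $I$, $II$, $III$ generate cubic paraproduct remainders $R_F(B)$, $R(|\partial_x|\omega,|\partial_x|\omega)-R(\partial_x\omega,\partial_x\omega)$, and $T_B(Vh_x)-T_{Bh_x}V-T_V(\partial_x T_BP_{\ge 1}h)$, each of which is a trilinear expression in $O_{1,-1/2}$ factors and therefore lands in $O_{3,1/2}$ by two applications of Lemma \ref{OtermsProd}. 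The bound $U_{\geq 3}\in |\partial_x|^{1/2}O_{3,1/2}$ then follows by tracking through the computation \eqref{symm20}--\eqref{symm51} of Proposition \ref{prosymm}: every cubic error comes either from applying $\partial_t$ to the $T_{p_j}P_{\ge 1}$ corrections (which produce $\partial_t\gamma\in O_{2,0}$ times an $O_{1,\alpha}$ factor, hence $O_{3,\alpha}$), from commutators of operators of the form $[T_{p_j}P_{\ge 1}|\partial_x|^s,\partial_x T_V]$ (acceptable because $p_j\in O_{2,0}$, $V\in O_{1,-1/2}$), or from Taylor remainders in $\sqrt{1+\sigma}-1-\gamma-\gamma^2/2$. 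Each such error is a product of three factors with a total number of derivatives compatible with $|\partial_x|^{1/2}O_{3,1/2}$.

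For $\mathcal{O}_W$, the starting point is formula \eqref{eqW5}, where five types of terms appear: (i) $[\partial_t,\mathcal{D}^k]u$, which because $\mathcal{D}-|\partial_x|^{3/2}=\Sigma_\gamma$ with $\gamma\in O_{2,0}$ produces cubic terms of the form $\mathcal{D}^{k-1}T_{\partial_t\gamma}(\ldots)u$; here the time derivative lands on $\gamma$ (quadratic) and $\partial_t\gamma\in O_{2,-1}$ after using \eqref{bvd4.7}, which saves one derivative. (ii)--(iii) the commutators $[\partial_x T_V,|\partial_x|^N]$ and $[\partial_x T_V,\mathcal{D}^k-|\partial_x|^N]$, both of which are smoothing by one order thanks to standard paracommutator estimates with $V\in O_{1,-1/2}$. (iv) $(\mathcal{D}^k-|\partial_x|^N)\mathcal{N}_u$, which is again cubic because $\mathcal{D}^k-|\partial_x|^N$ has a $T_\gamma$ factor. (v) $|\partial_x|^N(\mathcal{N}_u-\mathcal{N}_W)$, which vanishes at quadratic order by the definition of $\mathcal{N}_W$ in \eqref{eqWN2}, leaving a cubic correction involving $u-|\partial_x|^{-N}W\in O_{3,0}$. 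All five are estimated by Lemma \ref{OtermsProd}, placing the factor carrying $S$ in $L^2$ and the remaining factors in $L^\infty$; the reason the bound degrades to $\dot{H}^{N_0-N,-1/2}$ when $N>N_1$ is simply that the weighted estimate requires $S\mathcal{O}_W\in H^{N_1-N+\alpha}$ which is only available if $N\le N_1$.

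The last bound, $\mathcal{O}_Z\in \dot{H}^{0,-1/2}$ with the $t^{-1+4p_0}$ rate, follows from the explicit formula \eqref{NeqZ26}. I would handle each of the seven bracketed groups: the $[S,\Sigma_\gamma]\mathcal{D}^k u$ term uses $Sh_x\in O_{1,0}$ (with an extra $t^{4p_0}$ loss as allowed); $S\mathcal{O}_{W_k}$ uses \eqref{cubsum2} with $k\le 2N_1/3$ so the weighted estimate is applicable; the three differences of the form $Q_{N}(V,\mathcal{D}^k u)-(i/2)Q_{N}(\partial_x|\partial_x|^{-1/2}(u-\bar u),|\partial_x|^Nu)$ and its $S$-analog reduce via \eqref{bvd4.8} to a quadratic correction $V_2\in O_{2,-1/2}$ paired with an $O_{1,\alpha}$ factor, plus terms where $\mathcal{D}^k-|\partial_x|^N$ extracts a $\gamma$; and the combinations $A_{\eps_1\eps_2}(u_{\eps_1},Su_{\eps_2})-A_{\eps_1\eps_2}(u_{\eps_1},|\partial_x|^{-N}Z_{\eps_2})-NA_{\eps_1\eps_2}(u_{\eps_1},u_{\eps_2})$ produce a $Su-|\partial_x|^{-N}Z-Nu$ factor which is again cubic by definition of $Z$. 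The main obstacle will be the careful bookkeeping in this last step, in particular verifying that the pair $(V_2,\mathcal{D}^k u)$ contribution does not lose too many derivatives at low frequencies despite the $|\partial_x|^{-1/2}$ that appears in the $\dot{H}^{0,-1/2}$ norm; this is handled by the extra smoothing in $V_2$, which is quadratic and gains $1/2$ derivative compared to $V$.
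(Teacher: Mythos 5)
Your overall strategy coincides with the paper's: decompose each remainder into explicit multilinear expressions in $h_x,\phi_x,V,B,\omega,\gamma,\sigma,p_0,p_1$, whose $O_{m,\alpha}$ memberships are recorded in \eqref{bvd1}--\eqref{bvd4.8}, and then close with the product/paraproduct algebra of Lemma \ref{OtermsProd}. The two structural observations you make for $\mathcal{O}_W$ and $\mathcal{O}_Z$ --- the expansion $[\partial_t,\mathcal{D}^k]=\sum_j\mathcal{D}^j[\partial_t,\Sigma_\gamma]\mathcal{D}^{k-j-1}$ with $[\partial_t,\Sigma_\gamma]$ turning linear terms into cubic ones, and the fact that every word in $\mathcal{D}^k-|\partial_x|^N$ contains a $T_\gamma$ factor --- are exactly the paper's mechanism \eqref{hft0}, and your explanation of why the weighted bound in \eqref{cubsum2} is restricted to $N\leq N_1$ is the right one.

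There is one step where the argument as you state it would fail. For the group $T_B(Vh_x)-T_{Bh_x}V-T_V(\partial_xT_BP_{\geq 1}h)$ (and likewise for the commutators $[T_{p_j}P_{\geq 1}|\partial_x|^s,\partial_xT_V]h$ inside $U_{\geq 3}$), ``two applications of Lemma \ref{OtermsProd}'' to the individual terms only yield $O_{3,-1/2}$: for instance $Vh_x\in O_{2,-1/2}$ because of its $T_{h_x}V$ component, and then $T_B(Vh_x)\in O_{3,-1/2}$ by \eqref{prods1}, a full derivative short of $O_{3,1/2}$. The missing derivative comes from the cancellation between $T_BT_{h_x}V$ and $T_{Bh_x}V$, and exploiting it is not a product estimate but a para-composition bound: one must know that $T_{f_1f_2}g-T_{f_1}T_{f_2}g$ is smoother than either composition (the paper's \eqref{bvd7}). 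Without invoking such a symbolic-calculus remainder estimate the derivative count does not close. A smaller slip: $R(|\partial_x|\omega,|\partial_x|\omega)-R(\partial_x\omega,\partial_x\omega)$ is part of the \emph{quadratic} term $H_2$ in \eqref{on11}, not a cubic remainder; the cubic remainder from that source is $[R(B,B)-R(|\partial_x|\omega,|\partial_x|\omega)]-[R(V,V)-R(\partial_x\omega,\partial_x\omega)]-2R(V,Bh_x)$, which is indeed handled by \eqref{prods1.1} once one uses $B-|\partial_x|\omega,\ V-\partial_x\omega\in O_{2,-1/2}$.
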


\begin{proof}
The desired conclusion $G_{\geq 3}=O_{3,1}$ was already proved in Proposition \ref{ra102}.
We examine now the term $\Omega_{\geq 3}$ in Proposition \ref{firstred}.
Inspecting the calculations that lead from \eqref{dtomega1} to \eqref{on10}, we see that
\begin{align}\label{bvd6}
\begin{split}
\Omega_{\geq 3} & = \sum_{j=1}^5 \Omega_{\geq 3}^j ,
\\
\Omega_{\geq 3}^1 & := \frac{\partial_x^2h}{(1+h_x^2)^{3/2}} - \big( h_{xx} + \partial_x T_\sigma \partial_x h \big) ,
\\
\Omega_{\geq 3}^2 & := \big[ R(B,B) - R(|\partial_x|\omega, |\partial_x|\omega)\big]/2
  - \big[ R(V,V) - R(\partial_x \omega, \partial_x\omega)\big]/2 - R(V, Bh_x) ,
\\
\Omega_{\geq 3}^3 & := -T_{\partial_tB}P_{\geq 1}h -T_{|\partial_x|^3 h}P_{\geq 1}h ,
\\
\Omega_{\geq 3}^4 & := T_B P_{\leq 0} G(h)\phi - T_{|\partial_x|\omega} P_{\leq 0} |\partial_x|\omega,
\\
\Omega_{\geq 3}^5 & := T_B(V h_x) - T_{Bh_x}V - T_V \partial_x (T_B P_{\geq 1} h) .
\end{split}
\end{align}

We examine the terms and use \eqref{bvd1}--\eqref{bvd4.7} and Lemma \ref{OtermsProd} to see
\begin{equation*}
\Omega_{\geq 3}^1=O_{3,1},\quad \Omega_{\geq 3}^2=O_{3,1},\quad\Omega_{\geq 3}^3=O_{3,1},\quad\Omega_{\geq 3}^4=O_{3,1}.
\end{equation*}
Moreover, an argument similar to the proof of \eqref{prods1} in Lemma \ref{OtermsProd} shows that
\begin{equation}\label{bvd7}
T_{f_1f_2}g-T_{f_1}T_{f_2}g=O_{3,1}\qquad \text{ if }\qquad f_1=O_{1,-1},\,f_2=O_{1,-1},\,g=O_{1,-1}.
\end{equation}
Therefore, we can rewrite
\begin{equation*}
\Omega_{\geq 3}^5=\big[T_BT_{h_x}V - T_{Bh_x}V\big]+T_B\big(R(V,h_x)\big)+\big[T_BT_Vh_x-T_VT_BP_{\geq 1}h_x\big]- T_V T_{\partial_xB} P_{\geq 1} h,
\end{equation*}
and conclude that $\Omega_{\geq 3}^5=O_{3,1}$. Therefore $\Omega_{\geq 3}=O_{3,1}$ as desired.

We now move on to examine the cubic terms appearing in \eqref{equ} in Proposition \ref{proequ}.
For this we inspect the computations in the proof of Proposition \ref{prosymm} to retrieve all cubic terms that
have been incorporated in $U_{\geq 3}$. We find
\begin{align}
\label{U3}
U_{\geq 3} := \sum_{j=1}^5 U_{\geq 3}^j ,
\end{align}
where
\begin{align}
\label{U31}
\begin{split}
U_{\geq 3}^1 & := |\partial_x| G_{\geq 3} - i |\partial_x|^{1/2} \Omega_{\geq 3}
  + T_{\partial_t p_1} P_{\geq 1} |\partial_x| h + T_{\partial_t p_0} P_{\geq 1} |\partial_x|^{-1} \partial_x h
\\
& + T_{p_1} P_{\geq 1} |\partial_x|\big( \partial_t h - |\partial_x|\o + \partial_xT_V h \big)
  + T_{p_0} P_{\geq 1} |\partial_x|^{-1} \partial_x \big( \partial_t h - |\partial_x|\o + \partial_xT_V h \big) ,
\end{split}
\\
\label{U32}
\begin{split}
U_{\geq 3}^2 & := - \big[ T_{p_1} P_{\geq 1} |\partial_x| , \partial_xT_V \big] h
  - \big[ T_{p_0} P_{\geq 1} |\partial_x|^{-1} \partial_x , \partial_xT_V \big] h
  + \big[ |\partial_x| , \partial_xT_{- V +\partial_x \o} \big] h
\\
  & +iT_{- \partial_x V + \partial_x^2 \o} |\partial_x|^{1/2} \o
  + i\big[ |\partial_x|^{1/2} , \partial_x T_{V-\partial_x \o} \big] \o ,
\end{split}
\end{align}
\begin{align}
\label{U33}
\begin{split}
U_{\geq 3}^3 & := i|\partial_x|^{3/2} T_\sigma P_{\leq 0} |\partial_x| h
  + i\big( |\partial_x|^{3/2} T_\sigma P_{\geq 1} |\partial_x| h - T_\sigma P_{\geq 1} |\partial_x|^{5/2} h
  + \frac{3}{2} T_{\partial_x \sigma} P_{\geq 1} |\partial_x|^{1/2} \partial_x h \big)
\\
& - i\big( |\partial_x|^{3/2} T_{p_1} P_{\geq 1} |\partial_x| h - T_{p_1} P_{\geq 1} |\partial_x|^{5/2} h
  + \frac{3}{2} T_{\partial_x p_1} P_{\geq 1} |\partial_x|^{1/2} \partial_x h \big)
\\
& - i\big( |\partial_x|^{3/2} T_{p_0} P_{\geq 1} |\partial_x|^{-1}\partial_x h - T_{p_0} P_{\geq 1} |\partial_x|^{1/2}\partial_x h \big) ,
\end{split}
\end{align}
\begin{align}
\label{U34}
\begin{split}
U_{\geq 3}^4 & := -\frac{3i}{4} T_{\partial_x \gamma} P_{\geq 1} |\partial_x|^{-1/2}\partial_x T_{p_0} |\partial_x|^{-1}\partial_x h
\\
& - i \big( T_{\gamma} P_{\geq 1} |\partial_x|^{3/2} T_{p_1} P_{\geq 1} |\partial_x| h
  - T_{\gamma p_1} P_{\geq 1} |\partial_x|^{5/2} h
  + \frac{3}{2} T_{\gamma \partial_x p_1} P_{\geq 1} |\partial_x|^{1/2} \partial_x h \big)
\\
& - i \big( T_{\gamma} P_{\geq 1} |\partial_x|^{3/2} T_{p_0} P_{\geq 1} |\partial_x|^{-1}\partial_x h
  - T_{\gamma p_0} P_{\geq 1} |\partial_x|^{1/2}\partial_x h \big)
\\
& + \frac{3i}{4}\big( T_{\partial_x\gamma} P_{\geq 1} |\partial_x|^{-1/2}\partial_x T_{p_1} P_{\geq 1} |\partial_x| h
  - T_{\partial_x\gamma p_1} P_{\geq 1} |\partial_x|^{1/2}\partial_x h \big),
\end{split}
\end{align}
\begin{align}
\label{U35}
\begin{split}
U_{\geq 3}^5&:=[|\partial_x|,\partial_xT_{\partial_x\omega}](\widetilde{h}-h)+i|\partial_x|^{1/2}[T_{|\partial_x|^3\widetilde{h}}P_{\geq 1}\widetilde{h}-T_{|\partial_x|^3h}P_{\geq 1}h]+|\partial_x|M_2(\omega,h-\widetilde{h}),
\end{split}
\end{align}
where $\widetilde{h}=(1/2)|\partial_x|^{-1}(u+\overline{u})$, see \eqref{eqhou}. The first term comes from the remainders in the first chain of identities in the proof of Proposition \ref{prosymm}.
This is the same remainder that appears also at the end of \eqref{symm20}.
The second term above comes from \eqref{symm25}.
The term $U_{\geq 3}^3$ contains cubic terms that are discarded in \eqref{symm50}.
Quartic terms that have been discarded in \eqref{symm50},
and the last term on the right-hand side of \eqref{symm30}, are in $U_{\geq 3}^4$.
The term $U_{\geq 3}^5$ comes from the formulas after \eqref{eqhou}.

We examine the formulas \eqref{U3}--\eqref{U34} and use Lemma \ref{OtermsProd} and \eqref{bvd7} to conclude that $U_{\geq 3} = |\partial_x|^{1/2} O_{3,1/2}$, as desired. This completes the proof of \eqref{cubsum1}.

We turn now to the proof of \eqref{cubsum2}. We examine the formula \eqref{eqW5} and observe that
\begin{align}\label{hft0}
\big[ \pa_t, \D^k \big] = \sum_{j=0}^{k-1} \D^j [\pa_t, \D] \D^{k-j-1} = \sum_{j=0}^{k-1} \D^j [\partial_t,\Sigma_\gamma] \D^{k-j-1}.
\end{align}
In view of \eqref{bvd4.7} and Lemma \ref{OtermsProd}, $[\partial_t,\Sigma_\gamma]$ is an operator of order $3/2$ that transforms linear terms into cubic terms. Moreover
\begin{equation*}
\mathcal{D}^k-|\partial_x|^N=\sum'_{P_1,\ldots,P_k\in\{|\partial_x|^{3/2},\Sigma_\gamma\}}P_1\ldots P_k,
\end{equation*}
where the sum above is taken over all possible choices of operators $P_1,\ldots,P_k\in\{|\partial_x|^{3/2},\Sigma_\gamma\}$,
not all of them equal to $|\partial_x|^{3/2}$.
Therefore $\mathcal{D}^k-|\partial_x|^N$ is an operator of order $3k/2$ that transforms linear terms into cubic terms. Finally, using Lemma \ref{lembounda}, Lemma \ref{lemboundb}, and Lemma \ref{OtermsProd}, $\mathcal{N}_u$ is a quadratic term that does not lose derivatives, $\mathcal{N}_u\in O_{2,0}$. 
The desired conclusion follows by applying elliptic estimates, as in the proof of Lemma \ref{OtermsProd}, to the terms in the last two lines of \eqref{eqW5}.

The proof of \eqref{cubsum3} is similar, using the formula \eqref{NeqZ26}, Lemma \ref{OtermsProd}, and \eqref{hft0}.
\end{proof}

The next lemma gives bounds on the nonlinear terms in Propositions \ref{proequ}, \ref{proeqW} and \ref{lemeqZ}.

\begin{lemma}\label{lemboundN_u}
With the notation in definition \ref{Oterms} we have
\begin{align}
\label{boundLu}
& \partial_t u - i\Lambda u \in |\partial_x|^{1/2} O_{2,-1} ,
\\
\label{boundN_u}
& \mathcal{N}_u \in |\partial_x|^{3/2} O_{2,3/2}. 
\end{align}
Moreover
\begin{align}
\label{estLSu}
\begin{split}
{\| P_k (\partial_t - i\Lambda) u \|}_{L^2} & \lesssim \e_1^2 2^{k/2}
  {(1+t)}^{-1/2 + p_0} (2^{k/2} + (1+t)^{-1/2}),\\
{\| P_k (\partial_t - i\Lambda) Su \|}_{L^2} & \lesssim \e_1^2 2^{k/2}
  {(1+t)}^{-1/2 + 4p_0} (2^{k/2} + (1+t)^{-1/2}) .
\end{split}
\end{align}

Furthermore, with notation of Proposition \ref{lemeqZ}, let $Z=S\mathcal{D}^{2N_1/3}u$ and
\begin{align}
\label{defNZ}
  \mathcal{N}_Z :=  \mathcal{N}_{Z,1} +  \mathcal{N}_{Z,2} +  \mathcal{N}_{Z,3}.
\end{align}
Then, we have
\begin{align}
\label{estLZ}
{\| P_k (\partial_t - i\Lambda)Z \|}_{L^2} & \lesssim \e_1^2 2^{k/2} 2^{\max(k,0)} {(1+t)}^{-1/2 + 4p_0} ,
\\
\label{estNZ}
{\| P_k \mathcal{N}_Z \|}_{L^2} & \lesssim \e_1^2 2^{\min(k,0)} {(1+t)}^{-1/2 + 4p_0} .
\end{align}

\end{lemma}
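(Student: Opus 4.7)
The plan is to derive both bounds by reading off the structure of the equations \eqref{equ}--\eqref{equN} and \eqref{eqZ}--\eqref{eqZN2}, and then combining the elliptic bounds already established in Definition \ref{Oterms}, Lemma \ref{OtermsProd}, Proposition \ref{CubicSummary}, together with the symbol bounds in Lemmas \ref{lembounda} and \ref{lemboundb}, and the trilinear bounds from Lemma \ref{touse}(ii). No new analytic input is required; the lemma is essentially a summary of what is already visible in these tools.

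I would first address \eqref{boundN_u}. The nonlinearity $\mathcal{N}_u$ is the sum over $\sum_\star$ of $A_{\eps_1\eps_2}(u_{\eps_1},u_{\eps_2})$ and $B_{\eps_1\eps_2}(u_{\eps_1},u_{\eps_2})$. The symbol bounds \eqref{bounda0} and \eqref{boundb1} show that $a_{\eps_1\eps_2}^{k,k_1,k_2}/(2^{3k/2})$ and $b_{\eps_1\eps_2}^{k,k_1,k_2}/(2^{3k/2})$ define multipliers that are uniformly bounded in $S^\infty$ (with strong frequency localization forcing either Low$\times$High or High$\times$High balance). Hence $|\partial_x|^{-3/2}\mathcal{N}_u$ is a sum of paraproduct--type and remainder--type bilinear expressions in $u$ whose symbols satisfy the hypotheses of Lemma \ref{OtermsProd}. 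Since $u\in O_{1,0}$ by \eqref{bvd4.8}, the conclusions \eqref{prods1}--\eqref{prods1.1} of that lemma give $|\partial_x|^{-3/2}\mathcal{N}_u\in O_{2,3/2}$, which is \eqref{boundN_u}.

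For \eqref{boundLu} I would use \eqref{equ} to write
\begin{equation*}
\partial_t u - i\Lambda u = i\Sigma_\gamma u - \partial_x T_V u + \mathcal{N}_u + U_{\geq 3}.
\end{equation*}
The term $U_{\geq 3}\in |\partial_x|^{1/2}O_{3,1/2}\subset |\partial_x|^{1/2}O_{2,-1}$ by \eqref{cubsum1}. For $\Sigma_\gamma u$ I would use \eqref{equSigma} together with $\gamma\in O_{2,0}$, see \eqref{bvd4}, which identifies $\Sigma_\gamma u$ as a sum of paraproducts of the form $T_\gamma P_{\geq 1}|\partial_x|^{3/2}u$ and $T_{\partial_x\gamma}P_{\geq 1}|\partial_x|^{-1/2}\partial_x u$; applying \eqref{prods1} with $f=\gamma$ or $\partial_x\gamma$ and $g=|\partial_x|^{3/2}u$ (or its analogue) yields $\Sigma_\gamma u\in |\partial_x|^{3/2}O_{3,3/2}\subset |\partial_x|^{1/2}O_{2,-1}$. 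The term $\partial_x T_V u$ is a paraproduct with $V\in O_{1,-1/2}$ and $u\in O_{1,0}$; factoring $\partial_x=|\partial_x|^{1/2}\cdot\mathcal{H}|\partial_x|^{1/2}$ and using \eqref{prods1} with $f=V$, $g=u$ gives $T_V u\in O_{2,0}$ and then $\partial_x T_V u\in |\partial_x|^{1/2}O_{2,-1/2}\subset |\partial_x|^{1/2}O_{2,-1}$. Finally $\mathcal{N}_u\in|\partial_x|^{3/2}O_{2,3/2}\subset |\partial_x|^{1/2}O_{2,-1}$ by the previous paragraph. Combining the four estimates proves \eqref{boundLu}.

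The pointwise $L^2$ bounds \eqref{estLSu}, \eqref{estLZ}, \eqref{estNZ} are obtained by applying $P_k$ to the corresponding identities and estimating term by term using Lemma \ref{touse}(ii). For $\mathcal{N}_u$, the symbol bounds \eqref{bounda0}, \eqref{boundb1} give an $S^\infty$--bound of order $2^{3\max(k_1,k_2)/2}$ on each dyadic piece; pairing this with either two copies of the $L^\infty$ bound $\|P_k u\|_{L^\infty}\lesssim\e_1\langle t\rangle^{-1/2}$ in \eqref{bing11} when the two input frequencies are close, or one $L^\infty$ and one $L^2$ bound otherwise, yields $\|P_k\mathcal{N}_u\|_{L^2}\lesssim \e_1^2 2^{k/2}(2^{k/2}+\langle t\rangle^{-1/2})\langle t\rangle^{-1/2+p_0}$, the mixed factor arising from splitting the paraproduct sum at the threshold $2^{k_1}\approx\langle t\rangle^{-1}$. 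The paradifferential contributions $\Sigma_\gamma u$ and $\partial_x T_V u$ are strictly better because $\gamma\in O_{2,0}$ and $V\in O_{1,-1/2}$ provide extra time decay. The analogous estimate for $(\partial_t-i\Lambda)Su$ uses \eqref{eqZ} with the commutator identities \eqref{lemcomm}; the weighted variable $Su$ is placed in $L^2$ via the a priori bound from \eqref{bing11}, while the unweighted factors are placed in $L^\infty$. The estimate \eqref{estNZ} on $\mathcal{N}_Z=\mathcal{N}_{Z,1}+\mathcal{N}_{Z,2}+\mathcal{N}_{Z,3}$ follows similarly from the explicit formulas \eqref{eqZN2}, the symbol bounds \eqref{bounda0}--\eqref{boundb1} and $q_0$--type bounds, and the a priori control on $Su$. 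The main technical point throughout is tracking the low-frequency factor $(2^{k/2}+\langle t\rangle^{-1/2})$, which reflects the dichotomy in Lemma \ref{touse}(ii) between placing the low-frequency input in $L^2$ or in $L^\infty$; this is routine but bookkeeping-heavy.
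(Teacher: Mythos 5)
Your proposal follows the same route as the paper's (very terse) proof: read the memberships \eqref{boundLu}--\eqref{boundN_u} off the equation \eqref{equ}--\eqref{equN} using the symbol bounds of Lemmas \ref{lembounda} and \ref{lemboundb} together with Lemma \ref{OtermsProd}, and obtain the dyadic $L^2$ bounds by applying $P_k$ and using Lemma \ref{touse}(ii). The treatment of $i\Sigma_\gamma u$, $\partial_x T_V u$, $\mathcal{N}_u$ and $U_{\geq 3}$ for \eqref{boundLu}--\eqref{boundN_u} is correct and matches the intended argument.

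There is, however, one misattribution in your account of \eqref{estLSu} that would trip you up if you carried it out literally. You claim the factor $\bigl(2^{k/2}+\langle t\rangle^{-1/2}\bigr)$ arises ``from splitting the paraproduct sum at the threshold $2^{k_1}\approx\langle t\rangle^{-1}$'' inside the quadratic nonlinearity. It does not: the quadratic terms $A_{\eps_1\eps_2}$, $B_{\eps_1\eps_2}$ and $\partial_x T_V u$ all satisfy the \emph{stronger} bound $\e_1^2\,2^k\langle t\rangle^{-1/2+p_0}$ uniformly for $k\leq 0$, because their symbols vanish like $|\xi|$ (or better) at low output frequencies, regardless of where one splits the $L^2$/$L^\infty$ pairing. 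The second term $2^{k/2}\langle t\rangle^{-1/2}$ in \eqref{estLSu} is present solely to accommodate the cubic remainder $U_{\geq 3}\in|\partial_x|^{1/2}O_{3,1/2}$ (and the quadratic corrections hidden inside $V$ and $\Sigma_\gamma$), which only gains $2^{k/2}$ at low frequency but carries an extra factor $\langle t\rangle^{-1/2}$ of time decay; this is exactly what the paper means by ``decomposing the nonlinearity into quadratic and cubic components.'' The same remark applies to \eqref{estLZ}. Once this is corrected, the rest of your argument (including \eqref{estNZ}, which follows from the explicit formulas \eqref{eqZN2}, the symbol bounds, and the a priori control of $Su$) goes through as you describe.
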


\begin{proof}
The bounds \eqref{boundLu}--\eqref{boundN_u} follow from the formulas \eqref{equ}--\eqref{equN}, the bounds on the symbols in Lemmas \ref{lembounda} and \ref{lemboundb}, and Lemma \ref{OtermsProd}. The bounds \eqref{estLSu}, which provide a more refined version of the bounds \eqref{boundLu} when $k\leq 0$, follow by decomposing the nonlinearity into quadratic and cubic components. 

The estimates \eqref{estLZ}--\eqref{estNZ} follow from the formulas in Proposition \ref{lemeqZ}, using the same symbols bounds as before and Lemma \ref{touse}.
\end{proof}


\newcommand{\comment}[1]{\vskip.3cm
\fbox{%
\parbox{0.93\linewidth}{\footnotesize #1}}
\vskip.3cm}

\def\toverify{\vskip10pt \noindent {\bf F:} {\it Need to verify this ...} \vskip10pt}
\def\tofill{\vskip30pt $\cdots$ To fill in $\cdots$ \vskip30pt}


\subsection{Semilinear expansions}\label{CubicBounds2}

Recall the notation for trilinear operators
\begin{align}
 \label{moutMtri}
\mathcal{F}\big[ M(f_1,f_2,f_3) \big](\xi) := \frac{1}{4\pi^2} \iint_{\R\times\R} m(\xi,\eta,\sigma)
  \what{f_1}(\xi-\eta) \what{f_2}(\eta-\sigma) \what{f_3}(\sigma) \, d\eta d\sigma .
\end{align}
The starting point for the semilinear analysis in sections \ref{secdecay} and \ref{secprmainlem} leading to pointwise decay,
is the equation \eqref{equ100} stated at the beginning of section \ref{secdecay}.
In this subsection we show how to derive \eqref{equ100} starting from the main equation \eqref{on1} in Proposition \ref{MainProp}. 
In particular, to derive \eqref{equ100} we will need to expand the nonlinearity as a sum of quadratic, cubic, and higher order terms. We point out that at this point we are not interested in the potential loss of derivatives. 
The following is the main result in this subsection.

\begin{lemma}\label{lemcubicfinal}
Let $U=|\partial_x| h-i|\partial_x|^{1/2}\phi$, where $(h,\phi)$ are as in Proposition \ref{MainProp}. Then we can write
\begin{align}
\label{lemcubicfinal1}
\partial_t U - i |\partial_x|^{3/2} U= \mathcal{Q}_U + \mathcal{C}_U + \mathcal{R}_{\geq 4},
\end{align}
where the following claims hold true:

\setlength{\leftmargini}{1.5em}
\begin{itemize}

\smallskip
\item The quadratic nonlinear terms are
\begin{equation}\label{lemcubicfinal2}
\mathcal{Q}_U=\sum_{(\eps_1\eps_2)\in\{(++),(+-),(--)\}}Q_{\eps_1\eps_2}(U_{\eps_1},U_{\eps_2}),
\end{equation}
where $U_+=U$, $U_-=\overline{U}$, and the operators $Q_{++},Q_{+-},Q_{--}$ are defined by the symbols
\begin{equation}\label{cxz1}
\begin{split}
q_{++}(\xi,\eta)&:=\frac{i|\xi|(\xi\eta-|\xi||\eta|)}{8|\eta|^{1/2}|\xi-\eta|}+\frac{i|\xi|(\xi(\xi-\eta)-|\xi||\xi-\eta|)}{8|\eta||\xi-\eta|^{1/2}}+\frac{i|\xi|^{1/2}(\eta(\xi-\eta)+|\eta||\xi-\eta|)}{8|\eta|^{1/2}|\xi-\eta|^{1/2}},\\
q_{+-}(\xi,\eta)&:=-\frac{i|\xi|(\xi\eta-|\xi||\eta|)}{4|\eta|^{1/2}|\xi-\eta|}+\frac{i|\xi|(\xi(\xi-\eta)-|\xi||\xi-\eta|)}{4|\eta||\xi-\eta|^{1/2}}-\frac{i|\xi|^{1/2}(\eta(\xi-\eta)+|\eta||\xi-\eta|)}{4|\eta|^{1/2}|\xi-\eta|^{1/2}},\\
q_{--}(\xi,\eta)&:=-\frac{i|\xi|(\xi\eta-|\xi||\eta|)}{8|\eta|^{1/2}|\xi-\eta|}-\frac{i|\xi|(\xi(\xi-\eta)-|\xi||\xi-\eta|)}{8|\eta||\xi-\eta|^{1/2}}+\frac{i|\xi|^{1/2}(\eta(\xi-\eta)+|\eta||\xi-\eta|)}{8|\eta|^{1/2}|\xi-\eta|^{1/2}}.
\end{split}
\end{equation}

\smallskip
\item  The cubic terms have the form
\begin{align}
\label{lemcubicfinal3}
\mathcal{C}_U := M_{++-}(U, U, \bar{U}) + M_{+++}(U,U,U) + M_{--+}(\bar{U}, \bar{U}, U) + M_{---}(\bar{U}, \bar{U}, \bar{U}) ,
\end{align}
with purely imaginary symbols $m_{\iota_1\iota_2\iota_3}$ such that
\begin{align}
\label{lemcubicfinal4}
\begin{split}
& {\big\| \mathcal{F}^{-1}\big[m_{\iota_1\iota_2\iota_3}(\xi,\eta,\sigma)\cdot 
  \varphi_k(\xi)\varphi_{k_1}(\xi-\eta)\varphi_{k_2}(\eta-\sigma)\varphi_{k_3}(\sigma)\big] \big\|}_{L^1} \lesssim 2^{k/2} 2^{\max(k_1,k_2,k_3)}
\end{split}
\end{align}
for all $(\iota_1\iota_2\iota_3) \in \{ (++-),(--+),(+++),(---) \}$.
Moreover, with $d_1 = 1/16$,
\begin{align}
\label{lemcubicfinal5}
m_{++-} (\xi,0,-\xi) = id_1|\xi|^{3/2}.
\end{align}

\smallskip
\item $\mathcal{R}_{\geq 4}$ is a quartic remainder satisfying
\begin{align}
\label{lemcubicfinalR_4}
{\| \mathcal{R}_{\geq 4} \|}_{L^2} +{\| S \mathcal{R}_{\geq 4} \|}_{L^2} \lesssim \e_1^4 \langle t \rangle^{-5/4}. 
\end{align}
Moreover
\begin{align}
 \label{equR4}
\mathcal{C}_U+\mathcal{R}_{\geq 4}\in |\partial_x|^{1/2} O_{3,-1} .
\end{align}
\end{itemize}
\end{lemma}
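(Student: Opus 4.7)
\textbf{Proof plan for Lemma \ref{lemcubicfinal}.} The plan is to start from the system \eqref{on1} in the Hamiltonian variables $(h,\phi)$ and directly expand $\partial_tU = |\partial_x|\partial_t h - i|\partial_x|^{1/2}\partial_t\phi$ in powers of the unknowns $U,\bar U$, using the substitution $h = \frac{1}{2}|\partial_x|^{-1}(U+\bar U)$ and $\phi = \frac{i}{2}|\partial_x|^{-1/2}(U-\bar U)$. Since we are only tracking the semilinear structure of the nonlinearity, the analysis is much more flexible than the quasilinear derivation in Section \ref{Equations}: we may freely expand all expressions in Taylor series and move derivatives around. First I will establish a cubic expansion of the Dirichlet--Neumann operator of the form
\[
G(h)\phi = |\partial_x|\phi + G^{(1)}(h,\phi) + G^{(2)}(h,h,\phi) + \mathcal{E}_G,
\]
where $G^{(1)}$ is the bilinear operator with symbol $-(|\xi|\cdot|\xi-\eta| + \xi(\xi-\eta))$ (the quadratic remainder in \eqref{na72}), $G^{(2)}$ is an explicit trilinear expression, and $\mathcal{E}_G$ is quartic. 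Such an expansion is obtained by iterating the analysis of Appendix \ref{secDN}, namely by one more step of the contraction argument in Lemma \ref{na100}, or alternatively by direct multilinear computation from \eqref{na31}--\eqref{na42}; the quartic remainder $\mathcal{E}_G$ will satisfy $L^2$ and $S\text{-}L^2$ bounds of size $\varepsilon_1^4\langle t\rangle^{-5/4}$ via the same paraproduct estimates used for $G_{\geq 3}$, using Lemma \ref{OtermsProd} and the a priori bounds \eqref{bing1}.

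The next step is to substitute these expansions into the two components of $\partial_tU$. For $|\partial_x|\partial_t h = |\partial_x|G(h)\phi$ this is immediate. For $-i|\partial_x|^{1/2}\partial_t\phi$ we Taylor-expand the scalar factors: $(1+h_x^2)^{-3/2} = 1 - \tfrac{3}{2}h_x^2 + O(h_x^4)$ and $(1+h_x^2)^{-1} = 1 - h_x^2 + O(h_x^4)$, then expand $(G(h)\phi+h_x\phi_x)^2 = (|\partial_x|\phi)^2 + 2(|\partial_x|\phi)(G^{(1)}(h,\phi)+h_x\phi_x) + (\text{cubic/quartic})$. Grouping by degree this yields linear contribution $h_{xx}$ (which combines with $|\partial_x|^2\phi$ from the first equation to produce exactly $i|\partial_x|^{3/2}U$), quadratic and cubic explicit trilinear expressions in $(h,\phi)$, and a quartic remainder. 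After converting $(h,\phi)$ to $(U,\bar U)$ via the substitution above, the quadratic collection yields, by direct algebra, the four operators $Q_{\eps_1\eps_2}$ with the symbols $q_{\eps_1\eps_2}$ displayed in \eqref{cxz1}; the cubic collection yields $M_{\iota_1\iota_2\iota_3}$ with symbols $m_{\iota_1\iota_2\iota_3}$ that can be written down explicitly from the combinatorics of the three contributing sources (from $G^{(2)}$, from $G^{(1)}$ coupled with itself or with $\phi_x$, and from the Taylor corrections of the rational factors). Because the original multilinear factors have symbols of homogeneity at most $2$ (in absolute value $\lesssim |\xi||\eta|^{1/2}|\sigma|^{1/2}$ type or better) and the factors of $|\partial_x|^{-1}, |\partial_x|^{-1/2}$ coming from inverting $|\partial_x|$ and $|\partial_x|^{1/2}$ produce at worst $|\xi-\eta|^{-1}|\eta-\sigma|^{-1}|\sigma|^{-1/2}$ and similar, a direct inspection of each piece together with Lemma \ref{touse}(iii) establishes the symbol bound \eqref{lemcubicfinal4}; reality of the original equation and the fact that the full nonlinearity is purely real in $(h,\phi)$ translate into the symbols $m_{\iota_1\iota_2\iota_3}$ being purely imaginary.

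The bound \eqref{lemcubicfinalR_4} on the quartic remainder $\mathcal{R}_{\geq 4}$ is then handled by the same machinery as Proposition \ref{CubicSummary}: each factor beyond the third one in the expansion carries an additional $\|U\|_{L^\infty}\lesssim \varepsilon_1\langle t\rangle^{-1/2}$ or $\|h\|_{L^\infty}$-type gain, giving the required $\langle t\rangle^{-5/4}$ decay, while $S$ commutes with the expansion up to the usual homogeneity identity \eqref{lemcomm}-\eqref{lemcomm2}, so weighted bounds follow from placing the $S$-factor in $L^2$ and all others in $L^\infty$. The inclusion \eqref{equR4} follows by combining the elliptic bounds already assembled in Section \ref{CubicBounds0}.

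The main obstacle is the explicit computation of the resonant value $m_{++-}(\xi,0,-\xi) = id_1|\xi|^{3/2}$ with $d_1=1/16$: this value controls the logarithmic phase correction in \eqref{nf50} and determines the constant in the modified scattering profile, so its precise form must be extracted rather than merely estimated. I will compute it by evaluating each trilinear source at the spacetime-resonant configuration $(\xi,\eta,\sigma)=(\xi,0,-\xi)$, which corresponds to $\xi-\eta=\xi$, $\eta-\sigma=\xi$, $\sigma=-\xi$, so each ``input frequency'' has modulus $|\xi|$. At this point, the contribution from $G^{(2)}$ reduces to an explicit polynomial in $|\xi|$, the contribution from $(|\partial_x|\phi)(G^{(1)}(h,\phi)+h_x\phi_x)$ reduces similarly after expressing in $U$, and the contributions from the Taylor quadratic corrections to $(1+h_x^2)^{-3/2}$ and $(1+h_x^2)^{-1}$ likewise collapse; summing all of them gives the stated constant $1/16$. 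This is a lengthy but purely algebraic computation which will be carried out by isolating all trilinear terms whose symbol is supported near the diagonal of the resonant set and evaluating.
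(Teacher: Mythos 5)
Your proposal follows essentially the same route as the paper's proof: a cubic expansion of the Dirichlet--Neumann operator obtained by iterating the Appendix~\ref{secDN} analysis (this is exactly Lemma~\ref{DNexpansion}, derived from \eqref{na35} and the explicit symbols \eqref{na15}, \eqref{na19}), followed by Taylor expansion of the rational factors, substitution of $h=\tfrac12|\partial_x|^{-1}(U+\bar U)$ and $\phi=\tfrac{i}{2}|\partial_x|^{-1/2}(U-\bar U)$, symbol bounds via Lemma~\ref{touse}, quartic estimates via Lemma~\ref{OtermsProd}, and evaluation of the cubic symbol at the resonance $(\xi,0,-\xi)$. One minor transcription slip: the quadratic Dirichlet--Neumann symbol is $\xi(\xi-\eta)-|\xi||\xi-\eta|$ as in \eqref{na72} (equivalently $\xi\eta-|\xi||\eta|$ in the convention of \eqref{quar11}), not $-\xi(\xi-\eta)-|\xi||\xi-\eta|$.
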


Let $O_4$ denote the set of functions $g\in C([0,T]:L^2)$ that satisfy the "quartic" bounds
\begin{equation}\label{quar}
 {\| g(t)\|}_{L^2} +{\| S g(t)\|}_{L^2} \lesssim \e_1^4 \langle t \rangle^{-5/4},
\end{equation}
for any $t\in[0,T]$, compare with \eqref{lemcubicfinalR_4}. It is easy to see, using the same argument as in the proof of Lemma \ref{OtermsProd}, that
\begin{equation}\label{quar2}
M(O_{1,-2},O_{3,-2})\subseteq O_4,\qquad M(O_{2,-2},O_{2,-2})\subseteq O_4
\end{equation}
for any bilinear operator $M$ associated to a multiplier $m$ satisfying
\begin{equation}\label{quar3}
\|m^{k,k_1,k_2}\|_{S^\infty}+\|\widetilde{m}^{k,k_1,k_2}\|_{S^\infty}\lesssim 1,\qquad \text{ for any }k,k_1,k_2\in\mathbb{Z},
\end{equation} 
where $\widetilde{m}(\xi,\eta)=-(\xi\partial_\xi+\eta\partial_\eta)m(\xi,\eta)$. 

To identify the cubic terms we need the following more precise expansion of the Dirichlet--Neumann map. This follows from the formula \eqref{na35} (recall that $G(h)\phi=-\psi_x$) and the proof of Lemma \ref{boundedness2} (see, in particular, \eqref{ra11.5}, \eqref{na13.3}, \eqref{na15}, and \eqref{na19}). 

\begin{lemma}\label{DNexpansion}
With $(h,\phi)$ as in Proposition \ref{MainProp}, we have
\begin{equation}\label{quar10}
G(h)\phi=|\partial_x|\phi+DN_2[h,\phi]+DN_3[h,h,\phi]+DN_{\geq 4},
\end{equation} 
where
\begin{equation}\label{quar11}
\mathcal{F}\{DN_2[h,\phi]\}=\frac{1}{2\pi}\int_{\mathbb{R}}\widehat{h}(\xi-\eta)\widehat{\phi}(\eta)n_2(\xi,\eta)\,d\eta,\qquad n_2(\xi,\eta)=\xi\cdot\eta-|\xi||\eta|,
\end{equation}
\begin{equation}\label{quar12}
\begin{split}
&\mathcal{F}\{DN_3[h,h,\phi]\}=\frac{1}{4\pi^2}\int_{\mathbb{R}}\widehat{h}(\xi-\eta)\widehat{h}(\eta-\sigma)\widehat{\phi}(\sigma)n_3(\xi,\eta,\sigma)\,d\eta,\\
&\qquad n_3(\xi,\eta,\sigma)=\frac{|\xi||\sigma|}{2}\big(|\eta|+|\xi+\sigma-\eta|-|\xi|-|\sigma|\big),
\end{split}
\end{equation}
and, for any $t\in[0,T]$,
\begin{equation}\label{quar13}
\|DN_{\geq 4}(t)\|_{H^{N_0}}+\|S(DN_{\geq 4})(t)\|_{H^{N_1}}\lesssim \e_1^4(1+t)^{-5/4}.
\end{equation}
\end{lemma}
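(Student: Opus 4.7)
The plan is to expand the implicit relation $(I+T_1)\psi = (-iH_0+T_2)\phi$ from \eqref{na35} by homogeneity degree in $h$, using that $T_1 = R_0 - R_2 + \ldots$ and $T_2 = R_1 - R_3 + \ldots$ with $R_n$ multilinear of degree $n+1$ in $h$, see \eqref{na23} and \eqref{na8}. Writing $\psi = \psi^{(0)} + \psi^{(1)} + \psi^{(2)} + \psi_{\geq 3}$ with $\psi^{(k)}$ of degree $k$ in $h$, and matching terms of equal degree, yields $\psi^{(0)} = -iH_0\phi$, $\psi^{(1)} = -R_0\psi^{(0)} = iR_0 H_0\phi$, and $\psi^{(2)} = R_1\phi - R_0\psi^{(1)} = R_1\phi - iR_0^2 H_0\phi$. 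Since $G(h)\phi = -\psi_x$ by \eqref{na41}, this gives $G(h)\phi = |\partial_x|\phi + DN_2[h,\phi] + DN_3[h,h,\phi] + DN_{\geq 4}$ with $DN_2 := -\partial_x\psi^{(1)}$, $DN_3 := -\partial_x\psi^{(2)}$, and $DN_{\geq 4} := -\partial_x\psi_{\geq 3}$; the linear part $-\partial_x\psi^{(0)} = i\partial_x H_0\phi = |\partial_x|\phi$ is immediate from $\widehat{H_0 g}(\xi) = -\mathrm{sgn}(\xi)\widehat g(\xi)$.

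For the quadratic term, a direct Fourier space computation using the symbol $r_0(\xi,\eta_1) = (\xi-\eta_1)[\mathrm{sgn}(\xi)-\mathrm{sgn}(\xi-\eta_1)]$ of $R_0$ read off from \eqref{na15}--\eqref{na16}, combined with $\widehat{iH_0\phi}(\mu) = -i\,\mathrm{sgn}(\mu)\widehat\phi(\mu)$ and the change of variables $\mu = \xi - \eta_1$, yields
\[
\mathcal{F}(DN_2[h,\phi])(\xi) = \frac{1}{2\pi}\int_\R \big[\xi\mu - |\xi||\mu|\big]\,\widehat{h}(\xi-\mu)\widehat\phi(\mu)\,d\mu,
\]
matching $n_2$ in \eqref{quar11}.

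The identification of $DN_3 = -\partial_x R_1\phi + i\partial_x R_0^2 H_0\phi$ with the stated symbol $n_3$ is the heart of the argument and the main obstacle. Using \eqref{na13.3} together with the formula \eqref{na19} for $M_2$, and then composing $R_0$ with itself, one finds, after the substitution $\sigma = \xi-\eta_1-\eta_2$, $\eta = \xi-\eta_1$, that the symbol of $-\partial_x R_1\phi$ reduces to $\tfrac{\xi\sigma}{2}[|\xi|+|\sigma|-|\eta|-|\xi+\sigma-\eta|]$, while the symbol of $i\partial_x R_0^2 H_0\phi$ reduces (after expanding the product of the two $r_0$-symbols and using $\mathrm{sgn}(\eta)^2 = 1$) to $\xi\eta\sigma[\mathrm{sgn}(\xi)\mathrm{sgn}(\eta)\mathrm{sgn}(\sigma) - \mathrm{sgn}(\xi) - \mathrm{sgn}(\sigma) + \mathrm{sgn}(\eta)]$. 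Neither symbol alone is clean, but symmetrizing in the two $h$-variables under $\eta \mapsto \xi+\sigma-\eta$ and using the identities $\eta\,\mathrm{sgn}(\eta) = |\eta|$ and $\xi\,\mathrm{sgn}(\xi) = |\xi|$ repeatedly, the signed pieces collapse: the cross-terms combine into $(|\eta|+|\xi+\sigma-\eta|)(\xi\sigma + |\xi||\sigma|) - (\xi+\sigma)(|\xi|\sigma + \xi|\sigma|)$, after which the $\xi\sigma(|\xi|+|\sigma|)$ contribution cancels against $\xi^2|\sigma|+|\xi|\sigma^2 = |\xi||\sigma|(|\xi|+|\sigma|)$, leaving precisely $\tfrac{|\xi||\sigma|}{2}[|\eta|+|\xi+\sigma-\eta|-|\xi|-|\sigma|] = n_3(\xi,\eta,\sigma)$.

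Finally, $\psi_{\geq 3}$ solves $(I+T_1)\psi_{\geq 3} = F_{\geq 3}$, where $F_{\geq 3}$ collects the degree-$\geq 3$-in-$h$ residual $(-iH_0+T_2)\phi - (I+T_1)(\psi^{(0)}+\psi^{(1)}+\psi^{(2)})$. By \eqref{ra30.5} the operator $(I+T_1)^{-1}$ is bounded on $H^{N_0+1}$, so it suffices to estimate $F_{\geq 3}$ in that norm. Its cubic-in-$h$ constituents $-R_2(iH_0\phi)$, $-R_0(R_1\phi)$, and $R_0^3(iH_0\phi)$ are each bounded by $\e_1^4\langle t\rangle^{-5/4}$, using \eqref{ra11.5} for $R_2$, \eqref{ra11} for the interior $R_1$, and iterated \eqref{ra10} for compositions of $R_0$; the higher-degree contributions $R_n\phi$ ($n \geq 3$) and products $R_{i_1}\cdots R_{i_k}$ with total degree $\geq 4$ are summable thanks to the contraction $\|R_0\|_{\mathcal{E}^{-1}_w\to\mathcal{E}^{-1}_w}\lesssim\e_1$ from \eqref{ra30.5}, producing a convergent Neumann tail bounded by $\e_1^4\langle t\rangle^{-5/4}$. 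The $S$-weighted bound follows identically from the $S$-versions of \eqref{ra10}--\eqref{ra11.5}, and the single derivative lost to $\partial_x$ is absorbed since Lemma \ref{Wboundedness} provides control in $H^{N_0+1}$ and $H^{N_1+1}$, one order higher than required in \eqref{quar13}.
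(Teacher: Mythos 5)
Your proposal is correct and follows essentially the same route as the paper, whose own proof is just a pointer to \eqref{na35}, \eqref{na13.3}, \eqref{na15}, \eqref{na19}, and \eqref{ra11.5}: expand $(I+T_1)\psi=(-iH_0+T_2)\phi$ by degree in $h$, read off the quadratic and cubic symbols from $R_0$ and $R_1$ (your symmetrized cubic symbol indeed collapses to $n_3$), and bound the degree-$\geq 3$ tail with the $R_n$ estimates of Lemma \ref{Wboundedness}. You in fact carry out the symbol computations in more detail than the paper does.
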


We can now prove Lemma \ref{lemcubicfinal}.

\begin{proof}[Proof of Lemma \ref{lemcubicfinal}] We start from the equations \eqref{on1},
\begin{equation*}
\partial_th = G(h)\phi,\qquad \partial_t\phi = \dfrac{\partial_x^2 h}{(1+h_x^2)^{3/2}}
  - \dfrac{1}{2}\phi_x^2 + \dfrac{(G(h)\phi+h_x\phi_x)^2}{2(1+h_x^2)}.
\end{equation*}
Letting $U=|\partial_x| h-i|\partial_x|^{1/2}\phi$, it follows that
\begin{equation*}
\partial_tU-i|\partial_x|^{3/2}U=\mathcal{Q}_U+\mathcal{C}_U+\mathcal{R}_{\geq 4},
\end{equation*}
where with $DN_2=DN_2[h,\phi]$, $DN_3=DN_3[h,h,\phi]$,
\begin{equation}\label{cxz2}
\begin{split}
\mathcal{Q}_U&:=|\partial_x|DN_2+\frac{i}{2}|\partial_x|^{1/2}\big[\phi_x^2-(|\partial_x|\phi)^2\big],\\
\mathcal{C}_U&:=|\partial_x|DN_3+\frac{3i}{2}|\partial_x|^{1/2}(h_{xx}h_x^2)-i|\partial_x|^{1/2}\big[|\partial_x|\phi\cdot(DN_2+h_x\phi_x)\big],
\end{split}
\end{equation}
and
\begin{equation*}
\begin{split}
\mathcal{R}_{\geq 4}&:=|\partial_x|DN_4-i|\partial_x|^{1/2}\Big[h_{xx}\Big(\frac{1}{(1+h_x^2)^{3/2}}-1+\frac{3}{2}h_x^2\Big)\Big]\\
&-i|\partial_x|^{1/2}\Big[\dfrac{[G(h)\phi+h_x\phi_x]^2}{2(1+h_x^2)}-\frac{(|\partial_x|\phi)^2+2|\partial_x|\phi\cdot(DN_2+h_x\phi_x)}{2}\Big].
\end{split}
\end{equation*}
Notice also that
\begin{equation*}
h=\frac{1}{2}|\partial_x|^{-1}(U+\overline{U}),\qquad \phi=\frac{i}{2}|\partial_x|^{-1/2}(U-\overline{U}).
\end{equation*}

The conclusions \eqref{lemcubicfinal2} and \eqref{cxz1} follow from the formula for the quadratic term $\mathcal{Q}_U$ (for the symbols $q_{++}$ and $q_{--}$ it is convenient to symmetrize such that $Q_{++}(f,g)=Q_{++}(g,f)$ and $Q_{--}(f,g)=Q_{--}(g,f)$). The conclusions \eqref{lemcubicfinal3}--\eqref{lemcubicfinal4} for the cubic components also follow from the explicit formulas in \eqref{cxz2}. The symbols $m_{++-}, m_{+++}, m_{--+}, m_{---}$ are linear combinations, with purely imaginary coefficients, of the symbols
\begin{equation}\label{cxz40}
\begin{split}
&p_1(\xi,\eta,\sigma)=\frac{|\xi|^2|\sigma|\big(|\eta|+|\xi+\sigma-\eta|-|\xi|-|\sigma|\big)}{|\xi-\eta||\eta-\sigma||\sigma|^{1/2}},\qquad p_2(\xi,\eta,\sigma)=\frac{|\xi|^{1/2}(\xi-\eta)^2(\eta-\sigma)\sigma}{|\xi-\eta||\eta-\sigma||\sigma|},\\
&p_3(\xi,\eta,\sigma)=\frac{|\xi|^{1/2}|\xi-\eta|^{1/2}|\sigma|(|\sigma|-|\eta|)}{|\eta-\sigma||\sigma|^{1/2}}.
\end{split}
\end{equation}
More precisely, we have
\begin{equation*}
\begin{split}
m_{++-}(\xi,\eta,\sigma)&=\frac{i[-p_1(\xi,\eta,\sigma)+p_1(\xi,\eta,\eta-\sigma)+p_1(\xi,\xi-\sigma,\eta-\sigma)]}{16}\\
&+\frac{3i[p_2(\xi,\eta,\sigma)+p_2(\xi,\eta,\eta-\sigma)+p_2(\xi,\xi-\sigma,\eta-\sigma)]}{16}\\
&+\frac{i[-p_3(\xi,\eta,\sigma)+p_3(\xi,\eta,\eta-\sigma)-p_3(\xi,\xi-\sigma,\eta-\sigma)]}{8}.
\end{split}
\end{equation*}
Therefore
\begin{equation*}
m_{++-}(\xi,0,-\xi)=\frac{2i|\xi|^{3/2}}{16}-\frac{3i|\xi|^{3/2}}{16}+\frac{i|\xi|^{3/2}}{8}=\frac{i|\xi|^{3/2}}{16},
\end{equation*}
as claimed in \eqref{lemcubicfinal5}.

The bounds on the quartic term $\mathcal{R}_{\geq 4}$ follow easily from the defining formula, Lemma \ref{OtermsProd}, \eqref{quar2} and Lemma \ref{DNexpansion}.
\end{proof}

\subsubsection{The resonant value}
We now compute the resonant value $c^{++-}(\xi,0,-\xi)$, see \eqref{nf50}, 
from the formulas \eqref{nf} and \eqref{equq_0}.
Since $q_{+-}(0,x) = 0$ and $m_{++-}(\xi,0,-\xi) = id_1 |\xi|^{3/2}$ we have, using \eqref{csymbols},
\begin{align*}
ic^{++-}(\xi,0,-\xi) & =  2m_{++}(\xi,0) q_{+-}(0,-\xi)+ m_{+-}(\xi,-\xi) q_{++}(2\xi, \xi) - m_{+-}(\xi,0) q_{+-}(0, \xi)
\\
& - 2m_{--}(\xi,-\xi) q_{--}(2\xi, \xi)+ m_{++-}(\xi,0,-\xi)
\\
& =  m_{+-}(\xi,-\xi) q_{++}(2\xi, \xi) -  2m_{--}(\xi,-\xi) q_{--}(2\xi, \xi) + i d_1|\xi|^{3/2} . 
\end{align*}
Since $q_{++}(2\xi,\xi)=q_{--}(2\xi,\xi)=i|\xi|^{3/2}\sqrt{2}/4$ and $q_{+-}(\xi,-\xi)=2q_{--}(\xi,-\xi)=i|\xi|^{3/2}/4$, we obtain
\begin{align}
\label{c++-value}
\begin{split}
c^{++-}(\xi,0,-\xi) &= -\frac{q_{+-}(\xi,-\xi) q_{++}(2\xi, \xi)}{|\xi|^{3/2}-|2\xi|^{3/2}+|\xi|^{3/2}} +\frac{2q_{--}(\xi,-\xi) q_{--}(2\xi, \xi)}{|\xi|^{3/2}+|2\xi|^{3/2}+|\xi|^{3/2}} + d_1 |\xi|^{3/2}\\
  &= d_2 |\xi|^{3/2}, 
\end{split}
\end{align}
where $d_2 =-1/16$.

\vskip20pt


\begin{thebibliography}{100}


\bibitem{ABZ1}
Alazard, T., Burq, N. and Zuily, C.
\newblock On the water waves equations with surface tension.
\newblock {\em Duke Math. J.} 158 (2011), no. 3, 413-499.


\bibitem{ABZ2}
Alazard, T., Burq, N. and Zuily, C.
\newblock On the Cauchy problem for gravity water waves.
\newblock {\em Invent. Math.} 198 (2014), no. 1, 71-163. 


\bibitem{ABZ3}
Alazard, T., Burq, N. and Zuily, C.
\newblock Strichartz estimates and the Cauchy problem for the gravity water waves equations.
\newblock Preprint. {\em arXiv:1404.4276}.






\bibitem{ADa}
Alazard, T. and Delort, J.M.
\newblock Global solutions and asymptotic behavior for two dimensional gravity water waves.
\newblock {\em Ann. Sci. \'Ec. Norm. Sup\'er.} (4) 48 (2015), no. 5, 1149-1238. 

\bibitem{ADb}
Alazard, T. and Delort, J.M.
\newblock Sobolev estimates for two dimensional gravity water waves
\newblock {\em Ast\'erisque} No. 374 (2015), viii+241 pp. 


\bibitem{AlMet1}
Alazard, T. and M\'etivier, G.
\newblock Paralinearization of the Dirichlet to Neumann operator, and regularity of three-dimensional water waves.
\newblock {\em Comm. Partial Differential Equations}, 34 (2009), no. 10-12, 1632-1704.



\bibitem{AM}
Ambrose, D.M. and Masmoudi, N.
\newblock The zero surface tension limit of two-dimensional water waves.
\newblock {\em Comm. Pure Appl. Math.} 58 (2005), no. 10, 1287-1315.




\bibitem{BG}
Beyer, K. and G\"{u}nther, M.
\newblock On the Cauchy problem for a capillary drop. I. Irrotational motion.
\newblock {\em Math. Methods Appl. Sci.} 21 (1998), no. 12, 1149-1183.


\bibitem{CCFGG}
Castro, A., C\'ordoba, D., Fefferman, C., Gancedo, F. and G\'{o}mez-Serrano, J.
\newblock Finite time singularities for the free boundary incompressible Euler equations.
\newblock {\em Ann. of Math.} 178 (2013), 1061-1134.


\bibitem{CCFGL}
Castro, A., C\'ordoba, D., Fefferman, C., Gancedo, F. and L\'{o}pez-Fern\'{a}ndez, M.
\newblock Rayleigh-Taylor breakdown for the Muskat problem with applications to water waves.
\newblock {\em Ann. of Math.} 175 (2012), 909-948.


\bibitem{CHS}
Christianson, H., Hur, V. and Staffilani, G.
\newblock Strichartz estimates for the water-wave problem with surface tension.
\newblock {\em Comm. Partial Differential Equations}, 35 (2010), no. 12, 2195-2252.


\bibitem{CL}
Christodoulou, D. and Lindblad, H.
\newblock On the motion of the free surface of a liquid.
\newblock {\em Comm. Pure Appl. Math.} 53 (2000), no. 12, 1536-1602.


\bibitem{CKSTT1}
Colliander, J., Keel, M., Staffilani, G., Takaoka, H. and Tao, T.
\newblock Sharp global well-posedness for KdV and modified KdV on $\R$ and $\mathbb{T}$.
\newblock {\em  J. Amer. Math. Soc.} 16 (2003), no. 3, 705-749.


\bibitem{CKSTT2}
Colliander, J., Keel, M., Staffilani, G., Takaoka, H. and Tao, T.
\newblock Resonant decompositions and the I-method for the cubic nonlinear Schr\"odinger equation on $\R^2$.
\newblock {\em  Discrete Contin. Dyn. Syst.}, 21 (2008), no. 3, 665-686.


\bibitem{CS2}
Coutand, D. and Shkoller, S.
\newblock Well-posedness of the free-surface incompressible Euler equations with or without surface tension.
\newblock {\em  J. Amer. Math. Soc.} 20 (2007), no. 3, 829-930.


\bibitem{CSSplash}
Coutand, D. and Shkoller, S.
\newblock On the finite-time splash and splat singularities for the 3-D free-surface Euler equations.
\newblock {\em  Comm. Math. Phys.} 325 (2014), 143-183.



\bibitem{CraigLim}
Craig, W.
\newblock An existence theory for water waves and the Boussinesq and Korteweg-de Vries scaling limits.
\newblock {\em  Comm. Partial Differential Equations} 10 (1985), no. 8, 787-1003


\bibitem{CSS}
Craig, W., Sulem, C. and Sulem, P.-L.
\newblock Nonlinear modulation of gravity waves: a rigorous approach.
\newblock {\em Nonlinearity} 5 (1992), no. 2, 497-522.


\bibitem{CraSul}
Craig, W. and Sulem, C.
\newblock Numerical simulation of gravity waves.
\newblock {\em J. Comput. Physics} 108 (1993), 73-83.



\bibitem{DelortKG1d}
Delort, J.M.
\newblock Existence globale et comportement asymptotique pour l' \'equation de Klein-Gordon quasi-lin\'eaire \`a donn\'ees
petites en dimension 1.
\newblock {\em Ann. Sci. \'Ecole Norm. Sup.} 34 (2001) 1-61.

\bibitem{Delo}
Delort, J.M. 
\newblock Long-time Sobolev stability for small solutions of quasi-linear Klein-Gordon equations on the circle.
\newblock {\em Trans. Amer. Math. Soc.} 361 (2009), 4299-4365.


\bibitem{DIP}
Deng, Y., Ionescu, A. and Pausader, B.
\newblock The Euler-Maxwell system for electrons: global solutions in 2D.
\newblock Preprint (2014). {\em arXiv:1605.05340}.

\bibitem{IFL}
Fefferman, C., Ionescu, A., and Lie, V.
\newblock  On the absence of ``splash'' singularities in the case of two-fluid interfaces.
\newblock {\em Duke Math. J.} 165 (2016), no. 3, 417-462. 


\bibitem{GM}
Germain, P. and  Masmoudi, N.
\newblock Global existence for the Euler-Maxwell system. 
\newblock {\em Ann. Sci. \'Ecole Norm. Sup.}, to appear. {\em arXiv:1107.1595}.


\bibitem{GMS2}
Germain, P., Masmoudi, N. and Shatah, J.
\newblock Global solutions for the gravity surface water waves equation in dimension 3.
\newblock {\em Ann. of Math.} 175 (2012), 691-754.


\bibitem{GMSC}
Germain, P., Masmoudi, N. and Shatah, J.
\newblock Global solutions for capillary waves equation in dimension 3.
\newblock {\em Comm. Pure Appl. Math.} 68 (2015), no. 4, 625-687.




\bibitem{GNT1}
Gustafson, S., Nakanishi, K. and Tsai, T.
\newblock Scattering for the Gross-Pitaevsky equation in 3 dimensions.
\newblock {\em Comm. Contemp. Math.} 11 (2009), no. 4, 657-707.


\bibitem{HN}
Hayashi, N. and Naumkin, P.
\newblock Asymptotics for large time of solutions to the nonlinear Schr\"{o}dinger and Hartree equations.
\newblock {\em  Amer. J. Math.} 120 (1998), 369-389.






\bibitem{HITW}
Hunter, J., Ifrim, M., Tataru, D. and Wong T.
\newblock Long time solutions for a Burgers-Hilbert equation via a modified energy method.
\newblock {\em Proc. Amer. Math. Soc.} 143 (2015), 3407-3412.

\bibitem{HIT}
Hunter, J., Ifrim, M. and Tataru, D.
\newblock Two dimensional water waves in holomorphic coordinates.
\newblock {\em Comm. Math. Phys.} 346 (2016), no. 2, 483-552.


\bibitem{IT}
Ifrim, M. and Tataru, D.
\newblock Two dimensional water waves in holomorphic coordinates II: global solutions.
\newblock {\em Bull. Soc. Math. France} 144 (2016), no. 2, 369-394. 

\bibitem{IT2}
Ifrim, M. and Tataru, D.
\newblock The lifespan of small data solutions in two dimensional capillary water waves.
\newblock {\em arXiv:1406.5471}.







\bibitem{IoPu1}
Ionescu, A. and Pusateri, F.
\newblock Nonlinear fractional Schr\"{o}dinger equations in one dimension.
\newblock {\em J. Funct. Anal.} 266 (2014), 139-176.


\bibitem{IoPu2}
Ionescu, A. and Pusateri, F.
\newblock Global solutions for the gravity water waves system in 2D.
\newblock {\em Invent. Math.} 199 (2015), no. 3, 653-804. 


\bibitem{IoPunote}
Ionescu, A. and Pusateri, F.
\newblock A note on the asymptotic behavior of gravity water waves in two dimensions.
\newblock Unpublished note. https://web.math.princeton.edu/~fabiop/2dWWasym-f.pdf.


\bibitem{IoPu3}
Ionescu, A. and Pusateri, F.
\newblock Global analysis of a model for capillary water waves in 2D.
\newblock {\em Comm. Pure Appl. Math.} 69 (2016), no. 11, 2015-2071. 


\bibitem{KP}
Kato, J. and Pusateri, F.
\newblock A new proof of long range scattering for critical nonlinear Schr\"{o}dinger equations.
\newblock {\em Diff. Int. Equations}, 24 (2011), no. 9-10, 923-940.

\bibitem{K1}
Klainerman, S.
\newblock The null condition and global existence for systems of wave equations.
\newblock {\em Nonlinear systems of partial differential equations in applied mathematics, Part 1 (Santa Fe, N.M., 1984)}, 293-326.
\newblock Lectures in Appl. Math., 23, Amer. Math. Soc., Providence, RI, 1986.


\bibitem{Lannes}
Lannes, D.
\newblock Well-posedness of the water waves equations.
\newblock {\em J. Amer. Math. Soc.} 18 (2005), no. 3, 605-654.


\bibitem{LannesBook}
Lannes, D.
\newblock The water waves problem. Mathematical analysis and asymptotics.
\newblock Mathematical Surveys and Monographs, Vol. 188. American Mathematical Society, Providence, RI, 2013. xx+321 pp.



\bibitem{Lindblad}
Lindblad, H.
\newblock Well-posedness for the motion of an incompressible liquid with free surface boundary.
\newblock {\em Ann. of Math.} 162 (2005), no. 1, 109-194.



\bibitem{Nalimov}
Nalimov, V. I.
\newblock The Cauchy-Poisson problem.
\newblock {\em Dinamika Splosn. Sredy Vyp.} 18 Dinamika Zidkost. so Svobod. Granicami (1974), 10-210, 254.




\bibitem{BosonStar}
Pusateri, F.
\newblock Modified scattering for the Boson Star equation.
\newblock {\em Comm. Math. Phys.} 332 (2014), no. 3, 1203-1234. 


\bibitem{SW1}
Schneider, G. and Wayne, C.E.
\newblock The long wave limit for the water wave problem. I. the case of zero surface tension.
\newblock {\em Comm. Pure Appl. Math.} 53 (2000), no. 12, 1475-1535.


\bibitem{shatahKGE}
Shatah, J.
\newblock Normal forms and quadratic nonlinear Klein-Gordon equations.
\newblock {\em  Comm. Pure Appl. Math.} 38 (1985), no. 5, 685-696.


\bibitem{ShZ1}
Shatah, J. and Zeng, C.
\newblock Geometry and a priori estimates for free boundary problems of the Euler equation.
\newblock {\em Comm. Pure Appl. Math.} 61 (2008), no. 5, 698-744.




\bibitem{ShZ3}
Shatah, J. and Zeng, C.
\newblock Local well-posedness for the fluid interface problem.
\newblock {\em Arch. Ration. Mech. Anal.} 199 (2011), no. 2, 653-705.


\bibitem{SulemBook}
Sulem, C. and Sulem, P.L.
\newblock The nonlinear Schr\"{o}dinger equation. Self-focussing and wave collapse.
\newblock {\em Applied Mathematical Sciences}, 139.
\newblock Springer-Verlag, New York, 1999.


\bibitem{WuNLS} 
Totz, N and Wu, S.
\newblock A rigorous justification of the modulation approximation to the 2D full water wave problem.
\newblock {\em Comm. Math. Phys.} 310 (2012), no. 3, 817-883.


\bibitem{Totz} 
Totz, N.
\newblock A Justification of the Modulation Approximation to the 3D Full Water Wave Problem.
\newblock {\em arXiv:1309.5995} (2013).


\bibitem{Yosi}
Yosihara, H.
\newblock Gravity waves on the free surface of an incompressible perfect fluid of finite depth.
\newblock {\em Publ. Res. Inst. Math. Sci.} 18 (1982), 49-96.


\bibitem{Wu1}
Wu, S.
\newblock Well-posedness in Sobolev spaces of the full water wave problem in 2-D.
\newblock {\em Invent. Math.}, 130 (1997), 39-72.


\bibitem{Wu2}
Wu, S.
\newblock Well-posedness in Sobolev spaces of the full water wave problem in 3-D.
\newblock {\em J. Amer. Math. Soc.} 12 (1999), 445-495.


\bibitem{WuAG}
Wu, S.
\newblock Almost global wellposedness of the 2-D full water wave problem.
\newblock {\em Invent. Math.} 177 (2009), 45-135.


\bibitem{Wu3DWW}
Wu, S.
\newblock Global wellposedness of the 3-D full water wave problem.
\newblock {\em Invent. Math.} 184 (2011), 125-220.




\end{thebibliography}
\end{document}